\newtheorem{theorem}{Theorem}[section]
\newtheorem{proposition}{Proposition}[section]
\newtheorem{remark}{Remark}[section]
\newtheorem{lemma}{Lemma}[section]
\numberwithin{equation}{section}
\def\d{\mathrm{d}}
\def\no{\nonumber}
\def\R{\mathbb{R}}
\def\T{\mathbb{T}}
\def\P{\mathbb{P}}
\def\eps{\varepsilon}
\def\div{\mathrm{div}}
\def\l{\left\langle}
\def\r{\right\rangle}
\newcounter{wronumber}\setcounter{wronumber}{1}
\begin{document}
\title[From VMB to incompressible NSFM with Ohm's law]
			{From Vlasov-Maxwell-Boltzmann system to two-fluid incompressible Navier-Stokes-Fourier-Maxwell system with Ohm's law: convergence for classical solutions}

\author[Ning Jiang]{Ning Jiang}
\address[Ning Jiang]{\newline School of Mathematics and Statistics, Wuhan University, Wuhan, 430072, P. R. China}
\email{njiang@whu.edu.cn}

\author[Yi-Long Luo]{Yi-Long Luo}
\address[Yi-Long Luo]
		{\newline School of Mathematics and Statistics, Wuhan University, Wuhan, 430072, P. R. China}
\email{yl-luo@whu.edu.cn}

\thanks{ \today}

\maketitle

\begin{abstract}
  For the two-species Vlasov-Maxwell-Boltzmann (VMB) system with the scaling under which the moments of the fluctuations to the global Maxwellians formally converge to the two-fluid incompressible Navier-Stokes-Fourier-Maxwell (NSFM) system with Ohm's law, we prove the uniform estimates with respect to Knudsen number $\eps$ for the fluctuations. As consequences, the existence of the global in time classical solutions of VMB with all $\eps \in (0,1]$ is established. Furthermore, the convergence of the fluctuations of the solutions of VMB to the classical solutions of NSFM with Ohm's law is rigorously justified. 
  
This limit was justified in the recent breakthrough of Ars\'enio and Saint-Raymond \cite{Arsenio-SRM-2016} from renormalized solutions of VMB to dissipative solutions of incompressible viscous electro-magneto-hydrodynamics under the corresponding scaling. In this sense, our result gives a classical solution analogue of the corresponding limit in \cite{Arsenio-SRM-2016}.\\

\noindent\textsc{Keywords.} two-species Vlasov-Maxwell-Boltzmann system; two-fluid incompressible Navier-Stokes-Fourier-Maxwell system; Ohm's law; global classical solutions; uniform energy bounds; convergence for classical solutions. %\\

%\noindent\textsc{AMS subject classifications.} 35B45, 35B65, 35Q35, 76D03, 76D09, 76D10
\end{abstract}

%\vspace*{10pt}

%\phantomsection
%\addcontentsline{toc}{section}{\contentsname}

%\tableofcontents

%%%%%%%%%%%%%%%%%%%%%%%%%%%%%%%%%%%%%%（正文）%%%%%%%%%%%%%%%%%%%%%%%%%%%%%
%%%%%%%%%%%%%%%%%%%%%%%%%%%%%%%%%%%%%%%%%%%%%%%%%%%%%%%%%%%%%%%%%%%%%%%%%%%

\section{Introduction.}\label{Sec:Introduction}

\subsection{Vlasov-Maxwell-Boltzmann system}
Two-species Vlasov-Maxwell-Boltzmann system (in brief, VMB) describes the evolution of a gas of two species of oppositely charged particles (cations of charge $q^+ > 0$ and mass $m^+>0$, and anions of charge $-q^- <0$ and mass $m^->0$), subject to auto-induced electromagnetic forces. Such a gas of charged particles, under a global neutrality condition, is called a plasma. The particle number densities $F^+(t,x,v) \geq 0 $ and $F^-(t,x,v) \geq 0 $ represent the distributions of the positively charged ions (i.e. cations), and the negatively charged ions (i.e. anions) at time $t\geq 0$, position $x\in \mathbb{T}^3$, with velocity $v\in \mathbb{R}^3$, respectively. Precisely,  VMB system consists the following equations:
\begin{equation}\label{VMB-0}
    \left\{
    \begin{array}{c}
      \partial_t F^+ + v\cdot \nabla_x F^+ + \tfrac{q^+}{m^+}(E + v \times B)\cdot\nabla_v F^+ = \mathcal{B}(F^+, F^+) + \mathcal{B}(F^+, F^-)\,,\\[2mm]
            \partial_t F^- + v\cdot \nabla_x F^- - \tfrac{q^-}{m^-}(E + v \times B)\cdot\nabla_v F^- = \mathcal{B}(F^-, F^-) + \mathcal{B}(F^-, F^+)\,,\\[2mm]
      \mu_0\eps_0\partial_t E - \nabla_x \times B = -\mu_0\int_{\mathbb{R}^3}(q^+F^+- q^- F^-)v\,\mathrm{d}v\,,\\[2mm]
      \partial_t B + \nabla_{\!x} \times E = 0\,,\\[2mm]
      \div_x E = \tfrac{1}{\eps_0}\int_{\mathbb{R}^3}(q^+F^+- q^- F^-)\,\mathrm{d}v\,,\quad\!\mbox{and}\quad\!
      \div_x B = 0\,.
        \end{array}
  \right.
\end{equation}

The evolutions of the densities  $F^\pm$ are governed by the Vlasov-Boltzmann equations, which are the first two lines in \eqref{VMB-0}. They tell that the variations of the densities $F^\pm$ along the trajectories of the particles are subject to the influence of a Lorentz force and inter-particel collisions in the gas. The Lorentz force acting on the gas is auto-induced. That is, the electric field $E(t,x)$ and the magnetic field $B(t,x)$ are generated by the motion of the particles in the plasma itself. Their motion is governed by the Maxwell's equations, which are the remaining equations in \eqref{VMB-0}, namely Amp\`ere equation, Faraday's equation and Gauss' laws respectively. In \eqref{VMB-0}, the physical constants $\mu_0, \eps_0 >0$ are, respectively, the vacuum permeability (or magnetic constant) and the vacuum permittivity (or electric constant). Note that their relation to the speed of light is the formula $c=\frac{1}{\sqrt{\mu_0\eps_0}}$. For the sake of mathematical convenience, we make the simplification that both kinds of particles have the same mass $m^\pm=m>0$ and charge $q^\pm = q >0$. 

The Boltzmann collision operator, presented in the right-hand sides of the Vlasov-Boltzmann equations in \eqref{VMB-0}, is the quadratic form, acting on the velocity variable, associated to the bilinear operator, 
\begin{equation*}
  \mathcal{B}(F,H)(v) = \int_{\R^3} \int_{\mathbb{S}^2} ( F'H_*' - F H_* ) b(v-v_*, \cos \theta) \d \omega \d v_* \,,
\end{equation*}
where we have used the standard abbreviations 
\begin{equation}\nonumber
F=F(v)\,,\quad\! F'=F(v')\,,\quad\!H_*=H(v_*)\,,\quad\! H'_*=H(v'_*)\,,
\end{equation}
with $(v',v'_*)$ given by
\begin{equation*}
 v' = v - [ (v - v_*) \cdot \omega ] \omega \,,\quad\!
      v_*' = v_* + [(v-v_*) \cdot \omega] \omega\,.
\end{equation*}
for $\omega \in \mathbb{S}^2$. In this paper, we will assume that the Boltzmann collision kernel is of the following {\em hard sphere} form
\begin{equation}
b(v-v_*, \cos \theta) = |(v-v_*) \cdot \omega| = |v-v_*| | \cos \theta | \,.
\end{equation}
This hypothesis is satisfied for all physical model and is more convenient to work with but do not impede the generality of our results. Then the collisional frequency can be defined as
\begin{equation}\label{Collision-Frecency}
\nu(v) =  \int_{\R^3} | v - v_* | M(v_*) \d v_* \,.
\end{equation}

There have been extensive research on the well-posedness of the VMB.  DiPerna-Lions developed a theory of global-in-time renormalized solutions with large initial data, in particular to the Boltzmann equation \cite{DL-Annals1989}, Vlasov-Maxwell equations \cite{DL-CPAM1989} and Vlasov-Poisson-Boltzmann equation \cite{Lions-Kyoto1994, Lions-Kyoto1994-2}. But for VMB there are severe difficulties, among which the major one is that the a priori bounds coming from physical laws are not enough to prove the existence of global solutions, even in the renormalized sense. Recently, Ars\`enio and Saint-Raymond \cite{Arsenio-SRM, Arsenio-SRM-2016} eventually established global-in-time renormalized solutions with large initial data for VMB, both cut-off and non-cutoff collision kernels. We emphasize that by far renormalized solutions are still the only existing theory for solutions without any smallness requirements on initial data. On the other line, in the context of classical solutions, through a so-called nonlinear energy method, Guo \cite{Guo-2003-Invent} constructed a classical solution of VMB near the global Maxwellian. Guo's work inspired many results on VMB with more general collision kernels among which we only mention results for the most general collision kernels with or without angular cutoff assumptions, see \cite{DLYZ-KRM2013, DLYZ-CMP2017, FLLZ-2018}.

\subsection{Hydrodynamic limits}
One of the most important features of the Boltzmann equations (or more generally, kinetic equations) is its connection to the fluid equations. The so-called fluid regimes of the Boltzmann equation are those of asymptotic dynamics of the scaled Boltzmann equations when the Knudsen number $\eps$ is very small. Justifying these limiting processes rigorously has been an active research field from late 70's. Among many results obtained, the main contributions are the incompressible Navier-Stokes and Euler limits. There are two types of results in this field:
\begin{enumerate}
  \item First obtaining the solutions of the scaled Boltzmann equation {\em uniform} in the Knudsen number $\eps$, then extracting a convergent (at least weakly) subsequence converging to the solutions of the fluid equations as $\eps\rightarrow 0\,;$.
  \item First obtaining the solutions for the limiting fluid equations, then constructing a sequence of special solutions (around the Maxwellian) of the scaled Boltzmann equations for small Knudsen number $\eps$.
\end{enumerate}
The key difference between the results of type (1) and (2) are: in type (1), the solutions of the fluid equations are {\em not} known a priori, and are completely obtained from taking limits from the Boltzmann equation. In short, it is ``from kinetic to fluid"; In type (2), the solutions of the fluid equations are {\em known} first. In short, it is ``from fluid to kinetic".

The most successful program in type (1) is the so-called BGL program. As mentioned above, the DiPerna-Lions's renormalized solutions for cutoff kernel \cite{DL-Annals1989} (also the non-cutoff kernels in \cite{al-3}) are the only solutions known to exist globally without any restriction on the size of the initial data so far. From late 80's, Bardos-Golse-Levermore initialized the program (BGL program in brief) to justify Leray's solutions to the incompressible Navier-Stokes equations from DiPerna-Lions' renormalized solutions \cite{BGL-1991-JSP}, \cite{BGL-CPAM1993}. They proved the first convergence result with 5 additional technical assumptions. After 10 years effects by Bardos, Golse, Levermore, Lions and Saint-Raymond, see for example \cite{BGL3, LM3, LM4, GL}, the first complete convergence result without any additional compactness assumption was proved by Golse and Saint-Raymond in \cite{G-SRM-Invent2004} for cutoff Maxwell collision kernel, and in \cite{G-SRM2009} for hard cutoff potentials. Later on, it was extended by Levermore-Masmoudi \cite{LM} to include soft potentials. Recently Arsenio got the similar results for non-cutoff case \cite{Arsenio}. Furthermore, by Jiang, Levermore, Masmoudi and Saint-Raymond, these results were extended to bounded domain where the Boltzmann equation was endowed with the Maxwell reflection boundary condition \cite{Masmoudi-SRM-CPAM2003, JLM-CPDE2010, JM-CPAM2017}, based on the solutions obtained by Mischler \cite{mischler2010asens}.

The BGL program says that, given any $L^2\mbox{-}$bounded functions $(\rho_0, \mathrm{u}_0, \theta_0)$, and for any physically bounded initial data (as required in DiPerna-Lions solutions) $F_{\eps,0}= \mu + \eps \sqrt{\mu}g_{\eps,0}$, such that suitable moments of the fluctuation $g_{\eps,0}$, say, $(\mathcal{P}(g_{\eps,0}, v\sqrt{\mu})_{L^2(\mathbb{R}^3_v)}, (g_{\eps,0}, (\tfrac{|v|^2}{5}-1)\sqrt{\mu})_{L^2(\mathbb{R}^3_v)})$ converges in the sense of distributions to $(\mathrm{u}_0, \theta_0)$, the corresponding DiPerna-Lions solutions are $F_\eps(t,x,v)$. Then the fluctuations $g_{\eps}$ (defined by $F_\eps= \mu + \eps \sqrt{\mu}g_{\eps}$) has weak compactness, such that the corresponding moments of $g_\eps$ converge weakly in $L^1$ to $(\mathrm{u},\theta)$ which is a Leray solution of the incompressible Navier-Stokes equation whose viscosity and heat conductivity coefficients are determined by microscopic information, with initial data $(\mathrm{u}_0, \theta_0)$. Under some situations, for example the well-prepared initial data or in bounded domain with suitable boundary condition, the convergence could be strong $L^1$.

We emphasize that the BGL program indeed gave a new proof of Leray's solutions to the incompressible Navier-Stokes equation, in particular the energy inequality which can be derived from the entropy inequality of the Boltzmann equation. Any a priori information of the Navier-Stokes equation is {\em not} needed, and completely derived from the microscopic Boltzmann equation. In this sense, BGL program is spiritually a part of Hilbert's 6th problem: derive and justify the macroscopic fluid equations from the microscopic kinetic equations (see \cite{SRM2010}).

Another direction in type (1) is in the context of classical solutions. The first work in this type is Bardos-Ukai \cite{b-u}. They started from the scaled Boltzmann equation for cut-off hard potentials, and proved the global existence of classical solutions $g_\eps$ uniformly in $0< \eps <1$. The key feature of Bardos-Ukai's work is that they only need the smallness of the initial data, and did not assume the smallness of the Knudsen number $\eps$. After having the uniform in $\eps$ solutions $g_\eps$, taking limits can provide a classical solution of the incompressible Navier-Stokes equations with small initial data. Bardos-Ukai's approach heavily depends on the sharp estimate especially the spectral analysis on the linearized Boltzmann operator $\mathcal{L}$, and the semigroup method (the semigroup generated by the scaled linear operator $\eps^{-2}\mathcal{L}+\eps^{-1}v\cdot\nabla_{\!x}$). It seems that it is hardly extended to soft potential cutoff, and even harder for the non-cutoff cases, since it is well-known that the operator $\mathcal{L}$ has continuous spectrum in those cases. On the torus, semigroup approach has been employed by Briant \cite{Briant-JDE-2015} and Briant, Merino-Aceituno and Mouhot \cite{BMM-arXiv-2014} to prove incompressible Navier-Stokes limit by employing the functional analysis breakthrough of Gualdani-Mischler-Mouhot \cite{GMM}. Again, their results are for cut-off kernels with hard potentials. Recently, there is type (1) convergence result on the incompressible Navier-Stokes limit of the Boltzmann equation. In \cite{JXZ-Indiana2018}, the uniform in $\eps$ global existence of the Boltzmann equation with or without cutoff assumption was obtained and the global energy estimates were established. Then taking limit as $\eps\rightarrow 0$, it was proved the incompressible Navier-Stokes limit.

Most of the type (2) results are based on the Hilbert expansion  and obtained in the context of classical solutions. It was started from Nishida and Caflisch's work on the compressible Euler limit \cite{Nishida, Caflisch, KMN}. Their approach was revisitied by Guo, Jang and Jiang, combining with nonlinear energy method to apply to the acoustic limit \cite{GJJ-KRM2009, GJJ-CPAM2010, JJ-DCDS2009}. After then this process was used for the incompressible limits, for examples, \cite{DEL-89} and \cite{Guo-2006-CPAM}. In \cite{DEL-89}, De Masi-Esposito-Lebowitz considered  Navier-Stokes limit in dimension 2. More recently, using the nonlinear energy method, in \cite{Guo-2006-CPAM} Guo justified the Navier-Stokes limit (and beyond, i.e. higher order terms in Hilbert expansion). This result was extended in \cite{JX-SIMA2015} to more general initial data which allow the fast acoustic waves.  These results basically say that, given the initial data which is needed in the classical solutions of the Navier-Stokes equation, it can be constructed the solutions of the Boltzmann equation of the form $F_\eps = \mu + \eps \sqrt{\mu}(g_1+ \eps g_2 + \cdots + \eps^n g_\eps)$, where $g_1, g_2, \cdots $ can be determined by the Hilbert expansion, and $g_\eps$ is the error term. In particular, the first order fluctuation $g_1 = \rho_1 + \mathrm{u}_1\!\cdot\! v + \theta_1(\frac{|v|^2}{2}-\frac{3}{2})$, where $(\rho_1, \mathrm{u}_1, \theta_1)$ is the solutions to the incompressible Navier-Stokes equations.

\subsection{Hydrodynamic limits of Vlasov-Maxwell-Boltzmann system}
However, for the VMB, the corresponding hydrodynamic limits are much harder, even at the formal level, since it is coupled with Maxwell equations which are essentially hyperbolic. In a recent remarkable breakthrough \cite{Arsenio-SRM-2016}, Ars\'enio and Saint-Raymond not only proved the existence of renormalized solutions of VMB, as mentioned above, more importantly, also justified various limits (depending on the scalings) towards incompressible viscous electro-magneto-hydrodynamics. Among these limits, the most singular one is from renormalized solutions of two-species VMB to dissipative solutions of the two-fluid incompressible Navier-Stokes-Fourier-Maxwell (in brief, NSFM) system with Ohm's law. 

The proofs in \cite{Arsenio-SRM-2016} for justifying the weak limit from a sequence of solutions of VMB  to a dissipative solution of incompressible NSFM  are extremely hard. Part of the reasons are, besides many difficulties of the existence of renormalized solutions of VMB itself, our current understanding for the incompressible NSFM with Ohm's law is far from complete. From the point view of mathematical analysis, NSFM have a behavior which is more similar to the much less understood incompressible Euler equations than to the Navier-Stokes equations. That is the reason in \cite{Arsenio-SRM-2016}, they consider the so-called dissipative solutions of NSFM rather than the usual weak solutions. The dissipative solutions are were introduced by Lions for 3-dimensional incompressible Euler equations (see section 4.4 of \cite{Lions-1996}).

The studies of incompressible NSFM just started in recent years (for the introduction of physical background, see \cite{Biskamp, Davidson}).  For weak solutions, the existence of global in time Leray type weak solutions are completely open, even in 2-dimension.  A first breakthrough comes from Masmoudi \cite{Masmoudi-JMPA2010}, who in 2-dimensional case proved the existence and uniqueness of global strong solutions of incompressible NSFM (in fact, the system he considered in \cite{Masmoudi-JMPA2010} is little different with the NSFM in this paper, but the analytic analysis are basically the same) for the initial data $(v^{in}, E^{in}, B^{in})\in L^2(\mathbb{R}^2)\times (H^s(\mathbb{R}^2))^2$ with $s>0$. It is notable that in \cite{Masmoudi-JMPA2010}, the divergence-free condition of the magnetic field $B$ or the decay property of the linear part coming from Maxwell's equations is {\em not} used.  Ibrahim and Keraani \cite{Ibrahim-Keraani-2011-SIMA} considered the data $(u^{in}, E^{in}, B^{in}) \in \dot B^{1/2}_{2,1}(\mathbb{R}^3)\times (\dot H^{1/2}(\mathbb{R}^3))^2$ for 3-dimension, and $(v_0, E_0, B_0)\in \dot B^0_{2,1}(\mathbb{R}^2)\times (L^2_{log}(\mathbb{R}^2))^2$ for 2-dimensional case. Later on, German, Ibrahim and Masmoudi \cite{GIM2014} refines the previous results by running a fixed-point argument to obtain mild solutions, but taking the initial velocity field in the natural Navier-Stokes space $H^{1/2}$. In their results the regularity of the initial velocity and electromagnetic fields is lowered. Furthermore, they employed an $L^2L^\infty$-estimate on the velocity field, which significantly simplifies the fixed-point arguments used in \cite{Ibrahim-Keraani-2011-SIMA}. For some other asymptotic problems related, say, the derivation of the MHD from the Navier-Stokes-Maxwell system in the context of weak solutions, see Ars\'enio-Ibrahim-Masmoudi \cite{AIM-ARMA-2015}. Recently, in \cite{JL-CMS-2018} the authors of the current paper proved the global classical solutions of the incompressible NSFM with small intial data, by employing the decay properties of both the electric field and  the wave equation with linear damping of the divergence free magnetic field. This key idea was already used in \cite{GIM2014}. 

The only previous hydrodynamic limit result for the VMB for classical solutions belong to Jang \cite{Jang-ARMA2008}. In fact, in \cite{Jang-ARMA2008}, it was taken a very special scaling that the magnetic effect appears only at a higher order. As a consequence, it vanishes in the limit as the Knudsen number $\eps\rightarrow 0$. So in the limiting equations derived in \cite{Jang-ARMA2008}, there is no equation for the magnetic field at all. We emphasize that in \cite{Jang-ARMA2008}, the Hilbert expansion approach was employed, and the classical solutions to the VMB were constructed on those of the limiting equations. So the convergence results in \cite{Jang-ARMA2008} belong to the type (2) results, as we defined in the last subsection. 

The main concern of the current paper is to prove a type (1) convergence result in the context of classical solutions for two-species VMB. In the scaling which will be specified in the below, we will establish a uniform in $\eps$ estimates for the VMB. As a consequence, we can establish the global in time existence of solutions of VMB near the Maxwellians uniform in $\eps$. More over, as $\eps\rightarrow 0$, the moments of the fluctuations converge to the classical solutions of the two-fluid incompressible Navier-Stokes-Fourier-Maxwell (NSFM) system with Ohm's law. We emphasize that our result belongs to type (1) result. So we do not need any a priori information of NSFM. In fact, our convergence result automatically provides a global existence of NSFM with Ohm's law, of course, with small initial data.

We denote the Knudsen number by $\eps$ and the scaled two-species VMB reads
\begin{equation}\label{VMB-F}
  \left\{
    \begin{array}{l}
      \partial_t F_\eps^\pm + \tfrac{1}{\eps} v \cdot \nabla_x F_\eps^\pm \pm \tfrac{1}{\eps} ( \eps E_\eps + v \times B_\eps ) \cdot \nabla_v F_\eps^\pm = \tfrac{1}{\eps^2} \mathcal{B} (F_\eps^\pm, F_\eps^\pm) + \tfrac{1}{\eps^2} \mathcal{B} (F_\eps^\pm , F_\eps^\mp) \,,\\[2mm]
      \partial_t E_\eps - \nabla_x \times B_\eps = - \tfrac{1}{\eps^2} \int_{\R^3} (F_\eps^+ - F_\eps^-) v \d v \,,\\[2mm]
      \partial_t B_\eps + \nabla_x \times E_\eps = 0 \,,\\[2mm]
      \div_x E_\eps = \tfrac{1}{\eps} \int_{\R^3} ( F_\eps^+ - F_\eps^- ) \d v \,,\\[2mm]
      \div_x B_\eps = 0
    \end{array}
  \right.
\end{equation}
on $\mathbb{T}^3 \times \R^3$,  
Moreover, the initial data of \eqref{VMB-F} are imposed on
\begin{equation}\label{IC-VMB-F}
  \begin{aligned}
    F_\eps^\pm (0, x, v) = F_\eps^{\pm, in} (x,v) \in \R \,, \quad E_\eps (0,x) = E_\eps^{in} (x) \in \R^3 \,, \quad B_\eps (0, x) = B_\eps^{in} (x) \in \R^3 \,,
  \end{aligned}
\end{equation}

A physically relevant requirement for solutions to the Vlasov-Maxwell-Boltzmann system are their mass, momentum and energy are preserved with time. This is also an {\em a priori} property of the Vlasov-Maxwell-Boltzmann system on the torus (see \cite{Arsenio-SRM-2016} for instance) which reads
\begin{align}\label{Conservatn-Law-F}
\no & \frac{\d }{\d t}  \int_{\T^3 \times \R^3} F_\eps^\pm  \d x \d v = 0 \,, \\[2mm]
\no & \frac{\d}{\d t} \left( \int_{\T^3 \times \R^3} v ( F_\eps^+  + F_\eps^-  ) \d x \d v + \eps \int_{\T^3} E_\eps  \times B_\eps \d x \right) = 0 \,, \\[2mm]
& \frac{\d }{\d t} \left( \int_{\T^3 \times \R^3} |v|^2 ( F_\eps^+  + F_\eps^-  ) \d v \d x + \eps^2 \int_{\T^3} ( |E_\eps |^2 + |B_\eps  |^2 ) \d x \right) = 0 \,.
\end{align}
Notice from the Maxwell system and the periodic boundary condition of $E_\eps (t,x)$,
\begin{equation}
\frac{\d}{\d t} \int_{\T^3} B_\eps (t,x) \d x = 0 \,.
\end{equation}
We thus can define a constant vector 
\begin{equation}\label{B-average}
\tfrac{1}{|\T^3|} \int_{\T^3} B_\eps (t,x) = \bar{B}= \tfrac{1}{|\T^3|} \int_{\T^3} B_\eps^{in} (x) \d x \,.
\end{equation}

It is well-known that the global equilibrium for the two-species Vlasov-Maxwell-Boltzmann system is $( [M(v), M(v)],0,\bar{B})$, where the normalized global {\em Maxwellian} $M(v)$ is
\begin{equation*}
  M(v) = \tfrac{1}{(2 \pi)^\frac{3}{2}} \exp(- \tfrac{|v|^2}{2}) \,.
\end{equation*}
and $\bar{B} \in \R^3$ is a constant vector. Our goal is to approximate the two-fluid incompressible Navier-Stokes-Fourier-Maxwell system with Ohm's law by the system \eqref{VMB-F} under the linearization $F_\eps^\pm (t,x,v) = M(v)  + \eps \sqrt{M(v)} G_\eps^\pm (t,x,v) $ when the Knudsen number $\eps$ tends to zero. This leads to the perturbed two-species Vlasov-Maxwell-Boltzmann system
\begin{equation}\label{VMB-G}
  \left\{
    \begin{array}{l}
      \partial_t G_\eps + \tfrac{1}{\eps} \big[ v \cdot  \nabla_x G_\eps + \textsf{q} ( \eps E_\eps + v \times B_\eps ) \cdot \nabla_v \big]  G_\eps + \tfrac{1}{\eps^2} \mathscr{L} G_\eps - \tfrac{1}{\eps}  (E_\eps \cdot v) \sqrt{M}  \textsf{q}_1\\[2mm]
       \qquad \qquad = \tfrac{1}{2} \mathsf{q} (E_\eps \cdot v) G_\eps + \tfrac{1}{\eps} \Gamma (G_\eps, G_\eps) \,,\\[2mm]
      \partial_t E_\eps - \nabla_x \times B_\eps = - \tfrac{1}{\eps} \int_{\R^3} G_\eps \cdot \textsf{q}_1 v \sqrt{M} \d v \,,\\[2mm]
      \partial_t B_\eps + \nabla_x \times E_\eps = 0 \,,\\[2mm]
      \div_x E_\eps = \int_{\R^3} G_\eps \cdot \textsf{q}_1 \sqrt{M} \d v \,, \ \div_x B_\eps = 0 \,,
    \end{array}
  \right.
\end{equation}
where $G_\eps = [ G_\eps^+ , G_\eps^- ]$ represents the column vector in $\R^2$ with the components $G_\eps^\pm$, the $2 \times 2$ diagonal matrix $\textsf{q} = \textit{diag} (1, -1)$, the column vector $\textsf{q}_1 = [1, -1]$, the two species linearized collision operator $\mathscr{L}$ is given as
\begin{equation}\label{Linear-Oprt-VMB}
  \begin{aligned}
    \mathscr{L} G_\eps = \big[ \mathcal{L} G_\eps^+ + \mathcal{L} (G_\eps^+, G_\eps^-) \,, \ \mathcal{L} G_\eps^- + \mathcal{L} ( G_\eps^-, G_\eps^+ ) \big] \,,
  \end{aligned}
\end{equation}
where 
\begin{equation}\label{Linear-Oprt-B}
  \begin{aligned}
    \mathcal{L} g & = - \tfrac{1}{\sqrt{M}} [ \mathcal{B} ( \sqrt{M} g, M ) + \mathcal{B} (M, \sqrt{M} g) ] \\
    & = - \mathcal{Q} (g, \sqrt{M}) - \mathcal{Q} ( \sqrt{M} , g ) \\
    & = \sqrt{M} \int_{\R^3} \Big( \tfrac{g}{\sqrt{M}} + \tfrac{g_*}{\sqrt{M_*}} - \tfrac{g'}{\sqrt{M'}} - \tfrac{g_*'}{\sqrt{M_*'}} \Big) |v - v_*| M_* \d v_* 
  \end{aligned}
\end{equation}
is the usual linearized Boltzmann collision operator, and 
\begin{equation}
  \begin{aligned}
    \mathcal{L} (g, h) & =- \tfrac{1}{\sqrt{M}} [ \mathcal{B} ( \sqrt{M} g, M ) + \mathcal{B} (M, \sqrt{M} h) ] \\
    & = - \mathcal{Q} (g, \sqrt{M}) - \mathcal{Q} ( \sqrt{M} , h ) \\
    & = \sqrt{M} \int_{\R^3} \Big( \tfrac{g}{\sqrt{M}} + \tfrac{h_*}{\sqrt{M_*}} - \tfrac{g'}{\sqrt{M'}} - \tfrac{h_*'}{\sqrt{M_*'}} \Big) |v - v_*| M_* \d v_* \,.
  \end{aligned}
\end{equation}
Here we denote by $ \mathcal{Q} (g,h) = \tfrac{1}{\sqrt{M}} \mathcal{B} (\sqrt{M} g, \sqrt{M} h ) $. More specifically,
\begin{equation}
  \mathcal{Q} (g,h) =  \int_{\R^3} \int_{\mathbb{S}^2} ( g'h_*' - g h_* ) b ( |v - v_*|, \cos \theta ) \d \sigma \d v_* = \int_{\R^3} ( g'h_*' - g h_* ) |v - v_*| \sqrt{M_*} \d v_* \,.
\end{equation}
We then define a bilinear symmetric operator $\Gamma (G, H)$ as
\begin{equation}\label{Gamma}
 \begin{aligned}
   \Gamma (G , H ) = \tfrac{1}{2} [ \mathcal{Q} ( G^+, H^+ ) & + \mathcal{Q} ( H^+, G^+ ) + \mathcal{Q} ( G^+, H^- ) + \mathcal{Q} ( H^+, G^- ) \,, \\
   & \mathcal{Q} (G^-, H^-) + \mathcal{Q} (H^-, G^-) + \mathcal{Q} (G^- , H^+) + \mathcal{Q} ( H^-, G^+ ) ]
 \end{aligned}
\end{equation}
for vector-valued functions $G(v) = [ G^+ (v) , G^- (v) ]$ and $ H(v) = [ H^+ (v) , H^- (v) ] $.

The two species linearized collision operator $\mathscr{L}$ has several properties (see \cite{Arsenio-SRM-2016}, for instance), which will be used throughout this paper. The linear operator $\mathscr{L}$ is a closed self-adjoint operator in $L^2_v$ with kernel
\begin{equation*}
\textrm{Ker} (\mathscr{L}) = \textrm{Span} \{ \phi_1 (v) \,, \cdots \phi_6 (v) \} \,,
\end{equation*}
where $\phi_1 (v) = [1 , 0]  \sqrt{M} = \tfrac{1}{2} ( \mathsf{q}_1 + \mathsf{q}_2 ) \sqrt{M} $, $\phi_2 (v) = [0,1]  \sqrt{M} = \tfrac{1}{2} ( \mathsf{q}_2 - \mathsf{q}_1 ) \sqrt{M} $, $\phi_{i+2} (v) = [v_i , v_i]  \sqrt{M} = v_i \mathsf{q}_2 \sqrt{M} $ for $i = 1,2,3$ and $\phi_6 (v) = \tfrac{1}{2} [|v|^2 - 3 , |v|^2 - 3 ]  \sqrt{M} = ( \tfrac{|v|^2}{2} - \tfrac{3}{2} ) \mathsf{q}_2 \sqrt{M}$. Here the column vectors $\mathsf{q}_2 = [1,1]$. The family $(\phi_i (v))_{1 \leq i \leq 6}$ is an orthonormal basis of $\textrm{Ker} (\mathscr{L})$ in $L^2_v $ and we denote $\P$ by the orthogonal projection onto $\textrm{Ker}(\mathscr{L})$ in $L^2_v $:
\begin{equation}\label{VMB-Proj}
\P G (v) = \sum_{i=1}^2 \langle G,   \phi_i \rangle_{L^2_v} \phi_i (v) + \sum_{i=1}^3 \tfrac{1}{2} \langle G,   \phi_{i+2} \rangle_{L^2_v} \phi_{i+2} (v) +\tfrac{1}{3} \l G , \phi_6 \r_{L^2_v} \phi_6 (v)
\end{equation}
for all $G : \R^3 \mapsto \R^2$ in $L^2_v$. For notational simplicity, we denote by $\rho^+ =\langle G, \phi_1 (v) \rangle_{L^2_v} $, $\rho^- =\langle G, \phi_2 (v) \rangle_{L^2_v} $, $ u_i = \tfrac{1}{2} \langle G,   \phi_{i+2} \rangle_{L^2_v} $ for $i = 1,2,3$ and $\theta = \tfrac{1}{3} \l G , \phi_6 \r_{L^2_v} $, so that we can rewrite
\begin{equation}\label{VMB-Proj-Smp}
  \begin{aligned}
    \mathbb{P} G = & \rho^+ \phi_1 (v) + \rho^- \phi_2 (v) + \sum_{i=1}^3 u_i \phi_{i+2} (v) + \theta \phi_6 (v) \\
    = & \rho^+ \tfrac{\mathsf{q}_1 + \mathsf{q}_2}{2} \sqrt{M} + \rho^- \tfrac{\mathsf{q}_2 - \mathsf{q}_1}{2} \sqrt{M} + u \cdot v \mathsf{q}_2 \sqrt{M} + \theta ( \tfrac{|v|^2}{2} - \tfrac{3}{2} ) \mathsf{q}_2 \sqrt{M} \\
    = & ( \rho + \tfrac{1}{2} n ) \tfrac{\mathsf{q}_1 + \mathsf{q}_2}{2} \sqrt{M} + ( \rho - \tfrac{1}{2} n ) \tfrac{\mathsf{q}_2 - \mathsf{q}_1}{2} \sqrt{M} + u \cdot v \mathsf{q}_2 \sqrt{M} + \theta ( \tfrac{|v|^2}{2} - \tfrac{3}{2} ) \mathsf{q}_2 \sqrt{M} \,,
  \end{aligned}
\end{equation}
where $\rho = \tfrac{1}{2} (\rho^+ + \rho^-)$ and $n = \rho^+ - \rho^-$. We also define $\P^\perp = \mathbb{I} - \P $, where $\mathbb{I}$ is the identical mapping.

For the usual linearized Boltzmann collision operator $\mathcal{L}$ defined in \eqref{Linear-Oprt-B}, it is well known that the kernel of $\mathcal{L}$ is
\begin{equation*}
\textrm{Ker} ( \mathcal{L} ) = \textrm{Span} \big\{ \chi_1 (v) \,, \ \chi_2 (v) \,, \ \chi_3 (v) \,, \ \chi_4 (v) \,, \ \chi_5 (v) \big\} \,,
\end{equation*}
where $\chi_1 (v) = \sqrt{M(v)}$, $\chi_{1+i} (v) = v_i \sqrt{M(v)}$ for $i = 1, 2, 3$, and $\chi_5 (v) = \big(\tfrac{|v|^2}{2} - \tfrac{3}{2}\big) \sqrt{M(v)}$. The collection $\big( \chi_i (v) \big)_{1 \leq i \leq 5}$ consists of an orthonormal basis of $ \textrm{Ker} ( \mathcal{L} ) $ in $L^2_v$. One then can define the orthogonal projection $\Pi_{\mathcal{L}}$ on to $ \textrm{Ker} ( \mathcal{L} ) $ in $L^2_v$ as
\begin{equation}\label{Boltzmann-Proj}
\Pi_{\mathcal{L}} g (v) = \sum_{i=1}^4 \langle g , \chi_i \rangle_{L^2_v} \chi_i (v) + \tfrac{2}{3} \l g , \chi_5 \r_{L^2_v} \chi_5 (v)
\end{equation}
for $g : \R^3 \mapsto \R$ in $L^2_v$. We denote by $\Pi^\perp_{\mathcal{L}} = \mathbb{I} - \Pi_{\mathcal{L}}$. By direct verification, we know that for all $G = [ G^+, G^- ] : \R^3 \mapsto \R^2$ in $L^2_v$
\begin{equation}\label{Relation-VMN-Boltzmann-Fluid}
  \begin{aligned}
    \langle G^+ , \chi_1 (v) \rangle_{L^2_v} = & \langle G, \phi_1 (v) \rangle_{L^2_v} = \tfrac{1}{2} \langle G , (\mathsf{q}_1 + \mathsf{q}_2) \sqrt{M} \rangle_{L^2_v} \,, \\[2mm]
     \ \langle G^- , \chi_1 (v) \rangle_{L^2_v} = & \langle G, \phi_2 (v) \rangle_{L^2_v} = \tfrac{1}{2} \langle G, (\mathsf{q}_2 - \mathsf{q}_1) \sqrt{M} \rangle_{L^2_v} \,, \\[2mm]
    \langle G^\pm , \chi_{1+i} (v) \rangle_{L^2_v} = & \tfrac{1}{2} \langle G , \phi_{2+i} (v) \rangle_{L^2_v} \pm \tfrac{1}{2} \langle G, \textsf{q} \phi_{2+i} (v) \rangle_{L^2_v} \\[2mm]
    = & \tfrac{1}{2} \langle G , \mathsf{q}_2 v_i \sqrt{M} \rangle_{L^2_v} \pm \tfrac{1}{2} \langle G , \mathsf{q}_1 v_i \sqrt{M} \rangle_{L^2_v} \,, \ (i= 1, 2, 3) \\[2mm]
    \langle G^\pm , \chi_5 (v) \rangle_{L^2_v} = & \tfrac{1}{3} \langle G , \phi_6 (v) \rangle_{L^2_v} \pm \tfrac{1}{2} \langle G, \textsf{q} \phi_6 (v) \rangle_{L^2_v} \\[2mm]
    = & \tfrac{1}{2} \langle G , \mathsf{q}_2 ( \tfrac{|v|^2}{3} - 1 ) \sqrt{M} \rangle_{L^2_v} \pm \tfrac{1}{2} \langle G, \mathsf{q}_1 ( \tfrac{|v|^2}{3} - 1 ) \sqrt{M} \rangle_{L^2_v} \,.
  \end{aligned}
\end{equation}
We denote briefly by
\begin{equation}\label{rho-u-theta}
  \begin{aligned}
    & \rho^+ = \l G^+, \chi_1 \r_{L^2_v} = \l G , \phi_1 \r_{L^2_v} \,, \ \rho^- = \l G^-, \chi_1 \r_{L^2_v} = \l G , \phi_2 \r_{L^2_v} \,,\\
    & u^\pm_i = \l G^\pm , \chi_{i+1} \r_{L^2_v} \ \textrm{for } i = 1, 2, 3 , \ \theta^\pm = \l G^\pm , \tfrac{2}{3} \chi_5 \r_{L^2_v} \,,\\
    & u_i = \frac{1}{2} (u^+_i + u^-_i) = \tfrac{1}{2} \langle G , \phi_{2+i} (v) \rangle_{L^2_v} \,, \ \theta = \frac{1}{2} (\theta^+ + \theta^-) = \tfrac{1}{3} \langle G , \phi_6 (v) \rangle_{L^2_v}
  \end{aligned}
\end{equation}
for all $G(x,v) = [G^+(x,v), G^-(x,v)]$ belonging to $ L^2_{x,v}$. 

By assuming that the initial data $ ( F_\eps^{in} = [ F_\eps^{+, in} , F_\eps^{-, in} ] , E_\eps^{in}, B_\eps^{in} )$ has the same mass, momentum and energy as the steady state $( M \mathsf{q}_2 , 0 , \bar{B} )$, we can then rewrite the the conservation laws \eqref{Conservatn-Law-F} in terms of the perturbation $ ( G_\eps = [ G_\eps^+ , G_\eps^- ] , E_\eps , B_\eps ) $ as
\begin{equation}\label{Conservatn-Law-G}
\left\{
\begin{array}{l}
\int_{\T^3 \times \R^3} G_\eps (t,x,v) \cdot \phi_i (v) \d x \d v = 0 \quad \textrm{for } i = 1, 2\,, \\[4mm]
\int_{\T^3 \times \R^3} v G_\eps (t,x,v) \cdot \ \textsf{q}_2 \sqrt{M(v)} \d x \d v +  \int_{\T^3} E_\eps (t,x) \times B_\eps (t,x) \d x = 0 \,, \\[4mm]
\int_{\T^3 \times \R^3}  G_\eps (t,x,v) \cdot \phi_6 (v) \d v \d x + \eps \int_{\T^3} ( |E_\eps (t,x)|^2 + |B_\eps (t,x) - \bar{B} |^2 ) \d x = 0 
\end{array}
\right.
\end{equation}
for all $t \geq 0$. Then, without loss of generality, the initial conditions of \eqref{VMB-G} shall be imposed on
\begin{equation}\label{IC-VMB-G}
  \begin{aligned}
    G_\eps (0,x,v) = G_\eps^{in} (x,v) \in \R^2 \,, \quad E_\eps (0,x) = E_\eps^{in} (x) \in \R^3 \,, \quad B_\eps (0,x) = B_\eps^{in} (x) \in \R^3 \,,
  \end{aligned}
\end{equation}
which satisfy the conservation laws \eqref{Conservatn-Law-G} and the constraint $\eqref{VMB-G}_4$ initially.

\subsection{Notations.} We gather here the notations we will use throughout this paper. We first define the following shorthand notation,
\begin{equation*}
  \langle \cdot \rangle = \sqrt{1 + |\cdot|^2} \,.
\end{equation*}
We denote the symbol $(f)_{\T^3}$ by the average of the function $f(x)$ over $x \in \T^3$, namely,
\begin{equation*}
  (f)_{\T^3} \overset{\Delta}{=} \frac{1}{|\T^3|} \int_{\T^3} f(x) \d x \,.
\end{equation*}
For convention, we index the usual $L^p$ space by the name of the concerned variable. So we have, for $p \in [1, + \infty]$,
\begin{equation*}
  L^p_{[0,T]} = L^p ([0,T]) \,, \ L^p_x = L^p (\T^3) \,, \ L^p_v = L^p (\R^3) \,, \ L^p_{x,q} = L^p (\mathbb{T}^3 \times \mathbb{R}^3) \,.
\end{equation*}
For $p = 2$, we use the notations $ \langle \cdot \,, \cdot \rangle_{L^2_x} $, $ \langle \cdot \,, \cdot \rangle_{L^2_v} $ and $ \langle \cdot \,, \cdot \rangle_{L^2_{x,v}} $ to represent the inner product on the Hilbert spaces $L^2_x$, $L^2_v$ and $L^2_{x,v}$, respectively. 

Let $ w : \R^3_v \rightarrow \R^+$ be a strictly positive measurable function. For $p \in [1, + \infty )$, we denote the $w$-weighted $L^p$ spaces by
\begin{equation*}
  L^p_v(w) = L^p (\mathbb{R}^3; w \d v) \,, \ L^p_{x,v} (w) = L^p ( \mathbb{T}^3 \times \mathbb{R}^3; w \d v \d x )
\end{equation*}
with the norms
\begin{equation*}
  \begin{aligned}
    & \| f \|_{L^p_v (w)} = \left( \int_{\R^3} | f(v) |^p w(v) \d v \right)^\frac{1}{p} < + \infty \,, \\
    & \| g \|_{L^p_{x,q} (w)} = \left( \int_{\T^3 \times \R^3} | g(x,v) |^p w(v) \d v \d x \right)^\frac{1}{p} < + \infty \,.
  \end{aligned}
\end{equation*}

For any multi-indexes $\alpha = (\alpha_1, \alpha_2, \alpha_3)$ and $ m = ( m_1, m_2, m_3 )$ in $\mathbb{N}^3$ we denote the $(m,\alpha)^{th}$ partial derivative by
\begin{equation*}
  \partial^m_\alpha = \partial^m_x \partial^\alpha_v = \partial^{m_1}_{x_1} \partial^{m_2}_{x_2} \partial^{m_3}_{x_3} \partial^{\alpha_1}_{v_1} \partial^{\alpha_2}_{v_2} \partial^{\alpha_3}_{v_3} \,.
\end{equation*}
If each component of $m \in \mathbb{N}^3$ is not greater than that of $\tilde{m}$'s, we denote by $m \leq \tilde{m}$. The symbol $m < \tilde{m}$ means $m \leq \tilde{m}$ and $|m| < | \tilde{m} |$, where $|m|= m_1 + m_2 + m_3$.

We define the spaces $H^s_{x,v} $ and $H^s_x L^2_v$ by the norms
\begin{align*}
  \| f \|_{H^s_{x,q}} = \bigg(  \sum_{|m| + |\alpha| \leq s} \| \partial_\alpha^m f \|^2_{L^2_{x,v}} \bigg)^\frac{1}{2} \,, \  \| f \|_{H^s_x L^2_v} = \bigg(  \sum_{|m| \leq s} \| \partial^m_x f \|^2_{L^2_{x,v}} \bigg)^\frac{1}{2} \,.
\end{align*}
Furthermore, we give the spaces $H^s_{x,v} (w)$ and $H^s_x L^2_v (w)$ with norms
\begin{equation*}
  \| f \|_{H^s_{x,v} (w)} = \bigg( \sum_{|m| + |\alpha| \leq s} \|  \partial^m_\alpha f  \|_{L^2_{x,v} (w) } \bigg)^\frac{1}{2} \,, \ \| f \|_{H^s_x L^2_v (w) } = \bigg(  \sum_{|m| \leq s} \| \partial^m_x f \|^2_{L^2_{x,v} (w) } \bigg)^\frac{1}{2} \,.
\end{equation*}
We also introduce the spaces $ \widetilde{H}^s_{x,v} $ and $\widetilde{H}^s_{x,v} (\nu)$ endowed with the norms
\begin{equation*}
  \| f \|_{\widetilde{H}^s_{x,v}} = \Bigg( \sum_{\substack{|m| + |\alpha| \leq s \\ |\alpha| \geq 1}} \| \partial^m_\alpha f \|^2_{L^2_{x,v}} \Bigg)^\frac{1}{2} \,, \ \| f \|_{\widetilde{H}^s_{x,v} (\nu)} = \Bigg( \sum_{\substack{|m| + |\alpha| \leq s \\ |\alpha| \geq 1}} \| \partial^m_\alpha f \|^2_{L^2_{x,v} (\nu)} \Bigg)^\frac{1}{2} \,.
\end{equation*}

\subsection{Main results.} 
There are two theorems built in this paper. The first theorem is about the global existence of the two-species VMB system uniform with respect to the Knudsen number $0 < \eps \leq 1$. The second is on the two-fluid incompressible Navier-Stokes-Fourier-Maxwell limit with Ohm's law as $\eps \rightarrow 0$, taken from the solutions $(G_\eps , E_\eps , B_\eps)$ of the two-species VMB system \eqref{VMB-G} which are constructed in the first theorem.

To state our main theorems, we introduce the following energy functional and dissipation rate functional respectively
\begin{equation}\label{Energy-E-D}
  \begin{aligned}
    \mathbb{E}_s (G,E,B) = & \| G \|^2_{H^s_{x,v}} + \| E \|^2_{H^s_x} + \| B \|^2_{H^s_x} \,, \\
    \mathbb{D}_s (G,E,B) = & \tfrac{1}{\eps^2} \| \mathbb{P}^\perp G \|^2_{H^s_{x,v}(\nu)} + \| \nabla_x \mathbb{P} G \|^2_{H^{s-1}_x L^2_v} + \| E \|^2_{H^{s-1}_x} + \| \nabla_x B \|^2_{H^{s-2}_x} \,.
  \end{aligned}
\end{equation}

\begin{theorem}\label{Main-Thm-1}
	For the integer $s \geq 3$ and $0 < \eps \leq 1$, there are constants $\ell_0 > 0$, $c_0 > 0$ and $c_1 > 0$, independent of $\eps$, such that if $\mathbb{E}_s (G_\eps^{in} , E_\eps^{in}, B_\eps^{in}) \leq \ell_0$, then the Cauchy problem \eqref{VMB-G}-\eqref{IC-VMB-G} admits a global solution
	\begin{equation}
	  \begin{aligned}
	    & G_\eps (t,x,v) \in L^\infty_t ( \R^+ ; H^s_{x,v} ) \,, \mathbb{P}^\perp G_\eps (t,x,v) \in L^2_t (\R^+ ; H^s_{x,v}(\nu)) \,, \\
	    & E_\eps (t,x), B_\eps (t,x) \in L^\infty_t ( \R^+ ; H^s_x )
	  \end{aligned}
	\end{equation}
	with the global uniform energy estimate
	\begin{equation}\label{Uniform-Bnd}
	  \begin{aligned}
	    \sup_{t \geq 0} \mathbb{E}_s ( G_\eps , E_\eps , B_\eps ) (t) + c_0 \int_0^\infty \mathbb{D}_s (G_\eps , E_\eps , B_\eps) (t) \d t \leq c_1 \mathbb{E}_s (G_\eps^{in} , E_\eps^{in}, B_\eps^{in}) \,.
	  \end{aligned}
	\end{equation}
\end{theorem}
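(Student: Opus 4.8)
\emph{Proof strategy.} The proof rests on the nonlinear energy method together with a continuation argument, every estimate being uniform in $\eps\in(0,1]$. For each fixed $\eps>0$ a unique local-in-time classical solution of \eqref{VMB-G}--\eqref{IC-VMB-G} exists by a routine iteration scheme, so it suffices to propagate the a priori bound \eqref{Uniform-Bnd} on any interval $[0,T]$ on which a smooth solution with $\sup_{[0,T]}\mathbb{E}_s\leq\ell_0$ is given, and then fix $\ell_0$ small. The goal is a Lyapunov-type inequality
\begin{equation}\label{plan-lyap}
  \tfrac{\d}{\d t}\mathcal{E}_s(t)+c_0\,\mathbb{D}_s(t)\leq C\sqrt{\mathbb{E}_s(t)}\,\mathbb{D}_s(t)
\end{equation}
for an ``instant energy'' $\mathcal{E}_s$ equivalent to $\mathbb{E}_s$ (i.e. $\tfrac12\mathbb{E}_s\leq\mathcal{E}_s\leq 2\mathbb{E}_s$); once \eqref{plan-lyap} holds, taking $\ell_0\leq(c_0/2C)^2$ absorbs the right-hand side, and integrating in time gives \eqref{Uniform-Bnd}, which continues the solution to $[0,\infty)$.

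\emph{Step 1: the natural energy estimate.} For every $(m,\alpha)$ with $|m|+|\alpha|\leq s$ I apply $\partial^m_\alpha$ to $\eqref{VMB-G}_1$, pair with $\partial^m_\alpha G_\eps$ in $L^2_{x,v}$, and simultaneously pair $\partial^m_x$ of Amp\`ere's and Faraday's equations with $\partial^m_x E_\eps$ and $\partial^m_x B_\eps$. The transport term $\tfrac1\eps v\cdot\nabla_x$ is skew-adjoint and drops out; the collision term yields the coercive contribution $\tfrac{c}{\eps^2}\|\mathbb{P}^\perp\partial^m_\alpha G_\eps\|^2_{L^2_{x,v}(\nu)}$ from the spectral gap of $\mathscr{L}$; and the singular electric forcing $-\tfrac1\eps(E_\eps\cdot v)\sqrt{M}\,\textsf{q}_1$, integrated against $G_\eps$ and rewritten through $\eqref{VMB-G}_2$, reconstructs $\langle\partial^m_x E_\eps,\partial_t\partial^m_x E_\eps-\nabla_x\times\partial^m_x B_\eps\rangle_{L^2_x}$, whose curl part cancels exactly against the one coming from Faraday's equation. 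Thus every $O(\tfrac1\eps)$ contribution to $\tfrac{\d}{\d t}(\|G_\eps\|^2_{H^s_{x,v}}+\|E_\eps\|^2_{H^s_x}+\|B_\eps\|^2_{H^s_x})$ disappears. For $|\alpha|\geq1$ the streaming commutator $[\partial^\alpha_v,\tfrac1\eps v\cdot\nabla_x]$ is still $O(\tfrac1\eps)$-singular but carries one fewer $v$-derivative; splitting the target into $\mathbb{P}$ and $\mathbb{P}^\perp$, absorbing the $\mathbb{P}^\perp$ part into the $\tfrac1{\eps^2}$-coercivity by Young's inequality and inducting on $|\alpha|$ (the macroscopic remainder is Gaussian in $v$, hence dominated by $\|\nabla_x\mathbb{P}G_\eps\|_{H^{s-1}}$ and lower-order microscopic dissipation) disposes of it. The Lorentz term $\tfrac1\eps\textsf{q}(v\times B_\eps)\cdot\nabla_v G_\eps$ contributes nothing at leading order because $\nabla_v\cdot(v\times B_\eps)=0$; its commutators are genuinely cubic, carry a factor $B_\eps=O(\sqrt{\mathbb{E}_s})$, and after a $\langle v\rangle$-weighted Cauchy--Schwarz (distributing a $\tfrac1\eps$ only onto $\mathbb{P}^\perp G_\eps$-type factors) are $\lesssim\sqrt{\mathbb{E}_s}\,\mathbb{D}_s$, as are $\tfrac12\textsf{q}(E_\eps\cdot v)G_\eps$ and $\tfrac1\eps\Gamma(G_\eps,G_\eps)$ via the standard trilinear bound for $\Gamma$ with one factor in $L^\infty_x$ ($s\geq3$) and the micro--macro splitting of $\Gamma$. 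The outcome is
\begin{equation}\label{plan-natural}
  \tfrac{\d}{\d t}\mathbb{E}_s+\tfrac{c}{\eps^2}\|\mathbb{P}^\perp G_\eps\|^2_{H^s_{x,v}(\nu)}\leq C\sqrt{\mathbb{E}_s}\,\mathbb{D}_s\,.
\end{equation}

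\emph{Step 2: macroscopic and electromagnetic dissipation.} Since \eqref{plan-natural} controls only the microscopic component, I recover $\|\nabla_x\mathbb{P}G_\eps\|$, $\|E_\eps\|$ and $\|\nabla_x B_\eps\|$ via compensating interaction functionals. Projecting $\eqref{VMB-G}_1$ onto $\mathrm{Ker}(\mathscr{L})$ produces the local conservation/moment equations for $(\rho_\eps,u_\eps,\theta_\eps)$, the disparity mode $n_\eps$, and the current $j_\eps=\langle G_\eps,\textsf{q}_1 v\sqrt{M}\rangle_{L^2_v}$: an acoustic-type hyperbolic system with $\tfrac1\eps$ coefficients, forcings that are spatial divergences of moments of $\mathbb{P}^\perp G_\eps$, and the electromagnetic coupling $E_\eps$, $E_\eps\times B_\eps$. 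Testing these against suitable $\nabla_x$-combinations of the unknowns (a Kawashima/Guo-type functional) gives, modulo $\tfrac{\d}{\d t}$ of a bounded quantity, the full $\|\nabla_x\mathbb{P}G_\eps\|^2_{H^{s-1}_xL^2_v}$ together with $\|E_\eps\|^2_{H^{s-1}_x}$ read off from Amp\`ere, $\tfrac1\eps j_\eps=\nabla_x\times B_\eps-\partial_tE_\eps$ (paired, after one curl, with $E_\eps$, using $\div_xE_\eps=n_\eps$), at the price of a multiple of $\tfrac1{\eps^2}\|\mathbb{P}^\perp G_\eps\|^2_{H^s(\nu)}$ plus cubic errors $\lesssim\sqrt{\mathbb{E}_s}\,\mathbb{D}_s$; the missing zero-order norm $\|\mathbb{P}G_\eps\|_{L^2_x}$ is recovered from $\|\nabla_x\mathbb{P}G_\eps\|$ by Poincar\'e once the conservation laws \eqref{Conservatn-Law-G} are used to identify the spatial means of $(\rho_\eps,u_\eps,\theta_\eps,n_\eps)$ with quadratic electromagnetic quantities. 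Finally, differentiating Amp\`ere in time and eliminating $E_\eps$ through Faraday exhibits a damped-wave structure for $B_\eps-\bar B$, from which an interaction functional built on $\langle\nabla_x\times B_\eps,\nabla_x\times E_\eps\rangle$-type pairings (with $\partial_tE_\eps$ re-expressed via $\eqref{VMB-G}_2$) yields $\|\nabla_x B_\eps\|^2_{H^{s-2}_x}$ — necessarily with two derivatives lost, no dissipation for the conserved mean $\bar B$ — again up to $\tfrac1{\eps^2}\|\mathbb{P}^\perp G_\eps\|^2$ and cubic errors.

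\emph{Closing, and the main difficulty.} Setting $\mathcal{E}_s:=\mathbb{E}_s+\sum_i\kappa_i\mathcal{I}_i$ with the interaction functionals $\mathcal{I}_i$ of Step~2 and sufficiently small $\kappa_i$ keeps $\mathcal{E}_s$ equivalent to $\mathbb{E}_s$, and adding \eqref{plan-natural} to $\kappa_i$ times the dissipation estimates — ordered so that each $\tfrac1{\eps^2}\|\mathbb{P}^\perp G_\eps\|^2$ debt is absorbed by the coercive term of \eqref{plan-natural} — produces exactly \eqref{plan-lyap}, whence smallness of $\ell_0$ concludes as above. The technical heart is the interaction of the hyperbolic Maxwell block with the singular scaling: extracting dissipation for $B_\eps$ (only for $\nabla_xB_\eps$, with a two-derivative loss, through the hidden damped-wave structure) and handling the $\tfrac1\eps$-singular Lorentz commutators uniformly in $\eps$. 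Both ultimately hinge on the algebraic identities $v\cdot(v\times B_\eps)=0$ and $\nabla_v\cdot(v\times B_\eps)=0$, the smallness of $(E_\eps,B_\eps)$, and the micro--macro splitting, but organizing the bookkeeping of derivative counts and powers of $\eps$ so that all the estimates close simultaneously is the delicate part of the argument.
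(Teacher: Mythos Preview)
Your overall strategy is correct and matches the paper closely: local solution plus continuation via a Lyapunov inequality for an instant energy equivalent to $\mathbb{E}_s$, with kinetic dissipation from the coercivity of $\mathscr{L}$, macroscopic dissipation from a Guo-type seventeen-moment estimate, spatial means handled via the conservation laws \eqref{Conservatn-Law-G} and Poincar\'e, mixed $v$-derivatives by induction on $|\alpha|$, and smallness to close. The organization differs slightly (the paper separates pure spatial from mixed $(x,v)$ estimates, performing the latter only on $\mathbb{P}^\perp G_\eps$; you interleave them), but that is cosmetic.

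There is, however, one genuine gap in Step~2. You write that ``differentiating Amp\`ere in time and eliminating $E_\eps$ through Faraday exhibits a damped-wave structure for $B_\eps$''. These two steps alone give only the \emph{undamped} wave equation $\partial_{tt}B_\eps-\Delta_x B_\eps=\nabla_x\times j_\eps$, from which no dissipation for $\|\nabla_xB_\eps\|^2_{H^{s-2}_x}$ can be extracted. Likewise, $\|E_\eps\|^2_{H^{s-1}_x}$ cannot be ``read off from Amp\`ere'' by pairing with $E_\eps$: that produces only $-\langle j_\eps,E_\eps\rangle$, which is not coercive in $E_\eps$. As written, the Maxwell block is purely hyperbolic and the argument does not close.

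The missing ingredient is the \emph{kinetic Ohm's law}. Projecting the $G_\eps$-equation (dotted with $\mathsf{q}_1$) onto $\widetilde{\Phi}(v)$, the $(\mathcal{L}+\mathfrak{L})$-preimage of $v\sqrt{M}$ in $\mathrm{Ker}^\perp(\mathcal{L}+\mathfrak{L})$, produces
\[
 \tfrac1\eps\big\langle G_\eps\cdot\mathsf{q}_1,\,v\sqrt{M}\big\rangle_{L^2_v}=\sigma E_\eps+\mathcal{K}(G_\eps,E_\eps,B_\eps),
\]
with $\sigma>0$ the electrical conductivity and $\mathcal{K}$ a remainder controllable by $\mathbb{D}_s$ modulo a total time derivative. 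Substituting into Amp\`ere yields the \emph{damped} first-order system $\partial_t E_\eps-\nabla_x\times B_\eps+\sigma E_\eps=-\mathcal{K}$, whence $\sigma\|E_\eps\|^2_{H^{s-1}_x}$ follows directly; substituting into the wave equation yields $\partial_{tt}B_\eps-\Delta_xB_\eps+\sigma\,\partial_tB_\eps=\nabla_x\times\mathcal{K}$, and pairing with $\partial_t\partial^m_xB_\eps$ and $\partial^m_xB_\eps$ for $|m|\leq s-2$ gives $\|\partial_tB_\eps\|^2_{H^{s-2}_x}+\|\nabla_xB_\eps\|^2_{H^{s-2}_x}$. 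This is precisely the content of Proposition~\ref{Prop-Decay-E-B} in the paper, and it is the step your outline omits.
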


The next theorem is about the limit to the two fluid incompressible Navier-Stokes-Fourier-Maxwell system with Ohm's law:
\begin{equation}\label{INSFM-Ohm}
  \left\{
    \begin{array}{l}
      \partial_t u + u \cdot \nabla_x u - \mu \Delta_x u + \nabla_x p = \tfrac{1}{2} ( n E + j \times B ) \,, \qquad \div_x \, u = 0 \,, \\ [2mm]
      \partial_t \theta + u \cdot \nabla_x \theta - \kappa \Delta_x \theta = 0 \,, \qquad\qquad\qquad\qquad\qquad\quad\ \, \rho + \theta = 0 \,, \\ [2mm]
      \partial_t E - \nabla_x \times B = - j \,, \qquad\qquad\qquad\qquad\qquad\qquad\ \ \ \, \div_x \, E = n \,, \\ [2mm]
      \partial_t B + \nabla_x \times E = 0 \,, \qquad\qquad\qquad\qquad\qquad\qquad\qquad \div_x \, B = 0 \,, \\ [2mm]
      \qquad \qquad j - nu = \sigma \big( - \tfrac{1}{2} \nabla_x n + E + u \times B \big) \,, \qquad\quad\,\ w = \tfrac{3}{2} n \theta \,,
    \end{array}
  \right.
\end{equation}
where the viscosity $\mu$, the heat conductivity $\kappa$ and the electrical conductivity $\sigma$ are given by
\begin{equation}\label{VHE-Coefficients}
  \begin{aligned}
    \mu = \tfrac{1}{10} \int_{\R^3} A : \widehat{A} M \d v \,, \quad \kappa = \tfrac{2}{15} \int_{\R^3} B \cdot \widehat{B} M \d v \quad \textrm{and} \quad \sigma = \tfrac{2}{3} \int_{\R^3} \Phi \cdot \widetilde{\Phi} M \d v \,.
  \end{aligned}
\end{equation}
For the derivation of \eqref{VHE-Coefficients}, i.e. the relation of  $\mu$, $\kappa$, $\sigma$ with $A$, $\widehat{A}$, $B$, $\widehat{B}$, $\Phi$ and $\widetilde{\Phi}$, see \cite{Arsenio-SRM-2016}.

\begin{theorem}\label{Main-Thm-2}
	Let $0 < \eps \leq 1$, $s \geq 3$ and $\ell_0 > 0$ be as in Theorem \ref{Main-Thm-1}. Assume that the initial data $( G_\eps^{in} , E_\eps^{in}, B_\eps^{in} )$ in \eqref{IC-VMB-G} satisfy
	\begin{enumerate}
		\item $G_\eps^{in} \in H^s_{x,v}$, $E_\eps^{in}$, $B_\eps^{in} \in H^s_x$;
		\item $\mathbb{E}_s (G_\eps^{in}, E_\eps^{in}, B_\eps^{in}) \leq \ell_0$;
		\item there exist scalar functions $\rho^{in} (x)$, $ \theta^{in} (x) $,  $n^{in}(x) \in H^s_x$ and vector-valued functions $u^{in}(x) $,  $E^{in} (x)$, $B^{in} (x) \in H^s_x$ such that 
		\begin{equation}
		  \begin{array}{l}
			G_\eps^{in} \rightarrow G^{in} \quad \textrm{strongly in} \ H^s_{x,v} \,, \\
			E_\eps^{in} \rightarrow E^{in} \quad \textrm{strongly in} \ H^s_x \,, \\
			B_\eps^{in} \rightarrow B^{in} \quad \textrm{strongly in} \ H^s_x
		  \end{array}
		\end{equation}
		as $\eps \rightarrow 0$, where $G^{in} (x,v)$ is of the form
		\begin{equation}
		  \begin{aligned}
		    G^{in} (x,v) = & ( \rho^{in} (x) + \tfrac{1}{2} n^{in} (x) ) \tfrac{\mathsf{q}_1 + \mathsf{q}_2}{2} \sqrt{M} + ( \rho^{in} (x) - \tfrac{1}{2} n^{in} (x) ) \tfrac{\mathsf{q}_2 - \mathsf{q}_1}{2} \sqrt{M} \\
		    & + u^{in} \cdot v \mathsf{q}_2 \sqrt{M} + \theta^{in} ( \tfrac{|v|^2}{2} - \tfrac{3}{2} ) \mathsf{q}_2 \sqrt{M} \,.
		  \end{aligned}
		\end{equation}
	\end{enumerate}
  Let $(G_\eps, E_\eps, B_\eps)$ be the family of solutions to the perturbed two-species Vlasov-Maxwell-Boltzmann \eqref{VMB-G} with the initial conditions \eqref{IC-VMB-G} constructed in Theorem \ref{Main-Thm-1}. Then, as $\eps \rightarrow 0$,
  \begin{equation}
    \begin{aligned}
      G_\eps \rightarrow ( \rho + \tfrac{1}{2} n ) \tfrac{\mathsf{q}_1 + \mathsf{q}_2}{2} \sqrt{M} + ( \rho - \tfrac{1}{2} n ) \tfrac{\mathsf{q}_2 - \mathsf{q}_1}{2} \sqrt{M} + u \cdot v \mathsf{q}_2 \sqrt{M} + \theta ( \tfrac{|v|^2}{2} - \tfrac{3}{2} ) \sqrt{M}
    \end{aligned}
  \end{equation}
  weakly-$\star$ in $t \geq 0$, strongly in $H^{s-1}_{x,v}$ and weakly in $H^s_{x,v}$, and
  \begin{equation}
    \begin{aligned}
      E_\eps \rightarrow E \quad \textrm{and } \quad B_\eps \rightarrow B
    \end{aligned}
  \end{equation}
  strongly in $C( \R^+; H^{s-1}_x )$, weakly-$\star$ in $t \geq 0$ and weakly in $H^s_x$. Here 
  $$( u, \theta , n, E, B ) \in C(\R^+; H^{s-1}_x) \cap L^\infty (\R^+ ; H^s_x)$$ 
  is the solution to the incompressible Navier-Stokes-Fourier-Maxwell equations \eqref{INSFM-Ohm} with Ohm's law, which has the initial data
  \begin{equation}
    \begin{aligned}
      u|_{t=0} = \mathcal{P} u^{in} (x) \,, \ \theta |_{t=0} = \tfrac{3}{5} \theta^{in} (x) - \tfrac{2}{5} \rho^{in} (x) \,, \ E|_{t=0} = E^{in} (x) \,, \ B|_{t=0} = B^{in} (x) \,,
    \end{aligned}
  \end{equation}
  where $\mathcal{P}$ is the Leray projection. Moreover, the convergence of the moments holds: 
  \begin{equation}
    \begin{aligned}
      & \mathcal{P} \langle G_\eps , \tfrac{1}{2} \mathsf{q}_2 v \sqrt{M} \rangle_{L^2_v} \rightarrow u \,, \\
      & \langle G_\eps , \tfrac{1}{2} \mathsf{q}_2 ( \tfrac{|v|^2}{5} - 1 ) \sqrt{M} \rangle_{L^2_v} \rightarrow \theta \,,
    \end{aligned}
  \end{equation}
  strongly in $C(\R^+ ; H^{s-1}_x)$, weakly-$\star$ in $t \geq 0$ and weakly in $H^s_x$ as $\eps \rightarrow 0$.
\end{theorem}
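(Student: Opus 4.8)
\emph{Proof strategy.} The plan is to argue by compactness, in the spirit of the type~(1) results: the uniform bounds of Theorem~\ref{Main-Thm-1} provide weakly convergent subsequences, and the dissipation rate in \eqref{Uniform-Bnd} is precisely what annihilates the singular $\tfrac1\eps$- and $\tfrac1{\eps^2}$-terms upon passing to the limit in \eqref{VMB-G}. Concretely, \eqref{Uniform-Bnd} yields, along a subsequence, $G_\eps\rightharpoonup G$ weakly-$\star$ in $L^\infty_t H^s_{x,v}$ and $E_\eps\rightharpoonup E$, $B_\eps\rightharpoonup B$ weakly-$\star$ in $L^\infty_t H^s_x$; since $\tfrac1\eps\P^\perp G_\eps$ is bounded in $L^2_t H^s_{x,v}(\nu)$ we get $\P^\perp G_\eps\to 0$ strongly, so $G=\P G$ is, by \eqref{VMB-Proj-Smp}, the Maxwellian combination of its moments $(\rho,n,u,\theta)$, with $\rho^\pm_\eps\to\rho\pm\tfrac12 n$, $u_\eps\to u$, $\theta_\eps\to\theta$. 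Testing the evolution equations of \eqref{VMB-G} against $\eps$ times a smooth test function and sending $\eps\to0$ then produces the constraints of \eqref{INSFM-Ohm}: $\div_x u=0$ and $\rho+\theta=0$ from the local conservation of mass and momentum (using also the global constraints $\eqref{Conservatn-Law-G}$), while $\div_x E=n$ and $\div_x B=0$ pass directly in $\eqref{VMB-G}_4$.

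For the \emph{strong} convergence I would take the $\tfrac12\mathsf{q}_2 v\sqrt M$-, $\tfrac12\mathsf{q}_2(\tfrac{|v|^2}{5}-1)\sqrt M$- and $\mathsf{q}_1\sqrt M$-moments of $\eqref{VMB-G}_1$ to obtain approximate balance laws for $\mathcal P u_\eps$ ($\mathcal P$ the Leray projection), $\vartheta_\eps:=\langle G_\eps,\tfrac12\mathsf{q}_2(\tfrac{|v|^2}{5}-1)\sqrt M\rangle_{L^2_v}$ and $n_\eps$, in which the collision and quadratic contributions drop out ($\mathscr L$ vanishes on its kernel and $\Gamma$ conserves mass, momentum, energy), so the right-hand sides are bounded uniformly in $L^2_t H^{s-1}_x$; together with the uniform $L^\infty_t H^s$ bound, the Aubin--Lions--Simon lemma gives strong convergence of $\mathcal P u_\eps,\vartheta_\eps,n_\eps,E_\eps,B_\eps$ in $C(\R^+;H^{s-1}_x)$, hence of $E_\eps,B_\eps$ and, with $\P^\perp G_\eps\to0$, of $G_\eps$ in $H^{s-1}_{x,v}$ as stated. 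The \emph{main obstacle} is the purely acoustic part of $\P G_\eps$, namely $\rho_\eps+\theta_\eps$ together with the gradient part of $u_\eps$: since the data is not well prepared, these oscillate in time on the scale $\eps$, and I would have to show that they are damped at an $\eps$-independent rate --- by the transport--collision mechanism together with the coupling to the Maxwell system --- so that they neither survive in the strong limit nor pollute the nonlinear terms. I expect this acoustic analysis to be the heart of the proof.

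It then remains to pass to the limit in the equations. The Maxwell evolution equations $\eqref{VMB-G}_2$--$\eqref{VMB-G}_3$ converge once the current $j_\eps:=\tfrac1\eps\langle G_\eps,\mathsf{q}_1 v\sqrt M\rangle_{L^2_v}=\tfrac1\eps\langle \P^\perp G_\eps,\mathsf{q}_1 v\sqrt M\rangle_{L^2_v}$ is identified: inverting $\mathscr L$ on its range in the $\P^\perp$-component of $\eqref{VMB-G}_1$ expresses $\tfrac1\eps\P^\perp G_\eps$, to leading order, via the kinetic functions $\widehat{A}$, $\widehat{B}$, $\widetilde{\Phi}$ of \cite{Arsenio-SRM-2016}, and feeding in the strong convergence of the moments yields Ohm's law $j=nu+\sigma\big(-\tfrac12\nabla_x n+E+u\times B\big)$ with $\sigma$ as in \eqref{VHE-Coefficients}. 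For the momentum equation, applying $\mathcal P$ to the $\tfrac12\mathsf{q}_2 v\sqrt M$-moment of $\eqref{VMB-G}_1$ removes the pressure-like flux $\tfrac1\eps\nabla_x(\rho_\eps+\theta_\eps)$; the traceless part of the flux $\tfrac1\eps\langle\P^\perp G_\eps,\mathsf{q}_2(v\otimes v-\tfrac{|v|^2}{3}\mathbb{I})\sqrt M\rangle_{L^2_v}$ produces $-\mu\Delta_x u$ through $\widehat{A}$; the quadratic term produces the convection $u\cdot\nabla_x u$ (here the strong convergence of $u_\eps$ is essential and the acoustic cross-terms must average out); and the field terms, after integration by parts in $v$, reassemble into the Lorentz force $\tfrac12(nE+j\times B)$. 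The temperature equation is obtained the same way from the $\tfrac12\mathsf{q}_2(\tfrac{|v|^2}{5}-1)\sqrt M$-moment, $\widehat{B}$ producing $-\kappa\Delta_x\theta$ and the convection $u\cdot\nabla_x\theta$; the Boussinesq relation $\rho+\theta=0$ is already known. This reproduces \eqref{INSFM-Ohm} with the coefficients \eqref{VHE-Coefficients} exactly as in \cite{Arsenio-SRM-2016}.

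Finally, passing to the limit in \eqref{IC-VMB-G} with the strong $H^s$ convergence of $(G^{in}_\eps,E^{in}_\eps,B^{in}_\eps)$, and applying $\mathcal P$ and the moment $\tfrac12\mathsf{q}_2(\tfrac{|v|^2}{5}-1)\sqrt M$ together with $\rho+\theta=0$, identifies the initial data as $u|_{t=0}=\mathcal P u^{in}$, $\theta|_{t=0}=\tfrac35\theta^{in}-\tfrac25\rho^{in}$, $E|_{t=0}=E^{in}$, $B|_{t=0}=B^{in}$. Since \eqref{INSFM-Ohm} with small data has a unique solution in $C(\R^+;H^{s-1}_x)\cap L^\infty(\R^+;H^s_x)$ (by the energy method of \cite{JL-CMS-2018}), the limit is independent of the extracted subsequence, so the whole family converges, and the stated convergence of the moments $\mathcal P\langle G_\eps,\tfrac12\mathsf{q}_2 v\sqrt M\rangle_{L^2_v}\to u$ and $\langle G_\eps,\tfrac12\mathsf{q}_2(\tfrac{|v|^2}{5}-1)\sqrt M\rangle_{L^2_v}\to\theta$ follows from that of $G_\eps$.
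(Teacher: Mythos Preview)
Your strategy is correct and is essentially the one carried out in Section~\ref{Sec:Limits}: weak-$\star$ compactness from \eqref{Uniform-Bnd}, $\mathbb{P}^\perp G_\eps\to 0$ strongly from the dissipation, local moment equations (Lemma~\ref{Lmm-Local-Consvtn-Law}), Aubin--Lions--Simon for the strong convergence of $\mathcal P u_\eps$, $\tfrac35\theta_\eps-\tfrac25\rho_\eps$, $n_\eps$, $E_\eps$, $B_\eps$, and passage to the limit in the moment equations using the kinetic representations with $\widehat A$, $\widehat B$, $\widetilde\Phi$.

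The one place where you depart from the paper is your treatment of the acoustic modes $(\rho_\eps+\theta_\eps,\mathcal P^\perp u_\eps)$. You flag this as the ``heart of the proof'' and expect to show damping at an $\eps$-independent rate. That is not what happens: these modes are \emph{not} damped but oscillate on the time scale $\eps$ and converge only weakly to $0$. The paper records this in \eqref{Convg-weak-to-0} and then disposes of the quadratic cross-terms $\mathcal P\,\div_x(\mathcal P^\perp u_\eps\otimes\mathcal P^\perp u_\eps)$, etc., in \eqref{Limit-R-eps-u-1} in one line. The robust justification for the self-interaction term $\mathcal P^\perp u_\eps\otimes\mathcal P^\perp u_\eps$ is the Lions--Masmoudi observation that the acoustic wave equation forces $\div_x(\mathcal P^\perp u_\eps\otimes\mathcal P^\perp u_\eps)+(\rho_\eps+\theta_\eps)\nabla_x(\rho_\eps+\theta_\eps)$ to be a gradient up to $o(1)$, so $\mathcal P$ annihilates it; this is a short algebraic identity, not an energy/damping estimate, and it is not the central difficulty here. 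Your appeal to uniqueness of the limit system via \cite{JL-CMS-2018}, to pass from subsequences to the full family, is a nice addition that the paper does not make explicit.
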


The organization of this paper is as follows: in the next section, we give some basic properties of the linear collision operator and bilinear symmetric operator. In Section \ref{Sec:Unif-Spatial-Bnd}, the spatial derivative estimates are derived, which are not closed. Then we derive the mixed derivative estimate and obtain a closed uniform energy inequality in Section \ref{Sec:Unif-Mix-Bnd-Global}. Moreover, we also prove the global well-posedness under the small initial data for all $\eps \in (0,1]$. In Section \ref{Sec:Limits}, based on the uniform global in time energy bound, we take the limit to derive the incompressible NSFM system with Ohm's law. Finally, we construct the local classical solutions for all $0 < \eps \leq 1$ under the small size of the initial data.

\section{Preliminaries}\label{Sec:Preliminaries}

In this section we focus on some basic properties of the two species linearized collision operator $\mathscr{L}$  and the bilinear symmetric operator $\Gamma$, which will be frequently used in the estimating of uniform energy bounds of the perturbed VMB system \eqref{VMB-G}.

\begin{lemma}[Collisional frequency]\label{Lmm-CF-nu}
	The collision frequency $\nu(v)$ defined in \eqref{Collision-Frecency} has the following properties:
	\begin{enumerate}
		\item $\nu(v)$ is smooth and there are positive constants $C_1$ and $C_2$ such that
		\begin{equation}\label{nu-Propty-1}
		  C_1 ( 1 + |v| ) \leq  \nu(v) \leq C_2 ( 1 + |v| )
		\end{equation}
		for every $v \in \R^3$.
		
		\item For any $\alpha \in \mathbb{N}^3$, $\alpha \neq 0$,
		\begin{equation}\label{nu-Propty-2}
		  \sup_{v \in \R^3} | \partial^\alpha_v \nu(v) | < + \infty \,.
		\end{equation}
		
		\item If the velocities $v$, $v_*$, $v'$, $v_*' \in \R^3$ satisfy $ v + v_* = v' + v_*' $ and $ |v|^2 + |v_*|^2 = |v'|^2 + |v_*'|^2 $, then
		\begin{equation}\label{nu-Propty-3}
		  \nu(v) + \nu(v_*) \leq C_3 ( \nu(v') + \nu(v_*') )
		\end{equation}
		holds for some positive constant $C_3$.
	\end{enumerate}
\end{lemma}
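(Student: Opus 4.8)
The plan is to exploit the explicit integral representation $\nu(v) = \int_{\R^3} |v - v_*| M(v_*)\,\d v_*$ with $M$ the standard Gaussian, and treat the three assertions in turn. For (1), I would first observe that $\nu(v) = \bigl(|\cdot| * M\bigr)(v)$ is a convolution of the locally integrable, smooth-away-from-origin function $|v|$ with a Schwartz function, hence $\nu \in C^\infty(\R^3)$; smoothness at the origin of $|v|$ causes no trouble because it is integrable there and the Gaussian carries all the regularity. For the two-sided bound, I would use the elementary triangle-type inequalities $\bigl| |v| - |v_*| \bigr| \le |v - v_*| \le |v| + |v_*|$. Integrating against $M$ and using $\int M = 1$, $\int |v_*| M(v_*)\,\d v_* = c_* < \infty$ gives $\nu(v) \le |v| + c_*$, which yields the upper bound $\nu(v) \le C_2(1+|v|)$. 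For the lower bound, splitting into the regimes $|v| \le R$ and $|v| > R$: on the bounded region $\nu$ is continuous and strictly positive (the integrand is positive), so it is bounded below by a positive constant; for $|v| > R$ large, $\int_{|v_*| \le |v|/2} |v - v_*| M(v_*)\,\d v_* \ge \tfrac{|v|}{2}\int_{|v_*|\le |v|/2} M \ge \tfrac{|v|}{4}$ once $R$ is large enough, giving $\nu(v) \ge c|v|$. Combining the two regimes produces $\nu(v) \ge C_1(1+|v|)$.

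For (2), I would differentiate under the integral sign. Writing $\partial_v^\alpha \nu(v) = \int_{\R^3} |v-v_*|\, \partial_{v}^\alpha\!\bigl[M(v-v_* + v_*)\bigr]$ — more cleanly, by moving the derivatives onto the Gaussian via the substitution $w = v - v_*$, we get $\nu(v) = \int_{\R^3} |w|\, M(v - w)\,\d w$, so $\partial_v^\alpha \nu(v) = \int_{\R^3} |w|\,(\partial^\alpha M)(v - w)\,\d w = \bigl(|\cdot| * \partial^\alpha M\bigr)(v)$. Now $\partial^\alpha M(z) = P_\alpha(z) M(z)$ for a polynomial $P_\alpha$, and the function $(\partial^\alpha M)$ has \emph{zero integral} over $\R^3$ when $|\alpha| \ge 1$ (being a total derivative of an integrable function). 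Hence for $\alpha \neq 0$,
\[
\partial_v^\alpha \nu(v) = \int_{\R^3} \bigl( |w| - |v| \bigr)(\partial^\alpha M)(v-w)\,\d w,
\]
and since $\bigl||w| - |v|\bigr| \le |v - w|$, we get $|\partial_v^\alpha \nu(v)| \le \int_{\R^3} |v - w|\,|(\partial^\alpha M)(v-w)|\,\d w = \int_{\R^3} |z|\,|P_\alpha(z)| M(z)\,\d z$, a finite constant independent of $v$. This is the step that requires the small trick (subtracting the mean) rather than a brute-force bound, which would only give linear growth; I expect this to be the main point requiring care.

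For (3), I would use the collisional conservation laws $v + v_* = v' + v_*'$ and $|v|^2 + |v_*|^2 = |v'|^2 + |v_*'|^2$, which imply $|v|^2 + |v_*|^2 = |v'|^2 + |v_*'|^2$ and $|v + v_*| = |v' + v_*'|$. From these, $|v| + |v_*| \le \sqrt{2}\,(|v|^2 + |v_*|^2)^{1/2} = \sqrt{2}\,(|v'|^2 + |v_*'|^2)^{1/2} \le \sqrt{2}\,(|v'| + |v_*'|)$. Combining with the two-sided bounds from part (1): $\nu(v) + \nu(v_*) \le 2C_2 + C_2(|v| + |v_*|) \le 2C_2 + \sqrt{2}\,C_2(|v'| + |v_*'|) \le C\bigl(1 + |v'| + |v_*'|\bigr) \le \tfrac{C}{C_1}\bigl(\nu(v') + \nu(v_*')\bigr)$, which gives the claim with $C_3 = C/C_1$. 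The only mild subtlety is handling the constant terms, absorbed by noting $1 \le \tfrac{1}{C_1}\nu(v')$. None of part (3) is hard; the genuine content of the lemma is the uniform derivative bound in part (2).
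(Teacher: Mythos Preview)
Your proposal is correct in all three parts. Parts (1) and (3) match the paper's approach closely: for (1) the paper uses the equivalent bound $|v-v_*|^2 \le (1+|v_*|^2)(1+|v|^2)$ for the upper estimate and a continuity/limit argument for $\nu(v)/(1+|v|)$ in place of your $|v|\le R$ / $|v|>R$ split for the lower estimate; for (3) the paper does exactly what you do.

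Part (2) is where you take a genuinely different route. The paper differentiates the kernel once, writing $\nabla_v\nu(v)=\int_{\R^3}\tfrac{u}{|u|}M(v-u)\,\d u$, which is immediately bounded since $|u|/|u|=1$; all further derivatives in $v$ then fall on $M(v-u)$, and the resulting integral $\int |u|^{-0}|\partial^{\alpha'}M(v-u)|\,\d u$ is bounded uniformly in $v$ after a straightforward estimate on $\partial^{\alpha'}M$. You instead leave $|w|$ untouched, push \emph{all} derivatives onto the Gaussian via $\nu(v)=\int |w|M(v-w)\,\d w$, and then exploit the vanishing mean $\int\partial^\alpha M=0$ for $|\alpha|\ge 1$ to subtract $|v|$ and invoke $\big||w|-|v|\big|\le|v-w|$. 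Your argument is shorter and avoids ever touching the singularity of $\nabla|w|$; the paper's version has the minor advantage of generalising verbatim to soft-potential kernels $|v-v_*|^\gamma$ with $\gamma\in[0,1)$ (where $|u|^{\gamma-1}$ is still locally integrable), whereas your subtraction trick would need a slight adaptation in that case since $\big||w|^\gamma-|v|^\gamma\big|\le|v-w|^\gamma$ still holds for $\gamma\le 1$.
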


\begin{proof}[Proof of Lemma \ref{Lmm-CF-nu}]
	We indeed can prove more general conclusions corresponding to Lemma \ref{Lmm-CF-nu}. More precisely, we consider $\nu (v) = \int_{\R^3} |v - v_*|^\gamma M(v_*) \d v_*$ for any $\gamma \in [0,1]$, which is actually the collision frequency with respect to the hard potential ($\gamma \in ( 0, 1 ]$) and Maxwellian ($\gamma = 0$) collision kernel. In this case, the inequality \eqref{nu-Propty-1} will be
	\begin{equation}\label{nu-Propty-1*}
	  C_1 ( 1 + |v| )^\gamma \leq  \nu(v) \leq C_2 ( 1 + |v| )^\gamma \,,
	\end{equation}
	and the last two conclusions in Lemma \ref{Lmm-CF-nu} are still valid.
	
	We first prove the inequality \eqref{nu-Propty-1*}. From the elementary bounds
	\begin{equation}\label{Element-Inq}
	|v - v_*|^2 \leq ( 1 + |v_*|^2 ) ( 1 + |v|^2) \leq ( 1 + |v_*| )^2 (1 + |v|)^2
	\end{equation}
	for every $v \in \mathbb{R}^3$ and the fact $ M(v_*) = \tfrac{1}{\sqrt{2 \pi}^3} e^{- \tfrac{|v_*|^2}{2}} > 0 $, we directly obtain the upper bound
	\begin{equation*}
	\nu(v) = \int_{\mathbb{R}^3} |v - v_*|^\gamma M(v_*) \d v_* \leq \int_{\mathbb{R}^3} ( 1 + |v_*| )^\gamma M(v_*) \d v_* (1 + |v|)^\gamma \,,
	\end{equation*}
	which yields the upper bound of \eqref{nu-Propty-1*}.
	
	Next, the bound \eqref{Element-Inq} and the fact $M(v_*) > 0$ imply that
	\begin{equation*}
	(1+|v|)^{- \gamma} |v-v_*|^\gamma M(v_*) \leq (1 + |v_*|)^\gamma M(v_*) \,.
	\end{equation*}
	Then the Lebesgue Dominated Convergence Theorem implies that the positive function
	\begin{equation}\label{Functn-v}
	v \mapsto \frac{\nu(v)}{(1+|v|)^\gamma} = \frac{1}{(1+|v|)^\gamma} \int_{\mathbb{R}^3} |v-v_*|^\gamma M(v_*) \d v_*
	\end{equation}
	is continuous over $\mathbb{R}^3$ and satisfies
	\begin{equation*}
	\begin{aligned}
	\lim_{|v| \rightarrow \infty} \frac{\nu(v)}{(1+|v|)^\gamma} = & \lim_{|v| \rightarrow \infty} \frac{1}{(1+|v|)^\gamma} \int_{\mathbb{R}^3} |v-v_*|^\gamma M(v_*) \d v_* \\
	= & \int_{\mathbb{R}^3} M(v_*) \d v_* = 1 > 0 \,.
	\end{aligned}
	\end{equation*}
	The function \eqref{Functn-v} is thereby bounded away from zero, thereby the lower bound of \eqref{nu-Propty-1*} follows.
	
	We next derive the bound \eqref{nu-Propty-2}. One notices that
	\begin{equation*}
	\begin{aligned}
	\nabla_v \nu(v) = & \gamma \int_{\R^3} \tfrac{v-v_*}{|v-v_*|^{2-\gamma}} M(v_*) \d v_* \\
	=& \gamma \int_{\R^3} \tfrac{u}{|u|^{2-\gamma}} M(v-u) \d u \,,
	\end{aligned}
	\end{equation*}
	where the variables change $v_* \rightarrow u = v - v_*$ is utilized. Then for any $\beta' = [\beta_1', \beta_2', \beta_3']$,
	\begin{equation}\label{nu-1}
	\begin{aligned}
	|\partial_{\beta'} \nabla_v \nu(v)| = & \gamma \big| \int_{\R^3} \tfrac{u}{|u|^{2 - \gamma}} \partial_{\beta'} M(v-u) \d u \big| \\
	\leq & \gamma \int_{\R^3} \tfrac{1}{|u|^{1-\gamma}} |\partial_{\beta'} M(v-u)| \d u \,.
	\end{aligned}
	\end{equation}
	By direct calculations, we know that there is a constant $C_{\beta'} > 0$ such that
	\begin{equation*}
	|\partial_{\beta'} M(v - u)| \leq C_{\beta'} e^{- |v - u|^2 / 2} \,,
	\end{equation*}
	which implies that by \eqref{nu-1}
	\begin{align*}
	|\partial_{\beta'} \nabla_v \nu(v)| \leq &  C_{\beta'} \gamma \int_{\R^3} \tfrac{1}{|u|^{1-\gamma}} e^{- \tfrac{|v-u|^2}{4}} \d u \\
	= & \gamma C_{\beta'} \int_{\R^3} \tfrac{1}{|v - v_*|^{1-\gamma}} e^{- \tfrac{|v_*|^2}{4}} \d v_* \\
	= & \gamma C_{\beta'} \Big\{ \int_{|v-v_*| \geq 1} + \int_{|v-v_*| < 1} \Big\} \tfrac{1}{|v - v_*|^{1-\gamma}} e^{- \tfrac{|v_*|^2}{4}} \d v_* \\
	\leq & \gamma C_{\beta'} \int_{|v-v_*| \geq 1} e^{- \tfrac{|v_*|^2}{4}} \d v_* + \gamma C_{\beta'} \int_0^1 \int_{\mathbb{S}^2} \tfrac{1}{r^{1-\gamma}} r^2 \d \omega \d r \\
	\leq & \gamma C_{\beta'} \int_{\R^3} e^{- \tfrac{|v_*|^2}{4}} \d v_* + \tfrac{4 \pi \gamma}{2 + \gamma} C_{\beta'} < \infty \,.
	\end{align*}
	Then the bound \eqref{nu-Propty-2} holds.
	
	Finally, we verify the inequality \eqref{nu-Propty-3}. By the elementary inequality
	\begin{equation*}
	a^2 + b^2 \leq 2 (a+b)^2 \leq 4 (a^2 + b^2)
	\end{equation*}
	for $a,b \geq 0$, we derive from the conditions of $v$, $v'$, $v_*$ and $v_*'$ in Lemma \ref{Lmm-CF-nu} (3) that
	\begin{equation}\label{nu-2}
	|v| + |v_*| \leq C (|v'| + |v_*'|) \,.
	\end{equation}
	From the inequality \eqref{nu-Propty-1}, we have
	\begin{equation*}
	\begin{aligned}
	\nu(v) + \nu(v_*) \leq & \overline{C} \big[ (1+|v|)^\gamma + (1+|v_*|)^\gamma \big] \\
	\leq & 2 \overline{C} \big[ (1+|v|) + (1+|v_*|) \big]^\gamma \\
	\leq & C(\gamma) \big[ (1 + |v'|) + (1+|v_*'|) \big]^\gamma \\
	\leq & C(\gamma) \big[ (1+|v'|)^\gamma + (1+|v_*'|)^\gamma \big] \\
	\leq & \tfrac{C(\gamma)}{\underline{C}} ( \nu(v') + \nu(v_*') ) \,,
	\end{aligned}
	\end{equation*}
	where the elementary inequality
	\begin{equation*}
	(a+b)^\gamma \leq a^\gamma + b^\gamma \ (a,b \geq 0 \,, \gamma \in [0,1])
	\end{equation*}
	is used. Then the proof of Lemma \ref{Lmm-CF-nu} is finished.
\end{proof}

\begin{lemma}[Control of $\nu$-weighted norms]\label{Lmm-nu-norm}
	The related $\nu$-weighted norms have the following properties:
	\begin{enumerate}
		\item There is a constant $C_4 > 0$, such that for all $h \in L^2_v (\nu)$, 
		\begin{equation}\label{L2v-nu-norm}
		  \| h \|^2_{L^2_v} \leq C_4 \| h \|^2_{L^2_v (\nu)} \,.
		\end{equation}
		
		\item Let the integer $s \geq 1$. Then there exist positive constants $C_5$ and $C_6$, such that for all $h \in H^s_{x,v} (\nu)$
		\begin{equation}
		  \Big\langle \partial^m_\alpha ( \nu(v) h ) \,, \partial^m_\alpha h \Big\rangle_{L^2_{x,v}} \geq C_5 \| \partial^m_\alpha h \|^2_{L^2_{x,v} (\nu)} - C_6 \sum_{\alpha' < \alpha} \| \partial^m_{\alpha'} h \|^2_{L^2_{x,v}} 
		\end{equation}
		holds for $|m| + |\alpha| = s$ with $|\alpha| \geq 1$.
	\end{enumerate}
\end{lemma}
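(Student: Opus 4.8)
\emph{Proof proposal.} The plan is to dispatch the two parts separately: part (1) is immediate from the lower bound on $\nu$, and part (2) reduces to the Leibniz rule together with Lemma \ref{Lmm-CF-nu}(2) and a standard absorption argument.

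For (1), since Lemma \ref{Lmm-CF-nu}(1) gives $\nu(v) \geq C_1(1+|v|) \geq C_1 > 0$ for every $v$, I would simply estimate $\|h\|_{L^2_v(\nu)}^2 = \int_{\R^3} |h(v)|^2 \nu(v)\,\d v \geq C_1 \int_{\R^3}|h(v)|^2\,\d v = C_1 \|h\|_{L^2_v}^2$, so that $C_4 = C_1^{-1}$ works. Integrating the same inequality over $x \in \T^3$ yields $\|f\|_{L^2_{x,v}} \leq C_4^{1/2}\|f\|_{L^2_{x,v}(\nu)}$, which I will invoke in part (2).

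For (2), fix a pair $(m,\alpha)$ with $|m| + |\alpha| = s$ and $|\alpha| \geq 1$. Since $\nu$ depends on $v$ only, the operator $\partial^m_x$ acts only on $h$, and the Leibniz formula gives
\[
\partial^m_\alpha(\nu h) = \partial^\alpha_v\big(\nu\,\partial^m_x h\big) = \nu\,\partial^m_\alpha h + \sum_{0 \neq \alpha' \leq \alpha} \binom{\alpha}{\alpha'}\big(\partial^{\alpha'}_v \nu\big)\,\partial^m_{\alpha - \alpha'} h \,.
\]
Pairing with $\partial^m_\alpha h$ in $L^2_{x,v}$, the $\alpha' = 0$ term produces exactly $\|\partial^m_\alpha h\|_{L^2_{x,v}(\nu)}^2$. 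For each of the finitely many remaining terms, Lemma \ref{Lmm-CF-nu}(2) furnishes a uniform bound $|\partial^{\alpha'}_v \nu| \leq C_\nu := \max_{0 < |\alpha'| \leq s} \sup_{v} |\partial^{\alpha'}_v \nu(v)| < \infty$, so that Cauchy--Schwarz followed by part (1) yields
\[
\Big| \big\langle (\partial^{\alpha'}_v\nu)\,\partial^m_{\alpha - \alpha'} h \,, \partial^m_\alpha h \big\rangle_{L^2_{x,v}} \Big| \leq C_\nu\, \|\partial^m_{\alpha - \alpha'} h\|_{L^2_{x,v}}\, \|\partial^m_\alpha h\|_{L^2_{x,v}} \leq C_\nu C_4^{1/2}\, \|\partial^m_{\alpha - \alpha'} h\|_{L^2_{x,v}}\, \|\partial^m_\alpha h\|_{L^2_{x,v}(\nu)} \,.
\]

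Finally I would apply Young's inequality $ab \leq \eta a^2 + (4\eta)^{-1} b^2$ with $a = \|\partial^m_\alpha h\|_{L^2_{x,v}(\nu)}$, choosing $\eta$ small enough that, after summing over the (finitely many, $s$-dependent) multi-indices $\alpha'$ with $0 \neq \alpha' \leq \alpha$, the accumulated coefficient of $\|\partial^m_\alpha h\|_{L^2_{x,v}(\nu)}^2$ does not exceed $\tfrac12$. This gives
\[
\big\langle \partial^m_\alpha(\nu h)\,, \partial^m_\alpha h \big\rangle_{L^2_{x,v}} \geq \tfrac12 \|\partial^m_\alpha h\|_{L^2_{x,v}(\nu)}^2 - C_6 \sum_{0 \neq \alpha' \leq \alpha} \|\partial^m_{\alpha - \alpha'} h\|_{L^2_{x,v}}^2 \,,
\]
and reindexing by $\beta = \alpha - \alpha'$, noting that $\{\alpha - \alpha' : 0 \neq \alpha' \leq \alpha\}$ coincides with $\{\beta : \beta < \alpha\}$ in the multi-index order of the paper, produces the stated bound with $C_5 = \tfrac12$. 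I do not expect a genuine obstacle here; the only points requiring care are the uniformity of the constant $C_\nu$ (handled by Lemma \ref{Lmm-CF-nu}(2) and finiteness of the index set) and the bookkeeping of multi-indices when choosing the absorbing parameter $\eta$.
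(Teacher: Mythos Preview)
Your proposal is correct and follows essentially the same approach as the paper: part (1) via the lower bound $\nu(v)\geq C_1$ from Lemma~\ref{Lmm-CF-nu}(1), and part (2) via the Leibniz expansion of $\partial^m_\alpha(\nu h)$, the uniform bound on $\partial^{\alpha'}_v\nu$ from Lemma~\ref{Lmm-CF-nu}(2), Cauchy--Schwarz together with part (1), and Young's inequality to absorb half of $\|\partial^m_\alpha h\|_{L^2_{x,v}(\nu)}^2$, yielding $C_5=\tfrac12$. The only cosmetic difference is that the paper indexes the Leibniz sum by the derivative falling on $h$ (summing over $\alpha'<\alpha$) rather than on $\nu$, which is exactly the reindexing you perform at the end.
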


\begin{proof}[Proof of Lemma \ref{Lmm-nu-norm}]
	(1) From the inequality \eqref{nu-Propty-1}, we derive that
	\begin{equation}
	  \begin{aligned}
	     C_1 \leq \nu (v) \,,
	  \end{aligned}
	\end{equation}
	which immediately implies the inequality \eqref{L2v-nu-norm} holds.
	
	(2) Via direct calculation, we obtain
	\begin{equation}\label{K-=}
	  \begin{aligned}
	    \big\langle \partial^m_\alpha ( \nu (v) h ) , \partial^m_\alpha h \big\rangle_{L^2_{x,v}} = & \| \partial^m_\alpha h \|^2_{L^2_{x,v}(\nu)}  + \underset{K}{\underbrace{ \sum_{\alpha' < \alpha} C_\alpha^{\alpha'} \big\langle \partial^{\alpha-\alpha'}_v \nu (v) \partial^m_{\alpha'} h , \partial^m_\alpha h \big\rangle_{L^2_{x,v}} }} \,.
	  \end{aligned}
	\end{equation}
	Then, we employ the H\"older inequality, the part (2) of Lemma \ref{Lmm-CF-nu}, the part (1) of Lemma \ref{Lmm-nu-norm} and the Young's inequality to estimate the term $K$ in the previous equality \eqref{K-=}. More precisely, we have
	\begin{equation}\label{K-bnd}
	  \begin{aligned}
	    |K| \leq & C \sum_{\alpha' < \alpha} \| \partial^{\alpha-\alpha'}_v \nu (v) \|_{L^\infty_v} \| \partial^m_{\alpha'} h \|_{L^2_{x,v}} \| \partial^m_\alpha h \|_{L^2_{x,v}} \\
	    \leq & C \sum_{\alpha' < \alpha} \| \partial^m_{\alpha'} h \|_{L^2_{x,v}} \| \partial^m_\alpha h \|_{L^2_{x,v}(\nu)} \\
	    \leq & C \sum_{\alpha' < \alpha} \| \partial^m_{\alpha'} h \|^2_{L^2_{x,v}} + \tfrac{1}{2} \| \partial^m_\alpha h \|^2_{L^2_{x,v}(\nu)} \,.
	  \end{aligned}
	\end{equation}
	Substituting the inequality \eqref{K-bnd} into \eqref{K-=} implies that
	\begin{equation}
	  \begin{aligned}
	    \big\langle \partial^m_\alpha ( \nu (v) h ) , \partial^m_\alpha h \big\rangle_{L^2_{x,v}} \geq \tfrac{1}{2} \| \partial^m_\alpha h \|^2_{L^2_{x,v}(\nu)} - C \sum_{\alpha' < \alpha} \| \partial^m_{\alpha'} h \|^2_{L^2_{x,v}} \,.
	  \end{aligned}
	\end{equation} 
	Thus the proof of Lemma \ref{Lmm-nu-norm} is finished.
\end{proof}

\begin{lemma}[Coercivity on $\mathscr{L}$]\label{Lmm-L}
	The two species linearized collision operator $\mathscr{L} : L^2_v \rightarrow L^2_v$ has the following properties:
	\begin{enumerate}
		\item $\mathscr{L}$ is closed, self-adjoint and can be decomposed as
		\begin{equation*}
		\mathscr{L} = 2 \nu (v) \mathbb{I} - \mathscr{K},
		\end{equation*}
		where $\mathscr{K}$ is a compact operator in $L^2_v$, and $\nu(v)$ is the collisional frequency.
		
		\item Let the integer $s \geq 1$. Then for any $\delta > 0$, there is a $C(\delta) > 0$ such that for all $h \in H^s_{x,v} (\nu)$
		\begin{equation}
		\Big\langle \partial^m_\alpha \mathscr{K} (h), \partial^m_\alpha  h \Big\rangle_{L^2_{x,v}} \leq C(\delta) \| \partial^m_x h \|^2_{L^2_{x,v}} + \delta \| \partial^m_\alpha h \|^2_{L^2_{x,v}(\nu)}
		\end{equation}
		holds for $|m| + |\alpha| = s$, $|\alpha| \geq 1$.
		
		\item There is a $\lambda > 0$, such that for all $h \in L^2_v$
		\begin{equation}
		  \langle \mathscr{L} h, h \rangle_{L^2_v} \geq \lambda \| \P^\perp h \|^2_{L^2_v (\nu)} \,.
		\end{equation}
	\end{enumerate}
\end{lemma}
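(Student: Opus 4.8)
The three statements are of quite different character, so I would treat them separately. Part (1) is essentially the classical structure of the linearized hard-sphere Boltzmann operator, carried over to the two-species setting. The plan is to recall the Hilbert-Schmidt splitting of $\mathcal{L}$: writing $\mathcal{L}g = \nu(v)g - Kg$ where $K$ is the integral operator with the standard Grad kernel $k(v,v_*)$, one uses the well-known fact that $k \in L^2(\R^3\times\R^3)$ (hence $K$ is compact on $L^2_v$), together with self-adjointness and closedness inherited from the symmetry $\mathcal{Q}(g,\sqrt M)+\mathcal{Q}(\sqrt M,g)$. For $\mathscr{L}$, the definition \eqref{Linear-Oprt-VMB} shows $\mathscr{L}G = [\mathcal{L}G^+ + \mathcal{L}(G^+,G^-),\,\mathcal{L}G^- + \mathcal{L}(G^-,G^+)]$; expanding each $\mathcal{L}(\cdot,\cdot)$ via \eqref{Linear-Oprt-B} and its off-diagonal analogue, the diagonal part produces $2\nu(v)$ on each component (one $\nu$ from $\mathcal{L}G^\pm$ and one from the "gain-free" part of $\mathcal{L}(G^\pm,G^\mp)$), while all remaining terms are integral operators with $L^2$ kernels, which we collect into $\mathscr{K}$; compactness of $\mathscr{K}$ on $L^2_v$ then follows.

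Part (2) is the Grad-type commutator estimate for velocity derivatives of the compact part. Here I would use that $\mathscr{K}$ has the explicit integral-kernel form from part (1), $\mathscr{K}h(v) = \int k(v,v_*)h(v_*)\,\d v_*$ with $k$ and its $v$-derivatives enjoying Gaussian-type decay of the form $|\partial^\beta_v k(v,v_*)| \lesssim (1+|v-v_*|)^{-1}e^{-c|v-v_*|^2}$ (possibly after absorbing polynomial factors). Applying $\partial^m_\alpha$ and distributing derivatives by Leibniz, the top-order term in $\alpha$ that still hits $h$ inside the integral is handled by the standard estimate that convolution against such a kernel gains a factor $\nu^{-1}$, so it can be bounded by $\delta\|\partial^m_\alpha h\|^2_{L^2_{x,v}(\nu)}$ after Young's inequality; the derivatives that land on the kernel $k$ instead leave a lower-order velocity derivative $\partial^m_{\alpha'}h$ with $|\alpha'|<|\alpha|$, which — using a Poincaré/interpolation trick in $v$ or simply iterating down in $|\alpha|$, exactly as in Lemma \ref{Lmm-nu-norm}(2) — is absorbed into $C(\delta)\|\partial^m_x h\|^2_{L^2_{x,v}}$ plus a small multiple of the dissipation. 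The bookkeeping of which terms are "top order" versus "absorbable" is the one place requiring care.

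Part (3) is the spectral gap (coercivity) for $\mathscr{L}$ on the orthogonal complement of its kernel. The plan is the usual two-step argument: first, since $\mathscr{L} = 2\nu\mathbb{I} - \mathscr{K}$ with $\mathscr{K}$ compact and $\mathscr{L}$ self-adjoint nonnegative with $\mathrm{Ker}(\mathscr{L}) = \mathrm{Span}\{\phi_1,\dots,\phi_6\}$, Weyl's theorem gives that the essential spectrum of $\mathscr{L}$ starts at $\inf 2\nu(v) \ge 2C_1 > 0$ (by Lemma \ref{Lmm-CF-nu}(1)), so $0$ is an isolated eigenvalue of finite multiplicity; hence there is $\lambda>0$ with $\langle\mathscr{L}h,h\rangle_{L^2_v}\ge\lambda\|\mathbb{P}^\perp h\|^2_{L^2_v}$. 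Upgrading the $L^2_v$ norm on the right to the $\nu$-weighted norm is then done by the standard interpolation: split $\mathbb{P}^\perp h$ into a bounded-velocity part, on which $\|\cdot\|_{L^2_v(\nu)}\lesssim\|\cdot\|_{L^2_v}$, and a high-velocity tail, on which the coercivity of $\nu\mathbb{I}$ (i.e. $\langle\mathscr{L}h,h\rangle \ge 2\int\nu|h|^2 - \langle\mathscr{K}h,h\rangle$ together with smallness of $\mathscr{K}$ restricted to large $|v|$) dominates; combining with a small multiple of the $L^2_v$-gap closes the estimate. Alternatively one cites the known coercivity of the single-species $\mathcal{L}$ and checks the $2\times2$ quadratic form is bounded below componentwise. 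The essential-spectrum/Weyl argument is the conceptual heart; everything else is routine.
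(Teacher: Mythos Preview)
Your proposal is correct and in fact more substantive than the paper's own treatment: the paper simply cites Proposition~5.6--5.7 of \cite{Arsenio-SRM-2016}, \cite{Levermore-Sun-2010-KRM}, and Lemmas~1--2 of \cite{Guo-2003-Invent} for parts (1)--(3) respectively, without giving any argument. The sketches you outline---the Hilbert--Schmidt splitting carried to the two-species setting for (1), the Grad kernel plus derivative-distribution argument for (2), and the Weyl essential-spectrum argument followed by the $L^2_v$-to-$L^2_v(\nu)$ upgrade for (3)---are precisely the standard proofs found in those references, so your approach and the paper's are effectively the same.
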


\begin{proof}[Proof of Lemma \ref{Lmm-L}]
	(1) The proof can be referred to Proposition 5.6 of \cite{Arsenio-SRM-2016} or \cite{Levermore-Sun-2010-KRM} for details.
	
	(2) We refer to Lemma 2 of \cite{Guo-2003-Invent} for details fo proof, and we omit the details here.
	
	(3) The coercivity of $\mathscr{L}$ has been proved in Proposition 5.7 of \cite{Arsenio-SRM-2016} or Lemma 1 of \cite{Guo-2003-Invent}. We omit the details here.
\end{proof}

\begin{lemma}[Control of $\Gamma$: Torus version]\label{Lmm-Gamma-Torus}
	Let $\Gamma : L^2_v \times L^2_v \rightarrow L^2_v$ be the bilinear symmetric operator defined in \eqref{Gamma}.
	\begin{enumerate}
		\item For any $G, H \in L^2_v$, we have
		\begin{equation}
		\Gamma (G,H) \in \mathrm{Ker}^\perp ( \mathscr{L} ) \,.
		\end{equation}
		
		\item Let any integer $s \geq 3$ and $F \in L^2_{x,v} (\nu)$, $G, H \in H^s_{x,v} (\nu)$. Then for all $m, \alpha \in \mathbb{N}^3$, $|m| + |\alpha| \leq s$,
		\begin{equation}
		\bigg| \Big\langle \partial^m_\alpha \Gamma ( G, H ), F \Big\rangle_{L^2_{x,v}} \bigg| \leq \left\{  
		\begin{array}{l}
		\mathcal{G}^s_{x,v} (G,H) \| F \|_{L^2_{x,v} (\nu)} \,, \quad \textrm{if } \alpha  \neq 0 \,, \\[2mm]
		\mathcal{G}^s_x (G,H) \| F \|_{L^2_{x,v} (\nu)} \,, \quad\ \textrm{if } \alpha = 0 \,,
		\end{array}
		\right.
		\end{equation}
		where $\mathcal{G}^s_{x,v}$ and $\mathcal{G}^s_x$ satisfy $\mathcal{G}^s_{x,v} \leq \mathcal{G}^{s+1}_{x,v}$, $\mathcal{G}^s_x \leq \mathcal{G}^{s+1}_x$ and there exists a positive constant $C_\Gamma > 0$ such that 
		\begin{equation}
		\begin{aligned}
		\mathcal{G}^s_{x,v} (G,H) & = C_\Gamma \left( \| G \|_{H^s_{x,v}} \| H \|_{H^s_{x,v}(\nu)} + \| G \|_{H^s_{x,v}(\nu)} \| H \|_{H^s_{x,v}} \right) \,,
		\end{aligned}
		\end{equation}
		and
		\begin{equation}
		\begin{aligned}
		\mathcal{G}^s_x (G,H) & = C_\Gamma \left( \| G \|_{H^s_x L^2_v} \| H \|_{H^s_x L^2_v (\nu)} + \| G \|_{H^s_x L^2_v (\nu)} \| H \|_{H^s_x L^2_v} \right)  \,.
		\end{aligned}
		\end{equation}
	\end{enumerate}
\end{lemma}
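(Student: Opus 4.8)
\emph{Part (1): the range condition.} Observe that $\Gamma(G,H)$ is, componentwise, a finite linear combination of terms of the form $\mathcal{Q}(g,h)$ and $\mathcal{Q}(g,h)+\mathcal{Q}(h,g)$ with $g,h\in\{G^\pm,H^\pm\}$, so the claim reduces to the conservation identities for the rescaled collision operator $\mathcal{Q}$. First I would record that $\langle\mathcal{Q}(g,h),\psi\rangle_{L^2_v}=\int_{\R^3}\mathcal{B}(\sqrt M g,\sqrt M h)\tfrac{\psi}{\sqrt M}\,\d v$ and then use the standard pre/post-collisional and $v\leftrightarrow v_*$ changes of variables (unit Jacobian, invariance of the hard-sphere kernel and of $MM_*$) to conclude that $\langle\mathcal{Q}(g,h),\psi\rangle_{L^2_v}+\langle\mathcal{Q}(h,g),\psi\rangle_{L^2_v}=0$ whenever $\psi/\sqrt M\in\mathrm{Span}\{1,v_1,v_2,v_3,|v|^2\}$, i.e. whenever $\psi\in\mathrm{Ker}(\mathcal{L})$, and that for $\psi=\sqrt M$ one even has $\langle\mathcal{Q}(g,h),\sqrt M\rangle_{L^2_v}=0$ individually. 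Testing $\Gamma(G,H)$ against $\phi_1,\phi_2$ (which only pair the single-species components against $\sqrt M$) kills those contributions, and testing against $\phi_3,\dots,\phi_6$ (which pair against $v_i\sqrt M$, $(|v|^2-3)\sqrt M$ and \emph{sum both species}) reorganizes every cross term into a symmetric pair $\mathcal{Q}(g,h)+\mathcal{Q}(h,g)$, hence vanishes. Therefore $\Gamma(G,H)\perp\mathrm{Span}\{\phi_1,\dots,\phi_6\}=\mathrm{Ker}(\mathscr L)$.

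\emph{Part (2): the two ingredients.} The engine is the classical trilinear estimate for the hard-sphere operator: for $g,h,f\in L^2_v(\nu)$,
$|\langle\Gamma(g,h),f\rangle_{L^2_v}|\le C\big(\|g\|_{L^2_v}\|h\|_{L^2_v(\nu)}+\|g\|_{L^2_v(\nu)}\|h\|_{L^2_v}\big)\|f\|_{L^2_v(\nu)}$.
For the loss term this follows from $|v-v_*|\le\langle v\rangle\langle v_*\rangle$, $\nu(v)\sim\langle v\rangle$ (Lemma~\ref{Lmm-CF-nu}) and Cauchy–Schwarz; for the gain term it follows from Grad's estimates on the cut-off gain operator, and I would simply invoke the corresponding statement of \cite{Guo-2003-Invent} (see also \cite{Arsenio-SRM-2016}). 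The second ingredient reduces the derivatives to this estimate: spatial derivatives pass through $\Gamma$ exactly, $\partial^m_x\Gamma(G,H)=\sum_{m_1\le m}\binom{m}{m_1}\Gamma(\partial^{m_1}_xG,\partial^{m-m_1}_xH)$, while velocity derivatives are handled via the change of variables $v_*\mapsto u=v-v_*$ (as in the proof of Lemma~\ref{Lmm-CF-nu}) together with, for the gain term, a Carleman-type change of variables turning the post-collisional velocities into integration variables. Differentiating the resulting Gaussian and kernel factors then shows $\partial^m_\alpha\Gamma(G,H)=\sum C\,\widetilde\Gamma(\partial^{m_1}_{\alpha_1}G,\partial^{m_2}_{\alpha_2}H)$, a finite sum with $m_1+m_2=m$, $\alpha_1+\alpha_2\le\alpha$, where each $\widetilde\Gamma$ has the same structure as $\Gamma$ with kernel still dominated by $|v-v_*|$ times a Gaussian in $v_*$, so the trilinear estimate applies verbatim to each $\widetilde\Gamma$.

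\emph{Part (2): conclusion.} With these two facts, for fixed $x$ I would apply the trilinear estimate to each summand, then integrate in $x$ after a H\"older splitting: among the two derivative-carrying factors the one of smaller total order has order $\le\lfloor s/2\rfloor\le s-2$ (this is where $s\ge3$ enters), so it may be placed in $L^\infty_x$ at the cost of two extra spatial derivatives via the Sobolev embedding $H^2_x\hookrightarrow L^\infty_x$ on $\T^3$, while the remaining derivative factor and $F$ go into $L^2_x$. When $\alpha\ne0$ this produces factors of the type $\|\partial^{m_1}_{\alpha_1}G\|_{L^\infty_xL^2_v}\lesssim\|G\|_{H^s_{x,v}}$ and yields the bound $\mathcal{G}^s_{x,v}(G,H)\|F\|_{L^2_{x,v}(\nu)}$; when $\alpha=0$ no velocity derivative ever appears, and the identical argument with $H^s_xL^2_v$-norms replacing $H^s_{x,v}$-norms gives $\mathcal{G}^s_x(G,H)\|F\|_{L^2_{x,v}(\nu)}$. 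The monotonicities $\mathcal{G}^s_{x,v}\le\mathcal{G}^{s+1}_{x,v}$ and $\mathcal{G}^s_x\le\mathcal{G}^{s+1}_x$ are immediate from $\|\cdot\|_{H^s}\le\|\cdot\|_{H^{s+1}}$. The main obstacle is precisely the reduction of $\partial^\alpha_v\Gamma$ to a sum of $\Gamma$-type operators: controlling the velocity derivatives of the gain term requires the Carleman change of variables and careful bookkeeping of the Gaussian weights to keep all new kernels integrable and dominated by $|v-v_*|$ times a Gaussian; once that is in place, everything else is Leibniz's rule, Cauchy–Schwarz and Sobolev embedding.
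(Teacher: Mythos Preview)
Your proposal is correct and follows essentially the same architecture as the paper: reduce Part~(1) to the collision invariants of $\mathcal{Q}$, and for Part~(2) use the substitution $u=v-v_*$ to distribute $\partial^m_\alpha$ over $\mathcal{Q}(g_1,g_2)$ as a sum over $m_1+m_2=m$, $\alpha_1+\alpha_2+\alpha_3=\alpha$, then split into gain and loss, control each by Cauchy--Schwarz in $v$, and finish with a H\"older/Sobolev case analysis in $x$.

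Two differences worth noting. First, the paper is fully self-contained and does not invoke the trilinear estimate as a black box from \cite{Guo-2003-Invent}; instead it bounds $I_{loss}$ directly via $|\partial^\beta_v M^{1/2}|\le C M^{\eta/2}$ and Cauchy--Schwarz, and for $I_{gain}$ it applies Cauchy--Schwarz in $u$, inserts Lemma~\ref{Lmm-CF-nu}(3) (i.e. $\nu(v)+\nu(v_*)\le C(\nu(v')+\nu(v_*'))$) to convert the $\nu$-weight, and then performs the involutive change $(v,v_*)\mapsto(v',v_*')$. This is the ordinary pre/post-collisional change with unit Jacobian, not a Carleman representation; your ``Carleman-type'' description is a slight misnomer, though the underlying idea---turning post-collisional velocities into integration variables---is the same. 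Second, in the $x$-integration the paper does a finer case split: it uses $H^2_x\hookrightarrow L^\infty_x$ only when one factor carries no derivatives, and otherwise uses $H^1_x\hookrightarrow L^4_x$ on both factors (together with the auxiliary inequality \eqref{Inq-xv-mixed-Holder}). Your coarser $L^\infty_x\times L^2_x$ split also closes for $s\ge3$, but the paper's $L^4\times L^4$ treatment is what allows it to state the result already at $s\ge2$ in the intermediate steps.
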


\begin{proof}[Proof of Lemma \ref{Lmm-Gamma-Torus}]
	(1) Since $\mathcal{Q} (g,h) \in \textrm{Ker}^\perp (\mathcal{L})$, we easily verify that $ \Gamma ( G, H ) \in \textrm{Ker}^\perp ( \mathscr{L} ) $.
	
	(2) We first estimate the term $ \big\langle \partial^m_\alpha \mathcal{Q} ( g_1, g_2 ) , g_3 \big\rangle_{L^2_{x,v}} $ for all $g_1, g_2: \T^3 \times \R^3 \mapsto \R $ in $H^s_{x,v} (\nu)$ and $g_3: \T^3 \times \R^3 \mapsto \R $ in $L^2_{x,v} (\nu)$. Straightforward calculations give us
	\begin{equation}\label{Q-g1-g2}
	  \begin{aligned}
	    \partial^m_\alpha & \mathcal{Q} (g_1, g_2) =  \partial^m_\alpha \int_{\R^3} \big( g_1 (v') g_2 (v_*') - g_1 (v) g_2 (v_*) \big) |v- v_*| \sqrt{M(v_*)} \d v_* \\
	    & =  \partial^m_\alpha \int_{\R^3} \big[  g_1 ( v - \sigma \cdot (v-v_*) \sigma ) g_2 ( v_* + \sigma \cdot ( v - v_* ) \sigma ) \\
	    & \qquad \qquad \qquad \qquad \qquad - g_1 (v) g_2 (v_*) \big] |v- v_*| M^\frac{1}{2} (v_*) \d v_* \\
	    &  \xlongequal[]{u=v-v_*} \partial^m_\alpha \int_{\R^3} \big[ g_1 ( v - \sigma \cdot u \sigma ) g_2 ( v - u + \sigma \cdot u \sigma ) \\
	    & \qquad \qquad \qquad \qquad \qquad - g_1 (v) g_2 (v- u) \big] |u| M^\frac{1}{2} (v - u) \d u \\
	    & = \underset{I_{gain}}{ \underbrace{\sum_{\substack{m_1 + m_2 = m \\ \alpha_1 + \alpha_2 + \alpha_3 = \alpha }} \int_{\R^3} \partial^{m_1}_{\alpha_1} g_1 ( v - \sigma \cdot u \sigma ) \partial^{m_2}_{\alpha_2} g_2 ( v - u + \sigma \cdot u \sigma ) |u| \partial^{\alpha_3}_v M^\frac{1}{2} (v-u) \d u }} \\
	    & \ \ \underset{I_{loss}}{ \underbrace{-   \sum_{\substack{m_1 + m_2 = m \\ \alpha_1 + \alpha_2 + \alpha_3 = \alpha}} \partial^{m_1}_{\alpha_1} g_1 (v) \int_{\R^3} \partial^{m_2}_{\alpha_2} g_2 (v - u) |u| \partial^{\alpha_3}_v M^\frac{1}{2} (v - u) \d u } }\,.
	  \end{aligned}
	\end{equation}
	
	{\em Estimates on $I_{loss}$.} It is easy to know that for any $\eta \in (0,1)$ and $\beta \in \mathbb{N}^3$
	\begin{equation}
	  \partial^\beta_v M^\frac{1}{2} (v - u) \leq C M^\frac{\eta}{2} (v-u)
	\end{equation}
	holds for some positive constant $C$. Then the second term $I_{loss}$ is bounded by
	\begin{equation}
	  \begin{aligned}
	    | I_{loss} |&  \leq  C \sum_{\substack{ m_1 + m_2 = m \\ \alpha_1 + \alpha_2 \leq \alpha }} \big| \partial^{m_1}_{\alpha_1} g_v (v) \big| \int_{\R^3} |u| M^\frac{\eta}{2} (v - u) \big| \partial^{m_2}_{\alpha_2} g_2 (v - u) \big| \d u \\
	    \leq & C \sum_{\substack{ m_1 + m_2 = m \\ \alpha_1 + \alpha_2 \leq \alpha }} \big| \partial^{m_1}_{\alpha_1} g_v (v) \big| \left( \int_{\R^3} |u|^{2} M^\eta (v -u) \d u \right)^\frac{1}{2} \left( \int_{\R^3} \big| \partial^{m_2}_{\alpha_2} g_2 (v-u)  \big|^2 \d u \right)^\frac{1}{2} \,.
	  \end{aligned}
	\end{equation}
	The following elementary inequalities 
	\begin{equation*}
	  |v-v_*|^2 \leq ( 1 + |v_*|^2 ) ( 1 + |v|^2 ) \leq (  1 + |v_*| )^2 ( 1 + |v| )^2
	\end{equation*}
	and Lemma \ref{Lmm-CF-nu} (1) yield that
	\begin{equation*}
	  \begin{aligned}
	    \int_{\R^3} |u|^{2} M^\eta (v - u) \d u  = & \int_{\R^3} |v - v_*|^{2} M^\eta (v_*) \d v_* \\
	    \leq & ( 1 + |v| )^{2} \int_{\R^3} ( 1 + |v_*| )^{2} M^\eta (v_*) \d v_* \leq C \nu^2 (v) \,.
	  \end{aligned}
	\end{equation*}
	Then we have 
	\begin{equation*}
	  \begin{aligned}
	    |I_{loss}| \leq C \sum_{\substack{ m_1 + m_2 = m \\ \alpha_1 + \alpha_2 \leq \alpha }} \nu(v) \big| \partial^{m_1}_{\alpha_1} g_1 (v) \big| \left( \int_{\R^3} \big| \partial^{m_2}_{\alpha_2} g_2 (v-u)  \big|^2 \d u \right)^\frac{1}{2} \,,
	  \end{aligned}
	\end{equation*}
	which immediately derive from the H\"older inequality that
	\begin{equation}
	  \begin{aligned}
	    \Big| \big\langle I_{loss} , g_3 \big\rangle_{L^2_{x,v}} \Big|  \leq & C \sum_{\substack{ m_1 + m_2 = m \\ \alpha_1 + \alpha_2 \leq \alpha}} \Big\langle | \partial^{m_1}_{\alpha_1} g_1 (v) | \nu(v) \| \partial^{m_2}_{\alpha_2} g_2 \|_{L^2_v} , g_3 \Big\rangle_{L^2_{x,v}} \\
	    \leq & C \sum_{\substack{ m_1 + m_2 = m \\ \alpha_1 + \alpha_2 \leq \alpha}}  \underset{{\bf I}_{\alpha_1, \alpha_2}^{m_1 , m_2}} { \underbrace{ \Big\langle \| \partial^{m_1}_{\alpha_1} g_1 \|_{L^2_v (\nu)} \| \partial^{m_2}_{\alpha_2} g_2 \|_{L^2_v} , \| g_3 \|_{L^2_v(\nu)} \Big\rangle_{L^2_x} }} \,.
	  \end{aligned}
	\end{equation}
	
	We next control the terms ${\bf I}^{m_1, m_2}_{\alpha_1, \alpha_2}$ for all multi-indexes $m_1$, $m_2$, $\alpha_1$ and $\alpha_2$ in $\mathbb{N}^3$ satisfying $m_1| + |m_2 = m$ and $\alpha_1 + \alpha_2 \leq \alpha$. If $\alpha = 0$ and $m_1 = 0$ or $m_2 = 0$, via using the H\"older inequality and the Sobolev embedding $H^2_x (\T^3) \hookrightarrow L^\infty_x (\T^3)$, we have
	\begin{equation}
	  \begin{aligned}
	    \big| {\bf I}^{m, 0}_{0, 0} \big| + \big| {\bf I}^{0, m}_{0, 0} \big| \leq & \big( \| \partial^m_x g_1 \|_{L^2_{x,v}(\nu)} \| g_2 \|_{L^\infty_x L^2_v} + \| g_1 \|_{L^\infty_x L^2_v (\nu)} \| \partial^m_x g_2 \|_{L^2_{x,v}} \big) \| g_3 \|_{L^2_{x,v}(\nu)} \\
	    \leq & C \big( \| \partial^m_x g_1 \|_{L^2_{x,v}(\nu)} \| g_2 \|_{H^2_x L^2_v} + \| g_1 \|_{H^2_x L^2_v (\nu)} \| \partial^m_x g_2 \|_{L^2_{x,v}} \big) \| g_3 \|_{L^2_{x,v}(\nu)} \\
	    \leq & C \| g_1 \|_{H^s_x L^2_v(\nu)} \| g_2 \|_{H^s_x L^2_v} \| g_3 \|_{L^2_{x,v}(\nu)} \\
	    \leq & C  \| g_1 \|_{H^s_x L^2_v(\nu)} \| g_2 \|_{H^s_x L^2_v} \| g_3 \|_{L^2_{x,v}(\nu)} \,.
	  \end{aligned}
	\end{equation}
	If $\alpha = 0$ and $m_1 , m_2 \neq 0$, the H\"older inequality and the Sobolev embedding $H^1_x (\T^3) \hookrightarrow L^4_x (\T^3)$ imply that
	\begin{equation}
	  \begin{aligned}
	    {\bf I}^{m_1, m_2}_{0, 0} \leq & \big\| \| \partial^{m_1}_x g_1 \|_{L^2_v(\nu)} \big\|_{L^4_x} \big\| \| \partial^{m_2}_x g_2 \|_{L^2_v} \big\|_{L^4_x} \| g_3 \|_{L^2_{x,v}(\nu)} \\
	    \leq &  C \big\| \| \partial^{m_1}_x g_1 \|_{L^2_v(\nu)} \big\|_{H^1_x} \big\| \| \partial^{m_2}_x g_2 \|_{L^2_v} \big\|_{H^1_x} \| g_3 \|_{L^2_{x,v}(\nu)} \\
	    \leq & C \| g_1 \|_{H^s_x L^2_v (\nu)} \| g_2 \|_{H^s_x L^2_v} \| g_3 \|_{L^2_{x,v}(\nu)} \,,
	  \end{aligned}
	\end{equation}
	where the last inequality is derived from the inequality 
	\begin{equation}\label{Inq-xv-mixed-Holder}
	\big\| \nabla_x \| g \|_{L^2_v (w)} \big\|_{L^2_x} \leq \| \nabla_x g \|_{L^2_{x,v} (w)}
	\end{equation}
	for $w(v) = 1$ or $\nu(v)$. This inequality is derived from the H\"older inequality as follows:
	\begin{equation*}
	\begin{aligned}
	\left\| \nabla_x \| g \|_{L^2_v (w)} \right\|_{L^2_x} = & \left( \int_{\T^3} \Big| \nabla_x \| g \|_{L^2_v (w)} \Big|^2 \d x \right)^\frac{1}{2} = \left( \int_{\T^3} \Big| \tfrac{\nabla_x \| g \|^2_{L^2_v (w)}}{2 \| g \|_{L^2_v (w)}} \Big|^2 \right)^\frac{1}{2} \\
	= & \left( \int_{\T^3} \Big| \tfrac{\nabla_x \int_{\R^3} | g |^2 w \d v}{2 \| g \|_{L^2_v (w)}} \Big|^2 \right)^\frac{1}{2} = \left( \int_{\T^3} \Big| \tfrac{\int_{\R^3}  g \nabla_x g w \d v}{\| g \|_{L^2_v (w)}} \Big|^2 \right)^\frac{1}{2} \\
	\leq & \left( \int_{\T^3} \int_{\R^3} | \nabla_x g |^2 w \d v d x \right)^\frac{1}{2} = \| \nabla_x g \|_{L^2_{x,v} (w)} \,.
	\end{aligned}
	\end{equation*}
	If $\alpha \neq 0$ and $m_1 + \alpha_1 = 0$ or $m_2 + \alpha_2 = 0$, then one derives from the Sobolev embedding $H^2_x (\T^3) \hookrightarrow L^\infty_x (\T^3)$ that
	\begin{equation}
	  \begin{aligned}
	    \big| {\bf I}^{0,m}_{0, \alpha_2}  \big| + \big| {\bf I}^{m, 0}_{\alpha_1, 0}  \big| \leq & \Big( \| g_1 \|_{L^\infty_x L^2_v (\nu)} \| \partial^{m}_{\alpha_2} g_2 \|_{L^2_{x,v}} + \| \partial^m_{\alpha_1} g_1 \|_{L^2_{x,v}(\nu)} \| g_2 \|_{L^\infty_x L^2_v} \Big) \| g_3 \|_{L^2_{x,v}(\nu)} \\
	    \leq & C \Big( \| g_1 \|_{H^2_x L^2_v (\nu)} \| \partial^{m}_{\alpha_2} g_2 \|_{L^2_{x,v}} + \| \partial^m_{\alpha_1} g_1 \|_{L^2_{x,v}(\nu)} \| g_2 \|_{H^2_x L^2_v} \Big) \| g_3 \|_{L^2_{x,v}(\nu)} \\
	    \leq & C \| g_1 \|_{H^s_{x,v}(\nu)} \| g_2 \|_{H^s_{x,v}} \| g_3 \|_{L^2_{x,v}(\nu)} \\
	    \leq & C \| g_1 \|_{H^s_{x,v}(\nu)} \| g_2 \|_{H^s_{x,v}} \| g_3 \|_{L^2_{x,v}(\nu)} \,.
	  \end{aligned}
	\end{equation}
	Here we require $s \geq 2$. If $\alpha \neq 0$ and $m_1 + \alpha_1 \neq 0$, $m_2 + \alpha_2 \neq 0$, then the terms $ {\bf I}^{m_1, m_2}_{\alpha_1, \alpha_2} $ are bounded by
	\begin{equation}
	\begin{aligned}
	{\bf I}^{m_1, m_2}_{\alpha_1, \alpha_2} \leq & \big\| \| \partial^{m_1}_{\alpha_1} g_1 \|_{L^2_v (\nu)} \big\|_{L^4_x} \big\| \| \partial^{m_2}_{\alpha_2} g_2 \|_{L^2_v} \big\|_{L^4_x} \| g_3 \|_{L^2_{x,v}(\nu)} \\
	\leq & C \big\| \| \partial^{m_1}_{\alpha_1} g_1 \|_{L^2_v (\nu)} \big\|_{H^1_x} \big\| \| \partial^{m_2}_{\alpha_2} g_2 \|_{L^2_v} \big\|_{H^1_x} \| g_3 \|_{L^2_{x,v}(\nu)} \\
	\leq & C \big\| g_1 \big\|_{H^s_{x,v} (\nu)} \big\| g_2 \big\|_{H^s_{x,v}} \| g_3 \|_{L^2_{x,v}(\nu)} \\
	\leq & C \big\| g_1 \big\|_{H^s_{x,v} (\nu)} \big\| g_2 \big\|_{H^s_{x,v}} \| g_3 \|_{L^2_{x,v}(\nu)} \,.
	\end{aligned}
	\end{equation}
	Here the H\"older inequality, the Sobolev embedding $H^1_x (\T^3) \hookrightarrow L^4_x (\T^3)$ and the inequality \eqref{Inq-xv-mixed-Holder} are utilized. We summarize the all above bounds on the terms ${\bf I}^{m_1, m_2}_{\alpha_1, \alpha_2}$ and obtain
	\begin{equation}\label{Loss}
	  \begin{aligned}
	    \Big| \big\langle I_{loss} , g_3 \big\rangle_{L^2_{x,v}} \Big|  \leq & \sum_{\substack{ m_1 + m_2 = m \\ \alpha_1 + \alpha_2 \leq \alpha }} {\bf I}^{m_1, m_2}_{\alpha_1, \alpha_2} \\[2mm]
	    \leq & \left\{  
	      \begin{array}{l}
	        C \| g_1 \|_{H^s_{x,v}(\nu)} \| g_2 \|_{H^s_{x,v}} \| g_3 \|_{L^2_{x,v}(\nu)} \qquad \textrm{if } \alpha \neq 0 \,, \\[4mm]
	        C \| g_1 \|_{H^s_x L^2_v (\nu)} \| g_2 \|_{H^s_x L^2_v} \| g_3 \|_{L^2_{x,v}(\nu)} \quad\ \textrm{if } \alpha = 0 \,.
	      \end{array}
	    \right.
	  \end{aligned}
	\end{equation}
	
	{\em Estimates on $I_{gain}$.} We next deal with the term $I_{gain}$. By the H\"older inequality and the part (3) of Lemma \ref{Lmm-CF-nu}, we have
	\begin{align*}
	| I_{gain} | \leq & \sum_{\substack{ m_1 + m_2 = m \\ \alpha_1 + \alpha_2 + \alpha_3 = \alpha }} \left( |u|^{2 \gamma} | \partial^{\alpha_3}_v M^\frac{1}{2} (v-u) |^2 \d u \right)^\frac{1}{2} \\
	& \qquad \times \left( \int_{\R^3} | \partial^{m_1}_{\alpha_1} g_1 ( v - \sigma \cdot u \sigma ) |^2 | \partial^{m_2}_{\alpha_2} g_2 ( v - u + \sigma \cdot u \sigma ) |^2 \d u \right)^\frac{1}{2} \\
	\leq & C \sum_{\substack{ m_1 + m_2 = m \\ \alpha_1 + \alpha_2 \leq \alpha }} \nu(v) \left( \int_{\R^3} | \partial^{m_1}_{\alpha_1} g_1 ( v - \sigma \cdot (v - v_*) \sigma ) |^2 | \partial^{m_2}_{\alpha_2} g_2 ( v_* + \sigma \cdot (v - v_*) \sigma ) |^2 \d u \right)^\frac{1}{2} \\
	\leq & C \sum_{\substack{ m_1 + m_2 = m \\ \alpha_1 + \alpha_2 \leq \alpha }} \nu^\frac{1}{2} (v) \Bigg( \int_{\R^3} \big[ \nu( v - \sigma \cdot (v - v_*) \sigma ) + \nu( v_* + \sigma \cdot (v - v_*) \sigma ) \big] \\
	& \qquad \times | \partial^{m_1}_{\alpha_1} g_1 ( v - \sigma \cdot (v - v_*) \sigma ) |^2 | \partial^{m_2}_{\alpha_2} g_2 ( v_* + \sigma \cdot (v - v_*) \sigma ) |^2 \d v_* \Bigg)^\frac{1}{2} \,,
	\end{align*}
	which implies that
	\begin{align}\label{Bnd-gain-II+III}
	\no \Big| & \langle I_{gain}, g_3 \rangle_{L^2_{x,v}} \Big| \leq C \sum_{\substack{ m_1 + m_2 = m \\ \alpha_1 + \alpha_2 \leq \alpha }} \Bigg\langle \bigg( \int_{\R^3} \big[ \nu( v - \sigma \cdot (v - v_*) \sigma ) + \nu( v_* + \sigma \cdot (v - v_*) \sigma ) \big] \\
	\no & \qquad \times | \partial^{m_1}_{\alpha_1} g_1 ( v - \sigma \cdot (v - v_*) \sigma ) |^2 | \partial^{m_2}_{\alpha_2} g_2 ( v_* + \sigma \cdot (v - v_*) \sigma ) |^2 \d v_* \bigg)^\frac{1}{2} \,, g_3 \nu^\frac{1}{2} (v) \Bigg\rangle_{L^2_{x,v}} \\
	\no \leq & C \| g_3 \|_{L^2_{x,v}(\nu)} \sum_{\substack{ m_1 + m_2 = m \\ \alpha_1 + \alpha_2 \leq \alpha }} \Bigg( \int_{\T^3} \int_{\R^3} \int_{\R^3} \big[ \nu( v - \sigma \cdot (v - v_*) \sigma ) + \nu( v_* + \sigma \cdot (v - v_*) \sigma ) \big] \\
	\no & \qquad \quad \times | \partial^{m_1}_{\alpha_1} g_1 ( v - \sigma \cdot (v - v_*) \sigma ) |^2 | \partial^{m_2}_{\alpha_2} g_2 ( v_* + \sigma \cdot (v - v_*) \sigma ) |^2 \d v_* \d v \d x \Bigg)^\frac{1}{2} \\
	\no & \xlongequal[w_* = v_* + \sigma \cdot (v - v_*) \sigma]{w = v - \sigma \cdot (v - v_* ) \sigma} C \| g_3 \|_{L^2_{x,v} (\nu)} \sum_{\substack{ m_1 + m_2 = m \\ \alpha_1 + \alpha_2 \leq \alpha }} \Bigg( \int_{\T^3} \int_{\R^3} \int_{\R^3} [ \nu(w) + \nu(w_*) ] \\
	\no & \qquad \qquad \qquad \qquad \qquad \qquad \qquad  \times |\partial^{m_1}_{\alpha_1} g_1 (w)|^2 | \partial^{m_2}_{\alpha_2} g_2 (w_*) |^2 \d w_* \d w \d x \Bigg)^\frac{1}{2} \\
	\no & =  C \| g_3 \|_{L^2_{x,v} (\nu)} \sum_{\substack{ m_1 + m_2 = m \\ \alpha_1 + \alpha_2 \leq \alpha }} \Bigg( \underset{\textbf{II}^{m_1 , m_2}_{\alpha_1 , \alpha_2}}{\underbrace{ \int_{\T^3} \| \partial^{m_1}_{\alpha_1} g_1 \|^2_{L^2_v (\nu)} \| \partial^{m_2}_{\alpha_2} g_2 \|^2_{L^2_v} \d x }} \\
	& \qquad \qquad \qquad \qquad 
	\qquad \qquad \qquad \qquad \qquad  + \underset{\textbf{III}^{m_1 , m_2}_{\alpha_1 , \alpha_2}}{\underbrace{ \int_{\T^3} \| \partial^{m_1}_{\alpha_1} g_1 \|^2_{L^2_v} \| \partial^{m_2}_{\alpha_2} g_2 \|^2_{L^2_v (\nu)} \d x }} \Bigg)^\frac{1}{2} \,.
	\end{align}
	
	We will estimate term $ \textbf{II}^{m_1 , m_2}_{\alpha_1 , \alpha_2} $ in the previous equality by employing the H\"older inequality, the Sobolev embeddings $ H^1_x (\T^3) \hookrightarrow L^4_x (\T^3) $ and $ H^2_x (\T^3) \hookrightarrow L^\infty_x ( \T^3 ) $. 
	
	If $|m| + |\alpha| \leq s$ with $\alpha \neq 0$ and the multi-indexes $m_1$, $m_2$, $\alpha_1$ and $\alpha_2$ in $\mathbb{N}^3$ satisfy $|m_1| + |\alpha_1| \leq s -1$, $|m_2| + |\alpha_2| \leq s -1$ and $m_1 + m_2 = m$, $\alpha_1 + \alpha_2 \leq \alpha$, we deduce that
	\begin{equation}
	  \begin{aligned}
	    \textbf{II}^{m_1 , m_2}_{\alpha_1 , \alpha_2} \leq & \big\| \| \partial^{m_1}_{\alpha_1} g_1 \|_{L^2_v (\nu)} \big\|^2_{L^4_x} \big\| \| \partial^{m_2}_{\alpha_2} g_2 \|_{L^2_v} \big\|^2_{L^4_x}  \\
	    \leq & C \| g_1 \|^2_{H^s_{x,v}(\nu)} \| g_2 \|^2_{H^s_{x,v}} \,,
	  \end{aligned}
	\end{equation}
	where the inequality \eqref{Inq-xv-mixed-Holder} is utilized. If $|m| + |\alpha| \leq s$ and $\alpha \neq 0$ satisfying $|m_1| + |\alpha_1| = s$ or $|m_2| + |\alpha_2| = s $, then $m_2 = \alpha_2 = 0$ or  $m_1 = \alpha_1 = 0$, respectively. Then we can estimate that
	\begin{equation}
	  \begin{aligned}
	    \textbf{II}^{0 , m_2}_{0 , \alpha_2} + \textbf{II}^{m_1 , 0}_{\alpha_1 , 0} \leq & \int_{\T^3} \| \partial^{m_1}_{\alpha_1} g_1 \|^2_{L^2_v(\nu)} \| g_2 \|^2_{L^2_v} \d x + \int_{\T^3} \| g_1 \|^2_{L^2_v(\nu)} \| \partial^{m_2}_{\alpha_2} g_2 \|^2_{L^2_v} \d x \\
	    \leq & \| \partial^{m_1}_{\alpha_1} g_1 \|^2_{L^2_{x,v}(\nu)} \| g_2 \|^2_{L^\infty_x L^2_v} + \| g_1 \|^2_{L^\infty_x L^2_v(\nu)} \| \partial^{m_2}_{\alpha_2} g_2 \|^2_{L^2_{x,v}} \\
	    \leq & C \| g_1 \|^2_{H^s_{x,v}(\nu)} \| g_2 \|^2_{H^s_{x,v}} \,.
	  \end{aligned}
	\end{equation}
	Here the condition $s \geq 2$ is required. In summary, we obtain that for $m_1 + m_2 = m $ and $\alpha_1 + \alpha_2 \leq \alpha$ with $|m|+|\alpha| \leq s \, (\alpha \neq 0)$
	\begin{equation}\label{II-bnd}
	  \textbf{II}^{m_1 , m_2}_{\alpha_1 , \alpha_2} \leq C  \| g_1 \|^2_{H^s_{x,v}(\nu)} \| g_2 \|^2_{H^s_{x,v}} \,.
	\end{equation}
    Furthermore, the term $ \textbf{III}^{m_1 , m_2}_{\alpha_1 , \alpha_2} $ can be controlled by borrowing the analogous arguments of the estimates on the term $ \textbf{II}^{m_1 , m_2}_{\alpha_1 , \alpha_2} $ in \eqref{II-bnd}. More precisely,
	\begin{equation}\label{III-bnd}
	  \begin{aligned}
	    \textbf{III}^{m_1 , m_2}_{\alpha_1 , \alpha_2} \leq C  \| g_1 \|^2_{H^s_{x,v}} \| g_2 \|^2_{H^s_{x,v}(\nu)} 
	  \end{aligned}
	\end{equation}
	holds for all $m_1 + m_2 = m $ and $\alpha_1 + \alpha_2 \leq \alpha$ with $|m|+|\alpha| \leq s \, (\alpha \neq 0)$. We thereby derive from plugging the bounds \eqref{II-bnd} and \eqref{III-bnd} into the relation \eqref{Bnd-gain-II+III} that
	\begin{equation}\label{Gain-alpha-neq0}
	  \begin{aligned}
	    \big| \langle I_{gain}, g_3 \rangle_{L^2_{x,v}} \big| \leq &  C \sum_{\substack{ m_1 + m_2 = m \\ \alpha_1 + \alpha_2 \leq \alpha }} \big( \textbf{II}^{m_1 , m_2}_{\alpha_1 , \alpha_2} + \textbf{III}^{m_1 , m_2}_{\alpha_1 , \alpha_2} \big)^\frac{1}{2} \| g_3 \|_{L^2_{x,v}(\nu)}  \\
	    \leq & C \big( \| g_1 \|_{H^s_{x,v}(\nu)} \| g_2 \|_{H^s_{x,v}} + \| g_1 \|_{H^s_{x,v}} \| g_2 \|_{H^s_{x,v}(\nu)} \big) \| g_3 \|_{L^2_{x,v}(\nu)} 
	  \end{aligned}
	\end{equation}
	holds for all $|m| + |\alpha| \leq s$ with $\alpha \neq 0$. Via the similar arguments of \eqref{Gain-alpha-neq0}, one can also yield that
	\begin{equation}\label{Gain-alpha-=0}
	  \begin{aligned}
	    \big| \langle I_{gain}, g_3 \rangle_{L^2_{x,v}} \big| \leq C \big( \| g_1 \|_{H^s_x L^2_v(\nu)} \| g_2 \|_{H^s_x L^2_v} + \| g_1 \|_{H^s_x L^2_v} \| g_2 \|_{H^s_x L^2_v (\nu)} \big) \| g_3 \|_{L^2_{x,v}(\nu)} 
	  \end{aligned}
	\end{equation}
	hold for all $|m| + |\alpha| \leq s$ with $\alpha = 0$.
	
	Combining the bounds \eqref{Loss}, \eqref{Gain-alpha-neq0} and \eqref{Gain-alpha-=0}, we derive from \eqref{Q-g1-g2} that 
	\begin{equation}\label{Q-g1-g2-g3}
	  \begin{aligned}
	    & \big| \langle \partial^m_\alpha \mathcal{Q} (g_1 , g_2) , g_3 \rangle_{L^2_{x,v}} \big| \leq \big| \langle I_{loss}, g_3 \rangle_{L^2_{x,v}} \big| + \big| \langle I_{gain}, g_3 \rangle_{L^2_{x,v}} \big| \\[1mm]
	    \leq & \left\{
	           \begin{array}{l}
	             C \big( \| g_1 \|_{H^s_x L^2_v(\nu)} \| g_2 \|_{H^s_x L^2_v} + \| g_1 \|_{H^s_x L^2_v} \| g_2 \|_{H^s_x L^2_v (\nu)} \big) \| g_3 \|_{L^2_{x,v}(\nu)} \,, \quad \textrm{if } \ \alpha = 0  \\[2mm]
	             C \big( \| g_1 \|_{H^s_{x,v}(\nu)} \| g_2 \|_{H^s_{x,v}} + \| g_1 \|_{H^s_{x,v}} \| g_2 \|_{H^s_{x,v}(\nu)} \big) \| g_3 \|_{L^2_{x,v}(\nu)} \,, \quad\quad \ \, \textrm{if } \ \alpha \neq 0 \,.
	           \end{array}
	         \right.
	  \end{aligned}
	\end{equation}
	holds for all $|m| + |\alpha| \leq s$. Here $s \geq 2$ is required.
	
	We finally estimate the quantity $\l \partial^m_\alpha \Gamma (G, H) , F \r_{L^2_{x,v}} $ for $|m| + |\alpha| \leq s$. Recalling the definition of $\Gamma (G, H)$ in \eqref{Gamma}, i.e.,
	\begin{equation*}
	\begin{aligned}
	\Gamma (G , H ) = & \tfrac{1}{2} [ \mathcal{Q} ( G^+, H^+ ) + \mathcal{Q} ( H^+, G^+ ) + \mathcal{Q} ( G^+, H^- ) + \mathcal{Q} ( H^+, G^- ) \,, \\
	& \qquad  \mathcal{Q} (G^-, H^-) + \mathcal{Q} (H^-, G^-) + \mathcal{Q} (G^- , H^+) + \mathcal{Q} ( H^-, G^+ ) ] \,,
	\end{aligned}
	\end{equation*}
	we have
	\begin{equation}\label{Relt-Gamma}
	\begin{aligned}
	\l \partial^m_\alpha \Gamma (G, H), F \r_{L^2_{x,v}} = & \tfrac{1}{2} \l \partial^m_\alpha \mathcal{Q} ( G^+ , H^+ ) , F^+ \r_{L^2_{x,v}} + \tfrac{1}{2} \l \partial^m_\alpha \mathcal{Q} ( H^+ , G^+ ) , F^+ \r_{L^2_{x,v}} \\
	+ & \tfrac{1}{2} \l \partial^m_\alpha \mathcal{Q} ( G^- , H^- ) , F^- \r_{L^2_{x,v}} + \tfrac{1}{2} \l \partial^m_\alpha \mathcal{Q} ( H^- , G^- ) , F^- \r_{L^2_{x,v}} \\
	+ & \tfrac{1}{2} \l \partial^m_\alpha \mathcal{Q} ( G^+ , H^- ) , F^+ \r_{L^2_{x,v}} + \tfrac{1}{2} \l \partial^m_\alpha \mathcal{Q} ( H^+ , G^- ) , F^+ \r_{L^2_{x,v}} \\
	+ & \tfrac{1}{2} \l \partial^m_\alpha \mathcal{Q} ( G^- , H^+ ) , F^- \r_{L^2_{x,v}} + \tfrac{1}{2} \l \partial^m_\alpha \mathcal{Q} ( H^- , G^+ ) , F^- \r_{L^2_{x,v}} \\
	= & \tfrac{1}{2} \sum_{\tau = \pm} \sum_{\gamma = \pm} \big[ \langle \partial^m_\alpha \mathcal{Q} ( G^\tau , H^\gamma ) , F^\tau \rangle_{L^2_{x,v}} + \langle \partial^m_\alpha \mathcal{Q} ( H^\tau, G^\gamma ) , F^\tau \rangle_{L^2_{x,v}} \big] \,.
	\end{aligned}
	\end{equation}
	Then the inequalities \eqref{Q-g1-g2-g3} imply that
	\begin{equation}
	  \begin{aligned}
	    \langle \partial^m_\alpha \Gamma (G, & H), F \rangle_{L^2_{x,v}} \\
	    \leq & C \sum_{\tau = \pm} \sum_{\gamma = \pm} \big( \| G^\tau \|_{X_\alpha (\nu)} \| H^\gamma \|_{X_\alpha} + \| G^\tau \|_{X_\alpha } \| H^\gamma \|_{X_\alpha (\nu)} \big) \| F^\tau \|_{L^2_{x,v}(\nu)} \\
	    & + C \sum_{\tau = \pm} \sum_{\gamma = \pm} \big( \| H^\tau \|_{X_\alpha (\nu)} \| G^\gamma \|_{X_\alpha} + \| H^\tau \|_{X_\alpha } \| G^\gamma \|_{X_\alpha (\nu)} \big) \| F^\tau \|_{L^2_{x,v}(\nu)} \\
	    \leq & C \big( \| G \|_{X_\alpha (\nu)} \| H \|_{X_\alpha} + \| G \|_{X_\alpha } \| H \|_{X_\alpha (\nu)} \big) \| F \|_{L^2_{x,v}(\nu)} 
	  \end{aligned}
	\end{equation}
	holds for all $|m| + |\alpha| \leq s$, where $X_\alpha = H^s_x L^2_v$ if $\alpha = 0$, while $X_\alpha = H^s_{x,v}$ if $\alpha \neq 0$. Here the facts $\| G^\pm \|_Y \leq C \| G \|_Y$ $(Y = X_\alpha , X_\alpha (\nu), L^2_{x,v}(\nu))$ is also utilized. We thus complete the proof of Lemma \ref{Lmm-Gamma-Torus}.
\end{proof}

\section{Uniform energy estimates on the spatial derivatives.}\label{Sec:Unif-Spatial-Bnd}

At beginning of this section, we give the following local well-posedness of the perturbed VMB system \eqref{VMB-G} with small initial data:

\begin{proposition}\label{Prop-Local-Solutn}
	Let $0 < \eps \leq 1$ and $s \geq 3$. There are $\ell > 0$ and $T^* > 0$, independent of $\eps$, such that if $\mathbb{E}_s (G_\eps^{in}, E_\eps^{in} , B_\eps^{in}) \leq \ell$ and $T^* \leq \sqrt{\ell}$, the Cauchy problem \eqref{VMB-G}-\eqref{IC-VMB-G} admits a unique solution $(G_\eps, E_\eps , B_\eps ) $ satisfying
	\begin{equation}\no
	\begin{aligned}
	G_\eps \in L^\infty (0,T^*; H^s_{x,v}), E_\eps , B_\eps \in L^\infty (0,T^* ; H^s_x)
	\end{aligned}
	\end{equation}
	with the energy bound
	\begin{equation}\label{Local-Energy-Bnd}
	\begin{aligned}
	\sup_{t \in [0, T^*]} \mathbb{E}_s ( G_\eps , E_\eps , B_\eps ) + \tfrac{1}{\eps^2} \int_0^{T^*} \| \mathbb{P}^\perp G_\eps \|^2_{H^s_{x,v}(\nu)} \d t \leq C \ell  \,,
	\end{aligned}
	\end{equation}
	where the constant $C > 0$ is independent of $\eps$.
\end{proposition}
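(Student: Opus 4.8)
\emph{Strategy.} The plan is to construct $(G_\eps,E_\eps,B_\eps)$ on a short interval by a Picard iteration, to obtain a bound on the iterates uniform in both $\eps$ and the iteration index by re-running the energy-estimate machinery of Sections~\ref{Sec:Unif-Spatial-Bnd}--\ref{Sec:Unif-Mix-Bnd-Global} on the linearized problems, and then to upgrade the iterates to a genuine solution by a contraction estimate in one lower Sobolev order. Concretely I would put $(G_\eps^0,E_\eps^0,B_\eps^0):=(G_\eps^{in},E_\eps^{in},B_\eps^{in})$ and, given $(G_\eps^n,E_\eps^n,B_\eps^n)$, define $(G_\eps^{n+1},E_\eps^{n+1},B_\eps^{n+1})$ as the solution of the \emph{linear} system obtained from \eqref{VMB-G} by freezing all transport coefficients, sources and quadratic terms at level $n$: the linear kinetic equation
\begin{equation*}
	\partial_t G_\eps^{n+1} + \tfrac1\eps\big[ v\cdot\nabla_x + \mathsf{q}(\eps E_\eps^n + v\times B_\eps^n)\cdot\nabla_v\big]G_\eps^{n+1} + \tfrac1{\eps^2}\mathscr{L}G_\eps^{n+1} = \tfrac1\eps (E_\eps^n\cdot v)\sqrt M\,\mathsf{q}_1 + \tfrac12\mathsf{q}(E_\eps^n\cdot v)G_\eps^n + \tfrac1\eps\Gamma(G_\eps^n,G_\eps^n) \,,
\end{equation*}
coupled with the linear Maxwell system $\partial_t E_\eps^{n+1}-\nabla_x\times B_\eps^{n+1}=-\tfrac1\eps\langle G_\eps^n,\mathsf{q}_1 v\sqrt M\rangle_{L^2_v}$, $\partial_t B_\eps^{n+1}+\nabla_x\times E_\eps^{n+1}=0$, with the data \eqref{IC-VMB-G}. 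With the nonlinearity frozen the step decouples: $(E_\eps^{n+1},B_\eps^{n+1})$ solves a linear symmetric hyperbolic system and $G_\eps^{n+1}$ a linear transport--relaxation equation, both solvable by standard theory, and the Gauss constraints $\div_x E_\eps^{n+1}=\langle G_\eps^n,\mathsf{q}_1\sqrt M\rangle_{L^2_v}$, $\div_x B_\eps^{n+1}=0$ propagate from $t=0$ by the local charge balance built into the scheme.

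\emph{Uniform bound on $[0,T^*]$.} Writing $\mathcal N_T(G,E,B)=\sup_{[0,T]}\mathbb E_s(G,E,B)+\tfrac1{\eps^2}\int_0^T\|\mathbb{P}^\perp G\|^2_{H^s_{x,v}(\nu)}\,\d t$, the core step is to show that $\mathcal N_{T^*}(G_\eps^n,E_\eps^n,B_\eps^n)\le C_0\ell$ implies $\mathcal N_{T^*}(G_\eps^{n+1},E_\eps^{n+1},B_\eps^{n+1})\le C_0\ell$, for $T^*\le\sqrt\ell$ and $\ell$ small, all independent of $\eps$ and $n$. I would derive this exactly as in Sections~\ref{Sec:Unif-Spatial-Bnd}--\ref{Sec:Unif-Mix-Bnd-Global}, now for the linear system: apply $\partial^m_\alpha$ with $|m|+|\alpha|\le s$, pair with $\partial^m_\alpha G_\eps^{n+1}$ in $L^2_{x,v}$, and sum. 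The streaming operator $\tfrac1\eps v\cdot\nabla_x$ is skew-adjoint, contributing nothing for $|\alpha|=0$ and only lower-$|\alpha|$ commutators otherwise; the Lorentz term uses $\nabla_v\cdot(v\times B_\eps^n)=0$, leaving commutators that carry the small factor $\|B_\eps^n\|_{H^s_x}\le\sqrt{C_0\ell}$. The collision term is handled via $\mathscr L=2\nu\mathbb{I}-\mathscr K$ together with Lemmas~\ref{Lmm-nu-norm} and \ref{Lmm-L}: for $|\alpha|=0$ the coercivity of $\mathscr L$ yields the clean dissipation $\tfrac{c}{\eps^2}\|\mathbb{P}^\perp\partial^m_x G_\eps^{n+1}\|^2_{L^2_{x,v}(\nu)}$, and for $|\alpha|\ge1$ one closes by downward induction on $|\alpha|$, the compact-kernel leftovers $\tfrac{C(\delta)}{\eps^2}\|\partial^m_x G_\eps^{n+1}\|^2$ being controlled by the already-established lower-$|\alpha|$ dissipation and, after splitting $\mathbb{P}+\mathbb{P}^\perp$, by $\mathbb E_s$-type quantities. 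The quadratic term is bounded by Lemma~\ref{Lmm-Gamma-Torus}, so that its $\tfrac1\eps$ is absorbed into the dissipation by Young's inequality at the cost of a quartic remainder $\lesssim(C_0\ell)^2$. The coupling terms $\tfrac1\eps(E_\eps^n\cdot v)\sqrt M\mathsf{q}_1$ and $-\tfrac1\eps\langle G_\eps^n,\mathsf{q}_1 v\sqrt M\rangle_{L^2_v}$ exploit $\mathsf{q}_1 v\sqrt M\in\mathrm{Ker}^\perp(\mathscr L)$: for $|\alpha|=0$ they are conjugate and the singular factor cancels in $\tfrac{\d}{\d t}(\|\partial^m_x G_\eps^{n+1}\|^2+\|\partial^m_x E_\eps^{n+1}\|^2+\|\partial^m_x B_\eps^{n+1}\|^2)$, while for $|\alpha|\ge1$ the velocity derivatives fall on the explicit Maxwellian weight after integration by parts in $v$, and one pairs $\tfrac1\eps$ against the $\mathbb{P}^\perp$-dissipation. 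All remaining contributions are $\eps$-regular and of $\mathbb E_s$-type, hence dispatched crudely by $\int_0^{T^*}(\cdots)\,\d t\le T^*\sup_{[0,T^*]}(\cdots)$; Gr\"onwall together with $T^*\le\sqrt\ell$ then closes the induction for a suitable absolute constant $C_0$.

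\emph{Contraction, limit and uniqueness.} With the iterates bounded uniformly in $\mathcal N_{T^*}$, I would estimate the differences $\delta G^n:=G_\eps^{n+1}-G_\eps^n$, $\delta E^n$, $\delta B^n$ in the lower-order energy $\mathbb E_{s-1}$ with its $\tfrac1{\eps^2}$ dissipation; since $H^{s-1}_x(\T^3)\hookrightarrow L^\infty_x$ for $s\ge3$, the bilinear and transport-coefficient differences are Lipschitz in this topology by Lemma~\ref{Lmm-Gamma-Torus} and the uniform bounds, and the $\tfrac1\eps$ coupling terms are again matched against the dissipation. This gives $\mathcal N^{(s-1)}_{T^*}(\delta G^n,\delta E^n,\delta B^n)\le\tfrac12\,\mathcal N^{(s-1)}_{T^*}(\delta G^{n-1},\delta E^{n-1},\delta B^{n-1})$ after possibly shrinking $T^*$ (still $\eps$-independent, by smallness), so $(G_\eps^n,E_\eps^n,B_\eps^n)$ converges in $C([0,T^*];H^{s-1}_{x,v}\times H^{s-1}_x\times H^{s-1}_x)$; the uniform $H^s$ bound promotes the limit to $G_\eps\in L^\infty(0,T^*;H^s_{x,v})$, $\mathbb{P}^\perp G_\eps\in L^2(0,T^*;H^s_{x,v}(\nu))$, $E_\eps,B_\eps\in L^\infty(0,T^*;H^s_x)$, and passing to the limit in the linear iteration identifies it as a solution of \eqref{VMB-G}--\eqref{IC-VMB-G}. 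The bound \eqref{Local-Energy-Bnd} is inherited by lower semicontinuity with $C$ independent of $\eps$, and uniqueness follows by applying the same difference estimate to two solutions.

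\emph{Main obstacle.} The only genuine difficulty is the $\eps$-uniformity of the estimates: every term carrying a negative power of $\eps$ — the $\tfrac1\eps$ streaming and Lorentz commutators generated by velocity derivatives, the $\tfrac1{\eps^2}$ linearized collision operator, and the $\tfrac1\eps$ Ohm-type coupling between the kinetic equation and Amp\`ere's law — must be controlled on an interval $T^*$ that does not shrink as $\eps\to0$. I expect the crux to be exactly the bookkeeping that matches each such term against the $\tfrac1{\eps^2}\|\mathbb{P}^\perp G\|^2_{H^s_{x,v}(\nu)}$ dissipation produced by the coercivity of $\mathscr L$, using the null structures $\nabla_v\cdot(v\times B)=0$ and the skew-adjointness of $v\cdot\nabla_x$, the structural Lemmas~\ref{Lmm-CF-nu}--\ref{Lmm-Gamma-Torus}, the $\mathrm{Ker}^\perp(\mathscr L)$-membership of the coupling currents, and the smallness of the data; this is precisely the content of Sections~\ref{Sec:Unif-Spatial-Bnd}--\ref{Sec:Unif-Mix-Bnd-Global}, and here it is simply redeployed for a linear iteration on a short interval where the genuinely nonlinear, $\eps$-regular remainders are absorbed by the short time.
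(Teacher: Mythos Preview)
Your overall strategy—Picard iteration with uniform-in-$\eps$ energy bounds, followed by contraction in a lower norm—matches the paper's Appendix, but there is a real discrepancy in the iteration scheme that makes one of your claims false as written.

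In your scheme the step is fully decoupled: the kinetic source is $\tfrac{1}{\eps}(E_\eps^{n}\cdot v)\sqrt M\,\mathsf{q}_1$ and the Amp\`ere source is $-\tfrac{1}{\eps}\langle G_\eps^{n},\mathsf{q}_1 v\sqrt M\rangle_{L^2_v}$. You then assert that for $|\alpha|=0$ these two contributions are ``conjugate and the singular factor cancels in $\tfrac{\d}{\d t}(\|\partial^m_x G_\eps^{n+1}\|^2+\|\partial^m_x E_\eps^{n+1}\|^2+\|\partial^m_x B_\eps^{n+1}\|^2)$.'' This is not true: pairing gives $\tfrac{1}{\eps}\langle \partial^m_x E_\eps^{n},\langle \partial^m_x G_\eps^{n+1},\mathsf{q}_1 v\sqrt M\rangle\rangle_{L^2_x}$ from the kinetic equation and $-\tfrac{1}{\eps}\langle \langle \partial^m_x G_\eps^{n},\mathsf{q}_1 v\sqrt M\rangle,\partial^m_x E_\eps^{n+1}\rangle_{L^2_x}$ from Amp\`ere; the indices are mismatched and no cancellation occurs. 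The paper avoids this by keeping the step \emph{coupled}: its iterate \eqref{Iter-Appro-Systm} uses $E_\eps^{n+1}$ in the kinetic source and $G_\eps^{n+1}$ in the Amp\`ere source, so that the computation in \eqref{Iter-Inq-2} is an exact identity yielding $\tfrac12\tfrac{\d}{\d t}(\|\partial^m_x E_\eps^{n+1}\|^2+\|\partial^m_x B_\eps^{n+1}\|^2)$. That coupling is the point of the paper's scheme; the linear problem in $(G_\eps^{n+1},E_\eps^{n+1},B_\eps^{n+1})$ is still solvable by standard theory and the energy structure of the full system survives intact.

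Your route is salvageable, but only through the other mechanism you mention: since $\mathsf{q}_1 v\sqrt M\in\mathrm{Ker}^\perp(\mathscr L)$, both singular terms see only $\mathbb{P}^\perp G_\eps^{n+1}$ or $\mathbb{P}^\perp G_\eps^{n}$, and Young's inequality lets you trade $\tfrac{1}{\eps}$ against the $\tfrac{1}{\eps^2}$ kinetic dissipation at level $n+1$ and against the \emph{previous} dissipation $\tfrac{1}{\eps^2}\int_0^{T^*}\|\mathbb{P}^\perp G_\eps^{n}\|^2$, already controlled by the induction hypothesis. Closing then requires balancing the Young parameters so that the constant in front of the inherited dissipation is strictly less than one while the compensating constant on $\|E_\eps^{n+1}\|^2$ is absorbed by Gr\"onwall over the short time $T^*\le\sqrt\ell$; this works but is more delicate than the paper's exact cancellation. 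A related side-effect of your decoupling is that the Gauss law you write, $\div_x E_\eps^{n+1}=\langle G_\eps^{n},\mathsf{q}_1\sqrt M\rangle_{L^2_v}$, is not actually propagated by your scheme (the charge balance at level $n$ involves $G_\eps^{n-1}$), whereas in the paper's coupled scheme $\div_x E_\eps^{n+1}=\langle G_\eps^{n+1},\mathsf{q}_1\sqrt M\rangle_{L^2_v}$ is preserved. None of this breaks your argument irreparably, but you should either switch to the paper's coupled iterate or drop the cancellation claim and carry out the Young-inequality bookkeeping explicitly.
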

The proof of Proposition \ref{Prop-Local-Solutn} will be given in Section Appendix \ref{Sec:Appendix-Local-Solutn}. Our goal of this section is to  derive the uniform energy estimates on the spatial derivatives to the perturbed VMB system \eqref{VMB-G}. For notational simplicity, we drop the lower index $\eps$, i.e., 
\begin{equation}\label{VMB-G-drop-eps}
  \left\{
    \begin{array}{l}
      \partial_t G + \tfrac{1}{\eps} \big[ v \cdot  \nabla_x G + \textsf{q} ( \eps E + v \times B ) \cdot \nabla_v \big]  G + \tfrac{1}{\eps^2} \mathscr{L} G - \tfrac{1}{\eps}  (E \cdot v) \sqrt{M}  \textsf{q}_1 \\[2mm]
      \qquad \qquad = \tfrac{1}{2} \mathsf{q} (E \cdot v) G + \tfrac{1}{\eps} \Gamma (G, G) \,,\\[2mm]
      \partial_t E - \nabla_x \times B = - \tfrac{1}{\eps} \int_{\R^3} G \cdot \textsf{q}_1 v \sqrt{M} \d v \,,\\[2mm]
      \partial_t B + \nabla_x \times E = 0 \,,\\[2mm]
      \div_x E = \int_{\R^3} G \cdot \textsf{q}_1 \sqrt{M} \d v \,, \ \div_x B = 0 \,.
    \end{array}
  \right.
\end{equation}
The {\em key points} are the following three aspects: First, the two species linearized collision operator part $ \tfrac{1}{\eps^2} \mathscr{L} G $ will give us kinetic dissipation term $ \tfrac{1}{\eps^2} \| \mathbb{P}^\perp G \|^2_{H^s_x L^2_v (\nu)} $ with singularity $\tfrac{1}{\eps^2}$. Secondly, by the {\em micro-macro decomposition}, we can obtain a fluid dissipation term $\| \nabla_x \mathbb{P} G \|^2_{H^{s-1}_x L^2_v} $. Finally, we find the construction of Ohm's law, which will gives us a damping term $\partial_t B$ of the Farady equation, so that we can get the global energy estimate on the electric field $E$ and magnetic field $B$.

\subsection{Energy estimates with kinetic dissipation.} In this subsection, we will give the energy estimates of the spatial derivatives with kinetic dissipation by direct energy methods. More precisely, we prove the following proposition.
\begin{proposition}\label{Prop-Spatial}
	Assume that $(G, E, B)$ is the solution to the perturbed VMB system \eqref{VMB-G} constructed in Proposition \ref{Prop-Local-Solutn}. Let the integer $s \geq 3$. Then there are constants $\lambda > 0$ and $C > 0$, independent of $\eps > 0$, such that
	\begin{equation}\label{Spatial-Bnd}
	\begin{aligned}
	& \tfrac{1}{2} \tfrac{\d }{\d t} \left( \| G \|^2_{H^s_x L^2_v} + \| E \|^2_{H^s_x} + \| B \|^2_{H^s_x} \right) + \tfrac{\lambda}{\eps^2} \| \mathbb{P}^\perp G \|^2_{H^s_x L^2_v(\nu)} \\
	\leq & C \| E \|_{H^s_x} \left( \| \mathbb{P} G \|^2_{H^s_x L^2_v} + \| \mathbb{P}^\perp G \|^2_{H^s_x L^2_v (\nu)} \right) \\
	+ & C \| E \|_{H^s_x} \| \mathbb{P}^\perp G \|_{H^s_x L^2_v (\nu)} \Big( \| \mathbb{P}^\perp G \|_{H^s_x L^2_v (\nu)} + \sum_{|m| \leq s - 1} \| \nabla_v \partial^m_x \mathbb{P}^\perp G \|_{L^2_{x,v}(\nu)} \Big) \\
	+ & \frac{C}{\eps} \left( \| G \|_{H^s_x L^2_v} + \| B \|_{H^s_x} \right) \| \mathbb{P}^\perp G \|_{H^s_x L^2_v (\nu)} \\
	& \quad \times \Big( \| \mathbb{P} G \|_{H^s_x L^2_v} + \| \mathbb{P}^\perp G \|_{H^s_x L^2_v (\nu)} +  \sum_{|m| \leq s - 1} \| \nabla_v \partial^m_x \mathbb{P}^\perp G \|_{L^2_{x,v}(\nu)} \Big) \,.
	\end{aligned}
	\end{equation}
\end{proposition}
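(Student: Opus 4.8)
The plan is to differentiate the three evolution equations of \eqref{VMB-G-drop-eps} by $\partial^m_x$ for each multi-index $|m|\le s$, to test them respectively against $\partial^m_x G$ in $L^2_{x,v}$, against $\partial^m_x E$ in $L^2_x$ and against $\partial^m_x B$ in $L^2_x$, and then to add the three identities and sum over $|m|\le s$. The left-hand side of \eqref{Spatial-Bnd} emerges from four structural cancellations: (i) the free streaming gives $\tfrac1\eps\langle v\cdot\nabla_x\partial^m_x G,\partial^m_x G\rangle_{L^2_{x,v}}=0$ after integrating by parts in $x$ on the torus; (ii) since $\partial^m_x$ commutes with $\P$ and with $\mathscr{L}$, the coercivity in Lemma \ref{Lmm-L}(3) yields $\tfrac1{\eps^2}\langle\mathscr{L}\partial^m_x G,\partial^m_x G\rangle_{L^2_{x,v}}\ge\tfrac\lambda{\eps^2}\|\P^\perp\partial^m_x G\|^2_{L^2_{x,v}(\nu)}$, which upon summation is $\tfrac\lambda{\eps^2}\|\P^\perp G\|^2_{H^s_xL^2_v(\nu)}$; (iii) the $O(\tfrac1\eps)$ coupling term $\tfrac1\eps(E\cdot v)\sqrt{M}\,\mathsf{q}_1$ tested against $\partial^m_x G$ produces $\tfrac1\eps\int_{\T^3}\partial^m_x E\cdot\int_{\R^3}v\sqrt{M}\,(\mathsf{q}_1\cdot\partial^m_x G)\,\d v\,\d x$, which is exactly opposite to the Amp\`ere current $-\tfrac1\eps\int_{\R^3}\partial^m_x G\cdot\mathsf{q}_1 v\sqrt{M}\,\d v$ tested against $\partial^m_x E$, so the singular coupling cancels; (iv) $-\langle\nabla_x\times\partial^m_x B,\partial^m_x E\rangle_{L^2_x}+\langle\nabla_x\times\partial^m_x E,\partial^m_x B\rangle_{L^2_x}=0$. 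This leaves on the left the three $\tfrac12\tfrac{\d}{\d t}\|\cdot\|^2$ terms and the kinetic dissipation, and on the right the nonlinear terms $\langle\partial^m_x(\mathsf{q} E\cdot\nabla_v G),\partial^m_x G\rangle$, $\tfrac1\eps\langle\partial^m_x(\mathsf{q}(v\times B)\cdot\nabla_v G),\partial^m_x G\rangle$, $\tfrac12\langle\partial^m_x(\mathsf{q}(E\cdot v)G),\partial^m_x G\rangle$ and $\tfrac1\eps\langle\partial^m_x\Gamma(G,G),\partial^m_x G\rangle$, which must be controlled by the right-hand side of \eqref{Spatial-Bnd}.

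For each of these the plan is to insert the micro--macro decomposition $G=\P G+\P^\perp G$ into both slots of the inner product, using repeatedly that $\partial^m_x\P G$ is a polynomial in $v$ times $\sqrt{M}$, so that $v$-moments against it cost nothing and the weight $\nu(v)\lesssim\langle v\rangle$ is immaterial there. The collisional nonlinearity is handled by Lemma \ref{Lmm-Gamma-Torus}(2) with $\alpha=0$: since $\Gamma(G,G)\in\mathrm{Ker}^\perp(\mathscr{L})$ and $\partial^m_x$ commutes with $\P$, only the pairing with $\P^\perp\partial^m_x G$ survives, giving $\tfrac C\eps\|G\|_{H^s_xL^2_v}\bigl(\|\P G\|_{H^s_xL^2_v}+\|\P^\perp G\|_{H^s_xL^2_v(\nu)}\bigr)\|\P^\perp G\|_{H^s_xL^2_v(\nu)}$, i.e.\ (a part of) the third term on the right of \eqref{Spatial-Bnd}. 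The two electric-force contributions carry no $\tfrac1\eps$, hence need only one dissipative/small factor: in $\mathsf{q} E\cdot\nabla_v G$ the Leibniz term with no $x$-derivative on $E$ vanishes because $\div_v E=0$, and the remaining terms, after an integration by parts in $v$ whenever $\nabla_v$ would fall on the top-order $\P^\perp G$ and after distributing the surviving $x$-derivatives by the Sobolev embeddings $H^2_x\hookrightarrow L^\infty_x$ and $H^1_x\hookrightarrow L^4_x$ exactly as in the proof of Lemma \ref{Lmm-Gamma-Torus}, produce the first term $C\|E\|_{H^s_x}(\|\P G\|^2_{H^s_xL^2_v}+\|\P^\perp G\|^2_{H^s_xL^2_v(\nu)})$ together with $C\|E\|_{H^s_x}\|\P^\perp G\|_{H^s_xL^2_v(\nu)}\bigl(\|\P^\perp G\|_{H^s_xL^2_v(\nu)}+\sum_{|m|\le s-1}\|\nabla_v\partial^m_x\P^\perp G\|_{L^2_{x,v}(\nu)}\bigr)$; the last sum comes precisely from pairings $\langle\partial^{m_1}_x E\cdot\nabla_v\partial^{m_2}_x\P^\perp G,\partial^m_x\P^\perp G\rangle$ with $|m_1|\ge1$, which forces $|m_2|\le s-1$. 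The source term $\tfrac12\mathsf{q}(E\cdot v)G$ is a cubic term estimated the same way (its factor $|v|$ against $\P^\perp G$ being absorbed by $|v|\lesssim\nu(v)$ and Lemma \ref{Lmm-nu-norm}(1)) and lands in the first term.

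The genuine obstacle will be the magnetic force $\tfrac1\eps\mathsf{q}(v\times B)\cdot\nabla_v G$: because of the singular prefactor, \emph{every} surviving contribution must retain a factor $\|\partial^m_x\P^\perp G\|_{L^2_{x,v}(\nu)}$ in order to match \eqref{Spatial-Bnd}, and this is invisible until one uses the velocity algebra. The Leibniz term with all $x$-derivatives on $G$ equals $\tfrac1{2\eps}\int(v\times B)\cdot\nabla_v\bigl(|\partial^m_x G^+|^2-|\partial^m_x G^-|^2\bigr)=0$ since $\div_v(v\times B)=0$. For the remaining terms, where at least one $x$-derivative hits $B$, I would split both $G$-slots and exploit the identities $(v\times B)\cdot v=0$ and, crucially, $\mathsf{q}\,(v\times B)\cdot\nabla_v\P G=[v\cdot(B\times u)]\,\mathsf{q}_1\sqrt{M}$, where $u$ is the bulk velocity of $\P G$; this function is both Gaussian-localized \emph{and} orthogonal to $\mathrm{Ker}(\mathscr{L})$ in $L^2_v$, the orthogonality being due to the appearance of $\mathsf{q}_1=[1,-1]$ rather than $\mathsf{q}_2=[1,1]$ (one checks $\P[v_i\mathsf{q}_1\sqrt{M}]=0$). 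Hence the $\P G$-against-$\P G$ contribution vanishes identically; the $\P G$-against-$\P^\perp G$ and, after one integration by parts in $v$ so that $\nabla_v$ lands on the Gaussian $\partial^m_x\P G$, the $\P^\perp G$-against-$\P G$ contributions are bounded (the Gaussian absorbing the $\nu$-weight of the macroscopic factor) by $\tfrac C\eps\|B\|_{H^s_x}\|\P G\|_{H^s_xL^2_v}\|\P^\perp G\|_{H^s_xL^2_v(\nu)}$; and the $\P^\perp G$-against-$\P^\perp G$ contribution, using $|v|\lesssim\nu(v)$ to distribute the $\nu$-weight and the constraint $|m_2|\le s-1$ forced by $|m_1|\ge1$, is bounded by $\tfrac C\eps\|B\|_{H^s_x}\bigl(\sum_{|m|\le s-1}\|\nabla_v\partial^m_x\P^\perp G\|_{L^2_{x,v}(\nu)}\bigr)\|\P^\perp G\|_{H^s_xL^2_v(\nu)}$. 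Collecting all contributions, summing over $|m|\le s$, and relabelling the constant $\lambda$ gives \eqref{Spatial-Bnd}.
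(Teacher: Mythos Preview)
Your proposal is correct and follows essentially the same approach as the paper: the same energy identity, the same use of the coercivity of $\mathscr{L}$, the same micro--macro splitting of the force and collision terms, and the same crucial cancellation of the $\P G$--$\P G$ magnetic contribution. Your way of seeing that cancellation --- via $\mathsf{q}(v\times B)\cdot\nabla_v\P G=[v\cdot(B\times u)]\mathsf{q}_1\sqrt{M}$ and the observation that $v_i\mathsf{q}_1\sqrt{M}\in\mathrm{Ker}^\perp(\mathscr{L})$ --- is a slightly more conceptual rephrasing of the paper's explicit computation~\eqref{Cancellation-B-G}, and your handling of the singular coupling (testing the Maxwell equations separately and cancelling) is equivalent to the paper's substitution~\eqref{Spatial-2}.
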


\begin{proof}[Proof of Proposition \ref{Prop-Spatial}]
For all multi-index $m \in \mathbb{N}^3$ satisfying $|m| \leq s$ ($s \geq 3$), we act $\partial^m_x$ on the first equation of \eqref{VMB-G-drop-eps} and take the $L^2_{x,v}$-inner product with $\partial^m_x G$. Then we have
\begin{equation}\label{Spatial-0}
  \begin{aligned}
    & \tfrac{1}{2} \tfrac{\d}{\d t} \| \partial^m_x G \|^2_{L^2_{x,v}} + \tfrac{1}{\eps^2} \l \mathscr{L} \partial^m_x G , \partial^m_x G \r_{L^2_{x,v}} - \tfrac{1}{\eps} \l ( \partial^m_x E \cdot v ) \sqrt{M} \mathsf{q}_1, \partial^m_x G \r_{L^2_{x,v}} \\
    = & \tfrac{1}{2} \l \mathsf{q} \partial^m_x [ (E \cdot v) G ] , \partial^m_x G \r_{L^2_{x,v}} + \tfrac{1}{\eps} \l \partial^m_x \Gamma (G, G) , \partial^m_x G \r_{L^2_{x,v}} \\
    & - \tfrac{1}{\eps} \sum_{0 \neq m' \leq m} \l \mathsf{q} \partial^{m'}_x ( \eps E + v \times B ) \cdot \nabla_v \partial^{m-m'}_x G , \partial^m_x G \r_{L^2_{x,v}} \,.
  \end{aligned}
\end{equation}
First, from Part (3) of Lemma \ref{Lmm-L}, we know that there is a $\lambda > 0$ such that
\begin{equation}\label{Spatial-1}
  \tfrac{1}{\eps^2} \l \mathscr{L} \partial^m_x G , \partial^m_x G \r_{L^2_{x,v}} \geq \tfrac{\lambda}{\eps^2} \| \partial^m_x \mathbb{P}^\perp G \|^2_{L^2_{x,v}(\nu)} \,,
\end{equation}
which gives us the kinetic dissipation. Next, by the second Amp\`ere's equation and the third Faraday's equation in \eqref{VMB-G-drop-eps}, we compute that
\begin{equation}\label{Spatial-2}
  \begin{aligned}
    - & \tfrac{1}{\eps} \l ( \partial^m_x E \cdot v ) \sqrt{M} \mathsf{q}_1, \partial^m_x G \r_{L^2_{x,v}} = - \tfrac{1}{\eps} \int_{\T^3} \left( \int_{\R^3} \partial^m_x G \cdot \mathsf{q}_1 v \sqrt{M} \d v \right) \cdot \partial^m_x E \d x \\
    = & \int_{\T^3} \left( \partial_t \partial^m_x E - \nabla_x \times \partial^m_x B \right) \cdot \partial^m_x E \d x = \tfrac{1}{2} \tfrac{\d}{\d t} \| \partial^m_x E \|^2_{L^2_x} - \int_{\T^3} ( \nabla_x \times \partial^m_x B ) \cdot \partial^m_x E \d x \\
    = & \tfrac{1}{2} \tfrac{\d}{\d t} \| \partial^m_x E \|^2_{L^2_x} - \int_{\T^3} ( \nabla_x \times \partial^m_x E ) \cdot \partial^m_x B \d x = \tfrac{1}{2} \tfrac{\d}{\d t} \| \partial^m_x E \|^2_{L^2_x} + \int_{\T^3} \partial_t \partial^m_x B \cdot \partial^m_x B \d x \\
    = &  \tfrac{1}{2} \tfrac{\d}{\d t} \left( \| \partial^m_x E \|^2_{L^2_x} + \| \partial^m_x B \|^2_{L^2_x} \right) \,,
  \end{aligned}
\end{equation}
where we utilize the relation $ \int_{\T^3} ( \nabla_x \times \partial^m_x B ) \cdot \partial^m_x E \d x = \int_{\T^3} ( \nabla_x \times \partial^m_x E ) \cdot \partial^m_x B \d x $.

We now deal with the term $\tfrac{1}{2} \l \mathsf{q} \partial^m_x [ (E \cdot v) G ] , \partial^m_x G \r_{L^2_{x,v}}$ for all $|m| \leq s$. We make use of the decomposition $G = \mathbb{P} G + \mathbb{P}^\perp G$ and then obtain 
\begin{equation}
  \begin{aligned}
    & \tfrac{1}{2} \l \mathsf{q} \partial^m_x [ (E \cdot v) G ] , \partial^m_x G \r_{L^2_{x,v}} \\
    = & \tfrac{1}{2} \sum_{m' \leq m} C_m^{m'} \l ( \partial^{m'}_x E \cdot v ) \left( \mathsf{q} \partial^{m-m'}_x \mathbb{P} G + \mathsf{q} \partial^{m-m'}_x \mathbb{P}^\perp G \right) , \partial^m_x \mathbb{P} G + \partial^m_x \mathbb{P} G \r_{L^2_{x,v}}  \\
    = & \underset{\textbf{A}_1}{ \underbrace{ \tfrac{1}{2} \sum_{m' \leq m} C_m^{m'} \l ( \partial^{m'}_x E \cdot v ) \mathsf{q} \partial^{m-m'}_x \mathbb{P} G , \partial^m_x \mathbb{P} G \r_{L^2_{x,v}} } } \\
    & + \underset{\textbf{A}_2}{ \underbrace{ \tfrac{1}{2} \sum_{m' \leq m} C_m^{m'} \l ( \partial^{m'}_x E \cdot v ) \mathsf{q} \partial^{m-m'}_x \mathbb{P} G , \partial^m_x \mathbb{P}^\perp G \r_{L^2_{x,v}} } } \\
    & + \underset{\textbf{A}_3}{ \underbrace{ \tfrac{1}{2} \sum_{m' \leq m} C_m^{m'} \l ( \partial^{m'}_x E \cdot v ) \mathsf{q} \partial^{m-m'}_x \mathbb{P}^\perp G , \partial^m_x \mathbb{P} G \r_{L^2_{x,v}} } } \\
    & + \underset{\textbf{A}_4}{ \underbrace{ \tfrac{1}{2} \sum_{m' \leq m} C_m^{m'} \l ( \partial^{m'}_x E \cdot v ) \mathsf{q} \partial^{m-m'}_x \mathbb{P}^\perp G , \partial^m_x \mathbb{P}^\perp G \r_{L^2_{x,v}} } } \,.
  \end{aligned}
\end{equation}
For the term $\textbf{A}_1$, we derive from the definition of \eqref{Boltzmann-Proj}, the H\"older inequality, the Sobolev embeddings $ H^2_x (\T^3) \hookrightarrow L^\infty_x (\T^3) $, $ H^1_x (\T^3) \hookrightarrow L^4_x (\T^3) $ that for all $|m| \leq s \, (s \geq 2)$
\begin{equation}
  \begin{aligned}
    \textbf{A}_1 \leq & C \sum_{0 \neq m' < m} \l |\partial^{m'}_x E| \, | v \mathsf{q} \partial^{m-m'}_x \mathbb{P} G | \,, | \partial^m_x \mathbb{P} G | \r_{L^2_{x,v}} \\
    & + C \l |\partial^m_x E | \, |v \mathsf{q} \mathbb{P} G| + |E| \, |v \mathsf{q} \partial^m_x \mathbb{P} G| \,, | \partial^m_x \mathbb{P} G | \r_{L^2_{x,v}} \\
    \leq & C \sum_{0 \neq m' < m} \| \partial^m_x E \|_{L^4_x} \| \partial^{m-m'}_x \mathbb{P} G \|_{L^4_x L^2_v} \| \partial^m_x \mathbb{P} G \|_{L^2_{x,v}} \\
    & + C \left( \| \partial^m_x E \|_{L^2_x} \| \mathbb{P} G \|_{L^\infty_x L^2_v} + \| E \|_{L^\infty_x} \| \partial^m_x \mathbb{P} G \|_{L^2_{x,v}} \right) \| \partial^m_x \mathbb{P} G \|_{L^2_{x,v}} \\
    \leq & C \| E \|_{H^s_x} \| \mathbb{P} G \|^2_{H^s_x L^2_v} \,.
  \end{aligned}
\end{equation}
For the terms $\textbf{A}_2$ and $\textbf{A}_3$, we similarly have
\begin{equation}
  \textbf{A}_2 + \textbf{A}_3 \leq C \| E \|_{H^s_x} \| \mathbb{P} G \|_{H^s_x L^2_v} \| \mathbb{P}^\perp G \|_{H^s_x L^2_v (\nu)} \,.
\end{equation}
For the term $\textbf{A}_4$,
\begin{equation}
  \begin{aligned}
    \textbf{A}_4 \leq & C \sum_{0 \neq m' < m} \l | \partial^{m'}_x E | \, | \nu \mathsf{q} \partial^{m-m'}_x \mathbb{P}^\perp G | \,, | \partial^m_x \mathbb{P}^\perp G | \r_{L^2_{x,v}} \\
    & + C \l |\partial^m_x E| \, | \nu \mathsf{q} \mathbb{P}^\perp G | + | E | \, | \nu(v) \mathsf{q} \partial^m_x \mathbb{P}^\perp G | \,, | \partial^m_x \mathbb{P}^\perp G | \r_{L^2_{x,v}} \\
    \leq & C \sum_{0 \neq m' < m} \| \partial^{m'}_x E \|_{L^4_x} \| \partial^{m-m'}_x \mathbb{P}^\perp G \|_{L^4_x L^2_v (\nu)} \| \partial^m_x \mathbb{P}^\perp G \|_{L^2_{x,v}(\nu)} \\
    + & C \left( \| \partial^m_x E \|_{L^2_x} \| \mathbb{P}^\perp G \|_{L^\infty_x L^2_v (\nu)} + \| E \|_{L^\infty_x} \| \partial^m_x \mathbb{P}^\perp G \|_{L^2_{x,v} (\nu)} \right) \| \partial^m_x \mathbb{P}^\perp G \|_{L^2_{x,v}(\nu)} \\
    \leq & C \| E \|_{H^s_x} \| \mathbb{P}^\perp G \|_{H^s_x L^2_v (\nu)} \| \mathbb{P}^\perp G \|_{H^s_x L^2_v (\nu)} \,,
  \end{aligned}
\end{equation}
holds for all $|m| \leq s \, (s \geq 2)$, which is derived from the fact $|v| \leq \nu(v)$ (implied by the part (1) of Lemma \ref{Lmm-CF-nu}), the H\"older inequality, the relation \eqref{Inq-xv-mixed-Holder}, i.e., $ \big\| \nabla_x \| g \|_{L^2_v (w)} \big\|_{L^2_x} \leq \| \nabla_x g \|_{L^2_{x,v} (w)} $ and the Sobolev embeddings $ H^2_x (\T^3) \hookrightarrow L^\infty_x (\T^3) $, $ H^1_x (\T^3) \hookrightarrow L^4_x (\T^3) $. In summary, we have
\begin{equation}\label{Spatial-3}
  \begin{aligned}
     \tfrac{1}{2} & \l \mathsf{q} \partial^m_x [ (E \cdot v) G ] , \partial^m_x G \r_{L^2_{x,v}} \\
     & \leq C \left( \| \mathbb{P} G \|^2_{H^s_x L^2_v} + \| \mathbb{P}^\perp G \|^2_{H^s_x L^2_v (\nu)} + \| \mathbb{P}^\perp G \|_{H^s_x L^2_v (\nu)} \| \mathbb{P}^\perp G \|_{H^s_x L^2_v (\nu)} \right) \| E \|_{H^s_x} \,.
  \end{aligned}
\end{equation}

We now estimate the term $\frac{1}{\eps} \l \partial^m_x \Gamma (G, H) \,, \partial^m_x G \r_{L^2_{x,v}}$ by employing Lemma \ref{Lmm-Gamma-Torus}. More precisely, 
\begin{equation}\label{Spatial-4}
  \begin{aligned}
    & \frac{1}{\eps} \l \partial^m_x \Gamma (G, G) \,, \partial^m_x G \r_{L^2_{x,v}} = \frac{1}{\eps} \l \partial^m_x \Gamma (G, G) \,, \partial^m_x \mathbb{P}^\perp G \r_{L^2_{x,v}} \\
    \leq & \frac{C_\Gamma}{\eps} \| G \|_{H^s_x L^2_v} \| G \|_{H^s_x L^2_v(\nu)} \| \mathbb{P}^\perp G \|_{H^s_x L^2_v (\nu)} \\
    \leq & \frac{C_\Gamma}{\eps} \| G \|_{H^s_x L^2_v} \left( \| \mathbb{P} G \|_{H^s_x L^2_v(\nu)} + \| \mathbb{P}^\perp G \|_{H^s_x L^2_v(\nu)} \right) \| \mathbb{P}^\perp G \|_{H^s_x L^2_v (\nu)} \\
    \leq & \frac{C_\Gamma}{\eps} \| G \|_{H^s_x L^2_v} \left( \| \mathbb{P} G \|_{H^s_x L^2_v} + \| \mathbb{P}^\perp G \|_{H^s_x L^2_v(\nu)} \right) \| \mathbb{P}^\perp G \|_{H^s_x L^2_v (\nu)} \,.
  \end{aligned}
\end{equation}

It remains to estimate the term $ - \frac{1}{\eps} \sum\limits_{0 \neq m' \leq m} C_m^{m'} \l \mathsf{q} \partial^{m'}_x ( \eps E + v \times B ) \cdot \nabla_v \partial^{m-m'}_x G , \partial^m_x G \r_{L^2_{x,v}} $ carefully. By using the relation $G = \mathbb{P} G + \mathbb{P}^\perp G$, it can easily be decomposed as four parts:
\begin{equation}
  \begin{aligned}
     & - \tfrac{1}{\eps} \sum_{0 \neq m' \leq m} C_m^{m'} \l \mathsf{q} \partial^{m'}_x ( \eps E + v \times B ) \cdot \nabla_v \partial^{m-m'}_x G , \partial^m_x G \r_{L^2_{x,v}} \\
     = & \underset{\textbf{B}_1}{ \underbrace{ - \tfrac{1}{\eps} \sum_{0 \neq m' \leq m} C_m^{m'} \l \mathsf{q} \partial^{m'}_x ( \eps E + v \times B ) \cdot \nabla_v \partial^{m-m'}_x \mathbb{P} G , \partial^m_x \mathbb{P} G \r_{L^2_{x,v}}  } } \\
     &  \underset{\textbf{B}_2}{ \underbrace{ - \tfrac{1}{\eps} \sum_{0 \neq m' \leq m} C_m^{m'} \l \mathsf{q} \partial^{m'}_x ( \eps E + v \times B ) \cdot \nabla_v \partial^{m-m'}_x \mathbb{P}^\perp G , \partial^m_x \mathbb{P} G \r_{L^2_{x,v}}  } } \\
     &  \underset{\textbf{B}_3}{ \underbrace{ - \tfrac{1}{\eps} \sum_{0 \neq m' \leq m} C_m^{m'} \l \mathsf{q} \partial^{m'}_x ( \eps E + v \times B ) \cdot \nabla_v \partial^{m-m'}_x \mathbb{P} G , \partial^m_x \mathbb{P}^\perp G \r_{L^2_{x,v}}  } } \\
     &  \underset{\textbf{B}_4}{ \underbrace{ - \tfrac{1}{\eps} \sum_{0 \neq m' \leq m} C_m^{m'} \l \mathsf{q} \partial^{m'}_x ( \eps E + v \times B ) \cdot \nabla_v \partial^{m-m'}_x \mathbb{P}^\perp G , \partial^m_x \mathbb{P}^\perp G \r_{L^2_{x,v}}  } } \,.
  \end{aligned}
\end{equation}
The {\em key point} is to deal with the singularity $\frac{1}{\eps}$ occurring in the terms $\textbf{B}_1$, $\textbf{B}_2$, $\textbf{B}_3$ and $\textbf{B}_4$. Thanks to the kinetic dissipation term $\tfrac{1}{\eps^2} \| \mathbb{P}^\perp G \|^2_{H^s_x L^2_v (\nu)}$ derived from the non-negativity of $\mathscr{L}$ in Lemma \ref{Lmm-L}, the singular term $\frac{1}{\eps} \mathbb{P}^\perp G$ in $\textbf{B}_2$, $\textbf{B}_3$ and $\textbf{B}_4$ will be absorbed after some subtle calculations. The singular part in $\textbf{B}_1$ is actuary the term 
$$ \tfrac{1}{\eps} \sum\limits_{0 \neq m' \leq m} C_m^{m'} \l \mathsf{q} \partial^{m'}_x (v \times B ) \cdot \nabla_v \partial^{m-m'}_x \mathbb{P} G , \partial^m_x \mathbb{P} G \r_{L^2_{x,v}} \,.$$
Recalling the definition of $\mathbb{P} G$ in \eqref{VMB-Proj}, we have
\begin{equation}
  \begin{aligned}
    \nabla_v \mathbb{P} G = (u + v \theta ) \mathsf{q}_2 \sqrt{M} - \tfrac{1}{2} v \mathbb{P} G \,,
  \end{aligned}
\end{equation}
where $\mathsf{q}_2 = [1,1] \in \mathbb{R}^2$, $\theta = \frac{1}{2} \l G , \frac{2}{3} \phi_6 \r_{L^2_v}$ and the vector field $u = [ u_1, u_2, u_3 ] \in \mathbb{R}^3$ with the components $u_i = \frac{1}{2} \l G , \phi_{2+i} \r_{L^2_v}$ ($i = 1,2,3$). Then we have
\begin{equation}\label{Cancellation-B-G}
  \begin{aligned}
    & \tfrac{1}{\eps} \sum\limits_{0 \neq m' \leq m} C_m^{m'} \l \mathsf{q} \partial^{m'}_x (v \times B ) \cdot \nabla_v \partial^{m-m'}_x \mathbb{P} G , \partial^m_x \mathbb{P} G \r_{L^2_{x,v}} \\
    = & \tfrac{1}{\eps} \sum\limits_{0 \neq m' \leq m} C_m^{m'} \l v \times \partial^{m'}_x B \cdot \left[ (\partial^{m-m'}_x u + v \partial^{m-m'}_x \theta ) \mathsf{q} \mathsf{q}_2 \sqrt{M} - \tfrac{1}{2} v \mathsf{q} \partial^{m-m'}_x \mathbb{P} G \right] , \partial^m_x \mathbb{P} G \r_{L^2_{x,v}} \\
    = & \tfrac{1}{\eps} \sum\limits_{0 \neq m' \leq m} C_m^{m'} \l ( v \times \partial^{m'}_x B ) \cdot \partial^{m-m'}_x u , \mathsf{q}_1 \sqrt{M} \cdot \partial^m_x \mathbb{P} G \r_{L^2_{x,v}} \\
    & + \tfrac{1}{\eps} \sum\limits_{0 \neq m' \leq m} C_m^{m'} \l ( v \times \partial^{m'}_x B ) \cdot v , \left( \partial^{m-m'}_x \theta \mathsf{q}_1 \sqrt{M} - \frac{1}{2} \mathsf{q} \partial^{m-m'}_x \mathbb{P} G  \right) \cdot \partial^m_x \mathbb{P} G \r_{L^2_{x,v}} \\
    = & \tfrac{1}{\eps} \sum\limits_{0 \neq m' \leq m} C_m^{m'} \l ( v \times \partial^{m'}_x B ) \cdot \partial^{m-m'}_x u , \partial^m_x \l G, \phi_1 - \phi_2 \r_{L^2_v} M \r_{L^2_{x,v}} \\
    = & \tfrac{1}{\eps} \sum\limits_{0 \neq m' \leq m} C_m^{m'} \l ( \l v , M \r_{L^2_v} \times \partial^{m'}_x B ) \cdot \partial^{m-m'}_x u , \partial^m_x \l G , \phi_1 - \phi_2 \r_{L^2_v} \r_{L^2_x} \\
    = & 0 \,,
  \end{aligned}
\end{equation}
where we make use of the cancellations $( v \times \partial^{m'}_x B ) \cdot v = 0$, $ \mathsf{q}_1 \sqrt{M} \cdot \partial^m_x \mathbb{P} G = \partial^m_x \l G , \phi_1 - \phi_2 \r_{L^2_v} M $ and $ \l v , M \r_{L^2_v} = 0 $. We thereby know that the term $\textbf{B}_1$ does not involve the singularity.

Based on the above statements, we now estimate the terms $\textbf{B}_1$, $\textbf{B}_2$, $\textbf{B}_3$ and $\textbf{B}_4$ one by one. For the term $\textbf{B}_1$, we have
\begin{equation}
  \begin{aligned}
    \textbf{B}_1 = & - \sum_{0 \neq m' < m} C_m^{m'} \l \mathsf{q} \partial^{m'}_x E \cdot \nabla_v \partial^{m-m'}_x \mathbb{P} G , \partial^m_x \mathbb{P} G \r_{L^2_{x,v}} - \l \mathsf{q} \partial^m_x E \cdot \nabla_v \mathbb{P} G , \partial^m_x \mathbb{P} G \r_{L^2_{x,v}} \\
    \leq & C \sum_{0 \neq m' < m} \| \partial^{m'}_x E \|_{L^4_x} \| \partial^{m-m'}_x \mathbb{P} G \|_{L^4_x L^2_v } \| \partial^m_x \mathbb{P} G \|_{L^2_{x,v}} + C \| \partial^m_x E \|_{L^2_x} \| \mathbb{P} G \|_{L^\infty_x L^2_v } \| \partial^m_x \mathbb{P} G \|_{L^2_{x,v}} \\
    \leq & C \| E \|_{H^s_x} \| \mathbb{P} G \|_{H^s_x L^2_v} \| \nabla_x \mathbb{P} G \|_{H^{s-1}_x L^2_v}
  \end{aligned}
\end{equation}
for all $|m| \leq s$ ($s \geq 2$), where we make use of the H\"older inequality, the Sobolev embeddings $H^1_x (\T^3) \hookrightarrow L^4_x (\T^3)$ and $H^2_x (\T^3) \hookrightarrow L^\infty_x (\T^3)$, the definition \eqref{Boltzmann-Proj} of $\mathbb{P} G$ and the cancellation \eqref{Cancellation-B-G}. For the term $\textbf{B}_2$, we derive from the similar arguments in estimating the term $\textbf{B}_1$ that
\begin{equation}
  \begin{aligned}
    \textbf{B}_2 = & \tfrac{1}{\eps} \sum_{0 \neq m' \leq m} C_m^{m'} \l \mathsf{q} \partial^{m'}_x ( \eps E + v \times B ) \cdot \partial^m_x  \nabla_v \mathbb{P} G  , \partial^{m-m'}_x \mathbb{P}^\perp G \r_{L^2_{x,v}} \\
    \leq & \frac{C}{\eps} \sum_{0 \neq m' < m} \l \eps | \partial^{m'}_x E | + | \partial^{m'}_x B | , \| \partial^m_x \mathbb{P} G \|_{L^2_v} \| \partial^{m-m'}_x \mathbb{P}^\perp G \|_{L^2_v} \r_{L^2_x} \\
    & + \frac{C}{\eps} \l \eps | \partial^m_x E | + | \partial^m_x B | , \| \partial^m_x \mathbb{P} G \|_{L^2_v} \| \mathbb{P}^\perp G \|_{L^2_v} \r_{L^2_x} \\
    \leq & \frac{C}{\eps} \sum_{0 \neq m' < m} \left( \eps \| \partial^{m'}_x E \|_{L^4_x} + \| \partial^{m'}_x B \|_{L^4_x} \right) \| \partial^m_x \mathbb{P} G \|_{L^2_{x,v}} \| \partial^{m-m'}_x \mathbb{P}^\perp G \|_{L^4_x L^2_v} \\
    & + \frac{C}{\eps} \left( \eps \| \partial^m_x E \|_{L^2_x} + \| \partial^m_x B \|_{L^2_x} \right) \| \partial^m_x \mathbb{P} G \|_{L^2_{x,v}} \| \mathbb{P}^\perp G \|_{L^\infty_x L^2_v} \\
    \leq & \frac{C}{\eps}  \left( \eps \| E \|_{H^s_x} + \| B \|_{H^s_x} \right) \| \mathbb{P} G \|_{H^s_x L^2_v} \| \mathbb{P}^\perp G \|_{H^s_x L^2_v} \,.
  \end{aligned}
\end{equation}
The term $\textbf{B}_3$ can be similarly estimated as
\begin{equation}
  \begin{aligned}
     \textbf{B}_3 \leq \frac{C}{\eps}  \left( \eps \| E \|_{H^s_x} + \| B \|_{H^s_x} \right) \| \mathbb{P} G \|_{H^s_x L^2_v} \| \mathbb{P}^\perp G \|_{H^s_x L^2_v} \,.
  \end{aligned}
\end{equation}
We next control the term $\textbf{B}_4$. By the H\"older inequality, the relation $\max \{ 1, |v| \} \leq C \nu (v)$ derived from the part (1) of Lemma \ref{Lmm-CF-nu} and the Sobolev embedding theories, we have
\begin{equation}
  \begin{aligned}
     \textbf{B}_4 \leq & \frac{C}{\eps} \sum_{|m'|=1} \l \eps |\partial^{m'}_x E| + \nu (v) | \partial^{m'}_x B | \,, | \partial^{m-m'}_x \nabla_v \mathbb{P}^\perp G | \, | \partial^m_x \mathbb{P}^\perp G | \r_{L^2_{x,v}} \\
     + & \frac{C}{\eps} \sum_{2 \leq |m'| < |m|} \l \eps |\partial^{m'}_x E| + \nu (v) | \partial^{m'}_x B | \,, | \partial^{m-m'}_x \nabla_v \mathbb{P}^\perp G | \, | \partial^m_x \mathbb{P}^\perp G | \r_{L^2_{x,v}} \\
     + & \frac{C}{\eps} \l \eps |\partial^m_x E| + \nu (v) | \partial^m_x B | \,, | \nabla_v \mathbb{P}^\perp G | \, | \partial^m_x \mathbb{P}^\perp G | \r_{L^2_{x,v}} \\
     \leq & \frac{C}{\eps} \sum_{|m'|=1} \left( \eps \| \partial^{m'}_x E \|_{L^\infty_x} + \| \partial^{m'}_x B \|_{L^\infty_x} \right) \| \partial^{m-m'}_x \nabla_v \mathbb{P}^\perp G \|_{L^2_{x,v}(\nu)} \| \partial^m_x \mathbb{P}^\perp G \|_{L^2_{x,v}(\nu)} \\
     + & \tfrac{C}{\eps} \sum_{2 \leq |m'| < |m|} \left( \eps \| \partial^{m'}_x E \|_{L^4_x} + \| \partial^{m'}_x B \|_{L^4_x} \right) \| \partial^{m-m'}_x \nabla_v \mathbb{P}^\perp G \|_{L^4_x L^2_v(\nu)} \| \partial^m_x \mathbb{P}^\perp G \|_{L^2_{x,v}(\nu)} \\
     + & \tfrac{C}{\eps}  \left( \eps \| \partial^m_x E \|_{L^2_x} + \| \partial^m_x B \|_{L^2_x} \right) \| \nabla_v \mathbb{P}^\perp G \|_{L^\infty_x L^2_v(\nu)} \| \partial^m_x \mathbb{P}^\perp G \|_{L^2_{x,v}(\nu)} \\
     \leq & \tfrac{C}{\eps}  \left( \eps \| E \|_{H^s_x} + \| B \|_{H^s_x} \right) \| \mathbb{P}^\perp G \|_{H^s_x L^2_v(\nu)} \sum_{|m| \leq s - 1} \| \nabla_v \partial^m_x \mathbb{P}^\perp G \|_{L^2_{x,v}(\nu)} 
  \end{aligned}
\end{equation}
for all $|m| \leq s$ and $s \geq 3$. We summarize the all estimates on terms $\textbf{B}_1$, $\textbf{B}_2$, $\textbf{B}_3$ and $\textbf{B}_4$ above and then we obtain
\begin{equation}\label{Spatial-5}
  \begin{aligned}
    & - \frac{1}{\eps} \sum\limits_{0 \neq m' \leq m} C_m^{m'} \l \mathsf{q} \partial^{m'}_x ( \eps E + v \times B ) \cdot \nabla_v \partial^{m-m'}_x G , \partial^m_x G \r_{L^2_{x,v}} \\
    \leq & \tfrac{C}{\eps}  \left( \eps \| E \|_{H^s_x} + \| B \|_{H^s_x} \right) \left( \| \nabla_x \mathbb{P} G \|_{H^{s-1}_x L^2_v} + \| \mathbb{P}^\perp G \|_{H^s_x L^2_v(\nu)} \right) \\ 
    & \quad \times \sum_{|m| \leq s - 1} \| \nabla_v \partial^m_x \mathbb{P}^\perp G \|_{L^2_{x,v}(\nu)}  + C \| E \|_{H^s_x} \| \mathbb{P} G \|^2_{H^s_x L^2_v} 
  \end{aligned}
\end{equation}
for all $|m| \leq s$. Here we require the integer $s \geq 3$. Plugging the relations \eqref{Spatial-1}, \eqref{Spatial-2}, \eqref{Spatial-3}, \eqref{Spatial-4} and  \eqref{Spatial-5} into the equality \eqref{Spatial-0} reduces to
\begin{equation}
  \begin{aligned}
     & \tfrac{1}{2} \tfrac{\d }{\d t} \left( \| \partial^m_x G \|^2_{L^2_{x,v}} + \| \partial^m_x E \|^2_{L^2_x} + \| \partial^m_x B \|^2_{L^2_x} \right) + \tfrac{\lambda}{\eps^2} \| \partial^m_x \mathbb{P}^\perp G \|^2_{L^2_{x,v}(\nu)} \\
     \leq & C \| E \|_{H^s_x} \left( \| \mathbb{P} G \|^2_{H^s_x L^2_v} + \| \mathbb{P}^\perp G \|^2_{H^s_x L^2_v (\nu)} \right) \\
     + & C \| E \|_{H^s_x} \| \mathbb{P}^\perp G \|_{H^s_x L^2_v (\nu)} \Big( \mathbb{P}^\perp G \|_{H^s_x L^2_v (\nu)} + \sum_{|m| \leq s - 1} \| \nabla_v \partial^m_x \mathbb{P}^\perp G \|_{L^2_{x,v}(\nu)} \Big) \\
     + & \frac{C}{\eps} \left( \| G \|_{H^s_x L^2_v} + \| B \|_{H^s_x} \right) \| \mathbb{P}^\perp G \|_{H^s_x L^2_v (\nu)} \\
     & \quad \times \Big( \| \mathbb{P} G \|_{H^s_x L^2_v} + \| \mathbb{P}^\perp G \|_{H^s_x L^2_v (\nu)} +  \sum_{|m| \leq s - 1} \| \nabla_v \partial^m_x \mathbb{P}^\perp G \|_{L^2_{x,v}(\nu)} \Big)
  \end{aligned}
\end{equation}
for all multi-indexes $m \in \mathbb{N}^3$ with $|m| \leq s$ and $s \geq 3$. Summing up for all $|m| \leq s$ in the above inequality implies the bound \eqref{Spatial-Bnd}. Then the proof of Proposition \ref{Prop-Spatial} is finished.
\end{proof}

\subsection{Micro-macro decomposition on the equation of $G$} In this subsection, we will find a dissipation of the fluid part $\mathbb{P} G$ by using the so-called micro-macro decomposition method for the VMB version, which is inspired by that for the Boltzmann version. It is well-known that the micro-macro decomposition method for the Boltzmann version is actually depended on the so-called {\em thirteen moments}, see \cite{Guo-2006-CPAM} for instance. 

However, in order to obtain the dissipative term of the fluid part of the perturbed two species VMB system, we will introduce the following linear independent basis in $L^2_v$
\begin{equation}\label{Basis-B}
  \begin{aligned}
    \mathfrak{B} = \Big\{ [1,0]\sqrt{M}, \ & [0,1]\sqrt{M}, \ [v_i, 0] \sqrt{M}, \ [0,v_i] \sqrt{M}, \ [v_i^2, v_i^2] \sqrt{M}, \\
    & [v_i |v|^2, v_i |v|^2] \sqrt{M}, \ [v_j v_k , v_j v_k] \sqrt{M} ; \ 1 \leq i \leq 3, 1 \leq j < k \leq 3 \Big\} \,,
  \end{aligned}
\end{equation}
which consists of seventeen linear independent moments. We call the basis $\mathfrak{B}$ the {\em seventeen moments}. This can be seen in \cite{Guo-2003-Invent}, for instance. For notational simplicity, we denote by
\begin{equation}
\begin{aligned}
\mathfrak{B} = \Big\{ \beta^{\pm} (v), \ \beta_i^\pm (v), \ \beta_i (v), \ \widetilde{\beta}_i (v), \ \beta_{jk} (v) ; \ 1 \leq i \leq 3, 1 \leq j < k \leq 3 \Big\} \,,
\end{aligned}
\end{equation}
where
\begin{equation}
  \begin{aligned}
    \beta^+ (v) = &\, [1,0] \sqrt{M(v)} \,, \qquad\ \ \beta^- (v) = [0,1] \sqrt{M(v)} \,,\\
    \beta_i^+ (v) = &\, [v_i, 0 ] \sqrt{M(v)} \,, \qquad\ \beta_i^+ (v) = [0, v_i] \sqrt{M(v)} \,, \\
    \beta_i (v) = &\, [ v_i^2 , v_i^2 ] \sqrt{M(v)} \,, \qquad \widetilde{\beta}_i (v) = [ v_i |v|^2 , v_i |v|^2 ] \sqrt{M(v)} \,, \\
    \beta_{jk} (v) = &\, [v_j v_k , v_j v_k] \sqrt{M(v)} \,.
  \end{aligned}
\end{equation}
One can easily justify that $\mathfrak{B}$ is linearly independent in $L^2_v$. Indeed, if 
\begin{equation}\label{Linear-Indep}
  \begin{aligned}
    \sum_\pm k_\pm \beta^\pm (v) + \sum_\pm \sum_{i=1}^3 k_{i \pm} \beta_i^\pm (v) + \sum_{i=1}^3 k_i \beta_i (v) + \sum_{i=1}^3 \widetilde{k}_i \widetilde{\beta}_i (v) + \sum_{1 \leq j < k \leq 3} k_{jk} \beta_{jk} (v) = 0 \,,
  \end{aligned}
\end{equation}
we take $L^2_v$-inner product in \eqref{Linear-Indep} by multiplying each element in the set $\mathfrak{B}$, and then we obtain
\begin{equation}\label{Linear-Indep-1}
  \left\{
    \begin{array}{l}
      k_\pm + \sum\limits_{i=1}^3 k_i = 0 \,,\\
      k_{i\pm} + 5 k_i = 0 \ (1 \leq i \leq 3) \,, \\
      k_+ + k_- + 6 k_i = 0 \ (1 \leq i \leq 3) \,, \\
      k_{i+} + k_{i-} + 2 \widetilde{k}_i = 0 \ (1 \leq i \leq 3) \,, \\
      k_{jk} = 0 \ (1 \leq j < k \leq 3) \,,
    \end{array}
  \right.
\end{equation}
where we make use of the relations $\l 1, M \r_{L^2_v} = 1$, $\l |v|^2 , M \r_{L^2_v} = 3$ and $\l |v|^4 , M \r_{L^2_v} = 15$. Straightforward calculations imply that the linear system \eqref{Linear-Indep-1} admits only zero solution, namely, $k_\pm = k_{i \pm} = k_i = \widetilde{k}_i = k_{jk} = 0$ for $1 \leq i \leq 3$ and $1 \leq j < k \leq 3$. Consequently, we know that $\mathfrak{B}$ is linearly independent.

We now assume that $\{ e_j \}_{1 \leq j \leq 17}$ is a orthonormal basis of the linear space $\textrm{Span} \{ \mathfrak{B} \}$ with dimensions 17. Then each $e_j$ is a certain linear combination of $\mathfrak{B}$ and $ \textrm{Span} \{ \mathfrak{B} \} = \textrm{Span} \{ e_j ; 1 \leq j \leq 17 \} \subset L^2_v $. We thereby define a projection $ \mathcal{P}_{\mathfrak{B}} : L^2_v \rightarrow \textrm{Span} \{ \mathfrak{B} \} \subset L^2_v $ by
\begin{equation}
  \begin{aligned}
    \mathcal{P}_{\mathfrak{B}} f = \sum_{j=1}^{17} \l f, e_j \r_{L^2_v} e_j
  \end{aligned}
\end{equation}
for any $f \in L^2_v$. Since each $e_j$ can be represented as a certain linear combination of $\mathfrak{B}$, the projection  $ \mathcal{P}_{\mathfrak{B}} $ admits an equivalent form
\begin{equation}\label{PB-proj}
    \mathcal{P}_{\mathfrak{B}} f = \sum_\pm f^\pm \beta^\pm (v) + \sum_\pm \sum_{i=1}^3 f_i^\pm \beta_i^\pm (v) + \sum_{i=1}^3 f_i \beta_i (v) + \sum_{i=1}^3 \widetilde{f}_i \widetilde{\beta}_i (v) + \sum_{1 \leq i < j \leq 3} f_{ij} \beta_{ij} (v) \,,
\end{equation}
where the coefficients $f^\pm$, $f_i^\pm$, $f_i$, $\widetilde{f}_i$ and $f_{ij}$ are only depended on $f$, $\mathfrak{B}$ and $\{ e \}_{1 \leq j \leq 17}$.

We now decompose the first kinetic equation of $G$ in \eqref{VMB-G-drop-eps} from two aspects: 

(I) We first substitute the identity $G = \mathbb{P} G + \mathbb{P}^\perp G$ into the first kinetic equation on $G$, thus rewriting as
\begin{equation}\label{Fluid-G}
  \begin{aligned}
    \eps \partial_t \mathbb{P} G + v \cdot \nabla_x \mathbb{P} G - ( E \cdot v ) \sqrt{M} \mathsf{q}_1 = \Theta (\mathbb{P}^\perp G) + \Psi ( \mathbb{P} G ) + \Gamma ( G, G ) \,,
  \end{aligned}
\end{equation}
where
\begin{equation}\label{Theta-Pperp-G}
  \begin{aligned}
    \Theta(\mathbb{P}^\perp G) = - \left( \eps \partial_t + v \cdot \nabla_x + \tfrac{1}{\eps} \mathscr{L} + \mathsf{q} ( \eps E + v \times B ) \cdot \nabla_v - \tfrac{1}{2} \eps \mathsf{q} ( E \cdot v ) \right) \mathbb{P}^\perp G \,,
  \end{aligned}
\end{equation}
and
\begin{equation}\label{Psi-P-G}
  \begin{aligned}
    \Psi (\mathbb{P} G) = - \mathsf{q} ( \eps E + v \times B ) \cdot \nabla_v \mathbb{P} G + \tfrac{1}{2} \eps \mathsf{q} ( E \cdot v ) \mathbb{P} G \,.
  \end{aligned}
\end{equation}
Based on the definition \eqref{VMB-Proj} of $\mathbb{P} G$, direct calculation implies that the left terms in the equation \eqref{Fluid-G} is
\begin{equation}
  \begin{aligned}
    & \eps \partial_t \mathbb{P} G + v \cdot \nabla_x \mathbb{P} G - ( E \cdot v ) \sqrt{M} \mathsf{q}_1 \\
    = & \sum_\pm \eps \partial_t ( \rho^\pm - \tfrac{3}{2} \theta ) \beta^\pm (v) + \sum_\pm \sum_{i=1}^3\big[ \eps \partial_t u_i + \partial_i ( \rho^\pm - \tfrac{3}{2} \theta ) - (\pm E_i ) \big] \beta_i^\pm (v) \\
    + & \sum_{i=1}^3 \big( \tfrac{1}{2} \eps \partial_t \theta + \partial_i u_i \big) \beta_i (v) + \sum_{i=1}^3 \tfrac{1}{2} \partial_i \theta \widetilde{\beta}_i (v) + \sum_{1 \leq i < j \leq 3} ( \partial_i u_j + \partial_j u_i ) \beta_{ij} (v) \,,
  \end{aligned}
\end{equation}
thus belonging to the space $\textrm{Span} \{ \mathfrak{B} \} \subset L^2_v$. We thereby project the equation \eqref{Fluid-G} into $\textrm{Span} \{ \mathfrak{B} \} $ and obtain
\begin{equation}\label{Deco-17-moments}
  \left\{
     \begin{array}{l}
        \beta^\pm (v) : \ \eps \partial_t ( \rho^\pm - \tfrac{3}{2} \theta ) = \Theta^\pm + \Psi^\pm + \Gamma^\pm \,, \\[2mm]
        \beta_i^\pm (v) : \ \eps \partial_t u_i + \partial_i ( \rho^\pm - \tfrac{3}{2} \theta ) - ( \pm E_i ) = \Theta_i^\pm + \Psi_i^\pm + \Gamma_i^\pm \,, \ 1 \leq i \leq 3 \,, \\[2mm]
        \beta_i (v) \ : \ \tfrac{1}{2} \eps \partial_t \theta + \partial_i u_i = \Theta_i + \Psi_i + \Gamma_i \,, \qquad\qquad\qquad\qquad\ 1 \leq i \leq 3 \,, \\[2mm]
        \widetilde{\beta}_i (v) \ : \ \tfrac{1}{2} \partial_i \theta = \widetilde{\Theta}_i + \widetilde{\Psi}_i + \widetilde{\Gamma}_i \,, \qquad\qquad\qquad\qquad\qquad\quad\ \ 1 \leq i \leq 3 \,, \\[2mm]
        \beta_{ij} (v) : \partial_i u_j + \partial_j u_i = \Theta_{ij} + \Psi_{ij} + \Gamma_{ij} \,, \qquad\qquad\qquad\qquad\! 1 \leq i \neq j \leq 3 \,,
     \end{array}
  \right.
\end{equation}
where all the symbols $\Theta$, $\Psi$ and $\Gamma$ with various indexes are the coefficients of $\mathcal{P}_{\mathfrak{B}} \Theta (\mathbb{P}^\perp G)$, $\mathcal{P}_{\mathfrak{B}} \Psi (\mathbb{P} G)$ and $\mathcal{P}_{\mathfrak{B}} \Gamma (G,G)$, respectively.

(II) We project the first kinetic equation of $G$ in \eqref{VMB-G-drop-eps} into $\textrm{Ker} (\mathscr{L})$ by multiplying the vectors $\phi_1 (v)$, $\phi_2 (v)$, $\tfrac{1}{2} \phi_3(v)$, $\tfrac{1}{2} \phi_4(v)$, $\tfrac{1}{2} \phi_5(v)$ and $\tfrac{2}{3} \phi_6(v)$, respectively, and integrating over $v \in \R^3$. Thanks to the fact $\Gamma (G, G) \in \textrm{Ker}^\perp (\mathscr{L})$ shown in Lemma \ref{Lmm-Gamma-Torus}, the careful calculation reduces to
\begin{equation}\label{Deco-6-moments}
  \left\{
     \begin{array}{l}
        \eps \partial_t \rho^+ + \div_x u = \l \Theta (\mathbb{P}^\perp G) + \Psi (\mathbb{P} G) , \phi_1 (v) \r_{L^2_v} \,, \\[2mm]
        \eps \partial_t \rho^- + \div_x u = \l \Theta (\mathbb{P}^\perp G) + \Psi (\mathbb{P} G) , \phi_2 (v) \r_{L^2_v} \,, \\[2mm]
        \eps \partial_t u_i + \partial_i \big( \tfrac{\rho^+ + \rho^-}{2} + \theta \big) = \tfrac{1}{2} \l \Theta (\mathbb{P}^\perp G) + \Psi (\mathbb{P} G) , \phi_{i+2} (v) \r_{L^2_v} \ \textrm{for } 1 \leq i \leq 3 \,, \\[2mm]
        \eps \partial_t \theta + \tfrac{2}{3} \div_x u = \tfrac{1}{3} \l \Theta (\mathbb{P}^\perp G) + \Psi (\mathbb{P} G) , \phi_6 (v) \r_{L^2_v} \,.
     \end{array}
  \right.
\end{equation}

Based on the decompositions \eqref{Deco-17-moments} and \eqref{Deco-6-moments}, we can derive the dissipation $\| \nabla_x \mathbb{P} G \|^2_{H^{s-1}_x L^2_v}$ of the fluid part $\mathbb{P} G$ for the integer $s \geq 3$. More precisely, we will give the following proposition.
\begin{proposition}\label{Prop-MM-Est}
	Assume that $(G, E, B)$ is the solution to the perturbed VMB system \eqref{VMB-G} constructed in Proposition \ref{Prop-Local-Solutn}. Let integer $s \geq 3$. Then there is positive constant $C > 0$, independent of $\eps > 0$, such that
	\begin{equation}\label{Fluid-Dissip}
	  \begin{aligned}
	     & \| \nabla_x \mathbb{P} G \|^2_{H^{s-1}_x L^2_v} + \| \div_x E \|^2_{H^{s-1}_x} \leq - \eps \tfrac{\d}{\d t} \mathscr{A}_s (G)（(t) + \tfrac{C}{\eps^2} \| \mathbb{P}^\perp G \|^2_{H^{s}_x L^2_v (\nu)} \\
	     + & C \big( \| G \|^2_{H^s_x L^2_v} + \| E \|^2_{H^s_x} + \| B \|^2_{H^s_x} \big) \big( \| \nabla_x \mathbb{P} G \|^2_{H^{s-1}_x L^2_v} + \tfrac{1}{\eps^2} \| \mathbb{P}^\perp G \|^2_{H^s_x L^2_v (\nu)} \big) \\
	     + &  C \big( \| G \|^2_{H^s_x L^2_v} + \| E \|^2_{H^s_x} + \| B \|^2_{H^s_x} \big) \big( (\rho^+)^2_{\T^3} + (\rho^-)^2_{\T^3} + (u)^2_{\T^3} + (\theta)^2_{\T^3} \big)
	  \end{aligned}
	\end{equation}
	for all $0 < \eps \leq 1$, where the quantity $\mathscr{A}_s (G) (t)$ is defined as
	\begin{equation}\label{Quantity-A}
	  \begin{aligned}
	    \mathscr{A}_s (G) (t) = & \sum_{|m| \leq s -1} \sum_{i=1}^3 \Big[ \l 4 \partial^m_x u_i , \partial_i \partial^m_x \rho^+ + \partial_i \partial^m_x \rho^- \r_{L^2_x} + \sum_{j=1}^3 \l 32 \partial^m_x \mathbb{P}^\perp G , \partial_j \partial^m_x u_i \zeta_{ij} (v) \r_{L^2_{x,v}} \\
	    & + \l 32 \partial^m_x \mathbb{P}^\perp G , \partial_i \partial^m_x \theta \widetilde{\zeta}_i (v) \r_{L^2_{x,v}} + \l 4 \partial^m_x \mathbb{P}^\perp G , \partial_i \partial^m_x \zeta_i^+ (v) + \partial_i \partial^m_x \rho^- \zeta_i^- (v) \r_{L^2_{x,v}}  \Big] \,.
	  \end{aligned}
	\end{equation}
	Here the functions $\zeta_i^\pm (v)$, $\widetilde{\zeta}_i (v)$ and $\zeta_{ij} (v)$ are some fixed linear combinations of the basis $\mathfrak{B}$ defined in \eqref{Basis-B}.
\end{proposition}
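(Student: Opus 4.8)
The plan is to run the moment (micro-macro) method: extract a full spatial gradient of the fluid part $\mathbb{P}G$, together with $\div_x E$, from the two decompositions \eqref{Deco-17-moments} and \eqref{Deco-6-moments} of the kinetic equation, at the cost of a time derivative $-\eps\tfrac{\d}{\d t}\mathscr{A}_s(G)$ of a bounded interaction functional and of lower-order dissipative and nonlinear errors.

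First I would apply $\partial^m_x$ with $|m|\le s-1$ to every line of \eqref{Deco-17-moments} and of \eqref{Deco-6-moments}. From the $\widetilde{\beta}_i$-, $\beta_i$- and $\beta_{ij}$-lines one reads off $\partial_i\theta$, $\partial_i u_i$ and $\partial_i u_j+\partial_j u_i$ as moments of $\eps\partial_t\mathbb{P}^\perp G$, $\eps\partial_t\mathbb{P}G$, $\tfrac1\eps\mathscr{L}\mathbb{P}^\perp G$, transport and force terms, and quadratic terms; since on $\T^3$ control of the full symmetrized gradient controls $\nabla_x u$, this yields $\|\nabla_x u\|$ and $\|\nabla_x\theta\|$. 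From the $\beta_i^\pm$-lines one likewise reads off $\partial_i\rho^\pm$ and, taking the difference of the $+$ and $-$ equations, the electric field $E_i$; since $\div_x E=n=\rho^+-\rho^-$ by Gauss' law, $\|\div_x E\|$ becomes $\|n\|$. Pairing each identity with the matching gradient of the corresponding macroscopic quantity, integrating over $\T^3$ and summing over $i$ and over $|m|\le s-1$ puts $\|\nabla_x\mathbb{P}G\|^2_{H^{s-1}_x L^2_v}+\|\div_x E\|^2_{H^{s-1}_x}$ on the left; moreover the $\pm E_i$ contributions in the $\rho^\pm$-identities, once the $+$ and $-$ equations are added, collapse after integration by parts in $x$ to $-\|n\|^2_{H^{s-1}_x}$, so the electric term emerges with the favourable sign.

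Next I would treat the right-hand sides. The coefficients of $\mathcal{P}_{\mathfrak{B}}\Theta(\mathbb{P}^\perp G)$ are, by \eqref{Theta-Pperp-G}, linear in $\mathbb{P}^\perp G$ and consist of: (i) a genuine $\eps\partial_t$ of a fixed moment of $\mathbb{P}^\perp G$, which, paired against a gradient of a macroscopic quantity, I rewrite as $\eps\tfrac{\d}{\d t}$ of that pairing — this is precisely what is collected into $\mathscr{A}_s(G)$ — minus a commutator in which $\partial_t$ of the macroscopic quantity is re-expressed through \eqref{Deco-6-moments}; (ii) the $\tfrac1\eps\mathscr{L}\mathbb{P}^\perp G$ term, whose moments against the non-kernel elements of $\mathfrak{B}$ are bounded by $\tfrac1\eps\|\mathbb{P}^\perp G\|_{L^2_v(\nu)}$ via $\mathscr{L}=2\nu-\mathscr{K}$ and the smoothness of the $\mathfrak{B}$-moments (Lemma \ref{Lmm-L}(1)), while the moments against the six kernel directions $\phi_i$ vanish; (iii) the transport term $v\cdot\nabla_x\mathbb{P}^\perp G$ and the force terms built from $\mathsf{q}(\eps E+v\times B)\cdot\nabla_v\mathbb{P}^\perp G$, the latter integrated by parts in $v$ against the smooth fixed $\mathfrak{B}$-moments so that no $\nabla_v\mathbb{P}^\perp G$ survives. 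Every $\tfrac1\eps$-singular piece then has the form $\tfrac1\eps\|\mathbb{P}^\perp G\|_{H^s_x L^2_v(\nu)}\big(\|\nabla_x\mathbb{P}G\|_{H^{s-1}_x L^2_v}+\|\div_x E\|_{H^{s-1}_x}\big)$, which Young's inequality splits into a small multiple of $\|\nabla_x\mathbb{P}G\|^2_{H^{s-1}_x L^2_v}+\|\div_x E\|^2_{H^{s-1}_x}$, absorbed on the left, plus $\tfrac{C}{\eps^2}\|\mathbb{P}^\perp G\|^2_{H^s_x L^2_v(\nu)}$. The coefficients of $\mathcal{P}_{\mathfrak{B}}\Psi(\mathbb{P}G)$ and $\mathcal{P}_{\mathfrak{B}}\Gamma(G,G)$ are quadratic; estimating them with Lemma \ref{Lmm-Gamma-Torus}, the bound $|v|\le C\nu(v)$, the Sobolev embeddings $H^2_x\hookrightarrow L^\infty_x$ and $H^1_x\hookrightarrow L^4_x$ and inequality \eqref{Inq-xv-mixed-Holder}, and splitting each macroscopic factor into its $\T^3$-average (controlled through the conservation laws \eqref{Conservatn-Law-G}) plus a Poincaré-controlled remainder, produces exactly the two quadratic-times-quadratic lines on the right of \eqref{Fluid-Dissip}. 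A final linear combination of the $\theta$-, $u$- and $\rho^\pm$-estimates with small weights on the mutual coupling terms closes the inequality.

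The main difficulty is the bookkeeping: one must track, term by term, precisely which contributions carry the factor $\tfrac1\eps$ and at which spatial-derivative order, and check that after the $v$-integrations by parts and the substitution \eqref{Deco-6-moments} each such contribution is majorized by the single dissipation $\tfrac1{\eps^2}\|\mathbb{P}^\perp G\|^2_{H^s_x L^2_v(\nu)}$ actually at hand. In particular the commutator in (i) produces terms of the form $\langle\,\text{moment of }\partial^m_x\mathbb{P}^\perp G,\ \partial_i\div_x\partial^m_x u\,\rangle$, which one more integration by parts in $x$ brings back to order $\le s$ — this is why the sums in \eqref{Deco-17-moments}--\eqref{Deco-6-moments} run only over $|m|\le s-1$ — and one must verify that all non-singular remainders are genuinely of quadratic-times-dissipation type rather than merely quadratic, so that they can be absorbed by the smallness of the energy.
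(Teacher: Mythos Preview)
Your proposal is correct and follows essentially the same micro-macro moment strategy as the paper: read off the gradients of $u$, $\theta$, $\rho^\pm$ from the seventeen-moment decomposition \eqref{Deco-17-moments}, pair with the matching gradients, rewrite every $\eps\partial_t\mathbb{P}^\perp G$-moment as $\eps\tfrac{\d}{\d t}$ of an interaction term (this builds $\mathscr{A}_s(G)$) minus a commutator handled via \eqref{Deco-6-moments}, integrate the $\nabla_v$-force terms by parts in $v$, and close by Young's inequality and a weighted linear combination of the three species estimates. The only cosmetic difference is that the paper organizes Steps 1--3 by first deriving explicit elliptic identities $-\Delta_x\partial^m_x u_i=\ldots$, $-\Delta_x\partial^m_x\theta=\ldots$, $-\Delta_x\partial^m_x\rho^\pm\pm\partial^m_x\div_x E=\ldots$ from the moment relations and then testing each against $\partial^m_x u_i$, $\partial^m_x\theta$, $\partial^m_x\rho^\pm$, whereas you phrase it as directly pairing the gradient identities and invoking control of the full gradient from the symmetrized one; these are equivalent and lead to the same final combination (the paper's weights $1,1,\tfrac{1}{16}$ with $\delta=\tfrac{1}{32}$).
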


\begin{proof}[Proof of Proposition \ref{Prop-MM-Est}] 
	We prove this conclusion by three steps: we first derive the three fluid dissipative terms $\| \nabla_x u \|_{H^{s-1}_x}^2$, $\| \nabla_x \theta \|^2_{H^{s-1}_x}$ and $\| \nabla_x \rho^+ \|^2_{H^{s-1}_x} + \| \nabla_x \rho^- \|^2_{H^{s-1}_x}$ from the hydrodynamics relations \eqref{Deco-17-moments} and \eqref{Deco-6-moments}, where $\rho^\pm$, $u$ and $\theta$ are the three parts of the fluid part $\mathbb{P} G$ by three steps. Finally, we combine the estimates obtained in the previous three steps.\\
	
	{\em Step 1. Bounds on $\| \nabla_x u \|^2_{H^{s-1}_x}$ for the integer $s \geq 3$.} For all multi-indexes $m \in \mathbb{N}^3$ satisfying $|m| \leq s - 1$, we derive from the last $u$-equation and the third $\theta$-equation of \eqref{Deco-17-moments} that
	\begin{equation}
	  \begin{aligned}
	    - \Delta_x \partial^m_x u_i = & - \sum_{j=1}^3 \partial_j \partial_j \partial^m_x u_i = - \sum_{j \neq i} \partial_j \partial_j \partial^m_x u_i - \partial_i \partial_i \partial^m_x u_i \\
	    = & - \sum_{j \neq i} \partial^m_x \partial_j \big( - \partial_i u_j + \Theta_{ij} + \Psi_{ij} + \Gamma_{ij} \big) - \partial_i \partial^m_x \big( - \tfrac{1}{2} \eps \partial_t \theta + \Theta_i + \Psi_i + \Gamma_i \big) \\
	    = & \sum_{j \neq i} \partial_i \partial^m_x ( - \tfrac{1}{2} \eps \partial_t \theta + \Theta_j + \Psi_j + \Gamma_j ) - \sum_{j \neq  i} \partial_j \partial^m_x (\Theta_{ij} + \Psi_{ij} + \Gamma_{ij}  ) \\
	    & - \partial_i \partial^m_x ( - \tfrac{1}{2} \eps \partial_t \theta + \Theta_i + \Psi_i + \Gamma_i ) \\
	    = & - \tfrac{1}{2} \eps \partial_t \partial_i \partial^m_x \theta + \sum_{\bm{\Lambda} \in \{ \Theta, \Psi, \Gamma \} } \Big[ \sum_{j \neq i} ( \partial_i \partial^m_x \bm{\Lambda}_j - \partial_j \partial^m_x \bm{\Lambda}_{ij}  ) - \partial_i \partial^m_x \bm{\Lambda}_i \Big] \,.
	  \end{aligned}
	\end{equation}
	Then the definition \eqref{PB-proj} of $\mathcal{P}_{\mathfrak{B}}$ tells us that there is a certain linear combinations $\zeta_{ij} (v)$ of $\mathcal{B}$ such that
	\begin{equation}
	   \begin{aligned}
	      & \sum_{\bm{\Lambda} \in \{ \Theta, \Psi, \Gamma \} } \Big[ \sum_{j \neq i} ( \partial_i \partial^m_x \bm{\Lambda}_j - \partial_j \partial^m_x \bm{\Lambda}_{ij}  ) - \partial_i \partial^m_x \bm{\Lambda}_i \Big] \\
	      = & \sum_{j=1}^3 \partial_j \partial^m_x \l \Theta ( \mathbb{P}^\perp G ) + \Psi ( \mathbb{P} G ) + \Gamma ( G, G ) , \zeta_{ij} (v) \r_{L^2_v} \,.
	   \end{aligned}
	\end{equation}
	So, we have
	\begin{equation}
	  \begin{aligned}
	    - \Delta_x \partial^m_x u_i = - \tfrac{1}{2} \eps \partial_t \partial_i \partial^m_x \theta + \sum_{j=1}^3 \partial_j \partial^m_x \l \Theta ( \mathbb{P}^\perp G ) + \Psi ( \mathbb{P} G ) + \Gamma ( G, G ) , \zeta_{ij} (v) \r_{L^2_v} \,.
	  \end{aligned}
	\end{equation}
	Furthermore, the forth $\theta$-equation of \eqref{Deco-6-moments} gives us
	\begin{equation}
	  \begin{aligned}
	    - \tfrac{1}{2} \eps \partial_t \partial_i \partial^m_x \theta = \tfrac{1}{3} \partial_i \partial^m_x \div_x u - \tfrac{1}{6} \partial_i \partial^m_x \l \Theta (\mathbb{P}^\perp G) + \Psi (\mathbb{P} G) , \phi_6 (v) \r_{L^2_v} \,.
	  \end{aligned}
	\end{equation}
	Then we deduce that
	\begin{equation}
	   \begin{aligned}
	      - \Delta_x \partial^m_x u_i - \tfrac{1}{3} \partial_i \partial^m_x \div_x u = & - \tfrac{1}{6} \partial_i \partial^m_x \l \Theta (\mathbb{P}^\perp G) + \Psi (\mathbb{P} G) , \phi_6 (v) \r_{L^2_v} \\
	      + & \sum_{j=1}^3 \partial_j \partial^m_x \l \Theta ( \mathbb{P}^\perp G ) + \Psi ( \mathbb{P} G ) + \Gamma ( G, G ) , \zeta_{ij} (v) \r_{L^2_v} \,,
	   \end{aligned}
	\end{equation}
	which implies that by multiplying $\partial^m_x u_i$, integrating by parts over $x \in \T^3$ and summing up for $1 \leq i \leq 3$
	\begin{equation}\label{u-Dissip-0}
	  \begin{aligned}
	    & \| \nabla_x \partial^m_x u \|^2_{L^2_x} + \tfrac{1}{3} \| \partial^m_x \div_x u \|^2_{L^2_x} \\
	    = &  \underset{\textbf{C}_1}{ \underbrace{ - \sum_{i = 1}^3 \l \sum_{j=1}^3 \partial^m_x \l \Theta (\mathbb{P}^\perp G) + \partial_t \mathbb{P}^\perp G + \Psi (\mathbb{P} G ) + \Gamma (G, G) , \zeta_{ij} (v) \r_{L^2_v} , \partial^m_x \partial_j u_i \r_{L^2_x} } } \\
	    + & \underset{\textbf{C}_2}{ \underbrace{  \tfrac{1}{6} \sum_{i=1}^3 \l \partial^m_x \l \Theta (\mathbb{P}^\perp G) + \partial_t \mathbb{P}^\perp G + \Psi (\mathbb{P} G ) , \phi_6 (v) \r_{L^2_v} , \partial^m_x \partial_i u_i \r_{L^2_x}  } } \\
	    + & \underset{\textbf{C}_3}{ \underbrace{ \sum_{i,j = 1}^3 \l \eps \l \partial_t \partial^m_x \mathbb{P}^\perp G , \zeta_{ij} (v) \r_{L^2_v} , \partial^m_x \partial_j u_i \r_{L^2_x}  } } \,,
	  \end{aligned}
	\end{equation}
	where we utilize the cancellation $ \l \partial_t \partial^m_x \mathbb{P}^\perp G , \phi_6 (v) \r_{L^2_v} = 0 $ since $\phi_6 (v) \in \textrm{Ker} (\mathscr{L} ) $.
	
	We then estimate the terms $\textbf{C}_1$, $\textbf{C}_2$ and $\textbf{C}_3$ one by one. For the term $\textbf{C}_1$, we decompose as
	\begin{equation}
	  \begin{aligned}
	    \textbf{C}_1 = & \underset{\textbf{C}_1 (\Theta)}{ \underbrace{ - \sum_{i,j = 1}^3 \l \partial^m_x \l \Theta ( \mathbb{P}^\perp G ) + \eps \partial_t \mathbb{P}^\perp G , \zeta_{ij}(v) \r_{L^2_v} , \partial^m_x \partial_j u_i \r_{L^2_x} } } \\
	    & \underset{\textbf{C}_1 (\Psi)}{ \underbrace{ - \sum_{i,j = 1}^3 \l \partial^m_x \l \Psi ( \mathbb{P} G ) , \zeta_{ij}(v) \r_{L^2_v} , \partial^m_x \partial_j u_i \r_{L^2_x} } } \ \underset{\textbf{C}_1 (\Gamma)}{ \underbrace{ - \sum_{i,j = 1}^3 \l \partial^m_x \l \Gamma (G,G) , \zeta_{ij}(v) \r_{L^2_v} , \partial^m_x \partial_j u_i \r_{L^2_x} } } \,.
	  \end{aligned}
	\end{equation}
	Recalling the expression \eqref{Theta-Pperp-G} of $\Theta (\mathbb{P}^\perp G)$, we derive from the integration by parts over $v \in \R^3$, the H\"older inequality and the Sobolev embedding theory that
	\begin{equation}
	  \begin{aligned}
	    \textbf{C}_1 ( \Theta ) = & \l \nabla_x \partial^m_x \mathbb{P}^\perp G , \sum_{i,j=1}^3 v \zeta_{ij}(v) \partial^m_x \partial_j u_i \r_{L^2_{x,v}} + \tfrac{1}{\eps} \l \partial^m_x \mathbb{P}^\perp G , \sum_{i,j = 1}^3 \mathscr{L} \zeta_{ij}(v) \partial^m_x \partial_j u_i \r_{L^2_{x,v}} \\
	    - & \eps \l \partial^m_x ( E \mathbb{P}^\perp G ) , \sum_{i,j = 1}^3 \big( \mathsf{q} \nabla_v \zeta_{ij} (v) - \tfrac{1}{2} v \mathsf{q} \zeta_{ij}(v) \big) \partial^m_x \partial_j u_i \r_{L^2_{x,v}} \\
	    - & \l \partial^m_x ( B \mathbb{P}^\perp G ) , \sum_{i,j = 1}^3 v \times \nabla_v \mathsf{q} \zeta_{ij} (v) \partial^m_x \partial_j u_i \r_{L^2_{x,v}} \\
	    \leq & C \Big( \| \nabla_x \partial^m_x \mathbb{P}^\perp G \|_{L^2_{x,v}} + \tfrac{1}{\eps} \| \partial^m_x \mathbb{P}^\perp G \|_{L^2_{x,v}}  \\
	    & \qquad + \eps \| \partial^m_x ( E \mathbb{P}^\perp G ) \|_{L^2_{x,v}} +  \| \partial^m_x ( B \mathbb{P}^\perp G ) \|_{L^2_{x,v}} \Big) \| \nabla_x \partial^m_x u \|_{L^2_x} \\
	    \leq & \tfrac{C }{\eps} \big( 1 + \| E \|_{H^s_x} + \| B \|_{H^s_x} \big) \| \mathbb{P}^\perp G \|_{H^s_x L^2_v} \| \nabla_x \partial^m_x u \|_{L^2_x} 
	  \end{aligned}
	\end{equation}
	for $0 < \eps \leq 1 $, where the last second inequality is implied by the fact that all functions appearing in the previous estimations involving the function $\zeta_{ij} (v)$ and depending only on the variable $v \in \R^3$ are in $L^2_v$ (since $\zeta_{ij} (v)$ includes a factor $\exp ( - \frac{|v|^2}{4} )$). Similarly, by \eqref{Psi-P-G} and \eqref{VMB-Proj} we have
	\begin{equation}
	  \begin{aligned}
	    \textbf{C}_1 ( \Psi ) = & \l \partial^m_x \big[ ( \eps E + v \times B ) \cdot \nabla_v \mathbb{P} G - \tfrac{1}{2} ( E \cdot v ) \mathbb{P} G \big] , \mathsf{q} \zeta_{ij} (v) \partial^m_x \partial_j u_i \r_{L^2_{x,v}} \\
	    \leq & C ( \| E \|_{H^s_x} + \| B \|_{H^s_x} ) \| \mathbb{P} G \|_{H^s_x L^2_v} \| \nabla_x \partial^m_x u \|_{L^2_x} \,.
	  \end{aligned}
	\end{equation}
	We also derive from the H\"older inequality and Lemma  \ref{Lmm-Gamma-Torus} that
	\begin{equation}
	  \begin{aligned}
	    \textbf{C}_1 ( \Gamma ) \leq & C \| G \|_{H^{s-1}_x L^2_v} \| G \|_{H^{s-1}_x L^2_v (\nu)} \sum_{i,j = 1}^3 \| \partial^m_x \partial_j u_i \zeta_{ij} (v) \|_{L^2_{x,v}(\nu)} \\
	    \leq & C \| G \|_{H^s_x L^2_v} \left( \| \mathbb{P} G \|_{H^s_x L^2_v} + \| \mathbb{P}^\perp G \|_{H^s_x L^2_v (\nu)} \right) \| \nabla_x \partial^m_x u \|_{L^2_x} \,.
	  \end{aligned}
	\end{equation}
	Consequently, we obtain
	\begin{equation}\label{Bnd-C1}
	  \begin{aligned}
	    \textbf{C}_1 = & \textbf{C}_1 ( \Theta ) + \textbf{C}_1 ( \Psi ) + \textbf{C}_1 ( \Gamma ) \\
	    \leq & C \left( \| G \|_{H^s_x L^2_v} + \| E \|_{H^s_x} + \| B \|_{H^s_x} \right) \| \mathbb{P} G \|_{H^s_x L^2_v} \| \nabla_x \partial^m_x u \|_{L^2_x} \\
	    + & \tfrac{C}{\eps} \left( 1 + \| G \|_{H^s_x L^2_v} + \| E \|_{H^s_x} + \| B \|_{H^s_x} \right) \| \mathbb{P}^\perp G \|_{H^s_x L^2_v} \| \nabla_x \partial^m_x u \|_{L^2_x} 
	  \end{aligned}
	\end{equation}
	for $0 < \eps \leq 1$. We derive from the analogous procedures in estimations of $\textbf{C}_1$ that
	\begin{equation}\label{Bnd-C2}
	  \begin{aligned}
	    \textbf{C}_2 \leq & C \left( \| G \|_{H^s_x L^2_v} + \| E \|_{H^s_x} + \| B \|_{H^s_x} \right) \| \mathbb{P} G \|_{H^s_x L^2_v} \| \nabla_x \partial^m_x u \|_{L^2_x} \\
	    + & \tfrac{C}{\eps} \left( 1 + \| G \|_{H^s_x L^2_v} + \| E \|_{H^s_x} + \| B \|_{H^s_x} \right) \| \mathbb{P}^\perp G \|_{H^s_x L^2_v} \| \nabla_x \partial^m_x u \|_{L^2_x} \,.
	  \end{aligned}
	\end{equation}
	
	For the term $\textbf{C}_3$, making use of the third equation of \eqref{Deco-6-moments} reduces to
	\begin{equation}
	  \begin{aligned}
	    \textbf{C}_3 = & - \eps \tfrac{\d}{\d t} \sum_{i,j=1}^3 \l \partial_j \partial^m_x \mathbb{P}^\perp G , \partial^m_x u_i \zeta_{ij} (v) \r_{L^2_{x,v}} + \sum_{i,j=1}^3 \l \partial_j \partial^m_x \mathbb{P}^\perp G , \eps \partial_t  \partial^m_x u_i \zeta_{ij} (v) \r_{L^2_{x,v}} \\
	    = & - \eps \tfrac{\d}{\d t} \sum_{i,j=1}^3 \l \partial_j \partial^m_x \mathbb{P}^\perp G , \partial^m_x u_i \zeta_{ij} (v) \r_{L^2_{x,v}}  \underset{\textbf{C}_{31}}{ \underbrace{ -  \sum_{i,j = 1}^3 \l \partial_j \partial^m_x \mathbb{P}^\perp G , \partial_i \partial^m_x \big( \tfrac{\rho^+ + \rho^-}{2} + \theta \big) \zeta_{ij} (v) \r_{L^2_{x,v}} } } \\
	    + &  \underset{\textbf{C}_{32}}{ \underbrace{ \tfrac{1}{2} \sum_{i,j = 1}^3 \l \partial_j \partial^m_x \mathbb{P}^\perp G , \partial^m_x \l \Theta (\mathbb{P}^\perp G) + \Psi (\mathbb{P} G) , \phi_i (v) \r_{L^2_v} \zeta_{ij} (v) \r_{L^2_{x,v}} }} \,.
	  \end{aligned}
	\end{equation}
	The H\"older inequality implies that the term $\textbf{C}_{31}$ is bounded by
	\begin{equation}
	  \begin{aligned}
	    \textbf{C}_{31} \leq & \sum_{i,j = 1}^3 \| \partial_j \partial^m_x \mathbb{P}^\perp G \|_{L^2_{x,v}(\nu)} \big\| \partial_i \partial^m_x \big( \tfrac{\rho^+ + \rho^-}{2} + \theta \big) \big\|_{L^2_x} \| \zeta_{ij} (v) \|_{L^2_v} \\
	    \leq & C \| \mathbb{P}^\perp G \|_{H^s_x L^2_v (\nu)} \left( \| \nabla_x \partial^m_x \rho^+ \|_{L^2_x} + \| \nabla_x \partial^m_x \rho^- \|_{L^2_x} + \| \nabla_x \partial^m_x \theta \|_{L^2_x} \right) \,,
	  \end{aligned}
	\end{equation}
	where the part (1) of Lemma \ref{Lmm-CF-nu} is also utilized in the last inequality. One employs the similar arguments in estimating the term $\textbf{C}_1$ in \eqref{Bnd-C1} and then yields
	\begin{equation}
	  \begin{aligned}
	    \textbf{C}_{32} \leq & C ( \| E \|_{H^s_x} + \| B \|_{H^s_x} ) \| \mathbb{P} G \|_{H^s_x L^2_v} \| \mathbb{P}^\perp G \|_{H^s_x L^2_v} \\
	    & + C ( 1 + \| E \|_{H^s_x} + \| B \|_{H^s_x} ) \| \mathbb{P}^\perp G \|^2_{H^s_x L^2_v} \,.
	  \end{aligned}
	\end{equation}
	We summarize the above estimates and know
	\begin{equation}\label{Bnd-C3}
	  \begin{aligned}
	    & \textbf{C}_3  \leq - \eps \tfrac{\d}{\d t} \sum_{i,j=1}^3 \l \partial_j \partial^m_x \mathbb{P}^\perp G , \partial^m_x u_i \zeta_{ij} (v) \r_{L^2_{x,v}} \\
	    & + C \| \mathbb{P}^\perp G \|_{H^s_x L^2_v (\nu)} \left( \| \nabla_x \partial^m_x \rho^+ \|_{L^2_x} + \| \nabla_x \partial^m_x \rho^- \|_{L^2_x} + \| \nabla_x \partial^m_x \theta \|_{L^2_x} \right) \\
	    & + C ( \| E \|_{H^s_x} + \| B \|_{H^s_x} ) \| \mathbb{P} G \|_{H^s_x L^2_v} \| \mathbb{P}^\perp G \|_{H^s_x L^2_v} + C ( 1 + \| E \|_{H^s_x} + \| B \|_{H^s_x} ) \| \mathbb{P}^\perp G \|^2_{H^s_x L^2_v} \,.
	  \end{aligned}
	\end{equation}
	As a result, substituting the bounds \eqref{Bnd-C1}, \eqref{Bnd-C2} and \eqref{Bnd-C3} into the equality \eqref{u-Dissip-0} yields that
	\begin{equation*}
	  \begin{aligned}
	     & \| \nabla_x \partial^m_x u \|^2_{L^2_x} + \tfrac{1}{3} \| \partial^m_x \div_x u \|^2_{L^2_x} \leq - \eps \tfrac{\d}{\d t} \sum_{i,j=1}^3 \l \partial_j \partial^m_x \mathbb{P}^\perp G , \partial^m_x u_i \zeta_{ij} (v) \r_{L^2_{x,v}} \\
	     + & C \| \mathbb{P}^\perp G \|_{H^s_x L^2_v (\nu)} \left( \| \nabla_x \partial^m_x \rho^+ \|_{L^2_x} + \| \nabla_x \partial^m_x \rho^- \|_{L^2_x} + \| \nabla_x \partial^m_x \theta \|_{L^2_x} \right) \\
	     + & C ( \| E \|_{H^s_x} + \| B \|_{H^s_x} ) \| \mathbb{P} G \|_{H^s_x L^2_v} \| \mathbb{P}^\perp G \|_{H^s_x L^2_v} + C ( 1 + \| E \|_{H^s_x} + \| B \|_{H^s_x} ) \| \mathbb{P}^\perp G \|^2_{H^s_x L^2_v} \\
	     + & C \left( \| G \|_{H^s_x L^2_v} + \| E \|_{H^s_x} + \| B \|_{H^s_x} \right) \| \mathbb{P} G \|_{H^s_x L^2_v} \| \nabla_x \partial^m_x u \|_{L^2_x} \\
	     + & \tfrac{C}{\eps} \left( 1 + \| G \|_{H^s_x L^2_v} + \| E \|_{H^s_x} + \| B \|_{H^s_x} \right) \| \mathbb{P}^\perp G \|_{H^s_x L^2_v} \| \nabla_x \partial^m_x u \|_{L^2_x} \,,
	  \end{aligned}
	\end{equation*}
	which implies by Young's inequality
	\begin{equation}\label{u-Dissip}
	  \begin{aligned}
	    \| \nabla_x \partial^m_x u \|^2_{L^2_x} + \| \partial^m_x \div_x u \|^2_{L^2_x} \leq & - 3 \eps \frac{\d}{\d t} \sum_{i,j=1}^3 \l \partial_j \partial^m_x \mathbb{P}^\perp G , \partial^m_x u_i \zeta_{ij} (v) \r_{L^2_{x,v}} \\
	    + & C ( \| G \|^2_{H^s_x L^2_v} + \| E \|^2_{H^s_x} + \| B \|^2_{H^s_x} ) \| \mathbb{P} G \|^2_{H^s_x L^2_v} \\
	    + & \frac{C}{\eps} ( 1 + \| G \|^2_{H^s_x L^2_v} + \| E \|^2_{H^s_x} + \| B \|^2_{H^s_x} ) \| \mathbb{P}^\perp G \|^2_{H^s_x L^2_v (\nu)} \\
	    + & \delta ( \| \nabla_x \partial^m_x \rho^+ \|^2_{L^2_x} + \| \nabla_x \partial^m_x \rho^- \|^2_{L^2_x} + \| \nabla_x \partial^m_x \theta \|^2_{L^2_x} )
	  \end{aligned}
	\end{equation}
	for any small $\delta > 0$ to be determined and for all $0 < \eps \leq 1$.\\
	
	{\em Step 2. Bounds on $\| \nabla_x \theta \|^2_{H^{s-1}_x}$ for the integer $s \geq 3$.} For any multi-index $m \in \mathbb{N}^3$ with $|m| \leq s - 1$, we derive from the last second $\partial_i \theta$-equation of \eqref{Deco-17-moments} that
	\begin{equation}
	  \begin{aligned}
	    - \Delta_x \partial^m_x \theta = - \sum_{i=1}^3 \partial_i \partial_i \partial^m_x \theta = - 2 \sum_{i=1}^3 \partial_i \partial^m_x ( \tfrac{1}{2} \partial_i \theta ) = - 2 \sum_{i=1}^3 \partial_i \partial^m_x ( \widetilde{\Theta}_i + \widetilde{\Psi}_i + \widetilde{\Gamma}_i ) \,.
	  \end{aligned}
	\end{equation}
	By the definition \eqref{PB-proj} of $\mathcal{P}_{\mathfrak{B}}$ we know that there are some certain linear combinations $\widetilde{\zeta}_i (v) $ $(i=1,2,3)$ of $\mathfrak{B}$ such that
	\begin{equation*}
	  \begin{aligned}
	     - 2 \sum_{i=1}^3 \partial_i \partial^m_x ( \widetilde{\Theta}_i + \widetilde{\Psi}_i + \widetilde{\Gamma}_i ) \\
	     = \sum_{i=1}^3 \partial_i \partial^m_x \left[ \l \Theta ( \mathbb{P}^\perp G ) , \widetilde{\zeta}_i (v) \r_{L^2_v} + \l \Psi ( \mathbb{P} G ) , \widetilde{\zeta}_i (v) \r_{L^2_v} + \l \Gamma ( G , G ) , \widetilde{\zeta}_i (v) \r_{L^2_v} \right] \,,
	  \end{aligned}
	\end{equation*}
	which tells us
	\begin{equation}
	  \begin{aligned}
	    - \Delta_x \partial^m_x \theta = \sum_{i=1}^3 \partial_i \partial^m_x \left[ \l \Theta ( \mathbb{P}^\perp G ) , \widetilde{\zeta}_i (v) \r_{L^2_v} + \l \Psi ( \mathbb{P} G ) , \widetilde{\zeta}_i (v) \r_{L^2_v} + \l \Gamma ( G , G ) , \widetilde{\zeta}_i (v) \r_{L^2_v} \right] \,.
	  \end{aligned}
	\end{equation}
	We now take $L^2_x$-inner product by multiplying $\partial^m_x \theta$ in the above equality and integrating by parts over $x \in \T^3$. Consequently, we have
	\begin{equation}\label{theta-Dissip-0}
	  \begin{aligned}
	     & \| \nabla_x \partial^m_x \theta \|^2_{L^2_x} =  \underset{\textbf{D}_1}{\underbrace{ \sum_{i=1}^3 \l \partial^m_x \l \Theta ( \mathbb{P}^\perp G ) , \widetilde{\zeta}_i (v) \r_{L^2_v} , \partial_i \partial^m_x \theta \r_{L^2_x} }} \\
	     & + \underset{\textbf{D}_2}{\underbrace{ \sum_{i=1}^3 \l \partial^m_x \l \Psi ( \mathbb{P} G ) , \widetilde{\zeta}_i (v) \r_{L^2_v} , \partial_i \partial^m_x \theta \r_{L^2_x} }} + \underset{\textbf{D}_3}{\underbrace{ \sum_{i=1}^3 \l \partial^m_x \l \Gamma ( G , G ) , \widetilde{\zeta}_i (v) \r_{L^2_v} , \partial_i \partial^m_x \theta \r_{L^2_x} }} \,.
	  \end{aligned}
	\end{equation}
	We next deal with the terms $\textbf{D}_1$, $\textbf{D}_2$ and $\textbf{D}_3$ term by term. The definition \eqref{Theta-Pperp-G} of $\Theta (\mathbb{P}^\perp G)$ yields that the quantity $\textbf{D}_1$ can be decomposed as 
	\begin{equation}
	  \begin{aligned}
	     \textbf{D}_1 = & - \sum_{i=1}^3 \Big\langle \eps \partial^m_x \partial_t \mathbb{P}^\perp G + v \cdot \nabla_x \partial^m_x \mathbb{P}^\perp G + \tfrac{1}{\eps} \mathscr{L} \partial^m_x \mathbb{P}^\perp G \\
	     & \qquad + \partial^m_x [ \mathsf{q} ( \eps E + v \times B ) \cdot \nabla_v \mathbb{P}^\perp G ]  - \tfrac{1}{2} \eps \partial^m_x [ \mathsf{q} ( E \cdot v ) \mathbb{P}^\perp G ] , \partial_i \partial^m_x \theta \widetilde{\zeta}_i (v) \Big\rangle_{L^2_{x,v}} \\
	     = & \eps \frac{\d}{\d t} \sum_{i=1}^3 \l \partial_i \partial^m_x \mathbb{P}^\perp G , \partial^m_x \theta \widetilde{\zeta}_i (v) \r_{L^2_{x,v}} \underset{\textbf{D}_{11}}{\underbrace{ - \sum_{i=1}^3 \l \partial_i \partial^m_x \mathbb{P}^\perp G , \eps \partial_t \partial^m_x \theta \widetilde{\zeta}_i (v) \r_{L^2_{x,v}} }} \\
	     & \underset{\textbf{D}_{12}}{\underbrace{ - \sum_{i=1}^3 \l \tfrac{1}{\eps} \partial^m_x \mathbb{P}^\perp G , \partial_i \partial^m_x \theta \mathscr{L} \widetilde{\zeta}_i (v) \r_{L^2_{x,v}} }} + \underset{\textbf{D}_{13}}{ \underbrace{ \sum_{i=1}^3 \l \eps \partial^m_x ( E \mathsf{q} \mathbb{P}^\perp G ) , \partial_i \partial^m_x \theta \nabla_v ( \tfrac{\widetilde{\zeta}_i (v)}{\sqrt{M}} ) \sqrt{M} \r_{L^2_{x,v}} } } \\
	     & \underset{\textbf{D}_{14}}{\underbrace{ - \sum_{i=1}^3 \l v \cdot \nabla_x \partial^m_x \mathbb{P}^\perp G , \partial_i \partial^m_x \theta \widetilde{\zeta}_i (v) \r_{L^2_{x,v}} }} + \underset{\textbf{D}_{15}}{ \underbrace{ \sum_{i=1}^3 \l \partial^m_x [ (v \times B) \mathsf{q} \mathbb{P}^\perp G ] , \partial_i \partial^m_x \theta \nabla_v \widetilde{\zeta}_i (v) \r_{L^2_{x,v}} }} \,,
	  \end{aligned}
	\end{equation}
	where we make use of integration by parts over $v \in \R^3$ in the terms $\textbf{D}_{13}$ and $\textbf{D}_{15}$. Noticing that the last $\theta$-equation in \eqref{Deco-6-moments}
	\begin{equation*}
	  \eps \partial_t \theta + \tfrac{2}{3} \div_x u = \tfrac{1}{3} \l \Theta (\mathbb{P}^\perp G) + \Psi (\mathbb{P} G) , \phi_6 (v) \r_{L^2_v} \,,
	\end{equation*}
	the quantity $\textbf{D}_{11}$ can be rewritten as
	\begin{equation}
	  \begin{aligned}
	    \textbf{D}_{11} = & \underset{\textbf{D}_{111}}{ \underbrace{ \tfrac{2}{3} \sum_{i=1}^3 \l \partial_i \partial^m_x \mathbb{P}^\perp \cdot \widetilde{\zeta}_i (v) , \partial^m_x \div_x u \r_{L^2_{x,v}}  } } \\
	    & \underset{\textbf{D}_{112}}{ \underbrace{ - \tfrac{1}{3} \sum_{i=1}^3 \l \partial_i \partial^m_x \mathbb{P}^\perp G \cdot \widetilde{\zeta}_i (v) , \partial^m_x \l \Theta ( \mathbb{P}^\perp G ) , \phi_6 (v) \r_{L^2_v} \r_{L^2_{x,v}}  } } \\
	    & \underset{\textbf{D}_{113}}{ \underbrace{ - \tfrac{1}{3} \sum_{i=1}^3 \l \partial_i \partial^m_x \mathbb{P}^\perp G \cdot \widetilde{\zeta}_i (v) , \partial^m_x \l \Psi ( \mathbb{P} G ) , \phi_6 (v) \r_{L^2_v} \r_{L^2_{x,v}}  } } \,.
	  \end{aligned}
	\end{equation}
	One observes that $\widetilde{\zeta}_i (v)$ and $\phi_6 (v)$ are both in $L^2_v$. By utilizing the H\"older inequality and the part (1) of Lemma \ref{Lmm-CF-nu}, the quantity $ \textbf{D}_{111} $ is bounded by
	\begin{equation}
	  \begin{aligned}
	    \textbf{D}_{111} \leq C \| \mathbb{P}^\perp G \|_{H^s_x L^2_v (\nu)} \| \partial^m_x \div_x u \|_{L^2_x} \,.
	  \end{aligned}
	\end{equation}
	Since $\phi_6 (v) \in \textrm{Ker} (\mathscr{L})$, we derive from \eqref{Theta-Pperp-G}, the H\"older inequality, the Sobolev embedding theory and the part (1) of Lemma \ref{Lmm-CF-nu} that
	  \begin{align}
	    \no \textbf{D}_{112} = & \tfrac{1}{3} \sum_{i=1}^3 \Big\langle \partial_i \partial^m_x \mathbb{P}^\perp G \cdot \widetilde{\zeta}_i (v) , \partial^m_x \big\langle v \cdot \nabla_x \mathbb{P}^\perp G \\
	    \no & \qquad + \mathsf{q} ( \eps E + v \times B ) \cdot \nabla_v \mathbb{P}^\perp G - \tfrac{1}{2} \eps \mathsf{q} ( E \cdot v ) \mathbb{P}^\perp G , \phi_6 (v) \big\rangle_{L^2_v} \Big\rangle_{L^2_{x,v}} \\
	    \no = & \tfrac{1}{3} \sum_{i=1}^3 \Big\langle \partial_i \partial^m_x \mathbb{P}^\perp G \cdot \widetilde{\zeta}_i (v) , \partial^m_x \l v \cdot \nabla_x \mathbb{P}^\perp G , \phi_6 (v) \r_{L^2_v} \\
	    \no & \qquad \qquad \qquad  - \partial^m_x \l \eps E \cdot v , \mathbb{P}^\perp G \cdot \mathsf{q}_1 \sqrt{M(v)} \r_{L^2_v} \Big\rangle_{L^2_{x,v}} \\
	    \leq & C ( 1 + \| E \|_{H^s_x} ) \| \mathbb{P}^\perp G \|^2_{H^s_x L^2_v (\nu)}
	  \end{align}
	for $0 < \eps \leq 1$, where the second equality is implied by making use of the relation 
	$$ \l \mathsf{q} ( \eps E + v \times B ) \cdot \nabla_v \mathbb{P}^\perp G - \tfrac{1}{2} \eps \mathsf{q} ( E \cdot v ) \mathbb{P}^\perp G , \phi_6 (v) \r_{L^2_v} = - \l \eps E \cdot v , \mathbb{P}^\perp G \cdot \mathsf{q}_1 \sqrt{M(v)} \r_{L^2_v} \,. $$
	Indeed, recalling that $\phi_6 (v) = ( \frac{|v|^2}{2} - \frac{3}{2} ) \sqrt{M(v)} \mathsf{q}_2$ for $\mathsf{q}_2 = [1,1]$, by utilizing the integration by parts over $v \in \R^3$ and the fact $(v \times B) \cdot v = 0$ we have 
	\begin{equation*}
	  \begin{aligned}
	    & \l \mathsf{q} ( \eps E + v \times B ) \cdot \nabla_v \mathbb{P}^\perp G - \tfrac{1}{2} \eps \mathsf{q} ( E \cdot v ) \mathbb{P}^\perp G , \phi_6 (v) \r_{L^2_v} \\
	    = & \l \mathsf{q} ( \eps E + v \times B ) \cdot \nabla_v \mathbb{P}^\perp G - \tfrac{1}{2} \eps \mathsf{q} ( E \cdot v ) \mathbb{P}^\perp G , ( \frac{|v|^2}{2} - \frac{3}{2} ) \sqrt{M(v)} \mathsf{q}_2 \r_{L^2_v} \\
	    = & \l ( \eps E + v \times B ) \mathbb{P}^\perp G , - v \sqrt{M(v)} \mathsf{q}_1 + ( \tfrac{|v|^2}{2} - \tfrac{3}{2} ) \sqrt{M(v)} \mathsf{q}_1 \tfrac{v}{2} \r_{L^2_v} \\
	    & - \l \tfrac{\eps}{2} ( E \cdot v ) \mathbb{P}^\perp G , ( \frac{|v|^2}{2} - \frac{3}{2} ) \sqrt{M(v)} \mathsf{q}_1 \r_{L^2_v} \\
	    = & - \l \eps E \cdot v , \mathbb{P}^\perp G \cdot \mathsf{q}_1 \sqrt{M(v)} \r_{L^2_v} \,.
	  \end{aligned}
	\end{equation*}
	By employing the similar arguments in estimation of the term $\textbf{D}_{112}$, we deduce from \eqref{Psi-P-G}, the H\"older inequality, the Sobolev embedding theory and the part (1) of Lemma \ref{Lmm-CF-nu} that
	\begin{equation}
	   \textbf{D}_{113} \leq C \| E \|_{H^s_x} \| \mathbb{P} G \|_{H^s_x L^2_v} \| \mathbb{P}^\perp G \|_{H^s_x L^2_v (\nu)} \,.
	\end{equation}
	In summary, we have
	\begin{equation}
	  \begin{aligned}
	    \textbf{D}_{11} = & \textbf{D}_{111} + \textbf{D}_{112} + \textbf{D}_{113}  \leq C \| \mathbb{P}^\perp G \|_{H^s_x L^2_v (\nu)} \| \partial^m_x \div_x u \|_{L^2_x} \\
	    + & C ( 1 + \| E \|_{H^s_x} ) \| \mathbb{P}^\perp G \|^2_{H^s_x L^2_v (\nu)} + \| E \|_{H^s_x} \| \mathbb{P} G \|_{H^s_x L^2_v} \| \mathbb{P}^\perp G \|_{H^s_x L^2_v (\nu)} \\
	    \leq & \delta \| \partial^m_x \div_x u \|^2_{L^2_x} + C \| \mathbb{P}^\perp G \|^2_{H^s_x L^2_v (\nu)} + C \| E \|^2_{H^s_x} \left( \| \mathbb{P} G \|^2_{H^s_x L^2_v} + \| \mathbb{P}^\perp G \|^2_{H^s_x L^2_v (\nu)} \right)
	  \end{aligned}
	\end{equation}
	for any small $\delta > 0$ to be determined and for $0 < \eps \leq 1$. For the term $\textbf{D}_{12}$, we derive from the H\"older inequality and the part (1) of Lemma \ref{Lmm-CF-nu} that
	\begin{equation}
	  \begin{aligned}
	    \textbf{D}_{12} \leq & \frac{1}{\eps} \sum_{i=1}^3 \| \partial^m_x \mathbb{P}^\perp G \|_{L^2_{x,v}} \| \partial_i \partial^m_x \theta \|_{L^2_x} \| \mathscr{L} \widetilde{\zeta}_i (v) \|_{L^2_v} \\
	    \leq & \frac{C}{\eps} \| \mathbb{P}^\perp G \|_{H^s_x L^2_v (\nu)} \| \nabla_x \partial^m_x \theta \|_{L^2_x} \,.
	  \end{aligned}
	\end{equation}
	Moreover, the term $\textbf{D}_{13}$ is bounded by
	\begin{equation}
	  \begin{aligned}
	    \textbf{D}_{13} \leq & \eps \sum_{i=1}^3 \| \partial^m_x ( E \mathbb{P}^\perp G ) \|_{L^2_{x,v}} \| \partial_i \partial^m_x \theta \|_{L^2_x} \| \nabla_v ( \tfrac{\mathsf{q} \widetilde{\zeta}_i (v)}{\sqrt{M}} ) \sqrt{M} \|_{L^2_v} \\
	    \leq & C \eps \| \nabla_x \partial^m_x \theta \|_{L^2_x} \| \partial^m_x ( E \mathbb{P}^\perp G ) \|_{L^2_{x,v}} \\
	    \leq & C  \eps \| \nabla_x \partial^m_x \theta \|_{L^2_x} \| E \|_{H^s_x} \| \mathbb{P}^\perp G \|_{H^s_x L^2_v (\nu)} \,,
	  \end{aligned}
	\end{equation}
	where we make use of the H\"older inequality, the Sobolev embedding theory and the part (1) of Lemma \ref{Lmm-CF-nu}. For the term $\textbf{D}_{14}$, we have
	\begin{equation}
	  \begin{aligned}
	    \textbf{D}_{14} \leq & \sum_{i=1}^3 \| \nabla_x \partial^m_x \mathbb{P}^\perp G \|_{L^2_{x,v}} \| \partial_i \partial^m_x \theta \|_{L^2_x} \| v \widetilde{\zeta}_i (v) \|_{L^2_v} \\
	    \leq & C \| \nabla_x \partial^m_x \theta \|_{L^2_x} \| \mathbb{P}^\perp G \|_{H^s_x L^2_v (\nu)} \,.
	  \end{aligned}
	\end{equation}
	By the similar estimation of the term $\textbf{D}_{13}$, the quantity $\textbf{D}_{15}$ is bounded by
	\begin{equation}
	\begin{aligned}
	\textbf{D}_{15} \leq & \sum_{i=1}^3 \| \partial^m_x ( B \mathbb{P}^\perp G ) \|_{L^2_{x,v}} \| \partial_i \partial^m_x \theta \|_{L^2_x} \| v \nabla_v ( \mathsf{q} \widetilde{\zeta}_i (v) ) \|_{L^2_v} \\
	\leq & C \| \nabla_x \partial^m_x \theta \|_{L^2_x} \| \partial^m_x ( B \mathbb{P}^\perp G ) \|_{L^2_{x,v}} \\
	\leq & C  \| \nabla_x \partial^m_x \theta \|_{L^2_x} \| B \|_{H^s_x} \| \mathbb{P}^\perp G \|_{H^s_x L^2_v (\nu)} \,.
	\end{aligned}
	\end{equation}
	Consequently, collecting the above bounds yields
	\begin{equation}
	  \begin{aligned}
	    \textbf{D}_1 = & \eps \frac{\d}{\d t} \sum_{i=1}^3 \l \partial_i \partial^m_x \mathbb{P}^\perp G , \partial^m_x \theta \widetilde{\zeta}_i (v) \r_{L^2_{x,v}} + \textbf{D}_{11} + \textbf{D}_{12} + \textbf{D}_{13} + \textbf{D}_{14} + \textbf{D}_{15} \\
	    \leq & \eps \frac{\d}{\d t} \sum_{i=1}^3 \l \partial_i \partial^m_x \mathbb{P}^\perp G , \partial^m_x \theta \widetilde{\zeta}_i (v) \r_{L^2_{x,v}}  + C \| E \|^2_{H^s_x} \left( \| \mathbb{P} G \|^2_{H^s_x L^2_v} + \| \mathbb{P}^\perp G \|^2_{H^s_x L^2_v (\nu)} \right) \\
	    + & C \| \nabla_x \partial^m_x \theta \|_{L^2_x} \| \mathbb{P}^\perp G \|_{H^s_x L^2_v (\nu)}  ( \tfrac{1}{\eps} + \| E \|_{H^s_x} + \| B \|_{H^s_x}  ) + C \| \mathbb{P}^\perp G \|^2_{H^s_x L^2_v (\nu)} + \delta \| \partial^m_x \div_x u \|^2_{L^2_x}
	  \end{aligned}
	\end{equation}
	for any small $\delta > 0$ to be determined and for $0 < \eps \leq 1$.
	
	For the term $\textbf{D}_2$, it is derived from \eqref{Psi-P-G}, the definition \eqref{VMB-Proj} of $\mathbb{P} G$, the H\"older inequality and the Sobolev embedding theory that
	\begin{equation}
	  \begin{aligned}
	    \textbf{D}_2 = & \sum_{i=1}^3 \l \partial^m_x \big[ - \mathsf{q} ( \eps E + v \times B ) \cdot \nabla_v \mathbb{P} G + \tfrac{1}{2} \mathsf{q} \eps ( E \cdot v ) \mathbb{P} G \big] , \partial_i \partial^m_x \theta \widetilde{\zeta}_i (v) \r_{L^2_{x,v}} \\
	    \leq & C \| \partial^m_x [ ( \eps E + B ) \mathbb{P} G ] \|_{L^2_{x,v}} \| \nabla_x \partial^m_x \theta \|_{L^2_x} \\
	    \leq & C ( \eps \| E \|_{H^s_x} + \| B \|_{H^s_x} ) \| \mathbb{P} G \|_{H^s_x L^2_v} \| \nabla_x \partial^m_x \theta \|_{L^2_x} \,.
	  \end{aligned}
	\end{equation} 
	For the quantity $\textbf{D}_3$, we derive from Lemma \ref{Lmm-Gamma-Torus}, the composition $G = \mathbb{P} G + \mathbb{P}^\perp G$ and the definition \eqref{VMB-Proj} of $\mathbb{P} G$ that
	\begin{equation}
	  \begin{aligned}
	    \textbf{D}_3 = & \sum_{i=1}^3 \l \partial^m_x \Gamma (G , G) , \partial_i \partial^m_x \theta \widetilde{\zeta}_i (v) \r_{L^2_{x,v}} \\
	    \leq & C \sum_{i=1}^3 \| G \|_{H^s_x L^2_v} \| G \|_{H^s_x L^2_v (\nu)} \| \partial_i \partial^m_x \theta \|_{L^2_x} \| \widetilde{\zeta}_i (v) \|_{L^2_v (\nu)} \\
	    \leq & C\| \nabla_x \partial^m_x \theta \|_{L^2_x} \| G \|_{H^s_x L^2_v} ( \| \mathbb{P} G \|_{H^s_x L^2_v} + \| \mathbb{P}^\perp G \|_{H^s_x L^2_v (\nu)} ) \,.
	  \end{aligned}
	\end{equation}
	We finally plug the bounds of $\textbf{D}_1$, $\textbf{D}_2$ and $\textbf{D}_3$ into the relation \eqref{theta-Dissip-0} and obtain the bound
	\begin{equation*}
	  \begin{aligned}
	    \| \nabla_x \partial^m_x \theta \|^2_{L^2_x} \leq & \eps \frac{\d}{\d t} \sum_{i=1}^3 \l \partial_i \partial^m_x \mathbb{P}^\perp G , \partial^m_x \theta \widetilde{\zeta}_i (v) \r_{L^2_{x,v}} \\
	    + & \delta \| \partial^m_x \div_x u \|^2_{L^2_x}  + C \| E \|^2_{H^s_x} \left( \| \mathbb{P} G \|^2_{H^s_x L^2_v} + \| \mathbb{P}^\perp G \|^2_{H^s_x L^2_v (\nu)} \right) \\
	    + & C \| \nabla_x \partial^m_x \theta \|_{L^2_x} \| \mathbb{P}^\perp G \|_{H^s_x L^2_v (\nu)}  ( \tfrac{1}{\eps} + \| E \|_{H^s_x} + \| B \|_{H^s_x}  ) + C \| \mathbb{P}^\perp G \|^2_{H^s_x L^2_v (\nu)} \\
	    + & C \| \nabla_x \partial^m_x \theta \|_{L^2_x} \big( \| \mathbb{P} G \|_{H^s_x L^2_v} + \| \mathbb{P}^\perp G \|_{H^s_x L^2_v (\nu)} \big) \big( \| G \|_{H^s_x L^2_v} + \| E \|_{H^s_x} + \| B \|_{H^s_x} \big) \,,
	  \end{aligned}
	\end{equation*}
	which immediately yields by Young's inequality that for all mulit-indexes $m \in \mathbb{N}^3$ with $|m| \leq s - 1$
	\begin{equation}\label{theta-Dissip}
	  \begin{aligned}
	    \| \nabla_x \partial^m_x \theta \|^2_{L^2_x} \leq & \eps \frac{\d}{\d t} \sum_{i=1}^3 \l \partial_i \partial^m_x \mathbb{P}^\perp G , \partial^m_x \theta \widetilde{\zeta}_i (v) \r_{L^2_{x,v}} + 2 \delta \| \partial^m_x \div_x u \|^2_{L^2_x} + \tfrac{C}{\eps^2} \| \mathbb{P}^\perp G \|^2_{H^s_x L^2_v (\nu)} \\
	    + & C ( \| G \|^2_{H^s_x L^2_v} + \| E \|^2_{H^s_x} + \| B \|^2_{H^s_x} ) ( \| \mathbb{P} G \|^2_{H^s_x L^2_v} + \| \mathbb{P}^\perp G \|^2_{H^s_x L^2_v (\nu)} ) 
	  \end{aligned}
	\end{equation}
	for any small $\delta > 0$ to be determined and for all $0 < \eps \leq 1$.\\
	
	{\em Step 3. Bounds on $\| \nabla_x \rho^\pm \|^2_{H^{s-1}_x}$ for the integer $s \geq 3$.} From the second $u_i$-equation of \eqref{Deco-17-moments}, we deduce
	\begin{equation}
	  \begin{aligned}
	    & - \Delta_x \partial^m_x \rho^\pm \pm \partial^m_x \div_x E = \sum_{i=1}^3 \partial_i \partial^m_x ( - \partial_i \rho^\pm \pm E_i ) \\
	    = & - \sum_{i=1}^3 \partial_i \partial^m_x  ( - \eps \partial_t u_i + \tfrac{3}{2} \partial_i \theta + \Theta_i^\pm + \Psi_i^\pm + \Gamma_i^\pm ) \\
	    = & - \sum_{i=1}^3 \partial_i \partial^m_x  \Big[ - \eps \partial_t u_i + \tfrac{3}{2} \partial_i \theta + \l \Theta (\mathbb{P}^\perp G) , \zeta^\pm_i (v) \r_{L^2_v} \\
	    & \qquad \qquad  + \l \Psi (\mathbb{P} G) , \zeta^\pm_i (v) \r_{L^2_v} + \l \Gamma (G,G) , \zeta^\pm_i (v) \r_{L^2_v} \Big]
	  \end{aligned}
	\end{equation}
	for some certain linear combinations $\zeta^\pm_i (v)$ of the basis $\mathfrak{B}$. Multiplying by $\partial^m_x \rho^\pm$, taking $L^2_x$-inner and integrating by parts over $x \in \R^3$ give us
	\begin{equation}\label{rho-Dissip-0}
	  \begin{aligned}
	    & \| \nabla_x \partial^m_x \rho^\pm \|^2_{L^2_x} \pm \l \partial^m_x \div_x E , \partial^m_x \rho^\pm \r_{L^2_x} \\
	    = & - \sum_{i=1}^3 \l \partial_i \partial^m_x ( - \eps \partial_t u_i + \tfrac{3}{2} \partial_i \theta + \Theta_i^\pm + \Psi_i^\pm + \Gamma_i^\pm ) , \partial^m_x \rho^\pm \r_{L^2_x} \,.
	  \end{aligned}
	\end{equation}
	Noticing that the equation $\div_x E = \int_{\R^3} G \cdot \mathsf{q}_1 \sqrt{M} \d v$ in the perturbed VMB system \eqref{VMB-G-drop-eps} reduces to
	\begin{equation}
	  \div_x E = \rho^+ - \rho^- \,,
	\end{equation}
	we then sum up for the indexes $\pm$ in \eqref{rho-Dissip-0} and obtain
	\begin{equation}
	  \begin{aligned}
	    & \| \nabla_x \partial^m_x \rho^+ \|^2_{L^2_x} + \| \nabla_x \partial^m_x \rho^- \|^2_{L^2_x} + \| \partial^m_x \div_x E \|^2_{L^2_x} \\
	    = & - \sum_{\gamma = \pm} \sum_{i=1}^3 \l \partial_i \partial^m_x ( - \eps \partial_t u_i + \tfrac{3}{2} \partial_i \theta + \Theta_i^\gamma + \Psi_i^\gamma + \Gamma_i^\gamma ) , \partial^m_x \rho^\gamma \r_{L^2_x} \\
	    = & \underset{\textbf{E}_1}{ \underbrace{ \sum_{i=1}^3 \l \eps \partial_t \partial_i \partial^m_x u_i , \partial^m_x \rho^+ + \partial^m_x \rho^- \r_{L^2_x} }} \ \underset{\textbf{E}_2}{ \underbrace{ - \sum_{i=1}^3 \l \partial_i \partial^m_x \Theta (\mathbb{P}^\perp G) , \partial^m_x \rho^+ \zeta^+_i (v) + \partial^m_x \rho^- \zeta^-_i (v) \r_{L^2_{x,v}} }} \\
	     &  \underset{\textbf{E}_3}{ \underbrace{ - \tfrac{3}{2} \sum_{i=1}^3 \l \partial_i \partial^m_x \partial_i \theta , \partial^m_x \rho^+ + \partial^m_x \rho^- \r_{L^2_x} }} \ \underset{\textbf{E}_4}{ \underbrace{ - \sum_{i=1}^3 \l \partial_i \partial^m_x \Psi (\mathbb{P} G) , \partial^m_x \rho^+ \zeta^+_i (v) + \partial^m_x \rho^- \zeta^-_i (v) \r_{L^2_{x,v}} }}  \\
	     & \underset{\textbf{E}_5}{ \underbrace{ - \sum_{i=1}^3 \l \partial_i \partial^m_x \Gamma ( G , G ) , \partial^m_x \rho^+ \zeta^+_i (v) + \partial^m_x \rho^- \zeta^-_i (v) \r_{L^2_{x,v}} }} \,.
	  \end{aligned}
	\end{equation}
	Then we will estimate the quantities $\textbf{E}_i$ for $i = 1,2 , \cdots, 5$. For the term $\textbf{E}_1$, we deduce from the first two $\rho^\pm$-equations of \eqref{Deco-6-moments} that
	\begin{equation}
	  \begin{aligned}
	    \textbf{E}_1 = & \eps \frac{\d}{\d t} \sum_{i = 1}^3 \l \partial_i \partial^m_x u_i , \partial^m_x \rho^+ + \partial^m_x \rho^- \r_{L^2_x} - \sum_{i=1}^3 \l \partial_i \partial^m_x u_i , \eps \partial_t \partial^m_x \rho^+ + \eps \partial_t \partial^m_x \rho^- \r_{L^2_x} \\
	    = & \eps \frac{\d}{\d t} \sum_{i = 1}^3 \l \partial_i \partial^m_x u_i , \partial^m_x \rho^+ + \partial^m_x \rho^- \r_{L^2_x} + 2 \| \partial^m_x \div_x u \|^2_{L^2_x} \\
	    & \underset{ \textbf{E}_{11} }{ \underbrace{ - \sum_{i=1}^3 \l \partial_i \partial^m_x u_i , \partial^m_x \Theta ( \mathbb{P}^\perp G ) ( \phi_1 (v) + \phi_2 (v) ) \r_{L^2_{x,v}} } } \\
	    & \underset{ \textbf{E}_{12} }{ \underbrace{ - \sum_{i=1}^3 \l \partial_i \partial^m_x u_i , \partial^m_x \Psi ( \mathbb{P} G ) ( \phi_1 (v) + \phi_2 (v) ) \r_{L^2_{x,v}} } } \,.
	  \end{aligned}
	\end{equation}
	Since $\nabla_v ( \phi_1 (v) + \phi_2 (v) ) = - \frac{1}{2} v ( \phi_1 (v) + \phi_2 (v) )$, we have
	\begin{equation}\label{E-B-cancellation}
	  \begin{aligned}
	    & \l \mathsf{q} ( \eps E + v \times B ) \cdot \nabla_v \mathbb{P}^\perp G , \phi_1 (v) + \phi_2 (v) \r_{L^2_v} \\
	    = & - \l \mathsf{q} ( \eps E + v \times B ) \mathbb{P}^\perp G , \nabla_v ( \phi_1 (v) + \phi_2 (v) ) \r_{L^2_v} \\
	    = & \tfrac{1}{2} \l \mathsf{q} ( \eps E + v \times B ) \mathbb{P}^\perp G , v ( \phi_1 (v) + \phi_2 (v) ) \r_{L^2_v} \\
	    = & \l \tfrac{1}{2} \eps \mathsf{q} ( E \cdot v ) \mathbb{P}^\perp G , \phi_1 (v) + \phi_2 (v) \r_{L^2_v} \,,
	  \end{aligned}
	\end{equation}
	where the fact $(v \times B) \cdot v = 0$ is utilized. Since $\phi_1 (v)$, $\phi_2 (v)$ are both in $\textrm{Ker} (\mathscr{L})$, the previous identity and \eqref{Theta-Pperp-G} reduce to
	\begin{equation}
	  \begin{aligned}
	    \textbf{E}_{11} = & \sum_{i=1}^3 \Big\langle \partial_i \partial^m_x u_i \partial^m_x [ v \cdot \nabla_x \mathbb{P}^\perp G + \mathsf{q} ( \eps E + v \times B ) \cdot \nabla_v \mathbb{P}^\perp G \\
	    & \qquad \qquad - \tfrac{1}{2} \eps \mathsf{q} (E \cdot v) \mathbb{P}^\perp G ] ( \phi_1 (v) + \phi_2 (v) ) \Big\rangle_{L^2_{x,v}} \\
	    = & \sum_{i=1}^3 \l \partial_i \partial^m_x u_i , ( v \cdot \nabla_x \partial^m_x \mathbb{P}^\perp G ) ( \phi_1 (v) + \phi_2 (v) ) \r_{L^2_{x,v}} \\
	    \leq & \| \partial^m_x \div_x u \|_{L^2_x} \| \nabla_x \partial^m_x \mathbb{P}^\perp G \|_{L^2_{x,v}} \| v ( \phi_1 (v) + \phi_2 (v) ) \|_{L^2_v} \\
	    \leq & C \| \mathbb{P}^\perp G \|_{H^s_x L^2_v} \| \partial^m_x \div_x u \|_{L^2_x} \,.
	  \end{aligned}
	\end{equation}
	The cancellation \eqref{E-B-cancellation} still holds when the function $\mathbb{P}^\perp G$ is replaced by $\mathbb{P} G$. Namely,
	\begin{equation}
	  \begin{aligned}
	    \l \mathsf{q} ( \eps E + v \times B ) \cdot \nabla_v \mathbb{P} G , \phi_1 (v) + \phi_2 (v) \r_{L^2_v} = \l \tfrac{1}{2} \eps \mathsf{q} ( E \cdot v ) \mathbb{P} G , \phi_1 (v) + \phi_2 (v) \r_{L^2_v} \,.
	  \end{aligned}
	\end{equation}
	Then by the relation \eqref{Psi-P-G} of $\Psi (\mathbb{P} G)$ we compute the term $\textbf{E}_{12}$ that
	  \begin{align}
	    \no \textbf{E}_{12} = & \sum_{i=1}^3 \l \partial_i \partial^m_x u_i , \partial^m_x [ \mathsf{q} ( \eps E + v \times B ) \cdot \nabla_v \mathbb{P} G -  \tfrac{1}{2} \eps \mathsf{q} ( E \cdot v ) \mathbb{P} G ] ( \phi_1 (v) + \phi_2 (v) ) \r_{L^2_{x,v}} \\
	    \no = &  \sum_{i=1}^3 \l \partial_i \partial^m_x u_i , \partial^m_x \l \mathsf{q} ( \eps E + v \times B ) \cdot \nabla_v \mathbb{P} G , \phi_1 (v) + \phi_2 (v) \r_{L^2_v} \r_{L^2_x} \\
	    & -  \sum_{i=1}^3 \l \partial_i \partial^m_x u_i , \partial^m_x \l \tfrac{1}{2} \eps \mathsf{q} ( E \cdot v ) \mathbb{P} G , \phi_1 (v) + \phi_2 (v) \r_{L^2_v} \r_{L^2_x} = 0 \,.
	  \end{align}
	Then we, consequently, obtain 
	\begin{equation}
	  \begin{aligned}
	    \textbf{E}_1 \leq & \eps \frac{\d}{\d t} \sum_{i = 1}^3 \l \partial_i \partial^m_x u_i , \partial^m_x \rho^+ + \partial^m_x \rho^- \r_{L^2_x} + 2 \| \partial^m_x \div_x u \|^2_{L^2_x} + C \| \mathbb{P}^\perp G \|_{H^s_x L^2_v} \| \partial^m_x \div_x u \|_{L^2_x} \\
	    \leq & \eps \frac{\d}{\d t} \sum_{i = 1}^3 \l \partial_i \partial^m_x u_i , \partial^m_x \rho^+ + \partial^m_x \rho^- \r_{L^2_x} + 3 \| \partial^m_x \div_x u \|^2_{L^2_x} + C \| \mathbb{P}^\perp G \|^2_{H^s_x L^2_v (\nu)} \,,
	  \end{aligned}
	\end{equation}
	where the last inequality is derived from the Young's inequality and the part (1) of Lemma \ref{Lmm-CF-nu}.
	
	Next, we estimate the term $\textbf{E}_2$. From the relation \eqref{Theta-Pperp-G}, we deduce 
	\begin{equation}
	  \begin{aligned}
	    \textbf{E}_2 & =  \underset{\textbf{E}_{21}}{ \underbrace{ \sum_{i=1}^3 \l \eps \partial_i \partial^m_x \partial_t \mathbb{P}^\perp G , \partial^m_x \rho^+ \zeta^+_i (v) + \partial^m_x \rho^- \zeta^-_i (v) \r_{L^2_{x,v}} }} \\
	    &  \underset{\textbf{E}_{22}}{ \underbrace{ - \sum_{i=1}^3 \l v \cdot \nabla_x \partial^m_x  \mathbb{P}^\perp G ,  \partial_i \partial^m_x \rho^+ \zeta^+_i (v) +  \partial_i \partial^m_x \rho^- \zeta^-_i (v) \r_{L^2_{x,v}} }} \\
	    &  \underset{\textbf{E}_{23}}{ \underbrace{ -  \sum_{i=1}^3 \l \tfrac{1}{\eps} \partial^m_x \mathbb{P}^\perp G , \partial_i \partial^m_x \rho^+ \mathscr{L} \zeta^+_i (v) + \partial_i \partial^m_x \rho^- \mathscr{L} \zeta^-_i (v) \r_{L^2_{x,v}} }} \\
	    &  \underset{\textbf{E}_{24}}{ \underbrace{ - \sum_{i=1}^3 \l \partial^m_x ( \eps E \mathbb{P}^\perp G ) , \partial_i \partial^m_x \rho^+ \nabla_v ( \tfrac{\mathsf{q} \zeta^+_i (v)}{\sqrt{M}} ) \sqrt{M} + \partial_i \partial^m_x \rho^- \nabla_v ( \tfrac{\mathsf{q} \zeta^-_i (v)}{\sqrt{M}} ) \sqrt{M} \r_{L^2_{x,v}} }} \\
	    &  \underset{\textbf{E}_{25}}{ \underbrace{ - \sum_{i=1}^3 \l  \partial^m_x ( B \mathbb{P}^\perp G ) , \partial_i \partial^m_x \rho^+ v \times \nabla_v ( \tfrac{\mathsf{q} \zeta^+_i (v)}{\sqrt{M}} ) \sqrt{M} + \partial_i \partial^m_x \rho^- v \times \nabla_v ( \tfrac{\mathsf{q} \zeta^-_i (v)}{\sqrt{M}} ) \sqrt{M} \r_{L^2_{x,v}} }} \,,
	  \end{aligned}
	\end{equation}
	where the last two terms is derived from the integration by parts over $v \in \R^3$. For the term $\textbf{E}_{21}$, we derive from the first two $\rho^\pm$-equations of \eqref{Deco-6-moments} that
	  \begin{align}
	    \no \textbf{E}_{21} = & \eps \frac{\d}{\d t} \sum_{i=1}^3 \l \partial_i \partial^m_x \mathbb{P}^\perp G , \partial^m_x \rho^+ \zeta_i^+ (v) + \partial^m_x \rho^- \zeta^-_i (v) \r_{L^2_{x,v}} \\
	    \no & - \sum_{i=1}^3 \l \partial_i \partial^m_x \mathbb{P}^\perp G , \eps \partial_t \partial^m_x \rho^+ \zeta^+_i (v) + \eps \partial_t \partial^m_x \rho^- \zeta^-_i (v) \r_{L^2_{x,v}} \\
	    \no = & \eps \frac{\d}{\d t} \sum_{i=1}^3 \l \partial_i \partial^m_x \mathbb{P}^\perp G , \partial^m_x \rho^+ \zeta_i^+ (v) + \partial^m_x \rho^- \zeta^-_i (v) \r_{L^2_{x,v}} \\
	    \no & + \underset{\textbf{E}_{211}}{ \underbrace{  \sum_{i=1}^3 \l \partial_i \partial^m_x \mathbb{P}^\perp G , \partial^m_x \div_x u ( \zeta_i^+ (v) + \zeta_i^- (v) ) \r_{L^2_{x,v}} }} \\
	    \no & \underset{\textbf{E}_{212}}{ \underbrace{ - \sum_{i=1}^3 \l \partial_i \partial^m_x \mathbb{P}^\perp G , \partial^m_x \l \Theta (\mathbb{P}^\perp G) + \Psi (\mathbb{P} G) , \phi_1 (v) \r_{L^2_v} \zeta^+_i (v) \r_{L^2_{x,v}} }} \\
	    & \underset{\textbf{E}_{213}}{ \underbrace{ - \sum_{i=1}^3 \l \partial_i \partial^m_x \mathbb{P}^\perp G , \partial^m_x \l \Theta (\mathbb{P}^\perp G) + \Psi (\mathbb{P} G) , \phi_2 (v) \r_{L^2_v} \zeta^-_i (v) \r_{L^2_{x,v}} }} \,.
	  \end{align}
	The H\"older inequality implies that the term $\textbf{E}_{211}$ is bounded by
	\begin{equation}
	  \begin{aligned}
	    \textbf{E}_{211} \leq & \sum_{i=1}^3 \| \partial_i \partial^m_x \mathbb{P}^\perp G \|_{L^2_{x,v}} \| \partial^m_x \div_x u \|_{L^2_x} \| \zeta^+_i (v) + \zeta^-_i (v) \|_{L^2_v} \\
	    \leq & C \| \nabla_x \partial^m_x \mathbb{P}^\perp G \|_{L^2_{x,v}} \| \partial^m_x \div_x u \|_{L^2_x} \\
	    \leq & C \| \mathbb{P}^\perp G \|_{H^s_x L^2_v} \| \partial^m_x \div_x u \|_{L^2_x} \,.
	  \end{aligned}
	\end{equation}
	Since $\nabla_v \phi_i (v) = - \tfrac{1}{2} v \phi_i (v)$ for $i = 1 , 2$, we have 
	\begin{equation}
	  \begin{aligned}
	    & \l - \mathsf{q} ( \eps E + v \times B ) \cdot \nabla_v H , \phi_i (v) \r_{L^2_v} = \l  \mathsf{q} ( \eps E + v \times B ) H , \nabla_v \phi_i (v) \r_{L^2_v} \\
	    = & - \l \tfrac{1}{2} \eps \mathsf{q} ( E \cdot v ) H , \phi_i (v) \r_{L^2_v} + \l \mathsf{q} [ (v \times B) \cdot \tfrac{1}{2} v ] H , \phi_i (v) \r_{L^2_v} \\
	    = & - \l \tfrac{1}{2} \eps \mathsf{q} ( E \cdot v ) H , \phi_i (v) \r_{L^2_v}
	  \end{aligned}
	\end{equation}
	for any $H = [H^+ , H^-]$, where the last equality is derived from the relation $(v \times B) \cdot v = 0$. If taking $H = \mathbb{P} G$ or $\mathbb{P}^\perp G$, we obtain
	\begin{equation}
	  \begin{aligned}
	     \l - \mathsf{q} ( \eps E + v \times B ) \cdot \nabla_v \mathbb{P} G + \tfrac{1}{2} \eps \mathsf{q} ( E \cdot v ) \mathbb{P} G , \phi_i (v) \r_{L^2_v} = 0 \,, \\
	     \l - \mathsf{q} ( \eps E + v \times B ) \cdot \nabla_v \mathbb{P}^\perp G + \tfrac{1}{2} \eps \mathsf{q} ( E \cdot v ) \mathbb{P}^\perp G , \phi_i (v) \r_{L^2_v} = 0 \,.
	  \end{aligned}
	\end{equation}
	Additionally, $\phi_1 (v)$ is in $\textrm{Ker} (\mathscr{L})$, we deduce from \eqref{Theta-Pperp-G} and \eqref{Psi-P-G} that
	\begin{equation}
	  \begin{aligned}
	    \l \Theta (\mathbb{P}^\perp G) + \Psi (\mathbb{P} G) , \phi_i (v) \r_{L^2_v} = - \l v \cdot \nabla_x \mathbb{P}^\perp G , \phi_i (v) \r_{L^2_v} 
	  \end{aligned}
	\end{equation}
	for $i = 1 ,2$. Thus we have
	\begin{equation}
	  \begin{aligned}
	    \textbf{E}_{212} + \textbf{E}_{213} = & \sum_{i=1}^3 \l \partial_i \partial^m_x \mathbb{P}^\perp G , \partial^m_x \l v \cdot \nabla_x \mathbb{P}^\perp G, \phi_1 (v) \r_{L^2_v} \zeta^+_i (v) \r_{L^2_{x,v}} \\
	    & + \sum_{i=1}^3 \l \partial_i \partial^m_x \mathbb{P}^\perp G , \partial^m_x \l v \cdot \nabla_x \mathbb{P}^\perp G, \phi_2 (v) \r_{L^2_v} \zeta^-_i (v) \r_{L^2_{x,v}} \\
	    = & \sum_{i=1}^3 \| \partial_i \partial^m_x \mathbb{P}^\perp G \|_{L^2_{x,v}} \| \nabla_x \partial^m_x \mathbb{P}^\perp G \|_{L^2_{x,v}} \\
	    & \qquad \qquad \times  \left( \| v \phi_1 (v) \|_{L^2_v} \| \zeta^+_i (v) \|_{L^2_v} + \| v \phi_2 (v) \|_{L^2_v} \| \zeta^-_i (v) \|_{L^2_v} \right) \\
	    \leq & C \| \mathbb{P}^\perp G \|^2_{H^s_x L^2_v} \,.
	  \end{aligned}
	\end{equation}
	Consequently, the previous bounds give us
	\begin{equation}
	  \begin{aligned}
	    \textbf{E}_{21} \leq & \eps \frac{\d}{\d t} \sum_{i=1}^3 \l \partial_i \partial^m_x \mathbb{P}^\perp G , \partial^m_x \rho^+ \zeta_i^+ (v) + \partial^m_x \rho^- \zeta^-_i (v) \r_{L^2_{x,v}} \\
	    + & C \| \mathbb{P}^\perp G \|_{H^s_x L^2_v} \| \partial^m_x \div_x u \|_{L^2_x} + C \| \mathbb{P}^\perp G \|^2_{H^s_x L^2_v} \,.
	  \end{aligned}
	\end{equation}
	For the quantity $\textbf{E}_{22}$, it is derived from the H\"older inequality that
	\begin{equation}
	  \begin{aligned}
	    \textbf{E}_{22} \leq & \sum_{i=1}^3 \| \nabla_x \partial^m_x \mathbb{P}^\perp G \|_{L^2_{x,v}} ( \| \partial_i \partial^m_x \rho^+ \|_{L^2_x} + \| \partial_i \partial^m_x \rho^- \|_{L^2_x} ) \\
	    & \qquad \times ( \| v \zeta^+_i (v) \|_{L^2_v} + \| v \zeta^-_i (v) \|_{L^2_v} ) \\
	    \leq & C \| \mathbb{P}^\perp G \|_{H^s_x L^2_v} ( \| \nabla_x \partial^m_x \rho^+ \|_{L^2_x} + \| \nabla_x \partial^m_x \rho^- \|_{L^2_x} ) \,.
	  \end{aligned}
	\end{equation}
	Similarly, for the term $\textbf{E}_{23}$ we have
	\begin{equation}
	  \begin{aligned}
	     \textbf{E}_{23} \leq & \frac{1}{\eps} \sum_{i=1}^3 \| \partial^m_x \mathbb{P}^\perp G \|_{L^2_{x,v}} ( \| \partial_i \partial^m_x \rho^+ \|_{L^2_x} + \| \partial_i \partial^m_x \rho^- \|_{L^2_x} ) \\
	     & \qquad \times ( \| \mathscr{L} \zeta^+_i (v) \|_{L^2_v} + \| \mathscr{L} \zeta^-_i (v) \|_{L^2_v} ) \\
	     \leq & \frac{C}{\eps} \| \mathbb{P}^\perp G \|_{H^s_x L^2_v} ( \| \nabla_x \partial^m_x \rho^+ \|_{L^2_x} + \| \nabla_x \partial^m_x \rho^- \|_{L^2_x} ) \,.
	  \end{aligned}
	\end{equation}
	The term $\textbf{E}_{24}$ can be estimated as
	\begin{equation}
	  \begin{aligned}
	    \textbf{E}_{24} \leq & \sum_{i=1}^3 \| \partial^m_x ( \eps E \mathbb{P}^\perp G ) \|_{L^2_{x,v}} ( \| \partial_i \partial^m_x \rho^+ \|_{L^2_x} + \| \partial_i \partial^m_x \rho^- \|_{L^2_c} ) \\
	    & \qquad \times \big( \| \nabla_v ( \tfrac{\mathsf{q} \zeta_i^+ (v)}{\sqrt{M}} ) \sqrt{M} \|_{L^2_v} + \| \nabla_v ( \tfrac{\mathsf{q} \zeta_i^- (v)}{\sqrt{M}} ) \sqrt{M} \|_{L^2_v} \big) \\
	    \leq & C \eps \| \partial^m_x (E \mathbb{P}^\perp G) \|_{L^2_{x,v}} \big( \| \nabla_x \partial^m_x \rho^+ \|_{L^2_x} + \| \nabla_x \partial^m_x \rho^- \|_{L^2_x} \big) \\
	    \leq & C \eps \| E \|_{H^s_x} \| \mathbb{P}^\perp G \|_{H^s_x L^2_v} \big( \| \nabla_x \partial^m_x \rho^+ \|_{L^2_x} + \| \nabla_x \partial^m_x \rho^- \|_{L^2_x} \big)
	  \end{aligned} 
	\end{equation}
	by making use of the H\"older inequality and the Sobolev embedding theory. Similarly, for the term $\textbf{E}_{25}$ we have
	\begin{equation}
	\begin{aligned}
	\textbf{E}_{25} \leq & \sum_{i=1}^3 \| \partial^m_x ( B \mathbb{P}^\perp G ) \|_{L^2_{x,v}} ( \| \partial_i \partial^m_x \rho^+ \|_{L^2_x} + \| \partial_i \partial^m_x \rho^- \|_{L^2_c} ) \\
	& \quad \times \big( \| v \times \nabla_v ( \tfrac{\mathsf{q} \zeta_i^+ (v)}{\sqrt{M}} ) \sqrt{M} \|_{L^2_v} + \| v \times \nabla_v ( \tfrac{\mathsf{q} \zeta_i^- (v)}{\sqrt{M}} ) \sqrt{M} \|_{L^2_v} \big) \\
	\leq & C \| \partial^m_x (B \mathbb{P}^\perp G) \|_{L^2_{x,v}} \big( \| \nabla_x \partial^m_x \rho^+ \|_{L^2_x} + \| \nabla_x \partial^m_x \rho^- \|_{L^2_x} \big) \\
	\leq & C \| B \|_{H^s_x} \| \mathbb{P}^\perp G \|_{H^s_x L^2_v} \big( \| \nabla_x \partial^m_x \rho^+ \|_{L^2_x} + \| \nabla_x \partial^m_x \rho^- \|_{L^2_x} \big) \,.
	\end{aligned} 
	\end{equation}
	Therefore, by the bounds on quantities $\textbf{E}_{2i}$ for $1 \leq i \leq 5$ in the previous, we obtain the bound of $\textbf{E}_2$
	\begin{equation}
	  \begin{aligned}
	    \textbf{E}_2 = & \textbf{E}_{21} + \textbf{E}_{22} + \textbf{E}_{23} + \textbf{E}_{24} + \textbf{E}_{25} \\
	    \leq & \eps \frac{\d}{\d t} \sum_{i=1}^3 \l \partial_i \partial^m_x \mathbb{P}^\perp G , \partial^m_x \rho^+ \zeta_i^+ (v) + \partial^m_x \rho^- \zeta^-_i (v) \r_{L^2_{x,v}} \\
	    + & C \| \mathbb{P}^\perp G \|_{H^s_x L^2_v} \| \partial^m_x \div_x u \|_{L^2_x} + C \| \mathbb{P}^\perp G \|^2_{H^s_x L^2_v} \\
	    + & C ( \tfrac{1}{\eps} + \| E \|_{H^s_x} + \| B \|_{H^s_x} ) \| \mathbb{P}^\perp G \|_{H^s_x L^2_v} \big( \| \nabla_x \partial^m_x \rho^+ \|_{L^2_x} + \| \nabla_x \partial^m_x \rho^- \|_{L^2_x} \big)
	  \end{aligned}
	\end{equation}
	for $0 < \eps \leq 1$.
	
	For the term $\textbf{E}_3$, we just employ the H\"older inequality to estimate
	\begin{equation}
	  \textbf{E}_3 \leq \tfrac{3}{2} \| \nabla_x \partial^m_x \theta \|_{L^2_x} \big( \| \nabla_x \partial^m_x \rho^+ \|_{L^2_x} + \| \nabla_x \partial^m_x \rho^- \|_{L^2_x} \big) \,.
	\end{equation}
	We can deal with the quantity $\textbf{E}_4$ by utilizing the analogous arguments in estimating the term $ \textbf{E}_2 $ (in fact, it is simpler than the estimation of $ \textbf{E}_2 $). More precisely,
	\begin{equation}
	  \begin{aligned}
	    \textbf{E}_4 = & - \sum_{i=1}^3 \l \partial^m_x ( \eps E \mathbb{P} G ) , \partial_i \partial^m_x \rho^+ \nabla_v ( \tfrac{\mathsf{q} \zeta^+_i (v)}{\sqrt{M}} ) \sqrt{M} + \partial_i \partial^m_x \rho^- \nabla_v ( \tfrac{\mathsf{q} \zeta^-_i (v)}{\sqrt{M}} ) \sqrt{M} \r_{L^2_{x,v}} \\
	    & - \sum_{i=1}^3 \l  \partial^m_x ( B \mathbb{P} G ) , \partial_i \partial^m_x \rho^+ v \times \nabla_v ( \tfrac{\mathsf{q} \zeta^+_i (v)}{\sqrt{M}} ) \sqrt{M} + \partial_i \partial^m_x \rho^- v \times \nabla_v ( \tfrac{\mathsf{q} \zeta^-_i (v)}{\sqrt{M}} ) \sqrt{M} \r_{L^2_{x,v}} \\
	    \leq & \sum_{i=1}^3 \eps \| \partial^m_x (E \mathbb{P} G) \|_{L^2_{x,v}} \big( \| \partial_i \partial^m_x \rho^+ \|_{L^2_x} + \| \partial_i \partial^m_x \rho^- \|_{L^2_x} \big) \\
	    & \qquad \times \big( \| \nabla_v ( \tfrac{\mathsf{q} \zeta_i^+ (v)}{\sqrt{M}} ) \sqrt{M} \|_{L^2_v} + \| \nabla_v ( \tfrac{\mathsf{q} \zeta_i^- (v)}{\sqrt{M}} ) \sqrt{M} \|_{L^2_v} \big) \\
	    + & \sum_{i=1}^3 \| \partial^m_x (B \mathbb{P} G) \|_{L^2_{x,v}} \big( \| \partial_i \partial^m_x \rho^+ \|_{L^2_x} + \| \partial_i \partial^m_x \rho^- \|_{L^2_x} \big) \\
	    & \qquad \times \big( \| v \times \nabla_v ( \tfrac{\mathsf{q} \zeta_i^+ (v)}{\sqrt{M}} ) \sqrt{M} \|_{L^2_v} + \| v \times \nabla_v ( \tfrac{\mathsf{q} \zeta_i^- (v)}{\sqrt{M}} ) \sqrt{M} \|_{L^2_v} \big) \\
	    \leq & C \big( \eps \| \partial^m_x ( E \mathbb{P} G ) \|_{L^2_{x,v}} + \| \partial^m_x (B \mathbb{P} G) \|_{L^2_{x,v}} \big) \big( \| \nabla_x \partial^m_x \rho^+ \|_{L^2_x} + \| \nabla_x \partial^m_x \rho^- \|_{L^2_x} \big) \\
	    \leq & C ( \eps \| E \|_{H^s_x} + \| B \|_{H^s_x} ) \| \mathbb{P} G \|_{H^s_x L^2_v} \big( \| \nabla_x \partial^m_x \rho^+ \|_{L^2_x} + \| \nabla_x \partial^m_x \rho^- \|_{L^2_x} \big) \,.
	  \end{aligned}
	\end{equation}
	
	We finally estimate the term $\textbf{E}_5$ by employing Lemma \ref{Lmm-Gamma-Torus}. The details are shown as
	\begin{equation}
	  \begin{aligned}
	    \textbf{E}_5 = & \sum_{i=1}^3 \l \partial^m_x \Gamma ( G , G ) , \partial_i \partial^m_x \rho^+ \zeta_i^+ (v) + \partial_i \partial^m_x \rho^- \zeta_i^- (v) \r_{L^2_{x,v}} \\
	    \leq & C \sum_{i=1}^3 \| G \|_{H^s_x L^2_v} \| G \|_{H^s_x L^2_v (\nu)} \| \partial_i \partial^m_x \rho^+ \zeta_i^+ (v) + \partial_i \partial^m_x \rho^- \zeta_i^- (v) \|_{L^2_{x,v}(\nu)} \\
	    \leq & C \| G \|_{H^s_x L^2_v} ( \| \mathbb{P} G \|_{H^s_x L^2_v} + \| \mathbb{P}^\perp G \|_{H^s_x L^2_v (\nu)} ) ( \| \nabla_x \partial^m_x \rho^+ \|_{L^2_x} + \| \nabla_x \partial^m_x \rho^- \|_{L^2_x} ) \,.
	  \end{aligned}
	\end{equation}
	
	Collecting the previous estimations on the quantities $\textbf{E}_1$, $\textbf{E}_2$, $\textbf{E}_3$, $\textbf{E}_4$, $\textbf{E}_5$, and employing the part (1) of Lemma \ref{Lmm-CF-nu} or Lemma \ref{Lmm-nu-norm}, we have
	\begin{equation*}
	  \begin{aligned}
	    & \| \nabla_x \partial^m_x \rho^+ \|^2_{L^2_x} + \| \nabla_x \partial^m_x \partial^m_x \rho^- \|^2_{L^2_x} + \| \partial^m_x \div_x E \|^2_{L^2_x} \\
	    \leq & \eps \frac{\d}{\d t} \sum_{i=1}^3 \Big( \l \partial_i \partial^m_x u_i , \partial^m_x \rho^+ + \partial^m_x \rho^- \r_{L^2_x} + \l \partial_i \partial^m_x \mathbb{P}^\perp G , \partial^m_x \rho^+ \zeta_i^+ (v) + \partial^m_x \rho^- \zeta_i^- (v) \r_{L^2_{x,v}} \Big) \\
	    & + 3 \| \partial^m_x \div_x u \|^2_{L^2_x} + C \| \mathbb{P}^\perp G \|^2_{H^s_x L^2_v (\nu)} + C \| \mathbb{P}^\perp G \|_{H^s_x L^2_v (\nu)} \| \partial^m_x \div_x u \|_{L^2_x} \\
	    & + \big( \| \nabla_x \partial^m_x \rho^+ \|_{L^2_x} + \| \nabla_x \partial^m_x \rho^- \|_{L^2_x} \big) \Big[ \tfrac{3}{2} \| \nabla_x \partial^m_x \theta \|_{L^2_x} + \tfrac{C}{\eps} \| \mathbb{P}^\perp G \|_{H^s_x L^2_v (\nu)} \\
	    & + C ( \| G \|_{H^s_x L^2_v} + \| E \|_{H^s_x} + \| B \|_{H^s_x} ) ( \| \mathbb{P} G \|_{H^s_x L^2_v} + \| \mathbb{P}^\perp G \|_{H^s_x L^2_v (\nu)} ) \Big] 
	  \end{aligned}
	\end{equation*}
	for $0 < \eps \leq 1$, which immediately implies by the Young's inequality that for all multi-indexes $m \in \mathbb{N}^3$ with $|m| \leq s - 1$ ($s \geq 3$)
	  \begin{align}\label{rho-Dissip}
	    \no & \| \nabla_x \partial^m_x \rho^+ \|^2_{L^2_x} + \| \nabla_x \partial^m_x \partial^m_x \rho^- \|^2_{L^2_x} + \| \partial^m_x \div_x E \|^2_{L^2_x} \\
	    \no \leq & 2 \eps \frac{\d}{\d t} \sum_{i=1}^3 \Big( \l \partial_i \partial^m_x u_i , \partial^m_x \rho^+ + \partial^m_x \rho^- \r_{L^2_x} + \l \partial_i \partial^m_x \mathbb{P}^\perp G , \partial^m_x \rho^+ \zeta_i^+ (v) + \partial^m_x \rho^- \zeta_i^- (v) \r_{L^2_{x,v}} \Big) \\
	    \no &  + 8 \| \partial^m_x \div_x u \|^2_{L^2_x} + 6 \| \nabla_x \partial^m_x \theta \|^2_{L^2_x} + \frac{C}{\eps^2} \| \mathbb{P}^\perp G \|^2_{H^s_x L^2_v (\nu)} \\
	    & + C  ( \| G \|^2_{H^s_x L^2_v} + \| E \|^2_{H^s_x} + \| B \|^2_{H^s_x} ) ( \| \mathbb{P} G \|^2_{H^s_x L^2_v} + \| \mathbb{P}^\perp G \|^2_{H^s_x L^2_v (\nu)} )
	  \end{align}
	for $0 < \eps \leq 1$. \\
	
	{\em Step 4. Summarization for the dissipation of the fluid part $\mathbb{P} G$.} We summarize the estimates derived in the previous three steps. We first add $\frac{1}{16}$ times of the bound \eqref{rho-Dissip} to the summation of bounds \eqref{u-Dissip} and \eqref{theta-Dissip}. Then we deduce that for all multi-indexes $m \in \mathbb{N}^3$ with $|m| \leq s -1$ ($s \geq 3$)
	\begin{equation}
	  \begin{aligned}
	    & \tfrac{1}{16} \| \nabla_x \partial^m_x \rho^+ \|^2_{L^2_x} + \tfrac{1}{16} \| \nabla_x \partial^m_x \rho^- \|^2_{L^2_x} + \tfrac{1}{16} \| \partial^m_x \div_x E \|^2_{L^2_x} \\
	    & + \tfrac{1}{2} \| \nabla_x \partial^m_x \theta \|^2_{L^2_x} + \| \nabla_x \partial^m_x u \|^2_{L^2_x} + \tfrac{1}{2} \| \partial^m_x \div_x u \|^2_{L^2_x} \\
	    \leq & \eps \frac{\d}{\d t} \sum_{i = 1}^3 \Big[ \l \tfrac{1}{8} \partial_i \partial^m_x u_i , \partial^m_x \rho^+ + \partial^m_x \rho^- \r_{L^2_x} + \sum_{j=1}^3 \l \partial_j \partial^m_x \mathbb{P}^\perp G , \partial^m_x u_i \zeta_{ij} (v) \r_{L^2_{x,v}} \\
	    &  + \l \partial_i \partial^m_x \mathbb{P}^\perp G , \partial^m_x \theta \widetilde{\zeta}_i (v) \r_{L^2_{x,v}} + \l \tfrac{1}{8} \partial_i \partial^m_x \mathbb{P}^\perp G , \partial^m_x \rho^+ \zeta_i^+ (v) + \partial^m_x \rho^- \zeta_i^- (v) \r_{L^2_{x,v}} \Big] \\
	    &  + C ( \| G \|^2_{H^s_x L^2_v} + \| E \|^2_{H^s_x} + \| B \|^2_{H^s_x} ) \big( \| \mathbb{P} G \|^2_{H^s_x L^2_v} + \tfrac{1}{\eps^2} \| \mathbb{P}^\perp G \|^2_{H^s_x L^2_v (\nu)} \big) \\
	    & + \frac{C}{\eps^2} \| \mathbb{P}^\perp G \|^2_{H^s_x L^2_v (\nu)} + 2 \delta \| \partial^m_x \div_x u \|^2_{L^2_x} + \delta \big( \| \nabla_x \partial^m_x \rho^+ \|^2_{L^2_x} + \| \nabla_x \partial^m_x \rho^- \|^2_{L^2_x} + \| \nabla_x \partial^m_x \theta \|^2_{L^2_x} \big)
	  \end{aligned}
	\end{equation}
	holds for any small $\delta > 0$ to be determined and for all $0 < \eps \leq 1$. We now take $\delta = \tfrac{1}{32}$, so that
	\begin{equation}\label{Fluid-Dissip-1}
	  \begin{aligned}
	    & 2 \| \nabla_x \partial^m_x u \|^2_{L^2_x} + 3 \| \nabla_x \partial^m_x \theta \|^2_{L^2_x} + \| \partial^m_x \div_x u \|^2_{L^2_x} \\
	    & + \| \nabla_x \partial^m_x \rho^+ \|^2_{L^2_x} + \| \nabla_x \partial^m_x \rho^- \|^2_{L^2_x} + \| \partial^m_x \div_x  E \|^2_{L^2_x} \\
	    \leq & \eps \frac{\d}{\d t} \sum_{i = 1}^3 \Big[ \l 4 \partial_i \partial^m_x u_i , \partial^m_x \rho^+ + \partial^m_x \rho^- \r_{L^2_x} + \sum_{j=1}^3 \l 32 \partial_j \partial^m_x \mathbb{P}^\perp G , \partial^m_x u_i \zeta_{ij} (v) \r_{L^2_{x,v}} \\
	    &  + \l 32 \partial_i \partial^m_x \mathbb{P}^\perp G , \partial^m_x \theta \widetilde{\zeta}_i (v) \r_{L^2_{x,v}} + \l 4 \partial_i \partial^m_x \mathbb{P}^\perp G , \partial^m_x \rho^+ \zeta_i^+ (v) + \partial^m_x \rho^- \zeta_i^- (v) \r_{L^2_{x,v}} \Big] \\
	    + & \frac{C}{\eps^2} \| \mathbb{P}^\perp G \|^2_{H^s_x L^2_v (\nu)} + C ( \| G \|^2_{H^s_x L^2_v} + \| E \|^2_{H^s_x} + \| B \|^2_{H^s_x} ) \big( \| \mathbb{P} G \|^2_{H^s_x L^2_v} + \tfrac{1}{\eps^2} \| \mathbb{P}^\perp G \|^2_{H^s_x L^2_v (\nu)} \big)
	  \end{aligned}
	\end{equation}
	holds for all multi-indexes $m \in \mathbb{N}^3$ with $|m| \leq s - 1$ ($s \geq 3$) and for all $0 < \eps \leq 1$.
	
	Recalling that
	\begin{equation}
	  \mathbb{P} G = \rho^+ \phi_1 (v) + \rho^- \phi_2 (v) + \sum_{i=1}^3 u_i \phi_{i+2} (v) + \theta \phi_6 (v),
	\end{equation}
	we directly compute that for all $ m \in \mathbb{N}^3$
	\begin{equation}
	  \begin{aligned}
	    \| \partial^m_x \mathbb{P} G \|^2_{L^2_v} = & (\partial^m_x \rho^+)^2 \| \phi_1 (v) \|^2_{L^2_v} + (\partial^m_x \rho^-)^2 \| \phi_2 (v) \|^2_{L^2_v} \\
	    & + \sum_{i=1}^3 ( \partial^m_x u_i)^2 \| \phi_{i+2} (v) \|^2_{L^2_v} + (\partial^m_x \theta)^2 \| \phi_6 (v) \|^2_{L^2_v} \\
	    = & (\partial^m_x \rho^+)^2 + (\partial^m_x \rho^-)^2 + 2 | \partial^m_x u |^2 + 3 (\partial^m_x \rho^+)^2 \,,
	  \end{aligned}
	\end{equation}
	where we utilize the facts $\l 1 , M \r_{L^2_v} = \l v_i^2 , M \r_{L^2_v} = 1$ for $i = 1,2,3$ and $\l |v|^4 , M \r_{L^2_v} = 15$. Consequently, we have
	\begin{equation}\label{Fluid-norm-1}
	  \begin{aligned}
	    \| \nabla_x \partial^m_x \mathbb{P} G \|^2_{L^2_{x,v}} = \| \nabla_x \partial^m_x \rho^+ \|^2_{L^2_x} + \| \nabla_x \partial^m_x \rho^- \|^2_{L^2_x} + 2 \| \nabla_x \partial^m_x u \|^2_{L^2_x} + 3 \| \nabla_x \partial^m_x \theta \|^2_{L^2_x} \,.
	  \end{aligned}
	\end{equation}
	Moreover, one also has
	\begin{equation}\label{Fluid-norm-2}
	  \begin{aligned}
	    & \| \mathbb{P} G \|^2_{H^s_x L^2_v} = \| \mathbb{P} G \|^2_{L^2_{x,v}} + \| \nabla_x \mathbb{P} G \|^2_{H^{s-1}_x L^2_v} \\
	    = & \| \rho^+ \|^2_{L^2_x} + \| \rho^- \|^2_{L^2_x} + 2 \| u \|^2_{L^2_x} + 3 \| \theta \|^2_{L^2_x} + \| \nabla_x \mathbb{P} G \|^2_{H^{s-1}_x L^2_v} \\
	    \leq & 2 \| \rho^+ - ( \rho^+ )_{\T^3} \|^2_{L^2_x} + 2 \| \rho^- - ( \rho^- )_{\T^3} \|^2_{L^2_x} + 4 \| u - ( u )_{\T^3} \|^2_{L^2_x} + 6 \| \theta - ( \theta )_{\T^3} \|^2_{L^2_x} \\
	    & + 2 |\T^3| \big( (\rho^+)^2_{\T^3} + (\rho^-)^2_{\T^3} + 2 (u)^2_{\T^3} + 3 (\theta)^2_{\T^3} \big) + \| \nabla_x \mathbb{P} G \|^2_{H^{s-1}_x L^2_v} \\
	    \leq & C ( \| \nabla_x \rho^+ \|^2_{L^2_x} + \| \nabla_x \rho^- \|^2_{L^2_x} + 2 \| \nabla_x u \|^2_{L^2_x} + 3 \| \nabla_x \theta \|^2_{L^2_x} ) +  \| \nabla_x \mathbb{P} G \|^2_{H^{s-1}_x L^2_v} \\
	    & + C \big( (\rho^+)^2_{\T^3} + (\rho^-)^2_{\T^3} + 2 (u)^2_{\T^3} + 3 (\theta)^2_{\T^3} \big) \\
	    \leq & C \| \nabla_x \mathbb{P} G \|^2_{H^{s-1}_x L^2_v} + C \big( (\rho^+)^2_{\T^3} + (\rho^-)^2_{\T^3} + 2 (u)^2_{\T^3} + 3 (\theta)^2_{\T^3} \big)
	  \end{aligned}
	\end{equation}
	for $s \geq 3$, where the last second inequality is implied by the Poincar\'e inequality. From substituting the relations \eqref{Fluid-norm-1} and \eqref{Fluid-norm-2} into the bound \eqref{Fluid-Dissip-1} and summing up for all $|m| \leq s - 1$, we deduce that the inequality \eqref{Fluid-Dissip} holds for all $0 < \eps \leq 1$. The proof of Proposition \ref{Prop-MM-Est} is completed.
\end{proof}

\subsection{Estimations on some average quantities} In this subsection, we will estimate the average quantities $ (\rho^\pm)^2_{\T^3} $, $(u)^2_{\T^3}$ and $(\theta)^2_{\T^3}$ appearing in the right-hand side of \eqref{Fluid-Dissip} by utilizing the conservation laws \eqref{Conservatn-Law-G} of mass, momentum and energy for the VMB system \eqref{VMB-F}. More precisely, we will give the following proposition.

\begin{proposition}\label{Prop-rho-u-theta-average}
	Under the assumptions in Proposition \ref{Prop-MM-Est} and the initial data of $(G, E , B)$ satisfying the conservation laws \eqref{Conservatn-Law-G}, we have
	\begin{equation}\label{Average-Bnd}
	  \begin{aligned}
	    (\rho^+)^2_{\T^3} + (\rho^-)^2_{\T^3} + (u)^2_{\T^3} + (\theta)^2_{\T^3} \leq C \left( \| E \|^2_{L^2_x} + \| B \|^2_{H^1_x} \right) \left( \| E \|^2_{L^2_x} + \| \nabla_x B \|^2_{L^2_x} \right)
	  \end{aligned}
	\end{equation}
	for some positive constant $C > 0$ and for all $0 < \eps \leq 1$.
\end{proposition}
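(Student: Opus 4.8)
The plan is to read the three macroscopic averages off directly from the conservation laws \eqref{Conservatn-Law-G} and then bound the electromagnetic integrals that survive. Recalling from \eqref{VMB-Proj} and \eqref{rho-u-theta} that $\rho^+ = \langle G,\phi_1\rangle_{L^2_v}$, $\rho^- = \langle G,\phi_2\rangle_{L^2_v}$, $u_i = \tfrac12\langle G,\phi_{i+2}\rangle_{L^2_v}$ and $\theta = \tfrac13\langle G,\phi_6\rangle_{L^2_v}$, I would integrate these identities in $x\in\T^3$ and insert them into \eqref{Conservatn-Law-G}, which yields, for every $t\ge0$,
\[
  (\rho^+)_{\T^3} = (\rho^-)_{\T^3} = 0 \,, \qquad
  (u)_{\T^3} = -\tfrac{1}{2|\T^3|}\int_{\T^3} E\times B\,\d x \,, \qquad
  (\theta)_{\T^3} = -\tfrac{\eps}{3|\T^3|}\int_{\T^3}\big(|E|^2+|B-\bar B|^2\big)\,\d x \,.
\]
Consequently the two $\rho^\pm$-terms on the left-hand side of \eqref{Average-Bnd} vanish identically, and it remains only to estimate $(u)_{\T^3}$ and $(\theta)_{\T^3}$.

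For $(\theta)_{\T^3}$ I would invoke the Poincar\'e inequality on the mean-zero field $B-\bar B$ --- legitimate because, by \eqref{B-average}, $\bar B$ is precisely the $x$-average of $B$, so that $\nabla_x(B-\bar B)=\nabla_x B$ --- to obtain $\|B-\bar B\|_{L^2_x}^2\le C\|\nabla_x B\|_{L^2_x}^2$ and hence $|(\theta)_{\T^3}|\le C\eps\big(\|E\|_{L^2_x}^2+\|\nabla_x B\|_{L^2_x}^2\big)$; squaring and using $0<\eps\le1$ together with $\|\nabla_x B\|_{L^2_x}^2\le\|B\|_{H^1_x}^2$ bounds $(\theta)_{\T^3}^2$ by the right-hand side of \eqref{Average-Bnd}. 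For $(u)_{\T^3}$ the delicate point is that $B$ is \emph{not} mean-zero, so I would split $B=\bar B+(B-\bar B)$ and write $\int_{\T^3} E\times B\,\d x = \int_{\T^3} E\times(B-\bar B)\,\d x + \big(\int_{\T^3} E\,\d x\big)\times\bar B$. Cauchy--Schwarz and Poincar\'e control the first piece by $\|E\|_{L^2_x}\|B-\bar B\|_{L^2_x}\le C\|E\|_{L^2_x}\|\nabla_x B\|_{L^2_x}$, while the second is bounded by $|\bar B|\,|\T^3|^{1/2}\|E\|_{L^2_x}\le\|B\|_{L^2_x}\|E\|_{L^2_x}$ since $|\bar B|\le|\T^3|^{-1/2}\|B\|_{L^2_x}$. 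Squaring then gives $(u)_{\T^3}^2\le C\|E\|_{L^2_x}^2\|B\|_{H^1_x}^2$, which is again dominated by the right-hand side of \eqref{Average-Bnd}. Summing the $u$- and $\theta$-contributions produces \eqref{Average-Bnd} for all $0<\eps\le1$.

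The argument is genuinely short and I do not expect a real obstacle; the one point demanding care --- and the structural reason why the full norm $\|B\|_{H^1_x}$ rather than merely $\|\nabla_x B\|_{L^2_x}$ appears on the right-hand side of \eqref{Average-Bnd} --- is the nonzero constant average $\bar B$ of the magnetic field, which must be separated out before any Poincar\'e-type estimate becomes available.
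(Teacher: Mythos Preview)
Your proof is correct and follows essentially the same route as the paper: extract $(\rho^\pm)_{\T^3}=0$, $(u)_{\T^3}$, and $(\theta)_{\T^3}$ from the conservation laws \eqref{Conservatn-Law-G}, use Poincar\'e on $B-\bar B$ for the $\theta$-term, and bound the $E\times B$ integral for the $u$-term. The only minor difference is that your splitting $B=\bar B+(B-\bar B)$ in the $u$-estimate is unnecessary --- the paper simply applies Cauchy--Schwarz directly to obtain $|(u)_{\T^3}|\le C\|E\|_{L^2_x}\|B\|_{L^2_x}$, which already suffices; the appearance of $\|B\|_{H^1_x}$ on the right of \eqref{Average-Bnd} comes just from majorizing $\|B\|_{L^2_x}^2$ when packaging the final product form, not from any Poincar\'e argument on $u$.
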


The term $ \| E \|^2_{L^2_x} + \| \nabla_x B \|^2_{L^2_x} $ will be a part of the energy dissipation as shown in the next subsection and the quantity $ \| E \|^2_{L^2_x} + \| B \|^2_{H^1_x} $ can be controlled by the energy term. As a consequence, the average quantity $ (\rho^+)^2_{\T^3} + (\rho^-)^2_{\T^3} + (u)^2_{\T^3} + (\theta)^2_{\T^3} $ will be well dealt in deriving the global in times energy bounds uniformly in $\eps$ with small initial data. We next prove this proposition.

\begin{proof}[Proof of Proposition \ref{Prop-rho-u-theta-average}]
From the conservation laws \eqref{Conservatn-Law-G} and the definition \eqref{rho-u-theta} of $\rho^\pm$, $u$ and $\theta$, we deduce
\begin{equation}\label{Cons-rho-u-theta}
  \left\{
    \begin{array}{l}
      (\rho^\pm)_{\T^3} = 0 \,, \\[3mm]
      (u)_{\T^3} = - \frac{1}{2 |\T^3|} \int_{\T^3} E \times B \d x \,, \\[3mm]
      (\theta)_{\T^3} = - \frac{\eps}{3 |\T^3|} \int_{\T^3} ( |E|^2 + |B - \bar{B}|^2 ) \d x \,.
    \end{array}
  \right.
\end{equation}
Recalling the relation \eqref{B-average}, which means $\int_{\T^3} ( B - \bar{B} ) \d x = 0$, we derive from the Poincar\'e inequality that
\begin{equation}
  \begin{aligned}
    \int_{\T^3} |B - \bar{B}|^2 \d x \leq C \| \nabla_x B \|^2_{L^2_x}
  \end{aligned}
\end{equation}
for some positive constant $C > 0$. Thus, the third relation of \eqref{Cons-rho-u-theta} gives us
\begin{equation}
  \begin{aligned}
    | (\theta)_{\T^3} | \leq C ( \| E \|^2_{L^2_x} + \| \nabla_x B \|^2_{L^2_x} )
  \end{aligned}
\end{equation}
for all $0 < \eps \leq 1$. For the second equality of \eqref{Cons-rho-u-theta}, it is yielded by the H\"older inequality that
\begin{equation}
  \begin{aligned}
    | (u)_{\T^3} | \leq C \| E \|_{L^2_x} \| B \|_{L^2_x} \,.
  \end{aligned}
\end{equation}
Consequently, we obtain
\begin{equation}
  \begin{aligned}
    & (\rho^+)^2_{\T^3} + (\rho^-)^2_{\T^3} + (u)^2_{\T^3} + (\theta)^2_{\T^3} \\
    \leq & C \| E \|^2_{L^2_x} \| B \|^2_{L^2_x} + C \big( \| E \|^2_{L^2_x} + \| \nabla_x B \|^2_{L^2_x} \big)^2 \\
    \leq & C \left( \| E \|^2_{L^2_x} + \| B \|^2_{H^1_x} \right) \left( \| E \|^2_{L^2_x} + \| \nabla_x B \|^2_{L^2_x} \right) \,.
  \end{aligned}
\end{equation}
The proof of Proposition \ref{Prop-rho-u-theta-average} is finished.
\end{proof}

\subsection{Decay structures on the Maxwell system} In this subsection, we will find enough dissipation or decay properties on the electronic field $E$ and the magnetic field $B$ by making use of the Maxwell equations, hence the last four equations in \eqref{VMB-G-drop-eps}
\begin{equation}\label{Mxw}
\left\{
\begin{array}{l}
\partial_t E - \nabla_x \times B = - \frac{1}{\eps} ( u^+ - u^- ) \,, \\[2mm]
\partial_t B + \nabla_x \times E = 0 \,, \\[2mm]
\div_x E = \rho^+ - \rho^- \,, \ \div_x B = 0 \,,
\end{array}
\right.
\end{equation}
where we make use of the definition \eqref{rho-u-theta}. It is noticed that the second Faraday's law equation in \eqref{Mxw} does not have explicit dissipative term. If we take $\partial_t$ on the evolution of the magnetic field $B$ and combine with the evolution of $E$, we have
\begin{equation*}
\partial_{tt} B + \nabla_x \times ( \nabla_x \times B ) = \frac{1}{\eps} \nabla \times ( u^+ - u^- )  \,,
\end{equation*}
which implies that
\begin{equation}\label{Mxw-1}
\partial_{tt} B - \Delta_x B = \frac{1}{\eps} \nabla \times ( u^+ - u^- )
\end{equation}
by the equality $ \nabla_x \times ( \nabla_x \times B ) = - \Delta_x B $ under the divergence-free property $\div_x B = 0$.

However, the dissipation of \eqref{Mxw-1} is remain not enough. We try to derive the Ohm's law from the microscopic equation of $G$ in \eqref{VMB-G-drop-eps}, which will supply a decay term $\partial_t B$. More precisely, we dot with $\mathsf{q}_1$ in the first equation of \eqref{VMB-G-drop-eps}, and then we gain
\begin{align}\label{Mxw-2}
\no & \eps \partial_t ( G \cdot \mathsf{q}_1 ) + v \cdot \nabla_x ( G \cdot \mathsf{q}_1 ) + (\eps E + v \times B ) \cdot \nabla_v ( G \cdot \mathsf{q}_2 ) - 2 E \cdot v \sqrt{M}\\
& + \tfrac{1}{\eps} ( \mathscr{L} G ) \cdot \mathsf{q}_1 = \tfrac{1}{2} \eps ( E \cdot v ) ( G \cdot \mathsf{q}_2 ) + \Gamma ( G , G ) \cdot \mathsf{q}_1 \,,
\end{align}
where we make use of the relation $\mathsf{q} \mathsf{q}_1 = \mathsf{q}_2$. Recalling the definition \eqref{Linear-Oprt-VMB} of $\mathscr{L}$ and \eqref{Linear-Oprt-B} of $\mathcal{L}$, we calculate
\begin{equation}
  \begin{aligned}
    ( \mathscr{L} G ) \cdot \mathsf{q}_1 = & [ \mathcal{L} G^+ + \mathcal{L} ( G^+ , G^- ) , \mathcal{L} G^- + \mathcal{L} ( G^- , G^+ ) ] \cdot \mathsf{q}_1 \\
    = & \mathcal{L} ( G^+ - G^- ) + \mathcal{L} ( G^+ , G^- ) - \mathcal{L} ( G^- , G^+ ) \\
    = & \mathcal{L} ( G \cdot \mathsf{q}_1 ) - \big[ \mathcal{Q} ( G^+ , 1 ) + \mathcal{Q} ( 1 , G^- ) - \mathcal{Q} ( G^- , 1 ) - \mathcal{Q} (1 , G^+) \big] \\
    = & \mathcal{L} ( G \cdot \mathsf{q}_1 ) - \big[ \mathcal{Q} ( G \cdot \mathsf{q}_1 , 1) - \mathcal{Q} ( 1 , G \cdot \mathsf{q}_1 ) \big] \\
    = & \mathcal{L} ( G \cdot \mathsf{q}_1 ) + \mathfrak{L} ( G \cdot \mathsf{q}_1 ) \,,
  \end{aligned}
\end{equation}
where the linear operator $\mathfrak{L}$ is defined as
\begin{equation}\label{Mathfrak-L}
  \begin{aligned}
    \mathfrak{L} g = & \mathcal{L} ( g, - g ) =  - \big[ \mathcal{Q} (g , \sqrt{M}) - \mathcal{Q} (\sqrt{M} , g) \big] \\
    = & \sqrt{M} \int_{\R^3} \Big( \tfrac{g}{\sqrt{M}} - \tfrac{g_*}{\sqrt{M_*}} - \tfrac{g'}{\sqrt{M'}} + \tfrac{g_*'}{\sqrt{M_*'}} \Big) |v - v_*| M_* \d v_* \,.
  \end{aligned}
\end{equation}
Then the equation \eqref{Mxw-2} of $ G \cdot \mathsf{q}_1 $ can be rewritten as
\begin{equation}\label{Ohm-Law-Kinetic}
  \begin{aligned}
    \tfrac{1}{\eps} ( \mathcal{L} + \mathfrak{L} ) ( G \cdot \mathsf{q}_1 ) = & - \eps \partial_t ( G \cdot \mathsf{q}_1 ) - v \cdot \nabla_x ( G \cdot \mathsf{q}_1 ) - ( \eps E + v \times B ) \cdot \nabla_v ( G \cdot \mathsf{q}_2 )  \\
    & + 2 E \cdot v \sqrt{M} + \tfrac{1}{2} \eps ( E \cdot v ) ( G \cdot \mathsf{q}_2 ) + \Gamma ( G , G ) \cdot \mathsf{q}_1 \,.
  \end{aligned}
\end{equation}

We now display the following properties of the operator $\mathfrak{L}$, so that we can derive the corresponding macroscopic form from \eqref{Ohm-Law-Kinetic}.

\begin{lemma}\label{Lmm-mathfrak-L}
	The linear operator $\mathfrak{L}$ has the following properties:
	\begin{enumerate}
		\item Hilbert's decomposition of $\mathfrak{L}$:
		
		\noindent The linear operator $\mathfrak{L}$ can be decomposed as
		\begin{equation}
		  \mathfrak{L} g = \nu (v) g - \mathfrak{K} g \,,
		\end{equation}
		where $\mathfrak{K}$ is a compact integral operator on $L^2_v$.
		
		\item Coercivity of $\mathfrak{L}$:
		
		\noindent The linear operator $\mathfrak{L}$ is a nonnegative self-adjoint operator on $L^2_v$ with null space 
		\begin{equation}
		  \mathrm{Ker} ( \mathfrak{L} ) = \mathrm{Span} \{ \sqrt{M} \} \,.
		\end{equation}
		Moreover, the following coercivity estimate holds: there is $C > 0$ such that, for all $ g \in \mathrm{Ker}^\perp ( \mathfrak{L} ) \subset L^2_v $,
		\begin{equation}
		  \| g \|^2_{L^2_v (\nu)} \leq C \int_{\R^3} g \mathfrak{L} g \d v \,.
		\end{equation}
		In particular, for any $ g \in \mathrm{Ker}^\perp ( \mathfrak{L} ) \subset L^2_v $,
		\begin{equation}
		  \| g \|_{L^2_v (\nu)} \leq C \| \mathfrak{L} g \|_{L^2_v} \,.
		\end{equation}
		
		\item Properties of $\mathcal{L} + \mathfrak{L}$:
		
		\noindent For $\Phi (v) = v \sqrt{M} = [ \chi_2 (v), \chi_3 (v) , \chi_4(v) ] \in L^2_v$ and $\Psi (v) = ( \tfrac{|v|^2}{2} - \tfrac{3}{2} ) \sqrt{M} = \chi_5 (v) \in L^2_v$, there are two functions $\widetilde{\Phi}$, $\widetilde{\Psi} \in \mathrm{Ker}^\perp ( \mathcal{L} + \mathfrak{L} ) = \mathrm{Ker}^\perp ( \mathfrak{L} ) $ such that
		\begin{equation}
		  ( \mathcal{L} + \mathfrak{L} ) \widetilde{\Phi} = \Phi \ \textrm{and } ( \mathcal{L} + \mathfrak{L} ) \widetilde{\Psi} = \Psi \,,
		\end{equation} 
		which are uniquely determined in $ \mathrm{Ker}^\perp ( \mathcal{L} + \mathfrak{L} ) $. Furthermore, there exist two scalar valued functions $\alpha , \beta : \R^+ \rightarrow \R$ such that
		\begin{equation}\label{Tilde-Phi-Phi}
		  \widetilde{\Phi} (v) = \alpha (|v|) \Phi (v) \quad \textrm{and} \quad \widetilde{\Psi} (v) = \beta (|v|) \Psi (v) \,.
		\end{equation}
	\end{enumerate}
\end{lemma}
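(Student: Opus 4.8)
\textbf{Proof proposal for Lemma \ref{Lmm-mathfrak-L}.}

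The plan is to treat the three parts in order, reducing each to known facts about the one-species linearized Boltzmann operator $\mathcal{L}$ and to the standard structure of the collision kernel. For part (1), I would start from the explicit formula \eqref{Mathfrak-L} for $\mathfrak{L}$ and separate the ``gain'' and ``loss'' contributions exactly as in the classical Hilbert decomposition of $\mathcal{L}$. The loss part contributes $\sqrt{M}\int_{\R^3}\tfrac{g}{\sqrt{M}}|v-v_*|M_*\,\d v_* = \nu(v)g$ with $\nu$ the collisional frequency of \eqref{Collision-Frecency}; the remaining three terms $-\tfrac{g_*}{\sqrt{M_*}}-\tfrac{g'}{\sqrt{M'}}+\tfrac{g_*'}{\sqrt{M_*'}}$, each multiplied by $\sqrt{M}|v-v_*|M_*$ and integrated, define $\mathfrak{K}g$. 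Compactness of $\mathfrak{K}$ on $L^2_v$ follows by the same Grad-type estimates that give compactness of $\mathscr{K}$ in Lemma \ref{Lmm-L}(1): after the usual change of variables the kernels are Hilbert--Schmidt away from the singularity and the singular part is handled by a truncation argument, so I would simply invoke Proposition 5.6 of \cite{Arsenio-SRM-2016} / \cite{Levermore-Sun-2010-KRM} with the obvious sign changes.

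For part (2), self-adjointness and nonnegativity come directly from the symmetrized bilinear form: writing $\int g\,\mathfrak{L}g\,\d v$ and symmetrizing over the collision substitutions $(v,v_*)\leftrightarrow(v',v_*')$ and $(v,v_*)\leftrightarrow(v_*,v)$ produces a nonnegative quadratic expression of the form $\tfrac14\int\!\!\int\big(\tfrac{g}{\sqrt M}-\tfrac{g_*}{\sqrt{M_*}}-\tfrac{g'}{\sqrt{M'}}+\tfrac{g_*'}{\sqrt{M_*'}}\big)^2|v-v_*|M M_*\,\d v_*\d v$ --- note the relative minus signs, which is why $\mathrm{Ker}(\mathfrak{L})$ is only one-dimensional. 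The kernel is exactly the set of $g$ for which the integrand vanishes identically; the collision invariants that survive the sign pattern are only the multiples of $\sqrt M$ (the momentum and energy invariants $v\sqrt M$, $|v|^2\sqrt M$ pick up the wrong sign under $(v,v_*)\mapsto(v_*,v)$ and drop out), giving $\mathrm{Ker}(\mathfrak{L})=\mathrm{Span}\{\sqrt M\}$. The coercivity estimate $\|g\|_{L^2_v(\nu)}^2\le C\int g\,\mathfrak{L}g\,\d v$ on $\mathrm{Ker}^\perp(\mathfrak{L})$ then follows from part (1) by the standard Fredholm/Weyl argument: $\mathfrak{L}=\nu\mathbb{I}-\mathfrak{K}$ with $\mathfrak{K}$ compact, $\mathfrak{L}\ge 0$, and $\mathrm{Ker}(\mathfrak{L})$ finite-dimensional force a spectral gap in $L^2_v(\nu)$. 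The consequence $\|g\|_{L^2_v(\nu)}\le C\|\mathfrak{L}g\|_{L^2_v}$ is then Cauchy--Schwarz: $\|g\|_{L^2_v(\nu)}^2\le C\langle g,\mathfrak{L}g\rangle_{L^2_v}\le C\|g\|_{L^2_v}\|\mathfrak{L}g\|_{L^2_v}\le C\|g\|_{L^2_v(\nu)}\|\mathfrak{L}g\|_{L^2_v}$ using $\|g\|_{L^2_v}\le\|g\|_{L^2_v(\nu)}$ from \eqref{L2v-nu-norm}.

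For part (3), the key observation is that $\mathrm{Ker}(\mathcal{L})=\mathrm{Span}\{\chi_1,\dots,\chi_5\}$ while $\mathrm{Ker}(\mathfrak{L})=\mathrm{Span}\{\chi_1\}=\mathrm{Span}\{\sqrt M\}$, so $\mathrm{Ker}(\mathcal{L}+\mathfrak{L})$: since both operators are nonnegative self-adjoint, $(\mathcal{L}+\mathfrak{L})g=0$ iff $\mathcal{L}g=0$ and $\mathfrak{L}g=0$, i.e. $g\in\mathrm{Ker}(\mathcal{L})\cap\mathrm{Ker}(\mathfrak{L})=\mathrm{Span}\{\sqrt M\}=\mathrm{Ker}(\mathfrak{L})$, justifying the stated identity. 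Now $\Phi(v)=v\sqrt M$ and $\Psi(v)=(\tfrac{|v|^2}{2}-\tfrac32)\sqrt M$ both lie in $\mathrm{Ker}^\perp(\mathcal{L}+\mathfrak{L})$ (they are orthogonal to $\sqrt M$ in $L^2_v$), and $\mathcal{L}+\mathfrak{L}=2\nu\mathbb{I}-(\mathscr K\text{-type compact})$ inherits from parts (1)--(2) and Lemma \ref{Lmm-L} the property of being Fredholm of index zero with one-dimensional kernel $\mathrm{Span}\{\sqrt M\}$; hence it is invertible on $\mathrm{Ker}^\perp$, producing unique $\widetilde\Phi,\widetilde\Psi\in\mathrm{Ker}^\perp(\mathcal{L}+\mathfrak{L})$. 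The representation \eqref{Tilde-Phi-Phi} $\widetilde\Phi=\alpha(|v|)\Phi$, $\widetilde\Psi=\beta(|v|)\Psi$ is the subtle point and the \emph{main obstacle}: it is a rotational-covariance/Schur-lemma argument. Both $\mathcal{L}$ and $\mathfrak{L}$ commute with the action $g(v)\mapsto g(Rv)$ of $O(3)$; $\Phi$ transforms as the vector representation and $\Psi$ as the trivial (scalar) representation; decomposing $\mathrm{Ker}^\perp(\mathcal{L}+\mathfrak{L})$ into isotypic components and using that each component of $\Phi$ generates (together with its $O(3)$-orbit) an irreducible subspace on which $(\mathcal{L}+\mathfrak{L})^{-1}$ acts as a scalar multiplier, one concludes $(\mathcal{L}+\mathfrak{L})^{-1}\Phi$ must be of the form $\alpha(|v|)\Phi$ with a radial scalar $\alpha$, and similarly $(\mathcal{L}+\mathfrak{L})^{-1}\Psi=\beta(|v|)\Psi$. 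This is the analogue of the classical fact that the Navier--Stokes viscosity and heat-conductivity are generated by $\widehat A(v)=\alpha(|v|)A(v)$, $\widehat B(v)=\beta(|v|)B(v)$, and I would cite the corresponding computation in \cite{Arsenio-SRM-2016} (which defines $\widetilde\Phi$ and produces $\sigma$ via \eqref{VHE-Coefficients}) for the details rather than reproduce the representation-theoretic bookkeeping in full.
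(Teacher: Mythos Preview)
Your proposal is correct and aligns with the paper's approach: the paper's own proof simply cites Propositions 5.8--5.9 and page 46 of \cite{Arsenio-SRM-2016} and omits all details, whereas you sketch the standard arguments (Hilbert decomposition, symmetrized quadratic form, Fredholm/Weyl spectral gap, rotational covariance via Schur) before deferring to the same reference. In that sense you have supplied more than the paper does, and nothing in your outline is wrong or off-track.
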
 

\begin{remark}\label{Rmk-mathfrak-L}
	The relations \eqref{Tilde-Phi-Phi} imply that
	\begin{equation}
	  \int_{\R^3} \Phi_i (v) \widetilde{\Phi}_j (v) \d v = \tfrac{1}{2} \sigma \delta_{ij} \,,
	\end{equation}
	where $\sigma = \tfrac{2}{3} \int_{\R^3} \Phi \cdot \widetilde{\Phi} \d v $ defines the electrical conductivity $\sigma > 0$. Moreover, we also define the energy conductivity $\lambda > 0$ by $ \lambda = \int_{\R^3} \Psi \widetilde{\Psi} \d v $.
\end{remark}

\begin{proof}[Proof of Lemma \ref{Lmm-mathfrak-L}]
	The first two parts of Lemma \ref{Lmm-mathfrak-L} can be seen in Proposition 5.8 and 5.9 of Ars\'enio-Saint-Raymond's book \cite{Arsenio-SRM-2016}, while the last part of Lemma \ref{Lmm-mathfrak-L} and Remark \ref{Rmk-mathfrak-L} have been shown on Page 46 in \cite{Arsenio-SRM-2016}. So we omit the details of the proof here.
\end{proof}

Next we multiply by $\widetilde{\Phi}(v) $ in \eqref{Ohm-Law-Kinetic} and integrate over $v \in \R^3$. Then we derive from \eqref{Ohm-Law-Kinetic}, the part (3) of Lemma \ref{Lmm-mathfrak-L} and Remark \ref{Rmk-mathfrak-L} that
\begin{equation}\label{Ohm-Law-fluid}
  \begin{aligned}
    \tfrac{1}{\eps} ( u^+ - u^- ) = & \sigma E - \eps \l \partial_t ( G \cdot \mathsf{q}_1 ) , \widetilde{\Phi} \r_{L^2_v} - \l v \cdot \nabla_x ( G \cdot \mathsf{q}_1 ) , \widetilde{\Phi} \r_{L^2_v} \\
    + & \l - ( \eps E + v \times B ) \cdot \nabla_v ( G \cdot \mathsf{q}_2 ) + \tfrac{1}{2} \eps ( E \cdot v ) ( G \cdot \mathsf{q}_2 ) + \Gamma ( G , G ) \cdot \mathsf{q}_1 , \widetilde{\Phi} \r_{L^2_v} \\
    \overset{\Delta}{=} & \sigma E + \mathcal{K} ( G , E, B ) \,,
  \end{aligned}
\end{equation}
where we utilize the relation
\begin{equation*}
  \begin{aligned}
    \l ( \mathcal{L} + \mathfrak{L} ) ( G \cdot \mathsf{q}_1 ) , \widetilde{\Phi} \r_{L^2_v} = \l G \cdot \mathsf{q}_1 , ( \mathcal{L} + \mathfrak{L} )  \widetilde{\Phi} \r_{L^2_v} = \l G \cdot \mathsf{q}_1 , \Phi \r_{L^2_v} = u^+ - u^-
  \end{aligned}
\end{equation*}
implied by the definition \eqref{rho-u-theta} and the self-adjoint property of $\mathcal{L} + \mathfrak{L}$.
Then, from substituting \eqref{Ohm-Law-fluid} into \eqref{Mxw-1} we deduce that
\begin{equation}\label{Decay-B}
\partial_{tt} B - \Delta_x B + \sigma \partial_t B = \nabla_x \times \mathcal{K} ( G, E, B ) \,,
\end{equation}
where we use the Faraday's law equation $\partial_t B + \nabla_x \times E = 0$. We thereby have found the decay term $\partial_t B$ of $B$-equation. Moreover, by plugging the relation \eqref{Ohm-Law-fluid} into \eqref{Mxw}, we get
\begin{equation}\label{Mxw-E-Decay}
\left\{
\begin{array}{l}
\partial_t E - \nabla_x \times B + \sigma E = - \mathcal{K} ( G, E, B ) \,, \\[2mm]
\partial_t B + \nabla_x \times E = 0 \,, \\[2mm]
\div_x E = \rho^+ - \rho^- \,, \ \div_x B = 0 \,,
\end{array}
\right.
\end{equation}
in which we have the damping structure $\sigma E$ of the electric field $E$.

Based on the equation \eqref{Decay-B} and Maxwell system \eqref{Mxw-E-Decay}, we derive the following proposition, which gives us some energy dissipative structures on the electric field $E$ and magnetic field $B$. 

\begin{proposition}\label{Prop-Decay-E-B}
	Assume that $(G, E, B)$ is the solution to the perturbed VMB system \eqref{VMB-G} constructed in Proposition \ref{Prop-Local-Solutn}. Then there is a constant $C > 0$, independent of $\eps > 0$, such that the energy inequality on $(E, B)$
	\begin{equation}\label{E-B-decay}
	\begin{aligned}
	\tfrac{1}{2} \tfrac{\d}{\d t} \mathscr{E}_1 (E, B) & + \eps \tfrac{\d}{\d t} \mathscr{A}_s (E,B) (t) + \mathscr{D}_1 (E, B) \\
	\leq & C \eps \big( \| \nabla_x \mathbb{P} G \|^2_{H^{s-1}_x L^2_v} + \| \mathbb{P}^\perp G \|^2_{H^s_x L^2_v (\nu)} \big) \\
	+ & C ( \| G \|_{H^s_x L^2_v} + \| B \|_{H^s_x} ) \big( \| E \|^2_{H^{s-1}_x} + \| \partial_t B \|^2_{H^{s-2}_x} + \| \nabla_x B \|^2_{H^{s-2}_x} \big) \\
	+ & C ( \| G \|_{H^s_x L^2_v} + \| B \|_{H^s_x} ) \big( | \nabla_x \mathbb{P} G \|^2_{H^{s-1}_x L^2_v} + \| \mathbb{P}^\perp G \|^2_{H^s_x L^2_v (\nu)} \big) \\
	+ & C ( \eps + \| G \|_{H^s_x L^2_v} + \| B \|_{H^s_x} ) \big( (\rho^+)^2_{\T^3} + (\rho^-)^2_{\T^3} + (u)^2_{\T^3} + (\theta)^2_{\T^3} \big)
	\end{aligned}
	\end{equation}
	holds for any $0  < \eps \leq 1$ and integer $s \geq 3$, where
	\begin{equation}
	  \begin{aligned}
	    \mathscr{E}_1 (E, B) = & \delta_2 \| E \|^2_{H^{s-1}} + \delta_2 \| B \|^2_{H^{s-1}_x} + \delta_1 ( \sigma - 1 ) \| B \|^2_{H^{s-2}_x} \\
	    & + \delta_1 \| \partial_t B + B \|^2_{H^{s-2}_x} + ( 1 - \delta_1 ) \| \partial_t B \|^2_{H^{s-2}_x} + \| \nabla_x B \|^2_{H^{s-2}_x} \,,
	  \end{aligned}
	\end{equation}
	and
	\begin{equation}
	  \begin{aligned}
	    \mathscr{D}_1 (E,B) = & \tfrac{\sigma}{2} \delta_2 \| E \|^2_{H^{s-1}_x} + \tfrac{\sigma}{2} \| \partial_t B \|^2_{H^{s-2}_x} +  \tfrac{\delta_1}{4} \| \nabla_x B \|^2_{H^{s-2}_x}
	  \end{aligned}
	\end{equation}
for some small $\delta_1, \delta_2 > 0$, independent of $\eps \in (0,1]$, and the constant $\sigma > 0$ is mentioned in Remark \ref{Rmk-mathfrak-L}. Here the quantity $\mathscr{A}_s (E,B) (t)$ is defined as
\begin{equation}\label{Quantity-A-EB}
  \begin{aligned}
    & \mathscr{A}_s (E,B) (t) = \sum_{|m| \leq s - 1} \Big[ \tfrac{\eps}{2} \delta_2 \big\| \big\langle \partial^m_x G \cdot \mathsf{q}_1 , \widetilde{\Phi} \big\rangle_{L^2_v} \big\|^2_{L^2_v} - \delta_2 \big\langle \partial^m_x G \cdot \mathsf{q}_1 , \widetilde{\Phi} \cdot \partial^m_x E \big\rangle_{L^2_{x,v}}  \Big] \\
    & + \sum_{|m| \leq s - 2} \Big[ \tfrac{\eps}{2} \big\| \nabla_x \times \big\langle \partial^m_x G \cdot \mathsf{q}_1 , \widetilde{\Phi} \big\rangle_{L^2_v} \big\|^2_{L^2_x}  + \l \nabla_x \times \big\langle \partial^m_x G \cdot \mathsf{q}_1 , \widetilde{\Phi} \big\rangle_{L^2_v} , \partial_t \partial^m_x B + \delta_1 \partial^m_x B \r_{L^2_x} \Big]   \,.
  \end{aligned}
\end{equation}
\end{proposition}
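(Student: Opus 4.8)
The plan is to exploit the two dissipative reformulations of the Maxwell block that the excerpt has already isolated: the damped Maxwell system \eqref{Mxw-E-Decay}, which carries the damping $\sigma E$ on the electric field, and the damped wave equation \eqref{Decay-B} for $B$, which carries $\sigma\partial_t B$. First I would apply $\partial^m_x$ with $|m|\le s-1$ to \eqref{Mxw-E-Decay}, pair the $E$-equation with $\partial^m_x E$ and the $B$-equation with $\partial^m_x B$ and add: exactly as in the computation \eqref{Spatial-2} the two curl terms cancel, giving $\tfrac12\tfrac{\d}{\d t}(\|\partial^m_x E\|^2_{L^2_x}+\|\partial^m_x B\|^2_{L^2_x})+\sigma\|\partial^m_x E\|^2_{L^2_x}=-\langle\partial^m_x\mathcal{K},\partial^m_x E\rangle_{L^2_x}$, which yields the $\|E\|^2_{H^{s-1}_x}$ dissipation but nothing for $B$. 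To create $B$-dissipation I would apply $\partial^m_x$ with $|m|\le s-2$ to \eqref{Decay-B} and take $L^2_x$ inner products with $\partial_t\partial^m_x B$ (the natural energy, yielding $\tfrac12\tfrac{\d}{\d t}(\|\partial_t\partial^m_x B\|^2_{L^2_x}+\|\nabla_x\partial^m_x B\|^2_{L^2_x})+\sigma\|\partial_t\partial^m_x B\|^2_{L^2_x}$) and with $\delta_1\partial^m_x B$ (the hypocoercive correction, yielding $\delta_1\tfrac{\d}{\d t}\langle\partial_t\partial^m_x B,\partial^m_x B\rangle_{L^2_x}+\delta_1\|\nabla_x\partial^m_x B\|^2_{L^2_x}-\delta_1\|\partial_t\partial^m_x B\|^2_{L^2_x}+\tfrac{\delta_1\sigma}{2}\tfrac{\d}{\d t}\|\partial^m_x B\|^2_{L^2_x}$). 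Adding these with the relative weight $\delta_2$ on the Maxwell part, the $\|\nabla_x B\|^2$ dissipation comes from the correction term, the wrong-sign $-\delta_1\|\partial_t B\|^2$ is absorbed by $\sigma\|\partial_t B\|^2$ once $\delta_1\ll\delta_2$, and completing the square in the $\sigma\partial_t B$ damping reassembles exactly the $\mathscr{E}_1$ and $\mathscr{D}_1$ of the statement, up to the forcing terms coming from $\mathcal{K}$.

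The hard part is controlling $\mathcal{K}(G,E,B)$, whose dangerous ingredient is the time-derivative term $-\eps\langle\partial_t(G\cdot\mathsf{q}_1),\widetilde{\Phi}\rangle_{L^2_v}$ from \eqref{Ohm-Law-fluid}. I would integrate by parts in time: in $-\langle\partial^m_x\mathcal{K},\partial^m_x E\rangle$, and in the wave estimate in $\langle\nabla_x\times\partial^m_x\mathcal{K},\partial_t\partial^m_x B+\delta_1\partial^m_x B\rangle$, the piece $\eps\langle\partial_t\partial^m_x(G\cdot\mathsf{q}_1),\widetilde{\Phi}\rangle$ is rewritten as a total time derivative — this is precisely what is collected into $\mathscr{A}_s(E,B)(t)$ in \eqref{Quantity-A-EB} — plus a remainder in which $\partial_t$ now falls on $E$ or on $\partial_t B+\delta_1 B$. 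In that remainder I would substitute $\partial_t E$ and $\partial_{tt}B$ from \eqref{Mxw-E-Decay}–\eqref{Decay-B}; the terms that reproduce the factor $\eps\langle\partial_t(G\cdot\mathsf{q}_1),\widetilde{\Phi}\rangle$ generate another $\eps$ and produce the quadratic-in-$G$ square terms $\tfrac{\eps}{2}\delta_2\|\langle\partial^m_x G\cdot\mathsf{q}_1,\widetilde{\Phi}\rangle\|^2$ and $\tfrac{\eps}{2}\|\nabla_x\times\langle\partial^m_x G\cdot\mathsf{q}_1,\widetilde{\Phi}\rangle\|^2$ appearing in $\mathscr{A}_s$, while the genuine remainder is $O(\eps)$. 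To estimate $\langle\partial_t(G\cdot\mathsf{q}_1),\widetilde{\Phi}\rangle$ I would substitute $\partial_t G$ from the kinetic equation in \eqref{VMB-G-drop-eps}: since $\widetilde{\Phi}\in\mathrm{Ker}^\perp(\mathcal{L}+\mathfrak{L})=\mathrm{Ker}^\perp(\mathfrak{L})$ and $\mathscr{L}G\cdot\mathsf{q}_1=(\mathcal{L}+\mathfrak{L})(G\cdot\mathsf{q}_1)$ with $\langle(\mathcal{L}+\mathfrak{L})(G\cdot\mathsf{q}_1),\widetilde{\Phi}\rangle=u^+-u^-$ by Lemma \ref{Lmm-mathfrak-L}(3), the nominally $\tfrac{1}{\eps^2}$-singular contribution collapses and leaves at worst a $\tfrac{1}{\eps}\|\mathbb{P}^\perp G\|_{H^s_x L^2_v(\nu)}$, which the external factor $\eps$ neutralizes; this is the origin of the term $C\eps(\|\nabla_x\mathbb{P}G\|^2_{H^{s-1}_x L^2_v}+\|\mathbb{P}^\perp G\|^2_{H^s_x L^2_v(\nu)})$ on the right of \eqref{E-B-decay}.

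The remaining ingredients of $\mathcal{K}$ are routine. For the transport term $-\langle v\cdot\nabla_x(G\cdot\mathsf{q}_1),\widetilde{\Phi}\rangle$ I would use $G=\mathbb{P}G+\mathbb{P}^\perp G$: since $\widetilde{\Phi}(v)=\alpha(|v|)v\sqrt{M}$ is odd and $\mathbb{P}G\cdot\mathsf{q}_1=(\rho^+-\rho^-)\sqrt{M}$ is even, the macroscopic part equals $\tfrac{\sigma}{2}\nabla_x\partial^m_x\div_x E$ (using $\langle v_i\sqrt{M},\widetilde{\Phi}_j\rangle_{L^2_v}=\tfrac{\sigma}{2}\delta_{ij}$ from Remark \ref{Rmk-mathfrak-L} together with $\div_x E=\rho^+-\rho^-$), so pairing it with $\partial^m_x E$ produces the favorable term $-\tfrac{\sigma}{2}\|\div_x\partial^m_x E\|^2_{L^2_x}\le 0$, while the microscopic part is bounded by $\|\mathbb{P}^\perp G\|_{H^s_x L^2_v}$ via Cauchy–Schwarz. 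For the field-coupling terms $-(\eps E+v\times B)\cdot\nabla_v(G\cdot\mathsf{q}_2)+\tfrac12\eps(E\cdot v)(G\cdot\mathsf{q}_2)$ and the collision term $\Gamma(G,G)\cdot\mathsf{q}_1$ I would use Hölder, the Sobolev embeddings $H^1_x\hookrightarrow L^4_x$, $H^2_x\hookrightarrow L^\infty_x$, and Lemma \ref{Lmm-Gamma-Torus}, obtaining bounds of the form $(\|G\|_{H^s_x L^2_v}+\|B\|_{H^s_x})(\|\nabla_x\mathbb{P}G\|_{H^{s-1}_x L^2_v}+\|\mathbb{P}^\perp G\|_{H^s_x L^2_v(\nu)})$, where passing from $\|\mathbb{P}G\|_{H^s_x L^2_v}$ to $\|\nabla_x\mathbb{P}G\|_{H^{s-1}_x L^2_v}$ via \eqref{Fluid-norm-2} introduces precisely the average quantities $(\rho^\pm)^2_{\T^3}$, $(u)^2_{\T^3}$, $(\theta)^2_{\T^3}$ on the right of \eqref{E-B-decay}.

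Finally I would fix $\delta_1\ll\delta_2$, both independent of $\eps\in(0,1]$, so that $\mathscr{E}_1$ is comparable to $\|E\|^2_{H^{s-1}_x}+\|B\|^2_{H^{s-1}_x}+\|\partial_t B\|^2_{H^{s-2}_x}+\|\nabla_x B\|^2_{H^{s-2}_x}$ and so that all Young-splittings of the forcing terms — in particular the small $\|E\|^2_{H^{s-1}_x}$ error from the microscopic transport contribution and the wrong-sign $\|\partial_t B\|^2$ term — are absorbed into $\mathscr{D}_1$; summing over $m$ with $|m|\le s-1$ (Maxwell part) and $|m|\le s-2$ (wave part) then gives \eqref{E-B-decay}. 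The single point demanding genuine care throughout is the bookkeeping that keeps every true $\eps^{-1}$ or $\eps^{-2}$ power out of $\mathscr{D}_1$ and $\mathscr{A}_s$ — this is where the time integration by parts, the substitution of the evolution equations for $E$, $B$ and $G$, and the coercivity of $\mathcal{L}+\mathfrak{L}$ must be used in concert.
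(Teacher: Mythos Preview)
Your overall architecture is exactly the paper's: the damped Maxwell system \eqref{Mxw-E-Decay} for the $E$-dissipation, the damped wave equation \eqref{Decay-B} tested against $\partial_t\partial^m_x B$ and $\delta_1\partial^m_x B$ for the $B$-dissipation, and the time integration by parts on the $\eps\langle\partial_t(G\cdot\mathsf{q}_1),\widetilde{\Phi}\rangle$ piece of $\mathcal{K}$ followed by re-substitution of \eqref{Mxw-E-Decay}--\eqref{Decay-B}, which is precisely what generates the correction functional $\mathscr{A}_s(E,B)$. Your remark that the macroscopic part of the transport term gives a favorable sign (and even vanishes under $\nabla_x\times$ in the wave estimate, since it is a pure gradient) is a nice observation the paper does not exploit; the paper simply estimates that term by $C(\|\nabla_x\mathbb{P}G\|+\|\mathbb{P}^\perp G\|)\|\partial^m_x E\|$ and absorbs a $\tfrac{\sigma}{4}\|E\|^2$ by Young.

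Two points need correction. First, the hierarchy of the small parameters is reversed: the paper takes $\delta_2\ll\delta_1$, not $\delta_1\ll\delta_2$. The reason is that the Maxwell estimate \eqref{E-decay}, after the time-IBP and the substitution $\partial_t E=\nabla_x\times B-\sigma E-\mathcal{K}$, produces a linear error of the form $C\|\nabla_x B\|^2_{H^{s-2}_x}$; once weighted by $\delta_2$, this must be absorbed by the $\tfrac{\delta_1}{2}\|\nabla_x B\|^2_{H^{s-2}_x}$ dissipation coming from the hypocoercive correction in the wave estimate, which forces $C\delta_2\le\tfrac{\delta_1}{4}$. By contrast, the absorption of $-\delta_1\|\partial_t B\|^2$ into $\sigma\|\partial_t B\|^2$ only needs $\delta_1<\sigma$ and is unrelated to $\delta_2$. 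Second, your passage about substituting the kinetic equation for $\partial_t G$ is unnecessary and does not match what the paper does: once you have integrated by parts in time and re-substituted the Maxwell/wave equations, the second occurrence of $\eps\langle\partial_t(G\cdot\mathsf{q}_1),\widetilde{\Phi}\rangle$ inside $\mathcal{K}$ pairs against $\langle\partial^m_x G\cdot\mathsf{q}_1,\widetilde{\Phi}\rangle$ itself and is a \emph{perfect} time derivative (the $\tfrac{\eps}{2}\|\cdot\|^2$ terms in $\mathscr{A}_s$); no residual $\partial_t G$ remains to be estimated, and no appeal to $\langle(\mathcal{L}+\mathfrak{L})(G\cdot\mathsf{q}_1),\widetilde{\Phi}\rangle=u^+-u^-$ is needed at this stage.
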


\begin{remark}
	As shown in the proof, the smallness of the constants $\delta_1, \delta_2 > 0$ is such that the energy dissipative parts in the left-hand side of \eqref{E-B-decay-1} have the lower bound $\mathscr{D}_1 (G,B)$ and the coefficient $1 - \delta_1$ of $\| \partial_t B \|^2_{H^{s-2}_x}$ in the energy functional $\mathscr{E}_1 (E, B)$ is lager than $ \tfrac{1}{2} $. However, the energy functional $\mathscr{E}_1 (E,B)$ may not be nonnegative, because the chosen constants $ \delta_1, \delta_2 > 0 $ can not ensure the positivity of $ \delta_2 - \delta_1 + \delta_1 \sigma $, so that the term $ (\delta_2 - \delta_1 + \delta_1 \sigma ) \| B \|^2_{L^2_x}  $ in the following equality
	\begin{equation}
	  \begin{aligned}
	    & \delta_2 \| B \|^2_{H^{s-1}_x} + \delta_1 ( \sigma - 1 ) \| B \|^2_{H^{s-2}_x} + \| \nabla_x B \|^2_{H^{s-2}} \\
	    = & (\delta_2 - \delta_1 + \delta_1 \sigma ) \| B \|^2_{L^2_x} + (\delta_2 + 1) \sum_{|m| = s - 2} \| \nabla_x \partial^m_x B \|^2_{L^2_x} + (  \delta_2 + 1 - \delta_1 + \delta_1 \sigma ) \| \nabla_x B \|^2_{H^{s-3}_x}
	  \end{aligned}
	\end{equation}
	may be negatively valued. Thanks to the energy inequality \eqref{Spatial-Bnd} in Proposition \ref{Prop-Spatial}, we will design a new positive energy after carefully adjusting the coefficients of $\mathscr{E}_1 (E, B)$.
\end{remark}

\begin{proof}[Proof of Proposition \ref{Prop-Decay-E-B}]
	We will complete this proof by two steps: We first derive a energy inequality which involves the dissipative structures $\| \partial_t B \|^2_{H^{s-2}_x}$ and $\| \nabla_x B \|^2_{H^{s-2}_x}$ of the magnetic field $B$ by utilizing the second order wave system \eqref{Decay-B}. Secondly, from the first order wave system \eqref{Mxw-E-Decay}, we can derive another energy inequality involving the dissipative structure $\| E \|^2_{H^{s-1}_x}$ of the electric field $E$.\\
	
	{\em Step 1. Derivations of the dissipative structures $\| \partial_t B \|^2_{H^{s-2}_x}$ and $\| \nabla_x B \|^2_{H^{s-2}_x}$.} By acting the derivative operator $\partial^m_x$ on the equation \eqref{Decay-B} for all multi-indexes $m \in \mathbb{N}^3$ with $ |m| \leq s - 2$ $(s \geq 3)$, taking the $L^2$-inner product by dot with $\partial_t \partial^m_x B $ and integrating by parts over $x \in \T^3$, we deduce that
	\begin{equation}\label{B-energy-equ-1}
	  \tfrac{1}{2} \tfrac{\d}{\d t} ( \| \partial_t \partial^m_x B \|^2_{L^2_x} + \| \nabla_x \partial^m_x B \|^2_{L^2_x} ) + \sigma \| \partial_t \partial^m_x B \|^2_{L^2_x} = \l \nabla_x \times \partial^m_x \mathcal{K} ( G , E, B ) , \partial_t \partial^m_x B \r_{L^2_x} \,.
	\end{equation}
	If we replace the multiplied vector $\partial_t \partial^m_x B$ by the vector $\partial^m_x B$ in the previous process, we have
	\begin{equation}\label{B-energy-equ-2}
	  \begin{aligned}
	    & \tfrac{1}{2} \tfrac{\d}{\d t} \big( \| \partial_t \partial^m_x B + \partial^m_x B \|^2_{L^2_x} - \| \partial_t \partial^m_x B \|^2_{L^2_x} + ( \sigma - 1 ) \| \partial^m_x B \|^2_{L^2_x} \big) \\
	    & - \| \partial_t \partial^m_x B \|^2_{L^2_x} + \| \nabla_x \partial^m_x B \|^2_{L^2_x} \\
	    & =\l \nabla_x \times \partial^m_x \mathcal{K} ( G, E, B ) , \partial^m_x B \r_{L^2_x} 
	  \end{aligned}
	\end{equation}
	for all $|m| \leq s - 2$, where we utilize the equality
	\begin{equation*}
	  \begin{aligned}
	    & \l \partial_{tt} \partial^m_x B , \partial^m_x B \r_{L^2_x} = - \| \partial_t \partial^m_x B \|^2_{L^2_x} \\
	    & + \tfrac{1}{2} \tfrac{\d}{\d t} \big( \| \partial_t \partial^m_x B + \partial^m_x B \|^2_{L^2_x} - \| \partial_t \partial^m_x B \|^2_{L^2_x} + ( \sigma - 1 ) \| \partial^m_x B \|^2_{L^2_x} \big) \,.
	  \end{aligned}
	\end{equation*}
	Then, the equality \eqref{B-energy-equ-1} being added by $\delta_1$ times of \eqref{B-energy-equ-2} yields 
	\begin{equation}\label{B-energy-equ} 
	  \begin{aligned}
	    & \tfrac{1}{2} \tfrac{\d}{\d t} \big( \delta_1 \| \partial_t \partial^m_x B + \partial^m_x B \|^2_{L^2_x} + ( 1 - \delta_1 ) \| \partial_t \partial^m_x B \|^2_{L^2_x} + \| \nabla_x \partial^m_x B \|^2_{L^2_x} + \delta_1 ( \sigma - 1 ) \| \partial^m_x B \|^2_{L^2_x} \big) \\
	    & + ( \sigma - \delta_1 ) \| \partial_t \partial^m_x B \|^2_{L^2_x} + \delta_1 \| \nabla_x \partial^m_x B \|^2_{L^2_x} \\
	    = & \l \nabla_x \times \partial^m_x \mathcal{K} ( G, E, B ) , \partial_t \partial^m_x B \r_{L^2_x} + \l  \partial^m_x \mathcal{K} ( G, E, B ) ,  \delta_1 \nabla_x \times \partial^m_x B \r_{L^2_x}
	  \end{aligned}
	\end{equation}
	holds for all $|m| \leq s - 2 \ (s \geq 3)$, where $\delta_1 \in ( 0, 1 ]$ is a small number to be determined.
	
	Next, we estimate the two terms in the right-hand side of the equality \eqref{B-energy-equ}. Recalling the definition of $\mathcal{K} (G,E,B)$ in \eqref{Ohm-Law-fluid}, we decompose the first term in the right-hand side of \eqref{B-energy-equ} as some parts. More precisely,
	  \begin{align}
	    \no & \l \nabla_x \times \partial^m_x \mathcal{K} ( G, E, B ) , \partial_t \partial^m_x B \r_{L^2_x} \\
	    \no = & \underset{\textbf{F}_1}{ \underbrace{ - \eps \l \nabla_x \times \partial_t \l \partial^m_x G \cdot \mathsf{q}_1 , \widetilde{\Phi} \r_{L^2_v} , \partial_t \partial^m_x B \r_{L^2_x} } } \\
	    \no & \underset{\textbf{F}_2}{ \underbrace{ - \l \nabla_x  \times \l v \cdot \nabla_x ( \partial^m_x G \cdot \mathsf{q}_1  ) , \widetilde{\Phi} \r_{L^2_v} , \partial_t \partial^m_x B \r_{L^2_x} }} \\
	    \no & \underset{\textbf{F}_3}{ \underbrace{ - \eps \l \nabla_x \times \partial^m_x \l E \cdot \nabla_v ( G \cdot \mathsf{q}_2 ) , \widetilde{\Phi} \r_{L^2_v} , \partial_t \partial^m_x B \r_{L^2_x} }} \\
	    \no & + \underset{\textbf{F}_4}{ \underbrace{ \tfrac{1}{2} \eps \l \nabla_x \times \partial^m_x \l (E \cdot v) ( G \cdot \mathsf{q}_2 ) , \widetilde{\Phi} \r_{L^2_v} , \partial_t \partial^m_x B \r_{L^2_x} }} \\
	    \no & \underset{\textbf{F}_5}{ \underbrace{ - \l \nabla_x \times \partial^m_x \l (v \times B) \cdot \nabla_v ( G \cdot \mathsf{q}_2 ) , \widetilde{\Phi} \r_{L^2_v} , \partial_t \partial^m_x B \r_{L^2_x} }} \\
	    & + \underset{\textbf{F}_6}{ \underbrace{ \l \nabla_x \times \partial^m_x \l \Gamma (G,G) \cdot \mathsf{q}_1 , \widetilde{\Phi} \r_{L^2_v} , \partial_t \partial^m_x B \r_{L^2_x} }}
	  \end{align}
	holds for all $|m| \leq s - 2 \ (s \geq 3)$. The terms $\textbf{F}_i$ $(i = 1,2, \cdots, 6)$ will be controlled one by one.
	
	For the term $\textbf{F}_1$, we derive from the equation \eqref{Decay-B} that
	\begin{equation}
	  \begin{aligned}
	    \textbf{F}_1 = & - \eps \frac{\d}{\d t} \l \nabla_x \times \l \partial^m_x G \cdot \mathsf{q}_1 , \widetilde{\Phi} \r_{L^2_v} , \partial_t \partial^m_x B \r_{L^2_x} \\
	    & \underset{\textbf{F}_{11}}{\underbrace{ - \eps \l \nabla_x \big[ \nabla_x \times \l \partial^m_x G \cdot \mathsf{q}_1 , \widetilde{\Phi} \r_{L^2_v} \big] , \nabla_x \partial^m_x B \r_{L^2_x} }} \\
	    & \underset{\textbf{F}_{12}}{\underbrace{ - \eps \sigma \l \nabla_x \times \l \partial^m_x G \cdot \mathsf{q}_1 , \widetilde{\Phi} \r_{L^2_v} , \partial_t \partial^m_x B \r_{L^2_x} }} \\
	    & + \underset{\textbf{F}_{13}}{\underbrace{ \eps \l \nabla_x \times \l \partial^m_x G \cdot \mathsf{q}_1 , \widetilde{\Phi} \r_{L^2_v} , \nabla_x \times \partial^m_x \mathcal{K} ( G, E, B ) \r_{L^2_x} }} \,.
	  \end{aligned}
	\end{equation}
	It is derived from the H\"older inequality and the decomposition $G = \mathbb{P} G + \mathbb{P}^\perp G$ that the quantity $\textbf{F}_{11}$ is bounded by
	\begin{equation}
	  \begin{aligned}
	    \textbf{F}_{11} \leq & \eps \| \nabla_x ( \nabla_x \partial^m_x G  ) \|_{L^2_{x,v}} \| \mathsf{q}_1 \widetilde{\Phi} \|_{L^2_v} \| \nabla_x \partial^m_x B \|_{L^2_x} \\
	    \leq & C \eps \big( \| \nabla_x \mathbb{P} G \|_{H^{s-1}_x L^2_v} + \| \mathbb{P}^\perp G \|_{H^s_x L^2_v} \big) \| \nabla_x \partial^m_x B \|_{L^2_x} \,.
	  \end{aligned}
	\end{equation}
	Similarly, the quantity $\textbf{F}_{12}$ is controlled by
	\begin{equation}
	  \begin{aligned}
	    \textbf{F}_{12} \leq C \eps \big( \| \nabla_x \mathbb{P} G \|_{H^{s-1}_x L^2_v} + \| \mathbb{P}^\perp G \|_{H^s_x L^2_v} \big) \| \partial_t \partial^m_x B \|_{L^2_x} \,.
	  \end{aligned}
	\end{equation}
	The calculations on $\textbf{F}_{13}$ are rather tedious, since the expression $\mathcal{K} (G,E,B)$ includes six parts. We plug the expression $\mathcal{K} (G,E,B)$ defined in \eqref{Ohm-Law-fluid} into the term $\textbf{F}_{13}$. Then we get
	  \begin{align}
	    \no \textbf{F}_{13} = & - \tfrac{\eps^2}{2} \tfrac{\d}{\d t} \big\| \nabla_x \times \l \partial^m_x G \cdot \mathsf{q}_1 , \widetilde{\Phi} \r_{L^2_v} \big\|^2_{L^2_x} \\
	    \no & \underset{\textbf{F}_{131}}{\underbrace{ - \eps \l \nabla_x \times \l \partial^m_x G \cdot \mathsf{q}_1 , \widetilde{\Phi} \r_{L^2_v} , \nabla_x \times \l v \cdot \nabla_x ( \partial^m_x G \cdot \mathsf{q}_1 ) , \widetilde{\Phi} \r_{L^2_v} \r_{L^2_x}  }} \\
	    \no & \underset{\textbf{F}_{132}}{\underbrace{ - \eps^2 \l \nabla_x \times \l \partial^m_x G \cdot \mathsf{q}_1 , \widetilde{\Phi} \r_{L^2_v} , \nabla_x \times  \partial^m_x \l E \cdot \nabla_v ( G \cdot \mathsf{q}_2 ) , \widetilde{\Phi} \r_{L^2_v} \r_{L^2_x}  }} \\
	    \no & + \underset{\textbf{F}_{133}}{\underbrace{ \tfrac{1}{2} \eps^2 \l \nabla_x \times \l \partial^m_x G \cdot \mathsf{q}_1 , \widetilde{\Phi} \r_{L^2_v} , \nabla_x \times  \partial^m_x \l (E \cdot v ) ( G \cdot \mathsf{q}_2 ) , \widetilde{\Phi} \r_{L^2_v} \r_{L^2_x}  }} \\
	    \no & \underset{\textbf{F}_{134}}{\underbrace{ - \eps \l \nabla_x \times \l \partial^m_x G \cdot \mathsf{q}_1 , \widetilde{\Phi} \r_{L^2_v} , \nabla_x \times \partial^m_x \l (v \times B) \cdot \nabla_v ( G \cdot \mathsf{q}_2 ) , \widetilde{\Phi} \r_{L^2_v} \r_{L^2_x}  }} \\
	     & \underset{\textbf{F}_{135}}{\underbrace{ - \eps \l \nabla_x \times \l \partial^m_x G \cdot \mathsf{q}_1 , \widetilde{\Phi} \r_{L^2_v} , \nabla_x \times \l \partial^m_x \Gamma ( G , G ) \cdot \mathsf{q}_1  , \widetilde{\Phi} \r_{L^2_v} \r_{L^2_x}  }} \,.
	  \end{align}
	We estimate 
	\begin{equation}
	  \begin{aligned}
	    \textbf{F}_{131} \leq & \eps \| \nabla_x \partial^m_x G \|_{L^2_{x,v}} \| \nabla_x ( \nabla_x \partial^m_x G ) \|_{L^2_{x,v}} \| \mathsf{q}_1 \widetilde{\Phi} \|_{L^2_v} \| v \mathsf{q}_1 \widetilde{\Phi} \|_{L^2_v} \\
	    \leq & C \eps \| \nabla_x G \|^2_{H^{s-1}_x L^2_v} \\
	    \leq & C \eps ( \| \nabla_x \mathbb{P} G \|^2_{H^{s-1}_x L^2_v} + \| \mathbb{P}^\perp G \|^2_{H^s_x L^2_v} ) \,,
	  \end{aligned}
	\end{equation}
	where the H\"older inequality and the decomposition $G = \mathbb{P} G + \mathbb{P}^\perp G$ are utilized. The quantity $\textbf{F}_{132}$ is bounded by
	\begin{equation}
	  \begin{aligned}
	    \textbf{F}_{132} = & \eps^2 \l \nabla_x \times \l \partial^m_x G \cdot \mathsf{q}_1 , \widetilde{\Phi} \r_{L^2_v} , \nabla_x \times \partial^m_x \l E ( G \cdot \mathsf{q}_2 ), \nabla_v \widetilde{\Phi} \r_{L^2_v} \r_{L^2_x} \\
	    \leq & \eps^2 \| \nabla_x \partial^m_x G \|_{L^2_{x,v}} \| \nabla_x \partial^m_x ( E G ) \|_{L^2_{x,v}} \| \mathsf{q}_1 \widetilde{\Phi} \|_{L^2_v} \| \mathsf{q}_2 \nabla_v \widetilde{\Phi} \|_{L^2_v} \\
	    \leq & C \eps^2 \| \nabla_x G \|_{H^{s-1}_x L^2_v} \Big( \| E \|_{L^4_x} \| \nabla_x \partial^m_x G \|_{L^4_x L^2_v} \\
	    & \qquad + \sum_{ 1 \leq |m'| \leq |m|+1 } \| \partial^{m'}_x E \|_{L^2_x} \sum_{|m'| \leq |m|} \| \partial^{m'}_x G \|_{L^\infty_x L^2_v} \Big) \\
	    \leq & C \eps^2 \| \nabla_x G \|_{H^{s-1}_x L^2_v} \| E \|_{H^{s-1}_x} \| G \|_{H^s_x L^2_v} \\
	    \leq & C \eps^2 \| G \|_{H^s_x L^2_v} \| E \|_{H^{s-1}_x} \big( \| \nabla_x \mathbb{P} G \|_{H^{s-1}_x L^2_v} + \| \mathbb{P}^\perp G \|_{H^s_x L^2_v} \big) \,,
	  \end{aligned}
	\end{equation}
	where we make use of the H\"older inequality, the Sobolev embeddings $H^1_x (\T^3) \hookrightarrow L^4_x (\T^3)$, $H^2_x (\T^3) \hookrightarrow L^\infty_x (\T^3)$ and the decomposition $G = \mathbb{P} G + \mathbb{P}^\perp G$. Similarly, the term $\textbf{F}_{133}$ can be controlled by
	\begin{equation}
	  \begin{aligned}
	    \textbf{F}_{133} \leq C \eps^2 \| G \|_{H^s_x L^2_v} \| E \|_{H^{s-1}_x} \big( \| \nabla_x \mathbb{P} G \|_{H^{s-1}_x L^2_v} + \| \mathbb{P}^\perp G \|_{H^s_x L^2_v} \big) \,.
	  \end{aligned}
	\end{equation}
	Furthermore, we estimate the term $\textbf{F}_{134}$ for $|m| \leq s - 2 \ (s \geq 3)$ as
	\begin{equation}
	  \begin{aligned}
	    \textbf{F}_{134} = & \eps \l \nabla_x \times \l \partial^m_x G \cdot \mathsf{q}_1 , \widetilde{\Phi} \r_{L^2_v} , \nabla_x \times \partial^m_x \l ( v \times B ) ( G \cdot \mathsf{q}_2 ), \nabla_v \widetilde{\Phi} \r_{L^2_v} \r_{L^2_x} \\
	    \leq & \eps \| \nabla_x \partial^m_x G \|_{L^2_{x,v}} \| \nabla_x \partial^m_x (B G) \|_{L^2_{x,v}} \| \mathsf{q}_1 \widetilde{\Phi} \|_{L^2_v} \| v \mathsf{q}_2 \nabla_v \widetilde{\Phi} \|_{L^2_v} \\
	    \leq & C \eps \| \nabla_x G \|_{H^{s-1}_x L^2_v} \Big( \| \nabla_x \partial^m_x B \|_{L^2_x} \| G \|_{L^\infty_x L^2_v} \\
	    & \qquad + \sum_{ 1 \leq |m'| \leq |m|+1 } \| \partial^{m'}_x G \|_{L^2_x L^2_v} \sum_{|m'| \leq |m|} \| \partial^{m'}_x B \|_{L^\infty_x} \Big) \\
	    \leq & C \eps \| \nabla_x G \|_{H^{s-1}_x L^2_v} \big( \| \nabla_x B \|_{H^{s-2}_x } \| G \|_{H^s_x L^2_v} + \| \nabla_x G \|_{H^{s-1}_x L^2_v} \| B \|_{H^s_x} \big) \\
	    \leq & C \eps ( \| B \|_{H^s_x} + \| G \|_{H^s_x L^2_v} ) \big( \| \nabla_x B \|^2_{H^{s-2}_x} + \| \nabla_x \mathbb{P} G \|^2_{H^{s-1}_x L^2_v} + \| \mathbb{P}^\perp G \|^2_{H^s_x L^2_v} \big)
	  \end{aligned}
	\end{equation}
	by utilizing the decomposition $G = \mathbb{P} G + \mathbb{P}^\perp G$, the H\"older inequality and the Sobolev embedding $H^2_x ( \T^3 ) \hookrightarrow L^\infty_x (\T^3)$. In order to control the quantity $\textbf{F}_{135}$, we employ Lemma \ref{Lmm-Gamma-Torus} and the decomposition $G = \mathbb{P} G + \mathbb{P}^\perp G$. We thereby get
	  \begin{align}
	    \no \textbf{F}_{135} = & - \eps \l \l \nabla_x \partial^m_x G \times \widetilde{\Phi} , \mathsf{q}_1 \r_{L^2_v} \mathsf{q}_1 , \nabla_x \partial^m_x \Gamma (G,G) \times \widetilde{\Phi} \r_{L^2_{x,v}} \\
	    \no \leq & C \eps \| G \|_{H^s_x L^2_v} \| G \|_{H^s_x L^2_v (\nu)} \big\| \l \nabla_x \partial^m_x G \times \widetilde{\Phi} , \mathsf{q}_1 \r_{L^2_v} \mathsf{q}_1 \widetilde{\Phi} \big\|_{L^2_{x,v}(\nu)} \\
	    \no \leq & C \eps \| G \|_{H^s_x L^2_v} \| G \|_{H^s_x L^2_v (\nu)} \| \nabla_x G \|_{H^{s-1}_x L^2_v} \\
	    \no \leq & C \eps \| G \|_{H^s_x L^2_v} \big( \| \nabla_x \mathbb{P} G \|_{H^{s-1}_x L^2_v} + \| \mathbb{P}^\perp G \|_{H^s_x L^2_v } \big) \\
	    \no & \times \left( \| \nabla_x \mathbb{P} G \|_{H^{s-1}_x L^2_v} + \| \mathbb{P}^\perp G \|_{H^s_x L^2_v (\nu)} + (\rho^+)_{\T^3} + (\rho^-)_{\T^3} + |(u)_{\T^3}| + (\theta)_{\T^3} \right)  \\
	    \leq & C \eps \| G \|_{H^s_x L^2_v} \left( \| \nabla_x \mathbb{P} G \|^2_{H^{s-1}_x L^2_v} + \| \mathbb{P}^\perp G \|^2_{H^s_x L^2_v (\nu)} + (\rho^+)^2_{\T^3} + (\rho^-)^2_{\T^3} + (u)^2_{\T^3} + (\theta)^2_{\T^3} \right) \,,
	  \end{align}
	where the last inequality is implied by the Young's inequality and the part (1) of Lemma \ref{Lmm-CF-nu} or Lemma \ref{Lmm-nu-norm}.
	
	Collecting the estimates on the quantities $\textbf{F}_{13i}$ $(i = 1 , 2, \cdots , 5)$ in the previous, we obtain the bounds of $\textbf{F}_{13}$
	\begin{equation}
	  \begin{aligned}
	    \textbf{F}_{13} \leq & - \tfrac{\eps^2}{2} \tfrac{\d}{\d t} \big\| \nabla_x \times \l \partial^m_x G \cdot \mathsf{q}_1 , \widetilde{\Phi} \r_{L^2_v} \big\|^2_{L^2_x} + C \eps \big( \| \nabla_x \mathbb{P} G \|^2_{H^{s-1}_x L^2_v} + \| \mathbb{P}^\perp G \|^2_{H^s_x L^2_v (\nu)} \big) \\
	    + & C ( \| G \|_{H^s_x L^2_v} + \| B \|_{H^s_x} ) \\
	    & \qquad \times \big( \| E \|^2_{H^{s-1}} + \| \nabla_x B \|^2_{H^{s-2}_x} + \| \nabla_x \mathbb{P} G \|^2_{H^{s-1}_x L^2_v} + \| \mathbb{P}^\perp G \|^2_{H^s_x L^2_v (\nu)} \big) \\
	    + & C \| G \|_{H^s_x L^2_v} \big( (\rho^+)^2_{\T^3} + (\rho^-)^2_{\T^3} + (u)^2_{\T^3} + (\theta)^2_{\T^3} \big)
	  \end{aligned}
	\end{equation}
	holds for all $0 < \eps \leq 1$, where Lemma \ref{Lmm-nu-norm} is utilized. Then we derive from the bounds of $\textbf{F}_{11}$, $\textbf{F}_{12}$, $\textbf{F}_{13}$ and Lemma \ref{Lmm-nu-norm} that the term $\textbf{F}_1$ is controlled by
	\begin{equation}
	  \begin{aligned}
	    \textbf{F}_1 \leq & - \tfrac{\eps^2}{2} \tfrac{\d}{\d t} \big\| \nabla_x \times \l \partial^m_x G \cdot \mathsf{q}_1 , \widetilde{\Phi} \r_{L^2_v} \big\|^2_{L^2_x} - \eps \tfrac{\d}{\d t} \l \nabla_x \times \l \partial^m_x G \cdot \mathsf{q}_1 , \widetilde{\Phi} \r_{L^2_v} , \partial_t \partial^m_x B \r_{L^2_x} \\
	    + & C \eps \big( \| \nabla_x \mathbb{P} G \|^2_{H^{s-1}_x L^2_v} + \| \mathbb{P}^\perp G \|^2_{H^s_x L^2_v (\nu)} \big) \\
	    + & C \eps \big( \| \nabla_x \mathbb{P} G \|_{H^{s-1}_x L^2_v} + \| \mathbb{P}^\perp G \|_{H^s_x L^2_v (\nu)} \big) ( \| \nabla_x \partial^m_x B \|_{L^2_x} + \| \partial_t \partial^m_x B \|_{L^2_x} ) \\
	    + & C ( \| G \|_{H^s_x L^2_v} + \| B \|_{H^s_x} ) \\
	    & \qquad \times \big( \| E \|^2_{H^{s-1}_x} + \| \nabla_x B \|^2_{H^{s-2}_x} + \| \nabla_x \mathbb{P} G \|^2_{H^{s-1}_x L^2_v} + \| \mathbb{P}^\perp G \|^2_{H^s_x L^2_v (\nu)} \big) \\
	    + & C \| G \|_{H^s_x L^2_v} \big( (\rho^+)^2_{\T^3} + (\rho^-)^2_{\T^3} + (u)^2_{\T^3} + (\theta)^2_{\T^3} \big)
	  \end{aligned}
	\end{equation}
	for any $0 < \eps \leq 1$, where we employ Lemma \ref{Lmm-nu-norm}.
	
	For the term $\textbf{F}_2$, we deduce from the H\"older inequality, the the decomposition $G = \mathbb{P} G + \mathbb{P}^\perp G$ and the part (1) of Lemma \ref{Lmm-CF-nu} that
	\begin{equation}
	  \begin{aligned}
	    \textbf{F}_2 \leq & \| \nabla_x ( \nabla_x \partial^m_x G ) \|_{L^2_{x,v}} \| v \mathsf{q}_1 \widetilde{\Phi} \|_{L^2_v} \| \partial_t \partial^m_x B \|_{L^2_x} \\
	    \leq & C \| \nabla_x G \|_{H^{s-1}_x L^2_v} \| \partial_t \partial^m_x B \|_{L^2_x} \\
	    \leq & C ( \| \nabla_x \mathbb{P} G \|_{H^{s-1}_x L^2_v} + \| \mathbb{P}^\perp G \|_{H^s_x L^2_v (\nu)} ) \| \partial_t \partial^m_x B \|_{L^2_x} \,.
	  \end{aligned}
	\end{equation} 
	
	For the terms $\textbf{F}_3$ and $\textbf{F}_4$, it is yielded by the similar arguments in estimating the term $\textbf{F}_{132}$ that
	\begin{equation}
	  \begin{aligned}
	    \textbf{F}_3 + \textbf{F}_4 \leq C \eps \| G \|_{H^s_x L^2_v} \| E \|_{H^{s-1}_x} \| \partial_t \partial^m_x B \|_{L^2_x} \,.
	  \end{aligned}
	\end{equation}
	We also deduce the bound of the term $\textbf{F}_5$
	\begin{equation}
	  \begin{aligned}
	    \textbf{F}_5 \leq & C \eps ( \| B \|_{H^s_x} + \| G \|_{H^s_x L^2_v} ) \big( \| \nabla_x B \|^2_{H^{s-2}_x} + \| \nabla_x \mathbb{P} G \|^2_{H^{s-1}_x L^2_v} + \| \mathbb{P}^\perp G \|^2_{H^s_x L^2_v} \big)
	  \end{aligned}
	\end{equation}
	by employing the analogous arguments in estimating the term $\textbf{F}_{134}$.
	
	For the term $\textbf{F}_6$, by utilizing Lemma \ref{Lmm-Gamma-Torus} and the decomposition $G = \mathbb{P} G + \mathbb{P}^\perp G$, we get
	\begin{equation}
	  \begin{aligned}
	    \textbf{F}_6 \leq & C \| G \|_{H^s_x L^2_v} \| G \|_{H^s_x L^2_v (\nu)} \| \partial_t \partial^m_x B \|_{L^2_x} \| \mathsf{q}_1 \widetilde{\Phi} \|_{L^2_v (\nu)} \\
	    \leq & C \| G \|_{H^s_x L^2_v} \| \partial_t \partial^m_x B \|_{L^2_x} \\
	    & \times \big( \| \nabla_x \mathbb{P} G \|_{H^{s-1}_x L^2_v} + \| \mathbb{P}^\perp G \|_{H^s_x L^2_v} + |(\rho^+)_{\T^3}| + |(\rho^-)_{\T^3}| + |(u)_{\T^3}| + |(\theta)_{\T^3}| \big) \,.
	  \end{aligned}
	\end{equation}
	
	We summarize the estimates on $F_i$ $(i = 1,2, \cdots , 6)$, and obtain
	\begin{equation}\label{K-GEB-1}
	  \begin{aligned}
	    & \l \nabla_x \times \partial^m_x \mathcal{K} ( G,E,B ) , \partial_t \partial^m_x B \r_{L^2_x} \\
	    \leq & - \tfrac{\eps^2}{2} \tfrac{\d}{\d t} \big\| \nabla_x \times \l \partial^m_x G \cdot \mathsf{q}_1 , \widetilde{\Phi} \r_{L^2_v} \big\|^2_{L^2_x} - \eps \tfrac{\d}{\d t} \l \nabla_x \times \l \partial^m_x G \cdot \mathsf{q}_1 , \widetilde{\Phi} \r_{L^2_v} , \partial_t \partial^m_x B \r_{L^2_x} \\
	    + & C \eps \big( \| \nabla_x \mathbb{P} G \|^2_{H^{s-1}_x L^2_v} + \| \mathbb{P}^\perp G \|^2_{H^s_x L^2_v (\nu)} \big) \\
	    + & C \eps \big( \| \nabla_x \mathbb{P} G \|_{H^{s-1}_x L^2_v} + \| \mathbb{P}^\perp G \|_{H^s_x L^2_v (\nu)} \big) ( \| \nabla_x \partial^m_x B \|_{L^2_x} + \| \partial_t \partial^m_x B \|_{L^2_x} ) \\
	    + & C ( \| G \|_{H^s_x L^2_v} + \| B \|_{H^s_x} ) \Big[ \| E \|^2_{H^{s-1}_x} + \| \nabla_x B \|^2_{H^{s-2}_x} \\
	    & \qquad + \| \nabla_x \mathbb{P} G \|^2_{H^{s-1}_x L^2_v} + \| \mathbb{P}^\perp G \|^2_{H^s_x L^2_v (\nu)} \Big] \\
	    + & C \| G \|_{H^s_x L^2_v} \big( (\rho^+)^2_{\T^3} + (\rho^-)^2_{\T^3} + (u)^2_{\T^3} + (\theta)^2_{\T^3} \big) \\
	    + & C \| G \|_{H^s_x L^2_v} \| \partial_t \partial^m_x B \|_{L^2_x} \Big[ \| E \|_{H^{s-1}_x} + \| \nabla_x \mathbb{P} G \|_{H^{s-1}_x L^2_v} \\
	    & \qquad + \| \mathbb{P}^\perp G \|_{H^s_x L^2_v} + |(\rho^+)_{\T^3}| + |(\rho^-)_{\T^3}| + |(u)_{\T^3}| + |(\theta)_{\T^3}| \Big]
	  \end{aligned}
	\end{equation}
	for all $0 < \eps \leq 1$ and any multi-indexes $m \in \mathbb{N}^3$ with $|m| \leq s-2$ $(s \geq 3)$.
	
	Next, we estimate the quantity $ \l  \partial^m_x \mathcal{K} ( G, E, B ) ,  \delta_1 \nabla_x \times \partial^m_x B \r_{L^2_x} $. By the definition of $\mathcal{K} ( G, E, B )$ in \eqref{Ohm-Law-fluid}, we decompose this term as
	\begin{equation}
	  \begin{aligned}
	    & \l  \partial^m_x \mathcal{K} ( G, E, B ) ,  \delta_1 \nabla_x \times \partial^m_x B \r_{L^2_x} \\
	    = & \underset{\textbf{G}_1}{ \underbrace{ - \eps \l \partial_t \l \partial^m_x G \cdot \mathsf{q}_1 , \widetilde{\Phi} \r_{L^2_v} , \delta_1 \nabla_x \times \partial^m_x B \r_{L^2_x} }} \\
	    & \underset{\textbf{G}_2}{ \underbrace{ - \l \l v \cdot \nabla_x ( \partial^m_x G \cdot \mathsf{q}_1 ) , \widetilde{\Phi} \r_{L^2_v} , \delta_1 \nabla_x \times \partial^m_x B \r_{L^2_x} }} \\
	    & \underset{\textbf{G}_3}{ \underbrace{ - \eps \l \partial^m_x \l E \cdot \nabla_v ( G \cdot \mathsf{q}_2 ), \widetilde{\Phi} \r_{L^2_v} , \delta_1 \nabla_x \times \partial^m_x B \r_{L^2_x} }} \\
	    & + \underset{\textbf{G}_4}{ \underbrace{ \tfrac{1}{2} \eps \l \partial^m_x \l (E \cdot v ) ( G \cdot \mathsf{q}_2 ), \widetilde{\Phi} \r_{L^2_v} , \delta_1 \nabla_x \times \partial^m_x B \r_{L^2_x} }} \\
	    & \underset{\textbf{G}_5}{ \underbrace{ - \l \partial^m_x \l (v \times B) \cdot \nabla_v ( G \cdot \mathsf{q}_2 ), \widetilde{\Phi} \r_{L^2_v} , \delta_1 \nabla_x \times \partial^m_x B \r_{L^2_x} }} \\
	    & + \underset{\textbf{G}_6}{ \underbrace{ \l \partial^m_x \l \Gamma ( G , G ) \cdot \mathsf{q}_2 , \widetilde{\Phi} \r_{L^2_v} , \delta_1 \nabla_x \times \partial^m_x B \r_{L^2_x} }} \,. 
	  \end{aligned}
	\end{equation}
	For the term $\textbf{G}_1$, we have
	\begin{equation}
	  \begin{aligned}
	    \textbf{G}_1 = & - \eps \tfrac{\d}{\d t} \l \nabla_x \times \l \partial^m_x G \cdot \mathsf{q}_1 , \widetilde{\Phi} \r_{L^2_v} , \delta_1 \partial^m_x B \r_{L^2_x} \\
	    & +  \eps \l \nabla_x \times \l \partial^m_x G \cdot \mathsf{q}_1 , \widetilde{\Phi} \r_{L^2_v} , \delta_1 \partial_t \partial^m_x B \r_{L^2_x} \,,
	  \end{aligned}
	\end{equation}
	where the last term can be controlled by
	\begin{equation}\label{Bnd-G1-1}
	  \begin{aligned}
	    & \eps \l \nabla_x \times \l \partial^m_x G \cdot \mathsf{q}_1 , \widetilde{\Phi} \r_{L^2_v} , \delta_1 \partial_t \partial^m_x B \r_{L^2_x} \\
	    \leq & \eps \delta_1 \| \nabla_x \partial^m_x G \|_{L^2_{x,v}} \| \mathsf{q}_1 \widetilde{\Phi} \|_{L^2_v} \| \partial_t \partial^m_x B \|_{L^2_x} \\
	    \leq & C \eps \delta_1 \| \partial_t \partial^m_x B \|_{L^2_x} \big( \| \nabla_x \mathbb{P} G \|_{H^{s-1}_x L^2_v} + \| \mathbb{P} G \|_{H^s_x L^2_v} \big) \,.
	  \end{aligned}
	\end{equation}
	Here we make use of the H\"older inequality and the decomposition $G = \mathbb{P} G + \mathbb{P}^\perp G$. Then, we obtain the bound of $\textbf{G}_1$
	\begin{equation}
	  \begin{aligned}
	    \textbf{G}_1 \leq & - \eps \tfrac{\d}{\d t} \l \nabla_x \times \l \partial^m_x G \cdot \mathsf{q}_1 , \widetilde{\Phi} \r_{L^2_v} , \delta_1 \partial^m_x B \r_{L^2_x} \\
	    + & C \eps \delta_1 \| \partial_t \partial^m_x B \|_{L^2_x} \big( \| \nabla_x \mathbb{P} G \|_{H^{s-1}_x L^2_v} + \| \mathbb{P} G \|_{H^s_x L^2_v} \big)
	  \end{aligned}
	\end{equation}
	for all $|m| \leq s -2$ $(s \geq 3)$ and for small $\delta_1 \in ( 0 , 1 ]$ to be determined. Similar calculations on the inequality \eqref{Bnd-G1-1} reduce to
	\begin{equation}
	  \begin{aligned}
	    \textbf{G}_2 \leq C \eps \delta_1 \| \nabla_x \partial^m_x B \|_{L^2_x} \big( \| \nabla_x \mathbb{P} G \|_{H^{s-1}_x L^2_v} + \| \mathbb{P} G \|_{H^s_x L^2_v} \big) \,.
	  \end{aligned}
	\end{equation}
	Via the analogous arguments of the estimates on the term $\textbf{F}_{132}$, $\textbf{F}_{133}$ and $\textbf{F}_{134}$, one can get the bounds
	\begin{equation}
	  \begin{aligned}
	    \textbf{G}_3 + \textbf{G}_4 \leq C \eps \delta_1 \| G \|_{H^s_x L^2_v} \| E \|_{H^{s-1}_x} \| \nabla_x \partial^m_x B \|_{L^2_x}
	  \end{aligned}
	\end{equation}
	and
	\begin{equation}
	  \begin{aligned}
	    \textbf{G}_5 \leq C \eps \delta_1 ( \| B \|_{H^s_x} + \| G \|_{H^s_x L^2_v} ) \big( \| \nabla_x B \|^2_{H^{s-2}_x} + \| \nabla_x \mathbb{P} G \|^2_{H^{s-1}_x L^2_v} + \| \mathbb{P}^\perp G \|^2_{H^s_x L^2_v} \big) \,.
	  \end{aligned}
	\end{equation}
	We also deduce from the same operations as estimating the term $\textbf{F}_6$ that
	\begin{equation}
	  \begin{aligned}
	     \textbf{G}_6 \leq & C \delta_1 \| G \|_{H^s_x L^2_v} \| \nabla_x \partial^m_x B \|_{L^2_x} \\
	     & \times \big( \| \nabla_x \mathbb{P} G \|_{H^{s-1}_x L^2_v} + \| \mathbb{P}^\perp G \|_{H^s_x L^2_v} + |(\rho^+)_{\T^3}| + |(\rho^-)_{\T^3}| + |(u)_{\T^3}| + |(\theta)_{\T^3}| \big) \,.
	  \end{aligned}
	\end{equation}
	Via the summarization of the bounds $\textbf{G}_i$ $(i = 1, 2, \cdots, 6 )$, we derive
	\begin{equation}\label{K-GEB-2}
	  \begin{aligned}
	    & \l  \partial^m_x \mathcal{K} ( G, E, B ) ,  \delta_1 \nabla_x \times \partial^m_x B \r_{L^2_x} \leq - \eps \tfrac{\d}{\d t} \l \nabla_x \times \l \partial^m_x G \cdot \mathsf{q}_1 , \widetilde{\Phi} \r_{L^2_v} , \delta_1 \partial^m_x B \r_{L^2_x} \\
	    + & C \eps \delta_1 ( \| \partial_t \partial^m_x B \|_{L^2_x} + \| \nabla_x \partial^m_x B \|_{L^2_x} ) ( \| \nabla_x \mathbb{P} G \|_{H^{s-1}_x L^2_v} + \| \mathbb{P}^\perp G \|_{H^s_x L^2_v} ) \\
	    + & C \delta_1 ( \| G \|_{H^s_x L^2_v} + \| B \|_{H^s_x} ) ( \| \nabla_x B \|^2_{H^{s-2}_x} + \| E \|^2_{H^{s-1}_x} + \| \nabla_x \mathbb{P} G \|^2_{H^{s-1}_x L^2_v} + \| \mathbb{P}^\perp G \|^2_{H^s_x L^2_v} ) \\
	    + & C \delta_1 \| G \|_{H^s_x L^2_v} \big( (\rho^+)^2_{\T^3} + ( \rho^- )^2_{\T^3} + ( u )^2_{\T^3} + ( \theta )^2_{\T^3} \big)
	  \end{aligned}
	\end{equation}
	for all $0 < \eps \leq 1$ and for small $\delta_1 \in (0 , 1 ]$ to be determined.
	
	Consequently, via substituting the bounds \eqref{K-GEB-1} and \eqref{K-GEB-2} into the equality \eqref{B-energy-equ}, summing up for $|m| \leq s - 2$ $(s \geq 3)$ and utilizing the Young's inequality, we yield that
	\begin{equation}\label{B-decay}
	  \begin{aligned}
	    & \tfrac{1}{2} \tfrac{\d}{\d t} \big( \delta_1 \| \partial_t B + B \|^2_{H^{s-2}_x} + ( 1 - \delta_1 ) \| \partial_t B \|^2_{H^{s-2}_x} + \| \nabla_x B \|^2_{H^{s-2}_x} + \delta_1 ( \sigma - 1 ) \| B \|^2_{H^{s-2}_x} \big) \\
	    & + ( \sigma - \tfrac{3 \delta_1}{2} ) \| \partial_t B \|^2_{H^{s-2}_x} + \tfrac{\delta_1}{2} \| \nabla_x B \|^2_{H^{s-2}_x} \\
	    \leq & - \eps \frac{\d}{\d t} \sum_{|m| \leq s - 2} \Big[ \tfrac{\eps}{2} \big\| \nabla_x \times \l \partial^m_x G \cdot \mathsf{q}_1 , \widetilde{\Phi} \r_{L^2_v} \big\|^2_{L^2_x} \\
	    & \qquad \qquad \qquad \qquad + \Big\langle \nabla_x \times \l \partial^m_x G \cdot \mathsf{q}_1 , \widetilde{\Phi} \r_{L^2_v} , \partial_t \partial^m_x B + \delta_1 \partial^m_x B \Big\rangle_{L^2_x} \Big] \\
	    + & C ( \| G \|_{H^s_x L^2_v} + \| B \|_{H^s_x} ) \big[ \| \partial_t B \|^2_{H^{s-2}_x} + \| \nabla_x B \|^2_{H^{s-2}_x} \\
	    & \quad + \| E \|^2_{H^{s-1}_x} + \| \nabla_x \mathbb{P} G \|^2_{H^{s-1}_x L^2_v} + \| \mathbb{P}^\perp G \|^2_{H^s_x L^2_v (\nu)} \big] \\
	    + & \tfrac{C \eps}{\delta_1} ( \| \nabla_x \mathbb{P} G \|^2_{H^{s-1}_x L^2_v} + \| \mathbb{P}^\perp G \|^2_{H^s_x L^2_v (\nu)} ) + C \| G \|_{H^s_x L^2_v} \big( (\rho^+)^2_{\T^3} + ( \rho^- )^2_{\T^3} + ( u )^2_{\T^3} + ( \theta )^2_{\T^3} \big)
	  \end{aligned}
	\end{equation}
	for all $0 < \eps \leq 1$ and for small $\delta_1 \in (0 , 1 ]$ to be determined. \\
	
	{\em Step 2. Derivations of the dissipative structure $\| E \|^2_{H^{s-1}_x}$.} We take the derivative operator $\partial^m_x$ on the first two equation of \eqref{Mxw-E-Decay}, multiply by $\partial^m_x E$ and $\partial^m_x B$ respectively, and integrate by parts over $x \in \T^3$ for all $|m| \leq s - 1$ $(s \geq 3)$. We then deduce that
	\begin{equation}\label{E-decay-higer-derivative-equ}
	  \begin{aligned}
	    & \tfrac{1}{2} \tfrac{\d}{\d t} \big( \| \partial^m_x E \|^2_{L^2_x} + \| \partial^m_x B \|^2_{L^2_x} \big) + \sigma \| \partial^m_x E \|^2_{L^2_x} \\
	    = & \l \nabla_x \times \partial^m_x B , \partial^m_x E \r_{L^2_x} - \l \nabla_x \times \partial^m_x E , \partial^m_x B \r_{L^2_x} - \l \partial^m_x \mathcal{K} ( G, E, B ) , \partial^m_x E \r_{L^2_x} \\
	    = & - \l \partial^m_x \mathcal{K} ( G, E, B ) , \partial^m_x E \r_{L^2_x} \,,
	  \end{aligned}
	\end{equation}
	where we make use of the cancellation $ \l \nabla_x \times \partial^m_x B , \partial^m_x E \r_{L^2_x} - \l \nabla_x \times \partial^m_x E , \partial^m_x B \r_{L^2_x} = 0 $. So, we only need to control the term $ - \l \partial^m_x \mathcal{K} ( G, E, B ) , \partial^m_x E \r_{L^2_x} $ for all multi-indexes $m \in \mathbb{N}^3$ with $|m| \leq s - 1$ $(s \geq 3)$.  
	
	Recalling the definition of $\mathcal{K} ( G, E, B)$ in \eqref{Ohm-Law-fluid}, we have
	\begin{equation}\label{K-E}
	  \begin{aligned}
	    & - \l \partial^m_x \mathcal{K} ( G, E, B ) , \partial^m_x E \r_{L^2_x} \\
	    = & \underset{ \textbf{H}_1}{ \underbrace{ \eps \l \partial_t \l \partial^m_x G \cdot \mathsf{q}_1 , \widetilde{\Phi} \r_{L^2_v} , \partial^m_x E \r_{L^2_x} }} + \underset{ \textbf{H}_2}{ \underbrace{ \l \l v \cdot \nabla_x ( \partial^m_x G \cdot \mathsf{q}_1 ) , \widetilde{\Phi} \r_{L^2_v} , \partial^m_x E \r_{L^2_x} }}\\
	    & + \underset{ \textbf{H}_3}{ \underbrace{ \eps \l \partial^m_x \l E \cdot \nabla_v ( G \cdot \mathsf{q}_2 ) , \widetilde{\Phi} \r_{L^2_v} , \partial^m_x E \r_{L^2_x} }} \ \underset{ \textbf{H}_4}{ \underbrace{ - \tfrac{1}{2} \eps  \l \partial^m_x \l ( E \cdot v )  ( G \cdot \mathsf{q}_2 ) , \widetilde{\Phi} \r_{L^2_v} , \partial^m_x E \r_{L^2_x} }} \\
	    & + \underset{ \textbf{H}_5}{ \underbrace{ \eps \l \partial^m_x \l ( v \times B ) \cdot \nabla_v ( G \cdot \mathsf{q}_2 ) , \widetilde{\Phi} \r_{L^2_v} , \partial^m_x E \r_{L^2_x} }} \ \underset{ \textbf{H}_6}{ \underbrace{ - \l \partial^m_x \l \Gamma (G,G) \cdot \mathsf{q}_1 , \widetilde{\Phi} \r_{L^2_v} , \partial^m_x E \r_{L^2_x} }} \,.
	  \end{aligned}
	\end{equation}
	For the term $\textbf{H}_1$, we deduce from the first equation of \eqref{Mxw-E-Decay} and the definition of $\mathcal{K} (G, E, B)$ in \eqref{Ohm-Law-fluid} that
	\begin{equation}
	  \begin{aligned}
	    \textbf{H}_1 = & \eps \tfrac{\d}{\d t} \l \partial^m_x G \cdot \mathsf{q}_1 , \widetilde{\Phi} \cdot \partial^m_x E \r_{L^2_{x,v}} - \eps \l \l \partial^m_x G \cdot \mathsf{q}_1 , \widetilde{\Phi} \r_{L^2_v} , \partial_t \partial^m_x E \r_{L^2_x} \\
	    = & \eps \tfrac{\d}{\d t} \l \partial^m_x G \cdot \mathsf{q}_1 , \widetilde{\Phi} \cdot \partial^m_x E \r_{L^2_{x,v}} \underset{\textbf{H}_{11}}{ \underbrace{ - \eps \l \l \partial^m_x G \cdot \mathsf{q}_1 , \widetilde{\Phi} \r_{L^2_v} , \nabla_x \times \partial^m_x B \r_{L^2_x} }} \\
	    + & \underset{\textbf{H}_{12}}{ \underbrace{  \eps \l \l \partial^m_x G \cdot \mathsf{q}_1 , \widetilde{\Phi} \r_{L^2_v} , \sigma \partial^m_x E \r_{L^2_x} }} + \underset{\textbf{H}_{13}}{ \underbrace{  \eps \l \l \partial^m_x G \cdot \mathsf{q}_1 , \widetilde{\Phi} \r_{L^2_v} , \partial^m_x \mathcal{K} ( G, E, B ) \r_{L^2_x} }} \,.
	  \end{aligned}
	\end{equation}
	If $|m| \leq s - 2$, the H\"older inequality, the Poincar\'e inequality and the decomposition $G = \mathbb{P} G + \mathbb{P}^\perp G$ reduce to
	\begin{equation}
	  \begin{aligned}
	    \textbf{H}_{11} \leq & \eps \| \partial^m_x G \|_{L^2_{x,v}} \| \mathsf{q}_1 \widetilde{\Phi} \|_{L^2_v} \| \nabla_x \times \partial^m_x B \|_{L^2_x} \\
	    \leq & C \eps \| G \|_{H^s_x L^2_v} \| \nabla_x B \|_{H^{s-1}_x} \\
	    \leq & C \eps \| \nabla_x B \|_{H^{s-2}_x} \big( \| \nabla_x \mathbb{P} G \|_{H^{s-1}_x L^2_v} + \| \mathbb{P}^\perp G \|_{H^s_x L^2_v} \big) \\ 
	    & +  C \eps \| \nabla_x B \|_{H^{s-2}_x} \big(  |(\rho^+)_{\T^2}| + |(\rho^-)_{\T^2}| + |(u)_{\T^2}| + |(\theta)_{\T^2}| \big) \,.
	  \end{aligned}
	\end{equation}
	If $|m| = s -1$, by, additionally, integrating by parts over $x \in \T^3$, we have
	\begin{equation}
	  \begin{aligned}
	    \textbf{H}_{11} \leq & C \eps \| \nabla_x B \|_{H^{s-2}_x} \big( \| \nabla_x \mathbb{P} G \|_{H^{s-1}_x L^2_v} + \| \mathbb{P}^\perp G \|_{H^s_x L^2_v} \big)  \,.
	  \end{aligned}
	\end{equation} 
	In summary, for all multi-indexes $m \in \mathbb{N}^3$ with $|m| \leq s - 1$, the following bound holds:
	\begin{equation}
	  \begin{aligned}
	    \textbf{H}_{11} \leq &  C \eps \| \nabla_x B \|_{H^{s-2}_x} \big( \| \nabla_x \mathbb{P} G \|_{H^{s-1}_x L^2_v} + \| \mathbb{P}^\perp G \|_{H^s_x L^2_v} \big) \\ 
	    & +  C \eps \| \nabla_x B \|_{H^{s-2}_x} \big(  |(\rho^+)_{\T^2}| + |(\rho^-)_{\T^2}| + |(u)_{\T^2}| + |(\theta)_{\T^2}| \big) \,.
	  \end{aligned}
	\end{equation}
	Via the similar calculations of $\textbf{H}_{11}$ in the case $|m| \leq s - 2$, we have
	\begin{equation}
	  \begin{aligned}
	    \textbf{H}_{12} \leq & C \sigma \eps \| \partial^m_x E \|_{L^2_x} \big( \| \nabla_x \mathbb{P} G \|_{H^{s-1}_x L^2_v} + \| \mathbb{P}^\perp G \|_{H^s_x L^2_v} \big) \\ 
	    & +  C \sigma \eps \| \partial^m_x E \|_{L^2_x} \big(  |(\rho^+)_{\T^2}| + |(\rho^-)_{\T^2}| + |(u)_{\T^2}| + |(\theta)_{\T^2}| \big) 
	  \end{aligned}
	\end{equation}
	holds for any $|m| \leq s - 1$. 
	
	Again from utilizing the definition of $\mathcal{K} (G, E, B)$ in \eqref{Ohm-Law-fluid}, we derive
	\begin{equation}
	  \begin{aligned}
	    \textbf{H}_{13} = & - \tfrac{\eps^2}{2} \tfrac{\d}{\d t} \big\| \l \partial^m_x G \cdot \mathsf{q}_1 , \widetilde{\Phi} \r_{L^2_v} \big\|^2_{L^2_x} \\
	    & \underset{\textbf{H}_{131}}{ \underbrace{ - \eps \l \l \partial^m_x G \cdot \mathsf{q}_1 , \widetilde{\Phi} \r_{L^2_v} , \l v \cdot \nabla_x ( \partial^m_x G \cdot \mathsf{q}_1 ) , \widetilde{\Phi} \r_{L^2_v} \r_{L^2_x} }} \\
	    & \underset{\textbf{H}_{132}}{ \underbrace{  - \eps^2 \l \l \partial^m_x G \cdot \mathsf{q}_1 , \widetilde{\Phi} \r_{L^2_v} , \partial^m_x \l E \cdot \nabla_v ( G \cdot \mathsf{q}_2 ) , \widetilde{\Phi} \r_{L^2_v} \r_{L^2_x} }} \\
	    & + \underset{\textbf{H}_{133}}{ \underbrace{  \tfrac{1}{2} \eps^2 \l \l \partial^m_x G \cdot \mathsf{q}_1 , \widetilde{\Phi} \r_{L^2_v} , \partial^m_x \l (E \cdot v) ( G \cdot \mathsf{q}_2 ) , \widetilde{\Phi} \r_{L^2_v} \r_{L^2_x} }} \\
	    & \underset{\textbf{H}_{134}}{ \underbrace{ - \eps \l \l \partial^m_x G \cdot \mathsf{q}_1 , \widetilde{\Phi} \r_{L^2_v} , \partial^m_x \l ( v \times B ) \cdot \nabla_v ( G \cdot \mathsf{q}_2 ) , \widetilde{\Phi} \r_{L^2_v} \r_{L^2_x} }} \\
	    & + \underset{\textbf{H}_{135}}{ \underbrace{  \eps \l \l \partial^m_x G \cdot \mathsf{q}_1 , \widetilde{\Phi} \r_{L^2_v} , \partial^m_x \l \Gamma (G,G) \cdot \mathsf{q}_1 , \widetilde{\Phi} \r_{L^2_v} \r_{L^2_x} }} \,.
	  \end{aligned}
	\end{equation}
	The H\"older inequality reduces to
	\begin{equation}
	  \begin{aligned}
	    \textbf{H}_{131} \leq & C \eps \| \nabla_x G \|_{H^{s-1}_x L^2_v} \| G \|_{H^s_x L^2_v} \\
	    \leq & C \eps \big( \| \nabla_x \mathbb{P} G \|^2_{H^{s-1}_x L^2_v} + \| \mathbb{P}^\perp G \|^2_{H^s_x L^2_v} + (\rho^+)^2_{\T^3} + (\rho^-)^2_{\T^3} + (u)^2_{\T^3} + (\theta)^2_{\T^3} \big) \,,
	  \end{aligned}
	\end{equation}
	where the last inequality is implied by making use of the decomposition $G = \mathbb{P} G + \mathbb{P}^\perp G$, the Poincar\'e inequality and the Young's inequality. 	Via the analogous arguments of the estimates on the term $\textbf{F}_{132}$ and $\textbf{F}_{133}$, we can control the terms $\textbf{H}_{132}$ and $\textbf{H}_{133}$ in the case $|m| \leq s - 1$ with $m \neq 0$
	\begin{equation}
	  \begin{aligned}
	    \textbf{H}_{132} + \textbf{H}_{133} \leq C \eps^2 \| G \|_{H^s_x L^2_v} \| E \|_{H^{s-1}_x} \big( \| \nabla_x \mathbb{P} G \|_{H^{s-1}_x L^2_v} + \| \mathbb{P}^\perp G \|_{H^s_x L^2_v} \big) \,.
	  \end{aligned}
	\end{equation}
	If $m = 0$, the quantity $ \textbf{H}_{132} + \textbf{H}_{133} $ can be estimated as
	\begin{equation}
	  \begin{aligned}
	    \textbf{H}_{132} + \textbf{H}_{133} = & \eps^2 \l \l G \cdot \mathsf{q}_1 , \widetilde{\Phi} \r_{L^2_v} , \l E \cdot \nabla_v ( \tfrac{  \widetilde{\Phi} }{ \sqrt{M} } ) \sqrt{M} , G \cdot \mathsf{q}_2 \r_{L^2_v} \r_{L^2_x} \\
	    \leq & \eps^2 \| G \|^2_{L^4_x L^2_v} \| E \|_{L^2_x} \| \mathsf{q}_1 \widetilde{\Phi} \|_{L^2_v} \big\| \mathsf{q}_2 \nabla_v ( \tfrac{  \widetilde{\Phi} }{ \sqrt{M} } ) \sqrt{M} \big\|_{L^2_v} \\
	    \leq & C \eps^2 \| G \|^\frac{1}{2}_{L^2_x L^2_v} \| \nabla_x G \|^\frac{3}{4}_{L^2_x L^2_v} \| E \|_{L^2_x} \\
	    \leq & C \eps^2 \| G \|_{H^1_x L^2_v} \| E \|_{L^2_x} \big( \| \nabla_x \mathbb{P} G \|_{L^2_{x,v}} + \| \nabla_x \mathbb{P}^\perp G \|_{L^2_{x,v}} \big) \\
	    \leq & C \eps^2 \| G \|_{H^s_x L^2_v} \| E \|_{H^{s-1}_x} \big( \| \nabla_x \mathbb{P} G \|_{H^{s-1}_x L^2_v} + \| \mathbb{P}^\perp G \|_{H^s_x L^2_v} \big) \,,
	  \end{aligned}
	\end{equation}
	where we utilize the Sobolev interpolation inequality $\| f \|_{L^4_x (\T^3)} \leq C \| f \|^\frac{1}{4}_{L^2_x (\T^3)} \| \nabla_x f \|^\frac{3}{4}_{L^2_x (\T^3)}$. We thereby have
	\begin{equation}
	  \begin{aligned}
	    \textbf{H}_{132} + \textbf{H}_{133} \leq C \eps^2 \| G \|_{H^s_x L^2_v} \| E \|_{H^{s-1}_x} \big( \| \nabla_x \mathbb{P} G \|_{H^{s-1}_x L^2_v} + \| \mathbb{P}^\perp G \|_{H^s_x L^2_v} \big)
	  \end{aligned}
	\end{equation}
	for all $|m| \leq  s - 1$. Similarly, one can easily estimate 
	\begin{equation}
	  \begin{aligned}
	    \textbf{H}_{134} \leq C \eps \| B \|_{H^s_x} \big( \| \nabla_x \mathbb{P} G \|^2_{H^{s-1}_x L^2_v} + \| \mathbb{P}^\perp G \|^2_{H^s_x L^2_v} + (\rho^+)^2_{\T^3} + (\rho^-)^2_{\T^3} + (u)^2_{\T^3} + (\theta)^2_{\T^3} \big)
	  \end{aligned}
	\end{equation}
	for all $|m| \leq s - 1$. For the term $\textbf{H}_{135}$, we derive from Lemma \ref{Lmm-Gamma-Torus}, Lemma \ref{Lmm-nu-norm}, the decomposition of $G = \mathbb{P} G + \mathbb{P}^\perp G$ and the Poincar\'e inequality that
	\begin{equation}
	  \begin{aligned}
	    \textbf{H}_{135} \leq & C \eps \| G \|_{H^s_x L^2_v} \big( \| \nabla_x \mathbb{P} G \|^2_{H^{s-1}_x L^2_v} + \| \mathbb{P}^\perp G \|^2_{H^s_x L^2_v} + (\rho^+)^2_{\T^3} + (\rho^-)^2_{\T^3} + (u)^2_{\T^3} + (\theta)^2_{\T^3} \big)
	  \end{aligned}
	\end{equation}
	for any $|m| \leq s - 1$. Collecting the all estimates in the previous, we obtain
	\begin{equation}
	  \begin{aligned}
	    \textbf{H}_{13} \leq & - \tfrac{\eps^2}{2} \tfrac{\d}{\d t} \big\| \l \partial^m_x G \cdot \mathsf{q}_1 , \widetilde{\Phi} \r_{L^2_v} \big\|^2_{L^2_x} \\
	    + & C \eps \big( \| \nabla_x \mathbb{P} G \|^2_{H^{s-1}_x L^2_v} + \| \mathbb{P}^\perp G \|^2_{H^s_x L^2_v} + (\rho^+)^2_{\T^3} + (\rho^-)^2_{\T^3} + (u)^2_{\T^3} + (\theta)^2_{\T^3} \big) \\
	    + & C \eps ( \| G \|_{H^s_x L^2_v} + \| B \|_{H^s_x} ) \big( \| \nabla_x \mathbb{P} G \|^2_{H^{s-1}_x L^2_v} + \| \mathbb{P}^\perp G \|^2_{H^s_x L^2_v (\nu)} + \| E \|^2_{H^{s-1}_x} \big) \\
	    + & C \eps ( \| G \|_{H^s_x L^2_v} + \| B \|_{H^s_x} ) \big( (\rho^+)^2_{\T^3} + (\rho^-)^2_{\T^3} + (u)^2_{\T^3} + (\theta)^2_{\T^3} \big) \,,
	  \end{aligned}
	\end{equation}
	where we use the Young's inequality and Lemma \ref{Lmm-nu-norm}.
	
	We plug the estimates on the bounds of quantities $\textbf{H}_{11}$, $\textbf{H}_{12}$ and $\textbf{H}_{13}$ into the expression of $\textbf{H}_1$, which then gives us
	\begin{equation}\label{Bnd-H1}
	  \begin{aligned}
	    \textbf{H}_1 \leq & - \eps \tfrac{\d}{\d t} \Big[ \tfrac{\eps}{2} \big\| \l \partial^m_x G \cdot \mathsf{q}_1 , \widetilde{\Phi} \r_{L^2_v} \big\|^2_{L^2_x} - \l \partial^m_x G \cdot \mathsf{q}_1 , \widetilde{\Phi} \cdot \partial^m_x E \r_{L^2_{x,v}} \Big] \\
	    + & C \eps \big( \| \nabla_x B \|^2_{H^{s-2}_x} + \| \nabla_x \mathbb{P} G \|^2_{H^{s-1}_x L^2_v} + \| \mathbb{P}^\perp G \|^2_{H^s_x L^2_v (\nu)} \big) \\
	    + & C \eps \big( (\rho^+)^2_{\T^3} + (\rho^-)^2_{\T^3} + (u)^2_{\T^3} + (\theta)^2_{\T^3} \big) + \tfrac{\sigma}{4} \| \partial^m_x E \|^2_{L^2_x} \\
	    + & C \eps ( \| G \|_{H^s_x L^2_v} + \| B \|_{H^s_x} ) \big( \| \nabla_x \mathbb{P} G \|^2_{H^{s-1}_x L^2_v} + \| \mathbb{P}^\perp G \|^2_{H^s_x L^2_v (\nu)} + \| E \|^2_{H^{s-1}_x} \big) \\
	    + & C \eps ( \| G \|_{H^s_x L^2_v} + \| B \|_{H^s_x} ) \big( (\rho^+)^2_{\T^3} + (\rho^-)^2_{\T^3} + (u)^2_{\T^3} + (\theta)^2_{\T^3} \big) 
	  \end{aligned}
	\end{equation}
	for all $0 < \eps \leq 1$ and $|m| \leq s - 1$. Here we also make use of the Young's inequality and Lemma \ref{Lmm-nu-norm}.
	
	From the decomposition $G = \mathbb{P} G + \mathbb{P}^\perp G$ and the H\"older inequality, we deduce that
	\begin{equation}\label{Bnd-H2}
	  \begin{aligned}
	    \textbf{H}_2 \leq & \| \partial^m_x E \|_{L^2_x} \| \nabla_x \partial^m_x G \|^2_{L^2_{x,v}} \| v \mathsf{q}_1 \widetilde{\Phi} \|_{L^2_v} \\
	    \leq & C \| \partial^m_x E \|_{L^2_x} \big( \| \nabla_x \mathbb{P} G \|_{H^{s-1}_x L^2_v} + \| \mathbb{P}^\perp G \|_{H^s_x L^2_v (\nu)} \big) \\
	    \leq & \tfrac{\sigma}{4} \| \partial^m_x E \|^2_{L^2_x} + C \big( \| \nabla_x \mathbb{P} G \|^2_{H^{s-1}_x L^2_v} + \| \mathbb{P}^\perp G \|^2_{H^s_x L^2_v (\nu)} \big)
	  \end{aligned}
	\end{equation}
	for all $|m| \leq s - 1$. Similarly, we have
	\begin{equation}\label{Bnd-H3-H4}
	  \begin{aligned}
	    \textbf{H}_3 + \textbf{H}_4 = & - \eps \l \partial^m_x \l E \cdot \nabla_v ( \tfrac{\widetilde{\Phi}}{\sqrt{M}} ) \sqrt{M} , G \cdot \mathsf{q}_2 \r_{L^2_v} , \partial^m_x E \r_{L^2_x} \\
	    \leq & C \eps \| \partial^m_x E \|_{L^2_x} \| E \|_{H^{s-1}_x} \| G \|_{H^s_x L^2_v} \\
	    \leq & C \eps \| G \|_{H^s_x L^2_v} \| E \|^2_{H^{s-1}_x} \,.
	  \end{aligned}
	\end{equation}
	For the term $\textbf{H}_5$, we have
	\begin{equation}\label{Bnd-H5}
	  \begin{aligned}
	    \textbf{H}_5 \leq & C \eps \sum_{0 \neq m' \leq m} \| \partial^{m'}_x B \|_{L^2_x} \| \partial^{m - m'}_x G \|_{L^\infty_x L^2_v} \| \partial^m_x E \|_{L^2_v} \\
	    & + C \eps \| B \|_{L^4_x} \| \partial^m_x G \|_{L^4_x L^2_v} \| \partial^m_x E \|_{L^2_x} \\
	    \leq & C \eps \| G \|_{H^s_x L^2_v} \| \nabla_x B \|_{H^{s-2}_x} \| E \|_{H^{s-1}_x} \\
	    & + C \eps \| \nabla_x B \|^\frac{3}{4}_{L^2_x} \| B \|^\frac{1}{4}_{L^2_x} \| \nabla_x \partial^m_x G \|^\frac{3}{4}_{L^2_{x,v}} \| G \|^\frac{1}{4}_{L^2_{x,v}} \| \partial^m_x E \|_{L^2_x} \\
	    \leq & C \eps ( \| G \|_{H^s_x L^2_v} + \| B \|_{H^s_x} ) \| E \|_{H^{s-1}_x} \big( \| \nabla_x B \|_{H^{s-2}_x} + \| \nabla_x \mathbb{P} G \|_{H^{s-1}_x L^2_v} + \| \mathbb{P}^\perp G \|_{H^s_x L^2_v} \big)
	  \end{aligned}
	\end{equation}
	for all $|m| \leq s - 1$, where we utilize the H\"older inequality, the Young's inequality, the Sobolev embedding $H^2_x (\T^3) \hookrightarrow L^\infty_x (\T^3)$ and the Sobolev interpolation inequality $\| f \|_{L^4_x (\T^3)} \leq C \| f \|^\frac{1}{4}_{L^2_x (\T^3)} \| \nabla_x f \|^\frac{3}{4}_{L^2_x (\T^3)}$. For the term $\textbf{H}_6$, it is easily derived from Lemma \ref{Lmm-Gamma-Torus}, the decomposition $G = \mathbb{P} G + \mathbb{P}^\perp G$ and the Young's inequality that
	\begin{equation}\label{Bnd-H6}
	  \begin{aligned}
	    \textbf{H}_6 \leq & C \| G \|_{H^s_x L^2_v} \| G \|_{H^s_x L^2_v (\nu)} \| \partial^m_x E \|_{L^2_x} \| \mathsf{q}_1 \widetilde{\Phi} \|_{L^2_v(\nu)} \\
	    \leq & C \| G \|_{H^s_x L^2_v} \| E \|_{H^{s-1}_x} \Big( \| \nabla_x \mathbb{P} G \|_{H^{s-1}_x L^2_v} + \| \mathbb{P}^\perp G \|_{H^s_x L^2_v} + \sum_{ \bm{\gamma} \in \{ \rho^\pm , u, \theta \} } | ( \bm{\gamma} )_{\T^3} | \Big) \\
	    \leq & C \| G \|_{H^s_x L^2_v} \big( \| E \|^2_{H^{s-1}} + \| \nabla_x \mathbb{P} G \|^2_{H^{s-1}_x L^2_v} + \| \mathbb{P}^\perp G \|^2_{H^s_x L^2_v (\nu)} \big) \\
	    & + C \| G \|_{H^s_x L^2_v} \big( (\rho^+)^2_{\T^3} + (\rho^-)^2_{\T^3} + (u)^2_{\T^3} + (\theta)^2_{\T^3} \big) 
	  \end{aligned}
	\end{equation}
	for all $|m| \leq s - 1$.
	
	Consequently, we substitute the bounds \eqref{Bnd-H1}, \eqref{Bnd-H2}, \eqref{Bnd-H3-H4}, \eqref{Bnd-H5} and \eqref{Bnd-H6} into the equality \eqref{K-E}, which leads to
	\begin{equation}\label{Bnd-K-E}
	  \begin{aligned}
	    & - \l \partial^m_x \mathcal{K} ( G, E, B ) , \partial^m_x E \r_{L^2_x} \\
	    \leq & - \eps \tfrac{\d}{\d t} \Big[ \tfrac{\eps}{2} \big\| \l \partial^m_x G \cdot \mathsf{q}_1 , \widetilde{\Phi} \r_{L^2_v} \big\|^2_{L^2_x} - \l \partial^m_x G \cdot \mathsf{q}_1 , \widetilde{\Phi} \cdot \partial^m_x E \r_{L^2_{x,v}} \Big] \\
	    + & \tfrac{\sigma}{2} \| \partial^m_x E \|^2_{L^2_x} + C \big( \| \nabla_x B \|^2_{H^{s-2}_x} + \| \nabla_x \mathbb{P} G \|^2_{H^{s-1}_x L^2_v} + \| \mathbb{P}^\perp G \|^2_{H^s_x L^2_v} \big) \\
	    + & C \eps ( \| G \|_{H^s_x L^2_v} + \| B \|_{H^s_x} ) \big( \| \nabla_x \mathbb{P} G \|^2_{H^{s-1}_x L^2_v} + \| \mathbb{P}^\perp G \|^2_{H^s_x L^2_v (\nu)} + \| E \|^2_{H^{s-1}_x} + \| \nabla_x B \|^2_{H^{s-2}_x} \big) \\
	    + & C \eps ( 1 + \| G \|_{H^s_x L^2_v} + \| B \|_{H^s_x} ) \big( (\rho^+)^2_{\T^3} + (\rho^-)^2_{\T^3} + (u)^2_{\T^3} + (\theta)^2_{\T^3} \big) 
	  \end{aligned}
	\end{equation}
	for all $0 < \eps \leq 1$ and for any $|m| \leq s - 1$. Finally, plugging the inequality \eqref{Bnd-K-E} into the relation \eqref{E-decay-higer-derivative-equ} and summing up for all $|m| \leq s - 1$ reduce to
	\begin{equation}\label{E-decay}
	  \begin{aligned}
	    & \tfrac{1}{2} \tfrac{\d}{\d t} \big( \| E \|^2_{H^{s-1}_x} + \| B \|^2_{H^{s-1}} \big) + \tfrac{\sigma}{2} \| E \|^2_{H^{s - 1}_x} \\
	    \leq & - \eps \tfrac{\d}{\d t} \sum_{|m| \leq s - 1} \Big[ \tfrac{\eps}{2} \big\| \l \partial^m_x G \cdot \mathsf{q}_1 , \widetilde{\Phi} \r_{L^2_v} \big\|^2_{L^2_x} - \l \partial^m_x G \cdot \mathsf{q}_1 , \widetilde{\Phi} \cdot \partial^m_x E \r_{L^2_{x,v}} \Big] \\
	    + &  C \big( \| \nabla_x B \|^2_{H^{s-2}_x} + \| \nabla_x \mathbb{P} G \|^2_{H^{s-1}_x L^2_v} + \| \mathbb{P}^\perp G \|^2_{H^s_x L^2_v} \big) \\
	    + & C \eps ( \| G \|_{H^s_x L^2_v} + \| B \|_{H^s_x} ) \big( \| \nabla_x \mathbb{P} G \|^2_{H^{s-1}_x L^2_v} + \| \mathbb{P}^\perp G \|^2_{H^s_x L^2_v (\nu)} + \| E \|^2_{H^{s-1}_x} + \| \nabla_x B \|^2_{H^{s-2}_x} \big) \\
	    + & C \eps ( 1 + \| G \|_{H^s_x L^2_v} + \| B \|_{H^s_x} ) \big( (\rho^+)^2_{\T^3} + (\rho^-)^2_{\T^3} + (u)^2_{\T^3} + (\theta)^2_{\T^3} \big) 
	  \end{aligned}
	\end{equation}
	for any $0 < \eps \leq 1$.
	
	Let $\delta_2 \in (0, 1]$ be a small constant to be determined. Then, adding the $\delta_2$ times  of \eqref{E-decay} to \eqref{B-decay} tells us
	\begin{equation}\label{E-B-decay-1}
	  \begin{aligned}
	    & \tfrac{1}{2} \tfrac{\d}{\d t} \Big( \delta_2 \| E \|^2_{H^{s-1}} + \delta_2 \| B \|^2_{H^{s-1}_x} + \delta_1 ( \sigma - 1 ) \| B \|^2_{H^{s-2}_x} \\
	    & \qquad \quad + \delta_1 \| \partial_t B + B \|^2_{H^{s-2}_x} + ( 1 - \delta_1 ) \| \partial_t B \|^2_{H^{s-2}_x} + \| \nabla_x B \|^2_{H^{s-2}_x} \Big) \\
	    & + \tfrac{\sigma}{2} \delta_2 \| E \|^2_{H^{s-1}_x} + ( \sigma - \tfrac{3}{2} \delta_1 ) \| \partial_t B \|^2_{H^{s-2}_x} + ( \tfrac{\delta_1}{2} - C \delta_2 ) \| \nabla_x B \|^2_{H^{s-2}_x} \\
	    \leq & - \eps \tfrac{\d}{\d t} \sum_{|m| \leq s - 2} \Big[ \tfrac{\eps}{2} \big\| \nabla_x \times \big\langle \partial^m_x G \cdot \mathsf{q}_1 , \widetilde{\Phi} \big\rangle_{L^2_v} \big\|^2_{L^2_x} \\
	    & \qquad \quad + \l \nabla_x \times \big\langle \partial^m_x G \cdot \mathsf{q}_1 , \widetilde{\Phi} \big\rangle_{L^2_v} , \partial_t \partial^m_x B + \delta_1 \partial^m_x B \r_{L^2_x} \Big] \\
	    & - \eps \tfrac{\d}{\d t} \sum_{|m| \leq s - 1} \Big[ \tfrac{\eps}{2} \delta_2 \big\| \big\langle \partial^m_x G \cdot \mathsf{q}_1 , \widetilde{\Phi} \big\rangle_{L^2_v} \big\|^2_{L^2_v} - \delta_2 \big\langle \partial^m_x G \cdot \mathsf{q}_1 , \widetilde{\Phi} \cdot \partial^m_x E \big\rangle_{L^2_{x,v}}  \Big] \\
	    & + C \eps ( 1 + \tfrac{1}{\delta_1} ) \big( \| \nabla_x \mathbb{P} G \|^2_{H^{s-1}_x L^2_v} + \| \mathbb{P}^\perp G \|^2_{H^s_x L^2_v (\nu)} \big) \\
	    & + C ( \| G \|_{H^s_x L^2_v} + \| B \|_{H^s_x} ) \big( \| E \|^2_{H^{s-1}_x} + \| \partial_t B \|^2_{H^{s-2}_x} + \| \nabla_x B \|^2_{H^{s-2}_x} \big) \\
	    & + C ( \| G \|_{H^s_x L^2_v} + \| B \|_{H^s_x} ) \big( | \nabla_x \mathbb{P} G \|^2_{H^{s-1}_x L^2_v} + \| \mathbb{P}^\perp G \|^2_{H^s_x L^2_v (\nu)} \big) \\
	    & + C ( \eps + \| G \|_{H^s_x L^2_v} + \| B \|_{H^s_x} ) \big( (\rho^+)^2_{\T^3} + (\rho^-)^2_{\T^3} + (u)^2_{\T^3} + (\theta)^2_{\T^3} \big)
	  \end{aligned}
	\end{equation}
	for any $0 < \eps \leq 1$ and integer $s \geq 3$, where the small constants $\delta_1, \delta_2 \in (0, 1]$ are to be determined. We first take $\delta_1 \in ( 0 , \min \{ \tfrac{1}{2} , \tfrac{1}{3} \sigma  \}$ such that
	\begin{equation}
	  1 - \delta_1 \geq \tfrac{1}{2} > 0 \ \ \textrm{and } \ \sigma - \tfrac{3}{2} \delta_1 \geq \tfrac{1}{2} \sigma > 0 \,,
	\end{equation}
	and then choose $ \delta_2 \in ( 0 , \min \{ 1, \tfrac{1}{4 C} \delta_1 \} ] $ such that
	\begin{equation}
	  \tfrac{\delta_1}{2} - C \delta_2 \geq \tfrac{\delta_1}{4} > 0 \,,
	\end{equation}
	where the constant $C > 0$ is mentioned as in the inequality \eqref{E-B-decay-1} and independent of $\eps \in (0,1]$. Then the inequality \eqref{E-B-decay-1} implies \eqref{E-B-decay} and the proof of Proposition \ref{Prop-Decay-E-B} is finished.
\end{proof}

\subsection{Summarizations} In this subsection, we will summarize the all energy estimates derived from the previous four subsections. We first choose a constant $\eta_0 \in (0, 1]（$, independent of $\eps > 0$, such that for all $\eta \in ( 0, \eta_0 ]$
\begin{equation}
  \begin{aligned}
    \lambda - C \eta \geq \tfrac{\lambda}{2} > 0 \,,
  \end{aligned}
\end{equation}
where the constant $C > 0$ is mentioned as in Proposition \ref{Prop-MM-Est}. Combining the Young's inequality, the bound \eqref{Fluid-norm-2} and the inequality \eqref{Average-Bnd} in Proposition \ref{Prop-rho-u-theta-average}, we add the $\eta$ times of the relation \eqref{Fluid-Dissip} to the bound \eqref{Spatial-Bnd} and then deduce that
\begin{equation}\label{G-macro-micro-decay}
  \begin{aligned}
    & \tfrac{1}{2} \tfrac{\d}{\d t} \big( \| G \|^2_{H^s_x L^2_v} + \| E \|^2_{H^s_x} + \| B \|^2_{H^s_x} \big) + \eps \eta \tfrac{\d}{\d t} \mathscr{A}_s (G) (t) \\
    & + \tfrac{\lambda}{2 \eps^2} \| \mathbb{P}^\perp G \|^2_{H^s_x L^2_v (\nu)} + \eta \| \nabla_x \mathbb{P} G \|^2_{H^{s-1}_x L^2_v} + \eta \| \div_x E \|^2_{H^{s-1}_x} \\
    \leq & C \big( \| G \|_{H^s_x L^2_v} + \| G \|^4_{H^s_x L^2_v} + \| E \|_{H^s_x} + \| E \|^4_{H^s_x} + \| B \|_{H^s_x} + \| B \|^4_{H^s_x} \big) \\
    & \ \ \times \big( \| \nabla_x \mathbb{P} G \|^2_{H^{s-1}_x L^2_v} + \tfrac{1}{\eps^2} \| \mathbb{P}^\perp G \|^2_{H^s_x L^2_v (\nu) } + \| E \|^2_{H^{s-1}_x} + \| \nabla_x B \|^2_{H^{s-1}_x} \big) \\
    & + C \big( \| G \|_{H^s_x L^2_v} + \| E \|_{H^s_x} + \| B \|_{H^s_x} \big)  \sum_{|m| \leq s -1} \| \nabla_v \partial^m_x \mathbb{P}^\perp G \|^2_{L^2_{x,v} (\nu)}
  \end{aligned}
\end{equation}
holds for all $0 < \eps \leq 1$, $\eta \in (0, \eta_0 ]$ and integer $s \geq 3$.

We next take a constant $\eta_1 \in (0, 1]$, independent of $\eps > 0$, such that for all $\eta \in ( 0, \eta_0 ]$
\begin{equation}\label{Coeffic-etas}
  \left\{
    \begin{array}{l}
      1 - \eta \eta_1 \delta_1 \geq 1 - \eta_0 \eta_2 \delta_1 \geq \tfrac{1}{2} \,, \\[2mm]
      \tfrac{\lambda}{2} - C \eta \eta_1 \geq \tfrac{\lambda}{2} - C \eta_0 \eta_1 \geq \tfrac{\lambda}{4} \,, \\[2mm]
      \eta - C \eta \eta_1 \geq \tfrac{\eta}{2} \,,
    \end{array}
  \right.
\end{equation}
where the positive constant $C$ is given in Proposition \ref{Prop-Decay-E-B}. Then $\eta_1$ can be assumed as
\begin{equation*}
  \begin{aligned}
    \eta_1 = \min \{ 1, \tfrac{1}{2 C} , \tfrac{1}{2 \eta_0 \delta_1}, \tfrac{\lambda}{4 C \eta_0} \} \in ( 0, 1 ] \,.
  \end{aligned}
\end{equation*}
We now multiply the inequality \eqref{E-B-decay} by $\eta_1 \eta $ and then add it to the bound \eqref{G-macro-micro-decay}. We finally derive from the Young's inequality and the estimate \eqref{Average-Bnd} in Proposition \ref{Prop-rho-u-theta-average} that for all $0 < \eps \leq 1$
\begin{equation}\label{GEB-decay-1}
  \begin{aligned}
    & \tfrac{1}{2} \tfrac{\d}{\d t} \big( \| G \|^2_{H^s_x L^2_v} + \| E \|^2_{H^s_x} + \| B \|^2_{H^s_x} + \eta_1 \eta \mathscr{E}_1 (E,B) \big) + \eps \eta \tfrac{\d}{\d t} \big( \mathscr{A}_s (G) (t) + \eta_1 \mathscr{A}_s (E,B) (t) \big) \\
    & \big( \tfrac{\lambda}{2} - C \eta_1 \eta \big) \tfrac{1}{\eps^2} \| \mathbb{P}^\perp G \|^2_{H^s_x L^2_v (\nu)} + ( \eta - C \eta_1 \eta ) \| \nabla_x \mathbb{P} G \|^2_{H^{s-1}_x L^2_v} + \eta \| \div_x E \|^2_{H^{s-1}_x} + \eta_1 \eta \mathscr{D}_1 (E,B) \\
    \leq & C \big( \| G \|_{H^s_x L^2_v} + \| G \|^4_{H^s_x L^2_v} + \| E \|_{H^s_x} + \| E \|^4_{H^s_x} + \| B \|_{H^s_x} + \| B \|^4_{H^s_x} \big) \\
    & \ \ \times \big( \| \nabla_x \mathbb{P} G \|^2_{H^{s-1}_x L^2_v} + \tfrac{1}{\eps^2} \| \mathbb{P}^\perp G \|^2_{H^s_x L^2_v (\nu) } + \| E \|^2_{H^{s-1}_x} + \| \nabla_x B \|^2_{H^{s-1}_x} + \| \partial_t B \|^2_{H^{s-2}_x} \big) \\
    & + C \big( \| G \|_{H^s_x L^2_v} + \| E \|_{H^s_x} + \| B \|_{H^s_x} \big)  \sum_{|m| \leq s -1} \| \nabla_v \partial^m_x \mathbb{P}^\perp G \|^2_{L^2_{x,v} (\nu)} 
  \end{aligned}
\end{equation}
holds for any $\eta \in (0, \eta_0 ]$ and integer $s \geq 3$. 

For notational simplicity, we define the following functionals:
\begin{equation}\label{E-D-eta-Energies}
  \begin{aligned}
    & \mathscr{E}_{\eta} ( G, E, B ) = \| G \|^2_{H^s_x L^2_v} + \| E \|^2_{H^s_x} + \| B \|^2_{H^s_x} + \eta_1 \eta \mathscr{E}_1 (E, B) \,, \\
    & \mathscr{A}_s (G,E,B) (t) = \mathscr{A}_s (G) (t) + \eta_1 \mathscr{A}_s (E,B) (t) \,, \\
    & \mathscr{D}_{\eta} ( G, E, B ) = \tfrac{\lambda}{4 \eps^2} \| \mathbb{P}^\perp G \|^2_{H^s_x L^2_v (\nu)} + \tfrac{\eta}{2} \| \nabla_x \mathbb{P} G \|^2_{H^{s-1}_x L^2_v} + \eta_1 \eta  \mathscr{D}_1 (E, B) \,, \\
    & \mathscr{D}_w (G) = \sum_{|m| \leq s -1} \| \nabla_v \partial^m_x \mathbb{P}^\perp G \|^2_{L^2_{x,v} (\nu)} \,.
  \end{aligned}
\end{equation}
We emphasize that the coefficients relations \eqref{Coeffic-etas} ensure that 
\begin{equation}
  \begin{aligned}
    & \| B \|^2_{H^s_x} - \eta_1 \eta \delta_1 \| B \|^2_{H^{s-2}_x} \\
    = & \tfrac{1}{2} \| B \|^2_{H^s_x} + ( \tfrac{1}{2} - \eta_1 \eta \delta_1 ) \| B \|^2_{H^{s-2}_x} + \tfrac{1}{2} \sum_{s-1 \leq |m| \leq s} \| \partial^m_x B \|^2_{L^2_x} \\
    \geq & \tfrac{1}{2} \| B \|^2_{H^s_x} \geq 0 
  \end{aligned}
\end{equation}
for all $\eta \in ( 0, \eta_0 ]$, which means the functional $ \mathscr{E}_{\eta, \eta_1} ( G, E, B ) $ is nonnegative for all $\eta \in ( 0, \eta_0 ]$. Furthermore, we have
\begin{equation}
  \begin{aligned}
    \| G \|^2_{H^s_x L^2_v} + \| E \|^2_{H^s_x} + \| B \|^2_{H^s} \leq 2 \mathscr{E}_{\eta} ( G, E, B ) \,, 
  \end{aligned}
\end{equation}
and 
\begin{equation}
  \begin{aligned}
    \| \nabla_x \mathbb{P} G \|^2_{H^{s-1}_x L^2_v} + \tfrac{1}{\eps^2} \| \mathbb{P}^\perp G \|^2_{H^s_x L^2_v (\nu) } + \| E \|^2_{H^{s-1}_x} + \| \nabla_x B \|^2_{H^{s-1}_x} + \| \partial_t B \|^2_{H^{s-2}_x} \\
    \leq C ( 1 + \tfrac{1}{\eta} ) \mathscr{D}_{\eta} ( G, E, B )
  \end{aligned}
\end{equation}
for all $\eta \in ( 0, \eta_0 ]$. 

Consequently, we have derived the following proposition from the inequality \eqref{GEB-decay-1}:
\begin{proposition}\label{Prop-Spatial-Summary}
	Assume that $(G, E, B)$ is the solution to the perturbed VMB system \eqref{VMB-G} constructed in Proposition \ref{Prop-Local-Solutn}. Then there are constants $\eta_0 , \eta_1 \in (0, 1]$ and $C > 0$, independent of $\eps > 0$, such that
	\begin{equation}\label{GEB-decay}
	  \begin{aligned}
	    & \tfrac{1}{2} \tfrac{\d}{\d t} \mathscr{E}_{\eta} (G, E, B) + \eps \eta \tfrac{\d}{\d t} \mathscr{A}_s (G, E, B) (t) + \mathscr{D}_{\eta} (G, E, B) \\
	    \leq & C ( 1 + \tfrac{1}{\eta} ) \big[ \mathscr{E}^\frac{1}{2}_{\eta} (G, E, B) + \mathscr{E}^2_{\eta} (G, E, B) \big] \mathscr{D}_{\eta} (G, E, B) \\
	    & + C \mathscr{E}^\frac{1}{2}_{\eta} (G, E, B) \mathscr{D}_w (G)
	  \end{aligned}
	\end{equation}
	holds for all $0 < \eta \leq \eta_0$ and $0 < \eps \leq 1$.
\end{proposition}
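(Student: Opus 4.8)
The plan is to obtain \eqref{GEB-decay} essentially by a change of bookkeeping applied to the inequality \eqref{GEB-decay-1}, which has already been assembled by adding suitably weighted copies (with small multipliers $\eta,\eta_1$) of Propositions \ref{Prop-Spatial}, \ref{Prop-MM-Est}, \ref{Prop-rho-u-theta-average} and \ref{Prop-Decay-E-B}. Thus the remaining task is to (i) match the left-hand side of \eqref{GEB-decay-1} against the functionals introduced in \eqref{E-D-eta-Energies}, and (ii) dominate the right-hand side of \eqref{GEB-decay-1} by $\mathscr{E}_\eta$ and $\mathscr{D}_\eta$, keeping the velocity-derivative term $\mathscr{D}_w(G)$ aside.

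For the left-hand side, I would first note that the time-derivative terms in \eqref{GEB-decay-1} reassemble exactly into $\tfrac12\tfrac{\d}{\d t}\mathscr{E}_\eta(G,E,B)+\eps\eta\tfrac{\d}{\d t}\big(\mathscr{A}_s(G)(t)+\eta_1\mathscr{A}_s(E,B)(t)\big)$ once one reads off the definitions of $\mathscr{E}_1(E,B)$, of $\mathscr{A}_s(G)(t)$ in \eqref{Quantity-A}, and of $\mathscr{A}_s(E,B)(t)$ in \eqref{Quantity-A-EB}. Then the three dissipative terms $\big(\tfrac{\lambda}{2}-C\eta_1\eta\big)\tfrac1{\eps^2}\|\mathbb{P}^\perp G\|^2_{H^s_xL^2_v(\nu)}$, $(\eta-C\eta_1\eta)\|\nabla_x\mathbb{P}G\|^2_{H^{s-1}_xL^2_v}$ and $\eta_1\eta\,\mathscr{D}_1(E,B)$ are bounded below, thanks to the constraints \eqref{Coeffic-etas} and the explicit choices of $\eta_0,\eta_1$, by $\mathscr{D}_\eta(G,E,B)$, after discarding the nonnegative term $\eta\|\div_x E\|^2_{H^{s-1}_x}$. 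This is the only place where the smallness of $\eta$ (namely $\eta\le\eta_0$) and the precise value of $\eta_1$ enter.

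For the right-hand side, I would invoke the two elementary comparison bounds recorded just before the statement: $\|G\|^2_{H^s_xL^2_v}+\|E\|^2_{H^s_x}+\|B\|^2_{H^s_x}\le 2\mathscr{E}_\eta(G,E,B)$, so that each first-order factor $\|G\|_{H^s_xL^2_v},\|E\|_{H^s_x},\|B\|_{H^s_x}$ is $\le C\mathscr{E}_\eta^{1/2}$ and each fourth power is $\le C\mathscr{E}_\eta^{2}$; and $\|\nabla_x\mathbb{P}G\|^2_{H^{s-1}_xL^2_v}+\tfrac1{\eps^2}\|\mathbb{P}^\perp G\|^2_{H^s_xL^2_v(\nu)}+\|E\|^2_{H^{s-1}_x}+\|\nabla_x B\|^2_{H^{s-1}_x}+\|\partial_t B\|^2_{H^{s-2}_x}\le C(1+\tfrac1\eta)\mathscr{D}_\eta(G,E,B)$. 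Substituting these into the right-hand side of \eqref{GEB-decay-1} converts the quadratic$\times$quadratic nonlinearities into $C(1+\tfrac1\eta)\big(\mathscr{E}_\eta^{1/2}+\mathscr{E}_\eta^{2}\big)\mathscr{D}_\eta(G,E,B)$, while the last line of \eqref{GEB-decay-1} becomes exactly $C\,\mathscr{E}_\eta^{1/2}(G,E,B)\,\mathscr{D}_w(G)$, which I would leave untouched. Collecting these yields \eqref{GEB-decay}.

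The genuinely delicate point here is structural rather than computational: the resulting inequality is deliberately \emph{not} closed, because the term $C\mathscr{E}_\eta^{1/2}\mathscr{D}_w(G)$ involves the pure velocity derivatives $\nabla_v\partial^m_x\mathbb{P}^\perp G$, which are not absorbed by $\mathscr{D}_\eta(G,E,B)$. The plan is simply to carry this term forward to Section \ref{Sec:Unif-Mix-Bnd-Global}, where the mixed $(x,v)$-derivative estimates will produce a full kinetic dissipation controlling $\mathscr{D}_w(G)$ and the factor $\mathscr{E}_\eta^{1/2}$ will be rendered small by the smallness of the initial data. A secondary but routine point to verify along the way is the nonnegativity of $\mathscr{E}_\eta(G,E,B)$ for $\eta\le\eta_0$, which follows from the splitting $\|B\|^2_{H^s_x}-\eta_1\eta\delta_1\|B\|^2_{H^{s-2}_x}\ge\tfrac12\|B\|^2_{H^s_x}\ge0$ exhibited before the statement.
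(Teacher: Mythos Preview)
Your proposal is correct and follows essentially the same route as the paper: starting from the already-assembled inequality \eqref{GEB-decay-1}, you identify the left-hand side with $\tfrac12\tfrac{\d}{\d t}\mathscr{E}_\eta+\eps\eta\tfrac{\d}{\d t}\mathscr{A}_s+\mathscr{D}_\eta$ via the constraints \eqref{Coeffic-etas}, and bound the right-hand side using the two comparison inequalities recorded just before the statement. The observation that $\mathscr{D}_w(G)$ must be carried forward unclosed, and the verification of nonnegativity of $\mathscr{E}_\eta$, are exactly the points the paper makes as well.
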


\begin{remark}
	The parameter $\eta \in ( 0, \eta_0 ]$ given in Proposition \ref{Prop-Spatial-Summary} is small to be determined such that the unsigned functional $\eps \eta \mathscr{A}_s (G,E,B)$ defined in \eqref{E-D-eta-Energies} will be dominated by $\mathscr{E}_\eta (G,E,B)$ for all $0 < \eps \leq 1$.
\end{remark}

One notices that the energy inequality \eqref{GEB-decay} is not closed, because so far the quantity $\mathscr{D}_w (G)$ is uncontrolled.

\section{Energy estimates for the $(x,v)$-mixed derivatives and global solutions}
\label{Sec:Unif-Mix-Bnd-Global}

In this section, based on the bound \eqref{GEB-decay} in Proposition \ref{Prop-Spatial-Summary}, we will derive the energy estimates on the $(x,v)$-mixed derivatives of the kinetic part $\mathbb{P}^\perp G$ to control the the energy functional $\mathscr{D}_\eta (G)$ defined in \eqref{E-D-eta-Energies}. Then we can obtain a closed energy inequality of the perturbed VMB system \eqref{VMB-G-drop-eps}, which is uniform in $\eps \in (0, 1]$. One notices that for the hydrodynamic part $\mathbb{P} G$,
\begin{equation}\label{G-Macro}
  \begin{aligned}
    \| p(v) \partial^m_\alpha \mathbb{P} G \|_{L^2_{x,v}} \leq C \| \partial^m_x \mathbb{P} G \|_{L^2_{x,v}}
  \end{aligned}
\end{equation}
holds for any polynomial $p(v)$, which has been estimated in Proposition \ref{Prop-Spatial-Summary}. Furthermore, when we compute the $L^2_{x,v}$-norm of $\partial^m_\alpha \mathbb{P}^\perp G$ for $|m| + |\alpha| \leq s$ with $\alpha \neq 0$ in the later, there is an uncontrolled term $ \tfrac{1}{\eps^2} \sum_{\substack{ |m'| + |\alpha'| \leq s \\ \alpha' < \alpha }} \| \partial^{m'}_{\alpha'} \mathbb{P}^\perp G \|^2_{L^2_{x,v}(\nu)} $ in the right-hand side of \eqref{Mix-5}. However, we observe that the orders of $v$-derivatives in this term is strictly less that $|\alpha|$, so that we can employ an induction over $|\alpha|$, which ranges from $0$ to $s$. To be more precise, one can first inductively derive the following lemma.

\begin{lemma}\label{Lmm-Mix-Energy}
	Assume that $(G, E, B)$ is the solution to the perturbed VMB system \eqref{VMB-G} constructed in Proposition \ref{Prop-Local-Solutn}. Let $s \geq 3$ be any fixed integer. For any given $0 \leq k \leq s$, $|\alpha| \leq k$, there are positive constants $C_{|\alpha|}$, $C_k^*$, $\delta_k$, $\delta^*_k$, $\varrho_k$ and $\varrho^*_k$, independent of $0 < \eps \leq 1$ and $0 < \eta \leq \eta_0$, such that
	\begin{equation}\label{Mix-Derivatives}
	  \begin{aligned}
	     & \tfrac{\d}{\d t} \bigg\{  \varrho_k \mathscr{E}_\eta (G,E,B) + \varrho^*_k \eps \eta \mathscr{A}_s (G,E,B) + \sum_{|m| + |\alpha| \leq s \,, \, |\alpha| \leq k} \tfrac{C_{|\alpha|}\eta}{1 + \eta} \| \partial^m_\alpha \mathbb{P}^\perp G \|^2_{L^2_{x,v}} \bigg\} \\
	    & \qquad + \tfrac{\delta_k \eta}{1 + \eta} \tfrac{1}{\eps^2} \sum_{|m| + |\alpha| \leq s \,, \, |\alpha| \leq k} \| \partial^m_\alpha \mathbb{P}^\perp G \|^2_{L^2_{x,v}(\nu)} + \delta^*_k  \mathscr{D}_\eta (G,E,B)  \\
	    \leq & C_k^* \Big\{ \mathscr{E}_\eta^\frac{1}{2} (G,E,B) \| \mathbb{P}^\perp G \|^2_{\widetilde{H}^s_{x,v}(\nu)} + ( 1 + \tfrac{1}{\eta} ) \big[ \mathscr{E}_\eta^\frac{1}{2} (G,E,B) + \mathscr{E}_\eta^2 (G,E,B) \big] \\
	    & \qquad + ( 1 + \mathscr{E}_\eta (G,E,B) ) \big( \mathscr{E}_\eta (G,E,B) + \| \mathbb{P}^\perp G \|^2_{\widetilde{H}^s_{x,v}(\nu)} \big) \Big\} \mathscr{D}_\eta (G,E,B)
	  \end{aligned}
	\end{equation}
	holds for all $0 < \eps \leq 1$ and $0 < \eta \leq \eta_0$, where the constant $\eta_0 > 0$ is mentioned in Proposition \ref{Prop-Spatial-Summary}. Here the functionals $\mathscr{E}_\eta (G,E,B)$, $\mathscr{A}_s (G,E,B)$ and $\mathscr{D}_\eta (G,E,B)$ are defined in  \eqref{E-D-eta-Energies}.
\end{lemma}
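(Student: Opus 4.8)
\textbf{Proof proposal for Lemma \ref{Lmm-Mix-Energy}.}

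The plan is to argue by induction on $k$, the maximal order of velocity derivatives allowed in the mixed-derivative energy. For the base case $k=0$ there is nothing new: the inequality \eqref{Mix-Derivatives} with $k=0$ is exactly the spatial estimate \eqref{GEB-decay} of Proposition \ref{Prop-Spatial-Summary} after bounding $\mathscr{D}_w(G)$ (which only involves $\nabla_v \partial^m_x \mathbb{P}^\perp G$, i.e.\ $|\alpha|=1$) crudely by $\|\mathbb{P}^\perp G\|^2_{\widetilde H^s_{x,v}(\nu)}$ and absorbing it into the right-hand side as a $\mathscr{E}_\eta^{1/2}\mathscr{D}_\eta$-type term; choose $\varrho_0,\varrho_0^*,\delta_0^*$ accordingly and take the $C_{|\alpha|}$-sum empty. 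For the inductive step, assume \eqref{Mix-Derivatives} holds at level $k-1$ and produce it at level $k$.

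The core of the inductive step is an energy estimate for $\partial^m_\alpha \mathbb{P}^\perp G$ with $|m|+|\alpha|\le s$ and $|\alpha|=k$. First I would apply $\P^\perp$ to the $G$-equation in \eqref{VMB-G-drop-eps} and commute with $\partial^m_\alpha$. Taking the $L^2_{x,v}$ inner product with $\partial^m_\alpha \mathbb{P}^\perp G$ produces: (i) the time-derivative term $\tfrac12\tfrac{\d}{\d t}\|\partial^m_\alpha\mathbb{P}^\perp G\|^2_{L^2_{x,v}}$; (ii) the good dissipation $\tfrac{\lambda}{\eps^2}\|\partial^m_\alpha\mathbb{P}^\perp G\|^2_{L^2_{x,v}(\nu)}$ coming from $\tfrac{1}{\eps^2}\mathscr{L}$ via Part (3) of Lemma \ref{Lmm-L} together with Lemma \ref{Lmm-nu-norm} (the compact part $\mathscr{K}$ of Lemma \ref{Lmm-L}(2) yields a term $C(\delta)\|\partial^m_x\mathbb{P}^\perp G\|^2$ with pure spatial derivatives, harmless since it is controlled by $\mathscr{D}_\eta$ from level $0$, plus $\delta$ times the dissipation); (iii) a commutator term $\tfrac1\eps[v\cdot\nabla_x,\partial^\alpha_v]\partial^m_x\mathbb{P}^\perp G=\tfrac1\eps\sum_j\partial^{\alpha-e_j}_v\cdots$ — this is the dangerous $\tfrac1\eps$-singular term, but the derivative count in $v$ drops by one, so it matches the induction hypothesis at level $k-1$ after a Young inequality that splits $\tfrac1\eps\cdot\tfrac1\eps$ against the level-$(k-1)$ dissipation; (iv) the Vlasov force terms $\tfrac1\eps\partial^m_\alpha[\mathsf q(\eps E+v\times B)\cdot\nabla_v G]$, handled by the micro--macro split $G=\P G+\P^\perp G$ exactly as in the terms $\mathbf{B}_1,\dots,\mathbf{B}_4$ of Proposition \ref{Prop-Spatial}, using $|v|\le\nu(v)$ from Lemma \ref{Lmm-CF-nu}(1), the Sobolev embeddings $H^1_x\hookrightarrow L^4_x$, $H^2_x\hookrightarrow L^\infty_x$, and the key observation that the $\P G$--$\P G$ contribution with the magnetic field vanishes by the cancellation \eqref{Cancellation-B-G}; (v) the forcing terms $\tfrac1\eps(E\cdot v)\sqrt M\mathsf q_1$ and $\tfrac12\mathsf q(E\cdot v)G$, where the first lands in $\textrm{Ker}(\mathscr{L})$ so is killed against $\mathbb{P}^\perp G$, and the second is a lower-order nonlinear term; (vi) the collision nonlinearity $\tfrac1\eps\partial^m_\alpha\Gamma(G,G)$, estimated by Lemma \ref{Lmm-Gamma-Torus}(2) (the $\alpha\neq0$ branch), again giving $\tfrac1\eps\cdot\|G\|_{H^s}(\|\nabla_x\P G\|+\|\P^\perp G\|_{(\nu)})\cdot\|\P^\perp G\|_{(\nu)}$ which is of $\mathscr{E}^{1/2}\mathscr{D}$ form with the extra $\tfrac1\eps$ absorbed into the $\tfrac{1}{\eps^2}$-dissipation.

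Summing over all $(m,\alpha)$ with $|\alpha|=k$, $|m|+|\alpha|\le s$, multiplying by a suitably small constant $\tfrac{C_{k}\eta}{1+\eta}$, and adding the level-$(k-1)$ inequality of the induction hypothesis times an appropriate large constant $\varrho_k/\varrho_{k-1}$, one closes the loop: the singular commutator term at level $k$ is dominated by the $\tfrac{1}{\eps^2}$-dissipation at level $k-1$ provided $C_k$ is chosen small relative to $\delta_{k-1}$, while all nonlinear terms carry at least one factor $\mathscr{E}_\eta^{1/2}$ or produce $\|\mathbb{P}^\perp G\|^2_{\widetilde H^s_{x,v}(\nu)}$ times $\mathscr{D}_\eta$, matching the right-hand side of \eqref{Mix-Derivatives}. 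The factor $\tfrac{\eta}{1+\eta}$ on the new energy is needed so that, after combining with $\mathscr{D}_\eta$ (itself proportional to $\eta$ in its fluid and Maxwell parts), the coefficients balance uniformly in $\eta\in(0,\eta_0]$; this is why the statement carries $\tfrac{1}{1+\eta}$ rather than a bare constant.

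The main obstacle will be the bookkeeping of the singular $\tfrac1\eps$ and $\tfrac{1}{\eps^2}$ factors across the induction: one must verify that every $v$-derivative commutator that drops the $\alpha$-order by one can indeed be absorbed by the already-established dissipation at the lower level \emph{with room to spare}, i.e.\ that the chain of constants $\delta_k,\delta_k^*,\varrho_k,\varrho_k^*$ can be fixed consistently and finitely (there are only $s+1$ steps). A secondary technical point is that the compact operator $\mathscr{K}$ in Lemma \ref{Lmm-L}(2) forces an $\|\partial^m_x\mathbb{P}^\perp G\|^2$ term with \emph{no} $v$-derivatives into the estimate; this is fine because it is controlled by the base-level dissipation $\tfrac{\lambda}{4\eps^2}\|\mathbb{P}^\perp G\|^2_{H^s_xL^2_v(\nu)}\subset\mathscr{D}_\eta$, but one must keep the constant $C(\delta)$ from ballooning by choosing $\delta$ small at each level. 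Everything else is a routine, if lengthy, repetition of the Hölder/Sobolev/cancellation arguments already carried out in Propositions \ref{Prop-Spatial} and \ref{Prop-MM-Est}.
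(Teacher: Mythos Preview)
Your overall strategy—induction on the order $k=|\alpha|$ of velocity derivatives, with the base case $k=0$ supplied by Proposition~\ref{Prop-Spatial-Summary} and the inductive step closed by absorbing the $v$-commutator into the level-$(k-1)$ dissipation—is exactly the paper's approach, and your description of the bookkeeping of constants is correct.

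However, there are two concrete errors in your term-by-term outline:

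\emph{Item (v) is wrong.} The function $(E\cdot v)\sqrt{M}\,\mathsf{q}_1$ is \emph{not} in $\mathrm{Ker}(\mathscr{L})$; it lies in $\mathrm{Ker}^\perp(\mathscr{L})$. Indeed $v_i\sqrt{M}\,\mathsf{q}_1=[v_i\sqrt{M},-v_i\sqrt{M}]$ is orthogonal to all $\phi_j$ (use $\mathsf{q}_1\cdot\mathsf{q}_2=0$ for $\phi_3,\dots,\phi_6$ and oddness for $\phi_1,\phi_2$). So this forcing term is \emph{not} killed against $\partial^m_\alpha\mathbb{P}^\perp G$; instead it contributes $\tfrac{1}{\eps}\langle\partial^m_x E\cdot\partial^\alpha_v(v\sqrt{M}\mathsf{q}_1),\partial^m_\alpha\mathbb{P}^\perp G\rangle$, which after Young's inequality gives $\tfrac{\gamma}{\eps^2}\|\partial^m_\alpha\mathbb{P}^\perp G\|^2_{L^2_{x,v}(\nu)}+C_\gamma\|E\|^2_{H^{s-1}_x}$. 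The second piece is bounded by $(1+\tfrac{1}{\eta})\mathscr{D}_\eta$ since $|m|\le s-1$, so the argument survives, but not for the reason you give.

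\emph{The cancellation \eqref{Cancellation-B-G} is irrelevant here.} That identity was needed in Proposition~\ref{Prop-Spatial} because the test function there was $\partial^m_x G$, producing a genuinely singular $\mathbb{P}G$--$\mathbb{P}G$ pairing. In the mixed-derivative estimate the test function is $\partial^m_\alpha\mathbb{P}^\perp G$, so every Vlasov-force term is already paired against the microscopic part, and the $\tfrac{1}{\eps}$ in $\tfrac{1}{\eps}(v\times B)\cdot\nabla_v\mathbb{P}G$ is absorbed directly by the $\tfrac{1}{\eps^2}$-dissipation via $\tfrac{C}{\eps}\|B\|_{H^s_x}\|\nabla_x\mathbb{P}G\|_{H^{s-1}_x L^2_v}\|\partial^m_\alpha\mathbb{P}^\perp G\|_{L^2_{x,v}(\nu)}$.

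Finally, a point you omit: the paper does not project the equation by $\mathbb{P}^\perp$ but instead substitutes $G=\mathbb{P}G+\mathbb{P}^\perp G$ and moves all $\mathbb{P}G$ terms to the right (equation \eqref{VMB-G-Mixed}). This produces an explicit $-\partial_t\partial^m_\alpha\mathbb{P}G$ term (the paper's $M_8$) whose control requires invoking the macroscopic balance laws \eqref{Deco-6-moments} to replace $\partial_t(\rho^\pm,u,\theta)$ by spatial gradients and lower-order terms. If you literally apply $\mathbb{P}^\perp$ first you avoid this term, but you then have to handle $\partial^m_\alpha\mathbb{P}^\perp(v\cdot\nabla_x\mathbb{P}G)$ and similar projected commutators; either way a step beyond your outline is needed.
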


\begin{proof}[Proof of Lemma \ref{Lmm-Mix-Energy}]
	We now rewrite the first $G$-equation of \eqref{VMB-G-drop-eps} as
	\begin{equation}\label{VMB-G-Mixed}
	  \begin{aligned}
	    & \partial_t \mathbb{P}^\perp G + \tfrac{1}{\eps} \big[ v \cdot \nabla_x + \mathsf{q} (\eps E + v \times B) \cdot \nabla_v \big] \mathbb{P}^\perp G + \tfrac{1}{\eps^2} \mathscr{L} \mathbb{P}^\perp G \\
	    = & \tfrac{1}{\eps} (E \cdot v) \sqrt{M} \mathsf{q}_1 + \tfrac{1}{2} \mathsf{q} ( E \cdot v ) G + \tfrac{1}{\eps} \Gamma (G,G) \\
	    & - \partial_t \mathbb{P} G - \tfrac{1}{\eps} \big[ v \cdot \nabla_x + \mathsf{q} (\eps E + v \times B) \cdot \nabla_v \big] \mathbb{P} G \,.
	  \end{aligned}
	\end{equation}
	For all $|m| + |\alpha| \leq s$ and $\alpha \neq 0$, we take the derivative operator $\partial^m_\alpha$ in the equation \eqref{VMB-G-Mixed} and then we obtain
	\begin{equation}\label{VMB-G-Mixed-Derivative}
	  \begin{aligned}
	    & \partial_t \partial^m_\alpha \mathbb{P}^\perp G + \tfrac{1}{\eps} \big[ v \cdot \nabla_x + \mathsf{q} ( \eps E + v \times B ) \cdot \nabla_v \big] \partial^m_\alpha \mathbb{P}^\perp G + \tfrac{1}{\eps^2} \partial^m_\alpha \mathscr{L} \mathbb{P}^\perp G \\
	    = & \tfrac{1}{\eps} \partial^m_\alpha \big[ (E \cdot v) \sqrt{M} \mathsf{q}_1 \big] + \tfrac{1}{2} \partial^m_\alpha \big[ \mathsf{q} (E \cdot v) G \big] + \tfrac{1}{\eps} \partial^m_\alpha \Gamma (G, G) \\
	    & - \tfrac{1}{\eps} \sum_{|\alpha'|=1} C_\alpha^{\alpha'} \partial^{\alpha'}_v v \cdot \nabla_x \partial^m_{\alpha - \alpha'} \mathbb{P}^\perp G - \tfrac{1}{\eps} \sum_{|\alpha'|=1} C_\alpha^{\alpha'} \mathsf{q} ( \partial^{\alpha'}_v v \times B ) \cdot \nabla_v \mathbb{P}^\perp G \\
	    & - \tfrac{1}{\eps} \sum_{m' < m} \sum_{|\alpha'| \leq 1} C_m^{m'} C_\alpha^{\alpha'} \mathsf{q} ( \partial^{\alpha'}_v v \times \partial^{m-m'}_x B ) \cdot \nabla_v \partial^{m'}_{\alpha - \alpha'} \mathbb{P}^\perp G \\
	    & - \partial_t \partial^m_\alpha \mathbb{P} G - \tfrac{1}{\eps} \partial^m_\alpha \big[ v \cdot \nabla_x + \mathsf{q} ( \eps E + v \times B ) \cdot \nabla_v \big] \mathbb{P} G \,.
	  \end{aligned}
	\end{equation}
	We take the inner product of \eqref{VMB-G-Mixed-Derivative} over $\T^3 \times \R^3$ with $\partial^m_\alpha \mathbb{P}^\perp G$. More precisely, we obtain
	\begin{equation}\label{M}
	  \begin{aligned}
	    & \tfrac{1}{2} \tfrac{\d}{\d t} \| \partial^m_\alpha \mathbb{P}^\perp G \|^2_{L^2_{x,v}} + \underset{M_1}{ \underbrace{ \tfrac{1}{\eps^2} \l \partial^m_\alpha \mathscr{L} \mathbb{P}^\perp G , \partial^m_\alpha \mathbb{P}^\perp G \r_{L^2_{x,v}} }} \\
	    = & \underset{M_2}{ \underbrace{ \tfrac{1}{\eps} \l \partial^m_\alpha \big[ (E \cdot v) \sqrt{M} \mathsf{q}_1 \big] , \partial^m_\alpha \mathbb{P}^\perp G \r_{L^2_{x,v}} }} + \underset{M_3}{ \underbrace{ \tfrac{1}{2} \l \partial^m_\alpha \big[ \mathsf{q} ( E \cdot v ) G \big] , \partial^m_\alpha \mathbb{P}^\perp G \r_{L^2_{x,v}} }} \\
	    & + \underset{M_4}{ \underbrace{ \tfrac{1}{\eps} \l \partial^m_\alpha \Gamma (G, G) , \partial^m_\alpha \mathbb{P}^\perp G \r_{L^2_{x,v}} }} \ \underset{M_5}{ \underbrace{ - \tfrac{1}{\eps} \sum_{|\alpha'|=1} C_\alpha^{\alpha'} \l \partial^{\alpha'}_v v \cdot \nabla_x \partial^m_{\alpha - \alpha'} \mathbb{P}^\perp G , \partial^m_\alpha \mathbb{P}^\perp G \r_{L^2_{x,v}} }} \\
	    & \underset{M_6}{ \underbrace{ - \tfrac{1}{\eps} \sum_{|\alpha'|=1} C_\alpha^{\alpha'} \l \mathsf{q} ( \partial^{\alpha'}_v v \times B ) \cdot \nabla_v \partial^m_{\alpha - \alpha'} \mathbb{P}^\perp G , \partial^m_\alpha \mathbb{P}^\perp G \r_{L^2_{x,v}} }} \\
	    & \underset{M_7}{ \underbrace{ - \tfrac{1}{\eps} \sum_{m' < m} \sum_{|\alpha'| \leq 1} C_m^{m'} C_\alpha^{\alpha'} \l \mathsf{q} ( \partial^{\alpha'}_v v \times \partial^{m-m'}_x B ) \cdot \nabla_v \partial^{m'}_{\alpha - \alpha'} \mathbb{P}^\perp G , \partial^m_\alpha \mathbb{P}^\perp G \r_{L^2_{x,v}} }} \\
	    & \underset{M_8}{ \underbrace{ - \l \partial_t \partial^m_\alpha \mathbb{P} G , \partial^m_\alpha \mathbb{P}^\perp G \r_{L^2_{x,v}} }} \ \underset{M_9}{ \underbrace{ - \tfrac{1}{\eps} \l \partial^m_\alpha \big[ v \cdot \nabla_x + \mathsf{q} ( \eps E + v \times B ) \cdot \nabla_v \big] \mathbb{P} G , \partial^m_\alpha \mathbb{P}^\perp G \r_{L^2_{x,v}} }} \,.
	  \end{aligned}
	\end{equation}
	
	Recalling the decomposition of $\mathscr{L}$ in Lemma \ref{Lmm-L} (1), we have
	\begin{equation}
	  \begin{aligned}
	    \l \partial^m_\alpha \mathscr{L} \mathbb{P}^\perp G , \partial^m_\alpha \mathbb{P}^\perp G \r_{L^2_{x,v}} = 2 \l \partial^m_\alpha (\nu (v) \mathbb{P}^\perp G ) , \partial^m_\alpha \mathbb{P}^\perp G \r_{L^2_{x,v}} - \l \partial^m_\alpha \mathscr{K} \mathbb{P}^\perp G , \partial^m_\alpha \mathbb{P}^\perp G \r_{L^2_{x,v} } \,.
	  \end{aligned}
	\end{equation}
	From Lemma \ref{Lmm-nu-norm} (2), we derive
	\begin{equation}
	  \begin{aligned}
	    \l \partial^m_\alpha (\nu (v) \mathbb{P}^\perp G ) , \partial^m_\alpha \mathbb{P}^\perp G \r_{L^2_{x,v}} \geq C_5 \| \partial^m_\alpha \mathbb{P}^\perp G \|^2_{L^2_{x,v}(\nu)} - C_6 \sum_{\alpha' < \alpha} \| \partial^m_{\alpha'} \mathbb{P}^\perp G \|^2_{L^2_{x,v}} \,.
	  \end{aligned}
	\end{equation}
	Moreover, Lemma \ref{Lmm-L} (2) tells us that for any $\delta > 0$, there is a $C(\delta) > 0$ such that
	\begin{equation}
	  \begin{aligned}
	    \l \partial^m_\alpha \mathscr{K} \mathbb{P}^\perp G , \partial^m_\alpha \mathbb{P}^\perp G \r_{L^2_{x,v}} \leq \delta \| \partial^m_\alpha \mathbb{P}^\perp G \|^2_{L^2_{x,v}(\nu)} + C(\delta) \| \partial^m_x \mathbb{P}^\perp G \|^2_{L^2_{x,v}} \,.
	  \end{aligned}
	\end{equation}
	Thus taking $\delta = C_5 > 0$, $\lambda_0 = C_5 > 0$ and $\lambda_1 = C(C_5) + 2 C_6 > 0$ implies that the quantity $M_1$ has the lower bound
	\begin{equation}\label{M1}
	  \begin{aligned}
	    M_1 = \tfrac{1}{\eps^2} \l \partial^m_\alpha \mathscr{L} \mathbb{P}^\perp G , \partial^m_\alpha \mathbb{P}^\perp G \r_{L^2_{x,v}} \geq \tfrac{\lambda_0}{\eps^2} \| \partial^m_\alpha \mathbb{P}^\perp G \|^2_{L^2_{x,v} (\nu)} - \tfrac{\lambda_1}{\eps^2} \sum_{\alpha' < \alpha} \| \partial^m_{\alpha'} \mathbb{P}^\perp G \|^2_{L^2_{x,v}} \,.
	  \end{aligned}
	\end{equation}
	
	Since $|m| + |\alpha| \leq s$ and $\alpha \neq 0$, $0 \leq |m| \leq s - 1$. Then the term $M_2$ can be estimated as
	\begin{equation}\label{M2}
	  \begin{aligned}
	    M_2 = & \tfrac{1}{\eps} \l \partial^m_x E \cdot \partial^{\alpha}_v ( v \sqrt{M} \mathsf{q}_1 ) , \partial^m_\alpha \mathbb{P}^\perp_\alpha G \r_{L^2_{x,v} } \\
	    \leq & \tfrac{1}{\eps} \| \partial^m_x E \|_{L^2_x} \| \partial^\alpha_v ( v \sqrt{M} \mathsf{q}_1 ) \|_{L^2_{x,v}} \| \partial^m_\alpha \mathbb{P}^\perp G \|_{L^2_{x,v}} \\
	    \leq & \tfrac{C}{\eps} \| E \|_{H^{s-1}_x} \| \partial^m_\alpha \mathbb{P}^\perp G \|_{L^2_{x,v}} \leq \tfrac{C}{\eps} \| E \|_{H^{s-1}_x} \| \partial^m_\alpha \mathbb{P}^\perp G \|_{L^2_{x,v}(\nu)} \\
	    \leq & \tfrac{\gamma}{\eps^2} \| \partial^m_\alpha \mathbb{P}^\perp G \|^2_{L^2_{x,v}(\nu)} + C_\gamma \| E \|^2_{H^{s-1}_x} \ 
	  \end{aligned}
	\end{equation}
	for small $\gamma > 0$ to be determined, where the H\"older inequality, Young's inequality and Part (1) of Lemma \ref{Lmm-nu-norm} are utilized here.
	
	Next we estimate the term $M_3$. It can be directly calculated by using the decomposition $G = \mathbb{P} G + \mathbb{P}^\perp G$ that
	\begin{equation}\label{M3-decomp}
	  \begin{aligned}
	    M_3 = & \underset{M_{31}}{\underbrace{ \tfrac{1}{2} \sum_{|\alpha'| \leq 1} C_\alpha^{\alpha'} \l \partial^m_x E \cdot ( \partial^{\alpha'}_v v \otimes \mathsf{q} \partial^{\alpha - \alpha'}_v \mathbb{P} G ), \partial^m_\alpha \mathbb{P}^\perp G \r_{L^2_{x,v} } }} \\
	    + & \underset{M_{32}}{\underbrace{ \tfrac{1}{2} \sum_{m' < m} \sum_{|\alpha'| \leq 1} C_m^{m'} C_\alpha^{\alpha'}  \l \partial^{m'}_x E \cdot ( \partial^{\alpha'}_v v \otimes \mathsf{q} \partial^{m-m'}_{\alpha - \alpha'} \mathbb{P} G ) , \partial^m_\alpha \mathbb{P}^\perp G \r_{L^2_{x,v}} }} \\
	    + & \underset{M_{33}}{\underbrace{ \tfrac{1}{2} \sum_{|\alpha'| \leq 1} C_\alpha^{\alpha'} \l \partial^m_x E \cdot ( \partial^{\alpha'}_v v \otimes \mathsf{q} \partial^{\alpha - \alpha'}_v \mathbb{P}^\perp G ), \partial^m_\alpha \mathbb{P}^\perp G \r_{L^2_{x,v} } }} \\
	    + & \underset{M_{34}}{\underbrace{ \tfrac{1}{2} \sum_{m' < m} \sum_{|\alpha'| \leq 1} C_m^{m'} C_\alpha^{\alpha'}  \l \partial^{m'}_x E \cdot ( \partial^{\alpha'}_v v \otimes \mathsf{q} \partial^{m-m'}_{\alpha - \alpha'} \mathbb{P}^\perp G ) , \partial^m_\alpha \mathbb{P}^\perp G \r_{L^2_{x,v}} }} \,.
	  \end{aligned}
	\end{equation}
	For the term $M_{31}$, we drive from the H\"older inequality, the Sobolev embedding $H^2_x (\T^3) \hookrightarrow L^\infty_x (\T^3)$, the inequality \eqref{G-Macro} and the part (1) of Lemma \ref{Lmm-nu-norm} that
	\begin{equation}\label{M31}
	  \begin{aligned}
	    M_{31} \leq & C \sum_{|\alpha'|\leq 1} \| \partial^m_x E \|_{L^2_x} \| \partial^{\alpha'}_v v \otimes  \mathsf{q} \partial^{\alpha-\alpha'}_v \mathbb{P} G \|_{L^\infty_x L^2_v} \| \partial^m_\alpha \mathbb{P}^\perp G \|_{L^2_{x,v}} \\
	    \leq & C \| E \|_{H^{s-1}_x} \| \mathbb{P} G \|_{L^\infty_x L^2_v} \| \partial^m_\alpha \mathbb{P}^\perp G \|_{L^2_{x,v}} \leq C \| E \|_{H^{s-1}_x} \| \mathbb{P} G \|_{H^2_x L^2_v} \| \partial^m_\alpha \mathbb{P}^\perp G \|_{L^2_{x,v}} \\
	    \leq & C \| E \|_{H^{s-1}_x} \| \mathbb{P} G \|_{H^s_x L^2_v} \| \partial^m_\alpha \mathbb{P}^\perp G \|_{L^2_{x,v}(\nu)} \,.
	  \end{aligned}
	\end{equation}
	Here we require the integer $s \geq 2$. For the $M_{32}$, we additionally derive from the Sobolev embedding $H^1_x (\T^3) \hookrightarrow L^4_x (\T^3)$ that
	\begin{equation}\label{M32}
	  \begin{aligned}
	    M_{32} \leq & C \sum_{m' < m} \sum_{|\alpha'| \leq 1 } \| \partial^{m'}_x E \|_{L^4_x} \| \partial^{\alpha'}_v v \otimes \mathsf{q} \partial^{m-m'}_{\alpha-\alpha'} \mathbb{P} G \|_{L^4_x L^2_v} \| \partial^m_\alpha \mathbb{P}^\perp G \|_{L^2_{x,v}} \\
	    \leq & C  \sum_{m' < m} \sum_{|\alpha'| \leq 1 } \| \partial^{m'}_x E \|_{H^1_x} \| \partial^{m-m'}_x \mathbb{P} G \|_{H^1_x L^2_v} \| \partial^m_\alpha \mathbb{P}^\perp G \|_{L^2_{x,v}(\nu)} \\
	    \leq & C \| E \|_{H^{s-1}_x} \| \mathbb{P} G \|_{H^s_x L^2_v} \| \partial^m_\alpha \mathbb{P}^\perp G \|_{L^2_{x,v}(\nu)} \,. 
	  \end{aligned}
	\end{equation}
	In the term $M_{33}$, if $m=0$ and $1 \leq |\alpha| \leq s$, which satisfy $|m|+|\alpha| \leq s$ and $\alpha \neq 0$, one easily derive from the H\"older inequality, the part (1) of Lemma \ref{Lmm-CF-nu} and the Sobolev embedding $H^2_x (\T^3) \hookrightarrow L^\infty_x (\T^3)$ and $H^1_x (\T^3) \hookrightarrow L^4_x (\T^3)$ that
	\begin{equation}
	  \begin{aligned}
	    & \tfrac{1}{2} \sum_{|\alpha'| \leq 1} C_\alpha^{\alpha'} \l E \cdot ( \partial^{\alpha'}_v v \otimes \mathsf{q} \partial^{\alpha - \alpha'}_v \mathbb{P}^\perp G ), \partial^\alpha_v \mathbb{P}^\perp G \r_{L^2_{x,v} } \\
	    \leq & C \sum_{|\alpha'| = 1} \| E \|_{L^4_x} \left\| \tfrac{\partial^{\alpha'}_v v}{\nu (v)} \right\|_{L^\infty_v} \| \partial^{\alpha-\alpha'}_v \mathbb{P}^\perp G \|_{L^4_x L^2_v(\nu)} \| \partial^\alpha_v \mathbb{P}^\perp G \|_{L^2_{x,v} (\nu)} \\ 
	    & + \| E \|_{L^\infty_x} \left\| \tfrac{ v}{\nu (v)} \right\|_{L^\infty_v} \| \partial^\alpha_v \mathbb{P}^\perp G \|^2_{L^2_{x,v} (\nu)} \\
	    \leq & C \sum_{|\alpha'| = 1} \| E \|_{H^1_x} \| \partial^{\alpha-\alpha'}_v \mathbb{P}^\perp G \|_{H^1_x L^2_v(\nu)} \| \partial^\alpha_v \mathbb{P}^\perp G \|_{L^2_{x,v} (\nu)} + C \| E \|_{H^2_x} \| \partial^\alpha_v \mathbb{P}^\perp G \|^2_{L^2_{x,v} (\nu)} \\
	    \leq & C \| E \|_{H^{s}_x} \| \partial^\alpha_v \mathbb{P}^\perp G \|_{L^2_{x,v} (\nu)} \sum_{\alpha' \leq \alpha} \| \partial^{\alpha'}_v \mathbb{P}^\perp G \|_{L^2_{x,v}(\nu)} \,.
	  \end{aligned}
	\end{equation}
	Here $s \geq 2$ is required. In the term $M_{33}$, if $m, \alpha \neq 0$ and $|m| + |\alpha| \leq s$, we have $1 \leq |m|, |\alpha| \leq s - 1$. Then we estimate that
	\begin{equation}
	  \begin{aligned}
	    & \tfrac{1}{2} \sum_{|\alpha'| \leq 1} C_\alpha^{\alpha'} \l \partial^m_x E \cdot ( \partial^{\alpha'}_v v \otimes \mathsf{q} \partial^{\alpha-\alpha'}_v \mathbb{P}^\perp G ) , \partial^m_\alpha \mathbb{P}^\perp G \r_{L^2_{x,v}} \\
	    \leq & C \sum_{|\alpha'| \leq 1} \| \partial^m_x E \|_{L^4_x} \big\| \tfrac{\partial^{\alpha'}_v v}{\nu (v)} \big\|_{L^\infty_v} \| \partial^{\alpha-\alpha'}_v \mathbb{P}^\perp G \|_{L^4_x L^2_v (\nu)} \| \partial^m_\alpha \mathbb{P}^\perp G \|_{L^2_{x,v}(\nu)} \\
	    \leq & C \sum_{|\alpha'| \leq 1} \| \partial^m_x E \|_{H^1_x} \| \partial^{\alpha-\alpha'}_v \mathbb{P}^\perp G \|_{H^1_x L^2_v (\nu)} \| \partial^m_\alpha \mathbb{P}^\perp G \|_{L^2_{x,v}(\nu)} \\
	    \leq & C \| E \|_{H^s_x} \| \partial^m_\alpha \mathbb{P}^\perp G \|_{L^2_{x,v}(\nu)} \sum_{\alpha' \leq \alpha} \| \partial^m_{\alpha'} \mathbb{P}^\perp G \|_{L^2_{x,v}(\nu)} \,,
	  \end{aligned}
	\end{equation}
	where we make use of the Sobolev embedding $H^1_x (\T^3) \hookrightarrow L^4_x (\T^3)$. We thereby obtain the bound of $M_{33}$
	\begin{equation}\label{M33}
	  \begin{aligned}
	    M_{33} \leq & C \| E \|_{H^s_x} \| \partial^m_\alpha \mathbb{P}^\perp G \|_{L^2_{x,v}(\nu)} \sum_{\alpha' \leq \alpha} \| \partial^m_{\alpha'} \mathbb{P}^\perp G \|_{L^2_{x,v}(\nu)}
	  \end{aligned}
	\end{equation}
	holds for all $|m| + |\alpha| \leq s$ and $\alpha \neq 0$. In the term $M_{34}$, since $m' < m$ and $0 \leq |m| \leq s - 1$, we have $|m'| \leq |m| - 1 \leq s - 2$. We then deduce that
	\begin{equation}\label{M34}
	  \begin{aligned}
	    M_{34} \leq & C \sum_{m' < m} \sum_{|\alpha'| \leq 1} \| \partial^{m'}_x E \|_{L^\infty_x} \big\| \tfrac{\partial^{\alpha'}_v v}{\nu (v)} \big\|_{L^\infty_v} \| \partial^{m-m'}_{\alpha - \alpha'} \mathbb{P}^\perp G \|_{L^2_{x,v} (\nu)} \| \partial^m_\alpha \mathbb{P}^\perp G \|_{L^2_{x,v}(\nu)} \\
	    \leq & C \sum_{m' < m} \sum_{|\alpha'| \leq 1} \| \partial^{m'}_x E \|_{H^2_x} \| \partial^{m-m'}_{\alpha - \alpha'} \mathbb{P}^\perp G \|_{L^2_{x,v} (\nu)} \| \partial^m_\alpha \mathbb{P}^\perp G \|_{L^2_{x,v}(\nu)} \\
	    \leq & C \| E \|_{H^s_x} \| \partial^m_\alpha \mathbb{P}^\perp G \|_{L^2_{x,v}(\nu)} \sum_{m' \leq m , \, \alpha' \leq \alpha} \| \partial^{m'}_{\alpha'} \mathbb{P}^\perp G \|_{L^2_{x,v}(\nu)} \,,
	  \end{aligned}
	\end{equation}
	where the Sobolev embedding $H^2_x (\T^3) \hookrightarrow L^\infty_x (\T^3)$ is used. We then derive the bound of $M_3$ from plugging the inequalities \eqref{M31}, \eqref{M32}, \eqref{M33} and \eqref{M34} into the equality \eqref{M3-decomp} that
	\begin{equation}\label{M3}
	  \begin{aligned}
	    M_3 \leq & C \| E \|_{H^{s-1}_x} \| \mathbb{P} G \|_{H^s_x L^2_v} \| \partial^m_\alpha \mathbb{P}^\perp G \|_{L^2_{x,v}(\nu)} \\
	    + & C \| E \|_{H^s_x} \| \partial^m_\alpha \mathbb{P}^\perp G \|_{L^2_{x,v}(\nu)} \sum_{m' \leq m , \, \alpha' \leq \alpha} \| \partial^{m'}_{\alpha'} \mathbb{P}^\perp G \|_{L^2_{x,v}(\nu)} \\
	    \leq & C \| E \|_{H^{s-1}_x} \| \mathbb{P} G \|_{H^s_x L^2_v} \| \partial^m_\alpha \mathbb{P}^\perp G \|_{L^2_{x,v}(\nu)} + C \| E \|_{H^s_x} \| \mathbb{P}^\perp G \|_{\widetilde{H}^s_{x,v} (\nu)} \| \partial^m_\alpha \mathbb{P}^\perp G \|_{L^2_{x,v}(\nu)} 
	  \end{aligned}
	\end{equation}
	for all $|m| + |\alpha| \leq s$ and $\alpha \neq 0$.
	
	Next we estimate the term $M_4$ in \eqref{M}. Via Lemma \ref{Lmm-Gamma-Torus} and the decomposition $G = \mathbb{P} G + \mathbb{P}^\perp G$, we easily estimate that
	\begin{equation}
	  \begin{aligned}
	    M_4 \leq & \tfrac{C_\Gamma}{\eps} \| G \|_{H^s_{x,v}} \| G \|_{H^s_{x,v} (\nu)} \| \partial^m_\alpha \mathbb{P}^\perp G \|_{L^2_{x,v} (\nu)} \\
	    \leq & \tfrac{C}{\eps} \big( \| \mathbb{P} G \|_{H^s_{x,v}} + \| \mathbb{P}^\perp G \|_{H^s_{x,v}} \big) \big( \| \mathbb{P} G \|_{H^s_{x,v}(\nu)} + \| \mathbb{P}^\perp G \|_{H^s_{x,v}(\nu)} \big) \| \partial^m_\alpha \mathbb{P}^\perp G \|_{L^2_{x,v} (\nu)} \,.
	  \end{aligned}
	\end{equation}
	Furthermore, by the Poincar\'e inequality, the relation \eqref{G-Macro} and Proposition \ref{Prop-rho-u-theta-average}, we have
	\begin{equation}
	  \begin{aligned}
	    & \| \mathbb{P} G \|_{H^s_{x,v}} + \| \mathbb{P} G \|_{H^s_{x,v}(\nu)} \leq C \| \mathbb{P} G \|_{H^s_x L^2_v} \\
	    \leq & C \| \nabla_x \mathbb{P} G \|_{H^{s-1}_x L^2_v} + C \big( |(\rho^+)_{\T^3}| + |(\rho^-)_{\T^3}| + |(u)_{\T^3}| + |(\theta)_{\T^3}| \big) \\
	    + & C \big( \| \rho^+ - (\rho^+)_{\T^3} \|_{L^2_x} + \| \rho^- - (\rho^-)_{\T^3} \|_{L^2_x} + \| u - (u)_{\T^3} \|_{L^2_x} + \| \theta - (\theta)_{\T^3} \|_{L^2_x} \big) \\
	    \leq & C \| \nabla_x \mathbb{P} G \|_{H^{s-1}_x L^2_v} + C ( \| E \|_{L^2_x} + \| B \|_{H^1_x} ) ( \| E \|_{L^2_x} + \| \nabla_x B \|_{L^2_x} ) \\
	    \leq & C \| \nabla_x \mathbb{P} G \|_{H^{s-1}_x L^2_v} + C ( \| E \|_{H^s_x} + \| B \|_{H^s_x} ) ( \| E \|_{H^{s-1}_x} + \| \nabla_x B \|_{H^{s-2}_x} ) \,.
	  \end{aligned}
	\end{equation}
	Here $s \geq 2$ is required. We thereby obtain
	\begin{equation}\label{M4}
	  \begin{aligned}
	    M_4 \leq & \tfrac{C}{\eps} \big( \| \mathbb{P} G \|_{H^s_x L^2_v} + \| \mathbb{P}^\perp G \|_{H^s_{x,v}} \big) \big( \| \nabla_x \mathbb{P} G \|_{H^{s-1}_x L^2_v} + \| \mathbb{P}^\perp G \|_{H^s_{x,v}(\nu)} \big) \| \partial^m_\alpha \mathbb{P}^\perp G \|_{L^2_{x,v}(\nu)} \\
	    + & \tfrac{C}{\eps} ( \| E \|_{H^s_x} + \| B \|_{H^s_x} ) \big( \| \mathbb{P} G \|_{H^s_x L^2_v} + \| \mathbb{P}^\perp G \|_{H^s_{x,v}} \big) \\
	    & \quad \times ( \| E \|_{H^{s-1}_x} + \| \nabla_x B \|_{H^{s-2}_x} ) \| \partial^m_\alpha \mathbb{P}^\perp G \|_{L^2_{x,v}(\nu)} \,.
	  \end{aligned}
	\end{equation}
	
	Next we consider the quantity $M_5$ in \eqref{M}. Via the H\"older inequality and the part (1) of Lemma \ref{Lmm-nu-norm}, one easily yields that
	\begin{equation}\label{M5}
	  \begin{aligned}
	    M_5 \leq & \tfrac{C}{\eps} \sum_{|\alpha'| = 1} \| \nabla_x \partial^m_{\alpha - \alpha'} \mathbb{P}^\perp G \|_{L^2_{x,v}} \| \partial^m_\alpha \mathbb{P}^\perp G \|_{L^2_{x,v}} \\
	    \leq & \tfrac{C}{\eps} \sum_{\alpha' < \alpha} \| \nabla_x \partial^m_{\alpha'} \mathbb{P}^\perp G \|_{L^2_{x,v}(\nu)} \| \partial^m_\alpha \mathbb{P}^\perp G \|_{L^2_{x,v}(\nu)} \\
	    \leq & \tfrac{\gamma}{\eps^2} \| \partial^m_\alpha \mathbb{P}^\perp G \|^2_{L^2_{x,v}(\nu)} + C_\gamma \sum_{\substack{|m'|+|\alpha'| \leq s \\ \alpha' < \alpha}} \| \partial^{m'}_{\alpha'} \mathbb{P}^\perp G \|^2_{L^2_{x,v}(\nu)} 
	  \end{aligned}
	\end{equation}
	for small $\gamma > 0$ to be determined.
	
	The term $M_6$ in \eqref{M} will be estimated as follows:
	\begin{equation}\label{M6}
	  \begin{aligned}
	    M_6 \leq & \tfrac{C}{\eps} \sum_{|\alpha'|=1} \| B \|_{L^\infty_x} \| \nabla_v \partial^m_{\alpha-\alpha'} \mathbb{P}^\perp G \|_{L^2_{x,v}} \| \partial^m_\alpha \mathbb{P}^\perp G \|_{L^2_{x,v}} \\
	    \leq & \tfrac{C}{\eps} \| B \|_{H^2_x} \| \mathbb{P}^\perp G \|_{\widetilde{H}^s_{x,v}} \| \partial^m_\alpha \mathbb{P}^\perp G \|_{L^2_{x,v}} \\
	    \leq & \tfrac{C}{\eps} \| B \|_{H^s_x} \| \mathbb{P}^\perp G \|_{\widetilde{H}^s_{x,v}(\nu)} \| \partial^m_\alpha \mathbb{P}^\perp G \|_{L^2_{x,v}(\nu)} \,,
	  \end{aligned}
	\end{equation}
	where the H\"older inequality, the Sobolev embedding $H^2_x (\T^3) \hookrightarrow L^\infty_x (\T^3)$ and Lemma \ref{Lmm-nu-norm} (1) are utilized. Here $s \geq 2$ is required.
	
	Next we estimate the term $M_7$ in \eqref{M}. We first divide $M_7$ into two parts:
	\begin{equation}\label{M7-Decomp}
	  \begin{aligned}
	    M_7 = & \underset{M_{71}}{\underbrace{ - \tfrac{1}{\eps} \sum_{m' < m} \sum_{|\alpha'|=1} C_m^{m'} C_\alpha^{\alpha'} \l \mathsf{q} ( \partial^{\alpha'}_v v \times \partial^{m-m'}_x B ) \cdot \nabla_v \partial^{m'}_{\alpha-\alpha'} \mathbb{P}^\perp G , \partial^m_\alpha \mathbb{P}^\perp G \r_{L^2_{x,v}} }} \\
	    & \underset{M_{72}}{\underbrace{ - \tfrac{1}{\eps} \sum_{m' < m} C_m^{m'} \l \mathsf{q} ( v \times \partial^{m-m'}_x B ) \cdot \nabla_v \partial^{m'}_\alpha \mathbb{P}^\perp G , \partial^m_\alpha \mathbb{P}^\perp G \r_{L^2_{x,v}} }}\,.
	  \end{aligned}
	\end{equation}
	Since $|m|+|\alpha| \leq s$, $|\alpha| \geq 1$, $m' < m$ and $|\alpha'|=1$ in the term $M_{71}$, we easily have
	\begin{equation*}
	  \begin{aligned}
	    & |m'| + |\alpha-\alpha'| + 1 = |m'| + |\alpha| \leq |m| + |\alpha| - 1 \leq s - 1 \,, \\
	    & |m-m'| \leq |m| \leq s - |\alpha| \leq s - 1 \,.
	  \end{aligned}
	\end{equation*}
	Then the term $M_{71}$ can be estimated by
	\begin{equation}\label{M71}
	  \begin{aligned}
	    M_{71} \leq & \tfrac{C}{\eps} \sum_{m' < m} \sum_{|\alpha'|=1} \| \partial^{m-m'}_x B \|_{L^4_x} \| \nabla_v \partial^{m'}_{\alpha-\alpha'} \mathbb{P}^\perp G \|_{L^4_x L^2_v} \| \partial^m_\alpha \mathbb{P}^\perp G \|_{L^2_{x,v}} \\
	    \leq & \tfrac{C}{\eps} \sum_{m' < m} \sum_{|\alpha'|=1} \| \partial^{m-m'}_x B \|_{H^1_x} \| \nabla_v \partial^{m'}_{\alpha-\alpha'} \mathbb{P}^\perp G \|_{H^1_x L^2_v} \| \partial^m_\alpha \mathbb{P}^\perp G \|_{L^2_{x,v}} \\
	    \leq & \tfrac{C}{\eps} \| B \|_{H^s_x} \| \mathbb{P}^\perp G \|_{\widetilde{H}^s_{x,v}} \| \partial^m_\alpha \mathbb{P}^\perp G \|_{L^2_{x,v}} \\
	    \leq & \tfrac{C}{\eps} \| B \|_{H^s_x} \| \mathbb{P}^\perp G \|_{\widetilde{H}^s_{x,v}(\nu)} \| \partial^m_\alpha \mathbb{P}^\perp G \|_{L^2_{x,v}(\nu)} \,,
	  \end{aligned}
	\end{equation}
	where the H\"older inequality, the Sobolev embedding $H^1_x (\T^3) \hookrightarrow L^4_x (\T^3)$ and the part (1) of Lemma \ref{Lmm-nu-norm}. In the term $M_{72}$, if $m' = 0$, we have $|m| \geq |m'|+1 = 1$ and $|\alpha| \leq s - |m| \leq s - 1$. So we need to control the quantity $ - \frac{1}{\eps} \big\langle \mathsf{q} ( v \times \partial^m_x B ) \cdot \nabla_v \partial^\alpha_v \mathbb{P}^\perp G , \partial^m_\alpha \mathbb{P}^\perp G \big\rangle_{L^2_{x,v}} $. For the case $|m| = s - 1$, $|\alpha| \leq s - |m| = 1$. Then we have
	\begin{equation}\label{M72-1}
	  \begin{aligned}
	    & - \frac{1}{\eps} \l \mathsf{q} ( v \times \partial^m_x B ) \cdot \nabla_v \partial^\alpha_v \mathbb{P}^\perp G , \partial^m_\alpha \mathbb{P}^\perp G \r_{L^2_{x,v}} \\
	    \leq & \tfrac{C}{\eps} \big\| \tfrac{v}{\nu} \big\|_{L^\infty_v} \| \partial^m_x B \|_{L^4_x} \| \nabla_v \partial^\alpha_v \mathbb{P}^\perp G \|_{L^4_x L^2_v (\nu)} \| \partial^m_\alpha \mathbb{P}^\perp G \|_{L^2_{x,v}(\nu)} \\
	    \leq & \tfrac{C}{\eps} \| \partial^m_x B \|_{H^1_x} \| \nabla_v \partial^\alpha_v \mathbb{P}^\perp G \|_{H^1_x L^2_v (\nu)} \| \partial^m_\alpha \mathbb{P}^\perp G \|_{L^2_{x,v}(\nu)} \\
	    \leq & \tfrac{C}{\eps} \| B \|_{H^s_x} \| \mathbb{P}^\perp G \|_{\widetilde{H}^3_{x,v}(\nu)} \| \partial^m_\alpha \mathbb{P}^\perp G \|_{L^2_{x,v}(\nu)} \\
	    \leq & \tfrac{C}{\eps} \| B \|_{H^s_x} \| \mathbb{P}^\perp G \|_{\widetilde{H}^s_{x,v}(\nu)} \| \partial^m_\alpha \mathbb{P}^\perp G \|_{L^2_{x,v}(\nu)} \,,
	  \end{aligned}
	\end{equation}
	where we make use of the H\"older inequality, the Sobolev embedding $H^1_x (\T^3) \hookrightarrow L^4_x (\T^3)$ and the part (1) of Lemma \ref{Lmm-CF-nu}. Here $s \geq 3$ is required. For the case $1 \leq |m| \leq s - 2$, we know $|\alpha| + 1 \leq s + 1 - |m| \leq s$. Then we derive from the Sobolev embedding $H^2_x (\T^3) \hookrightarrow L^\infty_x (\T^3)$ and Lemma \ref{Lmm-CF-nu} (1) that
	\begin{equation}\label{M72-2}
	  \begin{aligned}
	    & - \frac{1}{\eps} \l \mathsf{q} ( v \times \partial^m_x B ) \cdot \nabla_v \partial^\alpha_v \mathbb{P}^\perp G , \partial^m_\alpha \mathbb{P}^\perp G \r_{L^2_{x,v}} \\
	    \leq & \tfrac{C}{\eps} \big\| \tfrac{v}{\nu (v)} \big\|_{L^\infty_v} \| \partial^m_x B \|_{L^\infty_x} \| \nabla_v \partial^\alpha_v \mathbb{P}^\perp G \|_{L^2_{x,v}(\nu)} \| \partial^m_\alpha \mathbb{P}^\perp G \|_{L^2_{x,v}(\nu)} \\
	    \leq & \tfrac{C}{\eps} \| \partial^m_x B \|_{H^2_x} \| \nabla_v \partial^\alpha_v \mathbb{P}^\perp G \|_{L^2_{x,v}(\nu)} \| \partial^m_\alpha \mathbb{P}^\perp G \|_{L^2_{x,v}(\nu)} \\
	    \leq & \tfrac{C}{\eps} \| B \|_{H^s_x} \| \mathbb{P}^\perp G \|_{\widetilde{H}^s_{x,v}(\nu)} \| \partial^m_\alpha \mathbb{P}^\perp G \|_{L^2_{x,v}(\nu)} \,.
	  \end{aligned}
	\end{equation}
	In the term $M_{72}$, if $m' \neq 0$, $m > m' \neq 0$ and $|m| + |\alpha| \leq s$ with $\alpha \neq 0$ imply that
	\begin{equation*}
	  \begin{aligned}
	    & |m-m'| = |m| - |m'| \leq |m| - 1 \leq s - 2 \,, \\
	    & |m'| + |\alpha| + 1 \leq |m| - 1 + |\alpha| + 1 \leq s \,.
	  \end{aligned}
	\end{equation*}
	Then we estimate that
	\begin{equation}\label{M72-3}
	  \begin{aligned}
	    & - \tfrac{1}{\eps} \sum_{0 \neq m' < m} C_m^{m'} \l \mathsf{q} ( v \times \partial^{m-m'}_x B ) \cdot \nabla_v \partial^{m'}_\alpha \mathbb{P}^\perp G , \partial^m_\alpha \mathbb{P}^\perp G \r_{L^2_{x,v}} \\
	    \leq & \tfrac{C}{\eps} \sum_{0 \neq m' < m} \big\| \tfrac{v}{\nu(v)} \big\|_{L^\infty_v} \| \partial^{m-m'}_x B \|_{L^\infty_x} \| \nabla_v \partial^{m'}_\alpha \mathbb{P}^\perp G \|_{L^2_{x,v}(\nu)} \| \partial^m_\alpha \mathbb{P}^\perp G \|_{L^2_{x,v}(\nu)} \\
	    \leq & \tfrac{C}{\eps} \sum_{0 \neq m' < m} \| \partial^{m-m'}_x B \|_{H^2_x} \| \nabla_v \partial^{m'}_\alpha \mathbb{P}^\perp G \|_{L^2_{x,v}(\nu)} \| \partial^m_\alpha \mathbb{P}^\perp G \|_{L^2_{x,v}(\nu)} \\
	    \leq & \tfrac{C}{\eps} \| B \|_{H^s_x} \| \mathbb{P}^\perp G \|_{\widetilde{H}^s_{x,v}(\nu)} \| \partial^m_\alpha \mathbb{P}^\perp G \|_{L^2_{x,v}(\nu)} \,,
	  \end{aligned}
	\end{equation}
	where the Sobolev embedding $H^2_x (\T^3) \hookrightarrow L^\infty_x (\T^3)$ and Lemma \ref{Lmm-CF-nu} (1) are utilized. Therefore, the bounds \eqref{M72-1}, \eqref{M72-2} and \eqref{M72-3} reduce to
	\begin{equation}\label{M72}
	  \begin{aligned}
	    M_{72} \leq \tfrac{C}{\eps} \| B \|_{H^s_x} \| \mathbb{P}^\perp G \|_{\widetilde{H}^s_{x,v}(\nu)} \| \partial^m_\alpha \mathbb{P}^\perp G \|_{L^2_{x,v}(\nu)} 
	  \end{aligned}
	\end{equation} 
	for all $|m|+|\alpha| \leq s$ and $\alpha \neq 0$. As a consequence, substituting the bounds \eqref{M71} and \eqref{M72} into the equality \eqref{M7-Decomp} tells us
	\begin{equation}\label{M7}
	  \begin{aligned}
	    M_7 \leq \tfrac{C}{\eps} \| B \|_{H^s_x} \| \mathbb{P}^\perp G \|_{\widetilde{H}^s_{x,v}(\nu)} \| \partial^m_\alpha \mathbb{P}^\perp G \|_{L^2_{x,v}(\nu)} \,.
	  \end{aligned}
	\end{equation}
	
	Next we estimate the term $M_8$ in the relation \eqref{M}. Recalling the expression of $\mathbb{P} G$ in \eqref{VMB-Proj-Smp}, i.e.,
	\begin{equation*}
	  \begin{aligned}
	    \mathbb{P} G = \rho^+ \phi_1 (v) + \rho^- \phi_2 (v) + \sum_{i=1}^3 u_i \phi_{i+2} (v) + \theta \phi_6 (v) \,,
	  \end{aligned}
	\end{equation*}
	we estimate the term $M_8$ as
	\begin{equation}\label{M8-1}
	  \begin{aligned}
	    M_8 = & - \l \partial_t ( \partial^m_x \rho^+ \phi_1 + \partial^m_x \rho^- \phi_2 + \sum_{i=1}^3 \partial^m_x u_i \phi_{i+2} + \partial^m_x \theta \phi_6 ) , \partial^m_\alpha \mathbb{P}^\perp G \r_{L^2_{x,v}} \\
	    \leq & C \big( \| \partial_t \partial^m_x \rho^+ \|_{L^2_x} + \| \partial_t \partial^m_x \rho^- \|_{L^2_x} + \| \partial_t \partial^m_x u \|_{L^2_x} + \| \partial_t \partial^m_x \theta \|_{L^2_x}  \big) \| \partial^m_\alpha \mathbb{P}^\perp G \|_{L^2_{x,v}} \,,
	  \end{aligned}
	\end{equation}
	where we make use of the fact $\phi_i (v) \in L^2_v$ for $1 \leq i \leq 6$.	Recall that the hydrodynamic coefficients $\rho^\pm$, $u$ and $\theta$ obey the relations \eqref{Deco-6-moments}, namely,
	\begin{equation}\label{M8-2}
	  \left\{
	    \begin{array}{l}
	      \partial_t \rho^+ = \tfrac{1}{\eps} \big[ - \div_x u + \l \Theta ( \mathbb{P}^\perp G ) + \Psi (\mathbb{P} G) , \phi_1 (v) \r_{L^2_v} \big] \,, \\ [2mm]
	      \partial_t \rho^- = \tfrac{1}{\eps} \big[ - \div_x u + \l \Theta ( \mathbb{P}^\perp G ) + \Psi (\mathbb{P} G) , \phi_2 (v) \r_{L^2_v} \big] \,, \\ [2mm]
	      \partial_t u_i = \tfrac{1}{\eps} \big[ - \partial_i ( \tfrac{\rho^+ + \rho^-}{2} + \theta ) + \l \Theta ( \mathbb{P}^\perp G ) + \Psi (\mathbb{P} G) , \phi_{i+2} (v) \r_{L^2_v} \big] \ \ \textrm{for} \ \ 1 \leq i \leq 3 \,, \\ [2mm]
	      \partial_t \theta = \tfrac{1}{\eps} \big[ - \tfrac{2}{3} \div_x u + \tfrac{1}{3} \l \Theta ( \mathbb{P}^\perp G ) + \Psi (\mathbb{P} G) , \phi_6 (v) \r_{L^2_v} \big] \,.
	    \end{array}
	  \right.
	\end{equation}
	For the quantity $\| \partial_t \partial^m_x \rho^+ \|_{L^2_x}$, we derive from the first equation of \eqref{M8-2} and the expression of $\mathbb{P} G$ in \eqref{VMB-Proj-Smp} that for all $|m| \leq s - 1$
	\begin{equation}\label{M8-3}
	  \begin{aligned}
	    & \| \partial_t \partial^m_x \rho^+ \|_{L^2_x} \leq \tfrac{1}{\eps} \| \div_x \partial^m_x u \|_{L^2_x} + \tfrac{1}{\eps} \left\| \partial^m_x \l \Theta (\mathbb{P}^\perp G ) + \Psi ( \mathbb{P} G ) , \phi_1 (v) \r_{L^2_v} \right\|_{L^2_x} \\
	    \leq & \tfrac{1}{\eps} \| \nabla_x \mathbb{P} G \|_{H^{s-1}_x L^2_v} + \underset{M_{81}}{\underbrace{ \tfrac{1}{\eps} \big\| \partial^m_x \langle \Theta (\mathbb{P}^\perp G) , \phi_1 (v) \rangle_{L^2_v} \big\|_{L^2_x} }} + \underset{M_{82}}{\underbrace{ \tfrac{1}{\eps} \big\| \partial^m_x \langle \Psi (\mathbb{P} G) , \phi_1 (v) \rangle_{L^2_v} \big\|_{L^2_x} }} \,.
	  \end{aligned}
	\end{equation}
	Recalling that $\phi_1 (v) \in \textrm{Ker} (\mathscr{L})$, $( \partial_t + \tfrac{1}{\eps} \mathscr{L} ) \mathbb{P}^\perp G \in \textrm{Ker}^\perp (\mathscr{L})$ and $ \Theta (\mathbb{P}^\perp G) $ defined in \eqref{Theta-Pperp-G}, i.e.,
	\begin{equation*}
	  \begin{aligned}
	    \Theta(\mathbb{P}^\perp G) = - \left( \eps \partial_t + v \cdot \nabla_x + \tfrac{1}{\eps} \mathscr{L} + \mathsf{q} ( \eps E + v \times B ) \cdot \nabla_v - \tfrac{1}{2} \eps \mathsf{q} ( E \cdot v ) \right) \mathbb{P}^\perp G \,,
	  \end{aligned}
	\end{equation*}
	one easily has
	\begin{equation}\label{M81-0}
	  \begin{aligned}
	    M_{81} = & \tfrac{1}{\eps} \big\| \partial^m_x \langle ( v \cdot \nabla_x + \mathsf{q} (\eps E + v \times B) \cdot \nabla_v - \tfrac{1}{2} \eps \mathsf{q} (E \cdot v) ) \mathbb{P}^\perp G , \phi_1 (v) \rangle_{L^2_v} \big\|_{L^2_x} \\
	    \leq & \underset{M_{811}}{\underbrace{ \tfrac{1}{\eps} \big\| \langle v \cdot \nabla_x \partial^m_x \mathbb{P}^\perp G , \phi_1(v) \rangle_{L^2_v} \big\|_{L^2_x} }} + \underset{M_{812}}{\underbrace{ \tfrac{1}{\eps} \big\| \partial^m_x \langle \eps \mathsf{q} E \cdot \nabla_v \mathbb{P}^\perp G , \phi_1 (v) \rangle_{L^2_v} \big\|_{L^2_x} }} \\
	    + & \underset{M_{813}}{\underbrace{ \tfrac{1}{\eps} \big\| \partial^m_x \langle \mathsf{q} (v \times B) \cdot \nabla_v \mathbb{P}^\perp G , \phi_1 (v) \rangle_{L^2_v} \big\|_{L^2_x} }} + \underset{M_{814}}{\underbrace{ \tfrac{1}{\eps} \big\| \partial^m_x \langle \tfrac{1}{2} \eps \mathsf{q} (E \cdot v) \mathbb{P}^\perp G , \phi_1 (v) \rangle_{L^2_v} \big\|_{L^2_x} }} 
	  \end{aligned}
	\end{equation}
	for $|m| \leq s - 1$. Since $\phi_1 (v)$ contains the exponential decay factor $\exp \big( - \tfrac{|v|^2}{4} \big)$, we know that $p(v) \phi_1 (v) \in L^2_v$ for any polynomial $p(v)$. As a result, we derive from the H\"older inequality and Lemma \ref{Lmm-nu-norm} (1) that the term $M_{811}$ is bounded by
	\begin{equation}\label{M811}
	  \begin{aligned}
	    M_{811} \leq \tfrac{1}{\eps} \| v \otimes \phi_1 (v) \|_{L^2_v} \| \nabla_x \partial^m_x \mathbb{P}^\perp G \|_{L^2_{x,v}} \leq \tfrac{C}{\eps} \| \mathbb{P}^\perp G \|_{H^s_x L^2_v} \leq \tfrac{C}{\eps} \| \mathbb{P}^\perp G \|_{H^s_x L^2_v (\nu)} \,.
	  \end{aligned}
	\end{equation}
	In the term $M_{812}$, since $|m| \leq s - 1$, the relation $|m-m'|+1 = |m| - |m'| + 1 \leq |m| \leq s - 1 $ holds if $m' \neq 0$. Then we compute that
	\begin{equation}\label{M812}
	  \begin{aligned}
	    M_{812} \leq & \sum_{m' \leq m} C_m^{m'} \big\| \langle \partial^{m'}_x E \cdot \nabla_v \partial^{m-m'}_x \mathbb{P}^\perp G , \mathsf{q} \phi_1 (v) \rangle_{L^2_v} \big\|_{L^2_x} \\
	    \leq & C \| E \|_{L^\infty_x} \| \nabla_v \partial^m_x \mathbb{P}^\perp G \|_{L^2_{x,v}} \| \mathsf{q} \phi_1 (v) \|_{L^2_v} \\
	    + & C \sum_{0 \neq m' \leq m} \| \partial^{m'}_x E \|_{L^4_x} \| \nabla_v \partial^{m-m'}_x \mathbb{P}^\perp G \|_{L^4_x L^2_v} \| \mathsf{q} \phi_1 (v) \|_{L^2_v} \\
	    \leq & C \| E \|_{H^2_x} \| \nabla_v \partial^m_x \mathbb{P}^\perp G \|_{L^2_{x,v}} + C \sum_{0 \neq m' \leq m} \| \partial^{m'}_x E \|_{H^1_x} \| \nabla_v \partial^{m-m'}_x \mathbb{P}^\perp G \|_{H^1_x L^2_v} \\
	    \leq & C \| E \|_{H^s_x} \sum_{m' \leq m} \| \nabla_v \partial^{m'}_x \mathbb{P}^\perp G \|_{L^2_{x,v}(\nu)} \\
	    \leq & C \| E \|_{H^s_x} \| \mathbb{P}^\perp G \|_{\widetilde{H}^s_{x,v}(\nu)} \,,
	  \end{aligned}
	\end{equation}
	where we make use of the H\"older inequality, the Sobolev embedding $H^2_x (\T^3) \hookrightarrow L^\infty_x (\T^3)$, $H^1_x (\T^3) \hookrightarrow L^4_x (\T^3)$ and the part (1) of Lemma \ref{Lmm-nu-norm}. Here $s \geq 2$ is required. In the term $M_{813}$, since $|m'| , |m-m'| \leq |m| \leq s - 1$, we derive from the integration by parts over $v \in \T^3$, the Sobolev embedding $H^1_x (\T^3) \hookrightarrow L^4_x (\T^3)$, the H\"older inequality and the part (1) of Lemma \ref{Lmm-nu-norm} that
	\begin{equation}\label{M813}
	  \begin{aligned}
	    M_{813} = & \tfrac{1}{\eps} \big\| - \partial^m_x \langle \mathsf{q} ( v \times B ) \otimes \mathbb{P}^\perp G , \nabla_v \phi_1 (v) \rangle_{L^2_v}  \big\|_{L^2_x} \\
	    \leq & \tfrac{1}{\eps} \sum_{m' \leq m} C_m^{m'} \big\| \langle \partial^{m-m'}_x B \otimes \partial^{m'}_x \mathbb{P}^\perp G , \mathsf{q} (v \times \nabla_v) \phi_1 (v) \rangle_{L^2_v} \big\|_{L^2_x} \\
	    \leq & \tfrac{C}{\eps} \sum_{m' \leq m} \| \partial^{m-m'}_x B \|_{L^4_x} \| \partial^{m'}_x \mathbb{P}^\perp G \|_{L^4_x L^2_v} \| \mathsf{q} ( v \times \nabla_v ) \phi_1 (v) \|_{L^2_v} \\
	    \leq & \tfrac{C}{\eps} \sum_{m' \leq m} \| \partial^{m-m'}_x B \|_{H^1_x} \| \partial^{m'}_x \mathbb{P}^\perp G \|_{H^1_x L^2_v} \\
	    \leq & \tfrac{C}{\eps} \| B \|_{H^s_x} \| \mathbb{P}^\perp G \|_{H^s_x L^2_v} \leq \tfrac{C}{\eps} \| B \|_{H^s_x} \| \mathbb{P}^\perp G \|_{H^s_x L^2_v(\nu)} \,.
	  \end{aligned}
	\end{equation}
	The term $M_{814}$ in \eqref{M81-0} can be estimated by
	\begin{equation}\label{M814}
	  \begin{aligned}
	    M_{814} \leq &  \tfrac{1}{2} \sum_{m' \leq m} C_m^{m'} \big\| \langle \mathsf{q} ( \partial^{m'}_x E \cdot v ) \partial^{m-m'}_x \mathbb{P}^\perp G , \phi_1 (v) \rangle_{L^2_v} \big\|_{L^2_x} \\
	    \leq & \tfrac{1}{2} \sum_{m' \leq m} C_m^{m'} \| \partial^{m'}_x E \|_{L^4_x} \| \partial^{m-m'}_x \mathbb{P}^\perp G \|_{L^4_x L^2_v} \| v \otimes \phi_1 (v) \|_{L^2_v} \\
	    \leq & C \sum_{m' \leq m} \| \partial^{m'}_x E \|_{H^1_x} \| \partial^{m-m'}_x E \|_{H^1_x} \| \partial^{m-m'}_x \mathbb{P}^\perp G \|_{H^1_x L^2_v} \\
	    \leq & C \| E \|_{H^s_x} \| \mathbb{P}^\perp G \|_{H^s_x L^2_v} \leq C \| E \|_{H^s_x} \| \mathbb{P}^\perp G \|_{H^s_x L^2_v(\nu)} \,,
	  \end{aligned}
	\end{equation}
	where the H\"older inequality, the Sobolev embedding $H^2_x (\T^3) \hookrightarrow L^\infty_x (\T^3)$ and Lemma \ref{Lmm-nu-norm} (1) are utilized. Plugging the bounds \eqref{M811}, \eqref{M812}, \eqref{M813} and \eqref{M814} into \eqref{M81-0} reduces to
	\begin{equation}\label{M81}
	  \begin{aligned}
	    M_{81} \leq \tfrac{C}{\eps} \| \mathbb{P}^\perp G \|_{H^s_x L^2_v(\nu)} + \tfrac{C}{\eps} \big( \eps \| E \|_{H^s_x} + \| B \|_{H^s_x} \big) \big( \| \mathbb{P}^\perp G \|_{H^s_x L^2_v(\nu)} + \| \mathbb{P}^\perp G \|_{\widetilde{H}^s_{x,v}(\nu)} \big) \,.
	  \end{aligned}
	\end{equation}
	Next we estimate the term $M_{82}$ in \eqref{M8-3}. Recalling the expression of $\Psi (\mathbb{P} G)$ defined in \eqref{Psi-P-G}, i.e.,
	\begin{equation*}
	  \begin{aligned}
	    \Psi (\mathbb{P} G) = - \mathsf{q} ( \eps E + v \times B ) \cdot \nabla_v \mathbb{P} G + \tfrac{1}{2} \eps \mathsf{q} ( E \cdot v ) \mathbb{P} G \,,
	  \end{aligned}
	\end{equation*}
	one can derive bound of the $M_{82}$ that
	\begin{equation}\label{M82-0}
	  \begin{aligned}
	    M_{82} \leq & \underset{M_{821}}{\underbrace{ \tfrac{1}{\eps} \big\| \partial^m_x \langle \eps E \cdot \nabla_v \mathbb{P} G , \mathsf{q} \phi_1 (v) \rangle_{L^2_v} \big\|_{L^2_x} }} + \underset{M_{822}}{\underbrace{ \tfrac{1}{\eps} \big\| \partial^m_x \langle (v \times B) \cdot \nabla_v \mathbb{P} G , \mathsf{q} \phi_1 (v) \rangle_{L^2_v} \big\|_{L^2_x} }} \\
	    & + \underset{M_{823}}{\underbrace{ \tfrac{1}{\eps} \big\| \partial^m_x \langle \tfrac{1}{2} \eps ( E \cdot v ) \mathbb{P} G , \mathsf{q} \phi_1 (v) \rangle_{L^2_v} \big\|_{L^2_x} }} \,.
	  \end{aligned}
	\end{equation}
	For the term $M_{821}$ in \eqref{M82-0}, we have
	\begin{equation}\label{M821}
	  \begin{aligned}
	    M_{821} \leq & \sum_{m' \leq m} C_m^{m'} \big\| \langle \partial^{m'}_x E \cdot \nabla_v \mathsf{q} \phi_1 (v) , \partial^{m-m'}_x \mathbb{P} G \rangle_{L^2_v} \big\|_{L^2_x} \\
	    \leq & C \sum_{m' \leq m} \| \partial^{m'}_x E \|_{L^3_x} \| \partial^{m-m'}_x \mathbb{P} G \|_{L^6_x L^2_v} \| \nabla_v \mathsf{q} \phi_1 (v) \|_{L^2_v} \\
	    \leq & C \sum_{m' \leq m} \| \partial^{m'}_x E \|_{H^1_x} \| \nabla_x \partial^{m-m'}_x \mathbb{P} G \|_{L^2_{x,v}} \leq C \| E \|_{H^s_x} \| \nabla_x \mathbb{P} G \|_{H^{s-1}_x L^2_v} \,,
	  \end{aligned}
	\end{equation}
	where we make use of the integration by parts over $v \in \R^3$, the H\"older inequality, the Sobolev embedding $H^1_x (\T^3) \hookrightarrow L^3_x (\T^3)$ and the Sobolev interpolation inequality $\| f \|_{L^6_x} \leq C \| \nabla_x f \|_{L^2_x}$. For the terms $M_{822}$ and $M_{823}$ in \eqref{M82-0}, we similarly estimate that
	\begin{equation}\label{M822}
	  \begin{aligned}
	    M_{822} \leq & \tfrac{1}{\eps} \sum_{m' \leq m} C_m^{m'} \big\| \langle \partial^{m'}_x B \otimes \partial^{m-m'}_x \mathbb{P} G , v \times \nabla_v (\mathsf{q} \phi_1 (v)) \rangle_{L^2_v} \big\|_{L^2_x} \\
	    \leq & \tfrac{C}{\eps} \sum_{m' \leq m} \| \partial^{m'}_x B \|_{L^3_x} \| \partial^{m-m'}_x \mathbb{P} G \|_{L^6_x L^2_v} \| v \times \nabla_v (\mathsf{q} \phi_1 (v)) \|_{L^2_v} \\
	    \leq & \tfrac{C}{\eps}  \sum_{m' \leq m} \| \partial^{m'}_x B \|_{H^1_x} \| \nabla_x \partial^{m-m'}_x \mathbb{P} G \|_{L^2_{x,v}} \\
	    \leq & \tfrac{C}{\eps} \| B \|_{H^s_x} \| \nabla_x \mathbb{P} G \|_{H^{s-1}_x L^2_v}
	  \end{aligned}
	\end{equation}
	and
	\begin{equation}\label{M823}
	  \begin{aligned}
	    M_{823} \leq & \tfrac{1}{2} \sum_{m' \leq m} C_m^{m'} \big\| \langle \partial^{m-m'}_x E \otimes \partial^{m'}_x \mathbb{P} G , v \otimes \mathsf{q} \phi_1 (v) \rangle_{L^2_v} \big\|_{L^2_x} \\
	    \leq & C \sum_{m' \leq m} \| \partial^{m-m'}_x E \|_{L^3_x} \| \partial^{m'}_x \mathbb{P} G \|_{L^6_x L^2_v} \| v \otimes \mathsf{q} \phi_1 (v) \|_{L^6_v} \\
	    \leq & C \sum_{m' \leq m} \| \partial^{m-m'}_x E \|_{H^1_x} \| \nabla_x \partial^{m'}_x \mathbb{P} G \|_{L^2_{x,v}} \leq C \| E \|_{H^s_x} \| \nabla_x \mathbb{P} G \|_{H^{s-1}_x L^2_v} \,.
	  \end{aligned}
	\end{equation}
	Substituting the inequality \eqref{M821}, \eqref{M822} and \eqref{M823} into the relation \eqref{M82-0}, we obtain
	\begin{equation}\label{M82}
	  \begin{aligned}
	    M_{82} \leq \tfrac{C}{\eps} ( \eps \| E \|_{H^s_x} + \| B \|_{H^s_x} ) \| \nabla_x \mathbb{P} G \|_{H^{s-1}_x L^2_v} \,.
	  \end{aligned}
	\end{equation}
	We thereby deduce from plugging the bounds \eqref{M81} and \eqref{M82} into the relation \eqref{M8-3} that
	\begin{equation}\label{M8-4}
	  \begin{aligned}
	    \| & \partial_t \partial^m_x \rho^+ \|_{L^2_x} \leq  \tfrac{C}{\eps} \big( \| \nabla_x \mathbb{P} G \|_{H^{s-1}_x L^2_v} + \| \mathbb{P}^\perp G \|_{H^s_x L^2_v (\nu)} \big) \\
	    & + \tfrac{C}{\eps} ( \eps \| E \|_{H^s_x} + \| B \|_{H^s_x} ) \big( \| \nabla_x \mathbb{P} G \|_{H^{s-1}_x L^2_v} + \| \mathbb{P}^\perp G \|_{H^s_x L^2_v (\nu)} + \| \mathbb{P}^\perp G \|_{\widetilde{H}^s_{x,v}(\nu)} \big) \,.
	  \end{aligned}
	\end{equation}
	Furthermore, via the analogous argument in estimating the norm $\| \partial_t \partial^m_x \rho^+ \|_{L^2_x}$ in \eqref{M8-4}, one can easily yield that
	\begin{equation}\label{M8-5}
	  \begin{aligned}
	    & \| \partial_t \partial^m_x \rho^- \|_{L^2_x} + \| \partial_t \partial^m_x u \|_{L^2_x} + \| \partial_t \partial^m_x \theta \|_{L^2_x} \\
	    \leq & \tfrac{C}{\eps} \big( \| \nabla_x \mathbb{P} G \|_{H^{s-1}_x L^2_v} + \| \mathbb{P}^\perp G \|_{H^s_x L^2_v (\nu)} \big) \\
	    & + \tfrac{C}{\eps} ( \eps \| E \|_{H^s_x} + \| B \|_{H^s_x} ) \big( \| \nabla_x \mathbb{P} G \|_{H^{s-1}_x L^2_v} + \| \mathbb{P}^\perp G \|_{H^s_x L^2_v (\nu)} + \| \mathbb{P}^\perp G \|_{\widetilde{H}^s_{x,v}(\nu)} \big) \,.
	  \end{aligned}
	\end{equation}
	We then plug the bounds \eqref{M8-4} and \eqref{M8-5} into the relation \eqref{M8-2} and obtain
	\begin{equation}\label{M8}
	  \begin{aligned}
	    M_8 \leq & \tfrac{C}{\eps} \big( \| \nabla_x \mathbb{P} G \|_{H^{s-1}_x L^2_v} + \| \mathbb{P}^\perp G \|_{H^s_x L^2_v (\nu)} \big) \| \partial^m_x \mathbb{P}^\perp G \|_{L^2_{x,v}} \\
	    & + \tfrac{C}{\eps} ( \eps \| E \|_{H^s_x} + \| B \|_{H^s_x} ) \| \partial^m_x \mathbb{P}^\perp G \|_{L^2_{x,v}} \\
	    & \qquad \times \big( \| \nabla_x \mathbb{P} G \|_{H^{s-1}_x L^2_v} + \| \mathbb{P}^\perp G \|_{H^s_x L^2_v (\nu)} + \| \mathbb{P}^\perp G \|_{\widetilde{H}^s_{x,v}(\nu)} \big) \\
	    \leq & C_\eta  \big( \| \nabla_x \mathbb{P} G \|^2_{H^{s-1}_x L^2_v} + \| \mathbb{P}^\perp G \|^2_{H^s_x L^2_v (\nu)} \big) + \tfrac{\eta}{\eps^2} \| \partial^m_x \mathbb{P}^\perp G \|^2_{L^2_{x,v}(\nu)} \\
	    & + \tfrac{C}{\eps} ( \eps \| E \|_{H^s_x} + \| B \|_{H^s_x} ) \| \partial^m_x \mathbb{P}^\perp G \|_{L^2_{x,v}(\nu)} \\
	    & \qquad \times \big( \| \nabla_x \mathbb{P} G \|_{H^{s-1}_x L^2_v} + \| \mathbb{P}^\perp G \|_{H^s_x L^2_v (\nu)} + \| \mathbb{P}^\perp G \|_{\widetilde{H}^s_{x,v}(\nu)} \big) 
	  \end{aligned}
	\end{equation}
	for small $\eta > 0$ to be determined, where the Young's inequality and Lemma \ref{Lmm-nu-norm} are utilized.
	
	Next we estimate the term $M_9$ in \eqref{M}. We first decompose it as three parts and then estimate them term by term. More precisely,
	\begin{equation}\label{M9-1}
	  \begin{aligned}
	    M_9 = & \underset{M_{91}}{\underbrace{ - \tfrac{1}{\eps} \l v \cdot \nabla_x \partial^m_\alpha \mathbb{P} G , \partial^m_\alpha \mathbb{P}^\perp G \r_{L^2_{x,v}} }} \ \underset{M_{92}}{\underbrace{ - \l \mathsf{q} \partial^m_\alpha ( E \cdot \nabla_v \mathbb{P} G ) , \partial^m_\alpha \mathbb{P}^\perp G \r_{L^2_{x,v}} }} \\
	    & \underset{M_{93}}{\underbrace{ - \tfrac{1}{\eps} \l \mathsf{q} \partial^m_\alpha \big( (v\times B) \cdot \nabla_v \mathbb{P} G \big) , \partial^m_\alpha \mathbb{P}^\perp G \r_{L^2_{x,v}} }} \,.
	  \end{aligned}
	\end{equation}
	Since $|m| \leq s - |\alpha| \leq s - 1$, it is easily derived from the H\"older inequality, the bound \eqref{G-Macro}, the Young's inequality and Lemma \ref{Lmm-nu-norm} (1) that
	\begin{equation}\label{M91}
	  \begin{aligned}
	    M_{91} \leq & \tfrac{1}{\eps} \| v \cdot \nabla_x \partial^m_\alpha \mathbb{P} G \|_{L^2_{x,v}} \| \partial^m_\alpha \mathbb{P}^\perp G \|_{L^2_{x,v}} \\
	    \leq & \tfrac{C}{\eps} \| \nabla_x \partial^m_x \mathbb{P} G \|_{L^2_{x,v}} \| \partial^m_\alpha \mathbb{P}^\perp G \|_{L^2_{x,v}} \\
	    \leq & \tfrac{C}{\eps} \| \nabla_x \mathbb{P} G \|_{H^{s-1}_x L^2_v} \| \partial^m_\alpha \mathbb{P}^\perp G \|_{L^2_{x,v}(\nu)} \\
	    \leq & C_\gamma \| \nabla_x \mathbb{P} G \|^2_{H^{s-1}_x L^2_v} + \tfrac{\gamma}{\eps^2} \| \partial^m_\alpha \mathbb{P}^\perp G \|^2_{L^2_{x,v}(\nu)}
	  \end{aligned}
	\end{equation}
	for small $\gamma > 0$ to be determined. For the term $M_{92}$ in \eqref{M9-1}, we employ the H\"older inequality, the Sobolev embedding $H^1_x (\T^3) \hookrightarrow L^3_x (\T^3)$, the Sobolev interpolation inequality $\| f \|_{L^6_x} \leq C \| \nabla_x f \|_{L^2_x}$, the bound \eqref{G-Macro} and the part (1) of Lemma \ref{Lmm-nu-norm} to estimate
	\begin{equation}\label{M92}
	  \begin{aligned}
	    M_{92} = & - \sum_{m' \leq m} C_m^{m'} \l \mathsf{q} ( \partial^{m'}_x E \cdot \nabla_v \partial^{m-m'}_\alpha \mathbb{P} G ) , \partial^m_\alpha \mathbb{P}^\perp G \r_{L^2_{x,v}} \\
	    \leq & C \sum_{m' \leq m} \| \partial^{m'}_x E \|_{L^3_x} \| \nabla_v \partial^{m-m'}_\alpha \mathbb{P} G \|_{L^6_x L^2_v} \| \partial^m_\alpha \mathbb{P}^\perp G \|_{L^2_{x,v}} \\
	    \leq & C \sum_{m' \leq m} \| \partial^{m'}_x E \|_{H^1_x} \| \nabla_x \nabla_v \partial^{m-m'}_\alpha \mathbb{P} G \|_{L^2_{x,v}} \| \partial^m_\alpha \mathbb{P}^\perp G \|_{L^2_{x,v}} \\
	    \leq & C \| E \|_{H^s_x} \sum_{m' \leq m} \| \nabla_x \partial^{m-m'}_x \mathbb{P} G \|_{L^2_{x,v}} \| \partial^m_\alpha \mathbb{P}^\perp G \|_{L^2_{x,v}(\nu)} \\
	    \leq & C \| E \|_{H^s_x} \| \nabla_x \mathbb{P} G \|_{H^{s-1}_x L^2_v} \| \partial^m_\alpha \mathbb{P}^\perp G \|_{L^2_{x,v}(\nu)} \,.
	  \end{aligned}
	\end{equation}
	Similarly in \eqref{M92}, one can calculate that
	\begin{equation}\label{M93}
	  \begin{aligned}
	    M_{93} = & \tfrac{1}{\eps} \sum_{m' \leq m} C_m^{m'} \l \mathsf{q} \partial^{m-m'}_x B \cdot \partial^{m'}_\alpha ( v \times \nabla_v \mathbb{P} G ) , \partial^m_\alpha \mathbb{P}^\perp G \r_{L^2_{x,v}} \\
	    \leq & \tfrac{C}{\eps} \sum_{m' \leq m} \| \partial^{m-m'}_x B \|_{L^3_x} \| \partial^{m'}_\alpha ( v \times \nabla_v \mathbb{P} G ) \|_{L^6_x L^2_v} \| \partial^m_\alpha \mathbb{P}^\perp G \|_{L^2_{x,v}} \\
	    \leq & \tfrac{C}{\eps} \sum_{m' \leq m} \| \partial^{m-m'}_x B \|_{H^1_x} \| \nabla_x \partial^{m'}_\alpha ( v \times \nabla_v \mathbb{P} G ) \|_{L^2_{x,v}} \| \partial^m_\alpha \mathbb{P}^\perp G \|_{L^2_{x,v}} \\
	    \leq & \tfrac{C}{\eps} \sum_{m' \leq m} \| \partial^{m-m'}_x B \|_{H^1_x} \| \nabla_x \partial^{m'}_x \mathbb{P} G \|_{L^2_{x,v}} \| \partial^m_\alpha \mathbb{P}^\perp G \|_{L^2_{x,v}(\nu)} \\
	    \leq & \tfrac{C}{\eps} \| B \|_{H^s_x} \| \nabla_x \mathbb{P} G \|_{H^{s-1}_x L^2_v} \| \partial^m_\alpha \mathbb{P}^\perp G \|_{L^2_{x,v}(\nu)} \,.
	  \end{aligned}
	\end{equation}
	Substituting the bounds \eqref{M91}, \eqref{M92} and \eqref{M93} into the equality \eqref{M9-1}, we obtain that
	\begin{equation}\label{M9}
	  \begin{aligned}
	    M_9 \leq & \tfrac{\gamma}{\eps^2} \| \partial^m_\alpha \mathbb{P}^\perp G \|^2_{L^2_{x,v}(\nu)} + C_\gamma \| \nabla_x \mathbb{P} G \|^2_{H^{s-1}_x L^2_v} \\
	    + & \tfrac{C}{\eps} \big( \eps \| E \|_{H^s_x} + \| B \|_{H^s_x} \big) \| \nabla_x \mathbb{P} G \|_{H^{s-1}_x L^2_v} \| \partial^m_\alpha \mathbb{P}^\perp G \|_{L^2_{x,v}(\nu)}
	  \end{aligned}
	\end{equation}
	holds for all $|m| + |\alpha| \leq s$ and $\alpha \neq 0$, where $\gamma > 0$ is small to be determined.
	
	Finally, we plug the bounds \eqref{M1}, \eqref{M2}, \eqref{M3}, \eqref{M4}, \eqref{M5}, \eqref{M6}, \eqref{M7}, \eqref{M8} and \eqref{M9} into the equality \eqref{M} and then obtain
	\begin{equation}\label{Mix-1}
	  \begin{aligned}
	    & \tfrac{1}{2} \tfrac{\d}{\d t} \| \partial^m_\alpha \mathbb{P}^\perp G \|^2_{L^2_{x,v}} + \tfrac{\lambda_0 - 4 \gamma}{\eps^2} \| \partial^m_\alpha \mathbb{P}^\perp G \|^2_{L^2_{x,v}(\nu)} \\
	    \leq & \tfrac{\lambda_1}{\eps^2} \sum_{\alpha' < \alpha} \| \partial^m_{\alpha'} \mathbb{P}^\perp G \|^2_{L^2_{x,v}(\nu)} + C_\gamma \sum_{\substack{ |m'| + |\alpha'| \leq s \\ \alpha' < \alpha }} \| \partial^{m'}_{\alpha'} \mathbb{P}^\perp G \|^2_{L^2_{x,v}(\nu)} \\
	    + & C_\gamma \big( \| E \|^2_{H^{s-1}_x} + \| \mathbb{P}^\perp G \|^2_{H^s_x L^2_v (\nu)} + 2 \| \nabla_x \mathbb{P} G \|^2_{H^{s-1}_x L^2_v} \big) \\
	    + & \tfrac{C}{\eps} ( 1 + \| E \|_{H^s_x} + \| B \|_{H^s_x} ) ( \| \mathbb{P} G \|_{H^s_x L^2_v} + \| \mathbb{P}^\perp G \|_{H^s_{x,v}} ) \\
	    & \qquad \times ( \| E \|_{H^{s-1}_x} + \| \nabla_x B \|_{H^{s-2}_x} ) \| \partial^m_\alpha \mathbb{P}^\perp G \|_{L^2_{x,v}(\nu)} \\
	    + & \tfrac{C}{\eps} \big( \eps \| E \|_{H^s_x} + \| B \|_{H^s_x} + \| \mathbb{P} G \|_{H^s_x L^2_v} + \| \mathbb{P}^\perp G \|_{H^s_{x,v}} \big) \\
	    &  \times \big( \| \nabla_x \mathbb{P} G \|_{H^{s-1}_x L^2_v} + \| \mathbb{P}^\perp G \|_{H^s_x L^2_v (\nu)} + \| \mathbb{P}^\perp G \|_{\widetilde{H}^s_{x,v}(\nu)} \big) \| \partial^m_\alpha \mathbb{P}^\perp G \|_{L^2_{x,v}(\nu)}
	  \end{aligned}
	\end{equation}
	for all $0 < \eps \leq 1$, $|m|+|\alpha| \leq s$ with $\alpha \neq 0$ and for small $\gamma > 0 $ to be determined. We take $\gamma = \tfrac{1}{16} \lambda_0 > 0$ and then we derive from the previous bound \eqref{Mix-1} and the Young's inequality that
	\begin{equation}\label{Mix-2}
	  \begin{aligned}
	    & \tfrac{1}{2} \tfrac{\d}{\d t} \| \partial^m_\alpha \mathbb{P}^\perp G \|^2_{L^2_{x,v}} + \tfrac{\lambda_0}{2 \eps^2} \| \partial^m_\alpha \mathbb{P}^\perp G \|^2_{L^2_{x,v}(\nu)} \\
	    \leq & \tfrac{C}{\eps^2} \sum_{\substack{ |m'| + |\alpha'| \leq s \\ \alpha' < \alpha }} \| \partial^{m'}_{\alpha'} \mathbb{P}^\perp G \|^2_{L^2_{x,v}(\nu)} + C \big( \| E \|^2_{H^{s-1}_x} + \| \mathbb{P}^\perp G \|^2_{H^s_x L^2_v(\nu)} + \| \nabla_x \mathbb{P} G \|^2_{H^{s-1}_x L^2_v} \big) \\
	    + & C ( 1 + \| E \|^2_{H^s_x} + \| B \|^2_{H^s_x} ) ( \| \mathbb{P} G \|^2_{H^s_x L^2_v} + \| \mathbb{P}^\perp G \|^2_{H^s_{x,v}} ) ( \| E \|^2_{H^{s-1}_x} + \| \nabla_x B \|^2_{H^{s-2}_x} )  \\
	    + & C \big( \eps^2 \| E \|^2_{H^s_x} + \| B \|^2_{H^s_x} + \| \mathbb{P} G \|^2_{H^s_x L^2_v} + \| \mathbb{P}^\perp G \|^2_{H^s_{x,v}} \big) \\
	    &  \times \big( \| \nabla_x \mathbb{P} G \|^2_{H^{s-1}_x L^2_v} + \| \mathbb{P}^\perp G \|^2_{H^s_x L^2_v (\nu)} + \| \mathbb{P}^\perp G \|^2_{\widetilde{H}^s_{x,v}(\nu)} \big) 
	  \end{aligned}
	\end{equation}
	for all $0 < \eps \leq 1$ and $|m|+|\alpha| \leq s$ with $\alpha \neq 0$. 
	
	Recalling the definitions of the energy functionals $\mathscr{E}_\eta (G,E,B)$ and $\mathscr{D}_\eta (G, E , B)$ in \eqref{E-D-eta-Energies}, one can obtain that the following inequalities
	\begin{equation}\label{Mix-3}
	  \begin{aligned}
	    & ( 1 + \| E \|^2_{H^s_x} + \| B \|^2_{H^s_x} ) ( \| \mathbb{P} G \|^2_{H^s_x L^2_v} + \| \mathbb{P}^\perp G \|^2_{H^s_{x,v}} ) \\
	    & \qquad \qquad \qquad \qquad \qquad \qquad  \leq C ( 1 + \mathscr{E}_\eta (G,E,B) ) ( \mathscr{E}_\eta (G,E,B) + \| \mathbb{P}^\perp G \|^2_{\widetilde{H}^s_{x,v}} ) \,, \\
	    & \eps^2 \| E \|^2_{H^s_x} + \| B \|^2_{H^s_x} + \| \mathbb{P} G \|^2_{H^s_x L^2_v} + \| \mathbb{P}^\perp G \|^2_{H^s_{x,v}} \leq C ( \mathscr{E}_\eta (G,E,B) + \| \mathbb{P}^\perp G \|^2_{\widetilde{H}^s_{x,v}} ) \,, \\
	    & \| E \|^2_{H^{s-1}_x} + \| \nabla_x B \|^2_{H^{s-2}_x} + \| \nabla_x \mathbb{P} G \|^2_{H^{s-2}_x L^2_v} + \| \mathbb{P}^\perp G \|^2_{H^s_x L^2_v (\nu)} \leq C ( 1 + \tfrac{1}{\eta} ) \mathscr{D}_\eta (G,E,B)
	  \end{aligned}
	\end{equation}
	hold for all $0 < \eta \leq \eta_0$ and $0 < \eps \leq 1$. Then the relations \eqref{Mix-2} and \eqref{Mix-3} reduce to
	\begin{equation}\label{Mix-4}
	  \begin{aligned}
	    & \tfrac{1}{2} \tfrac{\d}{\d t} \| \partial^m_\alpha \mathbb{P}^\perp G \|^2_{L^2_{x,v}} + \tfrac{\lambda_0}{2 \eps^2} \| \partial^m_\alpha \mathbb{P}^\perp G \|^2_{L^2_{x,v}(\nu)} \\
	    \leq & \tfrac{C}{\eps^2} \sum_{\substack{ |m'| + |\alpha'| \leq s \\ \alpha' < \alpha }} \| \partial^{m'}_{\alpha'} \mathbb{P}^\perp G \|^2_{L^2_{x,v}(\nu)} + C ( 1 + \tfrac{1}{\eta} ) \mathscr{D}_\eta (G,E,B) \\
	    + & C ( 1 + \tfrac{1}{\eta} ) ( 1 + \mathscr{E}_\eta (G,E,B) ) ( \mathscr{E}_\eta (G,E,B) + \| \mathbb{P}^\perp G \|^2_{\widetilde{H}^s_{x,v}} ) \mathscr{D}_\eta (G,E,B)
	  \end{aligned}
	\end{equation}
	for all $|m|+|\alpha| \leq s$ with $\alpha \neq 0$ and for any $0 < \eta \leq \eta_0$, $0 < \eps \leq 1$.
	
	Noticing that the energy functional $\mathscr{D}_w (G)$ defined in \eqref{E-D-eta-Energies} can be dominated by
	\begin{equation}\label{Bnd-Dw-G}
	  \begin{aligned}
	    \mathscr{D}_w (G) \leq \| \mathbb{P}^\perp G \|^2_{\widetilde{H}^s_{x,v}(\nu)} \,,
	  \end{aligned}
	\end{equation}
	we derive from multiplying the inequality \eqref{Mix-4} by $\tfrac{\eta}{2 C ( 1 + \eta
		 )}$ and adding it to the inequality \eqref{GEB-decay} that
	\begin{equation}\label{Mix-5}
	  \begin{aligned}
	    & \tfrac{\d}{\d t} \Big\{ \tfrac{1}{2} \mathscr{E}_\eta (G,E,B) + \eps \eta \mathscr{A}_s (G,E,B) + \tfrac{\eta}{4 C ( 1 + \eta )} \| \partial^m_\alpha \mathbb{P}^\perp G \|^2_{L^2_{x,v}} \Big\} \\
	    & + \tfrac{\eta}{4 C ( 1 + \eta )} \tfrac{1}{\eps^2} \| \partial^m_\alpha \mathbb{P}^\perp G \|^2_{L^2_{x,v}(\nu)} + \tfrac{1}{2} \mathscr{D}_\eta (G,E,B) \\
	    \leq & \tfrac{\eta}{2 ( 1 + \eta )} \tfrac{1}{\eps^2} \sum_{\substack{ |m'| + |\alpha'| \leq s \\ \alpha' < \alpha }} \| \partial^{m'}_{\alpha'} \mathbb{P}^\perp G \|^2_{L^2_{x,v}(\nu)} + C \mathscr{E}_\eta^\frac{1}{2} (G,E,B) \| \mathbb{P}^\perp G \|^2_{\widetilde{H}^s_{x,v}(\nu)} \\
	    & + \tfrac{1}{2} \big( 1 + \mathscr{E}_\eta (G,E,B) \big) \big( \mathscr{E}_\eta (G,E,B) + \| \mathbb{P}^\perp G \|^2_{\widetilde{H}^s_{x,v}(\nu)} \big) \mathscr{D}_\eta (G,E,B)  \\
	    & + C ( 1 + \tfrac{1}{\eta} ) \big[ \mathscr{E}_\eta^\frac{1}{2} (G,E,B) + \mathscr{E}_\eta^2 (G,E,B) \big] \mathscr{D}_\eta (G,E,B)
	  \end{aligned}
	\end{equation}
	holds for all $|m|+|\alpha| \leq s$ with $\alpha \neq 0$ and for any $0 < \eta \leq \eta_0$, $0 < \eps \leq 1$. 
	
	We observe that the previous energy inequality \eqref{Mix-5} is not closed, since the first term $ \tfrac{\eta}{2 ( 1 + \eta )} \tfrac{1}{\eps^2} \sum_{\substack{ |m'| + |\alpha'| \leq s \\ \alpha' < \alpha }} \| \partial^{m'}_{\alpha'} \mathbb{P}^\perp G \|^2_{L^2_{x,v}(\nu)} $ in the right-hand side of \eqref{Mix-5} is uncontrolled. One notices that the highest oder of $v$-derivatives in that term is less that $|\alpha|=k$, which inspires us that we can employ an induction over the $|\alpha|=k$, the order of $v$-derivative, to prove the energy bound \eqref{Mix-Derivatives}.
	
	For $k=0$, the energy bound \eqref{GEB-decay} in Proposition \ref{Prop-Spatial-Summary} and the inequality \eqref{Bnd-Dw-G} imply that \eqref{Mix-Derivatives} holds. Now we assume the lemma is valid for $k$. For $|\alpha| = k + 1$, summing up for $|m| + |\alpha| \leq s$ with $|\alpha| = k + 1$ in the inequality \eqref{Mix-5}, we obtain
	\begin{equation}\label{Mix-6}
	  \begin{aligned}
	    & \tfrac{\d}{\d t} \bigg\{ \tfrac{1}{2} N_{k+1} \mathscr{E}_\eta (G,E,B) + N_{k+1} \eps \eta \mathscr{A}_s (G,E,B) \\
	    & \qquad + \sum_{|m|+|\alpha| \leq s , |\alpha|=k+1} \tfrac{\eta}{4 C ( 1 + \eta )} \| \partial^m_\alpha \mathbb{P}^\perp G \|^2_{L^2_{x,v}} \bigg\} \\
	    & + \tfrac{\eta}{2 (1 + \eta)} \tfrac{1}{\eps^2} \sum_{|m|+|\alpha| \leq s , |\alpha|=k+1} \| \partial^m_\alpha \mathbb{P}^\perp G \|^2_{L^2_{x,v}(\nu)} + \tfrac{1}{2} N_{k+1} \mathscr{D}_\eta (G,E,B) \\
	    \leq & \tfrac{N_{k+1} \eta}{2 (1 + \eta)} \tfrac{1}{\eps^2} \sum_{|m|+|\alpha| \leq s , |\alpha| \leq k} \| \partial^m_\alpha \mathbb{P}^\perp G \|^2_{L^2_{x,v}(\nu)} + C N_{k+1} \Big\{ \mathscr{E}_\eta^\frac{1}{2} (G,E,B) \| \mathbb{P}^\perp G \|^2_{\widetilde{H}^s_{x,v}(\nu)} \\
	    & + \big( 1 + \mathscr{E}_\eta (G,E,B) \big) \big( \mathscr{E}_\eta (G,E,B) + \| \mathbb{P}^\perp G \|^2_{\widetilde{H}^s_{x,v}(\nu)} \big) \\
	    & + ( 1 + \tfrac{1}{\eta} ) \big[ \mathscr{E}_\eta^\frac{1}{2} (G,E,B) + \mathscr{E}_\eta^2 (G,E,B) \big] \Big\} \mathscr{D}_\eta (G,E,B) \,.
	  \end{aligned}
	\end{equation}
	Here $N_{k+1} > 0$ denotes the number of all possible $(m,\alpha)$ such that $|m| + |\alpha| \leq s$, $|\alpha| = k + 1$. By the assumption of the induction, \eqref{Mix-Derivatives} is valid for the case $k$. In order to absorb the first term on the right-hand side in \eqref{Mix-6} by the last second term on the left-hand side of \eqref{Mix-Derivatives}, we multiply \eqref{Mix-6} by $\tfrac{\delta_k}{N_{k+1}}$ and add it to \eqref{Mix-Derivatives}. We then get
	\begin{equation}\label{Mix-7}
	  \begin{aligned}
	    \tfrac{\d}{\d t} & \bigg\{ (\varrho_k + \tfrac{\delta_k}{2}) \mathscr{E}_\eta (G,E,B) + \sum_{|m|+|\alpha| \leq s , |\alpha| \leq k} \tfrac{C_{|\alpha|} \eta}{1 + \eta} \| \partial^m_\alpha \mathbb{P}^\perp G \|^2_{L^2_{x,v}} \\
	    & + (\varrho_k^* + \delta_k) \eps \eta \mathscr{A}_s (G,E,B) + \sum_{|m| + |\alpha| \leq s , |\alpha| = k + 1} \tfrac{\delta_k}{4 C N_{k+1}} \tfrac{\eta}{1 + \eta} \| \partial^m_\alpha \mathbb{P}^\perp G \|^2_{L^2_{x,v}} \bigg\} \\
	    & + ( \delta_k^* + \tfrac{\delta_k}{2} ) \mathscr{D}_\eta (G,E,B) + \tfrac{\delta_k \eta}{2 (1 + \eta)} \tfrac{1}{\eps^2} \sum_{|m|+|\alpha| \leq s, |\alpha| \leq k} \| \partial^m_\alpha \mathbb{P}^\perp G \|^2_{L^2_{x,v}(\nu)} \\
	    & + \tfrac{\delta_k}{N_{k+1}} \tfrac{\eta}{2 (1 + \eta)} \tfrac{1}{\eps^2} \sum_{|m| + |\alpha| \leq s , |\alpha| = k + 1} \| \partial^m_\alpha \mathbb{P}^\perp G \|^2_{L^2_{x,v}(\nu)} \\
	    \leq & ( C_k^* + C \delta_k ) \Big\{ \mathscr{E}_\eta^\frac{1}{2} (G,E,B) \| \mathbb{P}^\perp G \|^2_{\widetilde{H}^s_{x,v}(\nu)} + ( 1 + \tfrac{1}{\eta} ) \big[ \mathscr{E}_\eta^\frac{1}{2} (G,E,B) + \mathscr{E}_\eta^2 (G,E,B) \big] \\
	    & + ( 1 + \mathscr{E}_\eta (G,E,B) ) \big( \mathscr{E}_\eta (G,E,B) + \| \mathbb{P}^\perp G \|^2_{\widetilde{H}^s_{x,v}(\nu)} \big) \Big\} \mathscr{D}_\eta (G,E,B) \,.
	  \end{aligned}
	\end{equation}
	We thereby conclude our lemma from the previous inequality \eqref{Mix-7} by letting
	\begin{equation*}
	  \begin{aligned}
	    & C_{k+1} = \tfrac{\delta_k}{4 C N_{k+1}} \,, \qquad \qquad \! C^*_{k+1} = C_k^* + C \delta_k \,, \\
	    & \varrho_{k+1} = \varrho_k + \tfrac{\delta_k}{2} \,, \qquad \qquad \varrho_{k+1}^* = \varrho_k^* + \delta_k \,, \\ 
	    & \delta_{k+1} = \min \big\{ \tfrac{\delta_k}{2} , \tfrac{\delta_k}{2 N_{k+1}} \big\} \,, \ \delta_{k+1}^* = \delta_k^* + \tfrac{\delta_k}{2} \,.
	  \end{aligned}
	\end{equation*}
\end{proof}

Next we derive a closed energy estimate (uniform in $\eps$) of the perturbed VMB system \eqref{VMB-G-drop-eps} from the inequality \eqref{Mix-Derivatives} in Lemma \ref{Lmm-Mix-Energy}. We first analyze the unsigned functional $\mathscr{A}_s (G,E,B)$ defined in \eqref{E-D-eta-Energies}, i.e.,
\begin{equation*}
  \begin{aligned}
    \mathscr{A}_s (G,E,B) = \mathscr{A}_s (G) + \eta_1 \mathscr{A}_s (E,B) \,,
  \end{aligned}
\end{equation*}
where $\mathscr{A}_s (G)$ is defined in \eqref{Quantity-A} and $\mathscr{A}_s (E,B)$ is given in \eqref{Quantity-A-EB}. Via the H\"older inequality, one easily deduces that for $s \geq 3$
\begin{equation}\label{Quantity-A-G-bnd}
  \begin{aligned}
    | \mathscr{A}_s & (G) | \leq C \sum_{|m| \leq s - 1} \sum_{i=1}^3 \bigg\{ \| \partial^m_x u_i \|_{L^2_x} \big( \| \partial_i \partial^m_x \rho^+ \|_{L^2_x} + \| \partial_i \partial^m_x \rho^- \|_{L^2_x} \big) \\
    & + \sum_{j=1}^3 \| \partial^m_x \mathbb{P}^\perp G \|_{L^2_{x,v}} \| \partial_j \partial^m_x u_i \|_{L^2_x} \| \zeta_{ij} (v) \|_{L^2_v} + \| \partial^m_x \mathbb{P}^\perp G \|_{L^2_{x,v}} \| \partial_i \partial^m_x \theta \|_{L^2_x} \| \widetilde{\zeta}_i (v) \|_{L^2_v} \\
    & + \| \partial^m_x \mathbb{P}^\perp G \|_{L^2_{x,v}} ( \| \partial_i \partial^m_x \rho^+ \|_{L^2_x} + \| \partial_i \partial^m_x \rho^- \|_{L^2_x} ) ( \| \zeta_i^+ (v) \|_{L^2_v} + \| \zeta_i^- (v) \|_{L^2_v} ) \bigg\} \\
    & \leq C \sum_{|m| \leq s - 1} \bigg\{ \| \partial^m_x u \|_{L^2_x} ( \| \nabla_x \partial^m_x \rho^+ \|_{L^2_x} + \| \nabla_x \partial^m_x \rho^- \|_{L^2_x} ) \\
    & + \| \partial^m_x \mathbb{P}^\perp G \|_{L^2_{x,v}} \big( \| \nabla_x \partial^m_x \rho^+ \|_{L^2_x} + \| \nabla_x \partial^m_x \rho^- \|_{L^2_x} + \| \nabla_x \partial^m_x u \|_{L^2_x} + \| \nabla_x \partial^m_x \theta \|_{L^2_x}  \big) \bigg\} \\
    & \leq C \sum_{|m| \leq s - 1} \big( \| \partial^m_x \mathbb{P} G \|_{L^2_{x,v}} + \| \partial^m_x \mathbb{P}^\perp G \|_{L^2_{x,v}} \big) \| \nabla_x \partial^m_x \mathbb{P} G \|_{L^2_{x,v}} \\
    & \leq C\!_{_\#}^{(1)} \| G \|^2_{H^s_x L^2_v} 
  \end{aligned}
\end{equation}
for some $C\!_{_\#}^{(1)} > 0$, independent of $\eps \in ( 0, 1 ]$, where we make use of the facts
\begin{equation}
  \begin{aligned}
    & \|\partial^m_x \rho^+ \|_{L^2_x} + \| \partial^m_x \rho^- \|_{L^2_x} + \| \partial^m_x u \|_{L^2_x} + \| \partial^m_x \theta \|_{L^2_x} \leq C \| \partial^m_x \mathbb{P} G \|_{L^2_{x,v}} \,, \\
    & \| \partial^m_x \mathbb{P}^\perp G \|_{L^2_{x,v}} + \| \partial^m_x \mathbb{P} G \|_{L^2_{x,v}} \leq C \| \partial^m_x G \|_{L^2_{x,v}}
  \end{aligned}
\end{equation}
for all $m \in \mathbb{N}^3$ and $\zeta_{ij} (v)$, $\widetilde{\zeta}_i (v)$, $\zeta_i^+ (v)$, $\zeta_i^- (v) \in L^2_v$. Considering the functional $\mathscr{A}_s (E,B)$, defined in \eqref{Quantity-A-EB}, we yield that
\begin{equation}\label{Quantity-A-EB-bnd}
  \begin{aligned}
    \eta_1 \big| \mathscr{A}_s & (E,B) \big| \leq C \sum_{|m| \leq s - 1} \Big( \eps \| \partial^m_x G \|^2_{L^2_{x,v}} \| \mathsf{q}_1 \widetilde{\Phi} \|^2_{L^2_v} + \| \partial^m_x G \|_{L^2_{x,v}} \| \partial^m_x E \|_{L^2_x} \|_{L^2_v} \Big) \\
    & + C \sum_{|m| \leq s - 2} \Big[ \eps \| \nabla_x \partial^m_x G \|^2_{L^2_{x,v}} \| \mathsf{q}_1 \widetilde{\Phi} \|^2_{L^2_v} \\
    & + \| \nabla_x \partial^m_x G \|_{L^2_{x,v}} \| \mathsf{q}_1 \widetilde{\Phi} \|_{L^2_v} \big( \| \partial_t \partial^m_x B \|_{L^2_x} + \| \partial^m_x B \|_{L^2_x}  \big) \Big] \\
    \leq & C \sum_{|m| \leq s - 1} \big( \| \partial^m_x G \|^2_{L^2_{x,v}} + \| \partial^m_x E \|^2_{L^2_x} \big) \\
    & + C \sum_{|m| \leq s - 2} \Big[ \| \nabla_x \partial^m_x G \|^2_{L^2_{x,v}} + \| \nabla_x \partial^m_x G \|_{L^2_{x,v}} \big( \| \nabla_x \times \partial^m_x E \|_{L^2_x} + \| \partial^m_x B \|_{L^2_x} \big) \Big] \\
    \leq & C\!_{_\#}^{(2)} \big( \| G \|^2_{H^s_x L^2_v} + \| E \|^2_{H^s_x} + \| B \|^2_{H^s_x} \big)
  \end{aligned}
\end{equation}
holds for any $0 < \eps \leq 1$ and for some constant $ C\!_{_\#}^{(2)} > 0 $, independent of $\eps \in (0,1]$. Here the H\"older inequality, the Young's inequality, the bound $\| \mathsf{q}_1 \widetilde{\Phi} \|_{L^2_v} \leq C$ and the third Faraday equation of \eqref{VMB-G-drop-eps} are utilized. Combining the bounds \eqref{Quantity-A-G-bnd} and \eqref{Quantity-A-EB-bnd}, one immediately yields that
\begin{equation}\label{Quantity-A-GEB-bnd}
  \begin{aligned}
    \big| \mathscr{A}_s (G,E,B) \big| \leq & |\mathscr{A}_s (G)| + \eta_1 |\mathscr{A}_s (E,B)| \\
    \leq & C\!_{_\#} \big( \| G \|^2_{H^s_x L^2_v} + \| E \|^2_{H^s_x} + \| B \|^2_{H^s_x} \big)
  \end{aligned}
\end{equation}
for all $0 < \eps \leq 1$, where $ C\!_{_\#} = \max \big\{ C\!_{_\#}^{(1)} , C\!_{_\#}^{(2)} \big\} > 0 $.

Now we take $\eta_2 = \min \big\{ \eta_0 , \tfrac{\varrho_s}{2 \varrho_s^* C\!_{_\#}} \big\} > 0$, where $\eta_0 > 0$ is mentioned in Proposition \ref{Prop-Spatial-Summary}, the constants $\varrho_s > 0$ and $\varrho_s^* > 0$ are given in Lemma \ref{Lmm-Mix-Energy}. Then we introduce an energy functional
\begin{equation}
  \begin{aligned}
    \mathcal{E}_s (G,E,B) = \varrho_s \mathscr{E}_{\eta_2} (G,E,B) + \varrho_s^* \eps \eta_2 \mathscr{A}_s (G,E,B) + \sum_{|m| + |\alpha| \leq s , |\alpha| \leq s} \tfrac{C_{|\alpha|} \eta_2}{1 + \eta_2} \| \partial^m_\alpha \mathbb{P}^\perp G \|^2_{L^2_{x,v}}
  \end{aligned}
\end{equation}
and an energy dissipative rate
\begin{equation}
  \begin{aligned}
    \mathcal{D}_s (G,E,B) = \tfrac{\delta_s \eta_2}{1 + \eta_2} \tfrac{1}{\eps^2} \sum_{|m| + |\alpha| \leq s , |\alpha| \leq s} \| \partial^m_\alpha \mathbb{P}^\perp G \|^2_{L^2_{x,v}(\nu)} + \delta_s^* \mathscr{D}_{\eta_2} (G,E,B) \,,
  \end{aligned}
\end{equation}
where the positive constants $\varrho_s$, $\varrho_s^*$, $C_{|\alpha|}$ $\delta_s$ and $\delta_s^*$ are mentioned in Lemma \ref{Lmm-Mix-Energy}. We remark that the chosen number $\eta_2 = \min \big\{ \eta_0 , \tfrac{\varrho_s}{2 \varrho_s^* C\!_{_\#}} \big\} > 0$ is such that the energy $\mathcal{E}_s (G,E,B)$ is nonnegative for all $0 < \eps \leq 1$. Indeed, by \eqref{Quantity-A-GEB-bnd} we know that for any $0 < \eta \leq \eta_0$ and for all $0 < \eps \leq 1$
\begin{equation}
  \begin{aligned}
    & \varrho_s \mathscr{E}_\eta (G,E,B) + \varrho_s^* \eps \eta \mathscr{A}_s (G,E,B) \\
    \geq & \varrho_s \big( \| G \|^2_{H^s_x L^2_v} + \| E \|^2_{H^s_x} + \| B \|^2_{H^s_x} \big) - \varrho_s^* \eta \big| \mathscr{A}_s (G,E,B) \big| \\
    \geq & ( \varrho_s - \varrho_s^* C\!_{_\#} \eta ) \big( \| G \|^2_{H^s_x L^2_v} + \| E \|^2_{H^s_x} + \| B \|^2_{H^s_x} \big) \,.
  \end{aligned}
\end{equation}
So, if we require the right-hand side of the previous inequality is nonnegative, we noly need to choose an $\eta > 0$ such that
\begin{equation*}
  \begin{aligned}
    \varrho_s - \varrho_s^* C\!_{_\#} \eta > 0 \qquad \textrm{and } \qquad 0 < \eta \leq \eta_0 \,.
  \end{aligned}
\end{equation*}
Without loss of generality, we take 
\begin{equation}
  \begin{aligned}
    \eta = \eta_2 = \min \big\{ \eta_0 , \tfrac{\varrho_s}{2 \varrho_s^* C\!_{_\#}} \big\} > 0 \,.
  \end{aligned}
\end{equation}
We thereby obtain the positivity of the energy $\mathcal{E}_s ( G,E,B )$.

One notices that
\begin{equation}
  \begin{aligned}
    \| \mathbb{P}^\perp G \|^2_{\widetilde{H}^s_{x,v}(\nu)} \leq C \mathcal{D}_s (G,E,B)
  \end{aligned}
\end{equation}
holds for all $0 < \eps \leq 1$ and for some $C > 0$, independent of $\eps \in (0, 1 ]$. Therefore, from Lemma \ref{Lmm-Mix-Energy}, we immediately derive the following proposition.

\begin{proposition}\label{Prop-Uniform-Bnd-VMB}
	Let $s \geq 3$ be an integer and $0 < \eps \leq 1$. Assume that $(G_\eps, E_\eps, B_\eps)$ is the solution to the perturbed VMB system \eqref{VMB-G} constructed in Proposition \ref{Prop-Local-Solutn}. Then there is a constant $C > 0$, independent of $\eps \in (0 , 1 ]$, such that
	\begin{equation}\label{VMB-Uniform-Inq}
	  \begin{aligned}
	    \tfrac{\d}{\d t} \mathcal{E}_s (G_\eps, E_\eps, B_\eps) + \mathcal{D}_s (G_\eps, E_\eps, B_\eps) \leq C \big[ \mathcal{E}_s^\frac{1}{2} (G_\eps, E_\eps, B_\eps) + \mathcal{E}_s^2 (G_\eps, E_\eps, B_\eps) \big] \mathcal{D}_s (G_\eps, E_\eps, B_\eps)
	  \end{aligned}
	\end{equation}
	holds for all $0 < \eps \leq 1$.
\end{proposition}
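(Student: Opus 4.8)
The plan is to obtain \eqref{VMB-Uniform-Inq} as the top-order instance of Lemma \ref{Lmm-Mix-Energy}. Concretely, I would apply \eqref{Mix-Derivatives} with $k=s$ (so that the sums there range over \emph{all} $(m,\alpha)$ with $|m|+|\alpha|\le s$) and with the fixed parameter $\eta=\eta_2=\min\{\eta_0,\varrho_s/(2\varrho_s^{*}C_{\#})\}$. With these choices the quantity under $\tfrac{\d}{\d t}$ on the left of \eqref{Mix-Derivatives} is, term by term, precisely the definition of $\mathcal{E}_s(G_\eps,E_\eps,B_\eps)$, while the two remaining left-hand dissipative contributions $\tfrac{\delta_s\eta_2}{1+\eta_2}\tfrac{1}{\eps^{2}}\sum\|\partial^m_\alpha\mathbb{P}^\perp G_\eps\|^2_{L^2_{x,v}(\nu)}$ and $\delta_s^{*}\mathscr{D}_{\eta_2}(G_\eps,E_\eps,B_\eps)$ assemble into $\mathcal{D}_s(G_\eps,E_\eps,B_\eps)$. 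Hence the left-hand side of \eqref{Mix-Derivatives} is literally $\tfrac{\d}{\d t}\mathcal{E}_s+\mathcal{D}_s$, and it only remains to reshape its right-hand side into $C\big[\mathcal{E}_s^{1/2}+\mathcal{E}_s^{2}\big]\mathcal{D}_s$.

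For that I would first collect the comparability facts already prepared in the text. Since $\eta_2$ is a fixed positive number, the factor $(1+\tfrac1{\eta_2})$ in \eqref{Mix-Derivatives} is absorbed into the constant. From the definitions in \eqref{E-D-eta-Energies} one has $\mathscr{D}_{\eta_2}\le C\mathcal{D}_s$ and, because $\eps\le1$, the weighted bound $\|\mathbb{P}^\perp G_\eps\|^2_{\widetilde{H}^s_{x,v}(\nu)}\le C\mathcal{D}_s$ recorded just above the statement. Using \eqref{Quantity-A-GEB-bnd} together with $\partial_tB_\eps=-\nabla_x\times E_\eps$ (so that $\mathscr{E}_1(E_\eps,B_\eps)\le C(\|E_\eps\|^2_{H^s_x}+\|B_\eps\|^2_{H^s_x})$) gives $\mathscr{E}_{\eta_2}\le C\mathcal{E}_s$; conversely the choice $\eta_2\le\varrho_s/(2\varrho_s^{*}C_{\#})$ guarantees $\varrho_s\mathscr{E}_{\eta_2}+\varrho_s^{*}\eps\eta_2\mathscr{A}_s\ge\tfrac12\varrho_s\big(\|G_\eps\|^2_{H^s_xL^2_v}+\|E_\eps\|^2_{H^s_x}+\|B_\eps\|^2_{H^s_x}\big)\ge0$, so that $\mathcal{E}_s\ge0$ and $\mathcal{E}_s\le C\mathscr{E}_{\eta_2}+C\|\mathbb{P}^\perp G_\eps\|^2_{\widetilde{H}^s_{x,v}}\le C'\mathscr{E}_{\eta_2}$ (the last term being the \emph{unweighted} $\widetilde H^s$-norm, hence part of the $H^s_{x,v}$-energy). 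Feeding these bounds into the right-hand side of \eqref{Mix-Derivatives} turns every summand into a product of a factor $\mathscr{E}_{\eta_2}^{1/2}$, $\mathscr{E}_{\eta_2}$ or $\mathscr{E}_{\eta_2}^{2}$ (or the trilinear factor $\|\mathbb{P}^\perp G_\eps\|^2_{\widetilde H^s_{x,v}(\nu)}$) with a factor bounded by $\mathcal{D}_s$; replacing $\mathscr{E}_{\eta_2}$ by $C\mathcal{E}_s$, bounding the trilinear factor by $C\mathcal{D}_s$, and using the elementary inequality $x^{a}\le x^{1/2}+x^{2}$ (valid for $x>0$ and $\tfrac12\le a\le2$) to collapse the intermediate powers $\mathcal{E}_s,\mathcal{E}_s^{3/2}$ into $\mathcal{E}_s^{1/2}+\mathcal{E}_s^{2}$, yields \eqref{VMB-Uniform-Inq}.

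Since every genuinely analytic estimate — the micro-macro (seventeen-moment) decomposition of Proposition \ref{Prop-MM-Est}, the control of the average quantities in Proposition \ref{Prop-rho-u-theta-average}, the Ohm's-law/Maxwell decay structure of Proposition \ref{Prop-Decay-E-B}, their synthesis in Proposition \ref{Prop-Spatial-Summary}, and the induction on the order of $v$-derivatives in Lemma \ref{Lmm-Mix-Energy} — has already been carried out upstream, the present proposition is essentially a repackaging, and I expect the only real difficulty to be the bookkeeping. The delicate points are: (i) checking that the bracket in \eqref{Mix-Derivatives} at $(k,\eta)=(s,\eta_2)$ coincides \emph{verbatim} with $\mathcal{E}_s$ and that the left-hand dissipative terms coincide with $\mathcal{D}_s$; (ii) securing the non-negativity and two-sided comparability $\mathcal{E}_s\asymp\mathscr{E}_{\eta_2}$ through the quantitative choice of $\eta_2$ that dominates the sign-indefinite corrector $\eps\eta_2\mathscr{A}_s$ via \eqref{Quantity-A-GEB-bnd}; and (iii) tracking the $\eta$-dependence so that, once $\eta$ is frozen at $\eta_2$, the $\eta$-graded family of estimates in Lemma \ref{Lmm-Mix-Energy} becomes a single closed inequality with an $\eps$-independent constant, the trilinear terms $\|\mathbb{P}^\perp G_\eps\|^2_{\widetilde H^s_{x,v}(\nu)}\,\mathscr{D}_{\eta_2}$ being handled through the dissipation bound $\|\mathbb{P}^\perp G_\eps\|^2_{\widetilde H^s_{x,v}(\nu)}\le C\mathcal{D}_s$.
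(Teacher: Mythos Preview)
Your plan is exactly the paper's: specialize Lemma \ref{Lmm-Mix-Energy} to $k=s$, $\eta=\eta_2$, identify the left side with $\tfrac{\d}{\d t}\mathcal E_s+\mathcal D_s$, and repackage the right side. The comparability $\mathscr E_{\eta_2}\asymp\mathcal E_s$, the positivity of $\mathcal E_s$ via \eqref{Quantity-A-GEB-bnd} and the choice of $\eta_2$, and the bound $\|\mathbb P^\perp G_\eps\|^2_{\widetilde H^s_{x,v}(\nu)}\le C\mathcal D_s$ are all precisely what the paper records just before stating the proposition.

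There is one point where your bookkeeping slips. In point (iii) you propose to handle the factor $\|\mathbb P^\perp G_\eps\|^2_{\widetilde H^s_{x,v}(\nu)}$ appearing inside the bracket of \eqref{Mix-Derivatives} (in the summand $(1+\mathscr E_\eta)(\mathscr E_\eta+\|\mathbb P^\perp G_\eps\|^2_{\widetilde H^s_{x,v}(\nu)})\mathscr D_\eta$) by the dissipation bound $\|\mathbb P^\perp G_\eps\|^2_{\widetilde H^s_{x,v}(\nu)}\le C\mathcal D_s$. That produces a contribution of order $(1+\mathcal E_s)\,\mathcal D_s\cdot\mathcal D_s$, which is \emph{not} dominated by $C[\mathcal E_s^{1/2}+\mathcal E_s^2]\mathcal D_s$. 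The fix is to go back to the proof of the lemma rather than its statement: in \eqref{Mix-3}--\eqref{Mix-4} this factor enters as the \emph{unweighted} norm $\|\mathbb P^\perp G_\eps\|^2_{\widetilde H^s_{x,v}}$, which is bounded by $C\|G_\eps\|^2_{H^s_{x,v}}\le C\mathscr E_{\eta_2}\le C\mathcal E_s$ (energy, not dissipation). With that reading the term becomes $(1+\mathcal E_s)\,\mathcal E_s\,\mathcal D_s\le C(\mathcal E_s+\mathcal E_s^2)\mathcal D_s$, and your interpolation $x^a\le x^{1/2}+x^2$ finishes the job. The first bracket term $\mathscr E_\eta^{1/2}\|\mathbb P^\perp G_\eps\|^2_{\widetilde H^s_{x,v}(\nu)}$, by contrast, is (tracing back to \eqref{Mix-5}) not multiplied by $\mathscr D_\eta$ at all, so there your dissipation bound gives $C\mathcal E_s^{1/2}\mathcal D_s$ directly. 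In short: the displayed statement of Lemma \ref{Lmm-Mix-Energy} has uniformized the two occurrences of $\|\mathbb P^\perp G_\eps\|^2_{\widetilde H^s_{x,v}}$ under a $\nu$-weight, which is harmless for the lemma but obscures which one must be read as energy and which as dissipation when deducing \eqref{VMB-Uniform-Inq}.
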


Based on the uniform inequality \eqref{VMB-Uniform-Inq} in Proposition \ref{Prop-Uniform-Bnd-VMB}, we will give the proof of Theorem \ref{Main-Thm-1}.

\begin{proof}[\bf Proof of Theorem \ref{Main-Thm-1}: Global solutions]
	First, it is easy to know that there are constants $c_0$, $C_0 > 0$, independent of $\eps$, such that
	\begin{equation}\label{Energy-Equiv}
	  \begin{aligned}
	    & c_0 \mathbb{E}_s ( G_\eps, E_\eps, B_\eps ) \leq  \mathcal{E}_s (G_\eps , E_\eps, B_\eps) \leq C_0 \mathbb{E}_s ( G_\eps, E_\eps, B_\eps ) \,, \\
	    & c_0 \mathbb{D}_s ( G_\eps, E_\eps, B_\eps ) \leq \mathcal{D}_s (G_\eps , E_\eps, B_\eps) \leq C_0 \mathbb{D}_s ( G_\eps, E_\eps, B_\eps ) \,,
	  \end{aligned}
	\end{equation}
	where the energy functional $\mathbb{E}_s ( G_\eps, E_\eps, B_\eps )$ and energy dissipative rate functional $ \mathbb{D}_s ( G_\eps, E_\eps, B_\eps ) $ are defined in \eqref{Energy-E-D}. From \eqref{Energy-Equiv}, the differential inequality \eqref{VMB-Uniform-Inq} in Proposition \ref{Prop-Uniform-Bnd-VMB} and the energy bound \eqref{Local-Energy-Bnd} in Proposition \ref{Prop-Local-Solutn}, we deduce that for any $[t_1 , t_2] \subset [0, T^*]$ and $0 < \eps \leq 1$
	\begin{equation}
	  \begin{aligned}
	    & \big| \mathcal{E}_s (G_\eps, E_\eps, B_\eps) (t_2) - \mathcal{E}_s (G_\eps, E_\eps, B_\eps) (t_1) \big| \\
	    \leq & C \int_{t_1}^{t_2} \big[ \mathcal{E}_s^\frac{1}{2} (G_\eps, E_\eps, B_\eps) + \mathcal{E}_s^2 (G_\eps, E_\eps, B_\eps) \big] \mathcal{D}_s (G_\eps, E_\eps, B_\eps) \d t \\
	    \leq & C \sup_{0 \leq t \leq T^*} \Big[ \mathbb{E}_s^\frac{1}{2} (G_\eps , E_\eps , B_\eps) + \mathbb{E}_s^2 (G_\eps , E_\eps, B_\eps) \Big] \int_{t_1}^{t_2} \mathbb{D}_s (G_\eps, E_\eps, B_\eps) \d t \\
	    \leq & C ( \sqrt{\ell} + \ell^2 ) \left[ \int_{t_1}^{t_2} \tfrac{1}{\eps^2} \| \mathbb{P}^\perp G_\eps \|^2_{H^s_{x,v}(\nu)} \d t + \ell (t_2 - t_1) \right] \\
	    \rightarrow & 0 \quad \textrm{as} \quad t_2 \rightarrow t_1 \,,
	  \end{aligned}
	\end{equation}
	where $\ell > 0$ is mentioned in Proposition \ref{Prop-Local-Solutn}. Thus the local solution $(G_\eps, E_\eps, B_\eps)$ constructed in Proposition \ref{Prop-Local-Solutn} is such that the energy functional $\mathcal{E}_s (G_\eps, E_\eps, B_\eps) (t)$ is continuous in $t \in [0, T^*]$.
	
	We now define
	\begin{equation}
	  \begin{aligned}
	    T = \sup \Big\{ \tau \geq 0 ; C \sup_{t \in [0,\tau]} \big[ \mathcal{E}_s^\frac{1}{2} (G_\eps, E_\eps, B_\eps) + \mathcal{E}_s^2 (G_\eps, E_\eps, B_\eps) \big] (t) \leq \tfrac{1}{2}  \Big\} \geq 0 \,.
	  \end{aligned}
	\end{equation}
	From the relations \eqref{Energy-Equiv} and the initial condition in Theorem \ref{Main-Thm-1}, we have
	\begin{equation}
	  \begin{aligned}
	    c_0 \mathbb{E} ( G_\eps^{in}, E_\eps^{in}, B_\eps^{in} ) \leq \mathcal{E}_s (G_\eps, E_\eps, B_\eps) (0) \leq C_0 \mathbb{E} ( G_\eps^{in}, E_\eps^{in}, B_\eps^{in} ) \leq C_0 \ell_0 \,,
	  \end{aligned}
	\end{equation}
	where $\ell_0 \in ( 0, 1]$ is small to be determined. If we take $0 < \ell_0 \leq \min \big\{ 1 , \ell, \tfrac{1}{16 C^2 ( \sqrt{C_0} + C_0^2 )^2} \big\}$ ($\ell > 0$ is mentioned in Proposition \ref{Prop-Local-Solutn}), we have
	\begin{equation}\label{Initial-Bnd}
	  \begin{aligned}
	    & C \big[ \mathcal{E}_s^\frac{1}{2} (G_\eps, E_\eps, B_\eps) (0) + \mathcal{E}_s^2 (G_\eps, E_\eps, B_\eps) (0) \big] \\
	    \leq & C \big( \sqrt{C_0 \ell_0} + ( C_0 \ell_0 )^2 \big) \leq C ( \sqrt{C_0} + C_0^2 ) \sqrt{\ell_0} \leq \tfrac{1}{4} < \tfrac{1}{2} \,.
	  \end{aligned}
	\end{equation}
	Then, the continuity of $\mathcal{E} (G_\eps, E_\eps, B_\eps) (t)$ implies that $T > 0$. Consequently, we derive from the definition of $T$ and the inequality \eqref{VMB-Uniform-Inq} that for all $t \in [0,T]$ and $0 < \eps \leq 1$ 
	\begin{equation}
	  \begin{aligned}
	    \tfrac{\d}{\d t} \mathcal{E}_s (G_\eps, E_\eps , B_\eps) + \tfrac{1}{2} \mathcal{D}_s (G_\eps, E_\eps, B_\eps) \leq 0 \,.
	  \end{aligned}
	\end{equation}
	Then we yield that for all $t \in [0,T]$ and $0 < \eps \leq 1$
	\begin{equation}\label{Energy-Extend}
	  \begin{aligned}
	    \mathcal{E}_s (G_\eps, E_\eps , B_\eps) (t) + \int_0^t \tfrac{1}{2} \mathcal{D}_s (G_\eps, E_\eps, B_\eps) (\tau) \d \tau \leq \mathcal{E}_s (G_\eps, E_\eps , B_\eps) (0)  \leq C_0 \ell_0 \,,
	  \end{aligned}
	\end{equation}
	which immediately implies by the initial bound \eqref{Initial-Bnd} that
	\begin{equation}
	  \begin{aligned}
	    C \sup_{t \in [0,T]} \big[ \mathcal{E}_s^\frac{1}{2} (G_\eps, E_\eps, B_\eps) + \mathcal{E}_s^2 (G_\eps, E_\eps, B_\eps) \big] (t) \leq \tfrac{1}{4} < \tfrac{1}{2} \,. 
	  \end{aligned}
	\end{equation}
	Thus, the continuity of $ \mathcal{E} (G_\eps, E_\eps , B_\eps) (t) $ implies that $T = + \infty$. In other words, the local solution constructed in Proposition \ref{Prop-Local-Solutn} can be extended globally. Moreover, the uniform energy bound \eqref{Uniform-Bnd} can be derived from \eqref{Energy-Equiv} and \eqref{Energy-Extend}. Then the proof of Theorem \ref{Main-Thm-1} is completed.	
\end{proof}

\section{Limit to two fluid incompressible Navier-Stokes-Fourier-Maxwell equations with Ohm's law}
\label{Sec:Limits}

In this section, we will derive the two fluid incompressible Navier-Stokes-Fourier-Maxwell equations \eqref{INSFM-Ohm} with Ohm's law from the perturbed two-species Vlasov-Maxwell-Boltzmann \eqref{VMB-G} as $\eps \rightarrow 0$.

\subsection{Local conservation laws} We first introduce the following fluid variables
\begin{equation}\label{Fluid-Quanities}
  \begin{aligned}
    \rho_\eps = \tfrac{1}{2} \langle G_\eps , \mathsf{q}_2 \sqrt{M} \rangle_{L^2_v} \,, \ u_\eps = \tfrac{1}{2} \langle G_\eps , \mathsf{q}_2 v \sqrt{M} \rangle_{L^2_v} \,, \ \theta_\eps = \tfrac{1}{2} \langle G_\eps , \mathsf{q}_2 ( \tfrac{|v|^2}{3} - 1 ) \sqrt{M} \rangle_{L^2_v} \,, \\
    n_\eps = \langle G_\eps , \mathsf{q}_1 \sqrt{M} \rangle_{L^2_v} \,,\ j_\eps = \tfrac{1}{\eps} \langle G_\eps , \mathsf{q}_1 v \sqrt{M} \rangle_{L^2_v} \,,\ w_\eps = \tfrac{1}{\eps} \langle G_\eps , \mathsf{q}_1 ( \tfrac{|v|^2}{3} - 1 ) \sqrt{M} \rangle_{L^2_v} \,.
  \end{aligned}
\end{equation}
Then we can derive the following local conservation laws from the solutions $(G_\eps , E_\eps, B_\eps)$ constructed in Theorem \ref{Main-Thm-1}. These conservation laws exactly can be referred to \cite{Arsenio-SRM-2016}. However, for convenience for readers, we justify them here.

\begin{lemma}\label{Lmm-Local-Consvtn-Law}
	Assume that  $(G_\eps , E_\eps, B_\eps)$ is the solutions to the perturbed two-species VMB equations \eqref{VMB-G} constructed in Theorem \ref{Main-Thm-1}. Then the following local conservation laws holds:
	\begin{equation}\label{Local-Consvtn-Law}
	  \left\{
	    \begin{array}{l}
	      \partial_t \rho_\eps + \tfrac{1}{\eps} \div_x \, u_\eps = 0 \,, \\
	      \partial_t u_\eps + \tfrac{1}{\eps} \nabla_x ( \rho_\eps + \theta_\eps ) + \div_x \, \big\langle \widehat{A} (v) \sqrt{M} , \tfrac{1}{\eps} \mathcal{L} ( \tfrac{G_\eps \cdot \mathsf{q}_2}{2} ) \big\rangle_{L^2_v} = \tfrac{1}{2} ( n_\eps E_\eps + j_\eps \times B_\eps ) \,, \\
	      \partial_t \theta_\eps + \tfrac{2}{3} \tfrac{1}{\eps} \div_x \, u_\eps + \tfrac{2}{3} \div_x \, \big\langle \widehat{B} (v) \sqrt{M} , \tfrac{1}{\eps} \mathcal{L} ( \tfrac{G_\eps \cdot \mathsf{q}_2}{2} ) \big\rangle_{L^2_v} = \tfrac{\eps}{3} j_\eps \cdot E_\eps \,, \\
	      \partial_t n_\eps + \div_x \, j_\eps = 0 \,, \\
	      \partial_t E_\eps - \nabla_x \times B_\eps = - j_\eps \,, \\
	      \partial_t B_\eps + \nabla_x \times E_\eps = 0 \,, \\
	      \div_x \, E_\eps = n_\eps \,, \quad \div_x B_\eps = 0 \,.
	    \end{array}
	  \right.
	\end{equation}
\end{lemma}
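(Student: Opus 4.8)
The plan is to obtain each identity in \eqref{Local-Consvtn-Law} by taking suitable velocity moments of the kinetic equation, i.e. the first line of \eqref{VMB-G}, against the collision invariants. The three Maxwell relations (lines five through seven) require nothing: they are literally the Maxwell equations in \eqref{VMB-G} rewritten through the definitions \eqref{Fluid-Quanities} of $n_\eps$ and $j_\eps$. For the remaining four, I would first contract the vector equation \eqref{VMB-G} with the constant vectors $\mathsf{q}_1=[1,-1]$ and $\mathsf{q}_2=[1,1]$, using the elementary identities $\mathsf{q}\,\mathsf{q}_1=\mathsf{q}_2$, $\mathsf{q}\,\mathsf{q}_2=\mathsf{q}_1$ and $\mathsf{q}_1\cdot\mathsf{q}_2=0$; this already kills the linear forcing term $\tfrac1\eps(E_\eps\cdot v)\sqrt M\,\mathsf{q}_1$ when one contracts with $\mathsf{q}_2$, and produces scalar transport/Vlasov equations for $G_\eps\cdot\mathsf{q}_2$ and $G_\eps\cdot\mathsf{q}_1$ (the latter is exactly \eqref{Mxw-2}).

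Next I would integrate the $\mathsf{q}_2$-contracted equation against $\tfrac12\sqrt M$, $\tfrac12 v\sqrt M$ and $\tfrac12(\tfrac{|v|^2}{3}-1)\sqrt M$ to produce the $\rho_\eps$-, $u_\eps$- and $\theta_\eps$-equations, and the $\mathsf{q}_1$-contracted equation against $\sqrt M$ to produce the $n_\eps$-equation. Three structural facts do the bulk of the work. First, $\tfrac12\mathsf{q}_2\sqrt M=\phi_1+\phi_2$, $\tfrac12 v_i\mathsf{q}_2\sqrt M=\tfrac12\phi_{i+2}$, $\tfrac12(\tfrac{|v|^2}{3}-1)\mathsf{q}_2\sqrt M=\tfrac13\phi_6$ and $\mathsf{q}_1\sqrt M=\phi_1-\phi_2$ all lie in $\mathrm{Ker}(\mathscr{L})$; by the self-adjointness of $\mathscr{L}$ in Lemma \ref{Lmm-L} the singular terms $\tfrac1{\eps^2}\langle\mathscr{L}G_\eps,\cdot\rangle_{L^2_v}$ therefore vanish, and by Lemma \ref{Lmm-Gamma-Torus}(1) so do the contributions of $\tfrac1\eps\Gamma(G_\eps,G_\eps)$. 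Second, integrating by parts in $v$ in the Lorentz terms and using $\nabla_v\sqrt M=-\tfrac v2\sqrt M$, $\div_v(v\times B_\eps)=0$ and $(v\times B_\eps)\cdot v=0$, the magnetic part drops entirely except through the genuine flux $j_\eps\times B_\eps$ in the $u_\eps$-equation, while the remaining electric contributions are precisely cancelled by the nonlinear term $\tfrac12\mathsf{q}(E_\eps\cdot v)G_\eps$ on the right-hand side (these are the $\tfrac14\langle G_\eps\cdot\mathsf{q}_1,(E_\eps\cdot v)\,p(v)\sqrt M\rangle_{L^2_v}$ pairs that appear symmetrically on both sides). Third, $\langle G_\eps\cdot\mathsf{q}_1,v\sqrt M\rangle_{L^2_v}=\eps\, j_\eps$, which turns the surviving electric terms into $\tfrac12 n_\eps E_\eps$ and $\tfrac\eps3 j_\eps\cdot E_\eps$.

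The only place where a little more than bookkeeping is needed is the viscous/heat fluxes in the $u_\eps$- and $\theta_\eps$-equations. Writing $g_\eps=\tfrac12 G_\eps\cdot\mathsf{q}_2$, the transport term contributes $\tfrac1\eps\div_x\langle (v\otimes v)\sqrt M, g_\eps\rangle_{L^2_v}$, respectively $\tfrac1\eps\div_x\langle v(\tfrac{|v|^2}{3}-1)\sqrt M, g_\eps\rangle_{L^2_v}$; I would split $v\otimes v=A(v)+\tfrac{|v|^2}{3}I$ (and split $v(\tfrac{|v|^2}{3}-1)$ into its $\mathrm{Ker}(\mathcal{L})$- and $\mathrm{Ker}^\perp(\mathcal{L})$-components), so that the kernel parts reproduce $\tfrac1\eps\nabla_x(\rho_\eps+\theta_\eps)$ and $\tfrac23\tfrac1\eps\div_x u_\eps$, while the remaining pieces are $\langle A(v)\sqrt M,g_\eps\rangle_{L^2_v}$ and a multiple of the heat-flux moment. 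Using $A(v)\sqrt M=\mathcal{L}(\widehat A(v)\sqrt M)$ and $B(v)\sqrt M=\mathcal{L}(\widehat B(v)\sqrt M)$ together with the self-adjointness of $\mathcal{L}$ from \eqref{Linear-Oprt-B}, one gets $\tfrac1\eps\langle A\sqrt M,g_\eps\rangle_{L^2_v}=\langle \widehat A\sqrt M,\tfrac1\eps\mathcal{L}g_\eps\rangle_{L^2_v}$, which is exactly the flux written in \eqref{Local-Consvtn-Law} (and likewise for $\widehat B$); here I would simply invoke the properties of $A,\widehat A,B,\widehat B$ recorded in \cite{Arsenio-SRM-2016}. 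All these manipulations — the moment integrations and the integration by parts in $v$ — are licit because $(G_\eps,E_\eps,B_\eps)$ is the $H^s_{x,v}\times H^s_x\times H^s_x$ solution of Theorem \ref{Main-Thm-1} with $s\ge 3$, and $\sqrt M$ supplies all the needed velocity decay. The main obstacle is thus not analytic but the careful $\R^2$-valued algebra: tracking the $\mathsf{q}$, $\mathsf{q}_1$, $\mathsf{q}_2$ contractions and verifying the exact cancellation of the nonlinear electric-field terms between the two sides of each moment equation.
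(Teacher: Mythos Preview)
Your proposal is correct and follows essentially the same approach as the paper: multiply the kinetic equation by the appropriate $\mathsf{q}_1$- or $\mathsf{q}_2$-weighted Gaussian moments, use $\mathrm{Ker}(\mathscr L)$ and $\Gamma(G,G)\in\mathrm{Ker}^\perp(\mathscr L)$ to kill the singular terms, integrate the Lorentz term by parts in $v$ to expose the $n_\eps E_\eps$, $j_\eps\times B_\eps$ and $\tfrac\eps3 j_\eps\cdot E_\eps$ forces together with residual electric pieces that cancel against $\tfrac12\mathsf q(E_\eps\cdot v)G_\eps$, and finally split $v\otimes v$ and $v(\tfrac{|v|^2}{3}-1)$ into their hydrodynamic and $\mathrm{Ker}^\perp(\mathcal L)$ parts, passing the latter through $\mathcal L(\widehat A\sqrt M)=A\sqrt M$, $\mathcal L(\widehat B\sqrt M)=B\sqrt M$ and self-adjointness. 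The only cosmetic difference is that the paper multiplies directly by the vector test functions $\tfrac12\mathsf q_2\sqrt M$, $\tfrac12\mathsf q_2 v\sqrt M$, etc., rather than first contracting with $\mathsf q_1,\mathsf q_2$ and then taking scalar moments, but this is the same computation.
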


\begin{proof}[Proof of Lemma \ref{Lmm-Local-Consvtn-Law}]
	From the definition of $j_\eps$ in \eqref{Fluid-Quanities}, we easily derive last four relations of \eqref{Local-Consvtn-Law}. We thereby need only verify the first four relations in \eqref{Local-Consvtn-Law}. 
	
	{\em Step 1. Conservation law of $\rho_\eps$.} We multiply the first $G_\eps$-equation of \eqref{VMB-G} by $ \tfrac{\mathsf{q}_2 \sqrt{M}}{2} \in \textrm{Ker} (\mathscr{L}) $ and integrate over $v \in \R^3$. Then we obtain
	\begin{equation}
	  \begin{aligned}
	    \partial_t \rho_\eps + \underset{I_1}{\underbrace{ \tfrac{1}{\eps} \big\langle v \cdot \nabla_x G_\eps , \tfrac{\mathsf{q}_2 \sqrt{M}}{2} \big\rangle_{L^2_v} }} + \underset{I_2}{\underbrace{ \tfrac{1}{\eps} \big\langle ( \eps E_\eps + v \times B_\eps ) \cdot \nabla_v G_\eps , \tfrac{\mathsf{q}_2 \sqrt{M}}{2} \big\rangle_{L^2_v} }} \\
	    + \underset{I_3 \ = \, 0}{\underbrace{ \tfrac{1}{\eps^2} \big\langle \mathscr{L} G_\eps , \tfrac{\mathsf{q}_2 \sqrt{M}}{2} \big\rangle_{L^2_v} }} \ \  \underset{I_4}{\underbrace{ - \tfrac{1}{\eps} \big\langle (E_\eps \cdot v) \sqrt{M} \mathsf{q}_1 , \tfrac{\mathsf{q}_2 \sqrt{M}}{2} \big\rangle_{L^2_v} }} \\
	    = \underset{I_5}{\underbrace{ \tfrac{1}{2} \big\langle \mathsf{q} ( E_\eps \cdot v ) G_\eps , \tfrac{\mathsf{q}_2 \sqrt{M}}{2} \big\rangle_{L^2_v} }} + \underset{I_6 \ = \, 0}{\underbrace{ \tfrac{1}{\eps} \big\langle \Gamma (G_\eps , G_\eps) , \tfrac{\mathsf{q}_2 \sqrt{M}}{2} \big\rangle_{L^2_v} }} \,.
	  \end{aligned}
	\end{equation}
	For the term $I_1$, we have
	\begin{equation}
	  \begin{aligned}
	    I_1 = \tfrac{1}{\eps} \div_x \, \big\langle v \sqrt{M} , \tfrac{G_\eps \cdot \mathsf{q}_2}{2} \big\rangle_{L^2_v} = \tfrac{1}{\eps} \div_x \, u_\eps \,.
	  \end{aligned}
	\end{equation}
	For the term $I_2$, we compute that
	\begin{equation}
	  \begin{aligned}
	    I_2 = - \tfrac{1}{\eps} \big\langle \eps E_\eps + v \times B_\eps , \tfrac{G_\eps \cdot \mathsf{q}_1}{2} \nabla_v \sqrt{M} \big\rangle_{L^2_v} = \tfrac{1}{2} E_\eps \cdot \big\langle \tfrac{G_\eps \cdot \mathsf{q}_1}{2} , v \sqrt{M} \big\rangle_{L^2_v} \,.
	  \end{aligned}
	\end{equation}
	Since $\mathsf{q}_1 \cdot \mathsf{q}_2 = 0$, we know that $I_4 = 0$. For the term $I_5$, we have
	\begin{equation}
	  \begin{aligned}
	    I_5 = \tfrac{1}{2} E_\eps \cdot \big\langle \tfrac{G_\eps \cdot \mathsf{q}_1}{2} , v \sqrt{M} \big\rangle_{L^2_v} \,.
	  \end{aligned}
	\end{equation}
	Collecting the above relations, we deduce that
	\begin{equation}
	  \begin{aligned}
	    \partial_t \rho_\eps + \tfrac{1}{\eps} \div_x \, u_\eps = 0 \,,
	  \end{aligned}
	\end{equation}
	hence the first equation of \eqref{Local-Consvtn-Law} holds.
	
	{\em Step 2. Conservation law of $u_\eps$.} Multiplying the first $G_\eps$-equation of \eqref{VMB-G} by $ \tfrac{\mathsf{q}_2 v \sqrt{M}}{2} \in \textrm{Ker} (\mathscr{L}) $ and integrating over $v \in \R^3$ , we have
	\begin{equation}
	  \begin{aligned}
	    \partial_t u_\eps + \underset{I\!I_1}{\underbrace{ \tfrac{1}{\eps} \big\langle v \cdot \nabla_x G_\eps , \tfrac{\mathsf{q}_2 v \sqrt{M}}{2} \big\rangle_{L^2_v} }} + \underset{I\!I_2}{\underbrace{ \tfrac{1}{\eps} \big\langle \mathsf{q} ( \eps E_\eps + v \times B_\eps ) \cdot \nabla_v G_\eps , \tfrac{\mathsf{q}_2 v \sqrt{M}}{2} \big\rangle_{L^2_v} }}\\
	    + \underset{I\!I_3 \ = \, 0}{\underbrace{ \tfrac{1}{\eps^2} \big\langle \mathscr{L} G_\eps , \tfrac{\mathsf{q}_2 v \sqrt{M}}{2} \big\rangle_{L^2_v} }} \ \ \underset{I\!I_4 \ = \, 0}{\underbrace{ - \tfrac{1}{\eps} \big\langle (E_\eps \cdot v) \sqrt{M} \mathsf{q}_1 , \tfrac{\mathsf{q}_2 v \sqrt{M}}{2} \big\rangle_{L^2_v} }} \\
	    = \underset{I\!I_5}{\underbrace{ \tfrac{1}{2} \big\langle \mathsf{q} (E_\eps \cdot v) G_\eps , \tfrac{\mathsf{q}_2 v \sqrt{M}}{2} \big\rangle_{L^2_v} }} + \underset{I\!I_6 \ = \, 0}{\underbrace{ \tfrac{1}{\eps} \big\langle \Gamma (G_\eps , G_\eps) , \tfrac{\mathsf{q}_2 v \sqrt{M}}{2} \big\rangle_{L^2_v} }}\,,
	  \end{aligned}
	\end{equation}
	where $I\!I_4 = 0$ is derived from $\mathsf{q}_1 \cdot \mathsf{q}_2 = 0$. For the term $I\!I_1$, we deduce that
	\begin{equation}
	  \begin{aligned}
	    I\!I_1  = & \tfrac{1}{\eps} \div_x \, \big\langle v \otimes v \sqrt{M} , \tfrac{G_\eps \cdot \mathsf{q}_2}{2} \big\rangle_{L^2_v} \\
	    = & \tfrac{1}{\eps} \div_x \, \big\langle A(v) \sqrt{M} , \tfrac{G_\eps \cdot \mathsf{q}_2}{2} \big\rangle_{L^2_v} + \tfrac{1}{\eps} \div_x \, \big\langle \tfrac{|v|^2}{3} \mathbb{I}_3 \sqrt{M} , \tfrac{G_\eps \cdot \mathsf{q}_2}{2} \big\rangle_{L^2_v} \\
	    = & \div_x \, \big\langle \widehat{A} (v) \sqrt{M} , \tfrac{1}{\eps} \mathcal{L} ( \tfrac{G_\eps \cdot \mathsf{q}_2}{2} ) \big\rangle_{L^2_v} + \tfrac{1}{\eps} \nabla_x \Big( \big\langle G_\eps , \tfrac{\mathsf{q}_2 \sqrt{M}}{2} \big\rangle_{L^2_v} + \big\langle G_\eps , \tfrac{\mathsf{q}_2 \sqrt{M}}{2} ( \tfrac{|v|^2}{3} - 1 ) \big\rangle_{L^2_v} \Big) \\
	    = & \div_x \, \big\langle \widehat{A} (v) \sqrt{M} , \tfrac{1}{\eps} \mathcal{L} ( \tfrac{G_\eps \cdot \mathsf{q}_2}{2} ) \big\rangle_{L^2_v} + \tfrac{1}{\eps} \nabla_x (\rho_\eps + \theta_\eps) \,,
	  \end{aligned}
	\end{equation}
	where $\mathcal{L}$ is the standard linearized Boltzmann collision operator, $A(v) = v \otimes v - \tfrac{|v|^2}{3} \mathbb{I}_3 $ with $A \sqrt{M} \in \textrm{Ker}^\perp (\mathcal{L})$, $\widehat{A} (v) $ is such that $\mathcal{L} ( \widehat{A} \sqrt{M} ) = A \sqrt{M}$ with $\widehat{A} \sqrt{M} \in \textrm{Ker}^\perp (\mathcal{L}) $ and $\mathbb{I}_3$ is the $3 \times 3$ unitary matrix. For the term $I\!I_2$, we have
	\begin{equation}
	  \begin{aligned}
	    I\!I_2 = & - \tfrac{1}{\eps} \big\langle G_\eps , ( \eps E_\eps + v \times B_\eps ) \cdot \nabla_v ( \tfrac{\mathsf{q}_1 v \sqrt{M}}{2} ) \big\rangle_{L^2_v} \\
	    = & - \tfrac{1}{\eps} \big\langle G_\eps , ( \eps E_\eps + v \times B_\eps ) \cdot ( \mathbb{I}_3 - \tfrac{1}{2} v \otimes v ) \tfrac{\mathsf{q}_1 \sqrt{M}}{2} \big\rangle_{L^2_v} \\
	    = & - \tfrac{1}{2} \big\langle G_\eps , \mathsf{q}_1 \sqrt{M} \big\rangle_{L^2_v} E_\eps - \tfrac{1}{2 \eps} \big\langle G_\eps, \mathsf{q}_1 v \sqrt{M} \big\rangle_{L^2_v} \times B_\eps + \tfrac{1}{4} \big\langle E_\eps \cdot v , G_\eps \cdot \mathsf{q}_1 \sqrt{M} \big\rangle_{L^2_v} \\
	    = & - \tfrac{1}{2} n_\eps E_\eps - \tfrac{1}{2} j_\eps \times B_\eps + \tfrac{1}{4} \big\langle E_\eps \cdot v , G_\eps \cdot \mathsf{q}_1 \sqrt{M} \big\rangle_{L^2_v} \,,
	  \end{aligned}
	\end{equation}
	where the cancellation $(v \times B_\eps) \cdot v = 0$ is utilized. For the term $I\!I_5$, we derive that
	\begin{equation}
	  \begin{aligned}
	    I\!I_5 = \tfrac{1}{4} \big\langle E_\eps \cdot v , G_\eps \cdot \mathsf{q}_1 \sqrt{M} \big\rangle_{L^2_v} \,.
	  \end{aligned}
	\end{equation}
	Collecting the previous calculations of the terms $I\!I_1$, $I\!I_2$ and $I\!I_5$, we obtain
	\begin{equation}
	  \begin{aligned}
	    \partial_t u_\eps + \tfrac{1}{\eps} \nabla_x (\rho_\eps + \theta_\eps) + \div_x \, \big\langle \widehat{A} (v) \sqrt{M} , \tfrac{1}{\eps} \mathcal{L} ( \tfrac{G_\eps \cdot \mathsf{q}_2}{2} ) \big\rangle_{L^2_v} = \tfrac{1}{2} ( n_\eps E_\eps + j_\eps \times B_\eps ) \,,
	  \end{aligned}
	\end{equation}
	then the second equations of \eqref{Local-Consvtn-Law} holds.
	
	{\em Step 3. Conservation law of $\theta_\eps$.} We take $L^2_v$-inner product via multiplying the first $G_\eps$-equation of \eqref{VMB-G} by $ \tfrac{1}{2} \mathsf{q}_2 ( \tfrac{|v|^2}{3} - 1 ) \sqrt{M} \in \textrm{Ker} (\mathscr{L}) $. More precisely, we have
	\begin{equation}
	  \begin{aligned}
	    \partial_t \theta_\eps + \underset{I\!I\!I_1}{\underbrace{ \tfrac{1}{\eps} \big\langle v \cdot \nabla_x G_\eps , \tfrac{1}{2} \mathsf{q}_2 ( \tfrac{|v|^2}{3} - 1 ) \sqrt{M} \big\rangle_{L^2_v} }} \\
	    + \underset{I\!I\!I_2}{\underbrace{ \tfrac{1}{\eps} \big\langle \mathsf{q} ( \eps E_\eps + v \times B_\eps ) \cdot \nabla_v G_\eps , \tfrac{1}{2} \mathsf{q}_2 ( \tfrac{|v|^2}{3} - 1 ) \sqrt{M} \big\rangle_{L^2_v} }} \\
	    + \underset{I\!I\!I_3 \ = \, 0}{\underbrace{ \tfrac{1}{\eps^2} \big\langle \mathscr{L} G_\eps , \tfrac{1}{2} \mathsf{q}_2 ( \tfrac{|v|^2}{3} - 1 ) \sqrt{M} \big\rangle_{L^2_v} }} \ \ \underset{I\!I\!I_4 \ = \, 0}{\underbrace{ - \tfrac{1}{\eps} \big\langle (E_\eps \cdot v) \mathsf{q}_1 \sqrt{M} , \tfrac{1}{2} \mathsf{q}_2 ( \tfrac{|v|^2}{3} - 1 ) \sqrt{M} \big\rangle_{L^2_v} }} \\
	    = \underset{I\!I\!I_5}{\underbrace{ \tfrac{1}{2} \big\langle \mathsf{q} (E_\eps \cdot v) G_\eps , \tfrac{1}{2} \mathsf{q}_2 ( \tfrac{|v|^2}{3} - 1 ) \sqrt{M} \big\rangle_{L^2_v} }} + \underset{I\!I\!I_6 \ = \, 0}{\underbrace{ \tfrac{1}{\eps} \big\langle \Gamma (G_\eps , G_\eps) , \tfrac{1}{2} \mathsf{q}_2 ( \tfrac{|v|^2}{3} - 1 ) \sqrt{M} \big\rangle_{L^2_v} }} \,,
	  \end{aligned}
	\end{equation}
	where $I\!I\!I_3 = 0$ is implied by $\mathsf{q}_1 \cdot \mathsf{q}_2 = 0$. For the term $I\!I\!I_1$, we have
	\begin{equation}
	  \begin{aligned}
	    I\!I\!I_1 = & \tfrac{1}{2 \eps} \div_x \, \big\langle G_\eps \cdot \mathsf{q}_2 \sqrt{M} , v ( \tfrac{|v|^2}{3} - 1 ) \big\rangle_{L^2_v} \\
	    = & \tfrac{1}{3 \eps} \div_x \, \big\langle G_\eps \cdot \mathsf{q}_2 \sqrt{M} , v ( \tfrac{|v|^2}{2} - \tfrac{5}{2} ) + v \big\rangle_{L^2_v} \\
	    = & \tfrac{2}{3} \tfrac{1}{\eps} \div_x \big\langle \tfrac{G_\eps \cdot \mathsf{q}_2}{2} , B(v) \sqrt{M} \big\rangle_{L^2_v} + \tfrac{2}{3} \tfrac{1}{\eps} \div_x \, u_\eps \\
	    = & \tfrac{2}{3} \div_x \big\langle \widehat{B} (v) \sqrt{M} , \tfrac{1}{\eps} \mathcal{L} ( \tfrac{G_\eps \cdot \mathsf{q}_2}{2} ) + \tfrac{2}{3} \tfrac{1}{\eps} \div_x \, u_\eps \,,
	  \end{aligned}
	\end{equation}
	where $B(v) = v ( \tfrac{|v|^2}{2} - \tfrac{5}{2} )$ with $B \sqrt{M} \in \textrm{Ker}^\perp (\mathcal{L})$ and $\widehat{B} (v)$ is such that $\mathcal{L} ( \widehat{B} \sqrt{M} ) = B \sqrt{M}$ with $\widehat{B} (v) \sqrt{M} \in \textrm{Ker}^\perp (\mathcal{L})$. For the term $I\!I\!I_2$, we have
	\begin{equation}
	  \begin{aligned}
	    I\!I\!I_2 = & - \tfrac{1}{2 \eps} \big\langle G_\eps \cdot \mathsf{q}_1 , ( \eps E_\eps + v \times B_\eps ) \cdot \nabla_v [ ( \tfrac{|v|^2}{3} - 1 ) \sqrt{M} ] \big\rangle_{L^2_v} \\
	    = & - \tfrac{1}{2} \big\langle G_\eps \cdot \mathsf{q}_1 , E_\eps \cdot [ \tfrac{2}{3} v - \tfrac{1}{2} v ( \tfrac{|v|^2}{3} - 1 ) ] \sqrt{M} \big\rangle_{L^2_v} \\
	    = & - \tfrac{\eps}{3} E_\eps \cdot \big\langle G_\eps , \mathsf{q}_1 v \sqrt{M} \big\rangle_{L^2_v} + \tfrac{1}{4} \big\langle E_\eps \cdot v , G_\eps \cdot \mathsf{q}_1 ( \tfrac{|v|^2}{3} - 1 ) \sqrt{M} \big\rangle_{L^2_v} \\
	    = & - \tfrac{\eps}{3} j_\eps \cdot E_\eps + I\!I\!I_5 \,,
	  \end{aligned}
	\end{equation}
	where we make use of the cancellation $(v \times B_\eps) \cdot v = 0$. Thus, collecting the all relations derived in the previous yields that
	\begin{equation}
	  \begin{aligned}
	    \partial_t \theta_\eps + \tfrac{2}{3} \tfrac{1}{\eps} \div_x \, u_\eps + \tfrac{2}{3} \tfrac{1}{\eps} \div_x \big\langle \widehat{B} (v) \sqrt{M} , \tfrac{1}{\eps} \mathcal{L} ( \tfrac{G_\eps \cdot \mathsf{q}_1}{2} ) \big\rangle_{L^2_v} = \tfrac{\eps}{3} j_\eps \cdot E_\eps \,,
	  \end{aligned}
	\end{equation}
	and the third equation of \eqref{Local-Consvtn-Law} holds.
	
	{\em Conservation law of $n_\eps$.} We take $L^2_v$-inner product via multiplying the first equation of \eqref{VMB-G} by $ \mathsf{q}_1 \sqrt{M} \in \textrm{Ker} (\mathscr{L}) $. We then obtain
	\begin{equation}
	  \begin{aligned}
	    \partial_t n_\eps + \underset{I\!V_1}{\underbrace{ \tfrac{1}{\eps} \big\langle v \cdot \nabla_x G_\eps , \mathsf{q}_1 \sqrt{M} \big\rangle_{L^2_v} }} + \underset{I\!V_2}{\underbrace{ \tfrac{1}{\eps} \big\langle \mathsf{q} ( \eps E_\eps + v \times B_\eps ) \cdot \nabla_v G_\eps , \mathsf{q}_1 \sqrt{M} \big\rangle_{L^2_v} }} \\
	    + \underset{I\!V_3 \ = \, 0}{\underbrace{ \tfrac{1}{\eps^2} \big\langle \mathscr{L} G_\eps , \mathsf{q}_1 \sqrt{M} \big\rangle_{L^2_v} }} \ \ \underset{I\!V_4 \ = \, 0}{\underbrace{ - \tfrac{1}{\eps} \big\langle ( E_\eps \cdot v ) \sqrt{M} \mathsf{q}_1 , \mathsf{q}_1 \sqrt{M} \big\rangle_{L^2_v} }} \\
	    = \underset{I\!V_5}{\underbrace{ \tfrac{1}{2} \big\langle \mathsf{q} ( E_\eps \cdot v ) G_\eps , \mathsf{q}_1 \sqrt{M} \big\rangle_{L^2_v} }} + \underset{I\!V_6 \ = \, 0}{\underbrace{ \tfrac{1}{\eps} \big\langle \Gamma (G_\eps , G_\eps) , \mathsf{q}_1 \sqrt{M} \big\rangle_{L^2_v} }} \,,
	  \end{aligned}
	\end{equation}
	where $I\!V_4 = 0$ is derived from $ \langle v \sqrt{M} , \sqrt{M} \rangle_{L^2_v} = 0 $. For the term $I\!V_1$, we have
	\begin{equation}
	  \begin{aligned}
	    I\!V_1 = & \tfrac{1}{\eps} \div_x \, \big\langle G_\eps , \mathsf{q}_1 \sqrt{M} \big\rangle_{L^2_v} = \div_x \, j_\eps \,.
	  \end{aligned}
	\end{equation}
	For the term $I\!V_2$, we deduce from the integration by parts over $v \in \R^3$ and the cancellation $(v \times B_\eps) \cdot v = 0$ that
	\begin{equation}
	  \begin{aligned}
	    I\!V_2 = & \tfrac{1}{\eps} \big\langle \mathsf{q} ( \eps E_\eps + v \times B_\eps ) , \mathsf{q}_1 \tfrac{1}{2} v \sqrt{M} \big\rangle_{L^2_v} = \tfrac{1}{2} \big\langle E_\eps \cdot v , G_\eps \cdot \mathsf{q}_2 \sqrt{M} \big\rangle_{L^2_v} = I\!V_5  \,.
	  \end{aligned}
	\end{equation}
	Then, we derive the fourth equation of \eqref{Local-Consvtn-Law} 
	\begin{equation}
	  \begin{aligned}
	    \partial_t n_\eps + \div_x \, j_\eps = 0
	  \end{aligned}
	\end{equation}
	from the microscopic kinetic $G_\eps$-equation of \eqref{VMB-G}. We emphasize that the conservation law of $n_\eps$ can also be derived from the Amper\'e equation $\partial_t E_\eps - \nabla_x \times B_\eps = - j_\eps$ with the constraint $\div_x \, E_\eps = n_\eps$, which means that the conservation law of $n_\eps$ demonstrates the unity of micro and macro. As a result, the proof of Lemma \ref{Lmm-Local-Consvtn-Law} is finished.
\end{proof}

\subsection{Limits from the global energy estimate} Based on Theorem \ref{Main-Thm-1}, the Cauchy problem \eqref{VMB-G}-\eqref{IC-VMB-G} admits a global solution $( G_\eps , E_\eps , B_\eps )$ belonging to $L^\infty ( \R^+ ; H^s_{x,v} ) $, $ L^\infty (\R^+ ; H^s_x) $ and $ L^\infty (\R^+ ; H^s_x)$, which subjects to the global energy estimate \eqref{Uniform-Bnd}, namely, there is a positive constant, independent of $\eps$, such that
\begin{equation}\label{Uniform-Bnd-1}
  \begin{aligned}
    \sup_{t \geq 0} \big( \| G_\eps (t) \|^2_{H^s_{x,v}} + \| E_\eps (t) \|^2_{H^s_x} + \| B_\eps (t) \|^2_{H^s_x} \big)  \leq C \,,
  \end{aligned}
\end{equation}
and
\begin{equation}\label{Uniform-Bnd-2}
  \begin{aligned}
    \int_0^\infty \| \mathbb{P}^\perp G_\eps (t) \|^2_{H^s_{x,v}(\nu)} \d t \leq C \eps^2 \,.
  \end{aligned}
\end{equation}
From the energy bound \eqref{Uniform-Bnd-1}, there are $G \in L^\infty (\R^+; H^s_{x,v})$, $E,B \in L^\infty (\R^+ ; H^s_x)$, such that
\begin{equation}\label{Convg-GEB}
  \begin{aligned}
    G_\eps \rightarrow G \quad \textrm{weakly-}\star \ \textrm{for} \ t \geq 0 \,, \ \textrm{weakly in } H^s_{x,v}  \,, \\
    E_\eps \rightarrow E \quad \textrm{weakly-}\star \ \textrm{for} \ t \geq 0 \,, \ \textrm{weakly in } H^s_x  \,, \\
    B_\eps \rightarrow B \quad \textrm{weakly-}\star \ \textrm{for} \ t \geq 0  \,, \ \textrm{weakly in } H^s_x \,,
  \end{aligned}
\end{equation}
as $\eps \rightarrow 0$. The limits may hold for some subsequences. But, for convenience, we still employ the original notations of the sequences to denote by the subsequences throughout this paper. From the energy dissipation bound \eqref{Uniform-Bnd-2} and the inequality $\| \mathbb{P}^\perp G_\eps \|^2_{H^s_{x,v}(\nu)} \leq C \| \mathbb{P}^\perp G_\eps \|^2_{H^s_{x,v}} $ derived from the part (1) of Lemma \ref{Lmm-nu-norm}, we have
\begin{equation}\label{Convg-G-perp}
  \begin{aligned}
    \mathbb{P}^\perp G_\eps \rightarrow 0 \quad \textrm{strongly in } \ L^2 (\R^+; H^s_{x,v})
  \end{aligned}
\end{equation}
as $\eps \rightarrow 0$. We thereby deduce from combining the first convergence in \eqref{Convg-GEB} and \eqref{Convg-G-perp} that
\begin{equation}
  \begin{aligned}
    \mathbb{P}^\perp G = 0 \,,
  \end{aligned}
\end{equation}
which immediately means that there are $(\rho^+ , \rho^- , u , \theta) \in L^\infty_t ( \R^+ ; H^s_x )$ such that
\begin{equation}\label{Limit-G}
  \begin{aligned}
    G(t,x,v) = & \rho^+ (t,x) \tfrac{\mathsf{q}_1 + \mathsf{q}_2}{2} \sqrt{M (v)} + \rho^- (t,x) \tfrac{\mathsf{q}_2 - \mathsf{q}_1}{2} \sqrt{M (v)} \\
    + & u (t,x) \cdot v \mathsf{q}_2 \sqrt{M(v)} + \theta (t,x) ( \tfrac{|v|^2}{2} - \tfrac{3}{2} ) \mathsf{q}_2 \sqrt{M(v)} \,,
  \end{aligned}
\end{equation}
where $\mathsf{q}_1 = [1,-1] \in \R^2$ and $\mathsf{q}_2 = [1,1] \in \R^2$.

Via the definitions of $\rho_\eps$, $u_\eps$, $\theta_\eps$ and $n_\eps$ in \eqref{Fluid-Quanities} and the uniform energy bound \eqref{Uniform-Bnd-1}, we obtain
\begin{equation}\label{Uniform-Bnd-3}
  \begin{aligned}
    \sup_{t \geq 0} \big( \| \rho_\eps \|_{H^s_x} + \| u_\eps \|_{H^s_x} + \| \theta_\eps \|_{H^s_x} + \| n_\eps \|_{H^s_x} \big) \leq C\,.
  \end{aligned}
\end{equation}
We thereby deduce the following convergences from the convergence of \eqref{Convg-GEB} and the limit function $G(t,x,v)$ given in \eqref{Limit-G} that
\begin{equation}\label{Convg-rho-u-theta-n}
  \begin{aligned}
    & \rho_\eps = \tfrac{1}{2} \langle G_\eps , \mathsf{q}_2 \sqrt{M} \rangle_{L^2_v} \rightarrow \tfrac{1}{2} \langle G , \mathsf{q}_2 \sqrt{M} \rangle_{L^2_v} = \tfrac{1}{2} (\rho^+ + \rho^-) \overset{\triangle}{=} \rho \,, \\
    & u_\eps = \tfrac{1}{2} \langle G_\eps , \mathsf{q}_2 v \sqrt{M} \rangle_{L^2_v} \rightarrow \tfrac{1}{2} \langle G , \mathsf{q}_2 v \sqrt{M} \rangle_{L^2_v} = u \,, \\
    & \theta_\eps = \tfrac{1}{2} \langle G_\eps , \mathsf{q}_2 ( \tfrac{|v|^2}{3} - 1 ) \sqrt{M} \rangle_{L^2_v} \rightarrow \tfrac{1}{2} \langle G , \mathsf{q}_2 ( \tfrac{|v|^2}{3} - 1 ) \sqrt{M} \rangle_{L^2_v} = \theta \,, \\
    & n_\eps = \langle G_\eps , \mathsf{q}_1 \sqrt{M} \rangle_{L^2_v} \rightarrow \langle G , \mathsf{q}_1 \sqrt{M} \rangle_{L^2_v} = \rho^+ - \rho^- \overset{\triangle}{=} n \,,
  \end{aligned}
\end{equation}
weakly-$\star$ for $t \geq 0$ and weakly in $H^s_x $ as $\eps \rightarrow 0$. It remains to find the limits of $j_\eps$ and $w_\eps$ defined in \eqref{Fluid-Quanities}. We first analyze the relations between $\Pi^\perp_{\mathcal{L}} G_\eps^\pm$ and $\mathbb{P}^\perp G_\eps$. By the definition of the operator $\Pi_{\mathcal{L}}^\perp$, we have
\begin{equation}\label{Relt-perp-B-VMB-1}
  \begin{aligned}
    \Pi_{\mathcal{L}}^\perp G_\eps^+ = & G_\eps^+ - \sum_{1 \leq i \leq 5} \langle G_\eps^+ , \chi_i \rangle_{L^2_v} \chi_i \\
    = & G_\eps^+ - \tfrac{1}{2} \langle G_\eps , (\mathsf{q}_1 + \mathsf{q}_2) \sqrt{M} \rangle_{L^2_v} \sqrt{M} \\
    & - \Big[ \tfrac{1}{2} \langle G_\eps , \mathsf{q}_2 v \sqrt{M} \rangle_{L^2_v} + \tfrac{1}{2} \langle G_\eps , \mathsf{q}_1 v \sqrt{M} \rangle_{L^2_v} \Big] \cdot v \sqrt{M} \\
    & - \Big[ \tfrac{1}{2} \langle G_\eps , \mathsf{q}_2 ( \tfrac{|v|^2}{3} - 1 ) \sqrt{M} \rangle_{L^2_v} + \tfrac{1}{2} \langle G_\eps , \mathsf{q}_1 ( \tfrac{|v|^2}{3} - 1 ) \sqrt{M} \rangle_{L^2_v} \Big] ( \tfrac{|v|^2}{2} - \tfrac{3}{2} ) \sqrt{M} \\
    = & G_\eps^+ - ( \mathbb{P} G_\eps )^+ - \tfrac{\eps}{2} j_\eps \cdot v \sqrt{M} - \tfrac{\eps}{2} w_\eps ( \tfrac{|v|^2}{2} - \tfrac{3}{2} ) \sqrt{M} \\
    = & ( \mathbb{P}^\perp G_\eps )^+ - \tfrac{\eps}{2} j_\eps \cdot v \sqrt{M} - \tfrac{\eps}{2} w_\eps ( \tfrac{|v|^2}{2} - \tfrac{3}{2} ) \sqrt{M} \,,
  \end{aligned}
\end{equation}
where the relations \eqref{Relation-VMN-Boltzmann-Fluid} and \eqref{VMB-Proj-Smp} are utilized. Moreover, similar calculations in the previous process \eqref{Relt-perp-B-VMB-1} yield that
\begin{equation}\label{Relt-perp-B-VMB-2}
  \begin{aligned}
    \Pi_{\mathcal{L}}^\perp G_\eps^- = ( \mathbb{P}^\perp G_\eps )^+ + \tfrac{\eps}{2} j_\eps \cdot v \sqrt{M} + \tfrac{\eps}{2} w_\eps ( \tfrac{|v|^2}{2} - \tfrac{3}{2} ) \sqrt{M} \,.
  \end{aligned}
\end{equation}
We thereby derive from \eqref{Relt-perp-B-VMB-1} and \eqref{Relt-perp-B-VMB-2} that 
\begin{equation}\label{Relt-perp-B-VMB-3}
  \begin{aligned}
    \| \partial^m_x ( \mathbb{P}^\perp G_\eps )^\pm \|^2_{L^2_{x,v}} = & \| \partial^m_x \Pi_{\mathcal{L}}^\perp G_\eps^\pm \|^2_{L^2_{x,v}} + \tfrac{\eps^2}{4} \| \partial^m_x j_\eps \|^2_{L^2_x} \langle v \sqrt{M} , v \sqrt{M} \rangle_{L^2_v} \\
    & + \tfrac{\eps^2}{4} \| \partial^m_x w_\eps \|^2_{L^2_x} \langle (\tfrac{|v|^2}{2} - \tfrac{3}{2}) \sqrt{M} , (\tfrac{|v|^2}{2} - \tfrac{3}{2}) \sqrt{M} \rangle_{L^2_v} \\
    = & \| \partial^m_x \Pi_{\mathcal{L}}^\perp G_\eps^\pm \|^2_{L^2_{x,v}} + \tfrac{\eps^2}{4} \| \partial^m_x j_\eps \|^2_{L^2_x} + \tfrac{3 \eps^2}{8} \| \partial^m_x w_\eps \|^2_{L^2_x} 
  \end{aligned}
\end{equation}
for all $|m| \leq s$, where $ \langle v \sqrt{M} , v \sqrt{M} \rangle_{L^2_v} = 1 $ and $ \langle (\tfrac{|v|^2}{2} - \tfrac{3}{2}) \sqrt{M} , (\tfrac{|v|^2}{2} - \tfrac{3}{2}) \sqrt{M} \rangle_{L^2_v} = \tfrac{3}{2} $ are used. Then, from the part (1) of Lemma \ref{Lmm-nu-norm}, the energy dissipation bound \eqref{Uniform-Bnd-2} and the relation \eqref{Relt-perp-B-VMB-3}, we deduce that
\begin{equation}\label{Uniform-Bnd-4}
  \begin{aligned}
    & \| j_\eps \|^2_{L^2 (\R^+; H^s_x)} + \| w_\eps \|^2_{L^2 (\R^+; H^s_x)} \\
    \leq & \tfrac{C}{\eps^2} \int_0^\infty \| \mathbb{P}^\perp G_\eps (t)  \|^2_{H^s_x L^2_v} \d t \leq \tfrac{C}{\eps^2} \int_0^\infty \| \mathbb{P}^\perp G_\eps (t) \|^2_{H^s_{x,v}(\nu)} \d t \leq C^* \,.
  \end{aligned}
\end{equation}
Consequently, there are functions $j$, $w \in L^2 (\R^+ ; H^s_x)$ such that
\begin{equation}\label{Convg-j-w}
  \begin{aligned}
    j_\eps \rightarrow j \quad \textrm{and }\ w_\eps \rightarrow w
  \end{aligned}
\end{equation}
weakly in $L^2 (\R^+ ; H^s_x)$ as $\eps \rightarrow 0$.

\subsection{Convergences to limiting equations} In this subsection, we will derive the two fluid incompressible Navier-Stokes-Fourier-Maxwell equations \eqref{INSFM-Ohm} with Ohm's law from the conservation laws \eqref{Local-Consvtn-Law} in Lemma \ref{Lmm-Local-Consvtn-Law} and the convergences obtained in the previous subsection.

\subsubsection{Incompressibility and Boussinesq relation} 

From the first equation of \eqref{Local-Consvtn-Law} in Lemma \ref{Lmm-Local-Consvtn-Law} and the energy uniform bound \eqref{Uniform-Bnd-1}, it is easy to deduce
\begin{equation}
  \begin{aligned}
    \div_x \, u_\eps = - \eps \partial_t \rho_\eps \rightarrow 0
  \end{aligned}
\end{equation}
in the sense of distributions as $\eps \rightarrow 0$, which imply that by combining with the convergence \eqref{Convg-rho-u-theta-n}
\begin{equation}\label{Incompressibility}
  \begin{aligned}
    \div_x \, u = 0 \,.
  \end{aligned}
\end{equation}
Via the second equation of \eqref{Local-Consvtn-Law}, we have
\begin{equation}
  \begin{aligned}
    \nabla_x ( \rho_\eps + \theta_\eps ) = - \eps \partial_t u_\eps - \div_x \, \big\langle \widehat{A} (v) \sqrt{M} , \mathcal{L} ( \tfrac{G_\eps \cdot \mathsf{q}_2}{2} ) \big\rangle_{L^2_v} + \tfrac{\eps}{2} ( n_\eps E_\eps + j_\eps \times B_\eps ) \,.
  \end{aligned}
\end{equation}
Noticing that
\begin{equation}
  \begin{aligned}
    \tfrac{1}{2} G_\eps \cdot \mathsf{q}_2 = \tfrac{1}{2} \mathbb{P}^\perp G_\eps \cdot \mathsf{q}_2 + \rho_\eps \sqrt{M} + u_\eps \cdot v \sqrt{M} + \theta_\eps ( \tfrac{|v|^2}{2} - \tfrac{3}{2} ) \sqrt{M} \,,
  \end{aligned}
\end{equation}
one has
\begin{equation}\label{Bolz-L-G-q}
  \begin{aligned}
    \mathcal{L} ( \tfrac{G_\eps \cdot \mathsf{q}_2}{2} ) = \mathcal{L} ( \tfrac{\mathbb{P}^\perp G_\eps \cdot \mathsf{q}_2}{2} ) \,.
  \end{aligned}
\end{equation}
Thus, we have
\begin{equation}
  \begin{aligned}
    \div_x \, \big\langle \widehat{A} (v) \sqrt{M} , \mathcal{L} ( \tfrac{G_\eps \cdot \mathsf{q}_2}{2} ) \big\rangle_{L^2_v} = \div_x \, \big\langle \widehat{A} (v) \sqrt{M} , \mathcal{L} ( \tfrac{\mathbb{P}^\perp G_\eps \cdot \mathsf{q}_2}{2} ) \big\rangle_{L^2_v} \\
    = \tfrac{1}{2} \div_x \, \big\langle A (v) \sqrt{M} , \mathbb{P}^\perp G_\eps \cdot \mathsf{q}_2 ) \big\rangle_{L^2_v} \,,
  \end{aligned}
\end{equation}
where the self-adjointness of $\mathcal{L}$ is utilized. Then we derive from the H\"older inequality, the part (1) of Lemma \ref{Lmm-nu-norm} and the uniform energy dissipation bound \eqref{Uniform-Bnd-2} that
\begin{equation}
  \begin{aligned}
    \int_0^\infty \big\| \div_x \, \big\langle \widehat{A} (v) \sqrt{M} , \mathcal{L} ( \tfrac{G_\eps \cdot \mathsf{q}_2}{2} ) \big\rangle_{L^2_v} \big\|^2_{H^{s-1}_x} \d t \leq C \int_0^\infty \| \mathbb{P}^\perp G_\eps \|^2_{H^s_{x,v}(\nu)} \d t \leq C \eps^2 \,.
  \end{aligned}
\end{equation}
Furthermore, from the uniform bounds \eqref{Uniform-Bnd-1}, \eqref{Uniform-Bnd-3} and \eqref{Uniform-Bnd-4}, we easily derive that
\begin{equation}
  \begin{aligned}
    \sup_{t \geq 0} \big\| \tfrac{\eps}{2} ( n_\eps E_\eps + j_\eps \times B_\eps ) \big\|_{H^s_x} \leq C \eps \,.
  \end{aligned}
\end{equation}
Consequently, it is easy to deduce that
\begin{equation}
  \begin{aligned}
    \nabla_x (\rho_\eps + \theta_\eps) \rightarrow 0
  \end{aligned}
\end{equation}
in the sense of distributions as $\eps \rightarrow 0$, which, combining with the convergence \eqref{Convg-rho-u-theta-n}, gives the Boussinesq relation
\begin{equation}\label{Boussinesq}
  \begin{aligned}
    \rho + \theta  = 0 \,.
  \end{aligned}
\end{equation}

\subsubsection{Convergences of $\tfrac{3}{5} \theta_\eps - \tfrac{2}{5} \rho_\eps$, $\mathcal{P} u_\eps$, $n_\eps$, $E_\eps$ and $B_\eps$}

Before doing this, we introduce the following Aubin-Lions-Simon Theorem, a fundamental result of compactness in the study of nonlinear evolution problems, which can be referred to Theorem II.5.16 of \cite{Boyer-Fabrie-2013BOOK} or \cite{Simon-1987-AMPA}, for instance.
\begin{lemma}[Aubin-Lions-Simon Theorem]\label{Lmm-Aubin-Lions-Simon}
	Let $B_0 \subset B_1 \subset B_2$ be three Banach spaces. We assume that the embedding of $B_1$ in $B_2$ is continuous ans that the embedding of $B_0$ in $B_1$ is compact. Let $p$, $r$ be such that $1 \leq p, r \leq + \infty$. For $T > 0$, we define
	\begin{equation}
	  \begin{aligned}
	    E_{p,r} = \Big\{ u \in L^p ( 0, T; B_0 ) , \partial_t u \in L^r (0, T; B_2) \Big\} \,.
	  \end{aligned}
	\end{equation}
	\begin{enumerate}
		\item If $p < + \infty$, the embedding of $E_{p,r}$ in $L^p (0,T; B_1)$ is compact.
		
		\item If $p = + \infty$ and if $r > 1$, the embedding of $E_{p,r}$ in $C( 0,T; B_1 )$.
	\end{enumerate}
\end{lemma}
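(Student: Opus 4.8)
The plan is to follow the classical Aubin--Lions--Simon route: an Ehrling-type interpolation inequality between $B_0$, $B_1$, $B_2$ reduces the statement to a compactness criterion in $L^p(0,T;B_2)$, and the latter is obtained from the uniform bound on the time derivative via a Steklov-average (Fr\'echet--Kolmogorov) argument. Since a complete treatment is available in \cite{Boyer-Fabrie-2013BOOK, Simon-1987-AMPA}, I only indicate the main steps.

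\emph{Step 1 (Ehrling's lemma).} First I would establish that for every $\eta>0$ there is $C_\eta>0$ with
\begin{equation*}
  \| w \|_{B_1} \leq \eta \| w \|_{B_0} + C_\eta \| w \|_{B_2} \qquad \textrm{for all } w \in B_0 \,.
\end{equation*}
This is a contradiction argument: if it failed for some $\eta_0$, one could normalize so that $\| w_n \|_{B_1}=1$ while $\| w_n \|_{B_0}$ stays bounded and $\| w_n \|_{B_2}\to 0$; the compact embedding $B_0\hookrightarrow B_1$ gives $w_n\to w$ in $B_1$ along a subsequence, hence in $B_2$, forcing $w=0$ in $B_2$ and, by continuity and injectivity of $B_1\hookrightarrow B_2$, $w=0$ in $B_1$, contradicting $\| w \|_{B_1}=1$.

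\emph{Step 2 (reduction to $B_2$).} Given a bounded sequence $(u_n)$ in $E_{p,r}$ with $\| u_n \|_{L^p(0,T;B_0)}+\| \partial_t u_n \|_{L^r(0,T;B_2)}\leq M$, integrating the Ehrling inequality in $t$ yields
\begin{equation*}
  \| u_n - u_m \|_{L^p(0,T;B_1)} \leq 2M\eta + C_\eta \| u_n - u_m \|_{L^p(0,T;B_2)}
\end{equation*}
(with the evident modification for $p=\infty$). Hence it is enough to extract a subsequence Cauchy in $L^p(0,T;B_2)$; a diagonal extraction over $\eta=1/k$ then produces a subsequence Cauchy in $L^p(0,T;B_1)$. \emph{Step 3 (compactness in $L^p(0,T;B_2)$).} Here the time-derivative bound enters. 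For small $h>0$ I would consider the Steklov averages $u_n^h(t)=\tfrac1h\int_t^{t+h}u_n(s)\,\d s$. For fixed $h$, $\| u_n^h(t)\|_{B_0}$ is bounded uniformly in $n$ and $t$, so $\{u_n^h(t)\}_n$ is relatively compact in $B_1$, hence in $B_2$, for each $t$; moreover $u_n^h(t)-u_n^h(t')=\tfrac1h\int_{t'}^t\big(u_n(s+h)-u_n(s)\big)\,\d s$ together with $\| u_n(s+h)-u_n(s)\|_{B_2}\leq\int_s^{s+h}\| \partial_t u_n(\sigma)\|_{B_2}\,\d\sigma$ furnishes a modulus of continuity of $t\mapsto u_n^h(t)$ in $B_2$ uniform in $n$. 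By Arzel\`{a}--Ascoli, $\{u_n^h\}_n$ is relatively compact in $C([0,T-h];B_2)$ for each fixed $h$. Since the same pointwise estimate gives $\sup_n\| \tau_h u_n - u_n\|_{L^p(0,T-h;B_2)}\to 0$ as $h\to 0$, a $3\varepsilon$-argument delivers the relative compactness of $\{u_n\}_n$ in $L^p(0,T;B_2)$.

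\emph{Main obstacle.} I expect the technical heart to be Step 3 --- making the translation/Steklov-average estimate quantitative over the full range $1\leq p,r\leq+\infty$ (for small $p$ one controls $\| \tau_h u_n-u_n\|_{L^p(B_2)}$ directly by Fubini, for larger $p$ by H\"older, and the endpoint $r=1$ requires a uniform-integrability argument rather than genuine equicontinuity) --- together with the refinement needed in part~(2): upgrading relative compactness in $L^\infty(0,T;B_1)$ to relative compactness in $C([0,T];B_1)$. This last point is exactly where the hypothesis $r>1$ is used, since it yields an honest H\"older modulus of continuity in $B_2$ and hence, through Step~1, in $B_1$. The remaining verifications (measurability of the Bochner integrals, the diagonal extractions) are routine.
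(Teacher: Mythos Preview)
The paper does not prove this lemma at all: it is stated as a classical compactness result and simply attributed to Theorem~II.5.16 of \cite{Boyer-Fabrie-2013BOOK} and to \cite{Simon-1987-AMPA}, with the remark that reflexivity of the Banach spaces is not assumed. Your sketch is a faithful outline of the standard Aubin--Lions--Simon argument found in those references (Ehrling interpolation, reduction to compactness in $L^p(0,T;B_2)$, and a translation/Steklov-average equicontinuity estimate), so there is nothing to compare --- you have supplied exactly the proof the paper defers to the literature.
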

We emphasize that the reflexivity of the spaces considered in Lemma \ref{Lmm-Aubin-Lions-Simon} is not assumed.

We now consider the convergence of $\tfrac{3}{5} \theta_\eps - \tfrac{2}{5} \rho_\eps$. The third equation of \eqref{Local-Consvtn-Law} multiplied by $\tfrac{3}{5}$ minus $\tfrac{2}{5}$ times of the first equation of \eqref{Local-Consvtn-Law} gives
\begin{equation}
  \begin{aligned}
    \partial_t ( \tfrac{3}{5} \theta_\eps - \tfrac{2}{5} \rho_\eps ) + \tfrac{2}{5} \div_x \, \big\langle \widehat{B} (v) \sqrt{M} , \tfrac{1}{\eps} \mathcal{L} ( \tfrac{G_\eps \cdot \mathsf{q}_2}{2} ) \big\rangle_{L^2_v} = \tfrac{\eps}{5} j_\eps \cdot E_\eps \,.
  \end{aligned}
\end{equation}
Noticing the relation \eqref{Bolz-L-G-q}, we yield that
\begin{equation}
  \begin{aligned}
    \| \partial_t ( \tfrac{3}{5} \theta_\eps - \tfrac{2}{5} \rho_\eps ) \|_{H^{s-1}_x} = & \Big\| \tfrac{\eps}{5} j_\eps \cdot E_\eps - \tfrac{2}{5} \div_x \, \big\langle \widehat{B} (v) \sqrt{M} , \tfrac{1}{\eps} \mathcal{L} ( \tfrac{G_\eps \cdot \mathsf{q}_2}{2} ) \big\rangle_{L^2_v} \Big\|_{H^{s-1}_x} \\
    = & \Big\| \tfrac{\eps}{5} j_\eps \cdot E_\eps - \tfrac{2}{5} \div_x \, \big\langle \widehat{B} (v) \sqrt{M} , \tfrac{1}{\eps} \mathcal{L} ( \tfrac{ \mathbb{P}^\perp G_\eps \cdot \mathsf{q}_2}{2} ) \big\rangle_{L^2_v} \Big\|_{H^{s-1}_x} \\
    = & \Big\| \tfrac{\eps}{5} j_\eps \cdot E_\eps - \tfrac{1}{5 \eps} \div_x \, \big\langle B (v) \sqrt{M} , \mathbb{P}^\perp G_\eps \cdot \mathsf{q}_2 \big\rangle_{L^2_v} \Big\|_{H^{s-1}_x} \\
    \leq & C \| j_\eps \cdot E_\eps \|_{H^{s-1}_x} + \tfrac{C}{\eps} \| B(v) \sqrt{M} \|_{L^2_v} \| \nabla_x \mathbb{P}^\perp G_\eps \|_{H^{s-1}_x L^2_v} \\
    \leq & C \| j_\eps \|_{H^s_x} \| E_\eps \|_{H^s_x} + \tfrac{C}{\eps} \| \mathbb{P}^\perp G_\eps \|_{H^s_{x,v}(\nu)} \,,
  \end{aligned}
\end{equation}
which immediately derives from the uniform energy bounds \eqref{Uniform-Bnd-1}, \eqref{Uniform-Bnd-3} and \eqref{Uniform-Bnd-4} that
\begin{equation}\label{Local-t-theta-rho}
  \begin{aligned}
    & \| \partial_t ( \tfrac{3}{5} \theta_\eps - \tfrac{2}{5} \rho_\eps ) \|_{L^\infty(0,T ; H^{s-1}_x )} \\
    \leq & C \| j_\eps \|_{L^2(\R^+ ; H^s_x)} \| E_\eps \|_{L^\infty (\R^+ ; H^s_x)} + \tfrac{C}{\eps} \| \mathbb{P}^\perp G_\eps \|_{L^2(\R^+ ; H^s_{x,v}(\nu))} \leq C 
  \end{aligned}
\end{equation}
for any $T > 0$ and $0 < \eps \leq 1$. It is easily derived from the definition of $\rho_\eps$, $\theta_\eps$ in \eqref{Fluid-Quanities} and the uniform energy bound \eqref{Uniform-Bnd-1} that
\begin{equation}\label{Local-theta-rho}
  \begin{aligned}
    \big\| ( \tfrac{3}{5} \theta_\eps - \tfrac{2}{5} \rho_\eps ) \big\|_{L^\infty(0,T; H^s_x)} \leq C 
  \end{aligned}
\end{equation}
for all $T>0$ and $0 < \eps \leq 1$. One notices that
\begin{equation}\label{Embeddings-Hs}
  \begin{aligned}
    H^s_x \hookrightarrow H^{s-1}_x \hookrightarrow H^{s-1}_x \,,
  \end{aligned}
\end{equation}
where the embedding of $H^s_x $ in $ H^{s-1}_x$ is compact and the embedding of  $H^{s-1}_x $ in $ H^{s-1}_x$ is naturally continuous. Then, from Aubin-Lions-Simon Theorem in Lemma \ref{Lmm-Aubin-Lions-Simon}, the bounds \eqref{Local-t-theta-rho}, \eqref{Local-theta-rho} and the embeddings \eqref{Embeddings-Hs}, we deduce that there is a $\widetilde{\theta} \in L^\infty (\R^+ ; H^s_x) \cap C(\R^+ ; H^{s-1}_x)$ such that 
\begin{equation}
  \begin{aligned}
    \tfrac{3}{5} \theta_\eps - \tfrac{2}{5} \rho_\eps \rightarrow \widetilde{\theta}
  \end{aligned}
\end{equation}
strongly in $C(0, T ; H^{s-1}_x)$ for any $T > 0$ as $\eps \rightarrow 0$. Combining with the convergences \eqref{Convg-rho-u-theta-n}, we know that $ \widetilde{\theta} = \tfrac{3}{5} \theta - \tfrac{2}{5} \rho $. Then the relation \eqref{Boussinesq} and $\theta = ( \tfrac{3}{5} \theta - \tfrac{2}{5} \rho ) + \tfrac{2}{5} ( \rho + \theta )$ give us $\widetilde{\theta} = \theta$. As a result, 
\begin{equation}
  \begin{aligned}
    \tfrac{3}{5} \theta_\eps - \tfrac{2}{5} \rho_\eps \rightarrow \theta
  \end{aligned}
\end{equation}
strongly in $C(\R^+; H^{s-1}_x)$ as $\eps \rightarrow 0$, where $\theta \in L^\infty (\R^+; H^s_x) \cap C(\R^+; H^{s-1}_x)$. Noticing that $\theta_\eps = ( \tfrac{3}{5} \theta_\eps - \tfrac{2}{5} \rho_\eps ) + \tfrac{2}{5} (\rho_\eps + \theta_\eps)$, we thereby derive from the convergences \eqref{Convg-rho-u-theta-n} that
\begin{equation}
  \begin{aligned}
    \rho_\eps + \theta_\eps \rightarrow 0
  \end{aligned}
\end{equation}
weakly-$\star$ in $t \geq 0$, weakly in $H^s_x$ and strongly in $H^{s-1}_x$ as $\eps \rightarrow 0$.

Next we consider the convergence of $\mathcal{P} u_\eps$, where $\mathcal{P}$ is the Leray projection operator. Taking $\mathcal{P}$ on the second equation of \eqref{Local-Consvtn-Law} gives 
\begin{equation}\label{Local-Pu}
   \begin{aligned}
     \partial_t \mathcal{P} u_\eps + \mathcal{P} \div_x \, \big\langle \widehat{A} (v) \sqrt{M} , \tfrac{1}{\eps} \mathcal{L} ( \tfrac{G_\eps \cdot \mathsf{q}_2}{2} ) \big\rangle_{L^2_v} = \tfrac{1}{2} \mathcal{P} ( n_\eps E_\eps + j_\eps \times B_\eps ) \,. 
   \end{aligned}
\end{equation}
It is easy to derive from the relation \eqref{Bolz-L-G-q}, the H\"older inequality, the bound $\| A(v) \sqrt{M} \|_{L^2_v} \leq C$, the calculus inequality and the part (1) of Lemma \ref{Lmm-nu-norm} that
\begin{equation}
  \begin{aligned}
    \| \partial_t \mathcal{P} u_\eps \|_{H^{s-1}_x} = & \Big\| \tfrac{1}{2} \mathcal{P} ( n_\eps E_\eps + j_\eps \times B_\eps ) - \mathcal{P} \div_x \, \big\langle \widehat{A} (v) \sqrt{M} , \tfrac{1}{\eps} \mathcal{L} ( \tfrac{G_\eps \cdot \mathsf{q}_2}{2} ) \big\rangle_{L^2_v} \Big\|_{H^{s-1}_x} \\
    = & \Big\| \tfrac{1}{2} \mathcal{P} ( n_\eps E_\eps + j_\eps \times B_\eps ) - \mathcal{P} \div_x \, \big\langle \widehat{A} (v) \sqrt{M} , \tfrac{1}{\eps} \mathcal{L} ( \tfrac{\mathbb{P}^\perp G_\eps \cdot \mathsf{q}_2}{2} ) \big\rangle_{L^2_v} \Big\|_{H^{s-1}_x} \\
    = & \Big\| \tfrac{1}{2} \mathcal{P} ( n_\eps E_\eps + j_\eps \times B_\eps ) - \mathcal{P} \div_x \, \big\langle A (v) \sqrt{M} , \tfrac{1}{2 \eps}  \mathbb{P}^\perp G_\eps \cdot \mathsf{q}_2 \big\rangle_{L^2_v} \Big\|_{H^{s-1}_x} \\
    \leq & C \| n_\eps E_\eps \|_{H^{s-1}_x} + C \| j_\eps \times B_\eps \|_{H^{s-1}_x} \\
    & + \tfrac{C}{\eps} \| A(v) \sqrt{M} \|_{L^2_v} \| \nabla_x \mathbb{P}^\perp G_\eps \|_{H^{s-1}_x L^2_v} \\
    \leq & C \| n_\eps \|_{H^s_x} \| E_\eps \|_{H^s_x} + C \| j_\eps \|_{H^s_x} \| B_\eps \|_{H^s_x} + \tfrac{C}{\eps} \| \mathbb{P}^\perp G_\eps \|_{H^s_{x,v}(\nu)} \,,
  \end{aligned}
\end{equation}
which, by the uniform energy bounds \eqref{Uniform-Bnd-1}, \eqref{Uniform-Bnd-3} and \eqref{Uniform-Bnd-4}, implies that
\begin{equation}\label{Local-t-Pu}
  \begin{aligned}
    & \| \partial_t \mathcal{P} u_\eps \|_{L^2(0,T; H^{s-1}_x)}  \leq C \| n_\eps \|_{L^\infty(\R^+; H^s_x)} \| E_\eps \|_{L^2(\R^+ ; H^{s-1}_x)} \sqrt{T} \\
    & + C \| j_\eps \|_{L^2(\R^+ ; H^s_x)} \| B_\eps \|_{L^\infty(\R^+; H^s_x)} + \tfrac{C}{\eps} \| \mathbb{P}^\perp G_\eps \|_{L^2(\R^+; H^s_{x,v}(\nu))} \\ 
    \leq & C \sqrt{T} + C
  \end{aligned}
\end{equation}
for any $T > 0$ and $0 < \eps \leq 1$. Furthermore, from the definition of $u_\eps$ in \eqref{Fluid-Quanities} and the uniform energy bound \eqref{Uniform-Bnd-1}, we derive that for all $T > 0$ and $0 < \eps \leq 1$
\begin{equation}\label{Local-Pu-1}
  \begin{aligned}
    \| \mathcal{P} u_\eps \|_{L^\infty(0,T ; H^s_x)} \leq C \,.
  \end{aligned}
\end{equation}
Then, from Aubin-Lions-Simon Theorem in Lemma \ref{Lmm-Aubin-Lions-Simon}, the bounds \eqref{Local-t-Pu}, \eqref{Local-Pu-1} and the embeddings \eqref{Embeddings-Hs}, we derive that there is a $\widetilde{u} \in L^\infty (\R^+ ; H^s_x) \cap C(\R^+; H^{s-1}_x)$ such that 
\begin{equation}
  \begin{aligned}
    \mathcal{P} u_\eps \rightarrow \widetilde{u} 
  \end{aligned}
\end{equation}
strongly in $C(0,T; H^{s-1}_x)$ for all $T > 0$ as $\eps \rightarrow 0$. Furthermore, from the convergences \eqref{Convg-rho-u-theta-n} and the incompressibility \eqref{Incompressibility}, we deduce
\begin{equation}
  \begin{aligned}
    \widetilde{u} = \mathcal{P} u = u \,.
  \end{aligned}
\end{equation}
Consequently, 
\begin{equation}
  \begin{aligned}
    \mathcal{P} u_\eps \rightarrow u
  \end{aligned}
\end{equation}
strongly in $C(\R^+; H^{s-1}_x)$ as $\eps \rightarrow 0$, where $u \in L^\infty (\R^+; H^s_x) \cap C(\R^+; H^{s-1}_x)$. Furthermore, we know that
\begin{equation}
  \begin{aligned}
    \mathcal{P}^\perp u_\eps \rightarrow 0
  \end{aligned}
\end{equation}
weakly-$\star$ in $t \geq 0$, weakly in $H^s_x$ and strongly in $H^{s-1}_x$ as $\eps \rightarrow 0$.

Next, we consider the convergence of $n_\eps$. From the local conservation laws \eqref{Local-Consvtn-Law} in Lemma \ref{Lmm-Local-Consvtn-Law}, we know that $n_\eps$ satisfies
\begin{equation}
  \begin{aligned}
    \partial_t n_\eps  + \div_x \, j_\eps = 0 \,.
  \end{aligned}
\end{equation}
Then, we have
\begin{equation}
  \begin{aligned}
    \| \partial_t n_\eps \|_{H^{s-1}_x} = \| \div_x \, j_\eps \|_{H^{s-1}_x} \leq C \| j_\eps \|_{H^s_x} \,,
  \end{aligned}
\end{equation}
which yields by using the bound \eqref{Uniform-Bnd-4} that
\begin{equation}\label{Local-t-n}
  \begin{aligned}
    \| \partial_t n_\eps \|_{L^2 (0,T; H^{s-1}_x)} \leq C \| j_\eps \|_{L^2(\R^+ ; H^s_x)} \leq C 
  \end{aligned}
\end{equation}
for any $T > 0$ and $0 < \eps \leq 1$. Moreover, from the bound \eqref{Uniform-Bnd-3}, we know that for any $T > 0$ and $0 < \eps \leq 1$
\begin{equation}\label{Local-n}
  \begin{aligned}
    \| n_\eps \|_{L^\infty (0,T ; H^s_x)} \leq C \,.
  \end{aligned}
\end{equation}
Then, it is derived from Aubin-Lions-Simon Theorem in Lemma \ref{Lmm-Aubin-Lions-Simon}, the bounds \eqref{Local-t-n}, \eqref{Local-n} and the embeddings \eqref{Embeddings-Hs} that $n_\eps \rightarrow n$ strongly in $C(0,T;H^{s-1}_x)$ for any $T > 0$ as $\eps \rightarrow 0$. Hence, we have
\begin{equation}
  \begin{aligned}
    n_\eps \rightarrow n
  \end{aligned}
\end{equation}
strongly in $C(\R^+; H^{s-1}_x)$ as $\eps \rightarrow 0$, where $n \in L^\infty (\R^+ ; H^s_x) \cap C(\R^+; H^{s-1}_x)$.

We next consider the convergences of $E_\eps$ and $B_\eps$. Noticing that $E_\eps$ and $B_\eps$ subject to 
\begin{equation}
  \begin{aligned}
    \partial_t E_\eps - \nabla_x \times B_\eps = - j_\eps \,, \ \partial_t B_\eps + \nabla_x \times E_\eps = 0 \,,
  \end{aligned}
\end{equation}
we deduce that
\begin{equation}
  \begin{aligned}
    & \| \partial_t E_\eps \|_{H^{s-1}_x} + \| \partial_t B_\eps \|_{H^{s-1}_x} \\
    = & \| \nabla_x \times B_\eps - j_\eps \|_{H^{s-1}_x} + \| \nabla_x \times E_\eps \|_{H^{s-1}_x} \\
    \leq & C ( \| B_\eps \|_{H^s_x}  + \| j_\eps \|_{H^s_x} + \| E_\eps \|_{H^s_x} ) \,,
  \end{aligned}
\end{equation}
which reduces to
\begin{equation}\label{Local-t-E-B}
  \begin{aligned}
    & \| \partial_t E_\eps \|_{L^2(0,T;H^{s-1}_x)} + \| \partial_t B_\eps \|_{L^2(0,T; H^{s-1}_x) } \\
    \leq & C ( \| B_\eps \|_{L^\infty (\R^+; H^s_x)} + \| E_\eps \|_{L^\infty (\R^+; H^s_x)} ) \sqrt{T} + C \| j_\eps \|_{L^2(\R^+; H^s_x)} \\
    \leq & C( \sqrt{T} + 1 )
  \end{aligned}
\end{equation}
for all $T > 0$ and $0 < \eps \leq 1$. Here the uniform energy bounds \eqref{Uniform-Bnd-1} and \eqref{Uniform-Bnd-4} are utilized. Moreover, from the bound \eqref{Uniform-Bnd-1}, we have
\begin{equation}\label{Local-E-B}
  \begin{aligned}
    \| E_\eps \|_{L^\infty (0,T;H^s_x)} + \| B_\eps \|_{L^\infty (0,T;H^s_x)} \leq C
  \end{aligned}
\end{equation}
for any $T > 0$ and $0 < \eps \leq 1$. Then, from Aubin-Lions-Simon Theorem in Lemma \ref{Lmm-Aubin-Lions-Simon}, the uniform bounds \eqref{Local-t-E-B}, \eqref{Local-E-B} and the embeddings  \eqref{Embeddings-Hs}, we deduce that $E_\eps \rightarrow E$ and $B_\eps \rightarrow B$ strongly in $C(0,T;H^{s-1}_x)$ for any $T > 0$ as $\eps \rightarrow 0$. Namely, we have
\begin{equation}
\begin{aligned}
E_\eps \rightarrow E \quad \textrm{and} \quad B_\eps \rightarrow B
\end{aligned}
\end{equation}
strongly in $C(\R^+;H^{s-1}_x)$ as $\eps \rightarrow 0$, where $E, B \in L^\infty(\R^+; H^s_x) \cap C(\R^+; H^{s-1}_x)$.

In summary, we have deduced the following convergences:
\begin{equation}\label{Convg-strong}
  \begin{aligned}
    \big( \mathcal{P} u_\eps , \tfrac{3}{5} \theta_\eps - \tfrac{2}{5} \rho_\eps , n_\eps , E_\eps , B_\eps \big) \rightarrow (u, \theta, n, E, B)
  \end{aligned}
\end{equation}
strongly in $C(\R^+; H^{s-1}_x)$ as $\eps \rightarrow 0$, where $(u, \theta, n, E, B) \in L^\infty(\R^+; H^s_x) \cap C(\R^+ ; H^{s-1}_x)$, and
\begin{equation}\label{Convg-weak-to-0}
  \begin{aligned}
    \big( \mathcal{P}^\perp u_\eps , \rho_\eps + \theta_\eps \big) \rightarrow ( 0, 0 )
  \end{aligned}
\end{equation}
weakly-$\star$ in $t \geq 0$, weakly in $H^s_x$ and strongly in $H^{s-1}_x$ as $\eps \rightarrow 0$, and
\begin{equation}\label{Convg-weak-jw}
  \begin{aligned}
    (j_\eps, w_\eps) \rightarrow (j, w)
  \end{aligned}
\end{equation}
weakly in $L^2(\R^+; H^s_x)$ as $\eps \rightarrow 0$, where $(j,w) \in L^2 (\R^+; H^s_x)$.

\subsubsection{Ohm's law and energy equivalence relation}

We now derive the Ohm's law in the last second equality of \eqref{INSFM-Ohm} and energy equivalence relation in the last first equality of \eqref{INSFM-Ohm}. As in \eqref{Ohm-Law-Kinetic}, multiplying the first $G_\eps$-equation of \eqref{VMB-G} by $\mathsf{q}_1$ gives
\begin{equation}
  \begin{aligned}
    \tfrac{1}{\eps} ( \mathcal{L} + \mathfrak{L} ) ( G_\eps \cdot \mathsf{q}_1 ) = - \eps \partial_t ( G_\eps \cdot \mathsf{q}_1 ) - v \cdot \nabla_x ( G_\eps \cdot \mathsf{q}_1 ) - ( \eps E_\eps + v \times B_\eps ) \cdot \nabla_v (G_\eps \cdot \mathsf{q}_2) \\
    + 2 E_\eps \cdot \Phi (v) + \tfrac{1}{2} \eps ( E_\eps \cdot v ) ( G_\eps \cdot \mathsf{q}_2 ) + \Gamma (G_\eps , G_\eps) \cdot \mathsf{q}_1 \,,
  \end{aligned}
\end{equation}
where $\Phi (v) = v \sqrt{M}$ and the operator $\mathfrak{L}$ is defined in \eqref{Mathfrak-L}. By the definition of $\mathbb{P} G_\eps$ in \eqref{VMB-Proj-Smp}, $\mathsf{q}_1 \cdot \mathsf{q}_1 = 2$ and $\mathsf{q}_1 \cdot \mathsf{q}_2 = 0$, we have
\begin{equation}\label{PG-q1}
  \begin{aligned}
    \mathbb{P} G_\eps \cdot \mathsf{q}_1 = n_\eps \sqrt{M} 
  \end{aligned}
\end{equation}
and
\begin{equation}\label{PG-q2}
   \begin{aligned}
     \mathbb{P} G_\eps \cdot \mathsf{q}_2 = \rho_\eps \sqrt{M} + \underset{2 h_\eps}{\underbrace{ 2 u_\eps \cdot v \sqrt{M} + 2 \theta_\eps ( \tfrac{|v|^2}{2} - \tfrac{3}{2} ) \sqrt{M} }} \,.
   \end{aligned}
\end{equation}
Then, by the decomposition $G_\eps = \mathbb{P} G_\eps + \mathbb{P}^\perp G_\eps$, the relations \eqref{PG-q1} and \eqref{PG-q2}, we can calculate 
\begin{equation}
  \begin{aligned}
    v \cdot \nabla_x ( G_\eps \cdot \mathsf{q}_1 ) = & v \cdot \nabla_x ( \mathbb{P} G_\eps \cdot \mathsf{q}_1 ) + v \cdot \nabla_x ( \mathbb{P}^\perp G_\eps \cdot \mathsf{q}_1 ) \\
    = & \nabla_x n_\eps \cdot \Phi (v) + v \cdot \nabla_x ( \mathbb{P}^\perp G_\eps \cdot \mathsf{q}_1 ) \,,
  \end{aligned}
\end{equation}
and
\begin{equation}
  \begin{aligned}
    ( v \times B_\eps ) \cdot \nabla_v ( G_\eps \cdot \mathsf{q}_2) = & ( v \times B_\eps ) \cdot \nabla_v ( \mathbb{P} G_\eps \cdot \mathsf{q}_2) + ( v \times B_\eps ) \cdot \nabla_v ( \mathbb{P}^\perp G_\eps \cdot \mathsf{q}_2) \\
    = & - (v \times B_\eps) \cdot v \big( \rho_\eps + 2 u_\eps \cdot v + 2 \theta_\eps ( \tfrac{|v|^2}{2} - \tfrac{3}{2} ) \big) \sqrt{M} \\
    & + ( v \times B_\eps ) \cdot u_\eps \sqrt{M} + ( v \times B_\eps ) \cdot \nabla_v ( \mathbb{P}^\perp G_\eps \cdot \mathsf{q}_2) \\
    = & - 2 ( u_\eps \times B_\eps ) \cdot \Phi (v) + ( v \times B_\eps ) \cdot \nabla_v ( \mathbb{P}^\perp G_\eps \cdot \mathsf{q}_2) \,,
  \end{aligned}
\end{equation}
where the cancellation $(v \times B_\eps) \cdot v = 0$ and the relation $( v \times B_\eps ) \cdot u_\eps = - ( u_\eps \cdot B_\eps ) \cdot v$ are utilized. Furthermore, we have
\begin{equation}
  \begin{aligned}
    \Gamma ( G_\eps , G_\eps ) \cdot \mathsf{q}_1 = \Gamma ( \mathbb{P} G_\eps , \mathbb{P} G_\eps ) \cdot \mathsf{q}_1 + \Gamma ( \mathbb{P}^\perp G_\eps , \mathbb{P}^\perp G_\eps ) \cdot \mathsf{q}_1 \\
    + \Gamma ( \mathbb{P}^\perp G_\eps , \mathbb{P} G_\eps ) \cdot \mathsf{q}_1 + \Gamma ( \mathbb{P} G_\eps , \mathbb{P}^\perp G_\eps ) \cdot \mathsf{q}_1 \,.
  \end{aligned}
\end{equation}
For the term $\Gamma (\mathbb{P} G_\eps , \mathbb{P} G_\eps) \cdot \mathsf{q}_1$, we derive from the fact $\mathcal{Q} (\sqrt{M}, \sqrt{M}) = 0$, the definition of $h_\eps \in \textrm{Ker} (\mathcal{L})$ in \eqref{PG-q2}, the cancellation $\mathcal{L} h_\eps = 0$, the relations \eqref{PG-q1}-\eqref{PG-q2}, the definitions of operators $\mathcal{L}$ in \eqref{Linear-Oprt-B} and $\mathfrak{L}$ in \eqref{Mathfrak-L} that
\begin{equation}
  \begin{aligned}
    \Gamma (\mathbb{P} G_\eps , \mathbb{P} G_\eps) \cdot \mathsf{q}_1 = & \mathcal{Q} ( \mathbb{P} G_\eps^+ , \mathbb{P} G_\eps^+ + \mathbb{P} G_\eps^- ) - \mathcal{Q} ( \mathbb{P} G_\eps^- , \mathbb{P} G_\eps^+ + \mathbb{P} G_\eps^- ) \\
    = & \mathcal{Q} ( \mathbb{P} G_\eps^+ - \mathbb{P} G_\eps^- , \mathbb{P} G_\eps^+ + \mathbb{P} G_\eps^- ) \\
    = & \mathcal{Q} ( \mathbb{P} G_\eps \cdot \mathsf{q}_1 , \mathbb{P} G_\eps \cdot \mathsf{q}_2 ) \\
    = & \mathcal{Q} ( n_\eps \sqrt{M} , \rho_\eps \sqrt{M} + 2 h_\eps ) = 2 n_\eps \mathcal{Q} (\sqrt{M} , h_\eps) \\
    = & n_\eps \big[ \mathcal{Q} (h_\eps , \sqrt{M}) + \mathcal{Q} (\sqrt{M}, h_\eps) - \mathcal{Q} (h_\eps , \sqrt{M}) + \mathcal{Q} ( \sqrt{M} , h_\eps ) \big] \\
    = & - n_\eps \mathcal{L} h_\eps + n_\eps \mathfrak{L} h_\eps = n_\eps ( \mathcal{L} + \mathfrak{L} ) h_\eps - 2 n_\eps \mathcal{L} h_\eps \\
    = &  n_\eps ( \mathcal{L} + \mathfrak{L} ) h_\eps \,.
  \end{aligned}
\end{equation}
Consequently, from the previous equalities and the relations \eqref{PG-q1}-\eqref{PG-q2}, we have
\begin{equation}\label{Ohm-L+L-eps}
  \begin{aligned}
    \tfrac{1}{\eps} (\mathcal{L} + \mathfrak{L}) ( G_\eps \cdot \mathsf{q}_1 ) = 2 \big( - \tfrac{1}{2} \nabla_x n_\eps + E_\eps + u_\eps \times B_\eps ) \cdot \Phi (v) + n_\eps ( \mathcal{L} + \mathfrak{L} ) h_\eps + \mathcal{X}_\eps \,,
  \end{aligned}
\end{equation}
where $\mathcal{X}_\eps$ has the form of
\begin{equation}\label{X-eps}
  \begin{aligned}
    \mathcal{X}_\eps = & - \eps \partial_t ( G_\eps \cdot \mathsf{q}_1 ) - v \cdot \nabla_x ( \mathbb{P}^\perp G_\eps \cdot \mathsf{q}_1 ) - \eps E_\eps \cdot \nabla_v ( G_\eps \cdot \mathsf{q}_2 ) \\
    & - ( v \times B_\eps ) \cdot \nabla_v ( \mathbb{P}^\perp G_\eps \cdot \mathsf{q}_2 ) + \tfrac{1}{2} \eps ( E_\eps \cdot v ) ( G_\eps \cdot \mathsf{q}_2 ) \\
    & + \Gamma ( \mathbb{P}^\perp G_\eps , \mathbb{P}^\perp G_\eps ) \cdot \mathsf{q}_1 + \Gamma ( \mathbb{P}^\perp G_\eps , \mathbb{P} G_\eps ) \cdot \mathsf{q}_1 + \Gamma ( \mathbb{P} G_\eps , \mathbb{P}^\perp G_\eps ) \cdot \mathsf{q}_1 \,.
  \end{aligned}
\end{equation}

As shown in the equality \eqref{Ohm-Law-fluid}, via multiplying the relation \eqref{Ohm-L+L-eps} by $\widetilde{\Phi} (v)$ defined in \eqref{Tilde-Phi-Phi}, using Lemma \ref{Lmm-mathfrak-L} and the definition of $j_\eps$ in \eqref{Fluid-Quanities}, we obtain
\begin{equation}
  \begin{aligned}
    j_\eps = & 2 ( - \tfrac{1}{2} \nabla_x n_\eps + E_\eps + u_\eps \times B_\eps ) \cdot \langle \Phi (v) \otimes \widehat{\Phi} (v) \rangle_{L^2_v} + \langle n_\eps h_\eps , \Phi (v) \rangle_{L^2_v} + \langle \mathcal{X}_\eps , \widetilde{\Phi} (v) \rangle_{L^2_v} \\
    = & 2 ( - \tfrac{1}{2} \nabla_x n_\eps + E_\eps + u_\eps \times B_\eps ) \cdot \tfrac{1}{2} \sigma \mathbb{I}_3 + n_\eps u_\eps + \langle \mathcal{X}_\eps , \widetilde{\Phi} (v) \rangle_{L^2_v} \\
    = & n_\eps u_\eps + \sigma ( - \tfrac{1}{2} \nabla_x n_\eps + E_\eps + u_\eps \times B_\eps ) + \langle \mathcal{X}_\eps , \widetilde{\Phi} (v) \rangle_{L^2_v} \,.
  \end{aligned}
\end{equation}
Then, we have
\begin{equation}\label{Ohm-law-eps}
  \begin{aligned}
    j_\eps = & n_\eps \mathcal{P} u_\eps + \sigma ( - \tfrac{1}{2} \nabla_x n_\eps + E_\eps + \mathcal{P} u_\eps \times B_\eps ) \\
    & + n_\eps \mathcal{P}^\perp u_\eps + \sigma \mathcal{P}^\perp u_\eps \times B_\eps + \langle \mathcal{X}_\eps , \widetilde{\Phi} (v) \rangle_{L^2_v}  
  \end{aligned}
\end{equation}
by utilizing the decomposition $u_\eps = \mathcal{P} u_\eps + \mathcal{P}^\perp u_\eps$, where $\mathcal{P}$ is the Leray projection. From the convergences \eqref{Convg-strong}, we easily know that for $s \geq 3$
\begin{equation}\label{Convg-strong-Ohm}
  \begin{aligned}
    n_\eps \mathcal{P} u_\eps + \sigma ( - \tfrac{1}{2} \nabla_x n_\eps + E_\eps + \mathcal{P} u_\eps \times B_\eps ) \longrightarrow n u + \sigma ( - \tfrac{1}{2} \nabla_x n + E + u \times B )
  \end{aligned}
\end{equation}
strongly in $C(\R^+; H^{s-2}_x)$ as $\eps \rightarrow 0$. Moreover, it is easily deduced from the convergences \eqref{Convg-strong} and \eqref{Convg-weak-to-0} that
\begin{equation}\label{Convg-weak-uB-0}
  \begin{aligned}
     n_\eps \mathcal{P}^\perp u_\eps + \sigma \mathcal{P}^\perp u_\eps \times B_\eps \rightarrow 0
  \end{aligned}
\end{equation}
weakly-$\star$ in $t \geq 0$ and weakly in $H^{s-1}_x$ as $\eps \rightarrow 0$.

Next we prove that
\begin{equation}\label{X-eps-Limit}
  \begin{aligned}
    \langle \mathcal{X}_\eps , \widetilde{\Phi} (v) \rangle_{L^2_v} \rightarrow 0
  \end{aligned}
\end{equation}
in the sense of distribution as $\eps \rightarrow 0$. Indeed, for any $T > 0$, let a vector-valued text function $\psi (t,x) \in C^1 ( 0,T; C_c^\infty (\T^3) )$, $\psi(0,x) = \psi_0 (x) \in C_c^\infty (\T^3)$ and $\psi (t,x) = 0$ for $t \geq T'$, where $T' < T$. Then, from the uniform bound \eqref{Uniform-Bnd-1} and the initial energy bounds given in Theorem \ref{Main-Thm-1} that
\begin{equation}
  \begin{aligned}
    & \Big| - \eps \int_0^T \int_{\T^3} \langle \partial_t ( G_\eps \cdot \mathsf{q}_1 ) , \widetilde{\Phi} (v) \rangle_{L^2_v} \cdot \psi (t,x) \d x \d t \Big| \\
    = & \Big| \eps \int_{\T^3} \langle G_\eps^{in} \cdot \mathsf{q}_1 , \widetilde{\Phi} (v) \rangle_{L^2_v} \cdot \psi_0 (x) \d x  + \eps \int_0^T \int_{\T^3} \langle G_\eps \cdot \mathsf{q}_1 , \widetilde{\Phi} (v) \rangle_{L^2_v} \cdot \partial_t \psi (t, x) \d x \d t \Big| \\
    \leq & C \eps \| \widetilde{\Phi} (v) \|_{L^2_v} \big(  \| G_\eps^{in} \|_{L^2_{x,v}} \| \psi_0 \|_{L^2_x} + \| G_\eps \|_{L^\infty(\R^+; L^2_{x,v})} \| \partial_t \psi \|_{L^\infty (0,T;L^2_x)} T \big) \\
    \leq & C (\psi, T) \eps \big( \| G_\eps^{in} \|_{H^s_{x,v}} + \| G_\eps \|_{L^\infty(\R^+; H^s_{x,v})} \big) \\
    \leq & C^* ( \psi , T ) \eps \rightarrow 0
  \end{aligned}
\end{equation}
as $\eps \rightarrow 0$, which means that
\begin{equation}\label{X-eps-Limit-1}
  \begin{aligned}
    - \eps \langle \partial_t ( G_\eps \cdot \mathsf{q}_1 ) , \widetilde{\Phi} (v) \rangle_{L^2_v} \rightarrow 0
  \end{aligned}
\end{equation}
in the sense of distribution as $\eps \rightarrow 0$. It is yielded that by the H\"older inequality and the part (1) of Lemma \ref{Lmm-nu-norm}
\begin{equation}
  \begin{aligned}
    & \big\| - \langle v \cdot \nabla_x ( \mathbb{P}^\perp G_\eps \cdot \mathsf{q}_1 ) , \widetilde{\Phi} (v) \rangle_{L^2_v} \big\|_{H^{s-1}_x} \\
    \leq & C \| v \otimes \widetilde{\Phi} (v) \|_{L^2_v} \| \nabla_x \mathbb{P}^\perp G_\eps \|_{H^{s-1}_x L^2_v} \leq C \| \mathbb{P}^\perp G_\eps \|_{H^s_{x,v}(\nu)} \,,
  \end{aligned}
\end{equation}
which implies by the uniform energy dissipation bound \eqref{Uniform-Bnd-2} that
\begin{equation}
  \begin{aligned}
    \big\| - \langle v \cdot \nabla_x ( \mathbb{P}^\perp G_\eps \cdot \mathsf{q}_1 ) , \widetilde{\Phi} (v) \rangle_{L^2_v} \big\|_{L^2(\R^+;H^{s-1}_x)} \leq C \| \mathbb{P}^\perp G_\eps \|_{L^2(\R^+;H^s_{x,v}(\nu))} \leq C \eps \,.
  \end{aligned}
\end{equation}
Then we have
\begin{equation}\label{X-eps-Limit-2}
  \begin{aligned}
    - \langle v \cdot \nabla_x ( \mathbb{P}^\perp G_\eps \cdot \mathsf{q}_1 ) , \widetilde{\Phi} (v) \rangle_{L^2_v} \rightarrow 0
  \end{aligned}
\end{equation}
strongly in $L^2(\R^+;H^{s-1}_x)$ as $\eps \rightarrow 0$. Similarly, we can estimate by using the bounds \eqref{Uniform-Bnd-1} and \eqref{Uniform-Bnd-2} that
\begin{equation}
  \begin{aligned}
    & \| - \langle (v \times B_\eps) \cdot \nabla_v ( \mathbb{P}^\perp G_\eps \cdot \mathsf{q}_2 ) , \widetilde{\Phi} (v) \rangle_{L^2_v} \|_{L^2(\R^+; H^{s-1}_x)} \\
    & \leq C \| B_\eps \|_{L^\infty (\R^+; H^s_x)} \| \mathbb{P}^\perp G_\eps \|_{L^2(\R^+;H^s_{x,v}(\nu))} \leq C \eps \,.
  \end{aligned}
\end{equation}
Therefore, we have
\begin{equation}\label{X-eps-Limit-3}
  \begin{aligned}
    - \langle (v \times B_\eps) \cdot \nabla_v ( \mathbb{P}^\perp G_\eps \cdot \mathsf{q}_2 ) , \widetilde{\Phi} (v) \rangle_{L^2_v} \rightarrow 0
  \end{aligned}
\end{equation}
strongly in $L^2(\R^+;H^{s-1}_x)$ as $\eps \rightarrow 0$. By the H\"older inequality and the energy bound \eqref{Uniform-Bnd-1}, we have
\begin{equation}
  \begin{aligned}
    & \big\| \langle \tfrac{1}{2} \eps (E_\eps \cdot v) ( G_\eps \cdot \mathsf{q}_2 ) - \eps E_\eps \cdot \nabla_v (G_\eps \cdot  \mathsf{q}_2) , \widetilde{\Phi} (v) \rangle_{L^2_v} \big\|_{L^\infty (\R^+; H^{s-1}_x)} \\
    \leq & C \eps \| v \otimes \widetilde{\Phi} (v) \|_{L^2_v} \| E_\eps \otimes G_\eps \|_{L^\infty (\R^+; H^{s-1}_x L^2_v)} \\
    & + C \eps \| \widetilde{\Phi} (v) \|_{L^2_v} \| E_\eps \cdot \nabla_v G_\eps \|_{L^\infty (\R^+;  H^{s-1}_x L^2_v)} \\
    \leq & C \eps \| E_\eps \|_{L^\infty (\R^+ ; H^s_x)} \big( \| G_\eps \|_{L^\infty (\R^+; H^s_x L^2_v)} + \| \nabla_v G_\eps \|_{L^\infty (\R^+ ; H^{s-1}_x L^2_v)} \big) \\
    \leq & C \eps \| E_\eps \|_{L^\infty (\R^+ ; H^s_x)} \| G_\eps \|_{L^\infty (\R^+; H^s_{x,v})} \leq C \eps \,.
  \end{aligned}
\end{equation}
We thereby obtain
\begin{equation}\label{X-eps-Limit-4}
  \begin{aligned}
    \langle \tfrac{1}{2} \eps (E_\eps \cdot v) ( G_\eps \cdot \mathsf{q}_2 ) - \eps E_\eps \cdot \nabla_v (G_\eps \cdot  \mathsf{q}_2) , \widetilde{\Phi} (v) \rangle_{L^2_v} \rightarrow 0
  \end{aligned}
\end{equation}
strongly in $L^2(\R^+;H^{s-1}_x)$ as $\eps \rightarrow 0$. For any $T > 0$, we take any vector-valued test function $\varphi (t,x) \in C_c^\infty ( [0,T] \times \T^3 )$. Then, by employing the uniform bound \eqref{Uniform-Bnd-2}, Lemma \ref{Lmm-nu-norm} and \ref{Lmm-Gamma-Torus}, we yield that
\begin{equation}
  \begin{aligned}
    & \Big|  \int_0^T \int_{\T^3} \langle \Gamma ( \mathbb{P}^\perp G_\eps , \mathbb{P}^\perp G_\eps ) \cdot \mathsf{q}_1 , \widetilde{\Phi} (v) \rangle_{L^2_v} \cdot \varphi (t,x) \d x \d t \Big| \\
    = & \Big|  \int_0^T \langle \Gamma ( \mathbb{P}^\perp G_\eps , \mathbb{P}^\perp G_\eps ) , \mathsf{q}_1  \widetilde{\Phi} (v) \varphi (t,x) \rangle_{L^2_{x,v}} \d t \Big| \\
    \leq & \int_0^T \| \mathbb{P}^\perp G_\eps \|_{H^s_{x,v}} \| \mathbb{P}^\perp G_\eps \|_{H^s_{x,v}(\nu)} \| \mathsf{q}_1 \widetilde{\Phi} (v) \varphi (t,x) \|_{L^2_{x,v}(\nu)} \d t \\
    \leq & C (\varphi) \| \mathbb{P}^\perp G_\eps \|^2_{L^2 (\R^+; H^s_{x,v}(\nu))} \leq C (\varphi) \eps^2 \,.
  \end{aligned}
\end{equation}
Thus, we know that
\begin{equation}\label{X-eps-Limit-5}
  \begin{aligned}
    \langle \Gamma ( \mathbb{P}^\perp G_\eps , \mathbb{P}^\perp G_\eps ) \cdot \mathsf{q}_1 , \widetilde{\Phi} (v) \rangle_{L^2_v} \rightarrow 0
  \end{aligned}
\end{equation}
in the sense of distribution as $\eps \rightarrow 0$. Analogously, one easily derives that
\begin{equation}
  \begin{aligned}
    & \Big| \int_0^T \int_{\T^3} \langle \Gamma ( \mathbb{P} G_\eps , \mathbb{P}^\perp G_\eps ) \cdot \mathsf{q}_1 + \Gamma ( \mathbb{P}^\perp G_\eps , \mathbb{P} G_\eps ) \cdot \mathsf{q}_1 , \widetilde{\Phi} (v) \rangle_{L^2_v} \cdot \varphi (t,x) \d x \d t \Big| \\
    & \leq C (\varphi) \sqrt{T} \| G_\eps \|_{L^\infty (\R^+; H^s_{x,v})} \| \mathbb{P}^\perp G_\eps \|_{L^2 (\R^+; H^s_{x,v}(\nu))} \leq C (\varphi , T) \eps \,, 
  \end{aligned}
\end{equation}
which immediately implies that
\begin{equation}\label{X-eps-Limit-6}
  \begin{aligned}
    \langle \Gamma ( \mathbb{P} G_\eps , \mathbb{P}^\perp G_\eps ) \cdot \mathsf{q}_1 + \Gamma ( \mathbb{P}^\perp G_\eps , \mathbb{P} G_\eps ) \cdot \mathsf{q}_1 , \widetilde{\Phi} (v) \rangle_{L^2_v} \rightarrow 0
  \end{aligned}
\end{equation}
in the sense of distribution as $\eps \rightarrow 0$. Combining the convergences \eqref{X-eps-Limit-1}, \eqref{X-eps-Limit-2}, \eqref{X-eps-Limit-3}, \eqref{X-eps-Limit-4}, \eqref{X-eps-Limit-5}, \eqref{X-eps-Limit-6} with the definition of $\mathcal{X}_\eps$ in \eqref{X-eps}, we deduce the validity of the convergence \eqref{X-eps-Limit}. Consequently, based on the convergences \eqref{Convg-weak-jw}, \eqref{Convg-strong-Ohm}, \eqref{Convg-weak-uB-0} and \eqref{X-eps-Limit}, the equality \eqref{Ohm-law-eps} implies that
\begin{equation}
  \begin{aligned}
    j = nu + \sigma \big( - \tfrac{1}{2} \nabla_x n + E + u \times B \big) \,.
  \end{aligned}
\end{equation}

Next we verify rigorously the energy equivalence relation, hence the last equation in \eqref{INSFM-Ohm}. Recall that $\Psi (v) = ( \tfrac{|v|^2}{2} - \tfrac{3}{2} ) \sqrt{M}$ and $\widetilde{\Psi} (v)$ is such that $(\mathcal{L} + \mathfrak{L}) \widetilde{\Psi} (v) = \Psi (v)$, which can be seen in Lemma \ref{Lmm-mathfrak-L}. Then, from the definition of $w_\eps$ in \eqref{Fluid-Quanities} and the equation \eqref{Ohm-L+L-eps}, we deduce that
\begin{equation}
  \begin{aligned}
    w_\eps = & \tfrac{1}{\eps} \langle G_\eps \cdot \mathsf{q}_1 , \Psi (v) \rangle_{L^2_v} = \big\langle \tfrac{1}{\eps} ( \mathcal{L} + \mathfrak{L} ) (G_\eps \cdot \mathsf{q}_1) , \widetilde{\Psi} (v) \big\rangle_{L^2_v} \\
    = & 2 ( - \tfrac{1}{2} \nabla_x n_\eps + E_\eps + u_\eps \times B_\eps ) \cdot \langle \Phi (v) , \widetilde{\Psi} (v) \rangle_{L^2_v} \\
    & + \langle ( \mathcal{L} + \mathfrak{L} ) (n_\eps h_\eps) , \widetilde{\Psi} (v) \rangle_{L^2_v} + \langle \mathcal{X}_\eps , \widetilde{\Psi} (v) \rangle_{L^2_v} \\
    = & \langle n_\eps h_\eps , \Psi (v) \rangle_{L^2_v} + \langle \mathcal{X}_\eps , \widetilde{\Psi} (v) \rangle_{L^2_v} \,, 
  \end{aligned}
\end{equation}
where $ \langle \Phi (v) , \widetilde{\Psi} (v) \rangle_{L^2_v} = 0$ and the self-adjointness of $\mathcal{L} + \mathfrak{L}$ are also utilized. Moreover, direct calculation reduces to
\begin{equation}
  \begin{aligned}
    \langle n_\eps h_\eps , \Psi (v) \rangle_{L^2_v} = & n_\eps u_\eps \cdot \langle v \sqrt{M} , ( \tfrac{|v|^2}{2} - \tfrac{3}{2} ) \sqrt{M} \rangle_{L^2_v} \\
    & + n_\eps \theta_\eps \langle ( \tfrac{|v|^2}{2} - \tfrac{3}{2} ) \sqrt{M} , ( \tfrac{|v|^2}{2} - \tfrac{3}{2} ) \sqrt{M} \rangle_{L^2_v} \\
    = & \tfrac{3}{2} n_\eps \theta_\eps = \tfrac{3}{2} n_\eps ( \tfrac{3}{5} \theta_\eps - \tfrac{2}{5} \rho_\eps ) + \tfrac{3}{5} n_\eps ( \rho_\eps + \theta_\eps ) \,.
  \end{aligned}
\end{equation}
We thereby have
\begin{equation}\label{EER-eps}
  \begin{aligned}
    w_\eps = \tfrac{3}{2} n_\eps ( \tfrac{3}{5} \theta_\eps - \tfrac{2}{5} \rho_\eps ) + \tfrac{3}{5} n_\eps ( \rho_\eps + \theta_\eps ) + \langle \mathcal{X}_\eps , \widetilde{\Psi} (v) \rangle_{L^2_v} \,.
  \end{aligned}
\end{equation}
From the strong convergences \eqref{Convg-strong}, one easily deduces that
\begin{equation}\label{Limit-EER-1}
  \begin{aligned}
    \tfrac{3}{2} n_\eps ( \tfrac{3}{5} \theta_\eps - \tfrac{2}{5} \rho_\eps ) \rightarrow \tfrac{3}{2} n \theta 
  \end{aligned}
\end{equation}
strongly in $C(\R^+; H^{s-1}_x)$ as $\eps \rightarrow 0$. It is also derived from the convergences \eqref{Convg-strong} and \eqref{Convg-weak-to-0} that
\begin{equation}\label{Limit-EER-2}
  \begin{aligned}
    \tfrac{3}{5} n_\eps ( \rho_\eps + \theta_\eps ) \rightarrow 0
  \end{aligned}
\end{equation}
weakly-$\star$ in $t \geq 0$ and weakly in $H^{s-1}_x$ as $\eps \rightarrow 0$. Furthermore, similar arguments in analyzing the convergence \eqref{X-eps-Limit} give
\begin{equation}\label{Limit-EER-3}
  \begin{aligned}
    \langle \mathcal{X}_\eps , \widetilde{\Psi} (v) \rangle_{L^2_v} \rightarrow 0
  \end{aligned}
\end{equation}
in the sense of distribution as $\eps \rightarrow 0$. Finally, by plugging the convergences \eqref{Convg-weak-jw}, \eqref{Limit-EER-1}, \eqref{Limit-EER-2} and \eqref{Limit-EER-3} into the equation \eqref{EER-eps}, we obtain the energy equivalence relation
\begin{equation}
  \begin{aligned}
    w =  \tfrac{3}{2} n \theta \,.
  \end{aligned}
\end{equation}

\subsubsection{Equations of $u$ and $\theta$} We first calculate the term 
$$ \big\langle \widehat{\Xi} (v) \sqrt{M} , \tfrac{1}{\eps} \mathcal{L} ( \tfrac{G_\eps \cdot \mathsf{q}_2}{2} ) \big\rangle_{L^2_v} \,,$$
where $\widehat{\Xi} = A$ or $B$. Via multiplying the first $G_\eps$-equation in \eqref{VMB-G} by $\mathsf{q}_2$ and direct calculations, we obtain
\begin{equation}
\begin{aligned}
\partial_t ( G_\eps \cdot \mathsf{q}_2 ) + \tfrac{1}{\eps} v \cdot \nabla_x ( G_\eps \cdot \mathsf{q}_2 ) + \tfrac{1}{\eps} ( \eps E_\eps + v \times B_\eps ) \cdot \nabla_v ( G_\eps \cdot \mathsf{q}_1 ) + \tfrac{1}{\eps^2} \mathcal{L} ( G_\eps \cdot \mathsf{q}_2 )  \\
= \tfrac{1}{2} ( E_\eps \cdot v ) ( G_\eps \cdot \mathsf{q}_1 ) + \tfrac{1}{\eps} \mathcal{Q} ( G_\eps \cdot \mathsf{q}_2 ,  G_\eps \cdot \mathsf{q}_2 ) \,.
\end{aligned}
\end{equation}
Following the standard formal derivations of fluid dynamic limits of Boltzmann equation (see \cite{BGL-1991-JSP}, for instance), we obtain
\begin{equation}\label{Diffusion-A}
  \begin{aligned}
    \big\langle \widehat{A} (v) \sqrt{M} , \tfrac{1}{\eps} \mathcal{L} ( \tfrac{G_\eps \cdot \mathsf{q}_2}{2} ) \big\rangle_{L^2_v} =  u_\eps \otimes u_\eps - \tfrac{|u_\eps|^2}{3} \mathbb{I}_3   - \mu \Sigma (u_\eps) - R_{\eps, A}
  \end{aligned}
\end{equation}
and
\begin{equation}\label{Diffusion-B}
  \begin{aligned}
    \big\langle \widehat{B} (v) \sqrt{M} , \tfrac{1}{\eps} \mathcal{L} ( \tfrac{G_\eps \cdot \mathsf{q}_2}{2} ) \big\rangle_{L^2_v} = \tfrac{5}{2} u_\eps \theta_\eps - \tfrac{5}{2} \kappa \nabla_x \theta_\eps - R_{\eps , B} \,,
  \end{aligned}
\end{equation}
where $\Sigma (u_\eps) = \nabla_x u_\eps + \nabla_x u_\eps^\top - \tfrac{2}{3} \div_x \, u_\eps \mathbb{I}_3$, and for $\Xi = A$ or $B$, $R_{\eps, \Xi}$ are of the form
\begin{equation}\label{R-eps-Xi}
  \begin{aligned}
    R&_{\eps, \Xi} = \eps \big\langle \widehat{\Xi} (v) \sqrt{M} , \partial_t ( \tfrac{G_\eps \cdot \mathsf{q}_2}{2} ) \big\rangle_{L^2_v} + \big\langle \widehat{\Xi} (v) \sqrt{M} v \cdot \nabla_x  ( \tfrac{\mathbb{P}^\perp G_\eps \cdot \mathsf{q}_2}{2} ) \big\rangle_{L^2_v} \\
    & + \big\langle \widehat{\Xi} (v) \sqrt{M} , ( \eps E_\eps + v \times B_\eps ) \cdot \nabla_v ( \tfrac{\mathbb{P}^\perp G_\eps \cdot \mathsf{q}_1}{2} ) \big\rangle_{L^2_v} \\
    & - \eps \big\langle \widehat{\Xi} (v) \sqrt{M} , \tfrac{1}{2} ( E_\eps \cdot v ) ( \tfrac{G_\eps \cdot \mathsf{q}_1}{2} ) \big\rangle_{L^2_v} - 2 \big\langle \widehat{\Xi} (v) \sqrt{M} , \mathcal{Q} \big( \tfrac{\mathbb{P}^\perp G_\eps \cdot \mathsf{q}_2}{2} , \tfrac{\mathbb{P}^\perp G_\eps \cdot \mathsf{q}_2}{2} \big) \big\rangle_{L^2_v} \\
    & - 2 \big\langle \widehat{\Xi} (v) \sqrt{M} , \mathcal{Q} \big( \Pi ( \tfrac{ G_\eps \cdot \mathsf{q}_2}{2} ) , \tfrac{\mathbb{P}^\perp G_\eps \cdot \mathsf{q}_2}{2} \big) \big\rangle_{L^2_v} - 2 \big\langle \widehat{\Xi} (v) \sqrt{M} , \mathcal{Q} \big( \tfrac{\mathbb{P}^\perp G_\eps \cdot \mathsf{q}_2}{2} , \Pi ( \tfrac{ G_\eps \cdot \mathsf{q}_2}{2} ) \big) \big\rangle_{L^2_v} \,.
  \end{aligned}
\end{equation}
Here we also make use of the relation $\Pi^\perp ( G_\eps \cdot \mathsf{q}_2 ) = \mathbb{P}^\perp G_\eps \cdot \mathsf{q}_2 $, derived from the equalities \eqref{Relt-perp-B-VMB-1} and \eqref{Relt-perp-B-VMB-2}.

For the vector field $u_\eps$, we decompose $u_\eps = \mathcal{P} u_\eps + \mathcal{P}^\perp u_\eps$, where $\mathcal{P}^\perp = \mathcal{I} - \mathcal{P} = \nabla_x \Delta_x^{-1} \div_x$ is a gradient operator, where $\mathcal{I}$ is the identical mapping. Then, via plugging the relation \eqref{Diffusion-A} into the equation \eqref{Local-Pu}, we have
\begin{equation}\label{Pu-eps}
  \begin{aligned}
    \partial_t \mathcal{P} u_\eps + \mathcal{P} \div_x \, ( \mathcal{P} u_\eps \otimes \mathcal{P} u_\eps ) - \mu \Delta_x \mathcal{P} u_\eps = \tfrac{1}{2} \mathcal{P} ( n_\eps E_\eps + j_\eps \times B_\eps ) + R_{\eps, u} \,,
  \end{aligned}
\end{equation}
where
\begin{equation}\label{R-eps-u}
  \begin{aligned}
    R_{\eps, u} = \mathcal{P} \div_x \, R_{\eps, A} - \mathcal{P} \div_x \, ( \mathcal{P} u_\eps \otimes \mathcal{P}^\perp u_\eps + \mathcal{P}^\perp u_\eps \otimes \mathcal{P} u_\eps + \mathcal{P}^\perp u_\eps \otimes \mathcal{P}^\perp u_\eps ) \,.
  \end{aligned}
\end{equation}
Noticing that $\theta_\eps = ( \tfrac{3}{5} \theta_\eps - \tfrac{2}{5} \rho_\eps ) + \tfrac{2}{5} (\rho_\eps + \theta_\eps)$, we substitute the relation \eqref{Diffusion-B} into the equation \eqref{Local-theta-rho} and then obtain
\begin{equation}\label{3/5theta-2/5rho-eps}
  \begin{aligned}
    \partial_t ( \tfrac{3}{5} \theta_\eps - \tfrac{2}{5} \rho_\eps ) + \div_x \, \big[ \mathcal{P} u_\eps ( \tfrac{3}{5} \theta_\eps - \tfrac{2}{5} \rho_\eps ) \big] - \kappa \Delta_x ( \tfrac{3}{5} \theta_\eps - \tfrac{2}{5} \rho_\eps ) = R_{\eps, \theta} \,,
  \end{aligned}
\end{equation}
where
\begin{equation}\label{R-eps-theta}
  \begin{aligned}
    R_{\eps, \theta} = & \tfrac{2}{5} \div_x \, R_{\eps, B} - \tfrac{2}{5} \div_x \, \big[ \mathcal{P} u_\eps ( \rho_\eps + \theta_\eps ) \big] - \div_x \, \big[ \mathcal{P}^\perp u_\eps ( \tfrac{3}{5} \theta_\eps - \tfrac{2}{5} \rho_\eps ) \big] \\
    & + \tfrac{\eps}{5} j_\eps \cdot E_\eps - \tfrac{2}{5} \div_x \, \big[ \mathcal{P}^\perp u_\eps (\rho_\eps + \theta_\eps) \big] + \tfrac{2}{5} \kappa \Delta_x (\rho_\eps + \theta_\eps) \,.
  \end{aligned}
\end{equation}

Now we take the limit from \eqref{Pu-eps} to obtain the $u$-equation of \eqref{INSFM-Ohm}. For any $T > 0$, let a vector-valued text function $\psi (t,x) \in C^1 ( 0,T; C_c^\infty (\T^3) )$ with $\div_x \psi = 0$, $\psi(0,x) = \psi_0 (x) \in C_c^\infty (\T^3)$ and $\psi (t,x) = 0$ for $t \geq T'$, where $T' < T$. We multiply \eqref{Pu-eps} by $\psi (t,x)$ and integrate by parts over $(t,x) \in [0,T] \times \T^3$. Then we obtain 
\begin{equation}
  \begin{aligned}
    & \int_0^T \int_{\T^3} \partial_t \mathcal{P} u_\eps \cdot \psi (t,x) \d x \d t \\
    = & - \int_{\T^3} \mathcal{P} u_\eps (0,x) \cdot \psi (0,x) \d x - \int_0^T \int_{\T^3} \mathcal{P} u_\eps \cdot \partial_t \psi (t,x) \d x \d t \\
    = & - \int_{\T^3} \mathcal{P} \langle G_\eps^{in} , \tfrac{1}{2} \mathsf{q}_2 v \sqrt{M} \rangle_{L^2_v} \cdot \psi_0 (x) \d x - \int_0^T \int_{\T^3} \mathcal{P} u_\eps \cdot \partial_t \psi (t,x) \d x \d t \,.
  \end{aligned}
\end{equation}
From the initial conditions (3) in Theorem \ref{Main-Thm-2} and the convergence \eqref{Convg-strong}, we deduce that
\begin{equation}
  \begin{aligned}
    \int_{\T^3} \mathcal{P} \langle G_\eps^{in} , \tfrac{1}{2} \mathsf{q}_2 v \sqrt{M} \rangle_{L^2_v} \cdot \psi_0 (x) \d x \rightarrow \int_{\T^3} \mathcal{P} \langle G^{in} , \tfrac{1}{2} \mathsf{q}_2 v \sqrt{M} \rangle_{L^2_v} \cdot \psi_0 (x) \d x \\
    = \int_{\T^3} \mathcal{P} u^{in} \cdot \psi_0 (x) \d x
  \end{aligned}
\end{equation}
and
\begin{equation}
  \begin{aligned}
    \int_0^T \int_{\T^3} \mathcal{P} u_\eps \cdot \partial_t \psi (t,x) \d x \d t \rightarrow \int_0^T \int_{\T^3} u \cdot \partial_t \psi (t,x) \d x \d t
  \end{aligned}
\end{equation}
as $\eps \rightarrow 0$. Namely, we have
\begin{equation}\label{Limit-u-1}
  \begin{aligned}
    \int_0^T \int_{\T^3} \partial_t \mathcal{P} u_\eps \cdot \psi (t,x) \d x \d t \rightarrow - \int_{\T^3} \mathcal{P} u^{in} \cdot \psi_0 (x) \d x - \int_0^T \int_{\T^3} u \cdot \partial_t \psi (t,x) \d x \d t
  \end{aligned}
\end{equation}
as $\eps \rightarrow 0$. It is implied by the strong convergences \eqref{Convg-strong} that
\begin{equation}\label{Limit-u-2}
  \begin{aligned}
    & \mathcal{P} \div_x \, ( \mathcal{P} u_\eps \otimes \mathcal{P} u_\eps ) \rightarrow \mathcal{P} \div_x \, ( u \otimes u ) \quad \textrm{strongly in } \quad C(\R^+; H^{s-2}_x) \,, \\
    & \mu \Delta_x \mathcal{P} u_\eps \rightarrow \mu \Delta_x u \qquad\qquad\qquad\qquad\quad \textrm{strongly in } \quad C(\R^+; H^{s-3}_x) \,, \\
    & \mathcal{P} (n_\eps E_\eps) \rightarrow \mathcal{P} (n E) \quad\qquad\qquad\qquad\quad\ \, \textrm{strongly in } \quad C(\R^+; H^{s-1}_x) \,,
  \end{aligned}
\end{equation}
as $\eps \rightarrow 0$, where $s \geq 3$. Furthermore, we deduce from the convergences \eqref{Convg-weak-jw} and \eqref{Convg-strong} that
\begin{equation}\label{Limit-u-3}
  \begin{aligned}
    \mathcal{P} ( j_\eps \times B_\eps ) \rightarrow \mathcal{P} (j \times B)
  \end{aligned}
\end{equation}
weakly in $L^2_{loc} (\R^+ ; H^{s-1}_x)$ as $\eps \rightarrow 0$.

It remains to prove
\begin{equation}\label{Limit-R-eps-u}
  \begin{aligned}
    R_{\eps, u} \rightarrow 0
  \end{aligned}
\end{equation}
in the sense of distribution as $\eps \rightarrow 0$, where $R_{\eps, u}$ is defined in \eqref{R-eps-u}. Indeed, by employing the convergences \eqref{Convg-strong} and \eqref{Convg-weak-to-0}, one can obtain
\begin{equation}\label{Limit-R-eps-u-1}
  \begin{aligned}
    \mathcal{P} \div_x ( \mathcal{P} u_\eps \otimes \mathcal{P}^\perp u_\eps + \mathcal{P}^\perp u_\eps \otimes \mathcal{P} u_\eps + \mathcal{P}^\perp u_\eps \otimes \mathcal{P}^\perp u_\eps ) \rightarrow 0
  \end{aligned}
\end{equation}
weakly-$\star$ in $t \geq 0$ and strongly in $H^{s-2}_x$ as $\eps \rightarrow 0$. Moreover, by employing the similar arguments in the convergence \eqref{X-eps-Limit}, we know that for $\Xi = A$ or $B$
\begin{equation}\label{Limit-R-eps-u-2}
  \begin{aligned}
    \div_x \, R_{\eps , \Xi} \rightarrow 0
  \end{aligned}
\end{equation}
in the sense of distribution as $\eps \rightarrow 0$, where $R_{\eps , \Xi}$ are defined in \eqref{R-eps-Xi}. Thus, the convergences \eqref{Limit-R-eps-u-1} and \eqref{Limit-R-eps-u-2} imply the convergence \eqref{Limit-R-eps-u}. Collecting the limits \eqref{Limit-u-1}, \eqref{Limit-u-2}, \eqref{Limit-u-3} and \eqref{Limit-R-eps-u} yields that $u \in L^\infty (\R^+ ; H^s_x) \cap C (\R^+ ; H^{s-1}_x)$ obeys
\begin{equation}
  \begin{aligned}
    \partial_t u + \mathcal{P} \div_x \, ( u \otimes u ) - \mu \Delta_x u = \tfrac{1}{2} \mathcal{P} ( n E + j \times B )
  \end{aligned}
\end{equation}
with the initial data
\begin{equation}
  \begin{aligned}
    u (0,x)  = \mathcal{P} u^{in} (x) \,.
  \end{aligned}
\end{equation}

Finally, we take the limit from \eqref{3/5theta-2/5rho-eps} to the third $\theta$-equation in \eqref{INSFM-Ohm} as $\eps \rightarrow 0$. For any $T > 0$, let $\xi (t,x)$ be a test function satisfying $\xi (t,x) \in C^1 (0,T;C_c^\infty (\T^3))$ with $\xi (0,x) = \xi_0 (x) \in C_c^\infty (\T^3)$ and $\xi (t,x) = 0$ for $t \geq T'$, where $T' < T$. From the initial conditions (3) in Theorem \ref{Main-Thm-2} and the strong convergence \eqref{Convg-strong}, we deduce that 
\begin{equation}\label{Limit-theta-1}
  \begin{aligned}
    & \int_0^T \int_{\T^3} \partial_t ( \tfrac{3}{5} \theta_\eps - \tfrac{2}{3} \rho_\eps ) (t,x) \xi (t,x) \d x \d t \\
    = & - \int_{\T^3} \langle G_\eps^{in} , \tfrac{1}{2} \mathsf{q}_2 \big( \tfrac{3}{5} ( \tfrac{|v|^2}{3} - 1 ) - \tfrac{2}{5} \big) \sqrt{M} \rangle_{L^2_v} \xi_0 (x) \d x \\
    & - \int_0^T \int_{\T^3} ( \tfrac{3}{5} \theta_\eps - \tfrac{2}{3} \rho_\eps ) (t,x) \partial_t  \xi (t,x) \d x \d t \\
    \rightarrow & - \int_{\T^3} \langle G^{in} , \tfrac{1}{2} \mathsf{q}_2 \big( \tfrac{3}{5} ( \tfrac{|v|^2}{3} - 1 ) - \tfrac{2}{5} \big) \sqrt{M} \rangle_{L^2_v} \xi_0 (x) \d x - \int_0^T \int_{\T^3} \theta (t,x) \partial_t  \xi (t,x) \d x \d t \\
    = & - \int_{\T^3} \big( \tfrac{3}{5} \theta^{in} - \tfrac{2}{5} \rho^{in} \big) \xi_0 (x) \d x - \int_0^T \int_{\T^3} \theta (t,x) \partial_t  \xi (t,x) \d x \d t
  \end{aligned}
\end{equation}
as $\eps \rightarrow 0$. It is derived from the strong convergence \eqref{Convg-strong} that
\begin{equation}\label{Limit-theta-2}
  \begin{aligned}
    & \div_x \, \big[ \mathcal{P} u_\eps ( \tfrac{3}{5} \theta_\eps - \tfrac{2}{5} \rho_\eps ) \big] \rightarrow \div_x \, ( u \theta ) \quad \textrm{strongly in } \quad C(\R^+ ; H^{s-2}_x) \,, \\
    & \kappa \Delta_x ( \tfrac{3}{5} \theta_\eps - \tfrac{2}{5} \rho_\eps ) \rightarrow \kappa \Delta_x \theta \qquad \qquad \ \  \ \, \textrm{strongly in } \qquad C(\R^+; H^{s-3}_x)
  \end{aligned}
\end{equation}
as $\eps \rightarrow 0$, where $s \geq 3$. 

It remains to prove
\begin{equation}\label{Limit-R-eps-theta}
  \begin{aligned}
    R_{\eps, \theta} \rightarrow 0
  \end{aligned}
\end{equation}
in the sense of distribution as $\eps \rightarrow 0$, where $R_{\eps , \theta}$ is defined in \eqref{R-eps-theta}. Indeed, from the convergences \eqref{Convg-strong} and \eqref{Convg-weak-to-0}, we derive that
\begin{equation}\label{Limit-R-eps-theta-1}
  \begin{aligned}
    \tfrac{2}{5} \div_x \, \big[ \mathcal{P} u_\eps ( \rho_\eps + \theta_\eps ) \big] + \tfrac{2}{5} \div_x \, \big[ \mathcal{P}^\perp u_\eps ( \rho_\eps + \theta_\eps ) \big] + \div_x \, \big[ \mathcal{P}^\perp u_\eps ( \tfrac{3}{5} \theta_\eps - \tfrac{2}{5} \rho_\eps ) \big] \rightarrow 0
  \end{aligned}
\end{equation}
weakly-$\star$ in $t \geq 0$ and strongly in $H^{s-2}_x$ as $\eps \rightarrow 0$. Moreover, the convergence \eqref{Convg-weak-to-0} tells us
\begin{equation}\label{Limit-R-eps-theta-2}
  \begin{aligned}
    \kappa \Delta_x (\rho_\eps + \theta_\eps) \rightarrow 0
  \end{aligned}
\end{equation}
weakly-$\star$ in $t \geq 0$ and strongly in $H^{s-3}_x$ as $\eps \rightarrow 0$, where $s \geq 3$. Because
\begin{equation}
  \begin{aligned}
    \| \tfrac{\eps}{5} j_\eps \cdot E_\eps \|_{L^2 (\R^+; H^s_x)} \leq C \eps \| j_\eps \|_{L^2 (\R^+; H^s_x)} \| E_\eps \|_{L^\infty (\R^+; H^s_x)} \leq C \eps
  \end{aligned}
\end{equation}
holds by utilizing the uniform bounds \eqref{Uniform-Bnd-1} and \eqref{Uniform-Bnd-4}, we know that
\begin{equation}\label{Limit-R-eps-theta-3}
  \begin{aligned}
    \tfrac{\eps }{3} j_\eps \cdot E_\eps \rightarrow 0
  \end{aligned}
\end{equation}
strongly in $L^2 (\R^+ ; H^s_x)$ as $\eps \rightarrow 0$. Consequently, the convergences \eqref{Limit-R-eps-u-2}, \eqref{Limit-R-eps-theta-1}, \eqref{Limit-R-eps-theta-2} and \eqref{Limit-R-eps-theta-3} imply the convergence \eqref{Limit-R-eps-theta}. It is yielded by collecting the convergences \eqref{Limit-theta-1}, \eqref{Limit-R-eps-u-2} and \eqref{Limit-R-eps-theta} that $\theta \in L^\infty (\R^+ ; H^s_x) \cap C(\R^+ ; H^{s-1}_x)$ subjects to
\begin{equation}
  \begin{aligned}
    \partial_t \theta + \div_x \, (u \theta) = \kappa \Delta_x \theta
  \end{aligned}
\end{equation}
with the initial data
\begin{equation}
  \begin{aligned}
    \theta (0,x) = \tfrac{3}{5} \theta^{in} (x) - \tfrac{2}{5} \rho^{in} (x) \,.
  \end{aligned}
\end{equation}

\subsubsection{Equations of the electromagnetic fields $E$ and $B$}

We will derive the equations of $E$ and $B$ from taking limit in the last five equations of \eqref{Local-Consvtn-Law} as $\eps \rightarrow 0$. For any $ T > 0 $, let $\omega (t,x)$ be a vector-valued test function such that $\omega (t,x) \in C^1 (0,T; C^\infty_c (\T^3))$ with $\omega (0,x) = \omega_0 (x) \in C_c^\infty (\T^3)$ and $\omega (t,x) = 0$ for $t \geq T'$, where $T' < T$. It is deduced from the initial condition (3) in Theorem \ref{Main-Thm-2} and the strong convergence \eqref{Convg-strong} that
\begin{equation}\label{Limit-EB-1}
  \begin{aligned}
    \int_0^T \int_{\T^3} \partial_t Y_\eps \cdot \omega (t,x) \d x \d t = &  - \int_{\T^3} Y_\eps^{in} \cdot \omega_0 (x) \d x -  \int_0^T \int_{\T^3} Y_\eps \cdot \partial_t \omega (t,x) \d x \d t \\
    \rightarrow & - \int_{\T^3} Y^{in} \cdot \omega_0 (x) \d x -  \int_0^T \int_{\T^3} Y \cdot \partial_t \omega (t,x) \d x \d t
  \end{aligned}
\end{equation}
as $\eps \rightarrow 0$, where the symbol $Y$ can be $E$ or $B$. Moreover, from the convergence \eqref{Convg-strong}, one derives that for $Y = E$ or $B$
\begin{equation}\label{Limit-EB-2}
  \begin{aligned}
    \nabla_x  \times Y_\eps \rightarrow \nabla_x \times Y \quad \textrm{and} \quad \div_x \, Y_\eps \rightarrow \div_x \, Y
  \end{aligned}
\end{equation}
strongly in $C (\R^+; H^{s-2}_x)$ as $\eps \rightarrow 0$. Thus, from the convergences \eqref{Convg-strong}, \eqref{Convg-weak-jw}, \eqref{Limit-EB-1} and \eqref{Limit-EB-2}, the last four equations of \eqref{Local-Consvtn-Law} reduce to
\begin{equation}
  \left\{
    \begin{array}{l}
      \partial_t E - \nabla_x \times B = - j \,, \\
      \partial_t B + \nabla_x \times E = 0 \,, \\
      \div_x \, E = n \,, \quad \div_x \, B = 0 \,,
    \end{array}
  \right.
\end{equation}
with the initial data
\begin{equation}
  \begin{aligned}
    E (0,x) = E^{in} (x) \,, \quad B(0,x) = B^{in} (x) \,.
  \end{aligned}
\end{equation}

We now take limit from the local conservation law $\partial_t n_\eps + \div_x \, j_\eps = 0$ as $\eps \rightarrow 0$. For any $T > 0$, we take any scalar test function $\varUpsilon (t,x) \in C^1 (0,T; C_c^\infty (\T^3))$ satisfying $\varUpsilon (0,x) = \varUpsilon_0 (x) \in C_c^\infty (\T^3)$ and $\varUpsilon (t,x) = 0$ for $t \geq T'$, where $T' < T$. Then, from the initial condition (3) in Theorem \ref{Main-Thm-2} and the strong convergence \eqref{Convg-strong}, we imply that
\begin{equation}
  \begin{aligned}
    & \int_0^T \int_{\T^3} \partial_t n_\eps \varUpsilon (t,x) \d x \d t \\
    = & - \int_{\T^3} \langle G_\eps^{in}, \mathsf{q}_1 \sqrt{M} \rangle_{L^2_v} \varUpsilon_0 (x) \d x - \int_0^T \int_{\T^3} n_\eps  \partial_t \varUpsilon (t,x) \d x \d t \\
    \rightarrow & - \int_{\T^3} \langle G^{in}, \mathsf{q}_1 \sqrt{M} \rangle_{L^2_v} \varUpsilon_0 (x) \d x - \int_0^T \int_{\T^3} n \,  \partial_t \varUpsilon (t,x) \d x \d t \\
    = & - \int_{\T^3} n^{in} \, \varUpsilon_0 (x) \d x - \int_0^T \int_{\T^3} n \,  \partial_t \varUpsilon (t,x) \d x \d t
  \end{aligned}
\end{equation}
as $\eps \rightarrow 0$. Moreover, the convergence \eqref{Convg-weak-jw} implies that
\begin{equation}
  \begin{aligned}
    \div_x \, j_\eps \rightarrow \div_x \, j
  \end{aligned}
\end{equation}
weakly in $L^2 (\R^+ ; H^{s-1}_x)$ as $\eps \rightarrow 0$. Then, we have derived from the microscopic local conservation law that
\begin{equation}\label{Equ-n}
  \begin{aligned}
    \partial_t n + \div_x \, j = 0
  \end{aligned}
\end{equation}
with the initial data
\begin{equation}\label{IC-Equ-n}
  \begin{aligned}
    n (0,x) = n^{in} (x) \,.
  \end{aligned}
\end{equation}

\subsubsection{Summarization}

Collecting all above convergence results, we have shown that 
$$(u, \theta, n, E, B) \in C(\R^+; H^{s-1}_x ) \cap L^\infty (\R^+; H^s_x) $$ 
satisfy the following two fluid incompressible Navier-Stokes-Fourier-Maxwell equations with Ohm's law
\begin{equation*}
  \left\{
    \begin{array}{l}
      \partial_t u + u \cdot \nabla_x u - \mu \Delta_x u + \nabla_x p = \tfrac{1}{2} ( n E + j \times B ) \,, \qquad \div_x \, u = 0 \,, \\ [2mm]
      \partial_t \theta + u \cdot \nabla_x \theta - \kappa \Delta_x \theta = 0 \,, \qquad\qquad\qquad\qquad\qquad\quad\ \, \rho + \theta = 0 \,, \\ [2mm]
      \partial_t E - \nabla_x \times B = - j \,, \qquad\qquad\qquad\qquad\qquad\qquad\ \ \ \, \div_x \, E = n \,, \\ [2mm]
      \partial_t B + \nabla_x \times E = 0 \,, \qquad\qquad\qquad\qquad\qquad\qquad\qquad \div_x \, B = 0 \,, \\ [2mm]
      \qquad \qquad j - nu = \sigma \big( - \tfrac{1}{2} \nabla_x n + E + u \times B \big) \,, \qquad\quad\,\ w = \tfrac{3}{2} n \theta \,,
    \end{array}
  \right.
\end{equation*}
with initial data
\begin{equation}
  \begin{aligned}
    u (0,x) = \mathcal{P} u^{in} (x) \,, \ \theta (0,x) = \tfrac{3}{5} \theta^{in} (x) - \tfrac{2}{5} \rho^{in} (x) \,, \ E(0,x) = E^{in} (x) \,, \ B(0,x) = B^{in} (x) \,.
  \end{aligned}
\end{equation}
Moreover, from the uniform bound \eqref{Uniform-Bnd} in Theorem \ref{Main-Thm-1} and the convergence \eqref{Convg-GEB}, we have
\begin{equation}
  \begin{aligned}
    \sup_{t \geq 0} \big( & \| G \|^2_{H^s_{x,v}} + \| E \|^2_{H^s_x} + \| B \|^2_{H^s_x} \big) (t) \leq \sup_{t \geq 0} \big( \| G_\eps \|^2_{H^s_{x,v}} + \| E_\eps \|^2_{H^s_x} + \| B_\eps \|^2_{H^s_x} \big) (t) \\
    = & \sup_{t \geq 0} \mathbb{E}_s ( G_\eps , E_\eps , B_\eps ) (t) \leq c_1 \mathbb{E}_s ( G_\eps^{in} , E_\eps^{in} , B_\eps^{in} ) \rightarrow c_1 \mathbb{E}_s ( G^{in} , E^{in} , B^{in} ) 
  \end{aligned}
\end{equation}
as $\eps \rightarrow 0$. Hence
\begin{equation}
  \begin{aligned}
    \sup_{t \geq 0} \big( & \| G \|^2_{H^s_{x,v}} + \| E \|^2_{H^s_x} + \| B \|^2_{H^s_x} \big) (t) \leq c_1 \mathbb{E}_s ( G^{in} , E^{in} , B^{in} ) \leq c_1 \ell_0 \,.
  \end{aligned}
\end{equation}
Since $G$ is of the form \eqref{Limit-G} and $\rho + \theta = 0$, there are positive generic constants $C_h$ and $C_l$ such that
\begin{equation}
  \begin{aligned}
    C_l \big( \| u \|^2_{H^s_x} + \| \theta \|^2_{H^s_x} + \| n \|^2_{H^s_x} \big) \leq \| G \|^2_{H^s_{x,v}} \leq C_h \big( \| u \|^2_{H^s_x} + \| \theta \|^2_{H^s_x} + \| n \|^2_{H^s_x} \big) \,.
  \end{aligned}
\end{equation}
Consequently, the solution $(u,\theta,n,E,B)$ to the two fluid incompressible Navier-Stokes-Fourier-Maxwell equations \eqref{INSFM-Ohm} with Ohm's law constructed above admits the energy bound
\begin{equation}
  \begin{aligned}
    \sup_{t \geq 0} \big( & \| u \|^2_{H^s_x} + \| \theta \|^2_{H^s_x} + \| n \|^2_{H^s_x} + \| E \|^2_{H^s_x} + \| B \|^2_{H^s_x} \big) (t) \leq c_1^*  \mathbb{E}_s ( G^{in} , E^{in} , B^{in} ) \leq c_1^* \ell_0 \,,
  \end{aligned}
\end{equation}
where $c_1^* = c_1 + \tfrac{c_1}{C_l}  > 0$.

\appendix

\section{Construction of local solutions}\label{Sec:Appendix-Local-Solutn}

In this section, we will construct a unique local-in-time solution to the perturbed VMB system \eqref{VMB-G} for all $0 < \eps \leq 1$, hence prove Proposition \ref{Prop-Local-Solutn}. Fixed $\eps \in (0,1]$, the construction is based on a uniform energy estimate for a sequence of iterating approximate solutions. We consider the following linear iterating approximate sequence $(n \geq 0)$ for solving the perturbed VMB system \eqref{VMB-G} with initial data \eqref{IC-VMB-G}:
\begin{equation}\label{Iter-Appro-Systm}
\left\{
\begin{array}{l}
\big\{ \partial_t + \tfrac{1}{\eps} v \cdot \nabla_x + \tfrac{1}{\eps} \mathsf{q} ( \eps E_\eps^n + v \times B_\eps^n ) \cdot \nabla_v + \tfrac{1}{\eps^2} \mathscr{L} - \tfrac{1}{2} \mathsf{q} ( E_\eps^n \cdot v ) \big\} G_\eps^{n+1} \\
\qquad \qquad \qquad \qquad \qquad \qquad \quad = \tfrac{1}{\eps} ( E_\eps^{n+1} \cdot v ) \mathsf{q}_1 \sqrt{M} + \tfrac{1}{\eps} \Gamma ( G_\eps^n , G_\eps^n ) \,, \\
\partial_t E_\eps^{n+1} - \nabla_x \times B_\eps^{n+1} = - \tfrac{1}{\eps} \int_{\R^3} G_\eps^{n+1} \cdot \mathsf{q} v \sqrt{M} \d v \,, \\
\partial_t B_\eps^{n+1} + \nabla_x \times E_\eps^{n+1} = 0 \,, \\
\div_x \, E_\eps^{n+1} = \int_{\R^3} G_\eps^{n+1} \cdot  \mathsf{q}_1 \sqrt{M} \d v \,, \quad \div_x \, B_\eps^{n+1} = 0 \,,
\end{array}
\right.
\end{equation}
with initial data
\begin{equation}
\begin{aligned}
G_\eps^{n+1} |_{t=0} = G_\eps^{in} (x,v) \,, \ E_\eps^{n+1} |_{t=0} = E_\eps^{in} (x) \,, \ B_\eps^{n+1} |_{t=0} = B_\eps^{in} (x) \,.
\end{aligned}
\end{equation}
We start with $G_\eps^0 (t,x,v) = G_\eps^{in} (x,v)$, $E_\eps^0 (t,x) = E_\eps^{in} (x)$ and $B_\eps^0 (t,x) = B_\eps^{in} (x)$. It is standard from the linear theory to verify that the sequence $( G_\eps^{n}, E_\eps^n , B_\eps^n )$ is well-defined for all $n \geq 0$ on a maximal interval $[ 0, T_\eps^n )$. Our goal is to get a uniform in $n$ estimate for the iterating energy 
\begin{equation}\label{Iter-Energy}
\begin{aligned}
\mathfrak{E}_n (t) = \mathbb{E}_s (G_\eps^n , E_\eps^n , B_\eps^n) (t) + \tfrac{1}{\eps^2} \int_0^{t} \| \mathbb{P}^\perp G_\eps^n \|^2_{H^s_{x,v}(\nu)} (\tau) \d \tau \,.
\end{aligned}
\end{equation}
We notice that $\mathfrak{E}_n (0) = \mathbb{E}_s (G_\eps^{in}, E_\eps^{in}, B_\eps^{in})$ is independent of $n \geq 0$. For simplicity, we denote by $\mathfrak{E} (0) = \mathfrak{E}_n (0)$. Furthermore, we call $ \widetilde{\mathfrak{E}}_n (t) $ is an {\em instant iterating energy} of $\mathfrak{E}_n (t)$, if there is energy functional $\widetilde{\mathfrak{E}}_n (t)$ satisfying 
\begin{equation}
\begin{aligned}
c_* \mathfrak{E}_n (t) \leq \widetilde{\mathfrak{E}}_n (t) \leq C_* \mathfrak{E}_n (t)
\end{aligned}
\end{equation}
for some positive constant $c_*$ and $C_*$, which are both independent of $n$ and $\eps$. We also denote by $\widetilde{\mathfrak{E}}_n (0) = \widetilde{\mathfrak{E}} (0)$ for all $n \geq 0$.

Now	we derive the following lemma.

\begin{lemma}\label{Lmm-Iter-Appro-Bnd}
	There exist an instant iterating energy $\widetilde{\mathfrak{E}}_n (t)$, small $\ell > 0$ and $T^* > 0$, independent of $\eps$, such that if $T^* \leq \sqrt{\ell}$, $\widetilde{\mathfrak{E}} (0) \leq \ell$ and $\sup_{0 \leq t \leq T^*} \widetilde{\mathfrak{E}}_n (t) \leq 2 \ell$, then
	\begin{equation}
	\begin{aligned}
	\sup_{0 \leq t \leq T^*} \widetilde{\mathfrak{E}}_{n+1} (t) \leq 2 \ell \,.
	\end{aligned}
	\end{equation}
\end{lemma}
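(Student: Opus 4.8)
\textbf{Proof proposal for Lemma \ref{Lmm-Iter-Appro-Bnd}.}

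The plan is to run the same energy machinery already developed in Sections \ref{Sec:Unif-Spatial-Bnd} and \ref{Sec:Unif-Mix-Bnd-Global} for the nonlinear system \eqref{VMB-G-drop-eps}, but now applied to the linear iteration \eqref{Iter-Appro-Systm}, treating $(G_\eps^n, E_\eps^n, B_\eps^n)$ as frozen coefficients and $(G_\eps^{n+1}, E_\eps^{n+1}, B_\eps^{n+1})$ as the unknown. First I would redo the spatial-derivative estimate of Proposition \ref{Prop-Spatial}: acting $\partial^m_x$ ($|m|\le s$) on the first equation of \eqref{Iter-Appro-Systm}, pairing with $\partial^m_x G_\eps^{n+1}$ in $L^2_{x,v}$, and using the coercivity of $\mathscr{L}$ (Lemma \ref{Lmm-L}(3)) to extract the kinetic dissipation $\tfrac{\lambda}{\eps^2}\|\mathbb{P}^\perp G_\eps^{n+1}\|^2_{H^s_xL^2_v(\nu)}$. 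The Lorentz-force term $\tfrac1\eps\mathsf{q}(\eps E_\eps^n+v\times B_\eps^n)\cdot\nabla_v G_\eps^{n+1}$ and the source $\tfrac1\eps(E_\eps^{n+1}\cdot v)\mathsf{q}_1\sqrt M$ and $\tfrac1\eps\Gamma(G_\eps^n,G_\eps^n)$ are all handled exactly as in the proofs of Proposition \ref{Prop-Spatial} and Proposition \ref{Prop-MM-Est}, with the crucial observation that the $v\times B_\eps^n$ cancellation \eqref{Cancellation-B-G} still holds verbatim (it only uses $(v\times B)\cdot v=0$ and $\langle v,M\rangle_{L^2_v}=0$). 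Here the quadratic term involves $G_\eps^n$ rather than $G_\eps^{n+1}$, so after using Young's inequality it contributes $C\mathfrak{E}_n^{1/2}$ times a dissipation of the \emph{new} iterate plus a lower-order term controlled by $\mathfrak{E}_n$; the dependence on $\mathfrak{E}_n$ is harmless because by hypothesis $\mathfrak{E}_n\le C\ell$.

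Second, I would reproduce the micro-macro / seventeen-moments argument of Proposition \ref{Prop-MM-Est} and the Ohm's-law decay analysis of Propositions \ref{Prop-rho-u-theta-average} and \ref{Prop-Decay-E-B} for the linear system. The key structural point is that the Ohm's law derivation in \eqref{Ohm-Law-Kinetic}--\eqref{Decay-B} used only: (i) dotting the $G$-equation with $\mathsf{q}_1$; (ii) the algebraic identities for $\mathscr{L}\cdot\mathsf{q}_1$ via the operators $\mathcal{L},\mathfrak{L}$; (iii) Lemma \ref{Lmm-mathfrak-L}. All of these survive for \eqref{Iter-Appro-Systm} with the nonlinearities replaced by the frozen quantities $\Gamma(G_\eps^n,G_\eps^n)$ and the frozen force fields $E_\eps^n,B_\eps^n$, so one again obtains the damped wave equation $\partial_{tt}B_\eps^{n+1}-\Delta_x B_\eps^{n+1}+\sigma\partial_t B_\eps^{n+1}=\nabla_x\times\mathcal{K}(G_\eps^{n+1},E_\eps^n,B_\eps^n)+\cdots$ plus error terms quadratic in the $n$-data times dissipation of the $(n+1)$-data. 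Then I would assemble, exactly as in Lemma \ref{Lmm-Mix-Energy} and Proposition \ref{Prop-Uniform-Bnd-VMB}, the mixed $(x,v)$-derivative estimates and the combined functionals $\mathcal{E}_s,\mathcal{D}_s$; this yields an instant iterating energy $\widetilde{\mathfrak{E}}_{n+1}(t)$ (built from the same $\mathcal{E}_s$-type combination) satisfying a differential inequality of the schematic form
\begin{equation*}
\tfrac{\d}{\d t}\widetilde{\mathfrak{E}}_{n+1}(t)+c\,\mathfrak{D}_{n+1}(t)\le C\big(\widetilde{\mathfrak{E}}_{n+1}^{1/2}+\widetilde{\mathfrak{E}}_{n+1}^2\big)\mathfrak{D}_{n+1}(t)+C\big(\widetilde{\mathfrak{E}}_n+\widetilde{\mathfrak{E}}_n^2\big)\big(\mathfrak{D}_{n+1}(t)+\widetilde{\mathfrak{E}}_{n+1}(t)+1\big),
\end{equation*}
where $\mathfrak{D}_{n+1}$ is the dissipation rate of the $(n+1)$-iterate and the inhomogeneous term $+1$ reflects the forcing-type contributions that do not come with a dissipation factor. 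Under the standing assumption $\sup_{[0,T^*]}\widetilde{\mathfrak{E}}_n\le 2\ell$ with $\ell$ small, the right-hand side is bounded by $\tfrac{c}{2}\mathfrak{D}_{n+1}(t)+C\ell\,\widetilde{\mathfrak{E}}_{n+1}(t)+C\ell$, so Gronwall over $[0,T^*]$ gives $\widetilde{\mathfrak{E}}_{n+1}(t)\le (\widetilde{\mathfrak{E}}(0)+C\ell t)e^{C\ell t}\le(\ell+C\ell T^*)e^{C\ell T^*}$.

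Finally I would fix the parameters: choose $\ell>0$ small enough and $T^*\le\sqrt\ell$ so that $(\ell+C\ell\sqrt\ell)e^{C\ell\sqrt\ell}\le 2\ell$, which closes the induction $\sup_{[0,T^*]}\widetilde{\mathfrak{E}}_{n+1}\le 2\ell$. I expect the main obstacle to be purely bookkeeping rather than conceptual: one must check carefully that every singular-in-$\eps$ term in the linearized system is still absorbed by the kinetic dissipation $\tfrac{1}{\eps^2}\|\mathbb{P}^\perp G_\eps^{n+1}\|^2_{H^s_{x,v}(\nu)}$ of the \emph{new} iterate (this is where the cancellations \eqref{Cancellation-B-G} and \eqref{E-B-cancellation} and the $\mathsf{q}_1\cdot\mathsf{q}_2=0$ orthogonality are indispensable), and that the frozen coefficients $E_\eps^n,B_\eps^n$ appearing inside $\nabla_v$-terms never produce a genuinely uncontrolled $\tfrac1\eps$ factor — they only multiply $\mathbb{P}^\perp G_\eps^{n+1}$-derivatives or fluid quantities, exactly as in the nonlinear estimates. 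The source term $\tfrac1\eps(E_\eps^{n+1}\cdot v)\mathsf{q}_1\sqrt M$ must be moved to the Maxwell side by pairing as in \eqref{Spatial-2}, so that it contributes the time derivative of $\|E_\eps^{n+1}\|^2+\|B_\eps^{n+1}\|^2$ rather than a singular forcing; this is the one place where the linear structure of \eqref{Iter-Appro-Systm} (with $E_\eps^{n+1}$, not $E_\eps^n$, in that term) is essential and must be respected.
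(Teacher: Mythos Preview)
Your overall strategy is sound and would work, but you are taking a substantially more elaborate route than the paper. The paper's proof of Lemma \ref{Lmm-Iter-Appro-Bnd} does \emph{not} reproduce the macro--micro decomposition (Proposition \ref{Prop-MM-Est}), the conservation-law averages (Proposition \ref{Prop-rho-u-theta-average}), or the Ohm's-law / Maxwell decay machinery (Proposition \ref{Prop-Decay-E-B}) for the iteration at all. It only carries out the analogue of Proposition \ref{Prop-Spatial} (pure $x$-derivatives plus kinetic dissipation) and the induction-on-$|\alpha|$ mixed-derivative estimate of Lemma \ref{Lmm-Mix-Energy}, and then closes by short-time integration. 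The key observation you are missing is that for a \emph{local} result the fluid dissipation $\|\nabla_x\mathbb{P}G\|^2_{H^{s-1}_xL^2_v}$ and the $E,B$ dissipation are not needed: the kinetic dissipation $\tfrac{1}{\eps^2}\|\mathbb{P}^\perp G_\eps^{n+1}\|^2_{H^s_{x,v}(\nu)}$ alone absorbs every singular-in-$\eps$ term, and the remaining (nonsingular) terms are simply integrated over $[0,t]$ and bounded by $t\sup_{[0,t]}\widetilde{\mathfrak{E}}_{n+1}$ or $\sqrt{\ell}\,\widetilde{\mathfrak{E}}_{n+1}$. This yields directly an algebraic inequality of the form $[1-2C_s(1+\ell)\sqrt{\ell}]\sup_{[0,T^*]}\widetilde{\mathfrak{E}}_{n+1}\le \ell+C_s(1+\ell)\ell\sqrt{\ell}$, with no Gronwall exponential. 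Your approach buys you nothing extra here and costs a great deal of unnecessary work.

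One further remark on your schematic inequality: the term $C(\widetilde{\mathfrak{E}}_{n+1}^{1/2}+\widetilde{\mathfrak{E}}_{n+1}^2)\mathfrak{D}_{n+1}$ should not appear. In the iteration \eqref{Iter-Appro-Systm} every bilinear term carries at least one factor of the \emph{old} iterate $(G_\eps^n,E_\eps^n,B_\eps^n)$ --- the collision source is $\Gamma(G_\eps^n,G_\eps^n)$ and the Lorentz force uses $E_\eps^n,B_\eps^n$ --- so the correct coefficient in front of the new dissipation is $\widetilde{\mathfrak{E}}_n$-type, not $\widetilde{\mathfrak{E}}_{n+1}$-type. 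With that correction your Gronwall closure goes through without the implicit bootstrap on $\widetilde{\mathfrak{E}}_{n+1}$ that your current schematic would require; as written, bounding $C(\widetilde{\mathfrak{E}}_{n+1}^{1/2}+\widetilde{\mathfrak{E}}_{n+1}^2)\mathfrak{D}_{n+1}$ by $\tfrac{c}{2}\mathfrak{D}_{n+1}$ presupposes the smallness of $\widetilde{\mathfrak{E}}_{n+1}$ you are trying to prove.
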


If Lemma \ref{Lmm-Iter-Appro-Bnd} holds, we know that $ \sup_{0 \leq t \leq T^*} \widetilde{\mathfrak{E}}_n (t) \leq 2 \ell $ holds for all $n \geq 0$. Namely, $ sup_{0 \leq t \leq T^*} \mathfrak{E}_n (t) \leq 2 C_* \ell $ for all $n \geq 0$. In order to complete the proof of Proposition \ref{Prop-Local-Solutn}, We thereby employ the compactness arguments, take $n \rightarrow \infty$ and obtain a solution $(G_\eps, E_\eps, B_\eps)$ for any fixed $0 < \eps \leq 1$ from Lemma \ref{Lmm-Iter-Appro-Bnd}.

\begin{proof}[Proof of Lemma \ref{Lmm-Iter-Appro-Bnd}]
	For $|m| \leq s \, (s \geq 3)$, taking $\partial^m_x$ derivatives of the first $G_\eps^{n+1}$-equation in \eqref{Iter-Appro-Systm} and taking inner product with $\partial^m_x G_\eps^{n+1}$ over $\T^3 \times \R^3$, we obtain
	\begin{equation}\label{Iter-Inq-0}
	\begin{aligned}
	\tfrac{1}{2} & \tfrac{\d }{\d t} \| \partial^m_x G_\eps^{n+1} \|^2_{L^2_{x,v}} + \tfrac{1}{\eps^2} \langle \mathscr{L} \partial^m_x G_\eps^{n+1} , \partial^m_x G_\eps^{n+1} \rangle_{L^2_{x,v}} \\
	& - \tfrac{1}{\eps} \langle ( \partial^m_x E_\eps^{n+1} \cdot v ) \mathsf{q}_1 \sqrt{M} , \partial^m_x G_\eps^{n+1} \rangle_{L^2_{x,v}} \\
	= & \tfrac{1}{2} \langle  \mathsf{q} \partial^m_x [ (E_\eps^n \cdot v) G_\eps^{n+1} ] , \partial^m_x G_\eps^{n+1} \rangle_{L^2_{x,v}} + \tfrac{1}{\eps} \langle  \partial^m_x \Gamma (  G_\eps^n , G_\eps^n ) , \partial^m_x G_\eps^{n+1} \rangle_{L^2_{x,v}} \\
	& - \tfrac{1}{\eps} \sum_{0 \neq m' \leq m} \langle \mathsf{q} \partial^{m'}_x ( \eps E_\eps^n + v \times B_\eps^n ) \cdot \nabla_v \partial^{m-m'}_x G_\eps^{n+1} , \partial^m_x G_\eps^{n+1} \rangle_{L^2_{x,v}} \,.
	\end{aligned}
	\end{equation}
	We derive from the part (3) of Lemma \ref{Lmm-L} that there is a $\lambda > 0$ such that
	\begin{equation}\label{Iter-Inq-1}
	\begin{aligned}
	\tfrac{1}{\eps^2} \langle \mathscr{L} \partial^m_x G_\eps^{n+1} , \partial^m_x G_\eps^{n+1} \rangle_{L^2_{x,v}} \geq \tfrac{\lambda}{\eps^2} \| \partial^m_x \mathbb{P}^\perp G_\eps^{n+1} \|^2_{L^2_{x,v}(\nu)} \,.
	\end{aligned}
	\end{equation}
	Next, by the Maxwell system of $(E_\eps^{n+1}, B_\eps^{n+1})$ in \eqref{Iter-Appro-Systm}, we have
	\begin{equation}\label{Iter-Inq-2}
	\begin{aligned}
	- \tfrac{1}{\eps} \langle ( \partial^m_x E_\eps^{n+1} \cdot v ) \mathsf{q}_1 & \sqrt{M} , \partial^m_x G_\eps^{n+1} \rangle_{L^2_{x,v}} = \big\langle \partial^m_x E_\eps^{n+1} , - \tfrac{1}{\eps} \langle \partial^m_x G_\eps^{n+1} , \mathsf{q}_1 v \sqrt{M} \rangle_{L^2_v} \big\rangle_{L^2_x} \\
	= & \langle \partial^m_x E_\eps^{n+1} , \partial_t \partial^m_x E_\eps^{n+1} - \nabla_x \times \partial^m_x B_\eps^{n+1} \rangle_{L^2_x} \\
	= & \tfrac{1}{2} \tfrac{\d}{\d t} \| \partial^m_x E_\eps^{n+1} \|^2_{L^2_x} - \langle \partial^m_x B_\eps^{n+1} , \nabla_x \times \partial^m_x E_\eps^{n+1} \rangle_{L^2_x} \\
	= & \tfrac{1}{2} \tfrac{\d}{\d t} \| \partial^m_x E_\eps^{n+1} \|^2_{L^2_x} + \langle \partial^m_x B_\eps^{n+1} , \partial_t \partial^m_x B_\eps^{n+1} \rangle_{L^2_x} \\
	= & \tfrac{1}{2} \tfrac{\d}{\d t} \big( \| \partial^m_x E_\eps^{n+1} \|^2_{L^2_x} + \| \partial^m_x B_\eps^{n+1} \|^2_{L^2_x} \big) \,.
	\end{aligned}
	\end{equation}
	We now estimate the term $\tfrac{1}{2} \langle  \mathsf{q} \partial^m_x [ (E_\eps^n \cdot v) G_\eps^{n+1} ] , \partial^m_x G_\eps^{n+1} \rangle_{L^2_{x,v}}$ for all $|m| \leq s$. By employing the decomposition $G_\eps^{n+1} = \mathbb{P} G_\eps^{n+1} + \mathbb{P}^\perp G_\eps^{n+1}$, we obtain
	\begin{equation}
	\begin{aligned}
	& \tfrac{1}{2} \langle  \mathsf{q} \partial^m_x [ (E_\eps^n \cdot v) G_\eps^{n+1} ] , \partial^m_x G_\eps^{n+1} \rangle_{L^2_{x,v}} \\
	= & \underset{X_1}{ \underbrace{ \tfrac{1}{2} \sum_{m' \leq m} C_m^{m'} \big\langle ( \partial^{m'}_x E_\eps^n \cdot v ) \mathsf{q} \partial^{m-m'}_x \mathbb{P} G_\eps^{n+1} , \partial^m_x \mathbb{P} G_\eps^{n+1} \big\rangle_{L^2_{x,v}} } } \\
	& + \underset{X_2}{ \underbrace{ \tfrac{1}{2} \sum_{m' \leq m} C_m^{m'} \big\langle ( \partial^{m'}_x E_\eps^n \cdot v ) \mathsf{q} \partial^{m-m'}_x \mathbb{P} G_\eps^{n+1} , \partial^m_x \mathbb{P}^\perp G_\eps^{n+1} \big\rangle_{L^2_{x,v}} } } \\
	& + \underset{X_3}{ \underbrace{ \tfrac{1}{2} \sum_{m' \leq m} C_m^{m'} \big\langle ( \partial^{m'}_x E_\eps^n \cdot v ) \mathsf{q} \partial^{m-m'}_x \mathbb{P}^\perp G_\eps^{n+1} , \partial^m_x \mathbb{P} G_\eps^{n+1} \big\rangle_{L^2_{x,v}} } } \\
	& + \underset{X_4}{ \underbrace{ \tfrac{1}{2} \sum_{m' \leq m} C_m^{m'} \big\langle ( \partial^{m'}_x E_\eps^n \cdot v ) \mathsf{q} \partial^{m-m'}_x \mathbb{P}^\perp G_\eps^{n+1} , \partial^m_x \mathbb{P}^\perp G_\eps^{n+1} \big\rangle_{L^2_{x,v}} } } \,.
	\end{aligned}
	\end{equation}
	By the H\"older inequality and the Sobolev embedding theory, one easily yields that
	\begin{equation}
	\begin{aligned}
	X_1 \leq C \| E_\eps^n \|_{H^s_x} \| \mathbb{P} G_\eps^{n+1} \|^2_{H^s_x L^2_v} \leq C \mathbb{E}_s^\frac{1}{2} (G_\eps^n, E_\eps^n, B_\eps^n) \mathbb{E}_s ( G_\eps^{n+1}, E_\eps^{n+1}, B_\eps^{n+1} ) \,,
	\end{aligned}
	\end{equation}
	and
	\begin{equation}
	\begin{aligned}
	X_2 + X_3 \leq & C \| E_\eps^n \|_{H^s_x} \| \mathbb{P} G_\eps^{n+1} \|_{H^s_x L^2_v} \| \mathbb{P}^\perp G_\eps^{n+1} \|_{H^s_x L^2_v (\nu)} \\
	\leq & C \mathbb{E}_s^\frac{1}{2} (G_\eps^n, E_\eps^n, B_\eps^n) \mathbb{E}_s^\frac{1}{2} ( G_\eps^{n+1}, E_\eps^{n+1}, B_\eps^{n+1} ) \| \mathbb{P}^\perp G_\eps^{n+1} \|_{H^s_{x,v}(\nu)} \,,
	\end{aligned}
	\end{equation}
	and
	\begin{equation}
	\begin{aligned}
	X_4 \leq C \| E_\eps^n \|_{H^s_x} \| \mathbb{P}^\perp G_\eps^{n+1} \|^2_{H^s_x L^2_v (\nu)} \leq C \mathbb{E}_s^\frac{1}{2} (G_\eps^n, E_\eps^n, B_\eps^n) \| \mathbb{P}^\perp G_\eps^{n+1} \|^2_{H^s_x L^2_v (\nu)} \,.
	\end{aligned}
	\end{equation}
	In summary, we have
	\begin{equation}\label{Iter-Inq-3}
	\begin{aligned}
	& \tfrac{1}{2} \langle  \mathsf{q} \partial^m_x [ (E_\eps^n \cdot v) G_\eps^{n+1} ] , \partial^m_x G_\eps^{n+1} \rangle_{L^2_{x,v}} \\
	\leq & C \mathbb{E}_s^\frac{1}{2} (G_\eps^n, E_\eps^n, B_\eps^n) \Big[ \mathbb{E}_s ( G_\eps^{n+1}, E_\eps^{n+1}, B_\eps^{n+1} ) + \| \mathbb{P}^\perp G_\eps^{n+1} \|^2_{H^s_x L^2_v (\nu)} \Big] \,.
	\end{aligned}
	\end{equation}
	We now employ Lemma \ref{Lmm-Gamma-Torus} to estimate the term $\tfrac{1}{\eps} \langle  \partial^m_x \Gamma (  G_\eps^n , G_\eps^n ) , \partial^m_x G_\eps^{n+1} \rangle_{L^2_{x,v}}$ for all $|m| \leq s$. More precisely,
	\begin{equation}\label{Iter-Inq-4}
	\begin{aligned}
	& \tfrac{1}{\eps} \langle  \partial^m_x \Gamma (  G_\eps^n , G_\eps^n ) , \partial^m_x G_\eps^{n+1} \rangle_{L^2_{x,v}} = \tfrac{1}{\eps} \langle  \partial^m_x \Gamma (  G_\eps^n , G_\eps^n ) , \partial^m_x \mathbb{P}^\perp G_\eps^{n+1} \rangle_{L^2_{x,v}} \\
	\leq & \tfrac{C}{\eps} \| G_\eps^n \|_{H^s_x L^2_v} \big( \| \mathbb{P} G_\eps^n \|_{H^s_x L^2_v} + \| \mathbb{P}^\perp G_\eps^n \|_{H^s_x L^2_v (\nu)} \big) \| \mathbb{P}^\perp G_\eps^{n+1} \|_{H^s_x L^2_v (\nu)} \\
	\leq & \tfrac{C}{\eps} \mathbb{E}_s^\frac{1}{2} ( G_\eps^n , E_\eps^n , B_\eps^n ) \Big( \mathbb{E}_s^\frac{1}{2} ( G_\eps^n , E_\eps^n , B_\eps^n ) + \| \mathbb{P}^\perp G_\eps^n \|_{H^s_x L^2_v (\nu)} \Big) \| \mathbb{P}^\perp G_\eps^{n+1} \|_{H^s_x L^2_v (\nu)} \,.
	\end{aligned}
	\end{equation}
	Next we estimate the term $- \tfrac{1}{\eps} \sum_{0 \neq m' \leq m} \langle \mathsf{q} \partial^{m'}_x ( \eps E_\eps^n + v \times B_\eps^n ) \cdot \nabla_v \partial^{m-m'}_x G_\eps^{n+1} , \partial^m_x G_\eps^{n+1} \rangle_{L^2_{x,v}}$ for all $|m| \leq s$. By employing the relation $G_\eps^{n+1} = \mathbb{P} G_\eps^{n+1} + \mathbb{P}^\perp G_\eps^{n+1}$, it can be decomposed as four parts:
	\begin{equation}
	\begin{aligned}
	& - \tfrac{1}{\eps} \sum_{0 \neq m' \leq m} C_m^{m'} \big\langle \mathsf{q} \partial^{m'}_x ( \eps E_\eps^n + v \times B_\eps^n ) \cdot \nabla_v \partial^{m-m'}_x G_\eps^{n+1} , \partial^m_x G_\eps^{n+1} \big\rangle_{L^2_{x,v}} \\
	= & \underset{Y_1}{ \underbrace{ - \tfrac{1}{\eps} \sum_{0 \neq m' \leq m} C_m^{m'} \big\langle \mathsf{q} \partial^{m'}_x ( \eps E_\eps^n + v \times B_\eps^n ) \cdot \nabla_v \partial^{m-m'}_x \mathbb{P} G_\eps^{n+1} , \partial^m_x \mathbb{P} G_\eps^{n+1} \big\rangle_{L^2_{x,v}}  } } \\
	&  \underset{Y_2}{ \underbrace{ - \tfrac{1}{\eps} \sum_{0 \neq m' \leq m} C_m^{m'} \big\langle \mathsf{q} \partial^{m'}_x ( \eps E_\eps^n + v \times B_\eps^n ) \cdot \nabla_v \partial^{m-m'}_x \mathbb{P}^\perp G_\eps^{n+1} , \partial^m_x \mathbb{P} G_\eps^{n+1} \big\rangle_{L^2_{x,v}}  } } \\
	&  \underset{Y_3}{ \underbrace{ - \tfrac{1}{\eps} \sum_{0 \neq m' \leq m} C_m^{m'} \big\langle \mathsf{q} \partial^{m'}_x ( \eps E_\eps^n + v \times B_\eps^n ) \cdot \nabla_v \partial^{m-m'}_x \mathbb{P} G_\eps^{n+1} , \partial^m_x \mathbb{P}^\perp G_\eps^{n+1} \big\rangle_{L^2_{x,v}}  } } \\
	&  \underset{Y_4}{ \underbrace{ - \tfrac{1}{\eps} \sum_{0 \neq m' \leq m} C_m^{m'} \big\langle \mathsf{q} \partial^{m'}_x ( \eps E_\eps^n + v \times B_\eps^n ) \cdot \nabla_v \partial^{m-m'}_x \mathbb{P}^\perp G_\eps^{n+1} , \partial^m_x \mathbb{P}^\perp G_\eps^{n+1} \big\rangle_{L^2_{x,v}}  } } \,.
	\end{aligned}
	\end{equation}
	We notice that there is a singular expression 
	\begin{equation}
	\begin{aligned}
	\tfrac{1}{\eps} \sum_{0 \neq m' \leq m} C_m^{m'} \big\langle \mathsf{q} \partial^{m'}_x ( v \times B_\eps^n ) \cdot \nabla_v \partial^{m-m'}_x \mathbb{P} G_\eps^{n+1} , \partial^m_x \mathbb{P} G_\eps^{n+1} \big\rangle_{L^2_{x,v}} 
	\end{aligned}
	\end{equation}
	occurring in the term $Y_1$. Recalling the definition of $\mathbb{P} G$ in \eqref{VMB-Proj}, we have
	\begin{equation}
	\begin{aligned}
	\nabla_v \mathbb{P} G_\eps^{n+1} = (u_\eps^{n+1} + v \theta_\eps^{n+1} ) \mathsf{q}_2 \sqrt{M} - \tfrac{1}{2} v \mathbb{P} G_\eps^{n+1} \,,
	\end{aligned}
	\end{equation}
	where $\mathsf{q}_2 = [1,1] \in \mathbb{R}^2$, $\theta_\eps^{n+1} = \frac{1}{2} \l G_\eps^{n+1} , \frac{2}{3} \phi_6 \r_{L^2_v}$ and the vector field $u_\eps^{n+1} \in \mathbb{R}^3$ with the components $u^{n+1}_{\eps,i} = \frac{1}{2} \l G_\eps^{n+1} , \phi_{2+i} \r_{L^2_v}$ ($i = 1,2,3$). Then we have
	\begin{align}
	\no & \tfrac{1}{\eps} \sum\limits_{0 \neq m' \leq m} C_m^{m'} \big\langle \mathsf{q} \partial^{m'}_x (v \times B_\eps^n ) \cdot \nabla_v \partial^{m-m'}_x \mathbb{P} G_\eps^{n+1} , \partial^m_x \mathbb{P} G_\eps^{n+1} \big\rangle_{L^2_{x,v}} \\
	\no = & \tfrac{1}{\eps} \sum\limits_{0 \neq m' \leq m} C_m^{m'} \big\langle v \times \partial^{m'}_x B_\eps^n \cdot \big[ (\partial^{m-m'}_x u_\eps^{n+1} + v \partial^{m-m'}_x \theta_\eps^{n+1} ) \mathsf{q} \mathsf{q}_2 \sqrt{M} \\
	\no & \qquad \qquad - \tfrac{1}{2} v \mathsf{q} \partial^{m-m'}_x \mathbb{P} G_\eps^{n+1} \big] , \partial^m_x \mathbb{P} G_\eps^{n+1} \big\rangle_{L^2_{x,v}} \\
	\no = & \tfrac{1}{\eps} \sum\limits_{0 \neq m' \leq m} C_m^{m'} \big\langle ( v \times \partial^{m'}_x B_\eps^n ) \cdot \partial^{m-m'}_x u_\eps^{n+1} , \mathsf{q}_1 \sqrt{M} \cdot \partial^m_x \mathbb{P} G_\eps^{n+1} \big\rangle_{L^2_{x,v}} \\
	\no & + \tfrac{1}{\eps} \sum\limits_{0 \neq m' \leq m} C_m^{m'} \big\langle ( v \times \partial^{m'}_x B_\eps^n ) \cdot v , \big( \partial^{m-m'}_x \theta_\eps^{n+1} \mathsf{q}_1 \sqrt{M} \\
	\no & \qquad \qquad - \frac{1}{2} \mathsf{q} \partial^{m-m'}_x \mathbb{P} G_\eps^{n+1}  \big) \cdot \partial^m_x \mathbb{P} G_\eps^{n+1} \big\rangle_{L^2_{x,v}} \\
	\no = & \tfrac{1}{\eps} \sum\limits_{0 \neq m' \leq m} C_m^{m'} \big\langle ( v \times \partial^{m'}_x B_\eps^n ) \cdot \partial^{m-m'}_x u_\eps^{n+1} , \partial^m_x \l G_\eps^{n+1}, \phi_1 - \phi_2 \r_{L^2_v} M \big\rangle_{L^2_{x,v}} \\
	\no = & \tfrac{1}{\eps} \sum\limits_{0 \neq m' \leq m} C_m^{m'} \big\langle ( \l v , M \r_{L^2_v} \times \partial^{m'}_x B_\eps^n ) \cdot \partial^{m-m'}_x u_\eps^{n+1} , \partial^m_x \l G_\eps^{n+1} , \phi_1 - \phi_2 \r_{L^2_v} \big\rangle_{L^2_x} \\
	= & 0 \,.
	\end{align}
	We thereby know that the term $Y_1$ does not involve the singularity. Then, the term $Y_1$ can be bounded by
	\begin{equation}
	\begin{aligned}
	Y_1 \leq & C \| E_\eps^n \|_{H^s_x} \| \mathbb{P} G_\eps^{n+1} \|_{H^s_x L^2_v} \| \nabla_x \mathbb{P} G_\eps^{n+1} \|_{H^{s-1}_x L^2_v} \\
	\leq & C \mathbb{E}_s^\frac{1}{2} (G_\eps^n , E_\eps^n , B_\eps^n) \mathbb{E}_s (G_\eps^{n+1} , E_\eps^{n+1} , B_\eps^{n+1}) \,.
	\end{aligned}
	\end{equation}
	Moreover, the term $Y_2$, $Y_3$ and $Y_4$ can also be controlled as
	\begin{equation}
	\begin{aligned}
	Y_2 + Y_3 \leq & \tfrac{C}{\eps} ( \eps \| E_\eps^n \|_{H^s_x} + \| B_\eps^n \|_{H^s_x} ) \| \mathbb{P} G_\eps^{n+1} \|_{H^s_x L^2_v} \| \mathbb{P}^\perp G_\eps^{n+1} \|_{H^s_x L^2_v} \\
	\leq & \tfrac{C}{\eps}  \mathbb{E}_s^\frac{1}{2} (G_\eps^n , E_\eps^n , B_\eps^n) \mathbb{E}_s^\frac{1}{2} (G_\eps^{n+1} , E_\eps^{n+1} , B_\eps^{n+1}) \| \mathbb{P}^\perp G_\eps^{n+1} \|_{H^s_x L^2_v (\nu)} \,,
	\end{aligned}
	\end{equation}
	and
	\begin{equation}
	\begin{aligned}
	Y_4 \leq & \tfrac{C}{\eps} ( \eps \| E_\eps^n \|_{H^s_x} + \| B_\eps^n \|_{H^s_x} ) \| \mathbb{P}^\perp G_\eps^{n+1} \|_{H^s_x L^2_v (\nu)} \sum_{|m| \leq s - 1} \| \nabla_v \partial^m_x \mathbb{P}^\perp G_\eps^{n+1} \|_{L^2_{x,v}(\nu)} \\
	\leq & \tfrac{C}{\eps} \mathbb{E}_s^\frac{1}{2} (G_\eps^n , E_\eps^n , B_\eps^n) \| \mathbb{P}^\perp G_\eps^{n+1} \|_{\widetilde{H}^s_{x,v} (\nu)} \| \mathbb{P}^\perp G_\eps^{n+1} \|_{H^s_x L^2_v (\nu)} \,.
	\end{aligned}
	\end{equation}
	Consequently, we obtain
	\begin{equation}\label{Iter-Inq-5}
	\begin{aligned}
	- \tfrac{1}{\eps} & \sum_{0 \neq m' \leq m} C_m^{m'} \big\langle \mathsf{q} \partial^{m'}_x ( \eps E_\eps^n + v \times B_\eps^n ) \cdot \nabla_v \partial^{m-m'}_x G_\eps^{n+1} , \partial^m_x G_\eps^{n+1} \big\rangle_{L^2_{x,v}} \\
	& \leq C \mathbb{E}_s^\frac{1}{2} (G_\eps^n , E_\eps^n , B_\eps^n) \mathbb{E}_s (G_\eps^{n+1} , E_\eps^{n+1} , B_\eps^{n+1})  + \tfrac{C}{\eps} \mathbb{E}_s^\frac{1}{2} (G_\eps^n, E_\eps^n, B_\eps^n) \\
	& \times \Big[ \mathbb{E}_s^\frac{1}{2} ( G_\eps^{n+1} , E_\eps^{n+1} , B_\eps^{n+1} ) + \| \mathbb{P}^\perp G_\eps^{n+1} \|_{\widetilde{H}^s_{x,v}(\nu)} \Big] \| \mathbb{P}^\perp G_\eps^{n+1} \|_{H^s_x L^2_v (\nu)} \,.
	\end{aligned}
	\end{equation}
	Plugging the bounds \eqref{Iter-Inq-1}, \eqref{Iter-Inq-2}, \eqref{Iter-Inq-3}, \eqref{Iter-Inq-4} and \eqref{Iter-Inq-5} into the relation \eqref{Iter-Inq-0} reduces to
	\begin{equation}\label{Iter-Spatial-Bnd}
	\begin{aligned}
	& \tfrac{\d }{\d t} \big( \| G_\eps^{n+1} \|^2_{H^s_x L^2_v} + \| E_\eps^{n+1} \|^2_{H^s_x} + \| B_\eps^{n+1} \|^2_{H^s_x} \big) + \tfrac{\lambda}{\eps^2} \| \mathbb{P}^\perp G_\eps^{n+1} \|^2_{H^s_x L^2_v (\nu)} \\
	& \leq C \mathbb{E}_s^2 ( G_\eps^n , E_\eps^n , B_\eps^n ) + C \Big[ 1 + \mathbb{E}_s ( G_\eps^n , E_\eps^n , B_\eps^n ) \Big] \mathbb{E}_s ( G_\eps^{n+1} , E_\eps^{n+1} , B_\eps^{n+1} ) \\
	& + C \Big[ \mathbb{E}_s^\frac{1}{2} ( G_\eps^n , E_\eps^n , B_\eps^n ) + \mathbb{E}_s ( G_\eps^n , E_\eps^n , B_\eps^n ) \Big] \Big( \| \mathbb{P}^\perp G_\eps^n \|^2_{H^s_x L^2_v (\nu)} + \| \mathbb{P}^\perp G_\eps^{n+1} \|^2_{H^s_{x,v}(\nu)} \Big)
	\end{aligned}
	\end{equation}
	for all $0 < \eps \leq 1$.
	
	Next we estimate the mixed $(x,v)$-derivative energy bound. We first rewrite the $G_\eps^{n+1}$-equation of \eqref{Iter-Appro-Systm} as
	\begin{equation}
	\begin{aligned}
	& \big\{ \partial_t + \tfrac{1}{\eps} v \cdot \nabla_x + \tfrac{1}{\eps} \mathsf{q} ( \eps E_\eps^n + v \times B_\eps^n ) \cdot \nabla_v + \tfrac{1}{\eps^2} \mathscr{L} - \tfrac{1}{2} \mathsf{q} ( E_\eps^n \cdot v ) \big\} \mathbb{P}^\perp G_\eps^{n+1} \\
	& = - \big\{ \partial_t + \tfrac{1}{\eps} v \cdot \nabla_x + \tfrac{1}{\eps} \mathsf{q} ( \eps E_\eps^n + v \times B_\eps^n ) \cdot \nabla_v - \tfrac{1}{2} \mathsf{q} ( E_\eps^n \cdot v ) \big\} \mathbb{P} G_\eps^{n+1} \\
	& + \tfrac{1}{\eps} ( E_\eps^{n+1} \cdot v ) \mathsf{q}_1 \sqrt{M} + \tfrac{1}{\eps} \Gamma (G_\eps^n , G_\eps^n) \,.
	\end{aligned}
	\end{equation}
	For all $|m| + |\alpha| \leq s$ and $\alpha \neq 0$, we take the derivative operator $\partial^m_\alpha$ in \eqref{Iter-Appro-Decomp} and take inner product with $\partial^m_\alpha \mathbb{P}^\perp G_\eps^{n+1}$ over $\T^3 \times \R^3$. Then we obtain
	\begin{equation}\label{Z1-9}
	\begin{aligned}
	& \tfrac{1}{2} \tfrac{\d}{\d t} \| \partial^m_\alpha \mathbb{P}^\perp G_\eps^{n+1} \|^2_{L^2_{x,v}} + \underset{Z_1}{ \underbrace{ \tfrac{1}{\eps^2} \l \partial^m_\alpha \mathscr{L} \mathbb{P}^\perp G_\eps^{n+1} , \partial^m_\alpha \mathbb{P}^\perp G_\eps^{n+1} \r_{L^2_{x,v}} }} \\
	= & \underset{Z_2}{ \underbrace{ \tfrac{1}{\eps} \l \partial^m_\alpha \big[ (E_\eps^n \cdot v) \sqrt{M} \mathsf{q}_1 \big] , \partial^m_\alpha \mathbb{P}^\perp G_\eps^{n+1} \r_{L^2_{x,v}} }} + \underset{Z_3}{ \underbrace{ \tfrac{1}{2} \l \partial^m_\alpha \big[ \mathsf{q} ( E_\eps^n \cdot v ) G_\eps^{n+1} \big] , \partial^m_\alpha \mathbb{P}^\perp G_\eps^{n+1} \r_{L^2_{x,v}} }} \\
	& + \underset{Z_4}{ \underbrace{ \tfrac{1}{\eps} \l \partial^m_\alpha \Gamma (G_\eps^n, G_\eps^n) , \partial^m_\alpha \mathbb{P}^\perp G_\eps^{n+1} \r_{L^2_{x,v}} }} \ \underset{Z_5}{ \underbrace{ - \l \partial_t \partial^m_\alpha \mathbb{P} G_\eps^{n+1} , \partial^m_\alpha \mathbb{P}^\perp G_\eps^{n+1} \r_{L^2_{x,v}} }} \\
	& \underset{Z_6}{ \underbrace{ - \tfrac{1}{\eps} \sum_{|\alpha'|=1} C_\alpha^{\alpha'} \l \partial^{\alpha'}_v v \cdot \nabla_x \partial^m_{\alpha - \alpha'} \mathbb{P}^\perp G_\eps^{n+1} , \partial^m_\alpha \mathbb{P}^\perp G_\eps^{n+1} \r_{L^2_{x,v}} }} \\
	& \underset{Z_7}{ \underbrace{ - \tfrac{1}{\eps} \sum_{|\alpha'|=1} C_\alpha^{\alpha'} \l \mathsf{q} ( \partial^{\alpha'}_v v \times B_\eps^n ) \cdot \nabla_v \partial^m_{\alpha - \alpha'} \mathbb{P}^\perp G_\eps^{n+1} , \partial^m_\alpha \mathbb{P}^\perp G_\eps^{n+1} \r_{L^2_{x,v}} }} \\
	& \underset{Z_8}{ \underbrace{ - \tfrac{1}{\eps} \sum_{m' < m} \sum_{|\alpha'| \leq 1} C_m^{m'} C_\alpha^{\alpha'} \l \mathsf{q} ( \partial^{\alpha'}_v v \times \partial^{m-m'}_x B_\eps^n ) \cdot \nabla_v \partial^{m'}_{\alpha - \alpha'} \mathbb{P}^\perp G_\eps^{n+1} , \partial^m_\alpha \mathbb{P}^\perp G_\eps^{n+1} \r_{L^2_{x,v}} }} \\
	&  \underset{Z_9}{ \underbrace{ - \tfrac{1}{\eps} \l \partial^m_\alpha \big[ v \cdot \nabla_x + \mathsf{q} ( \eps E_\eps^n + v \times B_\eps^n ) \cdot \nabla_v \big] \mathbb{P} G_\eps^{n+1} , \partial^m_\alpha \mathbb{P}^\perp G_\eps^{n+1} \r_{L^2_{x,v}} }} \,.
	\end{aligned}
	\end{equation}
	
	Recalling the decomposition of $\mathscr{L}$ in Lemma \ref{Lmm-L} (1), we have
	\begin{equation}
	\begin{aligned}
	\big\langle \partial^m_\alpha \mathscr{L} \mathbb{P}^\perp G_\eps^{n+1} , \partial^m_\alpha \mathbb{P}^\perp G_\eps^{n+1} \big\rangle_{L^2_{x,v}} = & 2 \big\langle \partial^m_\alpha (\nu (v) \mathbb{P}^\perp G_\eps^{n+1} ) , \partial^m_\alpha \mathbb{P}^\perp G_\eps^{n+1} \big\rangle_{L^2_{x,v}} \\
	& - \big\langle \partial^m_\alpha \mathscr{K} \mathbb{P}^\perp G_\eps^{n+1} , \partial^m_\alpha \mathbb{P}^\perp G_\eps^{n+1} \big\rangle_{L^2_{x,v} } \,.
	\end{aligned}
	\end{equation}
	From Lemma \ref{Lmm-nu-norm} (2), we derive
	\begin{equation}
	\begin{aligned}
	\big\langle \partial^m_\alpha (\nu (v) \mathbb{P}^\perp G_\eps^{n+1} ) , \partial^m_\alpha \mathbb{P}^\perp G_\eps^{n+1} \big\rangle_{L^2_{x,v}} \geq C_5 \| \partial^m_\alpha \mathbb{P}^\perp G_\eps^{n+1} \|^2_{L^2_{x,v}(\nu)} - C_6 \sum_{\alpha' < \alpha} \| \partial^m_{\alpha'} \mathbb{P}^\perp G_\eps^{n+1} \|^2_{L^2_{x,v}} \,.
	\end{aligned}
	\end{equation}
	Moreover, Lemma \ref{Lmm-L} (2) tells us that for any $\delta > 0$, there is a $C(\delta) > 0$ such that
	\begin{equation}
	\begin{aligned}
	\big\langle \partial^m_\alpha \mathscr{K} \mathbb{P}^\perp G_\eps^{n+1} , \partial^m_\alpha \mathbb{P}^\perp G_\eps^{n+1} \big\rangle_{L^2_{x,v}} \leq \delta \| \partial^m_\alpha \mathbb{P}^\perp G_\eps^{n+1} \|^2_{L^2_{x,v}(\nu)} + C(\delta) \| \partial^m_x \mathbb{P}^\perp G_\eps^{n+1} \|^2_{L^2_{x,v}} \,.
	\end{aligned}
	\end{equation}
	Thus taking $\delta = C_5 > 0$, $\lambda_0 = C_5 > 0$ and $\lambda_1 = C(C_5) + 2 C_6 > 0$ implies that the quantity $M_1$ has the lower bound
	\begin{equation}\label{Z1}
	\begin{aligned}
	Z_1 = \tfrac{1}{\eps^2} \big\langle \partial^m_\alpha \mathscr{L} \mathbb{P}^\perp G_\eps^{n+1} , \partial^m_\alpha \mathbb{P}^\perp G_\eps^{n+1} \big\rangle_{L^2_{x,v}} \geq \tfrac{\lambda_0}{\eps^2} \| \partial^m_\alpha \mathbb{P}^\perp G_\eps^{n+1} \|^2_{L^2_{x,v} (\nu)} - \tfrac{\lambda_1}{\eps^2} \sum_{\alpha' < \alpha} \| \partial^m_{\alpha'} \mathbb{P}^\perp G_\eps^{n+1} \|^2_{L^2_{x,v}} \,.
	\end{aligned}
	\end{equation}
	
	Since $|m| + |\alpha| \leq s$ and $\alpha \neq 0$, $0 \leq |m| \leq s - 1$. Then the term $Z_2$ can be estimated as 
	\begin{equation}\label{Z2}
	\begin{aligned}
	Z_2 \leq \tfrac{C}{\eps} \| E_\eps^n \|_{H^{s-1}_x} \| \partial^m_\alpha \mathbb{P}^\perp G_\eps^{n+1} \|_{L^2_{x,v}(\nu)} \leq & \tfrac{C}{\eps} \mathbb{E}_s^\frac{1}{2} ( G_\eps^n, E_\eps^n , B_\eps^n ) \| \partial^m_\alpha \mathbb{P}^\perp G_\eps^{n+1} \|_{L^2_{x,v}(\nu)}
	\end{aligned}
	\end{equation}
	by using the H\"older inequality and the Sobolev embedding theory. For the term $Z_3$, one can decompose it as
	\begin{equation}
	\begin{aligned}
	Z_3 = & \underset{Z_{31}}{\underbrace{ \tfrac{1}{2} \sum_{|\alpha'| = 1} C_\alpha^{\alpha'} \big\langle \partial^m_x [ E_\eps^n \cdot ( \partial^{\alpha'}_v v \otimes \partial^{\alpha - \alpha'}_v G_\eps^{n+1} ) ] , \mathsf{q} \partial^m_\alpha \mathbb{P}^\perp G_\eps^{n+1} \big\rangle_{L^2_{x,v}} }} \\
	& + \underset{Z_{32}}{\underbrace{ \tfrac{1}{2} \big\langle \partial^m_x [ E_\eps^n \cdot ( v \otimes \partial^\alpha_v \mathbb{P} G_\eps^{n+1} ) ] , \mathsf{q} \partial^m_\alpha \mathbb{P}^\perp G_\eps^{n+1} \big\rangle_{L^2_{x,v}} }} \\
	& + \underset{Z_{33}}{\underbrace{ \tfrac{1}{2} \big\langle \partial^m_x [ E_\eps^n \cdot ( v \otimes \partial^\alpha_v \mathbb{P}^\perp G_\eps^{n+1} ) ] , \mathsf{q} \partial^m_\alpha \mathbb{P}^\perp G_\eps^{n+1} \big\rangle_{L^2_{x,v}} }}
	\end{aligned}
	\end{equation}
	by using $G_\eps^{n+1} = \mathbb{P} G_\eps^{n+1} + \mathbb{P}^\perp G_\eps^{n+1}$. Then the terms $Z_{31}$ and $Z_{32}$ can be bounded by
	\begin{equation}
	\begin{aligned}
	C \| E_\eps^n \|_{H^s_x} \| G_\eps^{n+1} \|_{H^s_{x,v}} \| \partial^m_\alpha \mathbb{P}^\perp G_\eps^{n+1} \|_{L^2_{x,v}}
	\end{aligned}
	\end{equation}
	and the term $Z_{33}$ can be bounded by
	\begin{equation}
	\begin{aligned}
	C \| E_\eps^n \|_{H^s_x} \| \mathbb{P}^\perp G_\eps^{n+1} \|_{\widetilde{H}^s_{x,v}(\nu)} \| \partial^m_\alpha \mathbb{P}^\perp G_\eps^{n+1} \|_{L^2_{x,v}(\nu)} \,.
	\end{aligned}
	\end{equation}
	We figure out that the factor $v$ in $Z_{33}$ can only be estimated by $\nu (v)$, so that it will be pushed to the $\nu$-weight of norms. However, the $v$ factor of $Z_{32}$ will be absorbed by the factor $\sqrt{M}$ in $\mathbb{P} G_\eps^{n+1}$ and  the term $Z_{31}$ does not involve the $v$ factor. We thereby have
	\begin{equation}\label{Z3}
	\begin{aligned}
	Z_3 \leq C \mathbb{E}_s^\frac{1}{2} (G_\eps^n , E_\eps^n, B_\eps^n) \Big[ \mathbb{E}_s ( G_\eps^{n+1} , E_\eps^{n+1} , B_\eps^{n+1} ) \\
	+ \| \mathbb{P}^\perp G_\eps^{n+1} \|_{\widetilde{H}^s_{x,v}(\nu)} \| \partial^m_\alpha \mathbb{P}^\perp G_\eps^{n+1} \|_{L^2_{x,v}(\nu)} \Big] \,.
	\end{aligned}
	\end{equation}
	Via Lemma \ref{Lmm-Gamma-Torus} and the decomposition $G_\eps^n = \mathbb{P} G_\eps^n + \mathbb{P}^\perp G_\eps^n$, we easily estimate that
	\begin{equation}\label{Z4}
	\begin{aligned}
	Z_4 \leq & \tfrac{C}{\eps} \| G_\eps^n \|_{H^s_{x,v}} \big( \| \mathbb{P} G_\eps^n \|_{H^s_x L^2_v} + \| \mathbb{P}^\perp G_\eps^n \|_{H^s_{x,v}(\nu)} \big) \| \partial^m_\alpha \mathbb{P}^\perp G_\eps^{n+1} \|_{L^2_{x,v}(\nu)} \\
	\leq & \tfrac{C}{\eps} \mathbb{E}_s (G_\eps^n , E_\eps^n, B_\eps^n) \| \partial^m_\alpha \mathbb{P}^\perp G_\eps^{n+1} \|_{L^2_{x,v}(\nu)} \\
	& + \tfrac{C}{\eps} \mathbb{E}_s^\frac{1}{2} (G_\eps^n, E_\eps^n , B_\eps^n) \| \mathbb{P}^\perp G_\eps^n \|_{H^s_{x,v}(\nu)} \| \partial^m_\alpha \mathbb{P}^\perp G_\eps^{n+1} \|_{L^2_{x,v}(\nu)} \,.
	\end{aligned}
	\end{equation}
	
	Next we control the term $Z_5$. Recalling the expression of $\mathbb{P} G$ in \eqref{VMB-Proj-Smp}, we have
	\begin{equation}
	\begin{aligned}
	\mathbb{P} G_\eps^{n+1} = \rho_\eps^{n+1,+} \phi_1 (v) + \rho_\eps^{n+1,-} \phi_2 (v) + \sum_{i=1}^3 u_{\eps,i}^{n+1} \phi_{i+2} (v) + \theta_\eps^{n+1} \phi_6 (v) \,.
	\end{aligned}
	\end{equation}
	Thus the term $Z_5$ can be estimated by
	\begin{equation}\label{Iter-Z5-rho-u-theta}
	\begin{aligned}
	Z_5 \leq C \big[ \| \partial_t \partial^m_x \rho_\eps^{n+1,+} \|_{L^2_x} + \| \partial_t \partial^m_x \rho_\eps^{n+1,-} \|_{L^2_x} + \| \partial_t \partial^m_x u_\eps^{n+1} \|_{L^2_x} \\
	+ \| \partial_t \partial^m_x \theta_\eps^{n+1} \|_{L^2_x} \big] \| \partial^m_\alpha \mathbb{P}^\perp G_\eps^{n+1} \|_{L^2_{x,v}} \,,
	\end{aligned}
	\end{equation}
	where $|m| \leq s - 1$. We further project the first $G_\eps^{n+1}$-equation of \eqref{Iter-Appro-Systm} into $\textrm{Ker} (\mathscr{L})$ by multiplying the vectors $\phi_1 (v)$, $\phi_2 (v)$, $\tfrac{1}{2} \phi_3 (v)$, $\tfrac{1}{2} \phi_4 (v)$, $\tfrac{1}{2} \phi_5 (v)$ and $\tfrac{2}{3} \phi_6 (v)$, respectively, and integrating over $v \in \R^3$. Thanks to $\Gamma (G_\eps^n,G_\eps^n) \in \textrm{Ker}^\perp (\mathscr{L})$, we deduce that
	\begin{equation}\label{Iter-Fluid-Part}
	\left\{
	\begin{array}{l}
	\partial_t \rho_\eps^{n+1,+} = \tfrac{1}{\eps} \big[ - \div_x \, u_\eps^{n+1} + \langle \Theta_n + \Psi_n , \phi_1 (v) \rangle_{L^2_v} \big] \,, \\[1.5mm]
	\partial_t \rho_\eps^{n+1,-} = \tfrac{1}{\eps} \big[ - \div_x \, u_\eps^{n+1} + \langle \Theta_n + \Psi_n , \phi_2 (v) \rangle_{L^2_v} \big] \,, \\[1.5mm]
	\partial_t u_\eps^{n+1} = \tfrac{1}{\eps} \big[ - \partial_i ( \tfrac{\rho_\eps^{n+1,+} + \rho_\eps^{n+1,-}}{2} + \theta_\eps^{n+1} ) + \langle \Theta_n + \Psi_n , \phi_{i+2} (v) \rangle_{L^2_v} \big] \quad (1 \leq i \leq 3) \,, \\[1.5mm]
	\partial_t \theta_\eps^{n+1} = \tfrac{1}{\eps} \big[ - \tfrac{2}{3} \div_x \, u_\eps^{n+1} + \tfrac{1}{3} \langle \Theta_n + \Psi_n , \phi_6 (v) \rangle_{L^2_v} \big] \,, 
	\end{array}
	\right.
	\end{equation}
	where
	\begin{equation}\label{Iter-Phi-Psi}
	\begin{aligned}
	\Theta_n = & - \Big( v \cdot \nabla_x + \mathsf{q} ( \eps E_\eps^n + v \times B_\eps^n ) \cdot \nabla_v - \tfrac{1}{2} \eps \mathsf{q} ( E_\eps^n \cdot v ) \Big) \mathbb{P}^\perp G_\eps^{n+1} \,, \\
	\Psi_n = & - \mathsf{q} ( \eps E_\eps^n + v \times B_\eps^n ) \cdot \nabla_v \mathbb{P} G_\eps^{n+1} + \tfrac{1}{2} \eps \mathsf{q} ( E_\eps^n \cdot v ) \mathbb{P} G_\eps^{n+1} \,.
	\end{aligned}
	\end{equation}
	For the quantity $\| \partial_t \partial^m_x \rho_\eps^{n+1,+} \|_{L^2_x}$, we derive from the first equation of \eqref{Iter-Fluid-Part} that for all $|m| \leq s - 1$
	\begin{equation}
	\begin{aligned}
	\| \partial_t \partial^m_x \rho_\eps^{n+1,+} \|_{L^2_x} \leq \tfrac{C}{\eps} \| \nabla_x & \mathbb{P} G_\eps^{n+1} \|_{H^{s-1}_x L^2_v} \\
	& + \tfrac{C}{\eps} \| \partial^m_x  \langle \Theta_n , \phi_1 (v) \rangle_{L^2_v} \|_{L^2_x} + \tfrac{C}{\eps} \| \partial^m_x  \langle \Psi_n , \phi_1 (v) \rangle_{L^2_v} \|_{L^2_x} \,.
	\end{aligned}
	\end{equation}
	Combining the definitions of $\Phi_n$ and $\Psi_n$ in \eqref{Iter-Phi-Psi}, the quantity $ \tfrac{C}{\eps} \| \partial^m_x  \langle \Theta_n , \phi_1 (v) \rangle_{L^2_v} \|_{L^2_x} $ can be controlled by
	\begin{equation}
	\begin{aligned}
	\tfrac{C}{\eps} \big( 1 + \eps \| E_\eps^n \|_{H^s_x} + \| B_\eps^n \|_{H^s_x} \big) \| G_\eps^{n+1} \|_{H^s_{x,v}} \,,
	\end{aligned}
	\end{equation}
	and the quantity $ \tfrac{C}{\eps} \| \partial^m_x  \langle \Psi_n , \phi_1 (v) \rangle_{L^2_v} \|_{L^2_x} $ can be bounded by
	\begin{equation}
	\begin{aligned}
	\tfrac{C}{\eps} \big( \eps \| E_\eps^n \|_{H^s_x} + \| B_\eps^n \|_{H^s_x} \big) \| \nabla_x \mathbb{P} G_\eps^{n+1} \|_{H^{s-1}_x L^2_v} \,.
	\end{aligned}
	\end{equation}
	Therefore, we have
	\begin{equation}\label{Iter-rho-n+1+}
	\begin{aligned}
	\| \partial_t \partial^m_x \rho_\eps^{n+1,+} \|_{L^2_x} \leq & \tfrac{C}{\eps} \big( 1 + \eps \| E_\eps^n \|_{H^s_x} + \| B_\eps^n \|_{H^s_x} \big) \| G_\eps^{n+1} \|_{H^s_{x,v}} \\
	\leq & \tfrac{C}{\eps} \Big[ 1 + \mathbb{E}_s^\frac{1}{2} (G_\eps^n, E_\eps^n , B_\eps^n) \Big] \mathbb{E}_s^\frac{1}{2} ( G_\eps^{n+1} , E_\eps^{n+1} , B_\eps^{n+1} ) \,.
	\end{aligned}
	\end{equation}
	Furthermore, by the analogous arguments in estimating the norm $\| \partial_t \partial^m_x \rho_\eps^{n+1,+} \|_{L^2_x}$ in \eqref{Iter-rho-n+1+}, we easily yield that
	\begin{equation}\label{Iter-rho-u-theta}
	\begin{aligned}
	\| \partial_t \partial^m_x \rho_\eps^{n+1,-} \|_{L^2_x} + \| \partial_t \partial^m_x u_\eps^{n+1} & \|_{L^2_x} + \| \partial_t \partial^m_x \theta_\eps^{n+1} \|_{L^2_x} \\
	& \leq \tfrac{C}{\eps} \Big[ 1 + \mathbb{E}_s^\frac{1}{2} (G_\eps^n, E_\eps^n , B_\eps^n) \Big] \mathbb{E}_s^\frac{1}{2} ( G_\eps^{n+1} , E_\eps^{n+1} , B_\eps^{n+1} ) \,.
	\end{aligned}
	\end{equation}
	Collecting the estimates \eqref{Iter-Z5-rho-u-theta}, \eqref{Iter-rho-n+1+} and \eqref{Iter-rho-u-theta} gives
	\begin{equation}\label{Z5}
	\begin{aligned}
	Z_5 \leq & \tfrac{C}{\eps} \Big[ 1 + \mathbb{E}_s^\frac{1}{2} (G_\eps^n, E_\eps^n , B_\eps^n) \Big] \mathbb{E}_s^\frac{1}{2} ( G_\eps^{n+1} , E_\eps^{n+1} , B_\eps^{n+1} ) \| \partial^m_\alpha \mathbb{P}^\perp G_\eps^{n+1} \|_{L^2_{x,v}(\nu)} \,.
	\end{aligned}
	\end{equation}
	
	It is easy to know that the term $Z_6$ is bounded by
	\begin{equation}\label{Z6}
	\begin{aligned}
	Z_6 \leq \tfrac{C}{\eps} \| G_\eps^{n+1} \|_{H^s_{x,v}} \| \partial^m_\alpha \mathbb{P}^\perp & G_\eps^{n+1} \|_{L^2_{x,v}(\nu)} \\
	& \leq \tfrac{C}{\eps} \mathbb{E}_s^\frac{1}{2} ( G_\eps^{n+1} , E_\eps^{n+1} , B_\eps^{n+1} ) \| \partial^m_\alpha \mathbb{P}^\perp G_\eps^{n+1} \|_{L^2_{x,v}(\nu)} 
	\end{aligned}
	\end{equation}
	and $Z_7$ is estimated by
	\begin{equation}\label{Z7}
	\begin{aligned}
	Z_7 \leq & \tfrac{C}{\eps} \| B_\eps^n \|_{H^s_x} \| G_\eps^{n+1} \|_{H^s_{x,v}} \| \partial^m_\alpha \mathbb{P}^\perp G_\eps^{n+1} \|_{L^2_{x,v}(\nu)} \\
	\leq & \tfrac{C}{\eps} \mathbb{E}_s^\frac{1}{2} ( G_\eps^n , E_\eps^n , B_\eps^n ) \mathbb{E}_s^\frac{1}{2} ( G_\eps^{n+1} , E_\eps^{n+1} , B_\eps^{n+1} ) \| \partial^m_\alpha \mathbb{P}^\perp G_\eps^{n+1} \|_{L^2_{x,v}(\nu)} \,.
	\end{aligned}
	\end{equation}
	Finally, one notices that the terms $Z_8$ and $Z_9$ will be dominated by
	\begin{equation}\label{Z8}
	\begin{aligned}
	Z_8 \leq & \tfrac{C}{\eps} \| B_\eps^n \|_{H^s_x} \| \mathbb{P}^\perp G_\eps^{n+1} \|_{\widetilde{H}^s_{x,v} (\nu)} \| \partial^m_\alpha \mathbb{P}^\perp G_\eps^{n+1} \|_{L^2_{x,v}(\nu)} \\
	\leq & \tfrac{C}{\eps} \mathbb{E}_s^\frac{1}{2} (G_\eps^n, E_\eps^n, B_\eps^n) \| \mathbb{P}^\perp G_\eps^{n+1} \|_{\widetilde{H}^s_{x,v} (\nu)} \| \partial^m_\alpha \mathbb{P}^\perp G_\eps^{n+1} \|_{L^2_{x,v}(\nu)} \,,
	\end{aligned}
	\end{equation}
	and
	\begin{equation}\label{Z9}
	\begin{aligned}
	Z_9 \leq & \tfrac{C}{\eps} \big( 1 + \eps \| E_\eps^n \|_{H^s_x} + \| B_\eps^n \|_{H^s_x} \big) \| \nabla_x \mathbb{P} G_\eps^{n+1} \|_{H^{s-1}_x L^2_v} \| \partial^m_\alpha \mathbb{P}^\perp G_\eps^{n+1} \|_{L^2_{x,v}(\nu)} \\
	\leq & \tfrac{C}{\eps} \Big[ 1 + \mathbb{E}_s^\frac{1}{2} (G_\eps^n, E_\eps^n, B_\eps^n) \Big] \mathbb{E}_s^\frac{1}{2} (G_\eps^{n+1}, E_\eps^{n+1}, B_\eps^{n+1}) \| \partial^m_\alpha \mathbb{P}^\perp G_\eps^{n+1} \|_{L^2_{x,v}(\nu)} \,.
	\end{aligned}
	\end{equation}
	
	Consequently, by substituting the bounds \eqref{Z1}, \eqref{Z2}, \eqref{Z3}, \eqref{Z4}, \eqref{Z5}, \eqref{Z6}, \eqref{Z7}, \eqref{Z8}, \eqref{Z9} into the equality \eqref{Z1-9} and employing the Young's inequality, we obtain
	\begin{equation}\label{Iter-Unclosed-Est}
	\begin{aligned}
	& \tfrac{\d}{\d t} \| \partial^m_\alpha \mathbb{P}^\perp G_\eps^{n+1} \|^2_{L^2_{x,v}} + \tfrac{\lambda_0}{\eps^2} \| \partial^m_\alpha \mathbb{P}^\perp G_\eps^{n+1} \|^2_{L^2_{x,v}(\nu)} \\
	\leq & \tfrac{\lambda_*}{\eps^2} \sum_{|\alpha'| < |\alpha|} \| \partial^m_{\alpha'} \mathbb{P}^\perp G_\eps^{n+1} \|^2_{L^2_{x,v}(\nu)} + C \mathbb{E}_s ( G_\eps^n, E_\eps^n, B_\eps^n ) + C \mathbb{E}_s ( G_\eps^{n+1}, E_\eps^{n+1}, B_\eps^{n+1} ) \\
	& + C \mathbb{E}_s ( G_\eps^n, E_\eps^n, B_\eps^n ) \Big[ \mathbb{E}_s ( G_\eps^{n+1}, E_\eps^{n+1}, B_\eps^{n+1} ) + \| \mathbb{P}^\perp G_\eps^n \|^2_{H^s_{x,v}(\nu)} + \| \mathbb{P}^\perp G_\eps^{n+1} \|^2_{\widetilde{H}^s_{x,v}(\nu)} \Big]
	\end{aligned}
	\end{equation}
	for all $|m| + |\alpha| \leq s$ with $\alpha \neq 0$ and $0 < \eps \leq 1$.
	
	We now claim that for any given $1 \leq k \leq s$ and $|\alpha| \leq s$, there are positive constants $a_{|\alpha|}$, $b_k$, $r_k$ and $C_k'$, independent of $\eps$ and $n$, such that
	\begin{equation}\label{Iter-Mixed-Bnd}
	\begin{aligned}
	\tfrac{\d}{\d t} & \bigg\{ \| G_\eps^{n+1} \|^2_{H^s_x L^2_v} + \| E_\eps^{n+1} \|^2_{H^s_x} + \| B_\eps^{n+1} \|^2_{H^s_x} + \sum_{\substack{|m| + |\alpha| \leq s \\ 1 \leq |\alpha| \leq k}} a_{|\alpha|} \| \partial^m_\alpha \mathbb{P}^\perp G_\eps^{n+1} \|^2_{L^2_{x,v}} \bigg\} \\
	& + \tfrac{b_k}{\eps^2} \| \mathbb{P}^\perp G_\eps^{n+1} \|^2_{H^s_x L^2_v (\nu)} + \tfrac{r_k}{\eps^2} \sum_{\substack{|m| + |\alpha| \leq s \\ 1 \leq |\alpha| \leq k}} \| \partial^m_\alpha \mathbb{P}^\perp G_\eps^{n+1} \|^2_{L^2_{x,v}(\nu)} \\
	& \leq C_k' \bigg\{  \Big[ 1 + \mathbb{E}_s (G_\eps^n, E_\eps^n, B_\eps^n) \Big] \Big[ \mathbb{E}_s (G_\eps^n , E_\eps^n , B_\eps^n) + \mathbb{E}_s (G_\eps^{n+1} , E_\eps^{n+1} , B_\eps^{n+1}) \Big] \\
	& + \Big[ \mathbb{E}_s^\frac{1}{2} (G_\eps^n , E_\eps^n , B_\eps^n) + \mathbb{E}_s ( G_\eps^n , E_\eps^n , B_\eps^n ) \Big] \Big( \| \mathbb{P}^\perp G_\eps^n \|^2_{H^s_x L^2_v} + \| \mathbb{P}^\perp G_\eps^{n+1} \|^2_{H^s_{x,v}(\nu)} \Big) \bigg\}
	\end{aligned}
	\end{equation}
	for all $0 < \eps \leq 1$.
	
	We will verify the bound \eqref{Iter-Mixed-Bnd} by induction. If $k = 1$, $|\alpha'| < |\alpha| = k$ implies $\alpha' = 0$. Then there is a constant $\lambda_*' > 0$, independent of $\eps$ and $n$, such that
	\begin{equation}
	\begin{aligned}
	\sum_{|m|+|\alpha|\leq s \,, |\alpha| = 1} \sum_{|\alpha'| < |\alpha|} \| \partial^m_{\alpha'} \mathbb{P}^\perp G_\eps^{n+1} \|^2_{L^2_{x,v}(\nu)} \leq \lambda_*' \| \mathbb{P}^\perp G_\eps^{n+1} \|^2_{H^s_x L^2_v (\nu)} \,.
	\end{aligned}
	\end{equation}
	Then, summing up for $|m| + |\alpha| \leq s$ with $|\alpha| = 1$ in \eqref{Iter-Unclosed-Est}, multiplying it by $\tfrac{\lambda}{2 \lambda_* \lambda_*'}$ and adding it to the inequality \eqref{Iter-Spatial-Bnd} imply that the inequality \eqref{Iter-Mixed-Bnd} holds for the case $k=1$, where $a_1 = \tfrac{\lambda}{2 \lambda_* \lambda_*'} > 0$, $b_1 = \tfrac{1}{2} \lambda > 0$, $r_1 = \tfrac{\lambda_0 \lambda}{2 \lambda_* \lambda_*'} > 0$ and $C_1' > 0$ are determined by the coefficients in left-hand of \eqref{Iter-Spatial-Bnd} and \eqref{Iter-Unclosed-Est}. Now we assume that the inequality \eqref{Iter-Mixed-Bnd} is valid for $k$. For $|\alpha| = k+1$, summing up for $|m|+|\alpha| \leq s$ with $|\alpha| = k + 1$ in the inequality \eqref{Iter-Unclosed-Est}, we obtain
	\begin{equation}\label{Iter-Induction}
	\begin{aligned}
	\tfrac{\d}{\d t} & \sum_{|m|+|\alpha|\leq s \,, |\alpha| = k +1} \| \partial^m_\alpha \mathbb{P}^\perp G_\eps^{n+1} \|^2_{L^2_{x,v}} + \tfrac{\lambda_0}{\eps^2} \sum_{|m|+|\alpha|\leq s \,, |\alpha| = k +1} \| \partial^m_\alpha \mathbb{P}^\perp G_\eps^{n+1} \|^2_{L^2_{x,v}(\nu)} \\
	& \leq \tfrac{N_{k+1} \lambda_*}{\eps^2} \sum_{|m|+|\alpha|\leq s \,, |\alpha| \leq k} \| \partial^m_\alpha \mathbb{P}^\perp G_\eps^{n+1} \|^2_{L^2_{x,v}(\nu)} + C N_{k+1} \bigg\{ \mathbb{E}_s (G_\eps^n , E_\eps^n, B_\eps^n) \\
	& + \Big[ 1 + \mathbb{E}_s ( G_\eps^n , E_\eps^n , B_\eps^n ) \Big] \mathbb{E}_s ( G_\eps^{n+1} , E_\eps^{n+1} , B_\eps^{n+1} ) \\
	& + \mathbb{E}_s (G_\eps^n , E_\eps^n, B_\eps^n) \Big( \| \mathbb{P}^\perp G_\eps^n \|^2_{H^s_{x,v}(\nu)} + \| \mathbb{P}^\perp G_\eps^{n+1} \|^2_{H^s_{x,v}(\nu)}  \Big) \bigg\} \,.
	\end{aligned}
	\end{equation}
	Here $N_{k+1} > 0$ denotes the number of all possible $(m,\alpha)$ such that $|m|+|\alpha| \leq s$ with $|\alpha| = k +1$. By the assumption of induction, \eqref{Iter-Mixed-Bnd} holds for the case $k$. In order to absorb the first term on the right-hand side of \eqref{Iter-Induction} by the last term on the left-hand side of \eqref{Iter-Mixed-Bnd}, we multiply \eqref{Iter-Induction} by $\tfrac{r_k}{2 \lambda_* N_{k+1}}$ and add it to \eqref{Iter-Mixed-Bnd}. We then have
	\begin{equation}
	\begin{aligned}
	\tfrac{\d}{\d t} & \bigg\{ \mathbb{E}_s ( G_\eps^{n+1}, E_\eps^{n+1}, B_\eps^{n+1} ) + \sum_{|m| + |\alpha| \leq s \,, 1 \leq |\alpha| \leq k+1} a_{|\alpha|} \| \partial^m_\alpha \mathbb{P}^\perp G_\eps^{n+1} \|^2_{L^2_{x,v}} \bigg\} \\
	& + \tfrac{b_k}{\eps^2} \| \mathbb{P}^\perp G_\eps^{n+1} \|^2_{H^s_x L^2_v (\nu)} + \tfrac{r_k}{\eps^2} \sum_{|m| + |\alpha| \leq s \,, 1 \leq |\alpha| \leq k+1} \| \partial^m_\alpha \mathbb{P}^\perp G_\eps^{n+1} \|^2_{L^2_{x,v}(\nu)} \\
	& \leq C_{k+1}' \bigg\{  \Big[ 1 + \mathbb{E}_s (G_\eps^n, E_\eps^n, B_\eps^n) \Big] \Big[ \mathbb{E}_s (G_\eps^n , E_\eps^n , B_\eps^n) + \mathbb{E}_s (G_\eps^{n+1} , E_\eps^{n+1} , B_\eps^{n+1}) \Big] \\
	& + \Big[ \mathbb{E}_s^\frac{1}{2} (G_\eps^n , E_\eps^n , B_\eps^n) + \mathbb{E}_s ( G_\eps^n , E_\eps^n , B_\eps^n ) \Big] \Big( \| \mathbb{P}^\perp G_\eps^n \|^2_{H^s_x L^2_v} + \| \mathbb{P}^\perp G_\eps^{n+1} \|^2_{H^s_{x,v}(\nu)} \Big) \bigg\} \,,
	\end{aligned}
	\end{equation}
	where $a_{k+1} = \tfrac{r_k}{2 N_{k+1}} > 0$, $b_{k+1} = b_k > 0$, $r_{k+1} = \min \big\{ \tfrac{r_k}{2} , \tfrac{\lambda_0 r_k}{2 \lambda_* N_{k+1}} \big\} > 0$ and $C_{k+1}' = C_k' + \tfrac{C r_k}{2 \lambda_*} > 0$. Then the induction principle implies that the bound \eqref{Iter-Mixed-Bnd} holds.
	
	We now introduce a so-called instant iterating energy of $\mathfrak{E}_n (t)$
	\begin{equation}
	\begin{aligned}
	\widetilde{\mathfrak{E}}_n (t) = & \| G_\eps^n \|^2_{H^s_x L^2_v} + \| E_\eps^n \|^2_{H^s_x} + \| B_\eps^n \|^2_{H^s_x} + \sum_{\substack{|m| + |\alpha| \leq s \\ 1 \leq |\alpha| \leq k}} a_{|\alpha|} \| \partial^m_\alpha \mathbb{P}^\perp G_\eps^n \|^2_{L^2_{x,v}} \\
	& + \tfrac{1}{\eps^2} \int_0^t \Big[ b_s \| \mathbb{P}^\perp G_\eps^n \|^2_{H^s_x L^2_v (\nu)} + r_s \sum_{\substack{|m| + |\alpha| \leq s \\ 1 \leq |\alpha| \leq k}} \| \partial^m_\alpha \mathbb{P}^\perp G_\eps^n \|^2_{L^2_{x,v}(\nu)} \Big] (\tau) \d \tau \,.
	\end{aligned}
	\end{equation}
	It is easy to verify that
	\begin{equation}
	\begin{aligned}
	c_* \mathfrak{E}_n (t) \leq \widetilde{\mathfrak{E}}_n (t) \leq C_* \mathfrak{E}_n (t) \,,
	\end{aligned}
	\end{equation}
	where $c_* = \min \{ 1, a_1, \cdots , a_s, b_s, r_s \} > 0$ and $C_* = \max \{ 1, a_1, \cdots , a_s, b_s, r_s \} > 0$. We take $k = s $ in the inequality \eqref{Iter-Mixed-Bnd} and integrate over $[0,t]$. Then we derive from the fact $\sup_{0 \leq t \leq T^*} \widetilde{\mathfrak{E}}_n (t) \leq 2 \ell$ that
	\begin{equation}
	\begin{aligned}
	\widetilde{\mathfrak{E}}_{n+1} (t) \leq \widetilde{\mathfrak{E}}_{n+1} (0) + C_s (1 + \ell) \Big[ \ell  t +  t \sup_{0 \leq s \leq t} \widetilde{\mathfrak{E}}_{n+1} (s) + \sqrt{\ell} \ \widetilde{\mathfrak{E}}_{n+1} (t) \Big] \,,
	\end{aligned}
	\end{equation}
	where $C_s = \tfrac{12 C_s'}{c_*} ( 1 + \tfrac{1}{\sqrt{c_*}} + \tfrac{1}{c_*} ) > 0$. Here $\widetilde{\mathfrak{E}}_{n+1} (0) = \widetilde{\mathfrak{E}} (0) \leq \ell $. It follows that for $t \leq T^*$
	\begin{equation}
	\begin{aligned}
	\Big[ 1 - C_s (1 + \ell) T^* - C_s (1 + \ell) \sqrt{\ell}  \Big] \sup_{0 \leq t \leq T^*} \widetilde{\mathfrak{E}}_{n+1} (t) \leq \widetilde{\mathfrak{E}}_{n+1} (0) + C_s ( 1 + \ell ) \ell T^* \\
	\leq \ell + C_s ( 1 + \ell ) \ell T^* \,.
	\end{aligned}
	\end{equation}
	If $0 < T^* \leq \sqrt{\ell}$ and $\ell > 0$ is small such that $ ( 1 + \ell ) \sqrt{\ell} \leq \tfrac{1}{5 C_s} $, then 
	\begin{equation}
	\begin{aligned}
	1 - C_s (1 + \ell) T^* - C_s (1 + \ell) \sqrt{\ell} \geq 1 - 2 C_s (1 + \ell) \sqrt{\ell} \,.
	\end{aligned}
	\end{equation}
	Consequently, we have
	\begin{equation}
	\begin{aligned}
	\sup_{0 \leq t \leq T^*} \widetilde{\mathfrak{E}}_{n+1} (t) \leq \Big[ 1 + \tfrac{3 C_s ( 1 + \ell ) \sqrt{\ell}}{1 - 2 C_s ( 1 + \ell) \sqrt{\ell}} \Big] \ell \leq 2 \ell \,.
	\end{aligned}
	\end{equation}
	The proof of Lemma \ref{Lmm-Iter-Appro-Bnd} is finished.
\end{proof}

%%%%%%%%%%%%%%%%%%%%%%%%%%%%%%%%%%%%%%%%%%%%%%%%%%%%%%%%%%%%%%%%%%%%%%%%%%%

\section*{Acknowledgment}

The first named author N. J. appreciates Ars\'enio and Saint-Raymond. This work was encouraged by them, and the communications between N. J. and them on the work \cite{Arsenio-SRM-2016} share lots of lights on the current project. This work was supported by the grants from the National Natural Foundation of
China under contract Nos. 11471181 and 11731008.

%%%%%%%%%%%%%%%%%%%%%%%%%%%%%%%%%%%%%%%%%%%%%%%%%%%%%%%%%%%%%%%%%%%%%%%%%%%
\bigskip
% \phantomsection
% \addcontentsline{toc}{section}{\refname}

%\bibliographystyle{unsrtnat}
%\nocite{*}
\bibliography{reference}

\end{document}